\documentclass{article}
\usepackage {geometry} 
\usepackage{graphicx} 
\usepackage[english]{babel}
\usepackage{appendix}
\usepackage{xcolor}
\usepackage{anyfontsize}
\usepackage{setspace}
\usepackage{amsthm}
\usepackage{amsmath}
\usepackage{amssymb}
\usepackage{amsthm}
\usepackage{mathabx}
\usepackage{mathtools}
\usepackage{stmaryrd}
\usepackage{hyperref}
\usepackage{enumitem}
\usepackage{wrapfig}
\usepackage{wrapstuff}
\usepackage{tikz-cd}
\usepackage{comment}
\usepackage{mathtools}
\usepackage[
    left = \flqq{},%
    right = \frqq{},%
    leftsub = \flq{},%
    rightsub = \frq{} %
]{dirtytalk}

\setlist{itemsep=0pt}

\theoremstyle{plain}
\newtheorem{theorem}{Theorem}[subsection]
\newtheorem*{theorem*}{Theorem}
\newtheorem{proposition}[theorem]{Proposition}
\newtheorem*{proposition*}{Proposition}
\newtheorem{lemma}[theorem]{Lemma}
\newtheorem{corollary}[theorem]{Corollary}
\theoremstyle{definition}
\newtheorem{definition}[theorem]{Definition}
\newtheorem*{definition*}{Definition}
\newtheorem{remark}{Remark}

\newtheorem{example}[theorem]{Example}
\newtheorem*{example*}{Example}
\newtheorem{examples}[theorem]{Examples}
\newtheorem{construction}[theorem]{Construction}

\DeclareMathOperator{\Hom}{Hom}
\DeclareMathOperator{\Isom}{Isom}
\DeclareMathOperator{\Fib}{Fib}
\DeclareMathOperator{\Bc}{Bc}
\DeclareMathOperator{\Aut}{Aut}

\DeclareMathOperator{\GL}{GL}
\DeclareMathOperator{\Perm}{Perm}
\DeclareMathOperator{\Spec}{Spec}
\DeclareMathOperator{\Gal}{Gal}
\DeclareMathOperator{\End}{End}
\DeclareMathOperator{\FEnd}{FEnd}
\DeclareMathOperator{\Sym}{Sym}
\DeclareMathOperator{\Ker}{Ker}
\DeclareMathOperator{\Imm}{Im}
\DeclareMathOperator{\core}{core}
\DeclareMathOperator{\pr}{pr}
\DeclareMathOperator{\im}{im}
\DeclareMathOperator{\ev}{ev}
\DeclareMathOperator{\eq}{eq}
\DeclareMathOperator{\coev}{coev}
\DeclareMathOperator{\prj}{prj}
\DeclareMathOperator{\Lim}{lim}
\DeclareMathOperator{\Nil}{Nil}
\DeclareMathOperator{\Deck}{Deck}
\DeclareMathOperator{\Der}{Der}
\DeclareMathOperator{\colim}{colim}
\DeclareMathOperator{\id}{id}
\DeclareMathOperator{\Sh}{\textbf{Sh}}
\DeclareMathOperator{\PSh}{\textbf{PSh}}
\DeclareMathOperator{\Sch}{\textbf{Sch}}

\DeclareMathOperator{\FSep}{\bf{FSep}}
\DeclareMathOperator{\FTCSet}{\bf{FTCSet}}
\DeclareMathOperator{\FCSet}{\bf{FCSet}}
\DeclareMathOperator{\FSet}{\bf{FSet}}
\DeclareMathOperator{\SET}{\bf{Set}}
\DeclareMathOperator{\Fun}{\bf{Fun}}
\DeclareMathOperator{\CFun}{\bf{CFun}}

\DeclareMathOperator{\Cov}{\bf{Cov}}
\DeclareMathOperator{\Alg}{\bf{Alg}}

\DeclareMathOperator{\Grp}{\bf{Grp}}
\DeclareMathOperator{\Top}{\bf{Top}}
\DeclareMathOperator{\Rng}{\bf{Rng}}
\DeclareMathOperator{\EF}{\bf{EF}}
\DeclareMathOperator{\Vect}{\bf{Vect}}
\DeclareMathOperator{\Rep}{\bf{Rep}}

\DeclareMathOperator{\LCS}{\bf{LCS}}
\DeclareMathOperator{\Mod}{\bf{Mod}}
\DeclareMathOperator{\Comod}{\bf{CoMod}}
\DeclareMathOperator{\Bund}{\bf{Bund}}
\DeclareMathOperator{\Fet}{\bf{F\acute{e}t}}
\DeclareMathOperator{\FeAl}{\bf{F\acute{e}Al}}
\DeclareMathOperator{\LS}{\bf{LS}}

\begin{document}

\begin{titlepage}
    \begin{center}
        \vspace*{3cm}
            
        \Huge
        \textbf{\'Etale Fundamental Groups}
            
        \vspace{0.5cm}
        \LARGE
            A geometric and topological approach to fundamental groups in algebraic geometry
            
        \vspace{1.5cm}
            
        \textbf{Loris De Vos}
            
        \vfill

        \large
        A thesis presented for the degree of\\
        Master in Mathematics $2025$-$2026$
            
        \vspace{0.8cm}
        \includegraphics[width=0.4\textwidth]{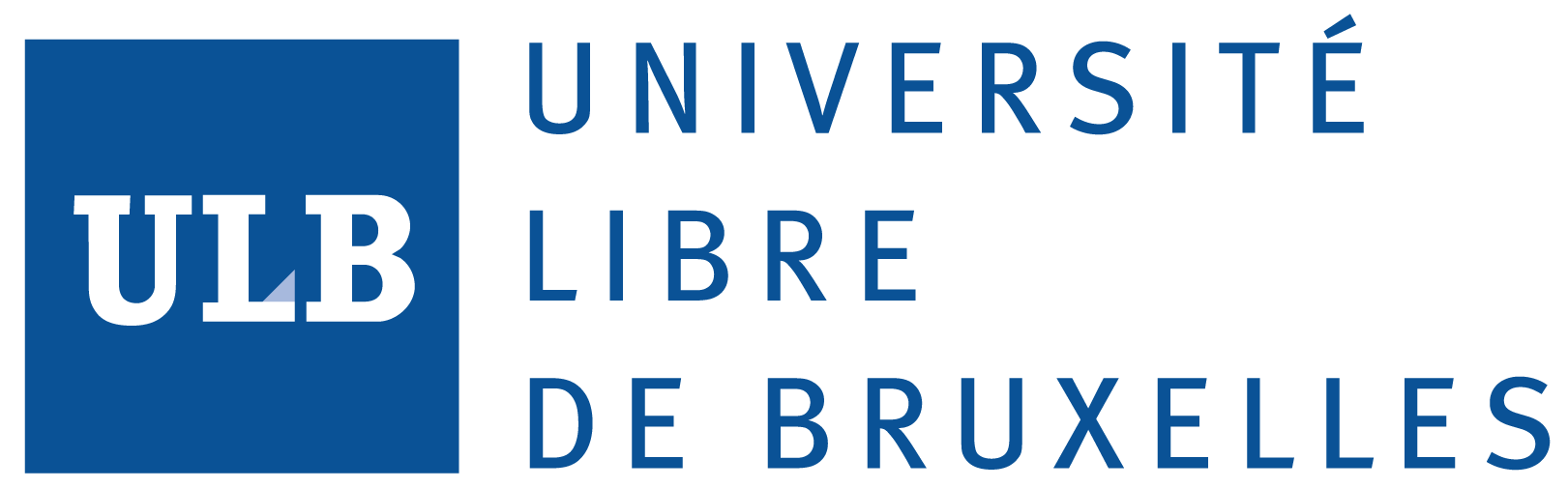}  

         \vspace{0.8cm}
        \large
        Facult\'e des Sciences\\
        Supervisor: Prof. Dr. Joost Vercruysse
    \end{center}
\end{titlepage}

\newpage

\begin{abstract}
    \noindent This thesis explores the notion of fundamental groups across three mathematical settings. 
    We begin with the classical topological theory of covering spaces, highlighting its structural analogy with Galois theory. 
    We then follow Grothendieck in transporting these ideas to algebraic geometry.
    The inadequacy of the Zariski topology motivates the \'etale topology, from which the \'etale fundamental group is constructed and compared to its topological counterpart via transcendental methods. 
    Finally, we linearise the theory through Tannakian duality, where fundamental groups are recovered as automorphism groups of fibre functors on certain monoidal categories, a framework broad enough to encompass \'etale, topological, and motivic Galois groups alike.
\end{abstract}

\newpage
\tableofcontents
\newpage

\section*{Introduction}
\addcontentsline{toc}{section}{Introduction}

The aim of this thesis is threefold. 
It is first to give an overview of Grothendieck's interpretation of topological covering spaces and the  fundamental group of a topological space.
Then, to show how it led to the realisation of similar concepts within the category of schemes.
And finally, to link this approach to the classical results of Tannaka and Krein.\\

The fundamental group of a topological space is an object which occupies a central place in mathematics and, more precisely, in algebraic topology because it provides one of the first ways of translating geometric phenomena into algebraic language.
In the first chapter of this dissertation, we will study the fundamental group through the scope of the theory of covering spaces. 
This approach is well-suited for the developments appearing later in the thesis because it reveals a deep analogy between topology and Galois theory.
Indeed, covering spaces behave in many respects like field extensions: automorphism groups of coverings resemble Galois groups, universal covers play the role of the algebraic closures, and the classification of coverings mirrors the classical correspondence between intermediate field extensions and subgroups of its Galois group.
We will then discuss a generalisation by \textit{Krull} of classical Galois theory for infinite field extensions, which takes advantage of the topology endowed on profinite groups. 
This perspective emphasises the fact that the essential structure underlying classical Galois theory is not tied specifically to fields, 
but rather to discrete sets equipped with an action of a Galois group.
From there, we will explain how covering spaces fit into this broader framework by showing a similar reformulation for the classification of covering spaces. 
The first chapter will end with a triple interpretation of covering spaces, as locally constant sheaves and as representations of the fundamental group.
This chapter will serve as both a review of classical material and a conceptual foundation for the remainder of the thesis.\\


One of Grothendieck’s deepest insights was that an analogy similar to the one between Galois theory and covering spaces could be transported into algebraic geometry.
But the difficulty in doing so lies in the fact that schemes do not possess a sufficiently rich topology. 
Indeed, for the Zariski topology, many schemes become topologically simply connected, making the resulting fundamental group trivial.
Consequently, a new notion of topology had to be introduced. 
It led to the \textit{\'etale topology}, for which coverings are no longer given by open subsets, but by families of \'etale morphisms.
One possible axiomatic approach we could have taken in this work was to study the \textit{Galois category} of finite \'etale morphisms over connected schemes. 
The \'etale fundamental group would then appear as the automorphism group of a certain fibre functor. 
This is done in \cite{SGA} and in \cite{Lenstra}.
In this second chapter, we chose to follow a more direct approach, also presented in \cite{Szamuely},
where the goal is to construct the algebraic counterpart of the theory developed for topological spaces.
This will allow us to define an object analogous to the fundamental group of topological spaces called the \textit{\'etale fundamental group}.
This can only be done after a thorough investigation of \'etale morphisms.
Here the reader will contemplate how our approach of the fundamental group through covering spaces from the first chapter bears its fruits.
The techniques we use to describe the \'etale fundamental group are almost equivalent to the ones used to describe its topological counterpart.
For the sake of accuracy, we then present an approach that allows us to compare our two analogue fundamental groups.
It uses techniques called transcendental methods. 
These show that the \'etale fundamental group of a connected scheme of finite type over $\mathbb{C}$, is the profinite completion of the topological fundamental group of the analytification of that scheme, provided we fix compatible base points.
We finish by giving a few structure theorems for the \'etale fundamental group, further deepening the bridge between algebraic geometry, topology and arithmetic.\\

We end our investigation by a linearisation of the above concepts, replacing coverings by linear algebraic data. 
This idea originates from the work of Tadao Tannaka and Marc Krein, and goes as follows; 
rather than studying a (locally) compact group directly, one studies the monoidal (tensor) category formed by its representations and attempts to recover the group from this category and an associated fibre functor alone.
Later, Grothendieck and Saavedra Rivano extended the theory to affine group schemes. 
The third chapter aims to introduce the basic machinery of Tannakian categories and explain how we can define a Tannakian
fundamental group scheme via the automorphism group of the fibre functor. 
One of the remarkable aspects of this theory is its unifying character, \'etale fundamental groups, topological fundamental groups (and actually differential Galois groups and motivic Galois groups) all arise naturally within the Tannakian framework. 
This highlights the \textit{motivic} character of the Tannakian fundamental group.

\subsubsection*{Notations and Conventions}

\begin{itemize}
    \item[] Most of the notations we use in this thesis are defined in the Appendices, for example every schemes are assumed to be separated.
    \item[] Categories are usually represented by calligraphic capital letters such as $\mathcal{C}$ or bold abbreviations such as $\SET$, $\Rng$, $\Sch$, $\dots$
    \item[] When considering group actions, we always assume that they act on a set from the left.
    \item[] All rings are considered to be commutative.
    \item[] We denote the algebraic closure of a field $k$ by $\overline{k}$, this is not to be confused with the geometric point $\overline{s}$ when $s\in S$ for $S$ a scheme.
    \item[] And in the context of algebraic geometry all schemes are assumed to be Noetherian.
    \item[] When $A$ is a local ring, we denote its (unique) maximal ideal with a fraktur symbol, such as $\mathfrak{p}$ or $\mathfrak{m}$.
    \item[] When $f:X\rightarrow Y$ is a morphism of schemes, $f^{\#}: A \rightarrow B$ is the induced homomorphism of rings for the affine morphism $f:U\rightarrow V$ with $\Spec\: B = U\subset X$ and $\Spec\:A = V\subset Y$ such that $f(U)\subset V$.
    \item[] Other notations are defined in the Appendices.
\end{itemize}

\newpage
\section{Topological Covering Spaces and Galois Theory}

The fundamental group is a core object from algebraic topology; it is an algebraic invariant for topological spaces. 
It is usually defined via homotopy of closed paths,
and many powerful tools can be used to compute them, such as the \textit{Seifert-Van Kampen theorem} or \textit{deformation retracts}.
However, in this thesis we will focus on a method which lies within the theory of covering spaces.
In this chapter, we describe how fundamental groups arise as automorphism groups of certain covering spaces and present theorems classifying these covering spaces.
Interestingly enough, the theory of covering spaces is in some sense parallel to the theory of separable field extension; it has a certain ‘‘Galois flavour" to it.\\

\subsection{The Fundamental Group in Topology}

\noindent We start by defining what homotopies and homotopy groups are.
Let $X$ be a topological space, two paths on $X$ are said to be \textit{homotopic} if it is possible to continuously deform one into another. 
Formally:
\begin{definition}
    A \textbf{homotopy of paths} in $X$ starting at $x_0$ and ending at $x_1$ is a family of paths $\gamma_t:[0,1]\rightarrow X$ indexed by $I$ such that 
    \begin{itemize}
        \item[$a)$] the endpoints are fixed : $\gamma_t(0) = x_0$,  $\gamma_t(1)=x_1$ , and 
        \item[$b)$] the map $H:I\times[0,1]\rightarrow X$ ; $(t,s)\mapsto \gamma_t(s)$ is continuous.
    \end{itemize}
    Two paths $f$ and $g$ are said to be \textbf{homotopic} if there is a homotopy of paths between them and we write $f\simeq g$.
\end{definition}

\noindent It is a straightforward verification to see that the relation ``$\gamma_1$ is homotopic to $\gamma_2$" is an equivalence relation.
Let's denote $\Sigma(X,x)$ the set of loops in $X$ based at $x$, now path-homotopy of loops defines an equivalence relation on $\Sigma(X,x)$.

\begin{definition}
    A continuous map $f:X\rightarrow Y$ is called a \textbf{homotopy equivalence} if there exists a continuous map $g:Y\rightarrow X$ such that $g\circ f \simeq  \id_X$ and $f\circ g\simeq \id_Y$.  
    In this case, the spaces $X$ and $Y$ are said to be \textbf{homotopy equivalent}.
\end{definition}

\begin{definition}
    The \textbf{fundamental group} of $X$ based at $x$, denoted by $\pi_1(X,x)$, is the set of homotopic classes of loops based at $x$. 
\end{definition}

\noindent The fundamental group is also often called the first homotopy group. 
The subscript $1$ in the fundamental group notation hints at the existence of higher homotopy groups $\pi_n(X,x)$, which are similarly considered as sets of homotopy classes based on maps $f:(S^n,s)\rightarrow(X,x)$. 
These higher homotopy groups ‘‘detect" higher-dimensional ‘‘holes" and the situation becomes much more complicated. 
In this thesis, we focus on $\pi_1(X,x)$, which already gives us a very rich theory.\\

\noindent The set of loops equipped with the homotopy equivalence relation is indeed a group with its operation being the concatenation of paths denoted by $\bullet$ and its inverses correspond to reversing the direction of paths. Given two paths $\gamma_1,\gamma_2 : [0,1] \rightarrow X$ with $\gamma_1(1) = \gamma_2(0)$ we define their concatenation $\gamma_1\bullet\gamma_2 : [0,1]\rightarrow X$ by 
\begin{equation*}
  \gamma_1\bullet\gamma_2 =
    \begin{cases}
      \gamma_1(2t) & \text{for $t\in [0,\frac{1}{2}]$}\\
      \gamma_2(2t-1) & \text{for $t\in [\frac{1}{2}, 1]$}.
    \end{cases}       
\end{equation*}

\noindent We say that a topological space is \textbf{simply-connected} if and only if its fundamental group is trivial.
\begin{remark}
    Let $f:(X,x)\rightarrow (Y,y)$ be a continuous map between pointed topological spaces such that $f(x)=y $, then we denote by $f_*:\pi_1(X,x)\rightarrow\pi_1(Y,y)$ the induced homomorphism, obtained by composition, $f_*([\gamma]) := [f \circ\gamma]$ for any $[\gamma]\in \pi_1(X,x)$.
\end{remark}
\noindent As expected, the fundamental group is a topological invariant :
\begin{proposition}
    Homeomorphic spaces have isomorphic fundamental groups.
    If $f:X\rightarrow Y$ is a homeomorphism, then $f_ *:\pi_1(X,x)\rightarrow \pi_1(Y,f(x))$ is an isomorphism.
\end{proposition}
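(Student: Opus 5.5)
The plan is to use functoriality of the induced homomorphism $f_*$ from the preceding remark. First I will check that $f_*$ is well defined and is a group homomorphism. If $H:I\times[0,1]\rightarrow X$ is a homotopy of loops from $\gamma$ to $\gamma'$ based at $x$, then $f\circ H$ is continuous. Its endpoints are fixed at $f(x)=y$, so it is a homotopy from $f\circ\gamma$ to $f\circ\gamma'$, and hence $[f\circ\gamma]$ depends only on $[\gamma]$. Compatibility with concatenation is immediate from the piecewise formula: $f\circ(\gamma_1\bullet\gamma_2)=(f\circ\gamma_1)\bullet(f\circ\gamma_2)$ holds pointwise, not merely up to homotopy. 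So $f_*([\gamma_1][\gamma_2])=f_*[\gamma_1]\,f_*[\gamma_2]$.

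Next I will record the two functoriality identities. For pointed maps $f:(X,x)\rightarrow(Y,y)$ and $g:(Y,y)\rightarrow(Z,z)$ we have $(g\circ f)_*=g_*\circ f_*$, because $(g\circ f)\circ\gamma=g\circ(f\circ\gamma)$. We also have $(\id_X)_*=\id_{\pi_1(X,x)}$. Both hold on representatives, so they hold on classes.

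Finally, let $f:X\rightarrow Y$ be a homeomorphism with continuous inverse $g=f^{-1}$, and put $y=f(x)$. Then $g(y)=x$, so $g$ is a pointed map $(Y,y)\rightarrow(X,x)$. Functoriality gives $g_*\circ f_*=(g\circ f)_*=(\id_X)_*=\id$, and likewise $f_*\circ g_*=\id$. Thus $f_*$ is a bijective homomorphism, i.e.\ an isomorphism, with inverse $g_*$. The first sentence of the statement follows.

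There is no real obstacle here. The only point needing care is the basepoint: the inverse homeomorphism must send $f(x)$ back to $x$, which it does automatically. This is what lets us use strict functoriality on pointed maps instead of the basepoint-change isomorphisms one would need for a general homotopy equivalence.
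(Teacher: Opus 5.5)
Your proof is correct and is essentially the paper's argument. Both rest on functoriality, $(f^{-1})_*\circ f_*=(f^{-1}\circ f)_*=(\id_X)_*$ and symmetrically $f_*\circ(f^{-1})_*=\id$; you additionally check that $f_*$ is a well-defined homomorphism and that these functoriality identities hold, which the paper takes for granted from its remark defining $f_*$.
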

\begin{proof}
    If $f$ is a homeomorphism, then $(f^{-1})_*\circ f_* = (f^{-1}\circ f)_* = (id_X)_* = id_{\pi_1(X,x)}$. Similarly, $f_* \circ (f^{-1})_*$ is the identity on $\pi_1(Y,f(x))$.
\end{proof}
\noindent Similarly, one proves the following.
\begin{corollary}
    Two homotopic equivalent spaces have the same fundamental group.
    If $f:X\rightarrow Y$ is a homotopy equivalence, then $f_ *:\pi_1(X,x)\rightarrow \pi_1(Y,f(x))$ is an isomorphism.
\end{corollary}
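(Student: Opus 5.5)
The idea is to imitate the proof of the preceding Proposition, with equalities of maps replaced by free homotopies. A free homotopy changes the induced map on $\pi_1$ only up to a change-of-basepoint isomorphism. Let $g:Y\rightarrow X$ satisfy $g\circ f\simeq \id_X$ and $f\circ g\simeq \id_Y$. The homotopies here are free homotopies of maps, not path homotopies with fixed endpoints. Functoriality $(h\circ k)_*=h_*\circ k_*$ for composable pointed maps follows directly from the definition $f_*([\gamma])=[f\circ\gamma]$. It gives
\[
(g\circ f)_* = g_*\circ f_* : \pi_1(X,x)\rightarrow \pi_1(X,g(f(x))),\qquad (f\circ g)_* = f_*\circ g_*:\pi_1(Y,f(x))\rightarrow \pi_1(Y,f(g(f(x)))).
\]

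The key step, and the main obstacle, is a lemma. Let $\varphi_0,\varphi_1:X\rightarrow Z$ be maps and $H:X\times[0,1]\rightarrow Z$ a homotopy between them. Let $\beta(t)=H(x,t)$ be the track of the basepoint. The lemma states that $(\varphi_1)_* = \beta_\#\circ(\varphi_0)_*$, where $\beta_\#[\delta]=[\bar\beta\bullet\delta\bullet\beta]$ is the change-of-basepoint map. The map $\beta_\#$ is an isomorphism $\pi_1(Z,\varphi_0(x))\rightarrow\pi_1(Z,\varphi_1(x))$, with inverse $\bar\beta_\#$. This follows from associativity and inverse properties of concatenation up to homotopy, which the paper already uses to make $\pi_1$ a group.

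To prove the lemma, fix a loop $\gamma$ at $x$ and consider the square $G(s,t)=H(\gamma(s),t)$ on $[0,1]^2$. Its bottom edge is $\varphi_0\circ\gamma$, its top edge is $\varphi_1\circ\gamma$, and both side edges are $\beta$. The square is contractible. So the boundary path "left side reversed, bottom, right side" is homotopic rel endpoints to the top edge, via straight-line homotopy of paths in $[0,1]^2$ composed with $G$. This gives $[\bar\beta\bullet(\varphi_0\circ\gamma)\bullet\beta]=[\varphi_1\circ\gamma]$. Keeping track of reparametrisations is the only delicate point. I would handle it by proving once that two paths in a convex set with the same endpoints are path-homotopic, and that this property is preserved under composition with continuous maps.

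Applying the lemma to $g\circ f\simeq\id_X$ shows that $g_*\circ f_* = \beta_\#^{-1}$ up to orientation conventions. The right-hand side is an isomorphism, so $f_*$ is injective and $g_*:\pi_1(Y,f(x))\rightarrow\pi_1(X,gf(x))$ is surjective. Applying the lemma to $f\circ g\simeq \id_Y$ at the basepoint $f(x)$ shows that $f_*\circ g_*$ is an isomorphism, where this $f_*$ is based at $gf(x)$. Hence $g_*$ is injective, so $g_*$ is bijective. Then $f_* = (g_*)^{-1}\circ(g_*\circ f_*)$ is a composite of isomorphisms, so $f_*$ is an isomorphism. The first sentence of the statement follows immediately; note that for path-connected spaces the choice of basepoint is irrelevant, by the same $\beta_\#$ isomorphisms.
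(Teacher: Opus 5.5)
Your proof is correct, and it is more careful than the paper's. The paper's proof is only the sentence ``Similarly, one proves the following,'' meaning: imitate $(f^{-1})_*\circ f_*=(f^{-1}\circ f)_*=(\id_X)_*$.

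Taken literally, that imitation does not go through for a homotopy equivalence. The map $g\circ f$ is only freely homotopic to $\id_X$, so $(g\circ f)_*$ lands in $\pi_1(X,gf(x))$ while $(\id_X)_*$ lands in $\pi_1(X,x)$. Even when $gf(x)=x$, $(g\circ f)_*$ is conjugation by the track loop, not the identity. For the same reason $g_*$ is not a two-sided inverse of $f_*$: the composite $f_*\circ g_*$ uses $f_*$ based at $gf(x)$ rather than at $x$.

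Your argument supplies exactly the two missing pieces:
\begin{itemize}
\item the change-of-basepoint lemma $(\varphi_1)_*=\beta_\#\circ(\varphi_0)_*$, proved via the square $G(s,t)=H(\gamma(s),t)$;
\item the bookkeeping that gets injectivity of $f_*$ and bijectivity of $g_*:\pi_1(Y,f(x))\rightarrow\pi_1(X,gf(x))$ from the two composites, keeping track of the two different basepoints.
\end{itemize}
This is the standard argument (Hatcher, Proposition~1.18 with Lemma~1.19).

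The paper's shortcut is literally valid only for pointed homotopy equivalences, where the homotopies fix the basepoint and $g_*$ is an honest inverse of $f_*$. Your route covers arbitrary free homotopies. The cost is the extra lemma, and that bijectivity of $f_*$ is obtained indirectly rather than by exhibiting an inverse.
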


\noindent One thing to notice is that if $X$ is not path-connected, the fundamental groups based at points in different path-connected components don't have any relation to each other. 
Therefore, it is customary to consider path-connected topological spaces when studying fundamental groups.
The next proposition tells us that in a path-connected space, the fundamental group is insensitive to changes of base points.
Thus, when this is clear, we will sometimes omit to write the base point.
\begin{proposition}
\label{fund_grou_base_pt}
    If $X$ is a path-connected topological space, then $\pi_1(X, x_0)$ and $\pi_1(X, x_1)$ are isomorphic for any $x_0,x_1\in X$.
\end{proposition}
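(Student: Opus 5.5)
Since $X$ is path-connected, I will choose a path $h:[0,1]\rightarrow X$ with $h(0)=x_0$ and $h(1)=x_1$, and write $\bar h(s)=h(1-s)$ for the reversed path. The isomorphism will be the change-of-basepoint map
\begin{equation*}
\beta_h:\pi_1(X,x_1)\rightarrow \pi_1(X,x_0),\qquad [\gamma]\mapsto [h\bullet\gamma\bullet\bar h].
\end{equation*}
A loop at $x_1$ is conjugated by $h$ into a loop at $x_0$: one runs along $h$ to $x_1$, traverses $\gamma$, and returns along $\bar h$. Since concatenation is not strictly associative, I will first fix a bracketing, say $h\bullet(\gamma\bullet\bar h)$, and note that any other bracketing gives a homotopic loop.

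The plan has four steps. First, I check that $\beta_h$ is well defined. If $H$ is a homotopy of paths from $\gamma$ to $\gamma'$, then performing $h$ on the first quarter, $H$ on the middle portion and $\bar h$ on the last part of the parameter interval gives a homotopy of paths from $h\bullet(\gamma\bullet\bar h)$ to $h\bullet(\gamma'\bullet\bar h)$. Continuity follows from the pasting lemma, because the pieces agree on the overlaps $\{1/2\}$ and $\{3/4\}$.

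Second, I show that $\beta_h$ is a homomorphism, that is,
\begin{equation*}
\beta_h([\gamma][\delta]) = [h\bullet\gamma\bullet\delta\bullet\bar h] = [h\bullet\gamma\bullet\bar h\bullet h\bullet\delta\bullet\bar h] = \beta_h[\gamma]\,\beta_h[\delta].
\end{equation*}
The middle equality uses $\bar h\bullet h\simeq c_{x_1}$, the constant path at $x_1$, together with $\gamma\bullet c_{x_1}\simeq\gamma$.

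Third, I show that $\beta_h$ is bijective. The map $\beta_{\bar h}:\pi_1(X,x_0)\rightarrow\pi_1(X,x_1)$ is inverse to it, since
\begin{equation*}
\beta_{\bar h}\beta_h[\gamma]=[\bar h\bullet h\bullet\gamma\bullet\bar h\bullet h]=[\gamma],
\end{equation*}
and symmetrically $\beta_h\beta_{\bar h}[\gamma]=[\gamma]$ for loops $\gamma$ at $x_0$.

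The main obstacle is that the paper asserted the group structure on $\pi_1$ without proof. Steps two and three rely on the groupoid identities for paths with different endpoints, not only for loops: associativity up to homotopy, $c\bullet\gamma\simeq\gamma\simeq\gamma\bullet c$, $h\bullet\bar h\simeq c_{x_0}$ and $\bar h\bullet h\simeq c_{x_1}$, and compatibility of $\bullet$ with homotopy. I would prove these once, uniformly, by the reparametrisation trick. If $\varphi:[0,1]\rightarrow[0,1]$ is continuous with $\varphi(0)=0$ and $\varphi(1)=1$, then $(t,s)\mapsto f((1-t)\varphi(s)+ts)$ is a homotopy of paths from $f\circ\varphi$ to $f$. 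Associativity and the unit laws are then instances of this with explicit piecewise-linear $\varphi$. For the inverse law I use the homotopy $(t,s)\mapsto h(2s(1-t))$ for $s\le 1/2$ and $h((2-2s)(1-t))$ for $s\ge 1/2$, which contracts $h\bullet\bar h$ to the constant path at $x_0$. Once these lemmas are in place, the remaining computations are formal.
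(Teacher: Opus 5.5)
Your proposal is correct and follows the same route as the paper, which simply asserts that conjugation by a path $\gamma$ from $x_0$ to $x_1$, $\delta\mapsto\gamma\bullet\delta\bullet\gamma^{-1}$, is an isomorphism (the same map as your $\beta_h$); you supply the omitted verifications (well-definedness, homomorphism property, the inverse $\beta_{\bar h}$, and the groupoid identities via reparametrisation), and all of them check out. One cosmetic slip: with the bracketing $h\bullet(\gamma\bullet\bar h)$ the path $h$ occupies the first half of $[0,1]$, not the first quarter, which is what the breakpoints $1/2$ and $3/4$ you list require.
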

\begin{proof}
    Since two points $x_0$ and $x_1$ may be joined by a path $\gamma$ starting at $x_0$ and ending at $x_1$, $\pi_1(X,x_0)$ is non-canonically isomorphic to $\pi_1(X,x_1)$ via $\delta \mapsto \gamma\bullet \delta \bullet \gamma^{-1}$.
\end{proof}

\begin{example}
Of course, since $\mathbb{R}^n$ is contractible, any loop is homotopic to the constant loop, so $\pi_1(\mathbb{R}^n,x) = \{1\}$ for any base point $x\in\mathbb{R}^n$.
\end{example}

\noindent Computing non-trivial fundamental groups such as $\pi_1(S^1)$ is already harder. An alternative is to use the theory of covering spaces and lifting of paths.
The calculation of $\pi_1(S^1)$ however, has many applications. It allows one to give a proof of the \textit{Fundamental Theorem of Algebra}, to show the \textit{Brouwer's fixed-point theorem in dimension $2$} or even to prove the \textit{Borsuk-Ulam theorem in dimension $2$} (these theorems with their proofs can be found in \cite{Hatcher} as Theorem $1.8$, $1.9$ and $1.10$ respectively).\\

\noindent In the language of categories, the fundamental group is a covariant functor which assigns to each pointed topological space $(X,x)$ its fundamental group based at $x$, and to each pointed continuous map $f:(X,x)\rightarrow(X',x')$ with $f(x)=x'$ its induced homomorphism $f_{*}:\pi_1(X,x)\rightarrow \pi_1(X',f(x))$.\\

\subsection{Covering Spaces and their Galois structure}
\label{fund_grp_cov_spaces}

In this section, we introduce the theory of covering spaces. 
Then we show that it exhibits a structure that behaves similarly to that of field extensions. 
Later on, we will prove the existence of a so-called universal covering that is deeply related to the fundamental group.

\label{covering_spaces}
\begin{definition}
\label{def_cover}
    A \textbf{covering} of a topological space $X$ consists of a topological space $Y$ and a map $p:Y\rightarrow X$ with the following property : for any $x\in X$ there exists a neighbourhood $U$ of $x$ such that 
    $$p^{-1}(U) = \bigsqcup_{y\in p^{-1}(x)} V_y  \text{ and } p\vert_{V_y}:V_y\rightarrow U \text{ is a homeomorphism}$$
    for each $y\in p^{-1}$. The sets $V_y$ are called sheets of the covering. We define a morphism between the covers $p:Y\rightarrow X$ and $q:Z\rightarrow X$ to be a continuous map $f:Y\rightarrow Z$ making the following diagram commute,
 \[
    \begin{tikzcd}[column sep=small]
        Y \arrow[rd,"p"'] \arrow[rr,"f"]   &   &Z \arrow[ld,"q"]\\
          & X      
    \end{tikzcd}
    \]
\end{definition}

\noindent This makes a category over the topological space $X$, whose objects are covers of $X$ and morphisms are morphisms of covers of $X$, called the category of covers of $X$.\\

\begin{remark}
\label{connected_constant_sheets}
    In the following, we will assume that $X$ is (path)-connected, otherwise we can simply consider coverings of connected components individually. The definition yields that each fibre $p^{-1}(x)$ is a discrete set for any $x\in X$, since each sheet $V_y$ contains a unique point of $p^{-1}(x)$, namely $y$. If this set is finite for any $x\in X$, then the number of sheets $\#p^{-1}(x)$ is constant over $U$ and (since $X$ is connected) constant over $X$.\\
    \noindent For the purposes of our thesis, we further assume that all our topological spaces are locally path-connected and semi-locally simply-connected. 
    These assumption can be understood as the technical bedrock for covering space theory to hold.
\end{remark} 
\noindent However, as our first example shows, we do not require that the above covering space must be connected :

\begin{examples}
\label{example_covering}\textbf{ }
\begin{itemize}
    \item[$a)$] Let $I$ be a non-empty discrete topological space and form the topological product $X\times I$. The projection on the first component $p:X\times I\rightarrow X$ turns $X\times I$ into a cover of $X$ with fibre $I$. We call this the \textbf{trivial cover}.

    \item[$b)$] Consider the circle $S^1$, with a map $p:\mathbb{R}\rightarrow S^1$ defined by $p(t)=e^{2\pi i t}$. This map is clearly a connected covering of $S^1$, locally $S^1$ is homeomorphic to $\mathbb{R}$, and it is even an infinitely sheeted covering. An insightful picture is to think about the covering as an infinite helix above the circle.
\end{itemize} 
\end{examples}
\noindent In this sense, every covering space can be characterised as being locally trivial. And if $X$ is connected, the fibres of $p$ are all homeomorphic to the same discrete space $I$.\\

\noindent Of course the composition of two coverings maps is still a covering map where the number of sheets might be greater. We also have the following proposition about composition of coverings. 
We say that a cover is \textit{connected} is the covering space is connected.

\begin{proposition}
\label{composition_covers}
    Let $q:Z \rightarrow X$ be a connected cover and $f:Y \rightarrow Z$ be a continuous map. If the composition $q\circ f:Y \rightarrow X$ is a cover, then so is $f:Y \rightarrow X$.
\end{proposition}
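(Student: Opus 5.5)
The statement should be read as saying that $f:Y\rightarrow Z$ is a cover; the composite $q\circ f:Y\rightarrow X$ is a cover by hypothesis. The plan is to show that every point of $Z$ has an evenly covered neighbourhood for $f$. I will use the standing assumption of Remark \ref{connected_constant_sheets} that spaces are locally path-connected. Connectedness of $Z$ will only be needed at the end, to get surjectivity of $f$, so that the fibres are non-empty as in the notion of cover used in Examples \ref{example_covering}.

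\textbf{Step 1: a common trivialising neighbourhood.} Fix $z\in Z$ and put $x=q(z)$. Choose an open neighbourhood of $x$ that is evenly covered both by $q$ and by $q\circ f$; the intersection of the two given neighbourhoods works. Shrink it to the path-connected component $U$ containing $x$, using local path-connectedness of $X$. Then $U$ is still evenly covered by both maps, and we obtain
\[
q^{-1}(U)=\bigsqcup_j V_j, \qquad (q\circ f)^{-1}(U)=\bigsqcup_i W_i ,
\]
where $q|_{V_j}:V_j\rightarrow U$ and $(q\circ f)|_{W_i}:W_i\rightarrow U$ are homeomorphisms. In particular every $V_j$ and every $W_i$ is connected, and $z$ lies in exactly one sheet, say $V_{j_0}$.

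\textbf{Step 2: sheets of $q\circ f$ map onto sheets of $q$.} For each $i$, the set $f(W_i)$ is connected and contained in $q^{-1}(U)=\bigsqcup_j V_j$. Since the $V_j$ are disjoint open sets, $f(W_i)$ lies in a single sheet $V_{j(i)}$. On $W_i$ we then have
\[
f|_{W_i}=(q|_{V_{j(i)}})^{-1}\circ (q\circ f)|_{W_i},
\]
a composite of homeomorphisms, so $f|_{W_i}:W_i\rightarrow V_{j(i)}$ is a homeomorphism. Moreover $f^{-1}(V_j)\subset (q\circ f)^{-1}(U)$, so
\[
f^{-1}(V_{j_0})=\bigsqcup_{i\,:\, j(i)=j_0} W_i .
\]
Each piece in this union maps homeomorphically onto $V_{j_0}$. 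Hence $V_{j_0}$ is an evenly covered neighbourhood of $z$ for $f$.

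\textbf{Step 3: surjectivity, the main obstacle.} Step 2 allows $f^{-1}(V_{j_0})$ to be empty, so it remains to show that $f$ is surjective. This is where connectedness of $Z$ enters.
\begin{itemize}
\item The image $f(Y)$ is open: by Step 2 it is a union of whole sheets $V_j$.
\item The complement $Z\setminus f(Y)$ is also open: by the same argument, any point not in the image has a whole sheet $V_j$ around it that misses $f(Y)$.
\end{itemize}
So $f(Y)$ is clopen in the connected space $Z$. It is non-empty as long as $Y\neq\emptyset$, which is implicit since $q\circ f$ is a cover with non-empty fibres. Therefore $f(Y)=Z$, and $f$ is a cover.
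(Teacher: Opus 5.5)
Your proof is correct and follows the paper's argument essentially step for step. Sheets of $q\circ f$ land in single sheets of $q$ and map homeomorphically onto them, and surjectivity follows because $f(Y)$ is clopen in the connected space $Z$. Your explicit shrinking of $U$ to a path-connected neighbourhood, using local path-connectedness, supplies the connectedness of the sheets $W_i$ that the paper's proof relies on without comment.
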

\begin{proof}
    We first show that for any open neighbourhood $V$ of $z\in Z$, its preimage by $f$ is a disjoint union of open sets such that each of them map homeomorphically onto $V$.\\
    Let $x= q(z)$ and $U$ be an open neighbourhood of $x$ such that its preimages by $q$ and $p=q\circ f$ are $q^{-1}(U) = \bigsqcup_i V_i$ and $p^{-1}(U) = \bigsqcup_j W_j$ as in the definition of covering maps.
    So, for each $W_j \subset Y$, its image $f(W_j)$ is a connected subset of $Z$ mapped by $q$ onto $U$, hence there is some $i$ such that $f(W_j)\subset V_i$. 
    Which is actually a homeomorphism since both $f(W_j)$ and $V_i$ are homeomorphically mapped onto $U$ by $q$. 
    This shows that the preimage of any $V_i$ by $f$ is a disjoint union of $W_j$'s.\\
    Now we show that $f$ is surjective. First notice that the argument above implies that $f(Y)$ is open in $Z$. Since $Z$ is connected it is enough to now prove that $Z\backslash f(Y)$ is also open.
    If $z\in Z\backslash f(Y)$ and $U$ is a neighbourhood of $x=q(z)$ as above, then the whole component $V_i$ of $q^{-1}(x)$ containing $z$ must be disjoint from $f(Y)$, otherwise the whole component would have to be contained in $f(Y)$ which is a contradiction. This shows that $f$ is surjective, concluding the proof.
\end{proof}

\noindent Powerful tools in the study of covering spaces are the following lifting lemmas:

\begin{lemma}[Lifting (Homotopy) Property]
\label{unique_lift}
    Let $p:Y\rightarrow X$ be a cover, $y$ a point of $Y$ and $x=p(y)$.
    \begin{itemize}
        \item Given a path $\gamma:[0,1]\rightarrow X$ starting at $x$, there is a unique lift $\widetilde{\gamma}:[0,1]\rightarrow Y$ with $\widetilde{\gamma}(0)=y$ and $p\circ\widetilde{\gamma}=\gamma$.
        \item For each homotopy $h:I\times [0,1]\rightarrow X$ such that $h(t,0) = x$ for all $t\in I$, there is a unique lift $\widetilde{h}:I\times [0,1]\rightarrow Y$ such that $\widetilde{h}(t,0) = y$ for all $t\in I$.
    \end{itemize}
\end{lemma}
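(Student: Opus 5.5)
The plan is the standard Lebesgue-number argument: prove existence and uniqueness of path lifts, then do the same for homotopies. Throughout, "evenly covered" means an open $U\subset X$ as in Definition \ref{def_cover}, so that $p^{-1}(U)$ is a disjoint union of sheets, each mapped homeomorphically onto $U$ by $p$.

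\emph{Uniqueness for paths.} Let $\widetilde{\gamma}_1,\widetilde{\gamma}_2$ be two lifts of $\gamma$ with $\widetilde{\gamma}_1(0)=\widetilde{\gamma}_2(0)=y$, and let $A=\{s\in[0,1] : \widetilde{\gamma}_1(s)=\widetilde{\gamma}_2(s)\}$. Then $0\in A$, and we show that $A$ is both open and closed. Take $s\in[0,1]$ and an evenly covered neighbourhood $U$ of $\gamma(s)$. By continuity there is an interval $J\ni s$ on which both lifts take values in $p^{-1}(U)$. On $J$, each lift stays in the single sheet containing its value at $s$, because $J$ is connected and the sheets are disjoint open sets. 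If the two values at $s$ agree, the lifts lie in the same sheet $V$ on $J$, and composing with the homeomorphism $(p\vert_V)^{-1}$ forces them to agree on $J$. If the values differ, they lie in different sheets on $J$ and hence disagree there. So both $A$ and its complement are open, and connectedness of $[0,1]$ gives $A=[0,1]$.

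\emph{Existence for paths.} The sets $\gamma^{-1}(U)$, with $U$ ranging over evenly covered opens, form an open cover of the compact space $[0,1]$. By the Lebesgue number lemma we can choose a subdivision $0=s_0<s_1<\dots<s_n=1$ such that each $\gamma([s_{k},s_{k+1}])$ lies in some evenly covered $U_k$. We then lift inductively. Suppose $\widetilde{\gamma}$ is defined on $[0,s_k]$. Let $V_k$ be the sheet over $U_k$ containing $\widetilde{\gamma}(s_k)$, and set $\widetilde{\gamma}=(p\vert_{V_k})^{-1}\circ\gamma$ on $[s_k,s_{k+1}]$. The two definitions agree at $s_k$, so the pasting lemma gives a continuous lift on $[0,s_{k+1}]$.

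\emph{Homotopies.} For the second point we first get the map set-theoretically: for each $t\in I$, let $\widetilde{h}(t,\cdot)$ be the unique lift of the path $h(t,\cdot)$ starting at $y$. This is forced, so uniqueness of $\widetilde{h}$ is automatic, and the whole difficulty is continuity in $t$. This is the step I expect to be the main obstacle. To handle it, fix $t_0\in I$. By compactness of $\{t_0\}\times[0,1]$ (the tube lemma plus Lebesgue number) we find a neighbourhood $N$ of $t_0$ and a subdivision $0=s_0<\dots<s_n=1$ such that each $h(N\times[s_k,s_{k+1}])$ lies in an evenly covered $U_k$. We then build a continuous lift $L$ on $N\times[0,1]$ inductively in $k$, exactly as in the path case, with one extra care. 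Assume $L$ is continuous on $N\times[0,s_k]$. The set $L(N\times\{s_k\})$ is a continuous image of $N$, which we take connected by shrinking $N$ to an interval. It therefore lies in a single sheet $V_k$ over $U_k$, and we put $L=(p\vert_{V_k})^{-1}\circ h$ on $N\times[s_k,s_{k+1}]$. The base case uses $h(t,0)=x$, so that $L(t,0)=y$ is constant. For each fixed $t\in N$, the restriction $L(t,\cdot)$ is a lift of $h(t,\cdot)$ starting at $y$, so by uniqueness for paths it equals $\widetilde{h}(t,\cdot)$. Hence $\widetilde{h}$ coincides with the continuous map $L$ near $\{t_0\}\times[0,1]$, which proves that $\widetilde{h}$ is continuous.
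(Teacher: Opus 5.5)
Your proof is correct and is essentially the standard argument the paper defers to. The paper gives no proof of its own and only cites Propositions 1.30 and 1.34, which are the homotopy lifting property and the uniqueness of lifts in Hatcher's book \cite{Hatcher}. Those proceed exactly as you do: Lebesgue-number subdivision, inductive sheet-by-sheet lifting, and an open/closed uniqueness argument.

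The only cosmetic difference is in the homotopy step. You force $L(N\times\{s_k\})$ into a single sheet by taking $N$ to be an interval, which uses that the parameter space $I$ is $[0,1]$. Hatcher instead shrinks $N$ to $N\cap(L|_{N\times\{s_k\}})^{-1}(V_k)$ at each step, which works for an arbitrary parameter space.
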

\begin{proof}
    See \cite{Hartshorne} (Prop.1.30 and 1.34).
\end{proof}

\begin{lemma}[Lifting Criterion]
\label{lift_criterion}
    Let $p:Y\rightarrow X$ be a connected covering and $f:Z\rightarrow X$ be a continuous map. Then a lift $\widetilde{f}: Z \rightarrow Y$ of $f$ exists if and only if $f_*(\pi_1(Z,z))$ is a subgroup of $p_*(\pi_1(Y,y))$ for $(y,z)\in Y\times Z$.
\end{lemma}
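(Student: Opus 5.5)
The plan is the classical one (Hatcher, Prop.~1.33). Fix base points with $f(z)=x=p(y)$. Assume, as in Remark~\ref{connected_constant_sheets}, that $Z$ is path-connected and locally path-connected, and that the lift is required to satisfy $\widetilde{f}(z)=y$.

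\emph{Necessity.} If a lift $\widetilde{f}$ exists with $p\circ\widetilde{f}=f$, then functoriality of $\pi_1$ gives
\[
f_*(\pi_1(Z,z))=p_*\widetilde{f}_*(\pi_1(Z,z))\subset p_*(\pi_1(Y,y)).
\]

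\emph{Sufficiency: construction.} Assume $f_*(\pi_1(Z,z))\subset p_*(\pi_1(Y,y))$. For $w\in Z$, choose a path $\gamma$ in $Z$ from $z$ to $w$. By the first part of Lemma~\ref{unique_lift}, the path $f\circ\gamma$ has a unique lift $\widetilde{f\circ\gamma}$ starting at $y$. Set $\widetilde{f}(w):=\widetilde{f\circ\gamma}(1)$. Then $p\circ\widetilde{f}=f$ by construction.

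\emph{Well-definedness.} Let $\gamma'$ be another path from $z$ to $w$. Then $h_0=(f\circ\gamma')\bullet(f\circ\gamma)^{-1}$ is a loop at $x$ whose class lies in $f_*(\pi_1(Z,z))$, hence in $p_*(\pi_1(Y,y))$. So $h_0$ is homotopic to $p\circ\widetilde{h}_1$ for some loop $\widetilde{h}_1$ at $y$. Lifting this homotopy with the second part of Lemma~\ref{unique_lift} (after reparametrising so the basepoint end is fixed) shows that the lift of $h_0$ starting at $y$ is itself a loop. By uniqueness of path lifts, its first half is $\widetilde{f\circ\gamma'}$, and its second half is the reverse of $\widetilde{f\circ\gamma}$. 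Hence both lifts end at the same point.

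\emph{Continuity: the main obstacle.} This is the step I expect to be hardest, because it is where local path-connectedness of $Z$ is used. Take $w\in Z$ and an open neighbourhood $W'$ of $\widetilde{f}(w)$. Choose an evenly covered neighbourhood $U$ of $f(w)$ as in Definition~\ref{def_cover}, and let $V$ be the sheet over $U$ containing $\widetilde{f}(w)$. Shrink $U$ so that $p^{-1}(U)\cap V\subset W'$, keeping $p|_V:V\to U$ a homeomorphism. By local path-connectedness, pick a path-connected open $N\ni w$ with $f(N)\subset U$. For $w'\in N$, extend the path $\gamma$ by a path $\eta$ in $N$ from $w$ to $w'$. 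The lift of $f\circ(\gamma\bullet\eta)$ is the lift of $f\circ\gamma$ followed by $(p|_V)^{-1}\circ f\circ\eta$, since the latter is a lift starting at $\widetilde{f}(w)$ and lifts are unique. So $\widetilde{f}|_N=(p|_V)^{-1}\circ f|_N$. This map is continuous and lands in $V\subset W'$.

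Finally, uniqueness of such a lift with $\widetilde{f}(z)=y$ also follows from Lemma~\ref{unique_lift}. Any lift must send $w$ to the endpoint of the lift of $f\circ\gamma$, because the composite $\widetilde{f}\circ\gamma$ is a lift of $f\circ\gamma$ starting at $y$.
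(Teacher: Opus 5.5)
Your proof is correct and is the classical argument of Hatcher's Proposition~1.33; that is all the paper provides here, since it only cites that proposition (with the reference mislabelled as Hartshorne). You also rightly make explicit the hypotheses the statement leaves implicit, namely $f(z)=p(y)$ and $Z$ path-connected and locally path-connected, without which the ``if'' direction can fail.
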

\begin{proof}
    See \cite{Hartshorne}(Prop. 1.33).
\end{proof}

\noindent The following lemma is the universal property for continuous maps with the quotient topology.
\begin{lemma}
\label{univ_ppty_lift}
    Let $X$ and $Y$ be topological spaces and $\sim$ be an equivalence relation on $Y$.
    Let $p:Y\rightarrow X$ be a continuous map such that for all $y,y'\in Y$, $y\sim y' \Rightarrow p(y)=p(y')$.
    Then there exists a unique continuous map $\overline{p}: Y/\sim \rightarrow X$ with $p=\overline{p} \circ \pi$ for $\pi:Y\rightarrow Y/\sim$ the natural projection
\end{lemma}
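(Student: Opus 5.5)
The statement is the universal property of the quotient topology, so I will define the map set-theoretically, check that it is well defined, and then get continuity from the definition of the quotient topology. Recall that $Y/\sim$ has as underlying set the equivalence classes $[y]$, with the natural projection $\pi:Y\rightarrow Y/\sim$ given by $y\mapsto [y]$. The topology on $Y/\sim$ is the finest one making $\pi$ continuous. Concretely, a subset $W\subset Y/\sim$ is open if and only if $\pi^{-1}(W)$ is open in $Y$.

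\emph{Existence and uniqueness as a map of sets.} I will set $\overline{p}([y]) := p(y)$. This is well defined because if $[y]=[y']$ then $y\sim y'$, and the hypothesis gives $p(y)=p(y')$. By construction $\overline{p}\circ\pi = p$. For uniqueness, suppose $g: Y/\sim\rightarrow X$ is any map with $g\circ\pi = p$. Since $\pi$ is surjective, every element of $Y/\sim$ has the form $\pi(y)$, and then $g(\pi(y)) = p(y) = \overline{p}(\pi(y))$, so $g=\overline{p}$. In particular uniqueness holds even among all set maps, not only among continuous ones.

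\emph{Continuity.} Let $U\subset X$ be open. I will show $\overline{p}^{-1}(U)$ is open in $Y/\sim$. By the definition of the quotient topology, it suffices to show $\pi^{-1}(\overline{p}^{-1}(U))$ is open in $Y$. This set equals $(\overline{p}\circ\pi)^{-1}(U) = p^{-1}(U)$, which is open because $p$ is continuous. Hence $\overline{p}$ is continuous.

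I do not expect any real obstacle here. The only point requiring care is to use the correct characterisation of open sets in the quotient topology, namely that openness in $Y/\sim$ is detected by preimages under $\pi$.
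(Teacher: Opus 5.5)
Your proof is correct and takes essentially the same approach as the paper: the paper simply cites Munkres (Theorem 22.2), whose proof is exactly this argument. It defines $\overline{p}$ on classes, checks well-definedness from the hypothesis, and gets continuity from $\pi^{-1}(\overline{p}^{-1}(U)) = p^{-1}(U)$ and the definition of the quotient topology, with uniqueness following from surjectivity of $\pi$ as you note.
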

\begin{proof}
    See \cite{munkrs}, Theorem $II.\:22.2$.
\end{proof}

\noindent As the above lifting properties suggest, there is an intimate connection between covering maps and fundamental groups. The key to further understanding this connection is a natural action on each fibre of a covering by the fundamental group of the base called the \textit{monodromy action}.

\begin{theorem}[The Monodromy Action]
    Let $p:Y\rightarrow X$ be a connected covering and $x\in X$. There is a transitive left action of $\pi_1(X,x)$ on the fibre $p^{-1}(x)$ called the \textbf{monodromy action}, given by $[\gamma]\cdot y = \widetilde{\gamma}_y(1)$ for $y\in p^{-1}(x)$, $[\gamma]\in\pi_1(X,x)$ and $\widetilde{\gamma}_y$ the unique lift of $\gamma$ in $Y$ starting at $y$.
\end{theorem}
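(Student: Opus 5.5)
The plan is to prove three things in order: the formula $[\gamma]\cdot y = \widetilde{\gamma}_y(1)$ is well defined, it satisfies the axioms of a left action, and the action is transitive. All three rest on the Lifting (Homotopy) Property, Lemma~\ref{unique_lift}.

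First, for well-definedness, take a loop $\gamma$ at $x$ and $y\in p^{-1}(x)$. The unique lift $\widetilde{\gamma}_y$ exists by the first part of Lemma~\ref{unique_lift}, and since $p(\widetilde{\gamma}_y(1))=\gamma(1)=x$, its endpoint lies in the fibre. We then check that this endpoint depends only on the class $[\gamma]$. Let $h$ be a homotopy of paths from $\gamma$ to $\gamma'$, with endpoints fixed at $x$. The second part of Lemma~\ref{unique_lift} gives a lift $\widetilde{h}$ starting at $y$ for every $t$. For each fixed $t$, the path $s\mapsto\widetilde{h}(t,s)$ is a lift of $\gamma_t$ starting at $y$, so by uniqueness it equals $\widetilde{(\gamma_t)}_y$. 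The map $t\mapsto \widetilde{h}(t,1)$ is continuous, and it takes values in the discrete fibre $p^{-1}(x)$ (Remark~\ref{connected_constant_sheets}). Since $I$ is connected, this map is constant, so $\widetilde{\gamma}_y(1)=\widetilde{\gamma'}_y(1)$. I expect this step to need the most care: one must keep the endpoint $h(t,1)=x$ fixed and use discreteness of the fibre correctly.

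Second, we verify the action axioms. The constant loop lifts to the constant path at $y$, so $1\cdot y=y$. For compatibility with concatenation, observe that $\widetilde{\gamma}_y\bullet\widetilde{\delta}_{\widetilde{\gamma}_y(1)}$ is a lift of $\gamma\bullet\delta$ starting at $y$, so by uniqueness of lifts it equals $\widetilde{(\gamma\bullet\delta)}_y$. Its endpoint is $[\delta]\cdot([\gamma]\cdot y)$. With concatenation read as "first $\gamma$, then $\delta$", this rule is naturally a right action. To obtain the stated left action, I would either adopt the convention that the product $[\gamma][\delta]$ in $\pi_1$ means traversing $\delta$ first, or define the action by $[\gamma]\cdot y := \widetilde{(\gamma^{-1})}_y(1)$. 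I would note this convention explicitly, since it is the only point where the statement as written needs interpretation.

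Third, for transitivity, take $y,y'\in p^{-1}(x)$. Since $Y$ is a connected covering and $X$ is locally path-connected, $Y$ is locally path-connected too, because it is locally homeomorphic to $X$. Hence $Y$ is path-connected, and there is a path $\sigma$ in $Y$ from $y$ to $y'$. Then $\gamma=p\circ\sigma$ is a loop at $x$, and by uniqueness of lifts $\widetilde{\gamma}_y=\sigma$. Therefore $[\gamma]\cdot y=\sigma(1)=y'$, which proves transitivity.
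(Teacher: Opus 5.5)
Your proposal is correct and follows essentially the same route as the paper: well-definedness from homotopy lifting, the action axioms from uniqueness of lifts, and transitivity by projecting a path in $Y$ from $y$ to $y'$ down to a loop at $x$. You are more careful than the paper in three places: the discreteness argument showing $t\mapsto\widetilde{h}(t,1)$ is constant; deducing path-connectedness of $Y$ from connectedness plus local path-connectedness; and noticing that, with the paper's own concatenation convention, the formula gives a right action (the paper's computation with $\widetilde{\delta}_z\bullet\widetilde{\gamma}_y$ silently reverses the order of concatenation), so an explicit convention or the inverse-loop definition is indeed needed to get a left action.
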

\begin{proof}
    Let any $y\in p^{-1}(x)$, then the path-lifting property \ref{unique_lift} associates to every loop $\gamma$ based at $x$ in $X$ a unique lift which is a path $\widetilde{\gamma}_y$ starting at $y$ and finishing at $\widetilde{\gamma}_y(1)\in p^{-1}(x)$.
    By lifting of homotopies we have that if $\gamma_1 \sim \gamma_2$, then $\widetilde{\gamma_1}_y \sim \widetilde{\gamma_2}_y$, so $\widetilde{\gamma}_y(1)$ depends only on the path-class of $\gamma$ ($[\gamma]\cdot y$ is now well defined).
    Let us prove that this is indeed a group action.
    Since the constant path is the unique lift of the constant loop $[c] \cdot y = \widetilde{c_y}(1) = y$.
    Suppose $\gamma$ and $\delta$ are two loops based at $x$ and let $z = [\gamma]\cdot y = \widetilde{\gamma_y}(1)$, then $[\delta]\cdot ([\gamma]\cdot y) = \widetilde{\delta_z}(1)$. 
    And $\widetilde{\delta_z}\bullet \widetilde{\gamma_y}$ being the lift of $\delta\bullet\gamma$ at $y$, we associativity as follows: 
    $$([\delta]\bullet[\gamma])\cdot y = [\delta\bullet\gamma]\cdot y = (\widetilde{\delta_z}\bullet\widetilde{\gamma_y})(1) = \widetilde{\delta_z}(1) = [\delta]\cdot ([\gamma]\cdot y)$$
    And finally since $Y$ is path-connected, any two points $y, y'$ in $p^{-1}(x)$ are joined by a path $\omega$ in $Y$. 
    Setting $\gamma = p \circ \omega$ ($\omega$ becomes a loop when its endpoints are collapsed together by $p$), we see that $\omega$ is actually a lift of $\gamma$ and therefore by monodromy action $[\gamma]\cdot y = y'$. The action is transitive on $p^{-1}(x)$.
\end{proof}

\noindent Below we prove a theorem that is a direct consequences of the lifting properties.

\begin{theorem}
\label{property_morphism_covers}
    Let $p:Y\rightarrow X$ and $q:Z\rightarrow X$ be two coverings. 
    \begin{itemize}
        \item[$a)$] If two morphism of coverings from $p$ to $q$ agree at one point of $Y$, then they are equal.
        \item[$b)$] Fixing $x\in X$, any morphism between the coverings $p$ and $q$ restrict to a $\pi_1(X,x)$-equivariant map from $p^{-1}(x)$ to $q^{-1}(x)$ (with respect to the monodromy action).
    \end{itemize}
\end{theorem}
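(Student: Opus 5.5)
Both parts follow from the unique path lifting of Lemma \ref{unique_lift}, together with the standing assumption (Remark \ref{connected_constant_sheets}) that $X$ is connected and locally path-connected. For part $a)$ I also need $Y$ to be connected, as in the connected-covering setting used throughout. Without that, $a)$ fails: two morphisms can agree on one component of $Y$ and differ on another. So I will prove $a)$ for connected $Y$, which is path-connected since it is locally path-connected, being locally homeomorphic to $X$.

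For $a)$, let $f,g : Y\rightarrow Z$ be morphisms of coverings with $f(y_0)=g(y_0)$. I will show that the set $A=\{y\in Y : f(y)=g(y)\}$ is all of $Y$ by a path-lifting argument. Take any $y\in Y$ and a path $\omega$ in $Y$ from $y_0$ to $y$, and put $\gamma = p\circ\omega$. Because $q\circ f = p = q\circ g$, both $f\circ\omega$ and $g\circ\omega$ are lifts of $\gamma$ along $q$. They start at the same point $f(y_0)=g(y_0)$. The uniqueness part of Lemma \ref{unique_lift} then gives $f\circ\omega = g\circ\omega$, and evaluating at $1$ gives $f(y)=g(y)$.

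An alternative is to show that $A$ is open and closed using the evenly covered neighbourhoods of Definition \ref{def_cover}. I will use the path version because it reuses the lemma directly.

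For $b)$, let $f$ be a morphism of coverings and fix $x\in X$. Since $q\circ f = p$, the map $f$ sends $p^{-1}(x)$ into $q^{-1}(x)$. Now let $[\gamma]\in\pi_1(X,x)$ and $y\in p^{-1}(x)$, and let $\widetilde{\gamma}_y$ be the lift of $\gamma$ to $Y$ starting at $y$. Then $f\circ\widetilde{\gamma}_y$ is a path in $Z$ with $q\circ f\circ \widetilde{\gamma}_y = p\circ\widetilde{\gamma}_y = \gamma$, starting at $f(y)$. By uniqueness of lifts it is $\widetilde{\gamma}_{f(y)}$. Evaluating at $1$ gives
\[ f([\gamma]\cdot y) = f(\widetilde{\gamma}_y(1)) = \widetilde{\gamma}_{f(y)}(1) = [\gamma]\cdot f(y), \]
which is exactly equivariance. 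This part does not need connectivity of $Y$ or $Z$: the monodromy formula still defines an action even if it is not transitive, and well-definedness on homotopy classes was already checked in the Monodromy Action theorem.

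I expect no step to be a serious obstacle. The only delicate point is the connectedness hypothesis in $a)$: I must either state it explicitly or note that the statement holds componentwise, with morphisms agreeing on every connected component of $Y$ that contains a point where they agree.
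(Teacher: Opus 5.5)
Your proof is correct and follows the paper's argument. Part $b)$ is the same lift-and-evaluate computation; for part $a)$ the paper just invokes uniqueness of lifts of $p$ through $q$, which you make explicit by applying unique path lifting to paths from $y_0$ in the path-connected space $Y$. You are also right that $a)$ requires $Y$ to be connected, a hypothesis the paper's statement leaves implicit even though its later uses (e.g.\ Corollary \ref{corollary_property_morphism_covers}) concern connected coverings.
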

\begin{proof}
    A morphism $\varphi:Y\rightarrow Z$ of covers can also be seen as a lift of $p:Y\rightarrow X$. Thus uniqueness follows from uniqueness of the lifting property proving $a)$.\\
    Suppose $\varphi:Y\rightarrow Z$ is a morphism of covers (from $p$ to $q$).
    By definition, $\varphi$ maps $p^{-1}(x)$ to $q^{-1}(x)$.
    Now, given $x\in X$, $y\in p^{-1}(x)$ and $[\gamma]\in\pi_1(X,x)$, we need to show that $\varphi([\gamma]\cdot y)= [\gamma]\cdot\varphi(y)$.
    Let $\widetilde{\gamma}_y$ be the lift of $\gamma$ to a path in $Y$ starting at $y$, and consider the path $\varphi\circ\widetilde{\gamma}_y$ in $Z$.
    Its initial point is $\varphi\circ\widetilde{\gamma}_y(0) = \varphi(y)$ and it satisfies $q\circ\varphi\circ\widetilde{\gamma}_y = p\circ\widetilde{\gamma}_y$, so $\varphi\circ\widetilde{\gamma}_y$ is the lift of $\gamma$ to $Z$ starting at $\varphi(y)$. Therefore, by the monodromy action,
    $$[\gamma]\cdot\varphi(y) = (\varphi\circ\widetilde{\gamma}_y)(1) = \varphi(\widetilde{\gamma}_y(1)) = \varphi([\gamma]\cdot y).$$
\end{proof}

\noindent A clever way of retrieving new covering spaces from existing ones, is to consider their quotient by some group action. Let us define the appropriate group action needed in this case :

\begin{definition}
\label{properly_discontinuous}
    A group $G$ acting on a topological space $X$ by homeomorphisms (here we may understand by automorphisms) is said to act \textbf{properly discontinuously} if for each $x\in X$, there exist an open neighbourhood $U$ of $x$ such that $(g\cdot U)\cap U = \emptyset$ for all $g\in G$ and $g\neq 1$.
    Properly discontinuous actions are also called covering space actions.
\end{definition}

\begin{proposition}
\label{quotient_cover}
    Let $Y$ be a connected topological space and let $G$ act on $Y$ properly discontinuously. Then the quotient map $p_G : Y \rightarrow Y/G$ is a covering.
\end{proposition}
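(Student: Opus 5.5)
The plan is to verify the local triviality condition of Definition \ref{def_cover} directly, using the neighbourhoods supplied by Definition \ref{properly_discontinuous}. I equip $Y/G$ with the quotient topology, so that $p_G$ is continuous and surjective. The first step is to show that $p_G$ is an open map. For $V\subset Y$ open we have
\[
p_G^{-1}(p_G(V)) = \bigcup_{g\in G} g\cdot V,
\]
which is a union of open sets because each $g$ acts by a homeomorphism. By the definition of the quotient topology, $p_G(V)$ is therefore open.

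Next I fix a point $[y]\in Y/G$ with representative $y$, and choose an open neighbourhood $V$ of $y$ with $(g\cdot V)\cap V=\emptyset$ for all $g\neq 1$. I set $U=p_G(V)$, which is an open neighbourhood of $[y]$ by the first step. Then $p_G^{-1}(U)=\bigcup_g g\cdot V$. This union is disjoint: if $(g\cdot V)\cap(h\cdot V)\neq\emptyset$, applying $h^{-1}$ gives $(h^{-1}g\cdot V)\cap V\neq\emptyset$, hence $g=h$. The sets $g\cdot V$ are indexed by the fibre $p_G^{-1}([y]) = G\cdot y$. Indeed, $g\cdot V$ contains exactly one point of the orbit, namely $g\cdot y$: if $h\cdot y\in g\cdot V$, then $g^{-1}h\cdot y\in V\cap (g^{-1}h\cdot V)$, forcing $h=g$. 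This also shows $g\mapsto g\cdot y$ is injective, so the sheets are in bijection with the fibre, as required by Definition \ref{def_cover}.

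The remaining step is to show that each restriction $p_G|_{g\cdot V}: g\cdot V\to U$ is a homeomorphism. It is surjective because $p_G(g\cdot V)=p_G(V)=U$. It is injective because two points $g v, g v'$ with the same image satisfy $v'=h\cdot v$ for some $h$, which puts $v'\in V\cap h\cdot V$ and forces $h=1$. It is continuous as a restriction of $p_G$, and it is open because $p_G$ is open and $g\cdot V$ is open in $Y$. Hence it is a homeomorphism.

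The only delicate point I expect is this openness of $p_G$, since continuity of the inverse rests entirely on it. The connectedness hypothesis on $Y$ is not needed for the covering property itself; it only ensures that $Y/G$ is connected, in keeping with Remark \ref{connected_constant_sheets}.
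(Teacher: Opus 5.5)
Your argument is correct and follows the same route as the paper, which takes $U=p_G(V)$ for a neighbourhood $V$ as in Definition \ref{properly_discontinuous} and simply asserts that $U$ satisfies Definition \ref{def_cover}. You supply the details the paper leaves implicit: openness of $p_G$, disjointness of the translates $g\cdot V$, the bijection of sheets with the orbit, and the fact that each restriction is a homeomorphism. Your observation that connectedness of $Y$ plays no role in the covering property is also accurate.
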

\begin{proof}
    The map $p_G$ is surjective and moreover each $x\in Y/G$ has an open neighbourhood of the form $U = p_G(V)$ with $V$ such that $V\cap (g\cdot V)=\emptyset$ for all $g\in G$ with $g\neq 1$. 
    And $U$ satisfies the conditions of Definition \ref{def_cover} so that $Y$ becomes a covering of $Y/G$.
\end{proof}

\noindent We now introduce a particular group which has the property of acting properly discontinuously on covering spaces.

\begin{definition}
    Let $p:Y\rightarrow X$ be a covering. A \textbf{deck transformation} is a homeomorphism $d:Y\rightarrow Y$ such that the following diagram commutes.
    \[
    \begin{tikzcd}[column sep=small]
                Y \arrow[rd,"p"'] \arrow[rr,"d"]   &   &Y \arrow[ld,"p"]\\
                                        & X                        
    \end{tikzcd}
    \]
    Together with composition, the set of deck transformations forms a group denoted $\Deck(p)$ or $\Aut(Y|X)$  $($the second notation may be familiar to the reader, as it resembles the notation used to describe Galois groups$)$.
\end{definition}

\noindent Notice that for any $x\in X$, the automorphism group $\Aut(Y|X)$ sends all of $p^{-1}(x)$ to itself. We further assume that $\Aut(Y|X)$ acts on $Y$ by the left.

\begin{corollary}
\label{corollary_property_morphism_covers}
    A homeomorphism $d\in \Aut(Y|X)$ of a connected covering $p:Y\rightarrow X$ having a fixed point must be trivial.
\end{corollary}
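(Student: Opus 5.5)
The plan is to deduce this directly from part $a)$ of Theorem \ref{property_morphism_covers}, viewing $d$ and $\id_Y$ as two morphisms of coverings from $p$ to itself.

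First, observe that $d\in\Aut(Y|X)$ is by definition a continuous map $Y\rightarrow Y$ with $p\circ d = p$. It is therefore a morphism of coverings from $p$ to $p$ in the sense of Definition \ref{def_cover}. The identity $\id_Y$ is trivially also such a morphism.

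Next, suppose $d$ has a fixed point $y_0\in Y$, so that $d(y_0)=y_0=\id_Y(y_0)$. Then the two morphisms of coverings $d$ and $\id_Y$ agree at the point $y_0$. By Theorem \ref{property_morphism_covers} $a)$, they must coincide, so $d=\id_Y$ is the trivial deck transformation.

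The only point needing care is where connectedness enters, since the uniqueness in Theorem \ref{property_morphism_covers} $a)$ is really uniqueness of lifts, and lifts are determined by one point only on a connected source. If one prefers to argue from Lemma \ref{unique_lift} directly, the argument goes as follows. Take any $y\in Y$ and, using that $Y$ is path-connected (connected and locally path-connected), choose a path $\omega$ from $y_0$ to $y$. Then $d\circ\omega$ and $\omega$ are both lifts of $p\circ\omega$ starting at $y_0$, so by uniqueness of path lifting they agree, and in particular $d(y)=y$. I expect no real obstacle here. The only subtlety is making sure the connectedness hypothesis of the covering is what guarantees path-connectedness of $Y$, which is why the hypothesis appears in the statement.
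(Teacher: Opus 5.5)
Your proof is correct and is exactly the paper's argument: apply Theorem \ref{property_morphism_covers} $a)$ to the two covering morphisms $d$ and $\id_Y$, which agree at the fixed point. Your supplementary path-lifting argument is a worthwhile addition, because it makes explicit that the uniqueness in that theorem rests on connectedness (hence path-connectedness) of $Y$, a hypothesis the paper's statement of part $a)$ leaves implicit.
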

\begin{proof}
    This is immediate from Theorem \ref{property_morphism_covers} if we take one (auto)morphism of covers to be the deck transformation $d$ and another one to be the identity.
\end{proof}

\begin{proposition}
    If $p:Y\rightarrow X$ is a connected covering, the action of $\Aut(Y|X)$ on $Y$ is properly discontinuous.
\end{proposition}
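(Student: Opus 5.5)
Fix $y\in Y$ and set $x=p(y)$. By Definition \ref{def_cover} there is an open neighbourhood $U$ of $x$ with $p^{-1}(U)=\bigsqcup_{y'\in p^{-1}(x)}V_{y'}$, where each sheet $V_{y'}$ maps homeomorphically onto $U$ under $p$. We take the sheet $V:=V_y$ as the neighbourhood required in Definition \ref{properly_discontinuous}. The task is then to show that $(d\cdot V)\cap V=\emptyset$ for every $d\in\Aut(Y|X)$ with $d\neq\id$.

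Let $d\in\Aut(Y|X)$ and suppose there is a point $v\in V$ with $d(v)\in V$. Since $p\circ d=p$, the points $v$ and $d(v)$ have the same image under $p$. Both lie in $V$, and $p|_V$ is injective, so $d(v)=v$. Thus $d$ has a fixed point. Corollary \ref{corollary_property_morphism_covers} applies because $Y$ is connected, and it gives $d=\id$.

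Taking the contrapositive, every $d\neq\id$ satisfies $d(V)\cap V=\emptyset$, which is exactly the definition of a properly discontinuous action. The argument therefore has two ingredients. The first is injectivity of $p$ on a single sheet. The second, which is where connectedness of $Y$ enters, is the rigidity statement of Corollary \ref{corollary_property_morphism_covers}, which itself rests on the uniqueness of lifts in Lemma \ref{unique_lift}. There is no real obstacle once that corollary is available. The only point needing care is that we use one and the same sheet $V$ for all $d$, rather than letting the neighbourhood depend on $d$.
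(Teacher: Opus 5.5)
Your proof is correct and takes essentially the same approach as the paper: choose the sheet over an evenly covered neighbourhood that contains $y$, and use Corollary \ref{corollary_property_morphism_covers} to rule out a nontrivial deck transformation that sends that sheet into itself. Your pointwise argument via injectivity of $p$ on the sheet is slightly leaner than the paper's, which takes $U$ connected so that $d$ maps the sheet onto another whole sheet.
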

\begin{proof}
    Let $y\in Y$ and set $x = p(y)$. Let $U$ be a connected open neighbourhood of $x$ such that $p^{-1}(U)$ is a disjoint union of open sets $V_i$ as in the Definition \ref{def_cover}, where one of these opens, let's say $V_i$, contains $y$. 
    For $d\in \Aut(Y|X)$ non-trivial, $d$ maps $V_i$ isomorphically onto some $V_j$ by definition of a deck transformation. Since $Y$ is connected, we can apply Corollary \ref{corollary_property_morphism_covers} and since $d\neq id_Y$, we must have $i\neq j$. This satisfies the condition of properly discontinuity.
\end{proof}

\noindent One more thing can be said about the group $G$, acting properly discontinuously on the connected topological spaces $Y$ and the associate quotient map $p_G:Y\rightarrow Y/G$ from Proposition \ref{quotient_cover}:

\begin{proposition}
\label{G_is_aut_group}
    If $G$ is a group acting properly discontinuously on a connected space $Y$, the automorphism group of the covering $p_G:Y\rightarrow Y/G$ is precisely $G$.
\end{proposition}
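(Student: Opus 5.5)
We have to prove that $\Aut(Y|Y/G)$ coincides with $G$, where $G$ is viewed inside the homeomorphism group of $Y$. We show both inclusions.

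First, $G\subseteq \Aut(Y|Y/G)$. Each $g\in G$ acts on $Y$ by a homeomorphism $y\mapsto g\cdot y$. Since $p_G(g\cdot y)=p_G(y)$ by definition of the orbit space, $g$ commutes with $p_G$. Hence $g$ is a deck transformation of the covering $p_G$, which is a covering by Proposition \ref{quotient_cover}. The resulting map $G\to\Aut(Y|Y/G)$ is a group homomorphism. It is injective: if $g$ acts as the identity, then $g\cdot U\cap U=U\neq\emptyset$ for the neighbourhood $U$ in Definition \ref{properly_discontinuous}, which forces $g=1$.

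Next, $\Aut(Y|Y/G)\subseteq G$. Let $d\in\Aut(Y|Y/G)$ and fix $y\in Y$. Since $p_G(d(y))=p_G(y)$, the points $d(y)$ and $y$ lie in the same $G$-orbit, so $d(y)=g\cdot y$ for some $g\in G$. Now $d$ and the deck transformation $g$ are two morphisms of coverings from $p_G$ to itself that agree at the point $y$. By Theorem \ref{property_morphism_covers}$(a)$ they are equal; equivalently, by Corollary \ref{corollary_property_morphism_covers} applied to $g^{-1}\circ d$, which fixes $y$, we get $d=g$. Connectedness of $Y$ is exactly the hypothesis that makes this uniqueness applicable.

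The main subtlety is this second step, namely the reliance on uniqueness of lifts. Theorem \ref{property_morphism_covers}$(a)$ needs the source of the morphisms to be connected, and ideally locally path-connected, so that the lifting argument runs. This is where the standing assumptions of Remark \ref{connected_constant_sheets} and the hypothesis that $Y$ is connected are used. If one prefers to avoid path lifting, a direct argument works instead. The set $\{z\in Y : d(z)=g\cdot z\}$ is closed, since it is an equaliser into a Hausdorff-free setting handled via sheets. It is also open, because on an evenly covered sheet both maps are the unique section-compatible homeomorphisms onto the same sheet. Being nonempty, clopen, and contained in the connected space $Y$, this set is all of $Y$.
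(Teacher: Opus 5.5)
Your argument is correct and is essentially the paper's: $G\subseteq\Aut(Y|Y/G)$ because $p_G(g\cdot y)=p_G(y)$ by definition of the orbit space. Conversely, a deck transformation $d$ agrees with some $g\in G$ at one point, so $g^{-1}\circ d$ has a fixed point and Corollary~\ref{corollary_property_morphism_covers} gives $d=g$; your explicit injectivity check for $G\to\Aut(Y|Y/G)$ via proper discontinuity is a small addition the paper leaves implicit. The closing clopen alternative is not needed, and its closedness justification is garbled as written. The actual reason is that if $d(z)\neq g\cdot z$, these two points of the same fibre lie in distinct sheets over an evenly covered neighbourhood of $p_G(z)$, and by continuity points near $z$ are sent into those distinct sheets.
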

\begin{proof}
    Since $G$ acts by homeomorphism on $Y$ and $p$ sends any $y\in Y$ to $p(y)\in Y/G$, then $p(y) = p(g\cdot y)$ by properly discontinuity of the action of $G$ onto $Y$, therefore $G$ is a subgroup of $\Aut(Y|(Y/G))$. 
    Now, let $\phi\in\Aut(Y, (Y/G))$, and $y\in Y$. Since the fibres of $p_G$ are exactly the orbits of $G$ (we quotient $Y$ by it), we may find $g\in G$ such that $\phi(y) = g\cdot y$. 
    Now applying Corollary \ref{corollary_property_morphism_covers} to the homeomorphism $\phi\circ g^{-1}$, we get $g=\phi$.
\end{proof}

\noindent From the previous propositions, it is now obvious that for a connected covering $p:Y\rightarrow X$, 
we have the following decomposition $p = \overline{p} \circ p_{\Aut(Y|X)} : Y\xrightarrow{p_{\Aut(Y|X)}} Y/ \Aut(Y|X) \xrightarrow{\overline{p}} X$. Where $p_{\Aut(Y|X)}$ and $\overline{p}$ are continuous maps and $p_{\Aut(Y|X)}$ is a now a \textit{subcovering} of $p$.

\begin{definition}
    We say that a covering $p:Y\rightarrow X$ is \textbf{Galois} or \textbf{Normal} if $Y$ is connected and if the map $\overline{p}$ defined above is a homeomorphism.
\end{definition}

\noindent The previous proposition already gives us an example of Galois coverings.
These types of coverings will turn out to be important in the classification of covering spaces (Theorem \ref{clasification_of_covers}), playing a similar role to Galois field extensions in the Main Theorem of Galois. 

\begin{remark}
\label{Galois_free}
    If $Y$ is a connected Galois covering of $X$, then $\Aut(Y|X)$ acts freely on $p^{-1}(x)$ for any $x\in X$.
    This follows by Corollary \ref{corollary_property_morphism_covers}, indeed for any two points $y$ and $y'$ in $p^{-1}(x)$, there is an automorphism $\phi\in \Aut(Y|X)$ mapping $p(y)\mapsto p(y')$.
    Suppose that there exists another such automorphism $\phi'$.
    Then we can construct a map $d:= \phi^{-1}\circ \phi'$, fixing $p$. 
    By the corollary it must be the identity and thus $\phi=\phi'$, 
    meaning that the action is not only transitive but also free.
\end{remark}

\noindent Finally, we will give equivalent characterisations of Galois covers. But first, we need two lemmas

\begin{lemma}
\label{induced_subgroup}
    Let $p:Y\rightarrow X$ be a connected covering and $x\in X$. For each $y\in p^{-1}(x)$, the stabiliser of $y$ under the monodromy action is $p_*(\pi_1(Y,y)) \leq \pi_1(X,x)$. And as $y$ varies over the fibre $p^{-1}(x)$, the set of induced subgroups $p_*(\pi_1(Y,y))$ is exactly one conjugacy class in $\pi_1(X,x)$.
\end{lemma}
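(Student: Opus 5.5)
The proof has two parts: identifying the stabiliser, then showing the stabilisers sweep out a single conjugacy class.

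\emph{Stabiliser.} Fix $y\in p^{-1}(x)$ and a loop $\gamma$ based at $x$. Let $\widetilde{\gamma}_y$ be the unique lift of $\gamma$ starting at $y$, given by Lemma \ref{unique_lift}.

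First suppose $[\gamma]$ lies in the stabiliser, so $[\gamma]\cdot y = y$. Then $\widetilde{\gamma}_y(1)=y$, so $\widetilde{\gamma}_y$ is a loop at $y$. Since $p\circ\widetilde{\gamma}_y=\gamma$, we get $[\gamma]=p_*[\widetilde{\gamma}_y]\in p_*(\pi_1(Y,y))$.

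Conversely, suppose $[\gamma]=p_*[\delta]$ for a loop $\delta$ at $y$. Then $p\circ\delta\simeq\gamma$ rel endpoints. The lift of $p\circ\delta$ starting at $y$ is $\delta$ itself, by uniqueness in Lemma \ref{unique_lift}, and it ends at $y$. The monodromy action depends only on the homotopy class, as shown in the proof of the Monodromy Action theorem via homotopy lifting. Hence $[\gamma]\cdot y=[p\circ\delta]\cdot y=\delta(1)=y$. This proves the first claim.

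\emph{Conjugacy class.} This follows from the general fact that stabilisers of points in one orbit of a group action are conjugate: $\mathrm{Stab}(g\cdot y)=g\,\mathrm{Stab}(y)\,g^{-1}$. The check is direct. If $h\cdot y=y$, then $(ghg^{-1})\cdot(g\cdot y)=g\cdot y$, which gives one inclusion; applying the same argument to $g^{-1}$ gives the other.

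The monodromy action on $p^{-1}(x)$ is transitive because $Y$ is connected (Monodromy Action theorem). So for any $y,y'\in p^{-1}(x)$ there is $[\gamma]$ with $y'=[\gamma]\cdot y$, and therefore
\[
p_*(\pi_1(Y,y'))=[\gamma]\,p_*(\pi_1(Y,y))\,[\gamma]^{-1}.
\]
For the converse inclusion, take any conjugate $g\,p_*(\pi_1(Y,y))\,g^{-1}$ with $g\in\pi_1(X,x)$. It equals the stabiliser of $g\cdot y\in p^{-1}(x)$. Hence the family of induced subgroups is exactly one full conjugacy class.

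The main subtlety is the convention for the order of composition. The paper's associativity computation implicitly composes $\delta\bullet\gamma$ as ``first $\gamma$, then $\delta$''. Depending on this convention, the conjugating element may be $[\gamma]$ or $[\gamma]^{-1}$, but either choice yields the same conjugacy class. I would phrase the argument purely in terms of the abstract left action to sidestep this. Beyond that, the only real input is well-definedness of the action on homotopy classes, which was already established.
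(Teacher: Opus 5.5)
Your proof is correct and follows the paper's argument. The identification of the stabiliser with $p_*(\pi_1(Y,y))$, via unique path lifting and homotopy invariance of the action, is exactly the paper's. For the conjugacy statement you check directly that $\mathrm{Stab}(g\cdot y)=g\,\mathrm{Stab}(y)\,g^{-1}$ and use transitivity, where the paper cites Theorem \ref{Cameron_1_3}; your check that every conjugate occurs as the stabiliser of a fibre point also makes the argument slightly more complete than the paper's.
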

\begin{proof}
    Let $y\in p^{-1}(x)$ be arbitrary, and suppose that $[\gamma]$ is in the stabiliser of $y$.
    This means that $\widetilde{\gamma}_y(1) = [\gamma]\cdot y = y$ (by the monodromy action), hence $\widetilde{\gamma}$ is actually a loop in $Y$ and thus represents an element in $\pi_1(Y,y)$. And by uniqueness of lifts we have that the induced map $p_* : \pi_1(Y,y) \rightarrow \pi_1(X,x$ sends $[\widetilde{\gamma}_y]$ to $[\gamma]$, so $p_*(\pi_1(Y,y))$.
    Conversely, if $[\gamma] \in p_*(\pi_1(Y,y))$, then there is a loop $\delta$ in $Y$ based at $y$ such that $p_*([\delta]) = [\gamma]$, which means that $p\circ \delta$ and $\gamma$ are homotopic equivalent.
    Denote $\gamma' = p\circ \delta$, then $\delta = \widetilde{\gamma'}_y$ by uniqueness of lifts and $[\gamma]\cdot y = [\gamma']\cdot y = \widetilde{\gamma'}_y(1) = \delta(1) = y$, which means that $[\gamma]$ is in the stabiliser of $y$.\\
    The last statement follows from Theorem \ref{Cameron_1_3} on the classification of transitive $G$-spaces by conjugacy classes subgroups of $G$ and quotients by stabilisers. As $y$ varies over $p^{-1}(x)$, the set of stabilisers of $y$ is exactly one conjugacy class in $\pi_1(X,x)$ (recall that the monodromy action is a transitive action).
\end{proof}

\noindent The following lemma is actually a criterion for deciding whether the morphism of coverings of Theorem \ref{property_morphism_covers} is actually an isomorphism.

\begin{lemma}
\label{isomorphism_conjugate}
    Let $p:Y\rightarrow X$ and $q:Z\rightarrow X$ be two connected coverings.\\
    $Y$ and $Z$ are isomorphic if and only if, for some $x\in X$ the conjugacy classes of subgroups of $\pi_1(X,x)$ induced by $q_1$ and $q_2$ are the same.
\end{lemma}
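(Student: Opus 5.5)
The plan is to prove both directions separately, using the lifting criterion (Lemma \ref{lift_criterion}) for the hard direction and Lemma \ref{induced_subgroup} to move between base points in a fibre. Here ``the conjugacy classes induced by $q_1$ and $q_2$'' is read as the conjugacy classes $\{p_*(\pi_1(Y,y)) : y\in p^{-1}(x)\}$ and $\{q_*(\pi_1(Z,z)) : z\in q^{-1}(x)\}$ in $\pi_1(X,x)$. By Lemma \ref{induced_subgroup}, each of these sets is a single conjugacy class.

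\emph{($\Rightarrow$)} Suppose $\varphi:Y\rightarrow Z$ is an isomorphism of coverings, so $q\circ\varphi = p$, and fix $y\in p^{-1}(x)$. Then $z:=\varphi(y)\in q^{-1}(x)$. Since $\varphi$ is a homeomorphism, $\varphi_*:\pi_1(Y,y)\rightarrow\pi_1(Z,z)$ is an isomorphism. Functoriality then gives
$$p_*(\pi_1(Y,y)) = q_*\varphi_*(\pi_1(Y,y)) = q_*(\pi_1(Z,z)).$$
So the two conjugacy classes share a member, and hence they coincide.

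\emph{($\Leftarrow$)} Suppose the conjugacy classes agree at some $x$, and fix $y\in p^{-1}(x)$. Then $p_*(\pi_1(Y,y))$ lies in the conjugacy class of $q$. By Lemma \ref{induced_subgroup}, that class is exactly the set of subgroups $q_*(\pi_1(Z,z))$ as $z$ runs over $q^{-1}(x)$. So we can choose $z\in q^{-1}(x)$ with $p_*(\pi_1(Y,y)) = q_*(\pi_1(Z,z))$; this use of the lemma is the key point. Now apply Lemma \ref{lift_criterion} to the connected covering $q$ and the map $p:Y\rightarrow X$. This gives a lift $\varphi:Y\rightarrow Z$ with $q\circ\varphi=p$ and $\varphi(y)=z$, which is a morphism of coverings. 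Applying the criterion symmetrically to the covering $p$ and the map $q$ gives $\psi:Z\rightarrow Y$ with $p\circ\psi = q$ and $\psi(z)=y$.

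The compositions $\psi\circ\varphi$ and $\varphi\circ\psi$ are endomorphisms of the coverings $p$ and $q$. They fix $y$ and $z$ respectively, so each agrees at one point with the identity. By Theorem \ref{property_morphism_covers}$(a)$ they are both the identity. Hence $\varphi$ is a homeomorphism and an isomorphism of coverings.

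The main obstacle is a base-point issue. Lemma \ref{lift_criterion} is stated without explicit base-point control, but the argument needs the lift to send $y$ to the chosen $z$. The standard version of the criterion (as in the cited reference) does produce a lift with prescribed basepoint, using $f_*(\pi_1(Z,z_0))\subseteq p_*(\pi_1(Y,y_0))$, so we use it in that form. The hypotheses it requires, that $Y$ and $Z$ are path-connected and locally path-connected, are covered by the standing assumptions of Remark \ref{connected_constant_sheets}. Choosing $z$ so that the subgroups are equal, rather than merely conjugate, is what makes both lifts exist simultaneously with matching basepoints.
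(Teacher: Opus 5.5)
Your proof is correct and follows the paper's argument. The ($\Leftarrow$) direction is the same: pick $z$ with $p_*(\pi_1(Y,y))=q_*(\pi_1(Z,z))$ via Lemma \ref{induced_subgroup}, lift in both directions with matched base points, and conclude the composites are identities. For ($\Rightarrow$) you use functoriality $p_*=q_*\circ\varphi_*$ where the paper compares stabilisers under the equivariant fibre bijection of Theorem \ref{property_morphism_covers}$(b)$, which is an immaterial difference. Your use of Theorem \ref{property_morphism_covers}$(a)$ for the composites is also slightly more apt than the paper's Corollary \ref{corollary_property_morphism_covers}, since $\psi\circ\varphi$ is not yet known to be a homeomorphism.
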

\begin{proof}
    First assume that $Y$ and $Z$ are isomorphic. 
    For any $x\in X$, this isomorphism restrict to a $\pi_1(X,x)$-equivariant bijection from $p^{-1}(x)$ to $q^{-1}(x)$ as in Theorem \ref{property_morphism_covers}. 
    So the sets of stabilisers of points in $p^{-1}(x)$ and in $q^{-1}(x)$ are in the same conjugacy class subgroup as Theorem \ref{Cameron_1_3} suggests. 
    And since these groups are exactly the induced subgroups $p_*(\pi_1(Y,y))$ and $q_*(\pi_1(Z,z))$ form Lemma \ref{induced_subgroup}, they must be conjugate.\\
    Now suppose that $p$ and $q$ induce the same conjugacy class of subgroups for some $x\in X$. 
    Choosing $y\in Y$ arbitrarily, by Lemma \ref{induced_subgroup}, there is some $z\in Z$ such that $p_*(\pi_1(Y,y)) = q_*(\pi_1(Z,z))$.
    By the Lifting Criterion, there exist  homomorphism of coverings $f : Z \rightarrow Y$ and $g: Y\rightarrow Z$ (since they are essentially lifts) with $f(z) = y$ and $g(y)=z$. 
    The composites $f\circ g$ and $g\circ f$ are covering homomorphism fixing $z$ and $y$ respectively, so by Corollary \ref{corollary_property_morphism_covers}, they are identities, there is an isomorphism of coverings between $p$ and $q$.
\end{proof}

\noindent Finally we get to the following characterisation,

\begin{theorem}
\label{caracterize_Galois_cov}
    Let $p:Y\rightarrow X$ be a connected covering.\\
    The following conditions are equivalent :
    \begin{itemize}
        \item[$a)$] $p$ is a Galois covering.
        \item[$b)$] $Y$ is connected and $\Aut(Y|X)$ acts transitively on the fibre $p^{-1}(x)$ for any $x\in X$.
        \item[$c)$] The stabiliser $p_*(\pi_1(Y,y))$ is a normal subgroup of $\pi_1(X,x)$ for any $y\in p^{-1}(x)$.
    \end{itemize}
\end{theorem}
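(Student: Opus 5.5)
I will prove the cycle of implications $a)\Rightarrow b)\Rightarrow c)\Rightarrow a)$. The tools are the lifting criterion (Lemma \ref{lift_criterion}), the identification of stabilisers in Lemma \ref{induced_subgroup}, and the rigidity of deck transformations in Corollary \ref{corollary_property_morphism_covers}.

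For $a)\Rightarrow b)$, assume $\overline{p}: Y/\Aut(Y|X)\rightarrow X$ is a homeomorphism. Then $\overline{p}$ is in particular injective. So two points $y,y'\in p^{-1}(x)$ have the same image in $Y/\Aut(Y|X)$, which means they lie in the same $\Aut(Y|X)$-orbit. Hence the action on each fibre is transitive, and connectedness of $Y$ is part of the definition of Galois.

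For $b)\Rightarrow c)$, fix $y,y'\in p^{-1}(x)$ and choose $d\in\Aut(Y|X)$ with $d(y)=y'$. Since $p\circ d=p$, we get
$$p_*(\pi_1(Y,y')) = p_*d_*(\pi_1(Y,y)) = p_*(\pi_1(Y,y)).$$
By Lemma \ref{induced_subgroup}, as $y'$ ranges over the fibre these subgroups run through the whole conjugacy class of $p_*(\pi_1(Y,y))$. That class therefore consists of a single subgroup, so the subgroup is normal.

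For $c)\Rightarrow a)$, the first task is transitivity. Let $H=p_*(\pi_1(Y,y))$ be normal. By Lemma \ref{induced_subgroup}, $p_*(\pi_1(Y,y'))$ is a conjugate of $H$, hence equal to $H$, for every $y'\in p^{-1}(x)$. The lifting criterion applied to $p$ itself then gives lifts $d:Y\rightarrow Y$ with $d(y)=y'$ and $d':Y\rightarrow Y$ with $d'(y')=y$. The composites $d'\circ d$ and $d\circ d'$ are endomorphisms of the covering with a fixed point. Corollary \ref{corollary_property_morphism_covers} (or part $a)$ of Theorem \ref{property_morphism_covers}) is stated for deck transformations but applies equally here, so both composites are the identity. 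Thus $d\in\Aut(Y|X)$, and the action is transitive on $p^{-1}(x)$. Since $X$ is connected and locally path-connected, the same argument works at any base point, so transitivity holds on every fibre.

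Transitivity on all fibres says exactly that the fibres of $p$ are the $\Aut(Y|X)$-orbits, so $\overline{p}$ is a continuous bijection. The main obstacle is to show that $\overline{p}$ is a homeomorphism, i.e.\ an open map. The plan is as follows. Since $p$ is a covering it is open. For $U$ open in $Y/\Aut(Y|X)$, its preimage $\pi^{-1}(U)$ is open in $Y$, and $\overline{p}(U)=p(\pi^{-1}(U))$ because $\pi$ is surjective. This set is open, so $\overline{p}$ is open and hence a homeomorphism. I would state the openness of covering maps explicitly, since it follows directly from Definition \ref{def_cover}: sheets map homeomorphically onto open sets.
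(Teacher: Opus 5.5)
Your proof is correct and follows essentially the paper's route. The paper proves $a)\Leftrightarrow b)$ by identifying $\Aut(Y|X)$-orbits with fibres, and $b)\Leftrightarrow c)$ by combining Lemma~\ref{induced_subgroup} with Lemma~\ref{isomorphism_conjugate}, whose proof is exactly the lifting-plus-rigidity argument you carry out directly with $Z=Y$ and base points $y,y'$. Your one addition is worthwhile: the paper only checks that $\overline{p}$ is bijective, whereas you also show it is open, using openness of $p$ and surjectivity of the quotient map. That gives the homeomorphism the definition of a Galois covering actually requires.
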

\begin{proof}
    $a) \Leftrightarrow b)$ ; 
    by definition, the underlying set of $Y/\Aut(Y|X)$ is the $\Aut(Y|X)$-orbits of $Y$. 
    So the map $\overline{p}$ is one to one precisely when each such orbits is equal to a whole fibre of $p$ or equivalently that $\Aut(Y|X)$ acts transitively on each fibre. 
    And since $X$ is connected, all fibres are homeomorphic to the same discrete space.

    $b) \Leftrightarrow c)$ ; fix $x\in X$.
    Combining Lemmas \ref{induced_subgroup} and \ref{isomorphism_conjugate}, we have that
    $\Aut(Y|X)$ acts transitively on $p^{-1}(x)$ if and only if the subgroups $p_*(\pi_1(Y,y))$ are the same for all $y\in Y$. And since they are conjugacy classes, it is equivalent to saying that $p_*(\pi_1(Y,y))$ is a normal subgroup of $\pi_1(X,x)$.\\
\end{proof}

\noindent We now touch upon the classification of topological covering spaces, which is surprisingly similar to the Galois correspondence of field extensions, here, Galois covers play the role of Galois field extensions, the Galois groups are automorphism groups of covers, and as we will see later, universal coverings are separable closures.


\begin{theorem}[Main Classification of Coverings]
\label{clasification_of_covers}
    Let $X$ be a connected topological space, $p:Y\rightarrow X$ be a Galois cover and $G$ the group automorphisms of $p$. 
    If $q:Z\rightarrow X$ is a connected cover such that $p = q\circ f$ for $f:Y\rightarrow Z$, then $f$ is a Galois cover and $Z\cong Y/H$ (where $H=\Aut(Y|Z)$ is a subgroup of $G$).
    The maps $H\mapsto Y/H$ and $Z\mapsto \Aut(Y|Z)$ induce a bijection between the subgroups of $G$ and the intermediate covers $Z$ as above.
    The cover $q:Z\rightarrow X$ is Galois if and only if $\Aut(Y|Z)=H$ is a normal subgroup of $G$ (in that case, $\Aut(Z|X) = G /H$.
\end{theorem}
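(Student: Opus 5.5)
The plan is to reduce everything to Proposition \ref{composition_covers}, Theorem \ref{caracterize_Galois_cov} and the quotient results (Propositions \ref{quotient_cover} and \ref{G_is_aut_group}), working with deck groups rather than fundamental groups wherever possible.

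\emph{Step 1: $f$ is a Galois cover.} Since $q$ is a connected cover and $p=q\circ f$ is a cover, Proposition \ref{composition_covers} shows that $f:Y\rightarrow Z$ is a covering. It is connected because $Y$ is. Every deck transformation of $f$ commutes with $p=q\circ f$, so $H:=\Aut(Y|Z)$ is a subgroup of $G$.

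To see that $f$ is Galois, I use criterion $b)$ of Theorem \ref{caracterize_Galois_cov}. Take $y,y'\in Y$ with $f(y)=f(y')$. Then $p(y)=p(y')$, and since $p$ is Galois there is $g\in G$ with $g(y)=y'$. The two maps $f\circ g$ and $f$ are both morphisms of covers from $p$ to $q$, and they agree at $y$ because $f(g(y))=f(y')=f(y)$. By Theorem \ref{property_morphism_covers}~$a)$ they are equal, so $g\in H$ and $H$ is transitive on the fibres of $f$. The definition of Galois then gives that $\overline{f}:Y/H\rightarrow Z$ is a homeomorphism, that is $Z\cong Y/H$.

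\emph{Step 2: the bijection.} Let $H\le G$. Since $G$ acts properly discontinuously on $Y$, so does $H$. By Proposition \ref{quotient_cover}, $Y\rightarrow Y/H$ is a cover, and $p$ factors through a continuous map $Y/H\rightarrow X$ by Lemma \ref{univ_ppty_lift}. I then check that $Y/H\rightarrow X$ is a covering. Take an evenly covered connected $U$ for $p$. The group $G$ permutes the sheets over $U$ simply transitively (Remark \ref{Galois_free}), so the preimage of $U$ in $Y/H$ is the disjoint union of the images of the sheets indexed by $H\backslash G$, and each of these maps homeomorphically onto $U$. Proposition \ref{G_is_aut_group} gives $\Aut(Y|Y/H)=H$, and Step 1 gives $Z\cong Y/\Aut(Y|Z)$. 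Hence the two assignments are mutually inverse, with intermediate covers taken up to isomorphism over $X$ compatible with $f$.

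\emph{Step 3: normality.} First suppose $H\trianglelefteq G$. Then $G/H$ acts on $Y/H$ over $X$, and this action is transitive on fibres because $G$ is transitive on the fibres of $p$. By Theorem \ref{caracterize_Galois_cov}~$b)$, $Z=Y/H$ is Galois. The action is also free, since a fixed point of $gH$ gives $gy=hy$ and hence $g=h$, so the map $G/H\rightarrow\Aut(Z|X)$ is injective.

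Conversely suppose $Z$ is Galois. Any $\phi\in\Aut(Z|X)$ lifts to some $g\in G$ with $f\circ g=\phi\circ f$. To get this, choose $y$, pick $y'$ in the fibre of $f$ over $\phi(f(y))$, and take $g$ with $g(y)=y'$; then $f\circ g$ and $\phi\circ f$ are morphisms $p\rightarrow q$ agreeing at $y$, so they are equal by Theorem \ref{property_morphism_covers}~$a)$. This gives a homomorphism $\{g: f\circ g=\phi_g\circ f\}\rightarrow\Aut(Z|X)$. I claim this set is all of $G$. Given $g\in G$ and $y$, the transitivity of $\Aut(Z|X)$ on the fibres of $q$ gives $\phi$ with $\phi(f(y))=f(gy)$, and uniqueness again gives $f\circ g=\phi\circ f$. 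The kernel of this map is exactly $H$, so $H$ is normal, and surjectivity follows from the lifting just shown, giving $\Aut(Z|X)\cong G/H$.

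The main obstacle I expect is this lifting argument, which must descend $g$ to an automorphism of $Z$ in both directions. An alternative is to translate via Theorem \ref{caracterize_Galois_cov}~$c)$ to normality of $q_*\pi_1(Z)$ in $\pi_1(X)$, using $p_*\pi_1(Y)\trianglelefteq\pi_1(X,x)$ and $G\cong\pi_1(X,x)/p_*\pi_1(Y,y)$. However, that identification has not been established yet in the paper, so I prefer the direct deck-group argument above.
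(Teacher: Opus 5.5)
Your proposal is correct and follows essentially the same route as the paper. It uses Proposition \ref{composition_covers} together with uniqueness of morphisms of covers (Theorem \ref{property_morphism_covers}) to show that $\Aut(Y|Z)$ is transitive on the fibres of $f$, an orbit-of-sheets argument to show that $Y/H\rightarrow X$ is a cover, and the homomorphism $G\rightarrow\Aut(Z|X)$ with kernel $\Aut(Y|Z)$ for the normality statement. Your version is slightly more careful than the paper's write-up: you explicitly check that this homomorphism is defined on all of $G$ and is surjective (by lifting each $\phi\in\Aut(Z|X)$), which is what actually justifies $\Aut(Z|X)\cong G/H$.
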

\begin{proof} 
    By hypothesis, the map $p$ factors as the composite $\overline{p}\circ p_H$ with $\overline{p}= Y/H\rightarrow X$ and $p_H:Y\rightarrow Y/H$ for $H$ a subgroup of $G$.
    Since $p$ and $p_H$ are coverings (Proposition \ref{quotient_cover}), then $\overline{p}$ is continuous.
    Moreover, $p$ is a covering, so by definition, there exists an open set $U$ in $X$ such that $p^{-1}(U) \cong U\times I$ and the subgroup $H$ of $G$ acts on $I$.
    So we have that $\overline{p}^{-1}(U)$ is open subset of $Y/H$ with $\overline{p}^{-1}(U)=\cong U\times J$ where $J$ is the set of $H$-orbits of $I$.
    Hence $\overline{p}$ is a cover since locally (for a sufficiently small open set) it is a trivial cover.\\
    Now suppose that $q:Z\rightarrow X$ is a connected cover with $p=q\circ f$ as above. Then by Proposition \ref{composition_covers}, $f:Y\rightarrow Z$ is a covering map as well. 
    To see that it is moreover a Galois cover, by the characterisation \ref{caracterize_Galois_cov} of Galois coverings, we need to check that $\Aut(Y|Z)$ acts transitively on a fibres of $f$.
    Let $z\in Z$ and $y_1,y_2\in f^{-1}(z)$. 
    Then $y_1,y_2\in p^{-1}(q(z))$ because $f^{-1}(z) \subset p^{-1}( q(z))$.
    Since $p$ is a Galois covering, there exists $\phi\in\Aut(Y|X)$ such that $y_1 = \phi(y_2)$. 
    We are done if we show that actually $\phi\in\Aut(Y|Z)$, which is equivalent to saying that the set $\{y\in Y \mid f(y)=f(\phi(y))\}$ is equal to the whole of $Y$.
    But this follows from Theorem \ref{property_morphism_covers}, so we have that $f = f\circ\phi$ and $\{y\in Y \mid f(y)=f(\phi(y))\} = Y$ and $\phi\in \Aut(Y|Z)$.
    The above two constructions being inverse to each other, only the last statement is left to prove.\\
    One implication is easy. Since $p$ is a Galois covering, if $H$ is a normal subgroup of $G$, then $G/H$ acts transitively on fibres of $q$ in $Z=Y/H\subset Y$. 
    Making $q:Z\rightarrow X$ a Galois cover. 
    And by definition of Galois covers and Proposition \ref{G_is_aut_group}, $Z/(G/H) \cong X$ and so $\Aut(Z|X)\cong G/H$.\\
    For the other implication, assume that $q:Z\rightarrow X$ is a Galois covering.
    What we want to show is that each $\phi\in \Aut(Y|X)$ induces an automorphism $\varphi$ of $Z$ over $X$, which in term induces a group homomorphism $h : G \rightarrow \Aut(Z|X)$ given by $\phi\mapsto\varphi$. 
    Let $y\in Y$ with $x = p(x)= (q\circ f)(y)$. Since $q$ is a covering, $f(y)$ and $f(\phi(y))$ are in the same fibre $q^{-1}(x)$ of $q$.
    $q$ being moreover Galois, there is an automorphism $\varphi\in\Aut(Z|X)$ with $\varphi(f(y))=f(\phi(y))$ by transitivity of automorphism groups of Galois covers.
    Our group homomorphism $h$ is well-defined since $\varphi$ is unique, for if $\psi\in\Aut(Z|X)$ was another such automorphism, by Theorem \ref{property_morphism_covers}, $\psi\cong\varphi$.
    By the same argument, the maps $\varphi\circ f$ and $f\circ \phi$ are equal making the following diagrams commute
    \[
    \begin{tikzcd}
        Y \arrow[r,"\phi"] \arrow[d, "f"'] & Y \arrow[d,"f"] \\
        Z \arrow[r,"\varphi"] \arrow[d,"q"'] & Z \arrow[d,"q"]\\
        X \arrow[r, "\id"'] & X
    \end{tikzcd}
    \]
    And since the kernel of the morphism $h$ is simply $H = \Aut(Z|X)$, we have that indeed $H$ is a normal subgroup of $G$.
\end{proof}

\noindent We end our discussion on covering spaces and actions with the following two results linking deck transformations and monodromy actions. 
Below, we denote for $H$ a subgroup of $G$, the \textit{normaliser} of $H$ in $G$ as $N_G(H) = \{g\in G\mid gH=Hg\}$.
\begin{theorem}
\label{th_link_aut_fund}
Let $p:Y\rightarrow X$ be a covering map and fix $y\in Y$ and $x = p(y)$. Then we have the following isomorphism
    $$\Aut(Y|X) \cong \frac{N_{\pi_1(X,x)}(p_*(\pi_1(Y,y)))}{p_*(\pi_1(Y,y))}  .$$
\end{theorem}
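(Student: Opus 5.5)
We assume, as throughout this section, that $p:Y\rightarrow X$ is a connected covering; otherwise the monodromy action need not be transitive. Write $\Gamma=\pi_1(X,x)$, $H=p_*(\pi_1(Y,y))$ and $F=p^{-1}(x)$. The plan is to construct a surjective homomorphism $\Phi: N_\Gamma(H)\rightarrow \Aut(Y|X)$ whose kernel is $H$, and then invoke the first isomorphism theorem.

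By Lemma \ref{induced_subgroup}, the monodromy action of $\Gamma$ on $F$ is transitive, and the stabiliser of $y$ is $H$. Hence the orbit map $\Gamma/H\rightarrow F$, $gH\mapsto g\cdot y$, is a $\Gamma$-equivariant bijection. The stabiliser of $g\cdot y$ is $gHg^{-1}$.

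To define $\Phi$, take $g\in N_\Gamma(H)$ and set $y'=g^{-1}\cdot y$. Its stabiliser is $g^{-1}Hg=H$, so Lemma \ref{induced_subgroup} gives $p_*(\pi_1(Y,y'))=p_*(\pi_1(Y,y))$. The Lifting Criterion (Lemma \ref{lift_criterion}), applied to $p$ itself, then gives a lift $d_g:Y\rightarrow Y$ of $p$ with $d_g(y)=y'$. It also gives a lift $e_g$ with $e_g(y')=y$. The composite $e_g\circ d_g$ is a morphism of covers fixing $y$, and $d_g\circ e_g$ fixes $y'$, so both are identities by Theorem \ref{property_morphism_covers} a). Thus $d_g\in\Aut(Y|X)$, and by the same part a) it is the unique deck transformation sending $y$ to $g^{-1}\cdot y$. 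Define $\Phi(g)=d_g$.

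We use $g^{-1}$ rather than $g$ because monodromy acts on the left while deck transformations also act on the left and commute with monodromy. To check that $\Phi$ is a homomorphism, we compare $d_g\circ d_{g'}$ and $d_{gg'}$ at $y$. By the $\Gamma$-equivariance in Theorem \ref{property_morphism_covers} b),
$$d_g(d_{g'}(y))=d_g(g'^{-1}\cdot y)=g'^{-1}\cdot d_g(y)=g'^{-1}g^{-1}\cdot y=(gg')^{-1}\cdot y=d_{gg'}(y).$$
Uniqueness from Theorem \ref{property_morphism_covers} a) then gives $d_g\circ d_{g'}=d_{gg'}$. If the order convention comes out reversed, one gets an anti-homomorphism instead, and composing with inversion repairs it. Since $\Gamma/H$ and its opposite are isomorphic, this does not affect the statement.

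The kernel consists of those $g$ with $d_g=\id$, that is, $g^{-1}\cdot y=y$, which means $g\in H$. For surjectivity, take $d\in\Aut(Y|X)$. Transitivity gives $g\in\Gamma$ with $d(y)=g^{-1}\cdot y$. By Theorem \ref{property_morphism_covers} b), $d$ is a $\Gamma$-equivariant bijection of $F$, so it preserves stabilisers. Hence $H=\mathrm{Stab}(y)=\mathrm{Stab}(d(y))=g^{-1}Hg$, so $g\in N_\Gamma(H)$, and uniqueness gives $d=d_g$.

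The main obstacle is the construction of $d_g$ and showing it is a homeomorphism. This is where the Lifting Criterion is essential, and it uses the standing hypotheses on $X$: locally path-connected and semi-locally simply connected. The rest is bookkeeping with left and right conventions.
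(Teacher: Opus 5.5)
Your proof is correct and is the standard argument. The paper gives no proof of its own; it only cites \cite{Intro_manifolds}, Theorem $12.7$, which proceeds the same way: the unique deck transformation moving $y$ along the monodromy gives a surjective homomorphism $N_{\pi_1(X,x)}(H)\to\Aut(Y|X)$ with kernel $H$, using the lifting criterion and uniqueness of lifts, under the connectedness hypothesis you rightly add. Your hedge about conventions is the case that actually applies here: with the paper's concatenation ($\gamma_1\bullet\gamma_2$ traverses $\gamma_1$ first), $[\gamma]\cdot y=\widetilde{\gamma}_y(1)$ is a right action, so $g\mapsto d_g$ as you wrote it is an anti-homomorphism, and you should instead send $g$ to the deck transformation taking $y$ to $g\cdot y$.
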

\begin{proof}
    See \cite{Intro_manifolds}, Theorem $12.7$
\end{proof}

\begin{corollary}
\label{co_link_aut_fund}
    For $X$, $Y$, $x$ and $y$ as in Theorem \ref{th_link_aut_fund}. If the covering $p:Y\rightarrow X$ is a Galois covering, then 
    $$\Aut(Y|X) \cong \frac{\pi_1(X,x)}{p_*(\pi_1(Y,y))}.$$
\end{corollary}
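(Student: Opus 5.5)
The plan is to deduce the corollary directly from Theorem \ref{th_link_aut_fund} by showing that, for a Galois covering, the normaliser appearing in the numerator is the whole group $\pi_1(X,x)$. Write $H := p_*(\pi_1(Y,y)) \leq \pi_1(X,x)$. Theorem \ref{th_link_aut_fund} already gives
$$\Aut(Y|X) \cong N_{\pi_1(X,x)}(H)/H,$$
so it remains to prove that $N_{\pi_1(X,x)}(H) = \pi_1(X,x)$. This holds exactly when $H$ is normal in $\pi_1(X,x)$.

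The key step is to invoke the characterisation of Galois coverings in Theorem \ref{caracterize_Galois_cov}. Since $p$ is Galois, it is in particular a connected covering. The implication $a) \Rightarrow c)$ of that theorem then says that the stabiliser $p_*(\pi_1(Y,y))$ of $y$ under the monodromy action is a normal subgroup of $\pi_1(X,x)$. Normality of $H$ means $gHg^{-1} = H$, equivalently $gH = Hg$, for all $g \in \pi_1(X,x)$. By the definition of the normaliser, every $g$ therefore lies in $N_{\pi_1(X,x)}(H)$, so the normaliser equals $\pi_1(X,x)$. Substituting into the isomorphism above gives $\Aut(Y|X) \cong \pi_1(X,x)/H$, which is the claim.

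There is essentially no obstacle beyond making sure the hypotheses match. Theorem \ref{th_link_aut_fund} is stated for an arbitrary covering with a fixed point $y$ over $x$, and the Galois hypothesis supplies the connectedness needed in Theorem \ref{caracterize_Galois_cov}. Normality also does not depend on the choice of $y$ in the fibre, since by Lemma \ref{induced_subgroup} the subgroups $p_*(\pi_1(Y,y'))$ for $y' \in p^{-1}(x)$ form a single conjugacy class, which collapses to one subgroup when it is normal.

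As a sanity check, one could also describe the isomorphism concretely. The map $\pi_1(X,x) \to \Aut(Y|X)$ sends $[\gamma]$ to the unique deck transformation taking $y$ to $[\gamma]\cdot y$. This deck transformation exists by transitivity (Theorem \ref{caracterize_Galois_cov} $b)$) and is unique by Corollary \ref{corollary_property_morphism_covers}. The map is surjective by transitivity of the monodromy action, and its kernel is the stabiliser $H$ by Lemma \ref{induced_subgroup}. This is not needed once Theorem \ref{th_link_aut_fund} is assumed.
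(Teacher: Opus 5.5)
Your proof is correct and is the same as the paper's: the Galois hypothesis gives normality of $p_*(\pi_1(Y,y))$ via Theorem \ref{caracterize_Galois_cov}, so the normaliser in Theorem \ref{th_link_aut_fund} is all of $\pi_1(X,x)$. One caveat on your optional sanity check: depending on the path-composition convention, the map sending $[\gamma]$ to the deck transformation taking $y$ to $[\gamma]\cdot y$ may be an anti-homomorphism (compare the $^{op}$ in Theorem \ref{aut_iso_fund}), which is harmless since every group is isomorphic to its opposite.
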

\begin{proof}
    Follows from Theorem \ref{th_link_aut_fund} since $p_*(\pi_1(Y,y))$ is a normal subgroup of $\pi_1(X,x)$ (Theorem \ref{caracterize_Galois_cov}).
\end{proof}

\begin{remark}\textbf{ }
    \begin{itemize}
        \item[$a)$] Recall from Theorem \ref{th_link_aut_fund} and Corollary \ref{co_link_aut_fund} that the Main Classification of Coverings for $p:Y\rightarrow X$ Galois can be restated in terms of fundamental groups $\pi_1(X,x)$ and induced subgroups $p_*(\pi_1(Y,y))$ as follow:

        \hspace*{2mm} \textbf{Theorem :} \textit{Let $X$ be a connected topological space and $x\in X$ is any base point.
        There is a one-to-one correspondence between isomorphism classes of coverings of $X$ and conjugacy classes of subgroups of $\pi_1(X,x)$.
        The correspondence associates each covering $p:Y\rightarrow X$ with the conjugacy class of its induced subgroup $p_*(\pi_1(Y,y))$.}

        \item[$b)$] From the classification of covering spaces, it is now direct that if $\pi_1(X,x)$ is abelian, every covering of $X$ is a Galois covering.\\
    \end{itemize}
\end{remark}

\subsection{Universal Cover and Fundamental Group}
\label{universal_cover_section}

In the previous section, we discussed the concept of subcovers, or covers that cover other covers. 
We now introduce a new type of covering space, which can be thought of as the covering of all coverings, inherently called the universal cover.

\begin{definition}
    Let $X$ be a path-connected, locally simply-connected topological space and pick a base point $x\in X$. The covering $q: \widetilde{X}_x \rightarrow X$ is called an \textbf{universal cover} if $\widetilde{X}_x$ is simply-connected.
\end{definition}

\noindent It is important to note that universal covers do not exists in general. 
But for sufficiently nice topological spaces (path-connected, simply path-connected) they always exists,
for more on this, we refer to \cite{Hatcher} page $63$.

\begin{construction}
    For $X$ path-connected and locally simply-connected, there is a way to construct a \textbf{simply-connected cover} $\widetilde{X}_x$ of $X$. \\
    The points of $\widetilde{X}_x$ are homotopy classes of paths on $X$ starting from $x$. 
    That way, $\widetilde{X}_x$ has a canonical point $\widetilde{x}$ corresponding to the constant path on $x$.
    This construction is motivated by the fact that $\widetilde{X}_x$ has to be simply-connected, thus any two points of $\widetilde{X}_x$ can be joined by a unique homotopy class of paths.
    The projection $q: \widetilde{X}_x \rightarrow X$ is defined as follows:
    take a point $\widetilde{y}\in \widetilde{X}_x$ consider its representing path $\gamma:[0,1]\rightarrow X$ starting at $x$ and ending at $y$, and set $q(\widetilde{y})=\gamma(1)=y$.\\
    Simply-connected covers constructed that way are endowed with the following topology: 
    consider as a basis of open neighbourhoods of $\widetilde{y}\in \widetilde{X}_x$, the sets $\widetilde{U}$.
    To define $\widetilde{U}$, we start with a simply-connected neighbourhood $U$ of $q(\widetilde{y})$ in $X$, and if $\gamma:[0,1]\rightarrow X$ is a path, representing $\widetilde{y}$, we define $\widetilde{U}_{\widetilde{y}}$ to be the set of homotopy classes of paths obtained by composing the homotopy class of $f$ with the homotopy class of some path $\delta:[0,1]\rightarrow X$ with $\delta(0) = y$ and $\delta([0,1])\subset U$. 
    That way, $\widetilde{U}$ can be thought of as obtained by ‘‘continuing homotopy classes of paths arriving at $y$ to other points of $U$".
    Given two neighbourhoods $\widetilde{U}_{\widetilde{y}}$ and $\widetilde{V}_{\widetilde{y}}$ of $\widetilde{y}$, there exists $\widetilde{W}_{\widetilde{y}}$ for some simply-connected neighbourhood $W$ of $y$, contained in $U\cap V$.
\end{construction}

\noindent Another consequence of the lifting criterion is that a universal cover $\widetilde{X}$ of $X$ is unique up to isomorphism.

\begin{definition}
    Let $X$ be a topological space and $x\in X$. We define the \textbf{fibre functor $\Fib_x$} from the category of coverings spaces $\Cov(X)$ of $X$ to the category $\pi_1(X,x)$-$\SET$ of sets equipped with a $\pi_1(X,x)$-action, by sending a cover $p:Y\rightarrow X$ to the fibre $p^{-1}(x)$.
\end{definition}

\noindent We immediately check that $\Fib_x$ is a functor, morphism of covers carry over to the fibres by Theorem \ref{property_morphism_covers} and the composition condition is similarly verified.

\begin{proposition}
\label{representable_fibre_functor}
    Let $X$ be a path-connected, locally simply-connected topological space and pick a base point $x\in X$. 
    The functor $\Fib_x :\Cov(X) \rightarrow \pi_1(X,x)$-$\SET$ is representable by the universal cover $q:\widetilde{X}_x\rightarrow X$.
\end{proposition}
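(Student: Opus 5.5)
The plan is to show that for every cover $p:Y\rightarrow X$ there is a bijection
$$\Hom_{\Cov(X)}(\widetilde{X}_x, Y) \xrightarrow{\ \sim\ } p^{-1}(x), \qquad \varphi \mapsto \varphi(\widetilde{x}),$$
where $\widetilde{x}\in q^{-1}(x)$ is the canonical point given by the constant path at $x$. We then check that this bijection is natural in $Y$. Naturality is immediate: for a morphism of covers $g:Y\rightarrow Z$ we have $(g\circ\varphi)(\widetilde{x}) = g(\varphi(\widetilde{x}))$. So the real content is bijectivity for each fixed $Y$.

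For injectivity, note that $\widetilde{X}_x$ is path-connected, since every point is joined to $\widetilde{x}$ by lifting its representing path. Two morphisms of covers $\widetilde{X}_x\rightarrow Y$ that agree at $\widetilde{x}$ therefore coincide by Theorem \ref{property_morphism_covers} $a)$.

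For surjectivity, fix $y\in p^{-1}(x)$ and lift $q:\widetilde{X}_x\rightarrow X$ along $p$. We may replace $Y$ by the connected component containing $y$, which is again a cover of $X$ because $X$ is locally path-connected. Lemma \ref{lift_criterion} then applies. Since $\pi_1(\widetilde{X}_x,\widetilde{x})$ is trivial, the condition $q_*(\pi_1(\widetilde{X}_x,\widetilde{x}))\subset p_*(\pi_1(Y,y))$ holds automatically. This gives a continuous lift $\widetilde{q}$ with $p\circ\widetilde{q} = q$ and $\widetilde{q}(\widetilde{x}) = y$, and $\widetilde{q}$ is exactly a morphism of covers mapping $\widetilde{x}$ to $y$. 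Concretely, $\widetilde{q}$ sends the class $[\gamma]$ to $\widetilde{\gamma}_y(1)$, which is well defined by Lemma \ref{unique_lift}.

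The main obstacle is the $\pi_1(X,x)$-$\SET$ framing. A representing object for a functor into $\pi_1(X,x)$-$\SET$ must be read as follows: the underlying set-valued functor is represented by $(\widetilde{X}_x,\widetilde{x})$, and the $\pi_1(X,x)$-action on $\Hom(\widetilde{X}_x,Y)$ matches the monodromy action. To handle this, I would let $\pi_1(X,x)$ act on $\widetilde{X}_x$ on the right by precomposition, $[\widetilde{y}]\cdot[\gamma] := [\gamma\bullet\widetilde{y}]$. These maps are deck transformations, which identifies $\pi_1(X,x)$ with $\Aut(\widetilde{X}_x|X)$, in line with Corollary \ref{co_link_aut_fund} applied to a simply-connected Galois cover. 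The group then acts on $\Hom(\widetilde{X}_x,Y)$ by $[\gamma]\cdot\varphi := \varphi\circ[\gamma]$. Evaluating at $\widetilde{x}$ gives $\varphi([\gamma]) = \varphi(\widetilde{\gamma}_{\widetilde{x}}(1))$. By the computation in Theorem \ref{property_morphism_covers} $b)$, this equals $[\gamma]\cdot\varphi(\widetilde{x})$ in the monodromy action. The bijection is therefore $\pi_1(X,x)$-equivariant, and $\Fib_x \cong \Hom_{\Cov(X)}(\widetilde{X}_x,-)$ as functors to $\pi_1(X,x)$-$\SET$.
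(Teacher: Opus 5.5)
Your proof is correct, and its skeleton matches the paper's. Evaluation at $\widetilde{x}$ gives the bijection $\Hom_{\Cov(X)}(\widetilde{X}_x,Y)\to p^{-1}(x)$, its inverse sends $y$ to the path-lifting map $[\gamma]\mapsto\widetilde{\gamma}_y(1)$, and naturality is composition. The differences lie in how each half is justified and in scope.

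The paper builds $\varphi_y$ by hand and declares it a morphism of covers ``by construction'', so continuity, which needs the basic sets $\widetilde{U}$, is never checked. It also asserts that evaluation is inverse to $y\mapsto\varphi_y$ without explaining why a morphism out of $\widetilde{X}_x$ is determined by its value at $\widetilde{x}$. You get existence and continuity together from Lemma~\ref{lift_criterion}, because $\pi_1(\widetilde{X}_x,\widetilde{x})$ is trivial; restricting first to the component of $Y$ through $y$ is what the paper's version of that criterion requires. You get injectivity from Theorem~\ref{property_morphism_covers}~$a)$ together with path-connectedness of $\widetilde{X}_x$. The trade-off is that your route delegates the topology to the cited criterion, while the paper's route is elementary and explicit but incomplete at exactly that point.

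Your final step goes beyond the paper. Its proof only shows representability of the underlying set-valued functor and never compares $\pi_1(X,x)$-actions, even though the target is $\pi_1(X,x)$-$\SET$. Your prepending action $[\widetilde{y}]\mapsto[\gamma\bullet\widetilde{y}]$ is also the well-defined form of the action that Theorem~\ref{aut_iso_fund} writes as $f\bullet\gamma$, which only makes sense when $f$ ends at $x$. For the equivariance check you only need each such map to be a deck transformation sending $\widetilde{x}$ to the point $[\gamma]$. The appeal to Corollary~\ref{co_link_aut_fund} to identify these maps with all of $\Aut(\widetilde{X}_x|X)$ is not actually used.
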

\begin{proof}
    We want to show that for a cover $p:Y\rightarrow X$, each point $y\in p^{-1}(x)$ corresponds to a morphism $\varphi_y :\widetilde{X}_x\rightarrow Y$ of covers.
    We define the map $\varphi_y$ as follows : it takes a point $\widetilde{y}\in \widetilde{X}_x$ represented by a path $f:[0,1]\rightarrow X$ starting at $x$, lifts this path to a path $\widetilde{f}$ in $Y$ starting at $y$ and sends  $\widetilde{y}$ to $\widetilde{f}(1)$.
    The lifting properties unsure that this map is well defined for any choice of $y\in Y$. And by construction $\varphi_y$ is a morphism of covers. \\
    The map  $y\mapsto\varphi_y$ is a bijection between $p^{-1}(x)$ and $\Hom_X(\widetilde{X}_x,Y)$, which has for inverse, the map sending a morphism $\phi \in \Hom_X(\widetilde{X}_x,Y)$ to the evaluation $\phi(\widetilde{x})$, the element $\widetilde{x}$ being represented by the constant path.
    Finally, the bijection we obtain is functorial : given a morphism of cover $h: Y\rightarrow Z$ : $y\mapsto z$ of $X$, the induced map $\Hom_X(\widetilde{X}_x, h): \Hom_X(\widetilde{X}_x,Y)\rightarrow \Hom_X(\widetilde{X}_x,Z)$ maps $\varphi_y$ to $\varphi_z$.
\end{proof}

\begin{proposition}
\label{unive_cov_galois}
    The universal cover $q : \widetilde{X}_x\rightarrow X$ is a Galois cover.
\end{proposition}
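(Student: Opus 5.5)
The plan is to use the characterisation of Galois covers in Theorem \ref{caracterize_Galois_cov}, specifically condition $c)$. First, we need $\widetilde{X}_x$ to be connected, so that $q$ is a connected covering and Theorem \ref{caracterize_Galois_cov} applies. This follows from the Construction. Every point $\widetilde{y}\in\widetilde{X}_x$ is represented by a path $\gamma$ starting at $x$. The truncated paths $\gamma_t(s)=\gamma(ts)$ define a path $t\mapsto[\gamma_t]$ in $\widetilde{X}_x$ from the canonical point $\widetilde{x}$ to $\widetilde{y}$. Its continuity is checked directly with the basic neighbourhoods $\widetilde{U}_{\widetilde{y}}$. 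Alternatively, $t\mapsto[\gamma_t]$ is the unique lift of $\gamma$ at $\widetilde{x}$ given by Lemma \ref{unique_lift}. Hence $\widetilde{X}_x$ is path-connected.

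Next, fix $x'\in X$ and $\widetilde{y}\in q^{-1}(x')$. Since $\widetilde{X}_x$ is simply-connected by definition of the universal cover, $\pi_1(\widetilde{X}_x,\widetilde{y})$ is trivial. Its image $q_*(\pi_1(\widetilde{X}_x,\widetilde{y}))$ is therefore the trivial subgroup of $\pi_1(X,x')$, which is normal. Condition $c)$ holds for every point of every fibre, so by Theorem \ref{caracterize_Galois_cov} the cover $q$ is Galois.

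As a cross-check, condition $b)$ can also be verified directly. Take $\widetilde{y},\widetilde{y}'$ in the same fibre. Since $\widetilde{X}_x$ is simply connected, the Lifting Criterion (Lemma \ref{lift_criterion}) applied to $q$ itself gives a morphism of covers $\phi:\widetilde{X}_x\to\widetilde{X}_x$ with $\phi(\widetilde{y})=\widetilde{y}'$. It also gives $\psi$ with $\psi(\widetilde{y}')=\widetilde{y}$. By Corollary \ref{corollary_property_morphism_covers}, $\psi\circ\phi$ and $\phi\circ\psi$ are identities, because they fix a point. So $\phi\in\Aut(\widetilde{X}_x|X)$, and the action on fibres is transitive.

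The only genuinely delicate step is the connectedness of $\widetilde{X}_x$, together with the implicit claim that the Construction really yields a covering. That claim rests on local simple-connectedness and is taken from the paper's Construction and the cited reference. Once connectedness is in place, the rest is a one-line application of Theorem \ref{caracterize_Galois_cov}.
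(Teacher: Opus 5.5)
Your proof is correct, but it takes a different route from the paper. The paper checks condition $b)$ of Theorem~\ref{caracterize_Galois_cov} on the base fibre $q^{-1}(x)$. It uses that $\Fib_x$ is represented by $\widetilde{X}_x$ (Proposition~\ref{representable_fibre_functor}). Each $\widetilde{y}\in q^{-1}(x)$ then gives a morphism of covers $\varphi_{\widetilde{y}}$ with $\varphi_{\widetilde{y}}(\widetilde{x})=\widetilde{y}$. A second use of representability produces $\varphi_{\widetilde{z}}$ with $\varphi_{\widetilde{y}}\circ\varphi_{\widetilde{z}}=\id$, which shows $\varphi_{\widetilde{y}}$ is a deck transformation.

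You instead go through condition $c)$, where the whole content is that $q_*(\pi_1(\widetilde{X}_x,\widetilde{y}))$ is trivial. What each route buys:
\begin{itemize}
\item[$\cdot$] Your route is shorter and isolates exactly what matters, namely that the induced subgroup is trivial. But it relies on the $b)\Leftrightarrow c)$ part of Theorem~\ref{caracterize_Galois_cov}, hence on Lemmas~\ref{induced_subgroup} and \ref{isomorphism_conjugate} and the Lifting Criterion.
\item[$\cdot$] The paper's argument needs only $a)\Leftrightarrow b)$ and stays inside the fibre-functor framework. It also shows more: every endomorphism $\varphi_{\widetilde{y}}$ of the universal cover is invertible, so $\Hom_X(\widetilde{X}_x,\widetilde{X}_x)=\Aut(\widetilde{X}_x|X)\cong q^{-1}(x)$. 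Corollary~\ref{fibre_univ_cov} later relies on exactly this.
\end{itemize}

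Your cross-check via $b)$ recovers that extra fact. It is essentially the paper's argument with the Lifting Criterion in place of representability; the paper's $\varphi_{\widetilde{y}}$ is itself built by path lifting. One citation detail there: $\psi\circ\phi$ is a priori only a morphism of covers, not yet a homeomorphism. So the right reference for it being the identity is Theorem~\ref{property_morphism_covers}~$a)$, not Corollary~\ref{corollary_property_morphism_covers}.

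Finally, you justify the path-connectedness of $\widetilde{X}_x$ via the truncated paths $\gamma_t$, which the paper simply asserts.
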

\begin{proof}
    Since $\widetilde{X}_x$ is connected, by Theorem \ref{caracterize_Galois_cov} it is enough to prove that the group $\Aut(\widetilde{X}_x,X)$ acts transitively on $q^{-1}(x)$.
    Recall that since the functor $\Fib_x$ is representable by a universal covering (\ref{representable_fibre_functor}), each point $\widetilde{y}\in\ q^{-1}(x)$ correspond to a morphism of covers $\varphi_{\widetilde{y}}:\widetilde{X}_x\rightarrow\widetilde{X}_x$, sending $\widetilde{x}$ to $\widetilde{y}$ ($\varphi_{\widetilde{y}}$ being a cover means that it's a surjective morphism).
    We now check that for any $\widetilde{y}\in q^{-1}(x)$, $\varphi_{\widetilde{y}}$ is an automorphism making $\Aut(\widetilde{X}_x,X)$ act transitively on $p^{-1}(x)$. 
    Now since composition of covers are covers, we can reapply Proposition \ref{representable_fibre_functor} to the cover $q\circ \varphi_{\widetilde{y}}$ and again to each point $\widetilde{z}\in\varphi^{-1}_{\widetilde{y}}(\widetilde{x})$ there is a corresponding morphism of covers $\varphi_{\widetilde{z}}:\widetilde{X}\rightarrow\widetilde{X}$ and we set $\varphi_{\widetilde{z}}(\widetilde{x}) = \widetilde{z}$ 
    (recall that $\widetilde{x}$ is the point in $\widetilde{X}$ represented by the constant path, it can be seen as the lift of the base point $x$ in $X$). \\
    Since, $\varphi_{\widetilde{z}}$ is again a cover compatible with $q\circ \varphi_{\widetilde{y}}$, we have $q\circ\varphi_{\widetilde{y}}\circ\varphi_{\widetilde{z}} = q$, 
    and because $\widetilde{z}\in \varphi_{\widetilde{y}}^{-1}(\widetilde{x})$, $\varphi_{\widetilde{y}}\circ\varphi_{\widetilde{z}}(\widetilde{x}) = \widetilde{x}$, is the identity on $\widetilde{X}_x$ since agreeing at one point implies agreeing everywhere (\ref{property_morphism_covers}). 
    Finally recall that $\varphi_{\widetilde{z}}$ is surjective (as a cover), hence $\varphi_{\widetilde{y}}$ is injective, and thus an automorphism.
\end{proof}

\subsubsection{$\pi_1$ as the Group of Deck Transformations} 

\noindent The following theorem describes the connection between universal covers and fundamental groups.
Recall that in group theory, for a group $G$, its opposite group, denoted $G^{op}$ is the group with the same elements as $G$ but with group operation defined by $(x,y)\mapsto y\cdot x$ (of course, $G\cong G^{op}$). In that case, left actions become right actions and vice versa.

\begin{theorem}
\label{aut_iso_fund}
    There is a natural isomorphism between $\Aut(\widetilde{X}_x|X)^{op}$ and $\pi_1(X,x)$.
\end{theorem}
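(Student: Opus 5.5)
The plan is to build the isomorphism from the representability of $\Fib_x$ (Proposition \ref{representable_fibre_functor}) and the monodromy action, and to avoid computing $N_{\pi_1}(q_*\pi_1)/q_*\pi_1$ directly.

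First, Proposition \ref{representable_fibre_functor} applied with $Y=\widetilde{X}_x$ gives a bijection
$$\Aut(\widetilde{X}_x|X)\subseteq \Hom_X(\widetilde{X}_x,\widetilde{X}_x)\xrightarrow{\ \sim\ } q^{-1}(x),\qquad \phi\mapsto \phi(\widetilde{x}).$$
The proof of Proposition \ref{unive_cov_galois} shows that every endomorphism of $\widetilde{X}_x$ over $X$ is an automorphism, so this map is in fact a bijection $\Aut(\widetilde{X}_x|X)\cong q^{-1}(x)$.

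Second, the monodromy action gives a map $\pi_1(X,x)\to q^{-1}(x)$, $[\gamma]\mapsto[\gamma]\cdot\widetilde{x}=\widetilde{\gamma}_{\widetilde{x}}(1)$. It is surjective because the action is transitive, since $\widetilde{X}_x$ is connected. It is injective because, by Lemma \ref{induced_subgroup}, the stabiliser of $\widetilde{x}$ is $q_*(\pi_1(\widetilde{X}_x,\widetilde{x}))$, which is trivial since $\widetilde{X}_x$ is simply connected. Alternatively, in the explicit construction, $\widetilde{\gamma}_{\widetilde{x}}(1)$ is just the homotopy class $[\gamma]$ itself, which makes the bijection tautological.

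Composing the two bijections gives $\Phi:\pi_1(X,x)\to\Aut(\widetilde{X}_x|X)$, where $\Phi([\gamma])=\phi_\gamma$ is the unique deck transformation with $\phi_\gamma(\widetilde{x})=[\gamma]\cdot\widetilde{x}$.

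The main step is checking that $\Phi$ reverses the order of multiplication, which is where $(\cdot)^{op}$ comes from. By Theorem \ref{property_morphism_covers}(b), deck transformations commute with the monodromy action on $q^{-1}(x)$, so
$$\phi_\gamma\circ\phi_\delta(\widetilde{x})=\phi_\gamma([\delta]\cdot\widetilde{x})=[\delta]\cdot\phi_\gamma(\widetilde{x})=[\delta]\cdot([\gamma]\cdot\widetilde{x})=([\delta][\gamma])\cdot\widetilde{x}.$$
By uniqueness (Theorem \ref{property_morphism_covers}(a)), this gives $\phi_\gamma\circ\phi_\delta=\phi_{\delta\gamma}$. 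Hence $\Phi$ is an anti-isomorphism, that is, an isomorphism $\pi_1(X,x)\cong\Aut(\widetilde{X}_x|X)^{op}$.

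The obstacle I expect is bookkeeping with conventions. The paper's concatenation $\gamma\bullet\delta$ traverses $\gamma$ first, and the monodromy proof writes the composition in a nonstandard order. I will fix one convention, under which the monodromy action is a genuine left action, and verify the sign of the anti-homomorphism against it. If the paper's convention makes monodromy a right action, the same computation instead gives a homomorphism into $\Aut$ with the opposite composition, and the $op$ is absorbed accordingly.

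For naturality, a pointed map $f:(X,x)\to(X',x')$ lifts to $\widetilde{f}:\widetilde{X}_x\to\widetilde{X'}_{x'}$ with $\widetilde{x}\mapsto\widetilde{x'}$, by the Lifting Criterion applied with $\widetilde{X}_x$ simply connected. This lift intertwines $\Phi$ with $f_*$, because both sides are evaluated at $\widetilde{x}$ through lifts of $f\circ\gamma$.
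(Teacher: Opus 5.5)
Your argument is correct. It produces the same map as the paper, since in both constructions $\phi_\gamma$ is the deck transformation with $\phi_\gamma(\widetilde{x})=[\gamma]$, but you verify its properties by a different route.

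The paper works inside the explicit path-class model of $\widetilde{X}_x$:
\begin{itemize}
\item it lets a loop act on the whole cover by pre-concatenation and asserts that each such map is a continuous deck transformation;
\item it gets injectivity because a nontrivial class moves $\widetilde{x}$;
\item it gets surjectivity by comparing the classes $[f]$ and $\phi([f])$ to build a loop $\delta$ with $\phi_\delta([f])=\phi([f])$, and then invokes uniqueness.
\end{itemize}
You never leave the fibre $q^{-1}(x)$:
\begin{itemize}
\item evaluation $\Aut(\widetilde{X}_x|X)\to q^{-1}(x)$ is a bijection by Proposition~\ref{unive_cov_galois} and Theorem~\ref{property_morphism_covers}(a);
\item the orbit map $\pi_1(X,x)\to q^{-1}(x)$ is a bijection because monodromy is transitive with trivial stabiliser (Lemma~\ref{induced_subgroup});
\item multiplicativity drops out of the equivariance in Theorem~\ref{property_morphism_covers}(b).
\end{itemize}

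Your route is independent of the particular model of the universal cover. It also avoids having to check continuity of the concatenation maps, which the paper only asserts. Finally, it makes the origin of $(\cdot)^{op}$ transparent: two commuting, simply transitive left actions on the same set are related by an anti-isomorphism. The paper's route is more hands-on and gives $\phi_\gamma$ by an explicit formula on all of $\widetilde{X}_x$.

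Your caveat about conventions is justified. With the paper's stated concatenation, where $\gamma\bullet\delta$ traverses $\gamma$ first, the monodromy action is in fact a right action. Your computation then gives $\phi_\gamma\circ\phi_\delta=\phi_{\gamma\delta}$, which is also what pre-concatenation yields in the paper's own proof. In that case the $(\cdot)^{op}$ is recovered only by composing with inversion. Under the convention that makes monodromy a left action, you get the anti-isomorphism directly.

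Your naturality paragraph goes beyond the paper, which does not address it. To complete it, specify the induced map $\phi\mapsto\phi'$, where $\phi'$ is the unique deck transformation with $\widetilde{f}\circ\phi=\phi'\circ\widetilde{f}$. This map exists and is a homomorphism by uniqueness of lifts from the connected space $\widetilde{X}_x$.
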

\begin{proof}
    Let's endow $\widetilde{X}_x$ with the following right $\pi_1(X,x)$-action : let $\widetilde{y}\in \widetilde{X}_x$ and $[\gamma]\in\pi_1(X,x)$ with $f$ the representative of $\widetilde{y}$. 
    We define the right action by considering the homotopy class of the concatenation $f \bullet \gamma$.
    Now the group action map $\phi_{\gamma}:\widetilde{X}_x \rightarrow \widetilde{X}_x$ is continuous and an automorphism of covers (since pre-concatenating with a loop doesn't affect the endpoints).
    Recall from our definition that $\Aut(\widetilde{X}_x|X)$ acts on $\widetilde{X}_x$ by the left (by evaluation).
    This defines a group morphism $\pi_1(X,x)\rightarrow \Aut(\widetilde{X}_x|X)^{op}$ sending any loop homotopy $[\gamma]$ to its group action map $\phi_{\gamma}\in \Aut(\widetilde{X}_x|X)$ which is an injective morphism since for a homotopy class that is not the one of the constant loop the associated group action map moves the element $\widetilde{x}\in\widetilde{X}_x$ associated with the homotopy class of the constant loop.\\
    It is left to show that this morphism is surjective.
    Consider $\phi\in \Aut(\widetilde{X}_x|X)$ and $\widetilde{y}\in\widetilde{X}_x$ corresponding to the homotopy class of some path $f$ in $X$. Then $\phi(\widetilde{y})$ is the point corresponding to the homotopy class of some other path $g$ in $X$ such that their endpoints coincide. 
    This means that the concatenation $f^{-1}\bullet g$ is a closed path around $x$ in $X$, and denote by $\delta$ its homotopy class in $\pi_1(X,x)$. Now do notice that the previously defined map $\phi_{\delta}$ sends then $\widetilde{y}$ corresponding to $[f]$ to $[f]\bullet\delta = [f\bullet (f^{-1}\bullet g)] = [g]$.
    Hence the automorphism $\phi^{-1}\circ\phi_{\delta}$ sends $\widetilde{y}$ corresponding to the class $[f]$ to itself and by Proposition \ref{property_morphism_covers}, there exist an element $\delta\in\pi_1(X,x)$ such that $\phi_{\delta} = \phi$.
\end{proof}

\begin{corollary}
\label{free_action_universal}
    The fundamental group $\pi_1(X,x)$ acts freely on the universal covering $\widetilde{X}$ of $X$.
\end{corollary}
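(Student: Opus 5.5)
The plan is to deduce the corollary from Theorem \ref{aut_iso_fund} together with Corollary \ref{corollary_property_morphism_covers}. The action of $\pi_1(X,x)$ on $\widetilde{X}_x$ is the right action constructed in the proof of Theorem \ref{aut_iso_fund}: the class $[\gamma]$ sends the point $\widetilde{y}=[f]$ to $[f\bullet\gamma]$, and this map is denoted $\phi_\gamma$. Freeness means that if $\phi_\gamma(\widetilde{y})=\widetilde{y}$ for a single point $\widetilde{y}\in\widetilde{X}_x$, then $[\gamma]$ is the identity of $\pi_1(X,x)$.

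The argument has three steps.

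\textbf{First}, I recall that each $\phi_\gamma$ is a deck transformation of the universal cover $q:\widetilde{X}_x\rightarrow X$. This is established in the proof of Theorem \ref{aut_iso_fund}: concatenating with a loop at $x$ does not change the endpoint, and the map is a homeomorphism with inverse $\phi_{\gamma^{-1}}$.

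\textbf{Second}, suppose $\phi_\gamma(\widetilde{y})=\widetilde{y}$ for some $\widetilde{y}$. Since $\widetilde{X}_x$ is connected (indeed simply connected), Corollary \ref{corollary_property_morphism_covers} applies to the deck transformation $\phi_\gamma$. It has a fixed point, so $\phi_\gamma=\id_{\widetilde{X}_x}$.

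\textbf{Third}, I use the injectivity of $[\gamma]\mapsto\phi_\gamma$ from Theorem \ref{aut_iso_fund} to conclude that $[\gamma]=1$. This can also be checked directly: evaluating $\phi_\gamma=\id$ at the base point $\widetilde{x}$, the class of the constant path $c_x$, gives $[c_x\bullet\gamma]=[c_x]$. Hence $[\gamma]=[c_x]$ is trivial in $\pi_1(X,x)$.

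I do not expect a real obstacle here. The only point needing care is the interplay of left and right actions: Theorem \ref{aut_iso_fund} identifies $\pi_1(X,x)$ with $\Aut(\widetilde{X}_x|X)^{op}$. Freeness is insensitive to whether the action is viewed as left or right, since the stabiliser of a point is trivial in both cases.

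An alternative route is via Proposition \ref{unive_cov_galois}, which says the universal cover is Galois. Remark \ref{Galois_free} then gives that $\Aut(\widetilde{X}_x|X)$ acts freely on the fibres, and transporting this through the isomorphism of Theorem \ref{aut_iso_fund} yields the same conclusion on fibres. Since every point of $\widetilde{X}_x$ lies in some fibre, freeness on all of $\widetilde{X}_x$ follows.
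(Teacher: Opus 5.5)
Your proposal is correct and follows essentially the same route as the paper. Both identify the action of $\pi_1(X,x)$ with the deck action via Theorem \ref{aut_iso_fund}, and both use that a deck transformation of the connected cover $\widetilde{X}_x$ with a fixed point is the identity (Corollary \ref{corollary_property_morphism_covers}, i.e.\ Theorem \ref{property_morphism_covers}~a)).

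One harmless notational point: under the paper's convention for $\bullet$, the well-defined deck map is the pre-concatenation $[f]\mapsto[\gamma\bullet f]$, because $[f\bullet\gamma]$ only makes sense over $x$. Your third step evaluates at $\widetilde{x}$, where both readings give $[\gamma]$, so the argument is unaffected.
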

\begin{proof}
    Indeed, since $\Aut(\widetilde{X}_x|X)^{op}$ and $\pi_1(X,x)$ and since two automorphisms of $\Aut(\widetilde{X}_x|X)$ agreeing at one point being identical by Theorem \ref{property_morphism_covers}, we have that $\Aut(\widetilde{X}_x|X)$ acts freely on $\widetilde{X}$, and so does $\pi_1(X,x)$.
\end{proof}

\noindent The previous Theorem \ref{aut_iso_fund} also echoes with the Theorem  \ref{th_link_aut_fund} and Corollary \ref{co_link_aut_fund} above.
It tells us that $p_*(\pi_1(\widetilde{X},y)) = 1$. 
And also that one does not distinguish between the monodromy action of $\pi_1(X,x)$ and the properly discontinuous and transitive action of $\Aut(\widetilde{X} |X)$ on $p^{-1}(x)$, they are essentially the same. 
To tie everything together, we have the following corollary.

\begin{corollary}
\label{fibre_univ_cov}
    Let $X$ be a path-connected locally simply-connected topological space, and $q:\widetilde{X}\rightarrow X$ the universal cover. 
    Then $\Fib_x(\widetilde{X}) = \pi_1(X,x)^{op}$.
\end{corollary}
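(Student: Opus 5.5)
The plan is to identify the fibre $q^{-1}(x)$ of the universal cover with $\pi_1(X,x)$, first as a set and then as a set with group action. Recall from the construction of $\widetilde{X}_x$ that a point of $\widetilde{X}_x$ is a homotopy class of paths in $X$ starting at $x$, and that $q$ sends such a class to its endpoint. A point of $q^{-1}(x)$ is therefore a homotopy class of paths from $x$ to $x$, that is, a class of loops based at $x$. This gives a tautological bijection $\Fib_x(\widetilde{X}) = q^{-1}(x) \to \pi_1(X,x)$, $[\gamma] \mapsto [\gamma]$, under which the base point $\widetilde{x}$ (the constant path) corresponds to the identity element.

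For a construction-independent check, I would use Theorem \ref{aut_iso_fund} and Corollary \ref{free_action_universal} together with Proposition \ref{unive_cov_galois}. The orbit map $\Aut(\widetilde{X}_x|X) \to q^{-1}(x)$, $\phi \mapsto \phi(\widetilde{x})$, is surjective because $q$ is Galois, so the automorphism group acts transitively on the fibre (Theorem \ref{caracterize_Galois_cov}). It is injective because an automorphism fixing a point is the identity (Corollary \ref{corollary_property_morphism_covers}). Composing with $\Aut(\widetilde{X}_x|X)^{op} \cong \pi_1(X,x)$ from Theorem \ref{aut_iso_fund} recovers the same bijection: the element $\phi_{\gamma}$ sends $\widetilde{x}$ to $[c_x \bullet \gamma] = [\gamma]$.

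Next I would compare the actions, since $\Fib_x$ takes values in $\pi_1(X,x)$-$\SET$ and the fibre carries the monodromy action. Take a loop $\delta$ and a point $[\gamma] \in q^{-1}(x)$. The lift of $\delta$ starting at $[\gamma]$ is $s \mapsto [\gamma \bullet \delta|_{[0,s]}]$, so its endpoint is $[\gamma \bullet \delta]$. Hence the monodromy action is $[\delta]\cdot[\gamma] = [\gamma][\delta]$: multiplication on the right, written as a left action. This is exactly left multiplication in $\pi_1(X,x)^{op}$, and this mismatch of sides is the source of the ${}^{op}$ in the statement. The fibre is thus $\pi_1(X,x)^{op}$ acting on itself by left translation, which is the regular torsor. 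Freeness and transitivity are consistent with Corollary \ref{free_action_universal} and with transitivity of monodromy.

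The main obstacle is the lifting claim: one must verify that $s \mapsto [\gamma \bullet \delta|_{[0,s]}]$ is continuous for the topology defined via the neighbourhoods $\widetilde{U}$. Near each $s_0$, pick a simply-connected $U$ containing $\delta([s_0-\epsilon, s_0+\epsilon])$; the nearby values then lie in $\widetilde{U}_{[\gamma\bullet\delta|_{[0,s_0]}]}$ by definition, so the path is continuous. Uniqueness of lifts (Lemma \ref{unique_lift}) then identifies it with the monodromy lift. Everything else is a matter of keeping track of the conventions for concatenation order.
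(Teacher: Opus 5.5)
Your argument is correct, but its main line differs from the paper's.

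\textbf{The paper's route.} The paper proves the corollary in two lines. Proposition~\ref{representable_fibre_functor}, applied with $Y=\widetilde{X}$, identifies $q^{-1}(x)$ with $\Hom_X(\widetilde{X},\widetilde{X})=\Aut(\widetilde{X}|X)$ via evaluation at $\widetilde{x}$. Theorem~\ref{aut_iso_fund} then gives $\Aut(\widetilde{X}|X)\cong\pi_1(X,x)^{op}$. So the paper only obtains a bijection of sets, with the ${}^{op}$ inherited from the group $\Aut(\widetilde{X}|X)$. It never discusses the $\pi_1(X,x)$-action on the fibre.

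\textbf{Your route.} Your second paragraph is essentially that argument, except that you get bijectivity of the orbit map from Galois transitivity plus freeness rather than from representability; these are the same fact. Your primary argument instead reads the bijection directly off the path-class model. This is more concrete, but it is tied to that particular construction.

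Your third paragraph goes beyond the paper. It costs you the continuity check for the explicit lift; there, fix the basic neighbourhood $\widetilde{U}$ first and only then choose $\epsilon$ with $\delta([s_0-\epsilon,s_0+\epsilon])\subset U$. In return it determines the action on the fibre and hence the torsor structure. That structure is what the paper implicitly uses later when it writes $\Fib_x(\widetilde{X})/H\cong\pi_1(X,x)/H$ in Theorem~\ref{covering_reformulation_1}.

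\textbf{Two clarifications.}

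First, with the paper's convention that $\gamma_1\bullet\gamma_2$ traverses $\gamma_1$ first, your formula $[\delta]\cdot[\gamma]=[\gamma\bullet\delta]$ is a \emph{right} action of $\pi_1(X,x)$. Equivalently it is a left action of $\pi_1(X,x)^{op}$, not of $\pi_1(X,x)$. Your phrase ``written as a left action'' should be read that way. Note that this conflicts with the paper's theorem on the monodromy action, which asserts a left $\pi_1(X,x)$-action.

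Second, the map $[f]\mapsto[f\bullet\gamma]$ in Theorem~\ref{aut_iso_fund} only makes sense when $f$ ends at $x$; the actual deck transformation is $[f]\mapsto[\gamma\bullet f]$. Your evaluation at $\widetilde{x}$ gives $[\gamma]$ with either formula, so your check is unaffected. With the corrected formula, deck transformations act on $q^{-1}(x)$ by left multiplication while monodromy acts by right multiplication. So the ${}^{op}$ in the statement belongs to the monodromy side, exactly as you say.
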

\begin{proof}
    By Proposition \ref{representable_fibre_functor}, $q^{-1}(x)$ is in one-to-one correspondence with $\Aut(\widetilde{X}|X)$ and by Theorem \ref{aut_iso_fund}, $\Aut(\widetilde{X}|X) \cong \pi_1(X,x)^{op}$.
\end{proof}


\noindent In continuation of Example \ref{example_covering}, we compute the fundamental group of the circle by means of lifting of paths and the universal covering map:

\begin{theorem}
    $\pi_1(S^1) \cong \mathbb{Z}$
\end{theorem}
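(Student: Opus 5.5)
The plan is to use the covering $p:\mathbb{R}\rightarrow S^1$, $p(t)=e^{2\pi i t}$ from Example \ref{example_covering} $b)$ as the universal cover. Then $\pi_1(S^1,1)$ is read off from Theorem \ref{aut_iso_fund} as $\Aut(\mathbb{R}|S^1)^{op}$.

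First, $\mathbb{R}$ is contractible, so it is simply-connected, and $p$ is a covering. Hence $p$ is a universal cover of $S^1$. By the uniqueness of universal covers noted after the construction (a consequence of the lifting criterion, Lemma \ref{lift_criterion}), we may use $\mathbb{R}$ in place of $\widetilde{X}_x$. Theorem \ref{aut_iso_fund} then gives $\pi_1(S^1,1)\cong \Aut(\mathbb{R}|S^1)^{op}$, and it remains to compute this deck group.

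The translations $\tau_n(t)=t+n$ with $n\in\mathbb{Z}$ are homeomorphisms satisfying $p\circ\tau_n=p$, so they lie in $\Aut(\mathbb{R}|S^1)$. Conversely, let $d$ be a deck transformation. Since $p(d(0))=p(0)=1$, we have $d(0)=n$ for some $n\in\mathbb{Z}$. Then $d$ and $\tau_n$ are two morphisms of covers agreeing at $0$, so by Theorem \ref{property_morphism_covers} $a)$ we get $d=\tau_n$. The map $n\mapsto\tau_n$ is an injective homomorphism $\mathbb{Z}\rightarrow\Aut(\mathbb{R}|S^1)$, because $\tau_n\circ\tau_m=\tau_{n+m}$. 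It is therefore an isomorphism, and since $\mathbb{Z}$ is abelian, taking the opposite group changes nothing. This gives $\pi_1(S^1)\cong\mathbb{Z}$, independently of the base point by Proposition \ref{fund_grou_base_pt}.

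As a check, the isomorphism can be made explicit: a loop $\gamma$ at $1$ is sent to $\widetilde{\gamma}_0(1)\in\mathbb{Z}$, which is the monodromy action on $p^{-1}(1)=\mathbb{Z}$. The loop $s\mapsto e^{2\pi i s}$ lifts to $s\mapsto s$ and therefore maps to the generator $1$.

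The main obstacle is justifying that $\mathbb{R}$ may replace the path-class model $\widetilde{X}_x$ in Theorem \ref{aut_iso_fund}. I would handle this with the lifting criterion. Both spaces are simply-connected covers, so Lemma \ref{lift_criterion} gives lifts in each direction. By Corollary \ref{corollary_property_morphism_covers}, the composites of these lifts are the identity, so the two covers are isomorphic. An isomorphism of covers induces an isomorphism of deck groups by conjugation.
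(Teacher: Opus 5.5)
Your argument is correct, but it takes a different route from the paper. The paper never uses deck transformations. It defines $\Gamma:\mathbb{Z}\to\pi_1(S^1)$ by $n\mapsto[\gamma_n]$, with $\gamma_n(s)=e^{2\pi i n s}$, and proves three things:
\begin{itemize}
\item[$\cdot$] $\Gamma$ is a homomorphism: $\widetilde{\gamma}_n$ followed by the translate $r_n(\widetilde{\gamma}_m)$ is a lift of $\gamma_n\bullet\gamma_m$ ending at $n+m$.
\item[$\cdot$] $\Gamma$ is surjective: the lift of any loop starting at $0$ ends at some integer $n$, and it is homotopic rel endpoints to $\widetilde{\gamma}_n$ because $\mathbb{R}$ is simply connected.
\item[$\cdot$] $\Gamma$ is injective: homotopic loops have lifts with the same endpoint, by homotopy lifting.
\end{itemize}
That proof needs only Lemma \ref{unique_lift} and the simple-connectedness of $\mathbb{R}$, and it produces the explicit isomorphism (endpoint of the lift) directly. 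Your proof instead computes $\Aut(\mathbb{R}|S^1)$ from the rigidity of morphisms of covers (Theorem \ref{property_morphism_covers} $a)$). It then hands the identification of this group with $\pi_1$ to Theorem \ref{aut_iso_fund}. The final computation is shorter and fits the chapter's Galois-theoretic viewpoint. The cost is heavier machinery, including the transfer between $\mathbb{R}$ and the path-class model $\widetilde{X}_x$.

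You can drop that transfer step entirely. Theorem \ref{th_link_aut_fund} holds for an arbitrary covering. With $Y=\mathbb{R}$ simply connected, the subgroup $p_*(\pi_1(\mathbb{R},0))$ is trivial, so that theorem gives $\Aut(\mathbb{R}|S^1)\cong\pi_1(S^1,1)$ directly. If you keep the transfer, cite Theorem \ref{property_morphism_covers} $a)$ rather than Corollary \ref{corollary_property_morphism_covers} when you argue that the composites of the two lifts are identities. Those composites are morphisms of covers agreeing with the identity at a point, but they are not yet known to be homeomorphisms, and the corollary assumes they are.
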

\begin{proof}
    $p:\mathbb{R}\rightarrow S^1: t\mapsto e^{2\pi i t}$ for $t\in[0,1]$ is clearly the universal cover since $\mathbb{R}$ is simply-connected.
    Notice that for any $x\in S^1$, we have that the fibre at that point is $\mathbb{Z}$ (a point in each sheet of the covering).
    Now consider the map $\Gamma:\mathbb{Z}\rightarrow \pi_1(S^1)$ given by $n\mapsto [\gamma_n]$
    where $\gamma_n$ is a loop in $S^1$ such that $\gamma_n(s) = e^{2\pi i ns}$.
    By the path-lifting property \ref{unique_lift}, there is a lift $\widetilde{\gamma}_n$ in $\mathbb{R}$ such that $\widetilde{\gamma}_n(s) = ns$.
    Considering the ``helix picture'' we described above, we can think of $\widetilde{\gamma}_n$ as the path that starts at $0$ and wraps $|n|$ times around the helix in the appropriate direction.\\
    This allows us to think of $\Gamma$ as a map sending $n$ to the homotopy class of a loop $p\widetilde{f}$ for some path $\widetilde{f}$ in $\mathbb{R}$ such that $\widetilde{f}(0)=0$ and $\widetilde{f}(1) = n$. 
    Since $\mathbb{R}$ is simply-connected $\widetilde{\gamma}_n \simeq \widetilde{f}$. So $p(\widetilde{f}) \simeq p(\widetilde{\gamma}_n) = \gamma_n$ ($[p(\widetilde{f})] = [\gamma_n]$.
   Consider the transformation $r_k:x \mapsto x+k$ in $\mathbb{R}$. Then we can define the path $\widetilde{\gamma}_n\cdot r_n(\widetilde{\gamma}_m)$ in $\mathbb{R}$ from $0$ to $n+m$. Recall that under the projection, $p(r_k(\widetilde{\gamma}_m)) = p(\widetilde{\gamma}_m)$. 
   Thus as above $\Gamma$ sends the path $\widetilde{\gamma}_n\cdot r_n(\widetilde{\gamma}_m)$ defined by the endpoint $n+m$ to its image under $p$, which is the homotopy class of $\gamma_n\cdot\gamma_m$. So $\Gamma(n+m) = \Gamma(n)\cdot\Gamma(m)$, making $\Gamma$ a morphism of groups.\\
   Consider a loop $f$ in $S^1$ with base point $x\in S^1$ that represents an element of $\pi_1(S^1)$. 
   Again, by the path-lifting property, there exist a unique lift $\widetilde{f}$ that starts at $0$ and ends at $n$ since $p(\widetilde{f}(1)) = f(1) = x$ and $p^{-1}(x) = \mathbb{Z}$. Therefore by our reconsiderations above, we have that $\Gamma(n) = [p(\widetilde{f})] = [f]$, so $\Gamma$ is surjective.
   Now suppose that $\Gamma(n) = \Gamma(m)$, implying that $\gamma_n \simeq\gamma_m$. Let $h$ be the homotopy between $\gamma_n$ and $\gamma_m$. By the homotopy lifting property \ref{unique_lift}, there is a unique homotopy lift $\widetilde{h}$ between the paths $\widetilde{\gamma}_n$ and $\widetilde{\gamma}_m$. Moreover it implies that $\widetilde{\gamma}_n \simeq \widetilde{\gamma}_m$ which means that $n=m$, proving that $\Gamma$ is injective.
\end{proof}

\noindent We finish our introduction on covering spaces and topological fundamental groups by invoking a result about compact manifolds.
\begin{proposition}
    A compact manifold is homotopy equivalent to a CW complex.
\end{proposition}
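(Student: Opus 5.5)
The plan is to reduce to a known structural statement: every compact topological manifold $M$ is a Euclidean neighbourhood retract (ENR), and every ENR is homotopy equivalent to a CW complex. I treat the smooth case first, where the argument is concrete, and then explain how the topological case is obtained.

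\emph{Smooth case.} Suppose $M$ is a compact smooth manifold. There are two routes to a CW structure.

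\begin{enumerate}
\item By Whitney's theorem, $M$ embeds in some $\mathbb{R}^N$. Choose a generic linear projection $f:M\to\mathbb{R}$. Generic height functions are Morse functions, and since $M$ is compact, $f$ has finitely many critical points.
\item Morse theory then says that passing a critical level of index $k$ changes the sublevel set, up to homotopy, by attaching a single $k$-cell.
\item Inducting over the finitely many critical values, $M$ is homotopy equivalent to a finite CW complex, with one $k$-cell for each critical point of index $k$.
\end{enumerate}

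Alternatively, one can invoke Whitehead's theorem that smooth manifolds admit triangulations. This makes $M$ homeomorphic to a finite simplicial complex, which is already a CW complex.

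\emph{Topological case.} Here I would not try to triangulate, since triangulations can fail in dimension $4$. Instead I would use the ENR route.

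\begin{enumerate}
\item A compact manifold is finite dimensional, compact metrizable and locally contractible. Hence it embeds in some $\mathbb{R}^N$ (Menger--N\"obeling), and by Borsuk's characterisation it is a retract of an open neighbourhood $U\subset\mathbb{R}^N$.
\item The open set $U$ can be written as a union of cubes of a fine dyadic grid, which gives a (possibly infinite, locally finite) cubical complex $K\subset U$.
\item Compactness of $M$ lets me restrict to a finite subcomplex $K'$ with $M\subset K'\subset U$.
\item Let $r:U\to M$ be the retraction and $i:M\hookrightarrow K'$ the inclusion. Then $r\circ i=\id_M$.
\item For the other composite, choose the grid fine enough relative to the retraction that each point $k\in K'$ and its image $r(k)$ lie in a common convex subset of $U$. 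The straight-line homotopy then stays in $U$ and shows $i\circ r|_{K'}\simeq \id$ as maps into $U$.
\end{enumerate}

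This makes $M$ a homotopy retract, i.e.\ a homotopy dominated space, of the finite CW complex $K'$.

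\emph{Main obstacle.} The difficulty is upgrading ``dominated by a finite complex'' to ``homotopy equivalent to a CW complex''. The homotopy in step 5 lands in $U$, not in $K'$, so it does not directly give an equivalence with $K'$. My first attempt would be Milnor's theorem that a space dominated by a CW complex has the homotopy type of a CW complex; concretely, one takes the mapping telescope of the idempotent $i\circ r$ on $K'$. This yields a possibly infinite CW complex, which is all the statement requires. If a \emph{finite} complex were wanted, one would instead have to cite Kirby--Siebenmann, which lies well beyond the scope of this thesis. So in the proof I would state the smooth Morse argument in full and cite West and Kirby--Siebenmann for the general topological case, noting that only the weaker statement is needed later.
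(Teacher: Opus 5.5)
Your proof is correct, and its topological part is exactly the argument behind the reference the paper cites (Hatcher, Corollary A.12). A compact manifold is an ENR. Compactness puts it inside a finite simplicial (in your case cubical) neighbourhood $K'\subset U$ of which it is a retract, so $M$ is dominated by a finite complex. The mapping telescope of $i\circ r$, after replacing it by a homotopic cellular map, is then a CW complex homotopy equivalent to $M$.

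Your step 5, and the \emph{main obstacle} it creates, are superfluous. Domination only requires $r\circ i\simeq\id_M$, which step 4 already gives on the nose, and the telescope argument never uses the other composite.

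The Morse/triangulation argument is a genuinely different route. It buys a \emph{finite} complex, but only for smooth $M$, so it supplements the ENR argument rather than replacing it.

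Your closing remark that only the weaker statement is needed later is not right for this paper. Right after this proposition, the paper infers that $\pi_1(M)$ is finitely presented from the statement that the fundamental group of a CW complex is finitely presented. That statement is only guaranteed when the complex has finitely many cells in dimensions $\le 2$, and your telescope is infinite.

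You can repair this without Kirby--Siebenmann or West by using the domination directly. The identity $r_*\circ i_*=\id$ makes $\pi_1(M)$ a retract of the finitely presented group $\pi_1(K')$. Such a retract is finitely presented: $\ker r_*$ is the normal closure of the finitely many elements $s^{-1}\,i_*r_*(s)$, with $s$ running over a finite generating set of $\pi_1(K')$.
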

\begin{proof}
    See \cite{Hatcher}, Corollary $A.12$.
\end{proof}

\noindent And as consequence of the Van Kampen Theorem, we have the following application for CW complexes:

\begin{theorem}
    The fundamental group of a CW complex is finitely presented.
\end{theorem}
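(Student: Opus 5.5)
We read the statement as concerning a \emph{finite}, connected CW complex $X$. This is the case needed for compact manifolds via the preceding proposition. For infinite complexes the claim fails: an infinite wedge of circles has a free fundamental group of infinite rank. The plan is an induction over skeleta, applying the Seifert--Van Kampen theorem at each cell attachment and using $\pi_1(S^1)\cong\mathbb{Z}$, computed above.

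\emph{Step 1 (the $1$-skeleton).} The $1$-skeleton $X^1$ is a finite connected graph. We choose a maximal tree $T\subset X^1$. Since $T$ is contractible, the quotient map $X^1\to X^1/T$ is a homotopy equivalence, so by the corollary on homotopy equivalences $\pi_1(X^1)\cong\pi_1(X^1/T)$. The quotient $X^1/T$ is a wedge of $k$ circles, one for each edge not in $T$. Applying Van Kampen inductively to the wedge, using small contractible neighbourhoods of the wedge point, gives a free group on $k$ generators. In particular $\pi_1(X^1)$ is finitely generated.

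\emph{Step 2 (attaching $2$-cells).} Let $Y'=Y\cup_\varphi e^2$, with attaching map $\varphi:S^1\to Y$. We apply Van Kampen to two open sets.
\begin{itemize}
\item[$\bullet$] $U$ is $Y'$ with an interior point of $e^2$ removed. It deformation retracts onto $Y$, so $\pi_1(U)\cong\pi_1(Y)$.
\item[$\bullet$] $V$ is the open cell $e^2$, which is simply connected.
\item[$\bullet$] $U\cap V$ is an open annulus, so $\pi_1(U\cap V)\cong\mathbb{Z}$, generated by a loop isotopic to the boundary circle.
\end{itemize}
Van Kampen then gives $\pi_1(Y')\cong\pi_1(Y)/\langle\langle[\varphi]\rangle\rangle$. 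The base point must be moved into $U\cap V$ along a path to a point of $Y$, and this changes $[\varphi]$ only by conjugation, as in Proposition \ref{fund_grou_base_pt}. Each $2$-cell therefore contributes exactly one relation, and finitely many $2$-cells give finitely many relations.

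\emph{Step 3 (cells of dimension $n\ge 3$).} We use the same decomposition. Now $V=e^n$ is contractible, and $U\cap V\simeq S^{n-1}$ is simply connected for $n\geq3$. Van Kampen therefore gives $\pi_1(Y')\cong\pi_1(Y)$, so higher cells leave the fundamental group unchanged. Combining the three steps, $\pi_1(X)\cong\pi_1(X^2)$ has a presentation with $k$ generators and one relation per $2$-cell, which is finite.

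The main obstacle is the careful setup of Van Kampen in Step 2. One must check that the open sets are path-connected and contain the base point, that $U$ really deformation retracts onto $Y$ (radially in the cell, composed with $\varphi$ on the boundary), and that the generator of $\pi_1(U\cap V)$ maps to the conjugacy class of $[\varphi]$ after the base-point change. I would first verify this for a single cell with a base point chosen in $U\cap V$. I would then transport the result along a path to $X^0$ using the change-of-base-point isomorphism.
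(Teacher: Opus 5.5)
Your proof is correct and is essentially the argument from the section of Hatcher (Chapter 1.2, Applications to Cell Complexes) that the paper cites in place of a proof. That argument handles the $1$-skeleton by a maximal tree, shows that each $2$-cell imposes one relation, and shows that cells of dimension $\geq 3$ leave $\pi_1$ unchanged. You are also right to add the hypothesis that $X$ be finite (and connected), which the paper's statement omits; without it the claim fails, for instance for an infinite wedge of circles.
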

\begin{proof}
    See \cite{Hatcher}, Chapter $1.2$, Applications to Cell Complexes.
\end{proof}

\noindent From this, for every compact manifold $M$, its fundamental group $\pi_1(M)$ is finitely presented.\\

\subsection{A Reformulation of Classical Galois Theory}

In this section, we present a reformulation of the famous classification of field extensions by Galois-correspondence. 
It was an idea originated by Grothendieck who recognised that the Galois correspondence was really a statement about group action and sets, 
we shall refer to such realisations with the French expression ‘‘\`a la Grothendieck". 
This reformulation could be seen as a genesis for this thesis since this idea motivated Grothendieck to develop an analogous theory for the classification of topological covering spaces.
With these two reformulations at hand, Grothendieck, in his more than influential \textit{S\'eminaires de G\'eom\'etrie Alg\'ebrique} \cite{SGA} developed the theory of finite \'etale covering for algebraic spaces, which sits above the intersection between field extensions and covering spaces.\\
Grothendieck's idea was to study field extensions through the prism of category theory, by showing the existence of a category equivalence between separable field extensions and sets on which the \textit{absolute Galois group} acts in a certain way.
For that, we need a stronger result within the theory of Galois, related to infinite field extensions and profinite groups. This theorem was proven by \textit{W. Krull} in the 1930's, for that he introduced a topological structure on Galois groups.\\

\noindent In the following, the requisite background for classical Galois theory is assumed. 
However, for the sake of clarity, we recall the Main Theorem of Galois theory, which provides the definitive classification of separable field extensions.

\begin{theorem}[Main Theorem of Galois Theory]
\label{galois_finite_field}
    Let $L|k$ be a finite Galois extension with Galois group $G$. The maps $H\mapsto L^{H}$ and $M \mapsto \Gal(L|M)$ yield an inclusion-reversing bijection between the set of subgroups of $G$ and the set of intermediate fields between $L$ and $k$ $($for $H$ a subgroup of $G$ and $M$ an intermediate field between $L$ and $k)$. Moreover
    the extension $L|M$ is always Galois and 
    the extension $M|k$ is Galois if ans only if $H$ is a normal subgroup of $G$ ($\Gal(M|k)\cong G/H$).
\end{theorem}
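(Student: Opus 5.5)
I would prove the Main Theorem of Galois Theory by the classical route through Artin's lemma and the degree count. Recall the standard characterisation: a finite extension $L|k$ is Galois if and only if $\#\Aut(L|k)=[L:k]$, equivalently $L^{\Aut(L|k)}=k$, equivalently $L$ is the splitting field of a separable polynomial over $k$. I take these, together with the tower law and the primitive element theorem for finite separable extensions, as part of the assumed background.

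The first step is Artin's lemma: if $H$ is a finite group of automorphisms of $L$, then $[L:L^H]\le \#H$. I would prove it by linear algebra. Suppose $\alpha_1,\dots,\alpha_n$ are elements of $L$ with $n>\#H$. The system $\sum_j \sigma(\alpha_j)x_j=0$, for $\sigma\in H$, has a nontrivial solution in $L$. Take such a solution with the minimal number of nonzero entries and normalise one entry to $1$. Applying any $\tau\in H$ and subtracting gives a shorter solution, which must therefore vanish. Hence every entry lies in $L^H$, and taking $\sigma=\id$ gives a nontrivial $L^H$-linear dependence among the $\alpha_j$. Since $H\subseteq\Aut(L|L^H)$ and $\#\Aut(L|L^H)\le[L:L^H]$, it follows that $\Gal(L|L^H)=H$ and that $L|L^H$ is Galois. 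This inequality is the main technical obstacle. Everything else is bookkeeping.

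Next, for an intermediate field $M$, note that $L$ is the splitting field over $k$ of a separable polynomial $f$. Then $L$ is also the splitting field of $f$ over $M$, so $L|M$ is Galois. This gives $L^{\Gal(L|M)}=M$. Combined with the previous step, this shows that $H\mapsto L^H$ and $M\mapsto\Gal(L|M)$ are mutually inverse. Inclusion reversal is immediate from the definitions.

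Finally, I treat normality. For $\sigma\in G$, one checks $\Gal(L|\sigma M)=\sigma\Gal(L|M)\sigma^{-1}$.

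\emph{If $H$ is normal.} Then $\sigma M=M$ for all $\sigma\in G$, so restriction gives a homomorphism $G\to\Aut(M|k)$ with kernel $H$. This yields $\#\Aut(M|k)\ge [G:H]=[L:k]/[L:M]=[M:k]$, so $M|k$ is Galois. The homomorphism is surjective by this count, giving $\Gal(M|k)\cong G/H$.

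\emph{If $M|k$ is Galois.} Then $M$ is normal over $k$. Every $\sigma\in G$ sends $M$ into $L$, and hence onto $M$, because the conjugates of elements of $M$ already lie in $M$. Therefore $\sigma H\sigma^{-1}=\Gal(L|\sigma M)=H$, so $H$ is normal.
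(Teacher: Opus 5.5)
Your proof is correct and follows the same classical route as the argument in Lang, Chapter VI.1, which the paper cites instead of giving a proof: Artin's fixed-field theorem, then $L^{\Gal(L|M)}=M$ for every intermediate $M$, then normality via $\Gal(L|\sigma M)=\sigma\Gal(L|M)\sigma^{-1}$. The only real difference is that you prove $[L:L^H]\le\#H$ by Artin's original linear-algebra argument, whereas Lang uses that each $\alpha\in L$ is a root of the separable polynomial $\prod(X-\beta)$ over its $H$-orbit, together with the primitive element theorem; the rest of the argument is identical.
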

\begin{proof}
    The material needed and the proof itself of the main theorem of Galois can be found in \cite{Lang_algebra} (Chapter $VI.$ $1$) .
\end{proof}

\noindent From classical Galois theory for field extensions, we obtain the following lemma (mostly from the primitive element theorem, Theorem $4.6$ in \cite{Lang_algebra})

\begin{lemma}
\label{embed_finite_subextension}
    Let $K|k$ be a Galois field extension (possibly infinite). Then each finite subextension of $K|k$ can be embedded in a finite Galois subextension of $K|k$ called the Galois closure.
\end{lemma}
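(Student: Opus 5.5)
Let $L|k$ be a finite subextension of the Galois extension $K|k$. The plan is to take the normal closure of $L$ inside $K$ and check that it is a finite Galois subextension of $K|k$.

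First, since $L|k$ is finite and separable (every element of $K$ is separable over $k$, as $K|k$ is Galois), the primitive element theorem (Theorem $4.6$ in \cite{Lang_algebra}) gives $\alpha\in L$ with $L=k(\alpha)$. Let $f\in k[x]$ be the minimal polynomial of $\alpha$ over $k$.

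Second, I use normality of $K|k$. Since $K|k$ is normal and $f$ is irreducible with a root $\alpha\in K$, $f$ splits completely in $K$. Let $\alpha=\alpha_1,\dots,\alpha_n\in K$ be its roots, and set $M=k(\alpha_1,\dots,\alpha_n)\subseteq K$. Then:
\begin{itemize}
    \item $M$ is the splitting field of $f$ over $k$, so $M|k$ is finite and normal;
    \item $M$ is generated by separable elements, so $M|k$ is separable.
\end{itemize}
Hence $M|k$ is a finite Galois subextension of $K|k$ with $L=k(\alpha_1)\subseteq M$. One may also note that $M$ is the smallest such field, since any Galois subextension containing $\alpha$ must contain all conjugates $\alpha_i$; this justifies the name Galois closure.

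The main point to be careful about is the infinite case. Here normality and separability of $K|k$ must be taken in the algebraic sense: every irreducible polynomial over $k$ with one root in $K$ splits into distinct linear factors in $K$. With that definition, the argument above never uses finiteness of $K|k$, so it goes through unchanged.

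If the primitive element theorem is to be avoided, there is a fallback: write $L=k(\beta_1,\dots,\beta_m)$ and replace $f$ by the product of the minimal polynomials of the $\beta_j$. The same splitting-field argument then applies.
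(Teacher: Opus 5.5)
Your proposal is correct and follows the same route as the paper: primitive element theorem, then embedding $k(\alpha)$ into the splitting field of the minimal polynomial of $\alpha$. You also make explicit a step the paper leaves implicit, namely that normality of $K|k$ puts all the roots of that polynomial inside $K$, so the splitting field is indeed a subextension of $K|k$.
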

\begin{proof}
    By the \textit{theorem of the primitive element}, each finite subextension is simple (of the form $k(\alpha)$) and we may embed $k(\alpha)$ into the splitting field of a the minimal polynomial of $\alpha$ which is Galois over $k$.
\end{proof}

\subsubsection{Profinite Groups}

As the name suggests, the notion of a topological group is one of the most natural bridges between group theory and topology.
A topological group $G$ is a topological space for which the underlying set is also a group. We also require that the group operations and the inversion maps are continuous.
A homomorphism of topological groups is defined to be a continuous group homomorphism $G\rightarrow H$.
Together they form a category.
Since topological groups are still groups, they can act on all kinds of sets, below we give a necessary and sufficient condition to determine whether an action of a topological group on a discrete topological space is continuous or not.

\begin{definition}
    We say that an action of a topological group $G$ on a topological space $X$ is a \textbf{continuous group action} if the map $m :G\times X\rightarrow X ; (g,x) \mapsto g\cdot x$ is a continuous map.
\end{definition}

\begin{lemma}
\label{continuous_open_stabiliser}
    Let $X$ be a discrete topological space, and $G$ a topological group acting on $X$. We say that $G$ has a continuous action on $X$ if and only if for each $x\in X$, the stabiliser $G_x$ is open.
\end{lemma}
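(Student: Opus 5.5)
We prove both implications by looking at the preimages of points under the action map $m: G\times X\rightarrow X$, $(g,x)\mapsto g\cdot x$. Since $X$ is discrete, $m$ is continuous if and only if $m^{-1}(\{y\})$ is open in $G\times X$ for every $y\in X$. Because $X$ is discrete, $G\times X$ is the topological disjoint union $\bigsqcup_{x\in X} G\times\{x\}$. Hence a subset of $G\times X$ is open exactly when each of its slices $\{g\in G \mid (g,x)\in \cdot\}$ is open in $G$. So continuity of $m$ is equivalent to the statement that, for all $x,y\in X$, the set
$$T_{x,y}=\{g\in G \mid g\cdot x = y\}$$
is open in $G$.

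For the direction ``continuous $\Rightarrow$ stabilisers open'', take $y=x$. Then $T_{x,x}=G_x$ is the preimage of the open point $\{x\}$ under the continuous map $g\mapsto m(g,x)$. This map is the composite of the continuous inclusion $G\rightarrow G\times X$, $g\mapsto (g,x)$, with $m$, so $G_x$ is open.

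For the converse, assume every $G_x$ is open and fix $x,y$. If $T_{x,y}$ is empty there is nothing to show. Otherwise choose $h\in T_{x,y}$. Then $g\cdot x=y$ holds if and only if $h^{-1}g\in G_x$, so $T_{x,y}=hG_x$ is a left coset of $G_x$. Left translation by $h$ is a homeomorphism of $G$, because multiplication is continuous and has the continuous inverse given by translation by $h^{-1}$. Therefore $hG_x$ is open, every $T_{x,y}$ is open, and $m$ is continuous.

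The only step needing any care is the reduction at the start: checking that openness in $G\times X$ can be tested slice by slice because $X$ is discrete. This follows directly from the definition of the product topology, since the sets $U\times\{x\}$ with $U$ open in $G$ form a basis. The rest is the coset computation above.
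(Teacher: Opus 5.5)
Your proof is correct and is essentially the paper's argument: both decompose the preimage of a point under $m$ into slices over the discrete factor $X$, identify each nonempty slice as a translate of a stabiliser, and get the converse by pulling back $\{x\}$ along $g\mapsto (g,x)$. The slices differ only cosmetically: you obtain the left coset $hG_x$ of the source point's stabiliser, the paper a right coset of the target point's stabiliser, which should read $G_x g_y^{-1}$ rather than the printed $G_x g_y$. Your explicit check that openness in $G\times X$ can be tested slice by slice is a detail the paper leaves implicit.
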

\begin{proof}
    Suppose that $G_x$ is open. 
    Since $X$ has the discrete topology, every $x$ is open and\\
    $U_x = \{(g,y)\in G\times X \mid gy=x\}$ is their preimage by our action.
    And
    \begin{equation*}
        U_x = \bigsqcup_{y\in X} \{(g,y), g\in G\mid gy = x\}
            = \bigsqcup_{y\in G\cdot x}\{(g,y), g\in G\mid gy=x\}
    \end{equation*}
    since if $y$ is not in the $G$-orbit of $x$, $\{(g,y) \mid gy = x\} = \emptyset$.
    Now, for all $y\in G\cdot x$, take $g_y\in G$ such that $g_yx = y$, then 
    \begin{equation*}
        U_x  = \bigsqcup_{y\in G\cdot x} \{ (g,y), g\in G\mid gg_yx=x\}
            = \bigsqcup_{y\in G\cdot x} \{y\} \times G_x g_y
    \end{equation*}
    which is open by assumption. The previously defined action $m:G\times X\rightarrow X$ is then continuous.
    Now suppose that the above map is continuous, then $G_x$ is the preimage of $x$ by the composition $m\circ \iota_x$ where $\iota_x : g\mapsto  (g,x)$ continuously.
    Hence, for every $x\in X$, $G_x$ is open by assumption
\end{proof}

\noindent An important class of topological groups are profinite groups. 
Profinite objects arise in category theory as the inverse limit of an inverse system of finite objects.

\begin{definition}
    A \textbf{profinite group} is a group that is isomorphic to the inverse limit of an inverse system of discrete finite groups.
    An inverse system consists of a directed set $(I, \leq)$, an indexed family of finite groups $\{G_i \mid i \in I\}$ and a family of homomorphisms $\{\phi_{ij} :G_j\rightarrow G_i \mid i,j\in I , i\leq j\}$ such that $\phi_{ii}$ is the identity map and the collection satisfied $\phi_{ij}\circ\phi_{jk} = \phi_{ik}$ whenever $i\leq j\leq k$. The inverse limit is then the set,
    $$\varprojlim G_i = \Bigl\{(g_i)_{i\in I} \in \prod_{i\in I}  G_i \mid \phi_{ij}(g_j)=g_i \text{ for all } i\leq j   \Bigr\}$$
\end{definition}

\begin{remark}
\label{prof_are_top}
    We can endow a profinite group with a natural topology as follows:
    let $G$ be an inverse limit of the system of finite groups $(G_{\alpha} , \phi_{\alpha\beta})$ and endow $G_{\alpha}$ with the discrete topology, their product with the product topology and the subgroup $G\subset \prod G_{\alpha}$ with the subspace topology.\\
    With this in mind, we see that the projection homomorphisms $\pr_i:\varprojlim G_i \rightarrow G_i$ are continuous, and their kernels form a basis of open neighbourhoods of $1$ in $G$ (the last statement holds since for every non-trivial element $g$, there exists some coordinate $i$ such that $\pr_i(g)\neq 1$ by definition of the inverse limit).\\
    And by translations, we can then obtain a basis of open normal subgroups of neighbourhoods of any elements $g\in G$ (note that translations are homeomorphisms)
\end{remark}

\noindent Another famous characterisation of profinite group is the following: 

\begin{theorem}
\label{profinite_groups_other}
    Profinite group are equivalently groups for which the underlying set is compact, totally disconnected and Hausdorff.
\end{theorem}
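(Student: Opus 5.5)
The plan has two directions. In each, "compact, totally disconnected and Hausdorff" refers to the topological group structure, with continuous multiplication and inversion.

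\emph{Profinite implies compact, Hausdorff, totally disconnected.} Let $G=\varprojlim G_i$ carry the topology of Remark \ref{prof_are_top}. Each finite discrete $G_i$ is compact, Hausdorff and totally disconnected, so by Tychonoff the product $\prod_i G_i$ is compact. Products of Hausdorff spaces are Hausdorff, and products of totally disconnected spaces are totally disconnected (a connected subset projects to a connected, hence singleton, subset of each factor). These two properties pass to subspaces. For compactness of $G$, I will show that $G$ is closed in the product. It is the intersection over $i\le j$ of the sets $\{(g_k)\mid \phi_{ij}(g_j)=g_i\}$. Each such set is the preimage of the diagonal of the discrete space $G_i\times G_i$ under the continuous map $(g_k)\mapsto(\phi_{ij}(g_j),g_i)$, so it is closed. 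A closed subset of a compact space is compact.

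\emph{Compact, Hausdorff, totally disconnected implies profinite.} This is the substantive direction, and I will organise it as follows.

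\textbf{Step 1 (main obstacle).} Show that the open normal subgroups of $G$ form a neighbourhood basis of $1$. The hard part is producing open subsets from total disconnectedness. First, in a compact Hausdorff space, the connected component of a point equals its quasi-component, the intersection of all clopen sets containing it. The standard proof: if the quasi-component $Q$ of $x$ split as a disjoint union of two closed sets $A\sqcup B$ with $x\in A$, normality gives disjoint open sets $U\supset A$ and $V\supset B$. Compactness then yields finitely many clopen sets containing $x$ whose intersection $C$ lies in $U\cup V$. The set $C\cap U$ is clopen and contains $x$, so $Q\subseteq C\cap U$, forcing $B=\emptyset$. Total disconnectedness then says $\{1\}$ is the intersection of the clopen sets containing it. Given an open $W\ni 1$, compactness of $G\setminus W$ yields a single clopen $C$ with $1\in C\subseteq W$.

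Next, using continuity of multiplication and compactness of $C$, I will find a symmetric open $V\ni 1$ with $CV\subseteq C$. For each $c\in C$ choose open sets $c\in U_c$ and $1\in V_c$ with $U_cV_c\subseteq C$, take a finite subcover by $U_{c_1},\dots,U_{c_n}$, and set $V=\bigcap_k V_{c_k}\cap\bigcap_k V_{c_k}^{-1}$. Then the subgroup $H=\bigcup_n V^n$ is open and contained in $C$. The set $G/H$ is finite by compactness, so the normal core $N=\bigcap_{g}gHg^{-1}$ is a finite intersection. Hence $N$ is an open normal subgroup with $N\subseteq W$.

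\textbf{Step 2.} Let $\mathcal N$ be the set of open normal subgroups, directed by reverse inclusion. Each $G/N$ is finite, again by compactness, since the cosets form an open cover by disjoint sets. Form the inverse system $(G/N)_{N\in\mathcal N}$ with the natural projections, and consider the continuous homomorphism $\Phi:G\to\varprojlim G/N$.

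\textbf{Step 3.} Check that $\Phi$ is an isomorphism of topological groups.
\begin{itemize}
  \item \emph{Injectivity.} By Hausdorffness and Step 1, $\bigcap\mathcal N=\{1\}$.
  \item \emph{Surjectivity.} Given a compatible family $(g_NN)$, the cosets $g_NN$ are closed and have the finite intersection property, because the family is compatible and $\mathcal N$ is directed. By compactness they share a point $g$, and $\Phi(g)$ is the given family.
  \item \emph{Homeomorphism.} $\Phi$ is a continuous bijection from a compact space to a Hausdorff space, hence a homeomorphism.
\end{itemize}
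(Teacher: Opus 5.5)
Your proof is correct and complete, and you rightly read the statement as being about topological groups, since continuity of multiplication is genuinely used in your Step~1. The paper gives no argument of its own and simply cites \cite{Ribes}, Theorem~2.1.3. Your argument is the standard proof behind that reference: a clopen neighbourhood basis obtained from component $=$ quasi-component, an open subgroup inside a clopen set, its normal core, and then $G\cong\varprojlim G/N$ by the finite intersection property. So you have made the outsourced argument self-contained.
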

\begin{proof}
    See \cite{Ribes} (Theorem $2.1.3$)
\end{proof}

\begin{corollary}
\label{closed_profinite_subgroup}
    Closed subgroups of profinite groups are again profinite.
\end{corollary}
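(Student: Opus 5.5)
The natural route goes through the topological characterisation in Theorem \ref{profinite_groups_other}: a group is profinite exactly when, as a topological group, it is compact, Hausdorff and totally disconnected. Let $G$ be profinite and $H \leq G$ a closed subgroup, carrying the subspace topology. The plan is to check the three topological properties for $H$ one at a time and then invoke the theorem in the reverse direction.

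The checks are as follows.
\begin{itemize}
    \item[$a)$] $H$ is a topological group. Multiplication and inversion on $H$ are restrictions of the continuous operations on $G$. Since $H$ is a subgroup, these restrictions land in $H$, so they are continuous for the subspace topology.
    \item[$b)$] $H$ is compact. It is a closed subset of the compact space $G$.
    \item[$c)$] $H$ is Hausdorff. Any subspace of a Hausdorff space is Hausdorff.
    \item[$d)$] $H$ is totally disconnected. A connected subset of $H$ is also connected as a subset of $G$, so it is a single point.
\end{itemize}
Theorem \ref{profinite_groups_other} then gives that $H$ is profinite.

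The main concern is whether Theorem \ref{profinite_groups_other} is being used in the right generality. It must be read as a statement about topological groups, so that the algebraic and topological structures are compatible, and that is exactly what step $a)$ secures. If one would rather not rely on the characterisation theorem, a direct description is also available. Using the basis of open normal subgroups $\ker(\pr_i)$ from Remark \ref{prof_are_top}, set $N_i = \ker(\pr_i)$ and consider the finite discrete groups $H/(H\cap N_i) \cong H N_i/N_i \subseteq G_i$, which form an inverse system. One shows that the natural map $H \to \varprojlim H/(H\cap N_i)$ is injective, because the $N_i$ intersect trivially, and continuous. It is surjective because of compactness: a compatible family gives a nested family of nonempty closed cosets $h_i(H\cap N_i)$, and by the finite intersection property these have a common point in $H$. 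In this approach, that surjectivity step is the only nontrivial point, which is why I would present the first argument and mention the second only as a remark.
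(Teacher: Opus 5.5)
Your proof is correct and takes the same route as the paper. The paper also derives the corollary from Theorem~\ref{profinite_groups_other}: a closed subset of a compact Hausdorff space is compact, and both Hausdorffness and total disconnectedness pass to subspaces. Your explicit check that $H$ is a topological group, and your alternative inverse-limit argument via $H/(H\cap N_i)$, are both sound additions but go beyond what the paper needs.
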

\begin{proof}
    Since closed subsets of a compact Hausdorff space are precisely the compact subsets and subsets of totally disconnected Hausdorff spaces are again totally disconnected and Hausdorff. Closed subgroups are again profinite by the above characterisation.
\end{proof}

\begin{corollary}
\label{open_subgroup_profinite}
    A subgroup of a profinite groups is open if and only if it is closed and of finite index.
\end{corollary}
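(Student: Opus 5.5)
The plan is to prove the two implications separately, using only that a profinite group $G$ is a compact topological group (Theorem \ref{profinite_groups_other}) in which multiplication by a fixed element is a homeomorphism.

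\emph{Open implies closed of finite index.} Let $H\le G$ be open. The left cosets $gH$, for $g\in G$, are images of $H$ under translation, so each of them is open. They partition $G$, and the complement of $H$ is the union of the cosets $gH\neq H$. That union is open, so $H$ is closed. For the index, note that the cosets form an open cover of $G$ by pairwise disjoint sets. By compactness a finite subcover exists. Since the cosets are disjoint, no coset can be dropped from a cover, so there are only finitely many cosets and $[G:H]<\infty$.

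\emph{Closed of finite index implies open.} Let $H$ be closed with $[G:H]=n$, and choose coset representatives $g_1=1,g_2,\dots,g_n$. Each $g_iH$ is closed, again because translation is a homeomorphism. The complement of $H$ is $\bigcup_{i\ge 2} g_iH$, which is a finite union of closed sets and hence closed. Therefore $H$ is open.

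The only point needing care is the finiteness step in the first implication: we must use that the cosets are disjoint, so that a finite subcover has to consist of all of them. Everything else is a direct consequence of translations being homeomorphisms, as recorded in Remark \ref{prof_are_top}. We never need the profinite structure beyond compactness, so the statement in fact holds for any compact topological group.
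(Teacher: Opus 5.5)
Your proof is correct and follows the same argument as the paper: translations make the cosets of an open subgroup open, so its complement is open and compactness bounds the number of cosets, while the complement of a closed finite-index subgroup is a finite union of closed cosets. Your explicit remark that disjointness forces every finite subcover to contain all the cosets fills in a step the paper's proof leaves implicit.
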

\begin{proof}
    Every open subgroup $U$ in a topological group is closed. Indeed, for topological groups, translations are homeomorphisms, so $U\mapsto gU$ is a homeomorphism and since the complement of $U$ is given by the disjoint union of open cosets $gU$, it is also closed. 
    By compactness, the number of these cosets must be finite hence the subgroup is of finite index.
    Conversely, a closed subgroup of finite index is open, being the complement of the finite disjoint union of its cosets, which are closed by homeomorphism.
\end{proof}

\begin{lemma}
\label{trivial_intersection_normal_subgroups}
    Let $G$ be a profinite group. 
    Then the intersection of all open normal subgroups of $G$ is trivial.
\end{lemma}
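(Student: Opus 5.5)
We write $G=\varprojlim G_i$ for an inverse system of finite discrete groups $(G_i,\phi_{ij})$, as in the definition of profinite groups. The plan is to exhibit enough open normal subgroups to separate every non-trivial element from the identity. The natural candidates are the kernels $N_i=\Ker(\pr_i)$ of the projection homomorphisms $\pr_i:G\rightarrow G_i$.

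First, each $N_i$ is normal in $G$, since it is the kernel of a group homomorphism. Second, each $N_i$ is open. By Remark \ref{prof_are_top}, $\pr_i$ is continuous, and $G_i$ carries the discrete topology, so $\{1\}$ is open in $G_i$. Hence $N_i=\pr_i^{-1}(\{1\})$ is open in $G$. Equivalently, $N_i=G\cap\bigl(\{1\}\times\prod_{j\neq i}G_j\bigr)$, which is a basic open set of the product topology intersected with $G$. So every $N_i$ lies in the family of open normal subgroups of $G$.

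It then suffices to show $\bigcap_{i\in I}N_i=\{1\}$, since the intersection of all open normal subgroups is contained in this intersection. Let $g=(g_i)_{i\in I}\in G$ with $g\neq 1$. By definition of the inverse limit as a subset of $\prod_i G_i$, the identity of $G$ is the tuple $(1)_{i\in I}$, so some coordinate satisfies $g_i\neq 1$. Then $\pr_i(g)\neq 1$, so $g\notin N_i$, and therefore $g$ is not in the intersection.

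There is no real obstacle here. The only point needing care is to use the concrete model of $G$ as an inverse limit with the subspace topology from the product. An abstract profinite group is only isomorphic to such a limit, but an isomorphism of topological groups carries open normal subgroups to open normal subgroups, so the conclusion transfers. Alternatively, one could argue through Theorem \ref{profinite_groups_other}: use compactness, Hausdorffness and total disconnectedness to find open subgroups avoiding a given $g\neq1$, then pass to their normal cores, which are open because an open subgroup has finitely many conjugates by Corollary \ref{open_subgroup_profinite}. The inverse-limit argument is shorter, so that is the one I would write.
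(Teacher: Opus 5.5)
Your proof is correct. It differs from the paper's in how you show that a given $g\neq 1$ escapes some open normal subgroup.

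The paper argues through the topology. Since $G$ is Hausdorff, there are disjoint open sets $U\ni 1$ and $V\ni g$. By Remark \ref{prof_are_top} the kernels $\Ker(\pr_i)$ form a neighbourhood basis of $1$, so some $\Ker(\pr_i)\subset U$, and that kernel misses $g$. You bypass both ingredients and simply read off a non-trivial coordinate of $g$.

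Your route is more economical and self-contained. It needs only continuity of the $\pr_i$ and the coordinatewise description of the identity. The paper's basis property is stronger: to fit a single kernel inside a basic open set that constrains finitely many coordinates, one has to use directedness of $I$. Moreover, the justification the paper gives for that property is exactly your coordinate argument, which proves trivial intersection rather than the basis claim.

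The paper's route, in exchange, is intrinsic. It works verbatim for any Hausdorff topological group whose identity has a neighbourhood basis of open normal subgroups, without fixing an inverse-limit presentation. Your fallback via Theorem \ref{profinite_groups_other} and normal cores is the analogue of that intrinsic approach.
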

\begin{proof}
    Since $G$ is Hausdorff, for any element $g\in G$, there exist two open sets $U$ and $V$ in $G$ with $1\in U$ and $g\in V$ such that $U\cap V = \emptyset$.
    But since $U$ contains $1$, from the basis of normal open neighbourhoods $\{U_i\mid i\in I\}$ (from Remark \ref{prof_are_top}), there exists an open normal subgroup $U_i\subset U$ which contains $1$.
    Thus $U_i\cap V=\emptyset$, so $g\notin \bigcap_{i\in I}U_i$.
    Hence the only element that lies in every neighbourhood of $1$ is $1$ itself.
\end{proof}

\begin{lemma}
\label{lemma_profinite}
    Let $G$ be a profinite group, and $H\subset G$ a closed subgroup.
    \begin{itemize}
        \item[$a)$] The intersection of the open subgroups of $G$ containing $H$ is exactly $H$.
        \item[$b)$] Given an open subgroup $V'\subset H$, there is an open subgroup $V\subset G$, with $V\cap H = V'$
    \end{itemize}
\end{lemma}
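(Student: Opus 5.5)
The plan is to use the basis of open normal subgroups of $G$ from Remark \ref{prof_are_top}, together with compactness of $G$ and of the closed subgroup $H$. Here $H$ is compact by Corollary \ref{closed_profinite_subgroup}.

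For $a)$, let $\mathcal{U}$ denote the family of open normal subgroups of $G$. For each $U\in\mathcal{U}$ the product $UH$ is a subgroup, since $U$ is normal. It is open, being a union of the cosets $Uh$, and it contains $H$. Hence the intersection of all open subgroups containing $H$ lies inside $\bigcap_{U\in\mathcal{U}} UH$, and it suffices to show that this last intersection equals $H$.

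Take $g\notin H$. Since $H$ is closed, $G\setminus H$ is an open neighbourhood of $g$. By Remark \ref{prof_are_top} there is some $U\in\mathcal{U}$ with $gU\subset G\setminus H$, that is, $gU\cap H=\emptyset$. Now suppose $g\in UH=HU$, so $g=hu$ with $h\in H$ and $u\in U$. Then $h=gu^{-1}\in gU\cap H$, which is a contradiction. So $g\notin UH$, and the intersection is exactly $H$. The reverse inclusion is trivial.

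For $b)$, $V'$ is open in $H$ for the subspace topology, and it contains $1$. So there is an open set $W\subset G$ with $W\cap H\subset V'$ and $1\in W$. By Remark \ref{prof_are_top} we may shrink $W$ to an open normal subgroup $U\in\mathcal{U}$ with $U\cap H\subset V'$. Set $V:=UV'$. This is a subgroup because $U$ is normal, and it is open as a union of translates of $U$.

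It remains to check $V\cap H=V'$. Clearly $V'\subset V\cap H$. Conversely, if $uv'\in H$ with $u\in U$ and $v'\in V'$, then $u=(uv')v'^{-1}\in U\cap H\subset V'$, so $uv'\in V'$.

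The only genuinely delicate point is extracting an open normal subgroup inside a prescribed neighbourhood of $1$, or of $g$ after translating. This is exactly what Remark \ref{prof_are_top} provides, since the kernels of the projections $\pr_i$ are open normal subgroups forming a neighbourhood basis of $1$. No compactness argument is actually needed beyond this, though one could alternatively deduce $a)$ from $b)$-type finite-index arguments via Corollary \ref{open_subgroup_profinite}.
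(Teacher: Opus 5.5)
Your proof is correct. In $a)$ your subgroup $UH$ is exactly the paper's $\pr_N^{-1}(\pr_N(H))=HN$, so that part follows the paper.

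In $b)$ you take a different route. The paper first notes that $V'$ is closed in $G$, being open, hence closed, in the closed subgroup $H$. It applies $a)$ to both $V'$ and $H$. It then uses that $[H:V']$ is finite (Corollary \ref{open_subgroup_profinite}, i.e.\ compactness of $H$) to extract finitely many open subgroups $V_1,\dots,V_n\supset V'$ of $G$ with $V'=V_1\cap\dots\cap V_n\cap H$, and sets $V=V_1\cap\dots\cap V_n$. You instead read the openness of $V'$ directly off the subspace topology, shrink to an open normal $U$ with $U\cap H\subset V'$, and take $V=UV'$.

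Your argument is shorter and uses neither compactness nor the closedness of $H$. It therefore proves $b)$ for an arbitrary subgroup of any topological group whose identity has a neighbourhood basis of open normal subgroups. The paper's version instead shows that $b)$ is a formal consequence of $a)$ plus finite index, at the price of needing $H$ closed.

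There are two small slips in your write-up. Your opening sentence announces a use of compactness of $G$ and $H$ that never occurs. Your closing aside also has the direction reversed: it is $b)$ that can be deduced from $a)$ via finite index, as the paper does.
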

\begin{proof}
    For the first statement, given $g\in G\backslash H$, we shall find an open subgroup of $G$ containing $H$ but not $g$.
    Since $H$ is closed, and $G$ is Hausdorff, there exists an open neighbourhood $U$ of $g$ such that $U\cap H = \emptyset$. 
    From Remark \ref{prof_are_top}, by translation we can take an open subgroup $N$ in $H$, normal in $G$ such that $gN$ is a basis element of the neighbourhood system at $g$, this gives us $gN\cap H = \emptyset$.
    Then the open subgroup $\pr_N^{-1}(\pr_N(H))$ is such a subgroup, with $\pr_N: G\rightarrow G/N$ the natural projection.\\
    For the second statement, we use that $V'$ is closed in $G$ and by the first statement, we write both $V'$ and $H$ as the intersection of the open subgroups of $G$ containing them.
    But since $[H:V']$ is finite (by the above corollaries), we find finitely many open subgroups $V_1,...,V_n$ in $G$ containing $V'$ but not $H$ such that $V'= V_1\:\cap\:...\cap\: V_n\cap H$. Thus $V = V_1\:\cap...\cap\:V_n$ proves the last statement.
\end{proof}

\noindent Sometimes, we want to endow a group with a profinite group structure (in the sense of Theorem \ref{profinite_groups_other}) even when it is not possible. 
For example, the group $\mathbb{Z}$ cannot be seen as a profinite group; we cannot endow it with a topology that is simultaneously compact, totally disconnected and Hausdorff. 
When this is the case, we can ‘‘force" a profinite group structure by considering the profinite completion of a group.

\begin{definition}
    Given an arbitrary group $G$, the \textbf{profinite completion} $\hat{G}$ of $G$ is defined as the inverse limit of the groups $G/N$, where $N$ runs through the normal subgroups in $G$ of finite index (these normal subgroups are partially ordered by inclusion, which makes them into an inverse system of natural homomorphisms between the quotients).
\end{definition}

\begin{remark}
\label{finite_generated_prof_grp}
While it is not always the case that the profinite completion of a finitely presented group is finitely presented, 
it is the case that the profinite completion of a finitely generated group is topologically finitely generated (see \cite{Ribes}, Section $2.12.2$).
We say that a profinite group is topologically finitely generated if it contains a
dense finitely generated subgroup.
\end{remark}

\noindent The next proposition realises Galois groups of Galois extensions as profinite groups

\begin{proposition}
\label{Galois_groups_are_profinite}
    Let $K|k$ be a Galois field extension (possibly infinite). The Galois groups of finite Galois subextensions of $K|k$ together with the homomorphisms $\phi_{ML}:\Gal(M|k)\rightarrow \Gal(L|k)$ form an inverse system whose inverse limit is isomorphic to $\Gal(K|k)$.
\end{proposition}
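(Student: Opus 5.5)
We set up the inverse system, build the natural map from $\Gal(K|k)$ to its limit, and show it is injective and surjective.

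\emph{The inverse system.} Let $I$ be the set of finite Galois subextensions $L|k$ of $K|k$, ordered by inclusion. This set is directed: given $L, M \in I$, the compositum $LM$ is again a finite Galois subextension containing both.

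For $L \subseteq M$, define $\phi_{ML}:\Gal(M|k)\rightarrow\Gal(L|k)$ by restriction $\sigma\mapsto\sigma|_L$. This is well defined because $L|k$ is normal, so every $k$-automorphism of $M$ maps $L$ into itself. The compatibilities $\phi_{LL}=\id$ and $\phi_{ML}\circ\phi_{NM}=\phi_{NL}$ for $L\subseteq M\subseteq N$ are immediate, since restricting in two steps is the same as restricting once. Hence $(\Gal(L|k),\phi_{ML})$ is an inverse system of finite groups.

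\emph{The comparison map.} Define
$$\rho:\Gal(K|k)\rightarrow\varprojlim_{L\in I}\Gal(L|k),\qquad \sigma\mapsto(\sigma|_L)_{L\in I}.$$
The image lies in the inverse limit because restrictions are compatible. It is a group homomorphism because restriction respects composition.

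\emph{Injectivity.} Suppose $\sigma|_L=\id$ for every $L\in I$. Any $\alpha\in K$ lies in the finite subextension $k(\alpha)$. By Lemma \ref{embed_finite_subextension}, $k(\alpha)$ is contained in some finite Galois subextension $L$. Hence $\sigma(\alpha)=\alpha$, and so $\sigma=\id$.

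\emph{Surjectivity.} This is the only step requiring real care. Take a compatible family $(\sigma_L)_{L\in I}$ and define $\sigma:K\rightarrow K$ by $\sigma(\alpha)=\sigma_L(\alpha)$, where $L$ is any element of $I$ containing $\alpha$ (one exists by Lemma \ref{embed_finite_subextension}).

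\begin{itemize}
    \item[$a)$] \textbf{Well defined.} If $\alpha\in L\cap L'$, use directedness to pick $M\in I$ containing both. Compatibility gives $\sigma_L(\alpha)=\sigma_M(\alpha)=\sigma_{L'}(\alpha)$.
    \item[$b)$] \textbf{Field homomorphism fixing $k$.} Given $\alpha,\beta\in K$, choose one $L\in I$ containing both. Then $\sigma(\alpha+\beta)$ and $\sigma(\alpha\beta)$ are computed inside $L$ by the $k$-automorphism $\sigma_L$, so additivity, multiplicativity and fixing $k$ follow.
    \item[$c)$] \textbf{Bijective.} $\sigma$ is injective because it is a field homomorphism. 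It is surjective because each $\sigma_L$ is onto $L$, and $K$ is the union of the $L\in I$.
\end{itemize}

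Thus $\sigma\in\Gal(K|k)$ and $\rho(\sigma)=(\sigma_L)_{L\in I}$, so $\rho$ is an isomorphism of groups.

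If one wants $\rho$ to be a homeomorphism as well, one takes on $\Gal(K|k)$ the Krull topology, whose basic open neighbourhoods of $1$ are the subgroups $\Gal(K|L)$ for $L\in I$. One then observes that $\rho(\Gal(K|L))$ is exactly $\ker(\pr_L)$ on the limit side, so $\rho$ matches the two bases of neighbourhoods of $1$ from Remark \ref{prof_are_top}.
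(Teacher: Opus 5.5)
Your proof is correct and follows the same route as the paper: the restriction map $\sigma\mapsto(\sigma|_L)_L$ into the inverse limit, injectivity via Lemma \ref{embed_finite_subextension}, and surjectivity by gluing a compatible family $(\sigma_L)$ into an automorphism of $K$. You also supply details the paper leaves implicit: directedness via composita, well-definedness through a common upper bound in $I$, and the check that the glued map is a bijective $k$-homomorphism.
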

\begin{proof}
    By the main theorem of Galois theory, we can quickly check that the above data forms an inverse system, $\phi_{ML}$ are the canonical surjections and we have $\phi_{NL}=\phi_{ML}\circ\phi_{NM}$ for $N$ a finite Galois subextension containing $M$. We only have to prove the isomorphism part of the statement.\\
    Let us define a group homomorphism 
    \begin{equation*}
        \begin{split}
            \phi : \Gal(&K|k)\longrightarrow \prod_{L}  \Gal(L|k)\textbf{ } ; \textbf{ }
            \sigma \longmapsto \prod_{L} \sigma\vert_L
        \end{split}
    \end{equation*}
    For $L$, finite Galois extensions of $k$.
    We first prove that $\phi$ is injective by contraposition: if $\sigma$ does not fix an element $\alpha$ of $k_s\subset K$ (for $k_s$ the separable closure as in Definition \ref{separable_closure}), then its restriction to a finite Galois subextension $L$ containing $k(\alpha)$  (which always exists via Lemma \ref{embed_finite_subextension}) is nontrivial.
    And then show that $\phi$ is also surjective. 
    From the \textit{main theorem of Galois theory}, the image of $\phi$ is contained inside $\varprojlim \Gal(L|k)$ since the chaining inclusions make the correct sequences. 
    Now, if we take an element $(\sigma_L)\in \varprojlim \Gal(L|k)$ and define a $k$-automorphism $\sigma$ of $K$ with $\sigma(\alpha) = \sigma_L(\alpha)$, $L$ some finite Galois extension containing $k(\alpha)$. $\sigma$ is well-defined since by definition our sequence $(\sigma_L)$ forms a compatible system of automorphisms. Finally, by construction $\sigma$ maps to $(\sigma_L)\in \varprojlim \Gal(L|k)$, so $Im (\phi) = \varprojlim \Gal(L|k)$.
\end{proof}

\noindent From the previous proposition and by the definition of profinite groups, we can indeed conclude that Galois groups are profinite.
And as long as one does not specify the ground field, the opposite is also true. 
This was proven in two steps by \textit{C. Jordan} : finite symmetric groups arise as Galois groups for some field extension (see Example $VI.\:2.4$ in \cite{Lang_algebra}). And then in \cite{profinite_waterhouse}, \textit{C. Waterhouse}, showed that profinite groups are always realisable as the Galois group of some (infinite) field extension.
The case for which the base field is fixed to be the rational numbers $\mathbb{Q}$ however, remains an open problem, famously referred to as the \textit{Inverse Galois Problem}.\\

\noindent Now we can prove the main theorem of Galois theory for infinite field extensions (this was our main reason to introduce topological groups):

\begin{theorem}[Krull]
    Let $K|k$ be a Galois extension (possibly infinite) with Galois group $G$. If $L$ is a subextension of $K|k$, then $\Gal(K|L)$ is a closed group of $G$.
    The maps $H\mapsto K^H$ and $L\mapsto \Gal(K|L)$ yield an inclusion-reversing bijection between the set of closed subgroups of $G$ and the set of intermediate fields between $K$ and $k$ (for $H$ a closed subgroup of $G$ and $M$ an intermediate field between $K$ and $k$).
    And a subextension $L|k$ is Galois if and only if $\Gal(L|k)$ is normal in $G$ (in that case, $\Gal(L|k) \cong G/H$).
\end{theorem}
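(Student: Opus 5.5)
The plan is to reduce everything to the finite Main Theorem \ref{galois_finite_field}, using the profinite topology on $G=\Gal(K|k)\cong\varprojlim\Gal(M|k)$ from Proposition \ref{Galois_groups_are_profinite}. In this topology a basis of open neighbourhoods of $1$ is given by the subgroups $\Gal(K|M)$, with $M$ running over the finite Galois subextensions; these are the kernels of the projections $\pr_M$. Since every element of $K$ lies in some finite Galois $M$ (Lemma \ref{embed_finite_subextension}), the subgroups $\Gal(K|k(\alpha))$ are open for each $\alpha\in K$.

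\textbf{Closedness of $\Gal(K|L)$.} We have $\Gal(K|L)=\bigcap_{\alpha\in L}\Gal(K|k(\alpha))$. Each $\Gal(K|k(\alpha))$ is an open subgroup, hence closed by Corollary \ref{open_subgroup_profinite}. An intersection of closed subgroups is closed, so $\Gal(K|L)$ is closed.

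\textbf{The two maps are mutually inverse.} First, $K^{\Gal(K|L)}=L$. Clearly $L$ is fixed. Conversely, let $\alpha\notin L$ and choose a finite Galois $M\supseteq L(\alpha)$ over $k$. Apply the finite theorem to the finite Galois extension $M|M\cap L$: it gives $\tau\in\Gal(M|M\cap L)$ with $\tau(\alpha)\neq\alpha$, because $\alpha\notin M\cap L$. Extend $\tau$ to an element of $\Gal(K|L)$. This is the delicate point, since we need an extension that fixes all of $L$ and not just $M\cap L$. I would handle it via the compositum: $ML$ is Galois over $L$ and the restriction $\Gal(ML|L)\to\Gal(M|M\cap L)$ is an isomorphism. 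This is the classical translation theorem, provable by the finite theorem applied to a finite Galois extension containing $M$ together with finitely many generators, and a limit argument. Then extend from $ML$ to $K$ by surjectivity of $\Gal(K|L)\to\Gal(ML|L)$, which follows from Proposition \ref{Galois_groups_are_profinite} applied to $K|L$.

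Second, $\Gal(K|K^H)=H$ for closed $H$. The inclusion $H\subseteq\Gal(K|K^H)$ is clear. For the reverse, take $\sigma\in\Gal(K|K^H)$ and an open neighbourhood $\sigma\Gal(K|M)$ with $M$ finite Galois. The image $\bar H$ of $H$ in $\Gal(M|k)$ has fixed field $M\cap K^H=M^{\bar H}$. By the finite theorem, $\sigma|_M\in\Gal(M|M^{\bar H})=\bar H$, so $\sigma\Gal(K|M)$ meets $H$. Thus $\sigma\in\overline H=H$. This density step, together with the extension of automorphisms above, is the main obstacle; everything else is formal.

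\textbf{Normality.} For $\sigma\in G$ we have $\sigma\Gal(K|L)\sigma^{-1}=\Gal(K|\sigma L)$. By bijectivity, $H=\Gal(K|L)$ is normal if and only if $\sigma L=L$ for all $\sigma$, which is equivalent to $L|k$ being normal, i.e. Galois since $K|k$ is separable. In that case restriction $G\to\Gal(L|k)$ is a homomorphism with kernel $H$. It is surjective because any $k$-automorphism of $L$ extends to $K$, by the same compositum/limit argument. Hence $\Gal(L|k)\cong G/H$.
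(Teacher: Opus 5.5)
Your proposal is correct and follows the paper's skeleton. In both, $\Gal(K|L)$ is closed as an intersection of open, hence closed, subgroups, and $\Gal(K|K^H)=H$ is proved by showing that every basic neighbourhood of $\sigma\in\Gal(K|K^H)$ meets $H$. The differences mostly work in your favour:
\begin{itemize}
\item The paper never proves $K^{\Gal(K|L)}=L$, so its bijection is only half established. You supply this step.
\item Your density step takes $M$ finite Galois over $k$ and identifies the fixed field of $\bar H$ as $M\cap K^H$. The paper instead takes $M$ finite Galois over $L=K^H$, argues that $H$ surjects onto $\Gal(M|L)$, and works in the topology of $\Gal(K|L)$. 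Your version stays inside $G$ and is the cleaner of the two.
\item For the normality clause, the paper only shows that restriction $G\to\Gal(L|k)$ has kernel $\Gal(K|L)$ and takes its surjectivity for granted. So the implication ``$\Gal(K|L)$ normal $\Rightarrow$ $L|k$ Galois'' is never argued there. Your identity $\sigma\Gal(K|L)\sigma^{-1}=\Gal(K|\sigma L)$, combined with injectivity of $L\mapsto\Gal(K|L)$, gives both directions, and you justify surjectivity of restriction.
\end{itemize}

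Two small repairs are needed. First, ``choose a finite Galois $M\supseteq L(\alpha)$'' is impossible when $L|k$ is infinite. You mean $\alpha\in M$, as your later use of $M\cap L$ and $ML$ presupposes.

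Second, the translation isomorphism $\Gal(ML|L)\cong\Gal(M|M\cap L)$ is an unnecessary detour. $ML|L$ is already finite Galois, being the splitting field over $L$ of the separable polynomial whose splitting field over $k$ is $M$. Since $\alpha\in ML\setminus L$, the finite theorem applied to $ML|L$ directly gives $\tau\in\Gal(ML|L)$ with $\tau(\alpha)\neq\alpha$. You then extend $\tau$ to $K$ via surjectivity of $\Gal(K|L)\to\Gal(ML|L)$, exactly as you planned. The only non-finite ingredient that remains is this extension step, which is the same one you reuse for surjectivity of restriction in the normality part.
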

\begin{proof}
    Let $L|k$ be a finite separable extension in $K|k$.
    By Lemma \ref{embed_finite_subextension}, we can embed $L|k$ in a finite Galois extension $M|k$ in $K|k$.
    Then $\Gal(M|k)$ is a finite quotient of $\Gal(K|k) = G$ since the latter is profinite and $\Gal(M|L)$ is a subgroup of $\Gal(M|k)$ by the main theorem of Galois theory \ref{galois_finite_field}. Now consider $U_L$ the inverse image of $\Gal(M|L)$ by the natural projection $\pr_M:G \rightarrow \Gal(M|k)$. $U_L$ is then open because $\Gal(M|k)$ has the discrete topology.\\
    Notice that $U_L = \Gal(K|L)$. 
    Indeed, each element of $U_L$ fixed $L$ since it comes from $\sigma\in\Gal(M|L)$ by inverse of the natural projection $\pr_M$ so $U_L\subseteq \Gal(K|L)$. 
    Conversely considering the projection $\pr_M$, $\pr_M(\Gal(K|L))\subseteq \Gal(M|L)$, so $\Gal(K|L) \subseteq \pr_M^{-1}(\Gal(M|L)) = U_L$. 
    Making $\Gal(K|L)$ a closed subgroup of $G$ by Lemma \ref{open_subgroup_profinite}.\\
    Now let $L|k$ be an arbitrary separable subextension (possibly infinite) of $K|k$.
    Then $L|k$ can be written as a union of finite separable subextensions $L_{\alpha}|k$ for which the Galois groups $\Gal(L_{\alpha}|k)$ are open in $G$ so also closed as shown above.
    Since their intersection is precisely $\Gal(K|L)$, we have that $\Gal(K|L)$ is a closed subgroup of $G$.\\
    In the opposite direction, let $H$ be a closed subgroup of $G$.
    Then $H$ is a subgroup of $\Gal(K|L)$ for $L = K^H$. We are going the show the other inclusion: 
    let $\sigma\in \Gal(K|L)$ and pick a fundamental open neighbourhood $U_M$ of the identity in $\Gal(K|L)$ (so the kernel of the projection $\pr_M$), in fact it is also a subgroup corresponding to a Galois extension $M|L$ ($U_M = \Gal(M|L)$).
    And $H\subset \Gal(K|L)$ surjects into $\Gal(M|L)$ by $\pr_M\vert_{\Gal(K|L)}:\Gal(K|L)\rightarrow \Gal(M|L)$ (denote this restricted projection by $\prj$).
    Indeed, if $\prj_M(H)\subsetneq \Gal(M|L)$, then $\prj_M(H)$ would be a subgroup fixing a field $N = M^{\prj_M(H)}$ with $L\subset N\subset M$. But this is a contradiction since each element of $M\backslash L$ is moved by an element of $H$ (recall it only fixes $L$), so $\prj_M(H) = \Gal(M|L)$.
    In particular some element in $H$ must map to the same element in $\Gal(M|L)$ by $\prj_M$ as $\sigma$ for example.
    Hence $H$ contains an element of $\sigma U_M$ and since $U_M$ was chosen arbitrarily, $\sigma$ is in the closure of $H$, but $H$ is closed so $\sigma \in H$ and we have proved that $\Gal(K|L) \subseteq H$.\\
    Finally, we have the following equivalences just in the proof of the classical main theorem of Galois theory :
    \begin{equation*}
    \begin{split}
        \text{$L|k$ is Galois} &\Leftrightarrow  \forall\sigma\in G, \sigma(L) = L,\\
        &\Leftrightarrow \text{there exist a map } .\vert_L:G\rightarrow\Gal(L|k), \text{with $\ker(.\vert_L) = \Gal(K|L)$},\\
        &\Leftrightarrow  \text{by isomorphism theorem } \Gal(L|k) \cong \frac{G}{\Gal(K|L)}.
    \end{split}
    \end{equation*}
\end{proof}

\noindent We can now start to show Grothendieck's reformulation of Galois theory which focuses on the absolute Galois group and sets equipped with a continuous and transitive action.

\begin{definition}
\label{separable_closure}
    We define the \textbf{separable closure} of $k$ in $\overline{k}$ as the compositum of all separable extensions of $k$. And we call $\Gal(k)=\Gal(k_s|k)$ the \textbf{absolute Galois group}.
\end{definition}

\begin{remark}
    It is quite standard to show that the separable closure $k_s$ is a separable extension of $k$.
    Indeed, if $M$ and $L$ are two separable extensions, we have that $LM = M(\alpha_1,...,\alpha_n)$ for $\alpha_1,...\alpha_n$ the separable elements of $L$. Since the $\alpha_i$ are separable over $k$ they are separable over $M$, so $LM|M$ is a separable extension, as well as $M|k$ by hypothesis. So by the property of tower extensions, $LM|k$ is a separable extension.
\end{remark}

\noindent Fix a base field $k$ and consider the separable closure $k_s$. Let $L$ be a finite separable extension of $k$.
From classical Galois theory, we know that $L$ has finitely many $k$-homomorphisms into $\overline{k}$ whose images are actually contained in $k_s$. 
So we may consider a finite set $\Hom_k(L,k_s)$ (with $|\Hom_k(L,k_s)|=[L:k]$) and endow it with the action of $\Gal(k)$ given by $(g,\phi)\mapsto g\circ \phi$ for $g\in\Gal(k)$ and $\phi\in\Hom_k(L,k_s)$. \\


\noindent And from our previous discussion on topological spaces, we have the following result.

\begin{corollary}
    The action of $\Gal(k)$ on $\Hom_k(L,k_s)$ as above is continuous and transitive.
\end{corollary}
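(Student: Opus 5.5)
To prove the corollary, I would treat continuity and transitivity separately. For continuity I would use Lemma \ref{continuous_open_stabiliser}, since $\Hom_k(L,k_s)$ is a finite set with the discrete topology. For transitivity I would use the extension property of embeddings into $k_s$.

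\emph{Continuity.} Fix $\phi\in\Hom_k(L,k_s)$. By Lemma \ref{continuous_open_stabiliser}, it suffices to show that the stabiliser $\Gal(k)_\phi=\{g\in\Gal(k)\mid g\circ\phi=\phi\}$ is open. Note that $g\circ\phi=\phi$ holds exactly when $g$ fixes $\phi(L)$ pointwise. Hence the stabiliser is
$$\Gal(k)_\phi=\Gal(k_s|\phi(L)),$$
where $\phi(L)$ is a finite separable subextension of $k_s|k$. The proof of Krull's theorem shows that $\Gal(k_s|L')=U_{L'}$ is open for any finite subextension $L'$. Concretely, embed $\phi(L)$ in a finite Galois extension $M$ using Lemma \ref{embed_finite_subextension}; then $\Gal(k_s|\phi(L))$ is the preimage of $\Gal(M|\phi(L))$ under the continuous projection $\pr_M:\Gal(k)\to\Gal(M|k)$. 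The latter group is discrete, so this preimage is open, which gives continuity.

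\emph{Transitivity.} Let $\phi,\psi\in\Hom_k(L,k_s)$. By the primitive element theorem, write $L=k(\alpha)$ with minimal polynomial $f$. Then $\phi(\alpha)$ and $\psi(\alpha)$ are both roots of $f$ in $k_s$. Choose a finite Galois extension $M\subset k_s$ containing both roots, for instance the splitting field of $f$. Since $\Gal(M|k)$ acts transitively on the roots of an irreducible polynomial that splits in $M$, there is $\bar g\in\Gal(M|k)$ with $\bar g(\phi(\alpha))=\psi(\alpha)$. Next, lift $\bar g$ to some $g\in\Gal(k)$. This is possible because $\pr_M$ is surjective: by Proposition \ref{Galois_groups_are_profinite}, the transition maps of the inverse system are surjective, and surjectivity passes to the limit. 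More elementarily, a $k$-embedding $M\to k_s$ extends to an automorphism of the separable closure. Finally, $g\circ\phi$ and $\psi$ agree on the generator $\alpha$, so $g\circ\phi=\psi$.

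I expect the main obstacle to be justifying the lifting of $\bar g$ to the whole of $\Gal(k_s|k)$. For an infinite inverse system, surjectivity of the limit projection requires a compactness argument, or alternatively Zorn's lemma to extend the automorphism. I would cite the standard extension theorem for embeddings into an algebraically closed field, for which \cite{Lang_algebra} can be referenced, and then restrict to $k_s$, which is stable under any such extension since it is normal over $k$.
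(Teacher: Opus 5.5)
Your proposal is correct and follows the paper's proof. For continuity you show the stabiliser $\Gal(k_s|\phi(L))$ is open via Lemma \ref{continuous_open_stabiliser} and Krull's theorem; for transitivity you use a primitive element $\alpha$ and the transitive action on the roots of its minimal polynomial. Your explicit lifting of $\bar g\in\Gal(M|k)$ to $\Gal(k)$ usefully justifies a step the paper only asserts.
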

\begin{proof}
    The stabiliser $U$ of an element $\phi\in\Hom_k(L,k_s)$ consists of the elements of $\Gal(k)$ fixing $\phi(L)$. And by the main theorem of Galois for infinite extensions $U$ is open, hence $\Gal(k)$ acts continuously on $\Hom_k(L,k_s)$ by the above lemma since $\Hom_k(L,k_s)$ has the discrete topology.
    $L|k$ being separable, by the primitive element theorem, $L$ is generated by a primitive element $\alpha$ with minimal polynomial $f$. Each $\phi\in\Hom_k(L,k_s)$ is given by mapping $\alpha$ to a root of $f$ in $k_s$. Since $\Gal(k)$ permutes these roots transitively , the $\Gal(k)$-action on $\Hom_k(L,k_s)$ is transitive.
\end{proof}

\begin{corollary}
\label{Galois_normal_reformulation}
    Let $L|k$ be a finite separable extension, then $\Hom_k(L,k_s)$ as a $\Gal(k)$-set is isomorphic to the left coset space of some open subgroup in $\Gal(k)$.
    If $L|k$ is Galois, this coset space is in fact a quotient by an open normal subgroup.
\end{corollary}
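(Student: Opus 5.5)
The plan is to use the previous corollary: the action of $\Gal(k)$ on $\Hom_k(L,k_s)$ is continuous and transitive. We fix a base point $\phi_0\in\Hom_k(L,k_s)$ and let $U=\Gal(k)_{\phi_0}$ be its stabiliser. By Lemma \ref{continuous_open_stabiliser}, continuity of the action on the discrete set $\Hom_k(L,k_s)$ means precisely that $U$ is open. Concretely, $U$ consists of the $\sigma\in\Gal(k)$ fixing $\phi_0(L)$ pointwise, so $U=\Gal(k_s|\phi_0(L))$, which is open by Krull's theorem; this is already shown in the proof of that theorem, where $U_L$ is the preimage of a subgroup of a finite quotient.

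Next we write down the orbit map $\Gal(k)/U\to\Hom_k(L,k_s)$, $\sigma U\mapsto\sigma\circ\phi_0$. It is well defined and injective because $\sigma\circ\phi_0=\tau\circ\phi_0$ if and only if $\tau^{-1}\sigma\in U$. It is surjective by transitivity, and it is $\Gal(k)$-equivariant for the left action on cosets. This is the standard identification of a transitive $G$-set with a coset space by a stabiliser, as in Theorem \ref{Cameron_1_3}. Both sides are discrete and the action on cosets is continuous, since $U$ is open and stabilisers of cosets $gU$ are the conjugates $gUg^{-1}$, which are also open. So this bijection is an isomorphism of continuous $\Gal(k)$-sets.

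For the Galois case, suppose $L|k$ is Galois. Then every $k$-embedding $\phi:L\to k_s$ has the same image $\phi(L)=\phi_0(L)$, because a normal extension is stable under all $k$-embeddings into $\overline{k}$. So the stabiliser of every point is $\Gal(k_s|\phi_0(L))$, and by Lemma \ref{induced_subgroup}-style reasoning the stabilisers form a conjugacy class. Hence $U=\sigma U\sigma^{-1}$ for all $\sigma$, so $U$ is normal. Alternatively, this follows from the last part of Krull's theorem, since $\phi_0(L)|k$ is Galois. The coset space $\Gal(k)/U$ is then the quotient group, isomorphic to $\Gal(\phi_0(L)|k)\cong\Gal(L|k)$, acting on itself by left translation.

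The main subtlety, rather than any computation, is to make the openness of $U$ precise, identifying $U$ with $\Gal(k_s|\phi_0(L))$ and invoking Krull's theorem for finite subextensions. It also matters that stability of $\phi_0(L)$ under $\Gal(k)$ is exactly the normality criterion, which needs the fact that every $k$-embedding of $\phi_0(L)$ into $k_s$ extends to an element of $\Gal(k)$.
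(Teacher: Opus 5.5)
Your proof is correct and follows essentially the paper's route: the previous corollary gives a continuous transitive action, Theorem \ref{Cameron_1_3} identifies $\Hom_k(L,k_s)$ with $\Gal(k)/U$ for $U$ the stabiliser of a point, and $U$ is open. The only difference is in the Galois case: you prove normality of $U$ directly (all embeddings have the same image, so all stabilisers coincide), whereas the paper just cites the last part of Krull's theorem, which you also mention as an alternative.
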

\begin{proof}
    The first statement is immediate from the above corollary and Theorem \ref{Cameron_1_3} $a)$  with the open subgroup being the stabiliser $U$ of any element $\phi\in\Hom_k(L,k_s)$.\\
    By the main theorem of Galois for infinite extensions, the coset $U/\Gal(k)$ becomes a quotient if and only if $U$ is a normal subgroup.
\end{proof}

\noindent Before stating the reformulation of Galois theory by Grothendieck, we have to introduce the functor responsible for the equivalence of categories. 
This functor is given by $L \mapsto \Hom_k(L,k_s)$ for $L$ a finite separable extension of $k$ and $\varphi \mapsto \varphi^*$ for $\varphi:L\rightarrow M$ a $k$-homomorphism and $\varphi^* :\Hom_k(M,k_s)\rightarrow\Hom_k(L,k_s)$ a $\Gal(k)$-equivariant map defined by pre-composition with $\varphi$.
Thus the functor $\Hom_k(\_,k_s)$ is a contravariant functor from the category $\FSep_k$ of \textit{finite separable extensions of $k$} to the category $\Gal(k)$-$\FTCSet$ of \textit{finite sets equipped with a $\Gal(k)$ transitive and continuous action}.


\begin{theorem}[Reformulation of Galois Theory]
\label{Grot_reformulation_Galois}
    Let $k$ be a field with separable closure $k_s$. 
    The above contravariant functor $\Hom_k(\_,k_s)$
    gives an anti-equivalence of categories between $\FSep_k$ and $\Gal(k)$-$\FTCSet$.
    And Galois extensions give rise to $\Gal(k)$-sets isomorphic to some finite quotient of $\Gal(k)$.
\end{theorem}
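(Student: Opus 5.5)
To show that $\Hom_k(\_,k_s)$ is an anti-equivalence, I will prove it is essentially surjective and fully faithful, in that order. The second statement, about Galois extensions, is already Corollary \ref{Galois_normal_reformulation}. So the real work is to realise every finite continuous transitive $\Gal(k)$-set, and to show the functor is bijective on morphisms.

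\emph{Essential surjectivity.} Let $S$ be a finite set with a continuous transitive $\Gal(k)$-action, and fix $s\in S$. By Lemma \ref{continuous_open_stabiliser} the stabiliser $U=\Gal(k)_s$ is open. By Theorem \ref{Cameron_1_3}, $S\cong \Gal(k)/U$ as $\Gal(k)$-sets. Since $U$ is open, it is closed and of finite index (Corollary \ref{open_subgroup_profinite}). Krull's theorem then gives a subextension $L=k_s^U$ with $\Gal(k_s|L)=U$. This $L$ is finite over $k$: pick an open normal subgroup $N\subset U$ with fixed field $M$ finite Galois, so that $L\subset M$. I then check that $\Hom_k(L,k_s)\cong \Gal(k)/U$. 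The inclusion $\iota:L\hookrightarrow k_s$ has stabiliser exactly $\Gal(k_s|L)=U$, and the action is transitive by the preceding corollary. Hence $\Hom_k(L,k_s)\cong\Gal(k)/U\cong S$.

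\emph{Full faithfulness.} Let $L,M$ be finite separable extensions. I embed $L\subset k_s$ via a chosen $\iota_L$, and likewise $M$ via $\iota_M$, with stabilisers $U_L=\Gal(k_s|L)$ and $U_M=\Gal(k_s|M)$. A $\Gal(k)$-equivariant map $\Hom_k(M,k_s)\to\Hom_k(L,k_s)$ is determined by the image $\psi$ of $\iota_M$. It must satisfy $U_M\subseteq \mathrm{Stab}(\psi)$, and conversely any such $\psi$ defines an equivariant map. Write $\psi=\sigma\circ\iota_L$. Then $\mathrm{Stab}(\psi)=\Gal(k_s|\sigma(L))$, and the inclusion $U_M\subseteq\Gal(k_s|\sigma(L))$ is equivalent, by the inclusion-reversing Krull correspondence, to $\sigma(L)\subseteq M$. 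Such $\psi$ therefore correspond exactly to $k$-embeddings $\varphi:L\to M$ with $\iota_M\circ\varphi=\psi$. Since $\varphi^*(\iota_M)=\iota_M\circ\varphi$, the map $\varphi\mapsto\varphi^*$ is a bijection $\Hom_k(L,M)\to\Hom_{\Gal(k)}(\Hom_k(M,k_s),\Hom_k(L,k_s))$. Injectivity is clear because $\iota_M$ is injective. Functoriality is immediate from the definition of $\varphi^*$ as precomposition.

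The main obstacle is this full-faithfulness step. One has to carefully match stabiliser inclusions with field inclusions through Krull's bijection, keeping track of the fact that the Galois closure $\sigma(L)$ is taken inside $k_s$ and that $\psi$ factors through $M$. I would handle it by working with $\iota_M(M)$ as a fixed subfield of $k_s$ and checking that $\psi(L)\subseteq\iota_M(M)$ holds exactly when every element fixing $\iota_M(M)$ fixes $\psi(L)$, which is exactly the stabiliser inclusion.
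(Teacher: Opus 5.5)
Your proposal is correct and follows essentially the same route as the paper. Essential surjectivity goes through the fixed field of a stabiliser together with Theorem \ref{Cameron_1_3}, and fullness turns the stabiliser inclusion $U_M\subseteq \mathrm{Stab}(\psi)$ into the field inclusion $\psi(L)\subseteq \iota_M(M)$ and takes $\varphi=\iota_M^{-1}\circ\psi$ (the paper's $j^{-1}\circ i$). You are slightly more careful than the paper on two points: you check that the stabiliser is closed, so that Krull's correspondence applies, and that $k_s^{U}$ is finite over $k$.
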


\begin{proof}
    We show that any continuous transitive finite left $\Gal(k)$-set $T$ is isomorphic to some $\Hom_k(L,k_s)$ for $L$ a finite separable extension of $k$.
    Let $t\in T$, $H_t = \Gal(k)_t$ be the stabiliser subgroup and denote $L=k_s^{H_t}$ the subfield of $k_s$ fixed by $H_t$.
    Remark that with this notation by classical Galois theory, $H_t = \Gal(k_s|L)$.
    We now define a map of $\Gal(k)$-sets $\Hom_k(L,k_s) \rightarrow T$ by the rule $g\circ \iota \mapsto g\cdot t$ with $\iota\in \Hom_k(L,k_s)$ the natural inclusion and any $g\in \Gal(k)$. 
    This map is well defined since the stabiliser $U_{\iota}$ of $\iota$ is isomorphic to the stabiliser $H_t$ of $t$.
    Indeed 
    \begin{equation*}
    \begin{split}
        U_{\iota} &= \{g\in \Gal(k) \mid g\circ \iota\ = \iota\}\\
                               &= \{g\in \Gal(k)\mid g\vert_{\iota(L)}=id_{\iota(L)}\}\\
                               &= \Gal(k_s|\iota(L)) = \Gal(k_s|L) \cong H_t
    \end{split}         
    \end{equation*}
    And since we know that $\Gal(k)$ acts transitively both on $T$ and $\Hom_k(L,k_s)$, we can apply Theorem \ref{Cameron_1_3} to obtain get $T\cong \Gal(k)/H_t \cong \Gal(k)/U_{\iota} \cong \Hom_k(L,k_s)$.
    Hence $\Hom_k(\_,k_s)$ is an essentially surjective functor.\\
    To prove that the functor is fully faithful, we want to show that for two finite separable extensions $L$ and $M$ of $k$, the set of $k$-homomorphism $\Hom_k(L,M)$ corresponds bijectively to the set of $\Gal(k)$-maps $\Hom_k(M,k_s)\rightarrow \Hom_k(L,k_s)$.
    Let $\varphi_1$ and $\varphi_2$ be $k$-homomorphism $L\rightarrow M$ and suppose that $f\circ \varphi_1 = f\circ \varphi_2$ for all $f:M\rightarrow k_s$.
    Take any $a\in L$, we want to show that $\varphi_1(a) = \varphi_2(a)$ in $L$.
    For all $f:M\rightarrow k_s$, $f(\varphi_1(a)) = f(\varphi_2(a))$, so $\varphi_1(a)$ and $\varphi_2(a)$ have the same image under embeddings $f :M\rightarrow k_s$.
    Thus $\varphi_1(a) = \varphi_2(a)$ implies that $\varphi_1=\varphi_2$. This shows injectivity.
    Let $\psi:\Hom_k(M,k_s) \rightarrow \Hom_k(L,k_s)$ be a $\Gal(k)$ equivariant map.
    Pick $j\in \Hom_k(M,k_s)$ and set $i=\psi(j)$ two $k$-embeddings.
    For $g$ an element of $U_j$ the stabiliser of $j$ in $\Gal(k)$, we have $g\cdot i = g\cdot \psi(j) = \psi(g\cdot j) = \psi(j) = i$ so $U_j \subset U_i$ (the stabiliser of $i$ in $\Gal(k)$).
    Hence $\Gal(k_s| j(M)) \subset \Gal(k_s|i(L))$, thus $i(L)\subset j(M)$ as subfields of $k_s$, and so $j^{-1}\circ i:L\rightarrow M$ is a well-defined $k$-homomorphism. We denote $\varphi:=j^{-1}\circ i$.
    Since $\Gal(k)$ acts transitively on $\Hom_k(M,k_s)$, then for any $f\in\Hom_k(M,k_s)$, $f = g\cdot j$ for some $g\in\Gal(k)$.
    And $\psi(f) = \psi(g\cdot j) = g\cdot \psi(j) = g\cdot i$ while $f\circ \varphi = (g\cdot j) \circ (j^{-1}\circ i) = g\cdot i$, so $\psi(f) = f\circ \varphi$ for any $f\in\Hom_k(M,k_s)$. We have shown essential surjectivity.\\
    The last statement is now direct from Corollary \ref{Galois_normal_reformulation}.


\end{proof}

\subsubsection{\'Etale and Separable Algebras}

We end this section with an important generalisation of the previous reformulation theorem, 
which makes use of \textit{\'etale algebras}, a natural replacement for finite separable extensions (for commutative algebras). 
It allows one to consider sets with a continuous but not necessarily transitive $\Gal(k)$-action.

\begin{definition}
    A finite-dimensional commutative $k$-algebra $A$ is said to be \textbf{\'etale} (over $k$) if it is isomorphic to a finite direct product of separable extensions of $k$.
\end{definition}

\begin{remark}
    Morphisms of finite \'etale $k$-algebras are simply morphisms of $k$-algebras. From this we can consider the category of finite \'etale $k$-algebras $\FeAl_k$.
    And just as before, for $A$ an \'etale $k$-algebra, the $\Gal(k)$-action on the separable closure $k_s$ induces a left action on the set of $k$-algebra homomorphisms $\Hom_k(A,k_s)$. Allowing us to consider the functor $\Hom_k(\_,k_s)$ with domain $\FeAl$ 
\end{remark}

\begin{theorem}
\label{equivalence_catgory_etale_algebra}
    Let $k$ be a field with separable closure $k_s$. 
    Then above functor $\Hom_k( , k_s)$ mapping a finite \'etale $k$-algebra $A$ to the finite set $\Hom_k(A,k_s)$ gives an anti-equivalence between the categories $\FeAl_k$ and $\Gal(k)$-$\FCSet$.
    Here separable field extensions give rise to sets with transitive $\Gal(k)$-actions and
    Galois extensions to $\Gal(k)$-sets isomorphic to quotients of $\Gal(k)$.
\end{theorem}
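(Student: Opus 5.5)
The plan is to reduce to Theorem \ref{Grot_reformulation_Galois} by showing that both categories decompose compatibly into ``connected'' pieces: finite products of fields on one side, and finite disjoint unions of transitive sets (orbits) on the other. The key observation is that $\Hom_k(\_,k_s)$ turns finite products into disjoint unions.

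First I will show that for $A = L_1\times\dots\times L_n$ with each $L_i|k$ finite separable,
$$\Hom_k(A,k_s) \cong \bigsqcup_{i=1}^n \Hom_k(L_i,k_s)$$
as $\Gal(k)$-sets. A $k$-algebra map $A\to k_s$ has as kernel a prime ideal of $A$. Its image is a domain, hence kills all but one of the idempotents $e_i$. So it factors through exactly one projection $A\to L_i$. The $\Gal(k)$-action is by post-composition and therefore respects this decomposition. By the corollary preceding Corollary \ref{Galois_normal_reformulation}, each piece is finite, continuous and transitive. Hence $\Hom_k(A,k_s)$ lies in $\Gal(k)$-$\FCSet$, its orbits correspond exactly to the factors $L_i$, and the functor is well defined.

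Essential surjectivity follows from this. A finite continuous $\Gal(k)$-set $T$ is the disjoint union of its finitely many orbits $T_1,\dots,T_n$, and each orbit is finite, transitive and continuous (stabilisers are open by Lemma \ref{continuous_open_stabiliser}). By Theorem \ref{Grot_reformulation_Galois}, $T_i\cong \Hom_k(L_i,k_s)$ for some finite separable $L_i$. Therefore $T\cong \Hom_k(\prod L_i,k_s)$.

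For full faithfulness, take $A=\prod_i L_i$ and $B=\prod_j M_j$. A $k$-algebra map $A\to B$ is the same as a family of maps $A\to M_j$, one for each $j$. Each map $A\to M_j$ is a map $L_{i}\to M_j$ for a unique $i$, because $M_j$ is a field: the images of the orthogonal idempotents $e_i$ are idempotents summing to $1$, so exactly one of them equals $1$. This gives
$$\Hom_k(A,B)\cong \prod_j \bigsqcup_i \Hom_k(L_i,M_j).$$
On the other side, a $\Gal(k)$-map $\bigsqcup_j \Hom(M_j,k_s)\to\bigsqcup_i\Hom(L_i,k_s)$ sends each orbit into a single orbit, since the image of a transitive set is transitive. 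This gives the same decomposition
$$\prod_j\bigsqcup_i \Hom_{\Gal(k)}\bigl(\Hom(M_j,k_s),\Hom(L_i,k_s)\bigr).$$
Theorem \ref{Grot_reformulation_Galois} identifies the two sides factor by factor. It remains to check that these identifications are compatible with the functor, meaning that the induced map is precomposition in each case. This compatibility check is the main bookkeeping obstacle, but it is formal once the idempotent argument is written out.

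The final clauses follow directly. A separable field is the case $n=1$, which gives a transitive set. A Galois extension gives a quotient of $\Gal(k)$ by Corollary \ref{Galois_normal_reformulation}.
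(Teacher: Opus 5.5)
Your proposal is correct and follows the same route as the paper: you write $\Hom_k(A,k_s)\cong\bigsqcup_i\Hom_k(L_i,k_s)$ via the factorisation $A=\prod_i L_i$, then reduce factor by factor to Theorem \ref{Grot_reformulation_Galois}. Your version is more precise than the paper's sketch in two places. You identify $\Hom_k(A,B)$ correctly with $\prod_j\bigsqcup_i\Hom_k(L_i,M_j)$, whereas the paper describes a family $\varphi_{ij}$ indexed by all pairs $(i,j)$. You also spell out essential surjectivity through the orbit decomposition, which the paper leaves implicit.
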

\begin{proof}
    This theorem follows from the previous one by noticing that, given a finite decomposition $A = \prod_i L_i$ into a product of fields and an element $\phi \in \Hom_k(A,k_s)$, the maps $\phi$ induces the injection of exactly one $L_i$ in $k_s$.
    Indeed, if $\phi(L_i)\neq 0$, being a field extension, $L_i$ injects into $k_s$,
    and on the other hand, a product $L_i\times L_j$ cannot inject into $k_s$ since $k_s$ has no zero-divisors.
    Thus we have the decomposition $\Hom_k(A,k_s) \cong \bigsqcup_i \Hom_k(L_i,k_s)$.
    For a similar reason, given another finite \'etale $k$-algebra $A' = \prod_jL_i'$, a morphism $\varphi:A\rightarrow A'$ is given by a collection of morphisms $\varphi_{ij}:L_i\rightarrow L_j'$, for each $i,j$. 
    And these correspond bijectively to morphisms of the corresponding $\Gal(k)$-sets by the previous theorem.
\end{proof}

\noindent \'Etale algebras will also play an important role later in this thesis. 
For that reason, we give another characterisation of finite \'etale $k$-algebras.
For that, we first need the next lemma.

\begin{lemma}
\label{finite_dimensional_direct_sum_reduced}
    A finite-dimensional commutative algebra $A$ over a field $k$ is isomorphic to a direct sum of finite field extensions of $k$ if and only if $A$ is reduced.
\end{lemma}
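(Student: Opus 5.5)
The forward direction is immediate. A finite direct product of fields has no nonzero nilpotents, since an element $(a_1,\dots,a_n)$ with $(a_1,\dots,a_n)^m=0$ has $a_i^m=0$ in each field $L_i$, hence $a_i=0$. So the work lies in the converse: assuming $A$ is reduced and finite-dimensional over $k$, we produce a decomposition of $A$ as a product of finite field extensions.

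The plan is to use the structure of Artinian rings through the Chinese remainder theorem.
\begin{itemize}
\item[$a)$] Since $\dim_k A<\infty$, every prime ideal $\mathfrak{p}$ of $A$ is maximal. Indeed, $A/\mathfrak{p}$ is a finite-dimensional $k$-algebra which is an integral domain. Multiplication by a nonzero element is an injective $k$-linear endomorphism of a finite-dimensional space, hence surjective, so that element is invertible and $A/\mathfrak{p}$ is a field.
\item[$b)$] $A$ has only finitely many maximal ideals. If $\mathfrak{m}_1,\dots,\mathfrak{m}_r$ are distinct maximal ideals, they are pairwise comaximal. The Chinese remainder theorem then gives a surjection $A\to\prod_{i=1}^r A/\mathfrak{m}_i$. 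Comparing dimensions yields $r\le \dim_k A$.
\item[$c)$] Let $\mathfrak{m}_1,\dots,\mathfrak{m}_r$ be all the maximal ideals, which by $a)$ are all the primes. The nilradical of $A$ is the intersection of all prime ideals, so $\bigcap_i\mathfrak{m}_i$ is the nilradical. This is $0$ because $A$ is reduced. The Chinese remainder theorem then gives an isomorphism $A\cong\prod_i A/\mathfrak{m}_i$ with kernel $\bigcap_i\mathfrak{m}_i=0$. Each $A/\mathfrak{m}_i$ is a field containing $k$ and finite-dimensional over it, i.e.\ a finite field extension of $k$.
\end{itemize}

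The step needing most care is $c)$, specifically the identification of the nilradical with the intersection of all primes. I would simply quote this standard fact from commutative algebra, as the paper does with other background results. Alternatively, one can avoid it by an elementary argument. Choose $x\in\bigcap_i\mathfrak{m}_i$. The chain of ideals $(x)\supseteq(x^2)\supseteq\cdots$ stabilises by finite-dimensionality, so $x^n=x^{n+1}y$ for some $y$. Then $x^n(1-xy)=0$. Here $1-xy$ is a unit, because it lies in no maximal ideal: if $1-xy\in\mathfrak{m}_i$, then since $xy\in\mathfrak{m}_i$ we would get $1\in\mathfrak{m}_i$. Hence $x^n=0$, and reducedness gives $x=0$. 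This keeps the proof self-contained using only linear algebra and the Chinese remainder theorem.
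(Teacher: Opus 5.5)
Your proof is correct, but it takes a different route from the paper's.

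\textbf{The paper's route.} It reduces to the case where $A$ is indecomposable and then shows that such a reduced algebra is a field. For $x\neq 0$, the stabilising chain $(x)\supseteq(x^2)\supseteq\cdots$ gives $x^n=x^{n+1}y$, hence $x^n=x^{2n}y^n$, so $x^ny^n$ is an idempotent. It cannot be $0$, since that would force $x^n=0$ and hence $x=0$ by reducedness. So it equals $1$ and $x$ is a unit.

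\textbf{Your route.} You work with the maximal ideals instead. All primes are maximal, there are at most $\dim_k A$ of them, and their intersection is zero. The Chinese remainder theorem then gives $A\cong\prod_i A/\mathfrak{m}_i$.

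\textbf{Comparison.} Both arguments use the same Artinian stabilisation trick, but for different purposes. The paper uses it to manufacture an idempotent; in your self-contained variant you use it to show that the intersection of the maximal ideals is nil.

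Your version has two advantages. First, it produces the decomposition explicitly, with the factors identified as the residue fields at the finitely many points of $\Spec A$. This matches how the lemma is used later, for instance when $k$-homomorphisms $A\to\overline{k}$ are shown to factor through a single field factor. Second, it sidesteps the paper's preliminary claim that $A$ splits into finitely many indecomposable factors. That claim is true, by induction on $\dim_k A$, but the paper does not argue it.

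The paper's idempotent argument has its own advantages. It needs neither the Chinese remainder theorem nor the description of the nilradical as the intersection of the primes. It also isolates the clean local statement that a reduced indecomposable Artinian ring is a field.
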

\begin{proof}
    The ‘‘only if" part is obvious, one just has to notice that fields are already reduced. \\
    To prove the other direction, by decomposing $A$ into a finite direct product of indecomposable $k$-algebras, we may assume that $A$ is indecomposable.
    Under this restriction $A$ can have no idempotent elements other than $0$ and $1$, indeed, if $e\neq 0,1$ is an idempotent, then $A \cong Ae \times A(1-e)$ would be a non-trivial direct product decomposition since $e(1-e)=e-e^2=0$.
    The lemma follows if we show that every non-zero element $x\in A$ is invertible making it a field.
    Since $A$ is finite-dimensional, it is Artinian, i.e. the descending chain of ideals $(x)\supset (x^2)\supset\: ... \supset (x^n)\supset\:...$ must stabilise.
    And thus for some $m$, we have $x^n = x^{n+1}y$ for some $y\in A$.
    By iteration we get $x^n = x^{n+i}y^i$ for $i\in\mathbb{N}$ until $x^n = x^{2n}y^n$, thus $(x^ny^n)=(x^ny^n)^2$, an idempotent. 
    If $x^ny^n=0$, then $x^n = (x^n)(x^ny^n) = 0$ which is a contradiction since $x\neq 0$ by assumption. So $x^ny^n=1$ and $x$ is invertible. $A$ is reduced.
\end{proof}

\begin{theorem}
\label{etale_algebras}
    Let $A$ be a finite-dimensional commutative $k$-algebra. Then the following are equivalent :
    \begin{itemize}
        \item[$a)$] $A$ is \'etale
        \item[$b)$] $A\otimes_k \overline{k} \cong \overline{k}^{\oplus^n}$
        \item[$c)$] $A\otimes_k\overline{k}$ is reduced.
    \end{itemize}
\end{theorem}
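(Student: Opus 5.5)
I will prove the cycle of implications $a)\Rightarrow b)\Rightarrow c)\Rightarrow a)$, using Lemma \ref{finite_dimensional_direct_sum_reduced} as the main tool.

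For $a)\Rightarrow b)$, since tensor products commute with finite direct products, it suffices to treat the case $A=L$, a finite separable extension of $k$. By the primitive element theorem write $L\cong k[x]/(f)$ with $f$ the separable minimal polynomial of a primitive element, of degree $n=[L:k]$. Then $L\otimes_k\overline{k}\cong \overline{k}[x]/(f)$. Over $\overline{k}$ the polynomial $f$ factors as $\prod_{i=1}^n (x-\alpha_i)$ with the $\alpha_i$ pairwise distinct, because $f$ is separable. The ideals $(x-\alpha_i)$ are then pairwise comaximal, so the Chinese remainder theorem gives $\overline{k}[x]/(f)\cong \prod_i \overline{k}[x]/(x-\alpha_i)\cong \overline{k}^{\oplus n}$. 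Taking the product over the factors of $A$ gives $b)$. The implication $b)\Rightarrow c)$ is immediate, since a product of fields has no nonzero nilpotents.

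The substantive step is $c)\Rightarrow a)$. First, $A$ embeds in $A\otimes_k\overline{k}$ via $a\mapsto a\otimes 1$, which is injective because $\overline{k}$ is free over $k$. A subring of a reduced ring is reduced, so $A$ is reduced. By Lemma \ref{finite_dimensional_direct_sum_reduced}, $A\cong\prod_i L_i$ with each $L_i$ a finite field extension of $k$. It remains to show that each $L_i$ is separable. Since $L_i\otimes_k\overline{k}$ is a direct factor of $A\otimes_k\overline{k}$, it is again reduced.

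I expect the main obstacle to be showing that reducedness forces separability. My plan is as follows. Take any $\alpha\in L_i$ with minimal polynomial $g$ over $k$. The subalgebra $k(\alpha)\otimes_k\overline{k}\cong\overline{k}[x]/(g)$ injects into $L_i\otimes_k\overline{k}$, by flatness over the field $k$, so it is reduced. If $g$ had a repeated root $\beta$ in $\overline{k}$, write $g=(x-\beta)^2h$. Then the class of $(x-\beta)h$ is nonzero, since its degree is smaller than $\deg g$, but its square is divisible by $g$, so it is a nonzero nilpotent. This contradicts reducedness. Hence every element of $L_i$ is separable over $k$, each $L_i$ is a separable extension, and $A$ is étale.
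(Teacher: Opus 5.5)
Your proof is correct. Your steps $a)\Rightarrow b)$ (primitive element theorem plus the Chinese remainder theorem) and $b)\Rightarrow c)$ are the same as the paper's. You diverge in how you close the loop.

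The paper proves $b)\Rightarrow a)$ by counting homomorphisms:
\begin{itemize}
\item it passes to $A_r=A/\Nil(A)$ and splits it into fields via Lemma \ref{finite_dimensional_direct_sum_reduced};
\item it uses the Galois-theoretic bound $|\Hom_k(L,\overline{k})|\le [L:k]$, with equality if and only if $L|k$ is separable, to get $|\Hom_k(A,\overline{k})|\le \dim_k A$, with equality exactly when $A$ is \'etale;
\item it then uses the bijection $\Hom_k(A,\overline{k})\cong\Hom_{\overline{k}}(A\otimes_k\overline{k},\overline{k})$ to see that $b)$ forces equality.
\end{itemize}
Separately, it gets $c)\Rightarrow b)$ by applying the same lemma to the $\overline{k}$-algebra $A\otimes_k\overline{k}$.

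You instead prove $c)\Rightarrow a)$ directly. Reducedness descends to $A$ along the injection $A\hookrightarrow A\otimes_k\overline{k}$, and the lemma splits $A$ into fields. An inseparable element $\alpha$ would then give the explicit nonzero nilpotent $(x-\beta)h$ in $\overline{k}[x]/(g)\subset L_i\otimes_k\overline{k}$.

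Your route is more elementary. It needs only flatness over a field and the definition of a separable polynomial, not the separable-degree count of embeddings or the extension-of-scalars bijection. In exchange, the paper's route yields the numerical criterion $|\Hom_k(A,\overline{k})|=\dim_k A$ for \'etaleness. That criterion fits naturally with the functor $\Hom_k(-,k_s)$ used in Theorem \ref{equivalence_catgory_etale_algebra}.
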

\begin{proof}
    Let us first show that $a)\Rightarrow b)$:\\
    We may restrict to finite separable extensions $L$ of $k$.
    Then of course, we have $L = k[x]/(f)$ for some polynomial $f(x)=\prod_i(x-\alpha_i)$ in $\overline{k}$.
    We can conclude by the Chinese Remainder Theorem as follows :
    $$L\otimes_k\overline{k} \cong \overline{k}[x]/(f) = \overline{k}[x]/(x-\alpha_1)...(x-\alpha_n)\cong \prod_{i=1}^n\overline{k}[x]/(x-\alpha_i) \cong \prod_{i=1}^n\overline{k}.$$
    Now to prove that $b)\Rightarrow a)$, consider $A_r = A/\Nil(A)$, then $A_r$ is reduced and the previous lemma implies that $A_r$ is the direct sum of finite field extensions of $k$.
    Since $\overline{k}$ is reduced as a field, each $k$-homomorphisms $A\rightarrow \overline{k}$ factors through $A_r$ and hence to one of its finite field extension decomposition factors $L$.
    From Galois theory, we know that $|\Hom_k(L,\overline{k})|\leq |[L:k]|$ with equality if and only if the extension is separable, 
    whence $\Hom_k(A,\overline{k})$ has at most $\dim_k(A)$ elements with equality if and only if $A = A_r$ and $A$ is \'etale. \\
    To see that equality holds, observe that $\Hom_k(A,\overline{k}) \cong \Hom_{\overline{k}}(A\otimes_k\overline{k},\overline{k})$ are bijective as sets.\\
    Indeed given a $k$-algebra homomorphism $A\rightarrow\overline{k}$, tensoring by $\overline{k}$ and composing with the multiplication map gives a $\overline{k}$-homomorphism $A\otimes_k\overline{k}\rightarrow \overline{k}\otimes_k\overline{k}\rightarrow \overline{k}$.
    Conversely, we construct an inverse map, the natural inclusion $k\rightarrow \overline{k}$ induces a $k$-homomorphism $A\cong A\otimes_kk\rightarrow A\otimes_k\overline{k}$ which composed by a given $\overline{k}$-homomorphism $A\otimes_k\overline{k}\rightarrow\overline{k}$ gives a map from the right hand side to that on the left hand side.
    The assumption of $b)$ implies that the set $\Hom_{\overline{k}}(A\otimes_k\overline{k}, \overline{k})$ has $\dim_{\overline{k}}(A\otimes_k\overline{k})$ elements and actually $\dim_{\overline{k}}(A\otimes_k\overline{k}) = \dim_k(A)$, proving the equivalence.\\
    Finally, $b)\Rightarrow c)$ is immediate, again fields are automatically reduced.\\
    And $c) \Rightarrow b)$ follows by applying the previous lemma to the $\overline{k}$-algebra $A\otimes_k\overline{k}$.
\end{proof}

\noindent We have discussed so far a generalisation of separable field extension through \'etale algebras, and seen that it allows to refine the classification field extensions. 
Another plausible and useful generalisations of separable field extension is the notion of \textit{separable algebras}, it can be considered a broader generalisation than the \'etale case since it allows to consider non-commutative algebras.

\begin{definition}
    An algebra $A$ over a field $k$ is called \textbf{separable} if there exists a map $\Delta : A\otimes_k A\rightarrow A$ with $m(\Delta(a))=a$ and $a\Delta(b) = \Delta(ab) = \Delta(a)b$, for $a,b\in A$ and $m$ the usual multiplication of $A$.
\end{definition}

\begin{proposition}
\label{commut_separable_is_etale}
    Commutative separable algebras  and finite \'etale algebras coincide over fields.
\end{proposition}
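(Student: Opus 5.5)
The plan is to prove both inclusions by reducing to base change along $k\rightarrow\overline{k}$ and using the characterisation of Theorem \ref{etale_algebras}. Throughout, I read the definition of separability in the standard way: there is an element $e=\sum_i x_i\otimes y_i\in A\otimes_k A$ (a \emph{separability idempotent}) with $m(e)=1$ and $(a\otimes 1)e=e(1\otimes a)$ for all $a\in A$. Equivalently, the multiplication $m:A\otimes_k A\rightarrow A$ splits as a map of $A$-bimodules, with section $a\mapsto (a\otimes 1)e$. The ``$\Delta$'' of the definition is this section. One point to record first is that such an $A$ is automatically finite-dimensional. This follows from the standard argument: the $x_i$ span $A$ over $k$ after applying a suitable functional. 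Alternatively, I will simply restrict to finite-dimensional commutative algebras, as in the setting of Theorem \ref{etale_algebras}.

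For \'etale $\Rightarrow$ separable, the first step is to note that separability is preserved under finite products. If $e_j$ is a separability idempotent for $A_j$, then $\sum_j e_j$, viewed in $\prod_j A_j\otimes\prod_j A_j$, works for the product. So it suffices to treat a finite separable extension $L=k(\alpha)=k[x]/(f)$, with $f$ separable, via the primitive element theorem. Write $f(x)=(x-\alpha)g(x)$ in $L[x]$. I then take $e=\frac{1}{f'(\alpha)}\sum_i c_i\otimes\alpha^i$, where $g(x)=\sum_i c_ix^i$. Equivalently, under $L\otimes_k L\cong L[x]/(f)\cong L\times\prod(\cdots)$ from the Chinese Remainder Theorem, $e$ is the idempotent projecting onto the factor $L[x]/(x-\alpha)$. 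Then $m(e)=g(\alpha)/f'(\alpha)=1$, and $(a\otimes1-1\otimes a)e=0$ because $e$ lives on the diagonal factor. A cleaner way to phrase this is that $\ker m$ is generated by an idempotent complement of $e$.

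For commutative separable $\Rightarrow$ \'etale, I will use criterion $c)$ of Theorem \ref{etale_algebras}. First, separability is stable under base change: $e\otimes 1$ is a separability idempotent for $\overline{A}:=A\otimes_k\overline{k}$ over $\overline{k}$. So it suffices to show that a commutative finite-dimensional separable algebra $B$ over a field $K$ is reduced. Since $B$ is commutative, the relation $(b\otimes1)e=e(1\otimes b)$ makes $e$ idempotent in the commutative ring $B\otimes_K B$, and $B\cong (B\otimes B)e$ via $m$. This is where I expect the main obstacle: extracting reducedness. My first attempt is to take a nilpotent $n\in B$ and, for each $K$-linear functional $\lambda$ on $B$, consider the $B$-linear map $\phi_\lambda:b\mapsto\sum_i\lambda(bx_i)y_i$. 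This map satisfies $\phi_\lambda(1)$-type identities that force $b=\sum_i\lambda_i(bx_i)y_i$ for suitable $\lambda_i$. This realises $B$ as a direct summand of a free module, in a way that shows the trace form $(b,c)\mapsto \mathrm{Tr}_{B/K}(bc)$ is nondegenerate. Indeed, $\mathrm{Tr}(b\,\cdot)$ is recovered from $e$ by $\sum_i \mathrm{Tr}(\cdots)$, and $e$ acts as a dual-basis element. A nilpotent $n$ has $\mathrm{Tr}(nc)=0$ for all $c$, since $nc$ is nilpotent and hence has trace zero. Nondegeneracy then gives $n=0$.

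If the trace-form route is messy, the fallback is more direct. Over $\overline{k}$, the algebra $\overline{A}$ is a product of local Artinian algebras $B_j$ with residue field $\overline{k}$, and each $B_j$ is separable, being a factor through its idempotent. For a local $B$ with maximal ideal $\mathfrak{m}$, I compose the splitting with $B\otimes B\rightarrow B/\mathfrak{m}\otimes B\cong B$. This turns the bimodule section into a $B$-linear section of $B\rightarrow B/\mathfrak{m}$, so $B/\mathfrak{m}$ is projective, hence free, over the local ring $B$. That forces $\mathfrak{m}=0$, so $B\cong \overline{k}$ and $\overline{A}\cong\overline{k}^{n}$, which is $b)$.
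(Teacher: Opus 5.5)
Your proof is correct, and it takes a genuinely different route from the paper. The paper offers no argument of its own: it quotes Corollary 4.5.8 of \cite{Sep_al} (a commutative $k$-algebra is separable iff it is a finite product of finite separable field extensions) and notes that this is the definition of a finite \'etale algebra. You prove both directions directly.

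For \'etale $\Rightarrow$ separable, your idempotent $e=f'(\alpha)^{-1}g(x)$ in $L\otimes_k L\cong L[x]/(f)$ works. Indeed $m(e)=g(\alpha)/f'(\alpha)=1$, and $(\alpha-x)g(x)=-f(x)=0$ gives $(\alpha\otimes1-1\otimes\alpha)e=0$, which extends to all of $L=k[\alpha]$. The passage to finite products is fine.

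For the converse, base change to $\overline{k}$ and splitting into local Artinian factors is sound. With $\pi:B\to B/\mathfrak{m}=\overline{k}$ the residue map, your composite with $B\otimes B\to B/\mathfrak{m}\otimes B\cong B$ amounts to the element $u=\sum_i\pi(x_i)y_i$. It satisfies $bu=\pi(b)u$ and $\pi(u)=1$, so $u$ is a unit annihilated by $\mathfrak{m}$, hence $\mathfrak{m}=0$. Criterion $b)$ of Theorem \ref{etale_algebras} then finishes.

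The citation buys brevity. Your route gives a self-contained proof that uses only Theorem \ref{etale_algebras} and makes the separability idempotent explicit. You also correctly read the paper's mis-typed definition as a bimodule section of $m$, as in Lemma \ref{lemma_splitting_exact_sequence_separable}.

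Two loose ends need fixing.

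First, the paper's definition of separability does not assume finite dimension, so you should not ``simply restrict'' to finite-dimensional algebras. Make your remark precise instead:
\begin{itemize}
\item write $e=\sum_i x_i\otimes y_i$ with the $y_i$ linearly independent;
\item apply $\id\otimes\lambda$, with $\lambda(y_i)=\delta_{ij}$, to $(a\otimes1)e=e(1\otimes a)$ to get $Ax_j\subseteq V:=\mathrm{span}_k(x_1,\dots,x_n)$;
\item conclude $a=\sum_i(y_ia)x_i\in V$, using commutativity and $m(e)=1$.
\end{itemize}

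Second, drop the trace-form paragraph. It asserts nondegeneracy without proving the key identity $(\mathrm{Tr}\otimes\id)(e)=\sum_i\mathrm{Tr}(x_i)y_i=1$, whereas your local-factor argument is already complete.
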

\begin{proof}
    Corollary $4.5.8$ in \cite{Sep_al} says that such a commutative $k$-algebra $A$ is a separable algebra if and only if $A$ is isomorphic to a finite direct sum of fields $L_1 \oplus ... \oplus L_n$ where each $L_i$ is a finite separable field extension of $k$.
    But this is exactly the definition of finite \'etale $k$-algebras.
\end{proof}

\noindent For $m:A\otimes_k A \rightarrow A$ the multiplication map of a $k$-algebra $A$, denote its kernel $I$.
Notice that if $A$ is a commutative algebra, $A\otimes_kA$ is a commutative algebra in its own right, and $I$ is an ideal in this algebra because it is an $A$,$A$-bimodule : for all $x\in I$, $(a\otimes b)x = axb \in I$ for $a,b\in A$.

\begin{lemma}
\label{lemma_splitting_exact_sequence_separable}
    An algebra $A$ over a commutative ring is separable if and only if this short exact sequence of $A$,$A$-bimodules splits
    $$0\xrightarrow[]{} I \xrightarrow[]{i} A\otimes_kA \xrightarrow[]{m} A\rightarrow 0$$
    for $i$ the inclusion.
\end{lemma}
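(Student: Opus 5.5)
The plan is to reduce the statement to the splitting lemma in the abelian category of $A$,$A$-bimodules, that is, of left $A\otimes_k A^{op}$-modules. One point of interpretation comes first. The map $\Delta$ in the definition of a separable algebra must be read as a map $\Delta:A\rightarrow A\otimes_k A$, since the conditions $m(\Delta(a))=a$ and $a\Delta(b)=\Delta(ab)=\Delta(a)b$ only make sense for a map of that direction. With this reading, $\Delta$ is an $A$,$A$-bimodule homomorphism with $m\circ\Delta=\id_A$. So separability says exactly that $m$ admits a bimodule section, and the statement becomes: $m$ has a bimodule section if and only if the displayed sequence splits as a sequence of bimodules.

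The key steps, in order, are as follows.

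First, check that the sequence is a short exact sequence of bimodules. The maps $i$ and $m$ are bimodule maps for the actions $a\cdot(x\otimes y)\cdot b=ax\otimes yb$ on $A\otimes_kA$. Moreover $m$ is surjective because $m(a\otimes 1)=a$, and $I=\ker m$ by definition. This is immediate.

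Second, prove the direction ($\Rightarrow$). Given $\Delta$, set $s=\Delta$. The identity $m\circ s=\id_A$ makes $s$ a section of $m$ in the bimodule category. The splitting lemma then gives $A\otimes_kA\cong I\oplus A$ as bimodules. Concretely, the retraction $r:A\otimes_kA\rightarrow I$ is $r(z)=z-s(m(z))$. One checks that $m(r(z))=0$, so $r$ lands in $I$, and that $r$ restricts to the identity on $I$. Both $r$ and the resulting isomorphism are bimodule maps because $s$ and $m$ are.

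Third, prove the direction ($\Leftarrow$). If the sequence splits in the bimodule category, there is a bimodule map $s:A\rightarrow A\otimes_kA$ with $m\circ s=\id_A$. If the splitting is given instead by a retraction $r$ of $i$, build $s$ by the usual argument: $\id-i\circ r$ kills $I$, so it factors through $m$. Then $\Delta:=s$ satisfies $m(\Delta(a))=a$, and bimodule linearity gives $a\Delta(b)=\Delta(ab)=\Delta(a)b$. Hence $A$ is separable. Optionally, I would also note that $e=\Delta(1)$ is a separability idempotent with $m(e)=1$ and $ae=ea$. This is the form that will be convenient later, for example when comparing with Proposition \ref{commut_separable_is_etale}.

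The main obstacle is not computational but foundational: the splitting must take place in the bimodule category and not merely over $k$. Over $k$ every such sequence splits, so a $k$-linear section proves nothing. I would therefore make explicit at the start that all maps are bimodule homomorphisms. I would also state that the equivalence of ``split by a section'' and ``split by a retraction'' holds in any abelian category, in particular in $A\otimes_kA^{op}$-modules. The argument uses nothing about $k$ being a field, which is consistent with the statement being phrased over a commutative ring.
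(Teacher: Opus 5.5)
Your proposal is correct and follows the same route as the paper, which likewise applies the splitting lemma and observes that a bimodule section $\Delta$ of $m$ is exactly a map with $m\circ\Delta=\id_A$ and $a\Delta(b)=\Delta(ab)=\Delta(a)b$. You supply the details the paper leaves implicit (exactness, and the section/retraction equivalence in the bimodule category), and you rightly read $\Delta$ as a map $A\rightarrow A\otimes_k A$, correcting the direction misprinted in the paper's definition of separability.
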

\begin{proof}
    We use the splitting lemma (in \cite{Lang_algebra}, Proposition $3.2$).
    We can define a map $\Delta: A\rightarrow A\otimes A$ so that $m\circ \Delta = \id_A$. 
    And  $\Delta$ is an $A$,$A$-bimodule map if and only if $a\Delta(b) = \Delta(ab)= \Delta(a)b$. 
    But this is precisely the definition of a separable algebras. \\
\end{proof}

\subsection{Another Classification of Covering Spaces}
\label{equivalence_fibre_functor}

\noindent Similarly as in Galois theory, there is a formulation ‘‘\textit{\`a la Grothendieck}" of the classification of covering spaces in terms of equivalence of categories via a fibre functor.

\begin{theorem}
\label{covering_reformulation_1}
    Let $X$ be a connect topological space and $x\in X$ a base point. The fibre functor $\Fib_x$ induces an equivalence of the categories between $\Cov(X)$ and $\pi_1(X,x)$-$\SET$.
    Connected covers correspond to $\pi_1(X,x)$-sets with transitive action 
    and Galois covers correspond to coset spaces of normal subgroups.
\end{theorem}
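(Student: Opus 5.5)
We prove that $\Fib_x$ is fully faithful and essentially surjective. For essential surjectivity we use the universal cover $q:\widetilde{X}_x\rightarrow X$ and its right $\pi_1(X,x)$-action from Theorem \ref{aut_iso_fund}. Given a left $\pi_1(X,x)$-set $S$ (with the discrete topology), we form the mixed quotient
$$\widetilde{X}_x\times_{\pi_1(X,x)} S := (\widetilde{X}_x\times S)/\bigl((\widetilde{y}\cdot\gamma, s)\sim(\widetilde{y},\gamma\cdot s)\bigr),$$
with the map to $X$ induced by $(\widetilde{y},s)\mapsto q(\widetilde{y})$. This map is well defined by Lemma \ref{univ_ppty_lift}.

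First we check that this is a cover. Take an evenly covered, simply-connected neighbourhood $U$ of a point. Then $q^{-1}(U)\cong U\times \pi_1(X,x)$, and the identification gives that the preimage of $U$ is $U\times S$. This uses that $\pi_1(X,x)$ acts freely on $\widetilde{X}_x$ (Corollary \ref{free_action_universal}) and permutes the sheets simply transitively.

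Next we compute its fibre over $x$. By Corollary \ref{fibre_univ_cov}, $q^{-1}(x)$ is a free transitive right $\pi_1(X,x)$-set, identified with $\pi_1(X,x)$ via $[\gamma]\leftrightarrow\widetilde{x}\cdot\gamma$. The fibre over $x$ is therefore $\pi_1(X,x)\times_{\pi_1(X,x)}S\cong S$. We must then verify that the monodromy action on this fibre agrees with the given action on $S$. To do so, lift a loop $\gamma$ starting at $[\widetilde{x},s]$: the lift is the path $t\mapsto[\widetilde{\gamma}(t),s]$, which ends at $[\widetilde{x}\cdot\gamma,s]=[\widetilde{x},\gamma\cdot s]$. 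Some care is needed to match left and right conventions, and this is where the $op$ in Theorem \ref{aut_iso_fund} enters.

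For full faithfulness, let $p:Y\rightarrow X$ and $p':Y'\rightarrow X$ be covers. Faithfulness follows from Theorem \ref{property_morphism_covers} $a)$, applied on each connected component of $Y$: each component meets $p^{-1}(x)$, because $X$ is path-connected and paths lift. For fullness we use Proposition \ref{representable_fibre_functor}, which says $\Hom_X(\widetilde{X}_x,Y)\cong p^{-1}(x)$. A cleaner route is to show that every cover $Y$ is isomorphic to $\widetilde{X}_x\times_{\pi_1}p^{-1}(x)$, via the map $[\widetilde{y},y]\mapsto\varphi_y(\widetilde{y})$. This map is a morphism of covers inducing the identity on fibres over $x$. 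It is bijective on every fibre, by transporting along paths and using unique lifting, so it is a homeomorphism because it is a local homeomorphism. Given this isomorphism, a $\pi_1$-equivariant map $f:p^{-1}(x)\rightarrow p'^{-1}(x)$ induces $\id\times f$ on the mixed quotients, and hence a morphism $Y\rightarrow Y'$ restricting to $f$. This gives fullness.

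Finally, a cover $Y$ is connected if and only if $p^{-1}(x)$ is a single orbit. One direction is the transitivity of the monodromy action. For the other, the decomposition of $S$ into orbits gives a decomposition of $\widetilde{X}_x\times_{\pi_1}S$ into a disjoint union of open subcovers. The Galois statement follows from Theorem \ref{caracterize_Galois_cov} $c)$ together with Lemma \ref{induced_subgroup}: the stabilisers are $p_*\pi_1(Y,y)$, so the fibre is $\pi_1(X,x)/H$ with $H$ normal exactly when the cover is Galois.

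The main obstacle will be showing that the comparison map $\widetilde{X}_x\times_{\pi_1}p^{-1}(x)\rightarrow Y$ is a homeomorphism, together with the left/right action bookkeeping. The homeomorphism step needs the local structure near non-base points, and this is where the standing assumption of local path-connectedness and semi-local simple connectedness is used.
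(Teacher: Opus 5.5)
Your argument is correct, but it is organised differently from the paper's. The paper first reduces to connected covers and transitive $\pi_1(X,x)$-sets. It then relies on the Galois correspondence of Theorem~\ref{clasification_of_covers}, applied to the universal cover, to identify every connected cover with $\widetilde{X}_x/H$, where $H$ is the stabiliser of a fibre point. Fullness comes from the observation that an equivariant map $y\mapsto z$ forces $H_y\subseteq H_z$, so the map $\widetilde{X}_x/H_y\rightarrow\widetilde{X}_x/H_z$ is defined. Essential surjectivity is obtained by writing $S\cong\pi_1(X,x)/H$, taking $\widetilde{X}_x/H$, and forming disjoint unions over orbits.

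You instead use the balanced product $\widetilde{X}_x\times_{\pi_1}S$ for arbitrary $S$. For transitive $S=\pi_1(X,x)/H$ this is just $\widetilde{X}_x/H$, so the objects coincide. You then prove fullness by showing directly, via path lifting, that the comparison map $\widetilde{X}_x\times_{\pi_1}p^{-1}(x)\rightarrow Y$ is an isomorphism of covers. This gives an explicit quasi-inverse functor, with no reduction to connected components and no appeal to the Galois correspondence. It is also the same construction the paper later uses in Theorem~\ref{equi_R_mod_loc_cst_sh}. The cost is that you must check by hand that the balanced product is locally trivial and that the comparison map is a homeomorphism. The paper gets both for free from Proposition~\ref{quotient_cover} and Theorem~\ref{clasification_of_covers}.

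One small correction concerns the connectedness statement. The orbit decomposition shows that a non-transitive fibre gives a disconnected cover. That is only the contrapositive of the direction you already have from transitivity of the monodromy action. The converse, that a transitive fibre gives a connected cover, still needs a line. For transitive $S$ and fixed $s\in S$, every class satisfies $[\widetilde{y},\gamma\cdot s]=[\widetilde{y}\cdot\gamma,s]$. Hence $\widetilde{X}_x\times_{\pi_1}S$ is the image of the connected space $\widetilde{X}_x\times\{s\}$, and so it is connected.
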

\begin{proof}
    We check that $\Fib_x$ is fully faithful and essentially surjective.\\
    Let $p:Y\rightarrow X$ and $q:Z\rightarrow X$ be two covers of $X$.
    We can assume that $Y$ and $Z$ are connected since (otherwise, we can just run the following argument on pairs of connected components).
    We want to show that the $\pi_1(X,x)$-equivariant map $\phi\in\Hom(\Fib_x(Y),\Fib_x(Z))$ comes from the unique morphism of covers $f: Y\rightarrow X$.
    Consider the maps $\varphi_y: \widetilde{X} \rightarrow Y$ and $\varphi_z:\widetilde{X}\rightarrow Z$ corresponding to the points $y\in \Fib_x(Y)$ and $z:=\phi(y)$ respectively as in Proposition \ref{representable_fibre_functor}.
    Since the universal covering $r:\widetilde{X}\rightarrow X$ is Galois (\ref{unive_cov_galois}) and by the above classification of covering spaces, the maps $\varphi_y$ and $\varphi_z$ are Galois coverings and $Y\cong \widetilde{X}/H_y$ and $Z\cong \widetilde{X}/H_z$ for each stabilisers $H_y=\Aut(\widetilde{X}_x|Y)$ of $y$ and $H_z=\Aut(\widetilde{X}_x|Z)$ of $z$.
    So we can write $\phi : \Fib_x (\widetilde{X}/H_y) \rightarrow \Fib_x(\widetilde{X}/H_z)$.
    Since this map is $\pi_1(X,x)$-equivariant, it induces a map $\Fib_x (\widetilde{X})/H_y \rightarrow \Fib_x(\widetilde{X})/H_z$,
    which implies that there is a map between the stabilisers $H_y\rightarrow H_z$ via $\phi$.
    Hence, $\varphi_z: \widetilde{X}_x\rightarrow Z$ induces a map 
    $$\widetilde{X}_x / H_y \rightarrow \widetilde{X}_x/H_z \xrightarrow[]{\sim} Z.$$
    Composing this with the homeomorphism $Y\cong \widetilde{X}_x/H_y$, gives us the desired map$f:Y\rightarrow Z$, thus $\Fib_x$ is indeed fully faithful.\\
    To show essentially surjective, consider $S$ any $\pi_1(X,x)$-set, we want to find a covering map whose fibre is exactly $S$.
    Assume that $\pi_1(X,x)$ acts transitively on $S$ and take $s\in S$. Let $H=\pi_1(X,x)_s$ be the stabiliser of $s\in S$, again by Theorem \ref{Cameron_1_3}, we have that $S\cong \pi_1(X,x)/H$ the coset. 
    Thus we wish to construct a covering map whose fibre is this exact coset.
    Consider $q:\widetilde{X}\rightarrow X$ the universal cover, by Proposition \ref{unive_cov_galois} it is Galois, thus $X \cong \widetilde{X}/\pi_1(X,x)$.
    Let $Y:= \widetilde{X}/H$ and $p:Y\rightarrow X$ a covering space, then we have on the quotients that $\widetilde{X}/H\rightarrow \widetilde{X}/\pi_1(X,x)$ is a covering map.
    Recall that from Corollary \ref{fibre_univ_cov}, we have $\Fib_x(\widetilde{X}/H) =\Fib_x(\widetilde{X})/H \cong \pi_1(X,x)/H \cong S$. 
    For $S$ not transitive, we decompose $S$ into disjoint union of orbits (into transitive components) and define a covering $p_i:Y_i\rightarrow X$ for each orbit. 
    Then, taking the fibre of the disjoint union yields a $\pi_1(X,x)$-equivariant bijection from the disjoint union of these fibres to the disjoint union of orbits of $S$, and thus a $\pi_1(X,x)$-equivariant bijection to $S$ itself.\\
    Finally the last statement of the theorem follows from the previous argument combined with the above classification of covering spaces.
\end{proof}

\noindent We can state a refinement of the previous theorem.
It applies the theorem to our previous discussion on Galois groups being profinite. 

\begin{remark}
    Earlier, we have shown that Galois groups of Galois extensions are profinite, however fundamental groups of topological spaces are generally not profinite; a prime example of this is the fundamental group of the circle $\pi_1(S^1) = \mathbb{Z}$ which cannot be seen as profinite.
\end{remark}

\noindent Notice that this refinement is much closer to Grothendieck's formulation of Galois theory for finite field extensions (Theorem \ref{Grot_reformulation_Galois}).
Since the subsequent chapter will mostly focus on \textit{}{finite} coverings (over algebraic spaces), this refinement is quite fitting.
The ‘‘new" tool we use here is the profinite completion of groups defined in the previous section.

\begin{definition}
    We say that a covering space $p:Y\rightarrow X$ is $\textbf{finite}$ if it has finite fibres. For a connected topological space, these fibres have the same cardinality, called the \textbf{degree} of the covering.
\end{definition}

\noindent In order to prove the next theorem, we first need the following lemma.

\begin{lemma}
    Let $G$ be a group and $H$ a subgroup of finite index. Then there exists a normal subgroup $N$ of $G$ contained in $H$
\end{lemma}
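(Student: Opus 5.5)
The natural candidate is the normal core of $H$,
$$N=\bigcap_{g\in G} gHg^{-1},$$
and the statement should be read with the implicit strengthening that $N$ has finite index in $G$. Without that strengthening the trivial subgroup already works, and the finite-index version is what the subsequent refinement via profinite completions actually needs.

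I would use the left action of $G$ on the finite coset space $G/H$ by left multiplication. This action defines a homomorphism
$$\rho: G\rightarrow \Perm(G/H)\cong S_n,\qquad n=[G:H]<\infty.$$
I take $N=\ker\rho$, and the proof has three steps.

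First, $N$ is normal in $G$, since it is the kernel of a homomorphism.

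Second, $N\subseteq H$. An element $g\in N$ fixes every coset, and in particular the coset $H$ itself, so $gH=H$ and hence $g\in H$. In fact $g$ fixes $aH$ exactly when $a^{-1}ga\in H$, that is, when $g\in aHa^{-1}$. So $N$ equals the intersection of all conjugates of $H$, which recovers the core description above. Alternatively, this identification also follows from the description of stabilisers of transitive $G$-sets in Theorem \ref{Cameron_1_3}.

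Third, $N$ has finite index. By the first isomorphism theorem, $G/N$ embeds in $S_n$, so $[G:N]\le n!$.

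There is no real obstacle here. The only point needing care is the identification of the stabiliser of $aH$ with the conjugate $aHa^{-1}$, together with remembering to record the finite-index conclusion explicitly, since that is what makes the lemma useful for comparing finite quotients with the profinite completion $\hat{G}$.
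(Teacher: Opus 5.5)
Your proposal is correct and follows the paper's proof: take $N=\ker\bigl(G\to\Sym(G/H)\cong S_n\bigr)$ and identify it with the core $\bigcap_{g\in G} gHg^{-1}\subseteq H$. Your explicit bound $[G:N]\le n!$ is a worthwhile addition. The paper's statement omits the finite-index conclusion, and without it the trivial subgroup already suffices, yet the later use in Theorem \ref{covering_reformulation_2} relies on exactly that conclusion.
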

\begin{proof}
    Suppose the index of $H$ in $G$ is $[G:H]=n$, and consider the representation of $G$ on the left coset space of $H$, so $\rho:G\rightarrow \Sym(G/H) = S_n$ is a group homomorphism whose kernel $N=\ker\rho$ is $\bigcap_{g\in G} gHg^{-1} = \core_G(H) \subseteq H$. (or exercise $I$ $9.(a)$ in \cite{Lang_algebra})
\end{proof}

\begin{theorem}
\label{covering_reformulation_2}
    Let $X$ be a connect topological space and $x\in X$ a base point. 
    The functor $\Fib_x$ induces an equivalence between the category of finite covers of $X$ and the category of finite continuous $\widehat{\pi_1(X,x)}$-sets.
    Connected covers correspond to finite $\widehat{\pi_1(X,x)}$-sets with transitive action and Galois covers to coset spaces of open normal subgroups.
\end{theorem}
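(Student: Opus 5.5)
The plan is to deduce this from Theorem \ref{covering_reformulation_1} by restricting to full subcategories on both sides and then identifying the target. Under $\Fib_x$, finite covers correspond exactly to $\pi_1(X,x)$-sets that are finite, because the fibre of a cover is $\Fib_x$ of it. Since Theorem \ref{covering_reformulation_1} gives an equivalence $\Cov(X)\simeq \pi_1(X,x)$-$\SET$, it restricts to an equivalence between finite covers and finite $\pi_1(X,x)$-sets. Essential surjectivity survives the restriction because the cover built there from a finite $S$ has fibre $S$.

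Write $G=\pi_1(X,x)$ with canonical map $\iota:G\to\widehat{G}$. The main step is to show that restriction along $\iota$ is an isomorphism of categories from finite continuous $\widehat{G}$-sets to finite $G$-sets.

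For a finite $G$-set $S$, the action $\rho:G\to\Sym(S)$ has finite image, so $N=\ker\rho$ is normal of finite index. By the definition of $\widehat G$ as $\varprojlim G/N$, there is a projection $\pr_N:\widehat G\to G/N$, and $G/N$ acts on $S$. This yields a $\widehat G$-action on $S$ whose restriction along $\iota$ is the original one. Each stabiliser contains the open subgroup $\ker\pr_N$, so it is open and the action is continuous by Lemma \ref{continuous_open_stabiliser}.

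Conversely, let $\widehat G$ act continuously on a finite set $S$. Each stabiliser is open, so the kernel $\bigcap_s \widehat G_s$ is a finite intersection of open subgroups and hence open. It therefore contains some basic open $\ker\pr_N$, using that these kernels form a neighbourhood basis of $1$ (Remark \ref{prof_are_top}). The action then factors through $G/N$. Because $\iota(G)$ surjects onto $G/N$ under $\pr_N$, the action is determined by its restriction to $G$. The same factorisation argument, applied to the graph of a map, shows that a map of finite sets is $G$-equivariant iff it is $\widehat G$-equivariant. This is the step I expect to need most care: checking that $\pr_N\circ\iota$ is the quotient map $G\to G/N$, which is immediate from the definition, and that two factorisations through different $N, N'$ agree, which follows by passing to $N\cap N'$. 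Composing the two equivalences gives the first statement.

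For the refinements, transitivity is unchanged by the identification, because the $G$-orbits and $\widehat G$-orbits coincide: the action factors through $G/N$, which is also a quotient of $G$. So connected covers correspond to transitive sets by Theorem \ref{covering_reformulation_1}. A Galois cover corresponds to $G/H$ with $H$ normal of finite index. By the lemma above, $H\supseteq N$ for some finite-index normal $N$ (here $N=H$ itself suffices). Then $G/H\cong \widehat G/\pr_N^{-1}(H/N)$, and $\pr_N^{-1}(H/N)$ is an open normal subgroup of $\widehat G$. Conversely, an open normal $U\le\widehat G$ has finite index by Corollary \ref{open_subgroup_profinite}, and $\iota^{-1}(U)$ is normal in $G$ with $G/\iota^{-1}(U)\cong\widehat G/U$, which uses the density of $\iota(G)$.
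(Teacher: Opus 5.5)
Your proposal is correct and follows essentially the same route as the paper. Both arguments reduce to Theorem~\ref{covering_reformulation_1} by showing that finite $\pi_1(X,x)$-sets and finite continuous $\widehat{\pi_1(X,x)}$-sets are the same objects, since in both cases the action factors through a finite quotient $G/N$. The differences are minor: you take $N$ to be the kernel of the whole permutation representation rather than the normal core of a stabiliser, and you spell out the morphism-level, transitivity and Galois checks that the paper dismisses as ``similar''.
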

\begin{proof}
    First, notice that for a finite connected cover $p:Y\rightarrow X$, from the classification of coverings, the action of $\pi_1(X,x)$ on $p^{-1}(x)$ factors through finite quotients, 
    so $\widehat{\pi_1(X,x)}$ also acts on $p^{-1}(x)$.
    By orbit stabiliser and since the covering is finite, the stabiliser of each point $y\in Y$ is a subgroup of finite index. 
    Thus, by the above lemma, it must contain a normal subgroup of finite index.
    Since those normal subgroups of finite index are the kernels of the projection maps $\pr_N$, they must be open, and being contained in the stabiliser of $y$, the stabiliser can be seen as a union of cosets of an open normal subgroup.
    Hence, this stabiliser is open, and by Lemma \ref{continuous_open_stabiliser}, this means that the action is continuous.
    Conversely, a continuous action of $\widehat{\pi_1(X,x)}$ on a finite set factors through continuous actions of $\pi_1(X,x)$ on finite quotients and by Theorem \ref{covering_reformulation_1}, gives rise to a finite cover.
    The last two statements are proven similarly as in the proof of Theorem \ref{covering_reformulation_1}, by factoring through quotients.\\
\end{proof}

\subsection{Classification of Locally Constant Sheaves}

The theories of \textit{locally constant sheaves} and covering spaces are closely related. 
In this section, we show that they are, in fact, equivalent. 
The reader may find all the necessary preliminaries to sheaves in Appendix \ref{review_sheaves}. Let us now introduce the main objects of this section.


\begin{definition}
    Let $X$ be a topological space and $S$ be a discrete set. We define a sheaf of sets $\mathcal{F}_S$ on $X$ by taking $\mathcal{F}_S(U)$ to be the set of continuous functions $U\rightarrow S$. But since $S$ is discrete, the sections of $\mathcal{F}_S$ are just constant functions, thus $\mathcal{F}_S(U) = S$ and we call this sheaf the \textbf{constant sheaf} with values in $S$.
    A sheaf $\mathcal{F}$ on $X$ is then said to be \textbf{locally constant} if the restriction $\mathcal{F}\vert_U$ is a constant sheaf for all open $U$ in some open covering of $X$.
\end{definition}

\noindent We will study locally constant sheaves through the lens of another sheaf construction. 

\begin{construction}
    Given a morphism of topological spaces $p:Y\rightarrow X$, we define the presheaf $\mathcal{F}_Y$ of sets on $X$, called the \textbf{presheaf of sections} of $p$ as follows:
\begin{itemize}
    \item[$a)$] for any open subset $U\subset X$, let $\mathcal{F}_Y(U)$ be the set of sections of $p$ over $U$, and
    \item[$b)$] for any two open subsets $V\subset U\subset X$, the restriction map $\rho_{UV}^{\mathcal{F}_Y}$ is defined by restricting the sections to $V$,
\end{itemize}
\end{construction}

\begin{remark}
\label{fibres_are_stalks}
    A first interesting fact is that for sheaves of sections of covering maps, fibres and stalks at a point $x\in X$ are the same. 
    Indeed let $\phi: \mathcal{F}_{Y,x}\rightarrow p^{-1}(x)$ be the map that sends the representative $[s]_x$ to its image $s(x)$. This map is well-defined since equivalent sections agree over an open neighbourhood of $x$.
    When $\phi([s]_x) = \phi([t]_x)$ in $p^{-1}(x)$, since $p$ is a local homeomorphism, there exist an open neighbourhood $V\subset X$ of $x$ such that $s(V)=t(V)$, and since they are sections, $s=t$ near $x$, so $[s]_x=[t]_x$.
    For any $y\in p^{-1}(x)$, we have that $p\vert_V:V\rightarrow U$ is a homeomorphism, with $s:=(p\vert_V)^{-1}$ a section such that $\phi([s]_x) = s(x) = y$. 
\end{remark}

\begin{proposition}
\label{sheaf_of_sections_if_of_cover}
    The presheaf $\mathcal{F}_Y$ of sections of $p$ is indeed a sheaf. Moreover, if $p:Y \rightarrow X$ is a covering, then $\mathcal{F}_Y$ is a locally constant sheaf. $\mathcal{F}_Y$ is a constant sheaf if and only if $p:Y\rightarrow X$ is a trivial cover.
\end{proposition}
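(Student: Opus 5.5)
I will prove the three assertions of Proposition \ref{sheaf_of_sections_if_of_cover} in order: the sheaf axioms, local constancy for coverings, and the characterisation of constancy by triviality.

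\emph{Sheaf axioms.} Let $U=\bigcup_i U_i$ be an open cover. For the identity axiom, take two sections $s,t$ of $p$ over $U$ whose restrictions to every $U_i$ agree. Then they agree pointwise on all of $U$, so $s=t$. For gluing, take sections $s_i$ over $U_i$ agreeing on the overlaps $U_i\cap U_j$. Define $s(u)=s_i(u)$ for $u\in U_i$; this is well defined by the compatibility on overlaps. It is continuous because continuity is local and $s\vert_{U_i}=s_i$ is continuous. It satisfies $p\circ s=\id_U$ pointwise. Note that this step uses nothing about $p$ beyond its continuity.

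\emph{Local constancy.} Let $p$ be a covering and $x\in X$. By Definition \ref{def_cover} there is a neighbourhood $U$ of $x$ with $p^{-1}(U)=\bigsqcup_{y\in p^{-1}(x)}V_y$ and $p\vert_{V_y}:V_y\to U$ homeomorphisms. I would shrink $U$ to be connected, which is possible since $X$ is locally path-connected (Remark \ref{connected_constant_sheets}); every open subset of $U$ inherits such a trivialisation. So $p^{-1}(U)\cong U\times I$ over $U$, with $I=p^{-1}(x)$ discrete. Then for any open $W\subset U$, a section over $W$ is a continuous map $W\to U\times I$ of the form $w\mapsto(w,f(w))$ with $f:W\to I$ continuous. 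Hence $\mathcal{F}_Y\vert_U$ is exactly the sheaf $W\mapsto\{\text{continuous } W\to I\}$, i.e. the constant sheaf $\mathcal{F}_I$ of the paper's definition. One subtlety: on a disconnected $W$ the value is $I^{\pi_0(W)}$ rather than $I$. I would note that the paper's ``$\mathcal{F}_S(U)=S$'' is only literally valid for connected $U$, and use the continuous-functions definition throughout.

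\emph{Constant iff trivial.} If $p$ is the trivial cover $X\times I\to X$, then the argument above with $U=X$ gives $\mathcal{F}_Y\cong\mathcal{F}_I$. Conversely, suppose there is an isomorphism $\Phi:\mathcal{F}_I\to\mathcal{F}_Y$. For each $i\in I$, the constant function $i$ gives a global section $s_i=\Phi(i)$ of $p$. Define $F:X\times I\to Y$ by $F(x,i)=s_i(x)$; it is continuous since each $s_i$ is and $I$ is discrete, and it commutes with the projections to $X$.

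The main obstacle is showing that $F$ is a homeomorphism, and here I would use Remark \ref{fibres_are_stalks}. The isomorphism $\Phi$ induces bijections on stalks $\mathcal{F}_{I,x}\cong I\to\mathcal{F}_{Y,x}\cong p^{-1}(x)$, given by $i\mapsto s_i(x)$. Hence $F$ is bijective on each fibre, so bijective. To see it is open, each $s_i$ is an open map into $Y$: locally it is the inverse of $p\vert_{V_y}$ onto a sheet. Thus $F$ is a bijective open continuous map, hence a homeomorphism over $X$, and $p$ is trivial.
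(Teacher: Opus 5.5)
Your proof is correct. For the first two assertions it follows the same idea as the paper: trivialise $p$ over a neighbourhood $U$ and read the sections off the product $U\times I$. You are in fact more careful there. The paper only identifies the sections over the connected trivialising set itself with points of the fibre, via connected components. You instead identify $\mathcal{F}_Y(W)$ with continuous maps $W\to I$ for every open $W\subset U$. That is what an isomorphism of sheaves $\mathcal{F}_Y\vert_U\cong\mathcal{F}_I\vert_U$ actually requires, and it makes the connectedness of $U$ unnecessary.

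The genuine divergence is in the last assertion. The paper settles it in one sentence: constancy holds iff the connected trivialising set can be taken to be all of $X$. That covers trivial $\Rightarrow$ constant. For the converse, however, one must pass from an abstract isomorphism $\mathcal{F}_Y\cong\mathcal{F}_S$ to an actual trivialisation over all of $X$, and the paper leaves this implicit. Your construction supplies exactly this step:
\begin{itemize}
\item the global sections $s_i=\Phi(i)$ assemble into a map $F:X\times I\to Y$ over $X$;
\item the stalk identification of Remark \ref{fibres_are_stalks} turns the stalk bijections of $\Phi$ into fibrewise bijectivity of $F$;
\item describing a section near each point as the inverse of a sheet homeomorphism makes $F$ open.
\end{itemize}
Both of the last two steps use that $p$ is a covering. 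For an arbitrary continuous $p$, a constant sheaf of sections does not force triviality. You should therefore state the covering hypothesis explicitly in the converse, as the statement intends.
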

\begin{proof}
    Since sections are continuous maps over an open $U\subset X$, the glueing property of sheaves is immediately verified.
    Suppose now that $p:Y\rightarrow X$ is a covering. For any $x\in X$ take a connected open neighbourhood $V$ of $x$ such that the cover $p$ over $V$ is trivial. So over $V$ the cover is isomorphic to $V\times S$, for $S$ the fibre at $x$.
    Let $s\in\mathcal{F}_Y(V)$ be a local section of $p$ over $V$. 
    Its image is a connected open subset of $Y$, and is isomorphically sent to $V$ by $p$.
    So $im(s)$ must be one of the connected components of $p^{-1}(V) = V\times S$. And since the images of two different sections are isomorphic to two different connected components of $p^{-1}(V)$, sections of $p$ over $V$ correspond bijectively to points of the fibre $S$. Thus the restriction $\mathcal{F}_Y\vert_V$ is the constant sheaf with value $S$.
    And we get that $\mathcal{F}_Y$ is a constant sheaf iff the connected open set $V$ is the whole of $X$.
\end{proof}

\noindent We can now define a functor from the category of covering spaces of $X$ to the category of locally constant sheaves on $X$, which sends an object $Y$ to $\mathcal{F}_Y$ and a morphism $\phi :Y\rightarrow Z$ to a morphism $\varphi : \mathcal{F}_Y \rightarrow \mathcal{F}_Z$. 
Which is a morphism of sheaves, $\varphi$ sends a section $s\in \mathcal{F}_Y(U)$ to a section $\phi\circ s\in \mathcal{F}_Z(U)$ and this is compatible with the restriction maps.\\

\noindent In order to define a functor in the reverse direction, we introduce the notion of \textit{\'etale spaces}.

\begin{definition}
\label{etale_space}
    Let $\mathcal{F}$ be a presheaf on a topological space $X$. 
    The \textbf{\'etale space} of $\mathcal{F}$ is the disjoint union of its stalks over $X$, denoted $X_{\mathcal{F}} = \bigsqcup_{x\in X}\mathcal{F}_x$ where points are represented by a pair $(U,s)$ and are denoted $s_x$. 
    And for any open set $U \subset X$, a section $s \in \mathcal{F}(U)$ defines a map $i_s : U \rightarrow X_{\mathcal{F}}$ sending each $x\in U$ to the germ $s_x\in\mathcal{F}_x$.
    We can now endow the \'etale space with an \textbf{\'etale topology}, which is the coarsest topology for which all sets of the form $i_s(U)$ are open for all $U$ and $s$ (note this is not the same topology as the \'etale topology we encounter in Chapter \ref{nex_topology}).
\end{definition}

\noindent Notice that when $\mathcal{F}$ is a locally constant sheaf, the \'etale space is actually a covering space and we consider the following \textbf{\'etale covering map} $p_{\mathcal{F}}:X_{\mathcal{F}}\rightarrow X$ sending a germ to the point over which it is defined ($s_x\mapsto x$).
Indeed, for $U\subset X$ open, $p_{\mathcal{F}}$ is continuous since $p_{\mathcal{F}}^{-1}(U) = \bigcup_{s\in\mathcal{F}(U)} i_s(U)$ is open (from the \'etale topology).
Let $\{U_i\mid i\in I\}$ be an open affine covering of connected subsets of $X$ such that $\mathcal{F}\vert_U$ is the constant sheaf with values in $S$. Then for all $x\in U$, the fibre $\mathcal{F}_x=S$, and from Remark \ref{fibres_are_stalks}, $p_{\mathcal{F}}^{-1}(x)\cong S$, so $p_{\mathcal{F}}^{-1}(U)$ is homeomorphic to $U\times S$. Observe that $p_{\mathcal{F}}$ is always a local homeomorphism but only a covering when $\mathcal{F}$ is locally constant.

\begin{remark}
\label{Sheaves_X_equiv_etale_X}
    Sheaf theory is quite vast. The knowledgeable reader might recognise the connection above with the theory of \textit{bundles} and \textit{overcategories}. A standard reference is \cite{sheaves_geometry_logic}, where in chapter II, the authors define the category of \'etale spaces over $X$ as the full subcategory of the category $\Bund(X) = \Top/X$ consisting of \'etale bundles. And here \'etale bundles are essentially the same thing as \'etale projections.
    We refer to Corollary $6.3$ of that chapter which establishes an equivalence between the category of \'etale spaces over $X$ and the category of sheaves over $X$.
\end{remark}

\noindent Finally, a morphism of sheaves $\varphi : \mathcal{F} \rightarrow \mathcal{G}$ induces maps between stalks $\phi_x:\mathcal{F}_x \rightarrow \mathcal{G}_x$ for all $x\in X$ and hence a map of \'etale spaces $\phi: X_{\mathcal{F}} \rightarrow X_{\mathcal{G}}$.

\begin{lemma}
\label{morphism_of_etal_space}
    Let $\mathcal{F}$ be a locally constant sheaf on $X$,the above map $\phi : X_{\mathcal{F}} \rightarrow X_{\mathcal{G}}$ is a morphism of coverings of $X$.
\end{lemma}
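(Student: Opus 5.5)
We have to show two things about $\phi : X_{\mathcal{F}} \rightarrow X_{\mathcal{G}}$: it commutes with the projections, $p_{\mathcal{G}}\circ\phi = p_{\mathcal{F}}$, and it is continuous for the \'etale topologies of Definition \ref{etale_space}. Implicitly both $\mathcal{F}$ and $\mathcal{G}$ are locally constant, so that, by the discussion preceding the lemma, $p_{\mathcal{F}}$ and $p_{\mathcal{G}}$ are coverings. A morphism of coverings in the sense of Definition \ref{def_cover} is exactly a continuous map over $X$, so these two facts suffice.

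Commutativity is immediate from the definition. The map $\phi$ is defined stalkwise by $\phi(s_x) = \phi_x(s_x) \in \mathcal{G}_x$, and both $p_{\mathcal{F}}$ and $p_{\mathcal{G}}$ send a germ to its base point. Hence $p_{\mathcal{G}}(\phi(s_x)) = x = p_{\mathcal{F}}(s_x)$.

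For continuity, recall that the sets $i_t(V)$, for $V\subset X$ open and $t\in\mathcal{G}(V)$, generate the topology of $X_{\mathcal{G}}$. It is therefore enough to show that each preimage $\phi^{-1}(i_t(V))$ is open.
\begin{itemize}
\item[$a)$] Fix a point $s_x$ of this preimage, with $s\in\mathcal{F}(U)$ a representative of the germ.
\item[$b)$] Then $\varphi_U(s)$ and $t$ have the same germ at $x$. By the definition of stalks they agree on some open $W\subset U\cap V$ containing $x$.
\item[$c)$] Because $\varphi$ commutes with restrictions, for every $y\in W$ we get $\phi(s_y) = (\varphi_U(s))_y = t_y \in i_t(V)$.
\item[$d)$] So $i_{s|_W}(W)$ is an open neighbourhood of $s_x$ contained in $\phi^{-1}(i_t(V))$, and the preimage is open.
\end{itemize}
The same computation shows that $\phi\circ i_s = i_{\varphi(s)}$, meaning $\phi$ carries the local sections of $p_{\mathcal{F}}$ to those of $p_{\mathcal{G}}$.

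The main point needing care is step $b)$, passing from the equality of germs to equality of sections on a neighbourhood. This is only the definition of the stalk as a colimit, so no real difficulty is expected. Finally, functoriality of $\mathcal{F}\mapsto X_{\mathcal{F}}$ (identities and composition) follows because stalk maps are functorial, although the lemma itself does not require it.
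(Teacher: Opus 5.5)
Your proposal is correct and follows essentially the same route as the paper. Compatibility with the projections is immediate, and continuity is checked on the generating open sets $i_t(V)$ by shrinking a representative $s$ of a germ in the preimage until $\varphi(s)$ agrees with $t$, so that $i_{s|_W}(W)$ is an open neighbourhood contained in $\phi^{-1}(i_t(V))$. You are also slightly more explicit than the paper in noting that $\mathcal{G}$ must be locally constant for $X_{\mathcal{G}}$ to be a covering, and that checking preimages of a subbasis suffices.
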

\begin{proof}
    It is straightforward to see that $\phi$ is compatible with the projections $p:Y\rightarrow X$ and $q:Z\rightarrow X$.
    And we show that $\phi$ is continuous for the \'etale topology:
    pick an open subset $U\subset X$, a section $t\in\mathcal{G}(U)$ and a point $x\in U$.
    We have $i_t(U) \cap \mathcal{G}_x = t_x$ a point in $X_{\mathcal{G}}$.
    Now each preimage $s_x \in \Phi^{-1}(i_t(U))$ lies in $\mathcal{F}_x$ (this should be clear from our above construction).
    And each of these $s_x\in\Phi^{-1}(i_t(U))$ comes from a section $s\in\mathcal{F}(V)$ with $x\in V\subset U$ that can be chosen so small such that $\phi(s) = t\vert_V$.
    So $s_x\in i_s(V)\subset X_{\mathcal{F}}$ with $\Phi(i_s(V)) = i_t(V)$ (because we chose $V$ such that $\phi(s) = t\vert_V$).
    So every point in the preimage has an open neighbourhood, the preimage is open, we have a continuous morphism of \'etale spaces.
\end{proof}

\noindent We now have the reverse functor and are ready to prove the main theorem of this section.

\begin{theorem}
\label{LCS_equiv_Cov}
    The categories of covering spaces of $X$ and the category of locally constant sheaves on $X$ are equivalent. 
    Moreover, trivial coverings correspond to constant sheaves.
\end{theorem}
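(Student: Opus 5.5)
We will exhibit the two functors already constructed as quasi-inverse to each other. Let $\Phi:\Cov(X)\rightarrow \LCS(X)$ send $p:Y\rightarrow X$ to its sheaf of sections $\mathcal{F}_Y$; this lands in locally constant sheaves by Proposition \ref{sheaf_of_sections_if_of_cover}. Let $\Psi$ send a locally constant sheaf $\mathcal{F}$ to its \'etale covering $p_{\mathcal{F}}:X_{\mathcal{F}}\rightarrow X$, and a morphism of sheaves to the induced map of \'etale spaces, which is a morphism of coverings by Lemma \ref{morphism_of_etal_space}. Functoriality of both is immediate, since composition of sections and of stalk maps is compatible. It then remains to build natural isomorphisms $\Psi\Phi\cong \id_{\Cov(X)}$ and $\Phi\Psi\cong\id_{\LCS(X)}$.

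For $\Psi\Phi\cong\id$, given a covering $p:Y\rightarrow X$ we define $\eta_Y: X_{\mathcal{F}_Y}\rightarrow Y$ by sending a germ $[s]_x$ to $s(x)$. By Remark \ref{fibres_are_stalks} this is a bijection on each fibre, hence a bijection, and it commutes with the projections. For continuity and openness we work locally. Over an evenly covered connected open $U$, the basic open sets $i_s(V)$ with $V\subset U$ are sent to $s(V)$, which is open in the sheet containing it because $s=(p\vert_{V_y})^{-1}$ on $V$. Conversely, the open subsets of sheets form a basis of $Y$ and are of this form. Hence $\eta_Y$ is a homeomorphism. Naturality says that for $\phi:Y\rightarrow Z$ we have $\phi(s(x))=(\phi\circ s)(x)$, which is tautological.

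For $\Phi\Psi\cong\id$, given $\mathcal{F}$ we define $\epsilon_{\mathcal{F}}(U):\mathcal{F}(U)\rightarrow \mathcal{F}_{X_{\mathcal{F}}}(U)$ by $s\mapsto i_s$. Each $i_s$ is a continuous section by definition of the \'etale topology. Injectivity follows from the sheaf (locality) axiom, since sections with the same germs everywhere agree. For surjectivity, a continuous section $\sigma$ of $p_{\mathcal{F}}$ over $U$ locally agrees with some $i_s$: at $x$ we have $\sigma(x)=s_x$ for some $s$ on a neighbourhood $W$, and $\sigma^{-1}(i_s(W))$ is an open set on which $\sigma=i_s$. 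These local $s$ agree on overlaps because their germs coincide, so they glue by the gluing axiom. Compatibility with restriction and naturality in $\mathcal{F}$ are clear. This surjectivity and gluing step is the main point requiring care; it is also the content of the general equivalence between sheaves and \'etale spaces cited in Remark \ref{Sheaves_X_equiv_etale_X}, which we could invoke instead. The work specific to our setting is only that both functors preserve the subcategories of coverings and of locally constant sheaves, and this was already established above.

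Finally, the last statement is exactly the final assertion of Proposition \ref{sheaf_of_sections_if_of_cover}: $\mathcal{F}_Y$ is constant if and only if $Y$ is trivial. Conversely, for the constant sheaf with values $S$ the \'etale space is $X\times S$, which can be seen directly from the description of $p_{\mathcal{F}}^{-1}(U)\cong U\times S$ with $U=X$.
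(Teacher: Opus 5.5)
Your proposal is correct, and its skeleton is the paper's: the same two functors and the same comparison maps. These are $s\mapsto i_s$ on sheaves, and on coverings the germ/point identification of Remark~\ref{fibres_are_stalks}, which you take in the direction $X_{\mathcal{F}_Y}\to Y$. You differ from the paper in how you prove these maps are isomorphisms.

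The paper localises. It restricts to an open cover on which $\mathcal{F}$ is constant and $Y$ is trivial, identifies $X_{\mathcal{F}_S}$ with $X\times S$, identifies the sheaf of sections of $X\times S$ with $\mathcal{F}_S$ via Proposition~\ref{sheaf_of_sections_if_of_cover}, and concludes. You instead verify the isomorphisms globally by the standard sheaf/\'etale-space argument. For $s\mapsto i_s$ you use locality and gluing. For $\eta_Y$ you use that images of local sections of a covering are open and form a basis. So local constancy enters only to know that the functors land in $\LCS(X)$ and $\Cov(X)$.

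The paper's route is shorter because it leans on local triviality. However, it leaves implicit that the specific maps $\alpha_Y,\beta_{\mathcal{F}}$, rather than just some isomorphisms, become the identifications over the cover. Yours checks exactly the maps in question. It also essentially proves the general sheaves/local-homeomorphisms equivalence that the paper only cites in Remark~\ref{Sheaves_X_equiv_etale_X}.

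Two small points to tighten:
\begin{enumerate}
\item $s(V)$ need not lie in a single sheet when $V$ is disconnected. Either restrict to connected $V$ (allowed, since $X$ is locally connected), or note that $s$ agrees with $(p|_{V_y})^{-1}$ on the open set $s^{-1}(V_y)$.
\item Continuity of $i_s$ is not literally by definition of the \'etale topology. It follows from $i_s^{-1}(i_t(W))=\{x : s_x=t_x\}$ being open.
\end{enumerate}
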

\begin{proof}
    Let $F: \LCS(X) \rightarrow \Cov(X)$ and $G:\Cov(X)\rightarrow\LCS(X)$ be functors as above.
    In order to show the equivalence of categories, we will show that the functors are quasi-inverses (that $\alpha:\id_{\Cov(X)} \Rightarrow F\circ G$ and $\beta:\id_{\LCS(X)}\Rightarrow G\circ F$ are natural isomorphisms). \\
    By the previous discussion, consider the following \'etale covering
    $p_{\mathcal{F}_Y}:X_{\mathcal{F}_Y}\rightarrow X$, then the fibre at $x$ is $p_{\mathcal{F}_Y}^{-1}(x) \cong \mathcal{F}_{Y,x}$ (by Remark \ref{fibres_are_stalks}), and since $\mathcal{F}_Y$ is a sheaf of sections, $\mathcal{F}_{Y,x} \cong p^{-1}(x)$ for the covering $p:Y\rightarrow X$.
    Thus for any $Y\in\Cov(X)$, the morphism $\alpha_Y$ sends an element $y\in Y$ to the corresponding element in $\mathcal{F}_{Y,p(y)}$.;                                                                                              
    And for any $\mathcal{F}\in \LCS(X)$, the other morphism $\beta_{\mathcal{F}}$ sends an element $s\in\mathcal{F}(U)$, for $U$ an open subset of $X$ to the local section $i_s$ it defines, which is an element of $\mathcal{F}_{X_{\mathcal{F}}}(U)$.
    Thus, we have that the following diagrams commute :
    \[
    \begin{tikzcd}
        Y \arrow[r,"f"] \arrow[d, "\alpha_Y"']
        & Z \arrow[d,"\alpha_{Z}"] \\
        X_{\mathcal{F}_{Y}} \arrow[r,"X_{\mathcal{F}_f}"']
        &  X_{\mathcal{F}_{Z}}
    \end{tikzcd}
    \hspace{1cm}
    \begin{tikzcd}
        \mathcal{F} \arrow[r,"g"] \arrow[d, "\beta_{\mathcal{F}}"']
        & \mathcal{G} \arrow[d,"\beta_{\mathcal{G}}"] \\
        \mathcal{F}_{X_{\mathcal{F}}} \arrow[r,"\mathcal{F}_{X_{g}}"']
        &  \mathcal{F}_{X_{\mathcal{G}}}.
    \end{tikzcd}
    \]
    What is left to show is that $\alpha_Y$ and $\beta_{\mathcal{F}}$ are actually isomorphisms. For that it is enough to show that their restriction over a suitable open covering of $X$ are isomorphisms.
    So let $\{U_i\mid i\in I\}$ be an open covering of $X$ such that $\mathcal{F}\vert_{U_i}$ is constant for every $i\in I$.
    Replacing $X$ by $U_i$, we may henceforth assume that $\mathcal{F} = \mathcal{F}_S$ is the constant sheaf with values in $S$ a discrete set.
    So $X_{\mathcal{F}_S} \cong X\times S$ (as in what follows Definition \ref{etale_space}) and since we know that the sheaf of section of the trivial cover $X\times S\rightarrow X$ is the constant sheaf defined by $S$ (by Proposition \ref{sheaf_of_sections_if_of_cover}), what have shown that $\beta_{\mathcal{F}}$ is an isomorphism.
    Over such an open covering of X, a covering map restrict to a trivial covering map, and similarly as above, we obtain that $\alpha_Y$ is an isomorphism (locally, which is enough).
\end{proof}

\noindent Combining the previous theorem with Theorem \ref{covering_reformulation_1} we get the following,

\begin{corollary}
    The category of locally constant sheaves on $X$ is equivalent to the category of sets endowed with a left-action of $\pi_1(X,x)$.
\end{corollary}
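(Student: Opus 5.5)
The plan is to compose two equivalences already established. Theorem \ref{LCS_equiv_Cov} gives quasi-inverse functors $F:\LCS(X)\rightarrow\Cov(X)$, $\mathcal{F}\mapsto X_{\mathcal{F}}$, and $G:\Cov(X)\rightarrow\LCS(X)$, $Y\mapsto\mathcal{F}_Y$. Theorem \ref{covering_reformulation_1} shows that $\Fib_x:\Cov(X)\rightarrow\pi_1(X,x)$-$\SET$ is an equivalence. Since a composite of equivalences is an equivalence (compose the quasi-inverses and the unit/counit natural isomorphisms), the functor $\Fib_x\circ F:\LCS(X)\rightarrow\pi_1(X,x)$-$\SET$ is an equivalence. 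Its quasi-inverse is $G\circ\Phi$, where $\Phi$ is the quasi-inverse of $\Fib_x$ that sends a transitive set $\pi_1(X,x)/H$ to $\widetilde{X}_x/H$, extended over orbits by disjoint union.

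To make the result concrete, I would identify the composite functor explicitly. Using Remark \ref{fibres_are_stalks} together with the construction of $X_{\mathcal{F}}$, the fibre of $p_{\mathcal{F}}$ over $x$ is exactly the stalk $\mathcal{F}_x$. So $\Fib_x\circ F$ is the stalk functor $\mathcal{F}\mapsto\mathcal{F}_x$. The induced left $\pi_1(X,x)$-action on $\mathcal{F}_x$ is then monodromy: take a germ $s_x$, continue it along a loop $\gamma$ (lift $\gamma$ to $X_{\mathcal{F}}$ via Lemma \ref{unique_lift}), and take the endpoint germ. A morphism of sheaves $\mathcal{F}\rightarrow\mathcal{G}$ induces the stalk map $\mathcal{F}_x\rightarrow\mathcal{G}_x$. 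This map is equivariant by Lemma \ref{morphism_of_etal_space} combined with Theorem \ref{property_morphism_covers} $b)$.

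The main point needing care is that the standing hypotheses match. Theorem \ref{covering_reformulation_1} relies on the universal cover, so I would assume that $X$ is connected, locally path-connected and semi-locally simply-connected, as in Remark \ref{connected_constant_sheets}. Theorem \ref{LCS_equiv_Cov} holds for arbitrary covers, including non-connected ones, and this matches the domain of $\Fib_x$. With the hypotheses aligned, the corollary follows formally.
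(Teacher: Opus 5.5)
Your proposal is correct and follows the paper's own argument: the corollary is obtained by composing the equivalence $\LCS(X)\simeq\Cov(X)$ of Theorem \ref{LCS_equiv_Cov} with the equivalence $\Fib_x:\Cov(X)\simeq\pi_1(X,x)$-$\SET$ of Theorem \ref{covering_reformulation_1}. Your identification of the composite with the stalk functor and your check of the standing hypotheses on $X$ are consistent additions, not a different route.
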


\begin{theorem}
\label{equi_R_mod_loc_cst_sh}
    Let $X$ be a connected and locally simply-connected topological space, with $x\in X$.
    There is an equivalence between the category of locally constant sheaves of $R$-modules on $X$, $\LCS_R(X)$ and the category of $R[\pi_1(X,x)]$-modules, $\Mod_{R[\pi_1(X,x)]}$.
\end{theorem}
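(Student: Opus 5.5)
The plan is to upgrade the set-level equivalence $\LCS(X)\simeq\Cov(X)\simeq\pi_1(X,x)$-$\SET$ (Theorem \ref{LCS_equiv_Cov} combined with Theorem \ref{covering_reformulation_1}) by keeping track of the extra module structure. The central functor is the stalk functor
$$\mathcal{F}\longmapsto \mathcal{F}_x,$$
which sends a locally constant sheaf of $R$-modules to its stalk at $x$. The stalk $\mathcal{F}_x$ is an $R$-module. Under the previous equivalence it is the fibre of the \'etale covering $p_{\mathcal{F}}:X_{\mathcal{F}}\rightarrow X$ (Remark \ref{fibres_are_stalks}), so it carries the monodromy action of $\pi_1(X,x)$. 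Once we check that this action is $R$-linear, $\mathcal{F}_x$ becomes exactly a left $R[\pi_1(X,x)]$-module.

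\textbf{Step 1: linearity of monodromy.} Fix a loop $\gamma$ at $x$. Cover $[0,1]$ by finitely many subintervals whose images lie in connected opens $U_i$ on which $\mathcal{F}\vert_{U_i}$ is constant with value an $R$-module $M_i$. On each $U_i$, the lift of a path segment starting at a germ $s_{\gamma(t_i)}$ is the path of germs of the unique section over $U_i$ extending it. The transport from $\mathcal{F}_{\gamma(t_i)}$ to $\mathcal{F}_{\gamma(t_{i+1})}$ is therefore the composite of the restriction isomorphisms $\mathcal{F}_{\gamma(t_i)}\cong\mathcal{F}(U_i)\cong \mathcal{F}_{\gamma(t_{i+1})}$. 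These are $R$-linear because $\mathcal{F}$ is a sheaf of $R$-modules. Composing over $i$ shows that $[\gamma]\cdot(-)$ is an $R$-linear automorphism of $\mathcal{F}_x$. Similarly, a morphism of sheaves of $R$-modules induces an $R$-linear map on stalks. By Theorem \ref{property_morphism_covers} that map is $\pi_1$-equivariant, so it is $R[\pi_1]$-linear. This defines the functor $\LCS_R(X)\rightarrow\Mod_{R[\pi_1(X,x)]}$.

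\textbf{Step 2: full faithfulness.} Take $\mathcal{F},\mathcal{G}\in\LCS_R(X)$ and an $R[\pi_1]$-linear map $\phi:\mathcal{F}_x\rightarrow\mathcal{G}_x$. Forgetting structure, Theorems \ref{LCS_equiv_Cov} and \ref{covering_reformulation_1} give a unique morphism of sheaves of sets $\varphi:\mathcal{F}\rightarrow\mathcal{G}$ with $\varphi_x=\phi$. It remains to show that $\varphi$ is $R$-linear on every stalk $\mathcal{F}_y$. Choose a path $\delta$ from $x$ to $y$. Transport along $\delta$ is an $R$-linear bijection $\mathcal{F}_x\rightarrow\mathcal{F}_y$ by the argument of Step 1, and $\varphi$ commutes with transport because it is a morphism of coverings. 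Hence $\varphi_y$ equals transport composed with $\phi$ composed with inverse transport, which is $R$-linear. Faithfulness is inherited from the set-level statement.

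\textbf{Step 3: essential surjectivity.} This is the main obstacle, because we must put an $R$-module structure on the sheaf itself, not just on a fibre. Given an $R[\pi_1]$-module $M$, I would not build the covering $\widetilde{X}/H$ orbit by orbit. Instead I would use the associated bundle
$$Y_M=(\widetilde{X}_x\times M)/\pi_1(X,x),$$
where $\pi_1(X,x)$ acts diagonally: on $\widetilde{X}_x$ via Theorem \ref{aut_iso_fund}, and on $M$ through the module structure (inverted if needed to match sides). The action on $\widetilde{X}_x\times M$ is properly discontinuous because the action on $\widetilde{X}_x$ is (Corollary \ref{free_action_universal}). So $Y_M\rightarrow X$ is a covering with fibre $M$ over $x$, and its monodromy action is the given one.

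The sheaf of sections $\mathcal{F}_{Y_M}$ is then a sheaf of $R$-modules. Over an evenly covered simply connected $U$, choose a sheet of $\widetilde{X}_x$ over $U$; this trivialises $Y_M\vert_U\cong U\times M$. A different choice of sheet changes the trivialisation by an element of $\pi_1$, which acts $R$-linearly on $M$. Pointwise addition and scalar multiplication of sections are therefore well defined. Finally, I would check that the stalk at $x$ recovers $M$ as an $R[\pi_1]$-module. Passing to the diagonal quotient is exactly what makes the module structure glue globally, which is why I prefer this construction to gluing over orbits.
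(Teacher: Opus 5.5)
Your argument is correct, and its two constructions are the paper's. The forward functor is the stalk at $x$ with its monodromy action, and your $Y_M=(\widetilde{X}_x\times M)/\pi_1(X,x)$ is exactly the paper's $M\times_{\pi_1}\widetilde{X}$. What differs is how the equivalence is assembled.

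For $R$-linearity of the monodromy, the paper observes that addition $\mathcal{F}\times\mathcal{F}\rightarrow\mathcal{F}$ and multiplication by each $r\in R$ are morphisms of sheaves. They therefore induce morphisms of the associated \'etale coverings, which are $\pi_1(X,x)$-equivariant on fibres by Theorem \ref{property_morphism_covers}. You instead compute parallel transport through trivialising connected opens. This is more hands-on, but it also produces the transport maps $\mathcal{F}_x\rightarrow\mathcal{F}_y$ that you reuse in Step 2.

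The paper then exhibits $M\mapsto\mathcal{F}_M$ as a quasi-inverse and checks both unit and counit. The counit needs the identification $\mathcal{F}_x\times_{\pi_1}\widetilde{X}\cong X_{\mathcal{F}}$, which the paper asserts very quickly; it implicitly requires transporting $\mathcal{F}_x$ to every other stalk. You avoid the counit altogether. Faithfulness and the underlying morphism of sheaves of sets come from Theorems \ref{LCS_equiv_Cov} and \ref{covering_reformulation_1}. You then only have to upgrade linearity from $\mathcal{F}_x$ to every $\mathcal{F}_y$ by transport along a path from $x$ to $y$. You should state explicitly that stalkwise linearity implies linearity on sections, since sections of $\mathcal{G}$ are determined by their germs. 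Your route is therefore shorter once the set-level equivalence is granted, but it depends on that equivalence. The paper's route yields an explicit quasi-inverse functor.

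One citation slip: Corollary \ref{free_action_universal} only gives freeness of the action. Proper discontinuity is the earlier proposition that $\Aut(Y|X)$ acts properly discontinuously on a connected covering. In any case, what you actually need is that $Y_M\rightarrow X$ is a covering. Your trivialisation $Y_M\vert_U\cong U\times M$ over evenly covered $U$ already establishes this.
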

\begin{proof}

    Let $\mathcal{F}$ be a locally constant sheaf of $R$-modules on $X$. Then $\forall x\in X$, $\mathcal{F}_x$ is also a $R$-module since it is a direct limit of $R$-modules (see Theorem \ref{direct_limit_R_module}).\\
    Moreover, if $\mathcal{F}$ is a locally constant sheaf, we saw that its stalk at $x$ can be seen as the fibre of the \'etale projection $p_{\mathcal{F}} : X_{\mathcal{F}}\rightarrow X$ at $x$, $\mathcal{F}_x \cong p_{\mathcal{F}}^{-1}(x)$. From the previous sections, this means exactly that $\forall x\in X$, there is an action of $\pi_1(X,x)$ on $\mathcal{F}_x$.
    And this action is actually compatible with the $R$-module structure :
    The addition law on $\mathcal{F}$ is a morphisms $\varphi :\mathcal{F}\times \mathcal{F}\rightarrow \mathcal{F}$ of sheaves. It induces a morphism  $\varphi_x : (\mathcal{F}\times \mathcal{F})_x \rightarrow \mathcal{F}_x$ of stalks for all $x\in X$. 
    And since the product sheaf $\mathcal{F}\times \mathcal{F}$ is defined by $(\mathcal{F}\times\mathcal{F})(U) =\mathcal{F}(U)\times\mathcal{F}(U)$ for all open subset $U\subset X$, its stalk over $x$ is $\mathcal{F}_x\times \mathcal{F}_x$, so $\varphi_x$ is then the addition law on $\mathcal{F}_x$.
    As in Lemma \ref{morphism_of_etal_space}, consider the map $\Phi : X_{\mathcal{F}}\times X_{\mathcal{F}}\rightarrow X_{\mathcal{F}}$ between \'etale sets which is a morphism of covers when $\mathcal{F}$ is a locally constant sheaf.
    We know from the theory of covering spaces that $\Phi$ is a $\pi_1(X,x)$-equivarient map on the fibres.
    Thus, the restricted morphisms $\varphi_x$ on stalks are also $\pi_1(X,x)$-equivarient for all $x\in X$. (Addition and group action commute).\\
    Similarly, the multiplication law on $\mathcal{F}$ is a morphism $R\times \mathcal{F \rightarrow \mathcal{F}}$ and since $R$ is a ring, we can just consider for all open subset $U\subset X$ the maps $\mathcal{F}(U)\rightarrow\mathcal{F}(U): s\mapsto r\cdot s$ for any fixed $r\in R$. 
    By the same argument as above, the multiplication law on $\mathcal{F}$ induces a multiplication law on $\mathcal{F}_x$ for all $x\in X$ which is again $\pi_1(X,x)$-equivarient (as a restriction of $\Phi : X_{\mathcal{F}}\rightarrow X_{\mathcal{F}}$).
    Since the multiplication and addition on $\mathcal{F}_x$ commute with the action of $\pi_1(X,x)$, $\mathcal{F}_x$ can be considered an $R[\pi_1(X,x)]$-module.
    And since morphisms of sheaves are defined for sections of open subsets, similarly as above, we can construct a morphism of $R[\pi_1(X,x)]$-modules. 
    Hence we can consider the functor $F_x:\LCS_R(X) \rightarrow \Mod_{R[\pi_1(X,x)]}$ sending $\mathcal{F}$ to its stalk $\mathcal{F}_x$.\\
    In the reverse direction, let $M$ be an $R[\pi_1(X,x)]$-module and give it the usual discrete topology.
    We first construct a cover : 
    $$q_M : M\times_{\pi_1(X,x)}\widetilde{X} \rightarrow X , (m,\widetilde{y})\mapsto p(\widetilde{y})$$ 
    with $p:\widetilde{X}\rightarrow X$ the universal cover at $x$.
    We check that this is indeed a cover,
    note that $M\times_{\pi_1(X,x)}\widetilde{X}$ is the fibre product consisting of the elements $(m,\widetilde{y})$ 
    such that 
    $(m , \widetilde{y}) \sim (\gamma m,\gamma\widetilde{y}) = \gamma\cdot (m,\widetilde{y})$ for $\gamma\in\pi_1(X,x), m\in M$ and $\widetilde{y}\in\widetilde{X}$.
    From now on, let's write $\pi_1$ in place of $\pi_1(X,x)$.
    Let $U$ be an open neighbourhood of $x$ in $X$. The preimage of $U$ by $q_M$ is $M\times_{\pi_1}p^{-1}(U)$ and since $p$ is actually a cover of $X$, $p^{-1}(U) = \bigsqcup_{y\in p^{-1}(x)} V_{y}$ with $V_{y}$ homeomorphic to $U$ for all $y\in p^{-1}(x)$, and we have 
    $$q_M^{-1}(U) =  M\times_{\pi_1}\bigsqcup_{y\in p^{-1}(x)}V_{y}.$$
    But since $X$ is connected and locally simply-connected, $p^{-1}(x)$ has an action of $\pi_1$, so any $\gamma\in\pi_1$ sends an element of $V_y$ to its corresponding element in $V_{\gamma y}$ and since the elements of our fibre product are equivalent classes defined by the above diagonal relation, we have 
    $q^{-1}_M(U) = M\times_{\pi_1}V$
    (with $V$ being any of the copies $V_{y}$).
    Finally, we have that $q_M(M\times_{\pi_1}V) = p(V)$ which is homeomorphic to $U$, making it a covering space.
    Notice that $M\times_{\pi_1}\widetilde{X}$ is actually an $R$-module, with an $R$-action defined on the first component as follow: $r\cdot(m,\widetilde{y}) = (rm,\widetilde{y})$. The $R$-action is well-defined since it commutes with the group action (coming from the quotient relation) and this holds since $M$ is an $R[\pi_1(X,x)]$-module :
    $$r\cdot(\gamma m,\widetilde{y}) = (r\gamma m, \widetilde{y}) = (\gamma rm,\widetilde{y})= r\cdot(m,\gamma\widetilde{y})$$
    for $\gamma \in \pi_1(X,x)$.
    From the constructed cover $q_M:M\times_{\pi_1}\widetilde{X}\rightarrow X$, we can consider the locally constant sheaf of sections $\mathcal{F}_M$ of $q_M$.
    For every open subset $U\subset X$, $\mathcal{F}_M(U)$ is the set of sections $s:U\rightarrow M\times_{\pi_1}\widetilde{X}$ of $q_M$ that is continuous on $U$.
    Such a section $s$ as its image completely in $M\times_{\pi_1}p^{-1}(U)$ which is a sub $R$-module. We can thus define a multiplication and addition on $\mathcal{F}_M(U)$ as follow,
    for all $y\in U$ and $s,y\in\mathcal{F}_M(U)$, let $(s+t)(y) = s(y)+t(y)$ and $(rs)(y) = r\cdot s(y)$. This makes $\mathcal{F}_M$ into a sheaf of $R$-modules.
    From this we can consider the functor $G:\Mod_{R[\pi_1]}\rightarrow \LCS_R(X)$ sending an $R[\pi_1]$-module $M$ to the sheaf of sections $\mathcal{F}_M$.
    This is indeed a functor since taking a morphism of $R[\pi_1]$-modules gives rise to a morphism of the resulting covering spaces as above which will define a morphism between the respective sheaves of local sections. By construction, the composition condition also holds.\\
    In a similar fashion as in the proof of Theorem \ref{LCS_equiv_Cov}, we now show that these functor are quasi-inverses.
    We want to show that the natural morphisms $\alpha :FG \Rightarrow \id_{\pi_1\Mod}$ and $\beta: GF\Rightarrow \id_{\LCS_R}$ are actually functorial isomorphisms.\\
    Consider the map $\varphi:M\rightarrow \mathcal{F}_{M,x}$ for $x\in X$ that sends an element $m\in M$ to the equivalence class $[m, \widetilde{x_0}]$ for $\widetilde{x_0}$ the lift of a base point $x_0\in X$ (recall $\mathcal{F}_{M,x}\cong q_M^{-1}(x)$). 
    Notice that this map is an isomorphism, indeed,
    if $[m,\tilde{y}] = [m',\tilde{y}']$, then there exist $\gamma\in\pi_1$ such that $[\gamma m,\gamma\tilde{y}] = [m',\tilde{y}']$. 
    And since $\pi_1$ acts freely on $\widetilde{X}$ by Corollary \ref{free_action_universal}, $\gamma = 1$ and thus $m=m'$. 
    And let $[m,\gamma\tilde{y}]\in q_M^{-1}(x)$, then $\gamma^{-1}m\in M$ is its preimage.
    Thus, for $M$ a $R[\pi_1]$-module, the morphism $\alpha_M$ sends an element $m\in M$ to its corresponding element in $\mathcal{F}_{M,x}$.\\    
    Let $\mathcal{F}\in \LCS_R$ and consider the $R[\pi_1]$-module $\mathcal{F}_x$. We can construct a covering map $q_{\mathcal{F}_x} : \mathcal{F}_x\times_{\pi_1}\widetilde{X}\rightarrow X$. But since the universal covering is a Galois covering, we have that $\widetilde{X}/\pi_1\cong X$ and thus $$\mathcal{F}_x\times_{\pi_1}\widetilde{X} \cong \bigsqcup_{x\in X} \mathcal{F}_x = X_{\mathcal{F}}.$$
    So the sheaf of sections of the above covering space must be $\mathcal{F}$ again from our discussion in Remark \ref{Sheaves_X_equiv_etale_X}.
    And we have the map $\beta_{\mathcal{F}}:\mathcal{F}\rightarrow \mathcal{F}_{\mathcal{F}_x}$ sending a section $s\in \mathcal{F}(U)$ to the corresponding section in $\mathcal{F}_{\mathcal{F}_x}$ is an isomorphism.\\
    We have thus showed, for any $M\in\Mod_{\pi_1}$ and $\mathcal{F}\in \LCS_R$, that $\alpha_M$ and $\beta_{\mathcal{F}}$ are isomorphism and moreover these isomorphisms are functorial in $M$ and $\mathcal{F}$ respectively since the following diagrams commute,
    \[
    \begin{tikzcd}
        M \arrow[r,"f"] \arrow[d, "\alpha_M"']
        & N \arrow[d,"\alpha_{N}"] \\
        \mathcal{F}_{M,x} \arrow[r,"\mathcal{F}_{f_x}"']
        &  \mathcal{F}_{N,x}
    \end{tikzcd}
    \hspace{1cm}
    \begin{tikzcd}
        \mathcal{F} \arrow[r,"g"] \arrow[d, "\beta_{\mathcal{F}}"']
        & \mathcal{G} \arrow[d,"\beta_{\mathcal{G}}"] \\
        \mathcal{F}_{\mathcal{F}_x} \arrow[r,"\mathcal{F}_{g_{x}}"']
        &  \mathcal{F}_{\mathcal{G}_x}.
    \end{tikzcd}
    \]

\end{proof}

\noindent Finally, we can apply the identification of representation of groups with group-ring modules described below to the above theorem and obtain the next corollary. 

\begin{theorem}
    Let $k$ be a field.
    The category of $k[G]$-modules and the category of representations of $G$ are isomorphic.
\end{theorem}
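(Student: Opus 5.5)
The plan is to write down two functors between $\Mod_{k[G]}$ and $\Rep_k(G)$ and check that they are mutually inverse on the nose, not just up to natural isomorphism. This gives an isomorphism of categories. Here a representation of $G$ means a $k$-vector space $V$ together with a group homomorphism $\rho:G\rightarrow \GL(V)$. A morphism of representations is a $k$-linear map commuting with the actions.

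\textbf{From modules to representations.} Let $M$ be a left $k[G]$-module. Restricting scalars along $k\hookrightarrow k[G]$ makes $M$ a $k$-vector space. For $g\in G$, set $\rho_M(g)(m)=g\cdot m$.
\begin{itemize}
    \item Each $\rho_M(g)$ is $k$-linear, because $g$ commutes with scalars in $k[G]$.
    \item $\rho_M(gh)=\rho_M(g)\rho_M(h)$ and $\rho_M(1)=\id$ follow from the module axioms.
    \item Hence $\rho_M(g)$ has inverse $\rho_M(g^{-1})$, so $\rho_M(g)\in\GL(M)$.
\end{itemize}
A $k[G]$-linear map is in particular $k$-linear and commutes with every $g$. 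So on morphisms this functor is the identity on underlying maps.

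\textbf{From representations to modules.} Let $(V,\rho)$ be a representation. Define
$$\Bigl(\sum_{g}a_g g\Bigr)\cdot v=\sum_g a_g\,\rho(g)(v).$$
The sum is finite, since elements of $k[G]$ are finite formal sums.
\begin{itemize}
    \item Distributivity in both arguments comes from linearity of each $\rho(g)$ and bilinearity of the formula.
    \item The unit acts as the identity since $\rho(1)=\id_V$.
    \item Associativity $(xy)\cdot v=x\cdot(y\cdot v)$ reduces by bilinearity to basis elements $x=g$, $y=h$. There it is exactly $\rho(gh)=\rho(g)\rho(h)$.
\end{itemize}
An equivariant $k$-linear map $f$ satisfies $f(g\cdot v)=g\cdot f(v)$ for every $g$. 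By linearity it then commutes with all of $k[G]$, so again morphisms are unchanged.

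\textbf{Checking the functors are inverse.} Both composites are the identity on underlying sets and maps.
\begin{itemize}
    \item Starting from $M$: the new action of $\sum a_g g$ is $\sum a_g (g\cdot m)$. This equals the original action because $M$ is a $k[G]$-module.
    \item Starting from $(V,\rho)$: the recovered homomorphism is $g\mapsto (v\mapsto 1\cdot g\cdot v=\rho(g)v)$, which is $\rho$ itself.
\end{itemize}

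The only step requiring real care is associativity of the extended action. It is handled by reducing to the basis $G$ of $k[G]$ via bilinearity, so nothing here is a genuine obstacle.
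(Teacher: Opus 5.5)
Your proposal is correct and follows the same route as the paper: define the action by $\bigl(\sum_g a_g g\bigr)\cdot v=\sum_g a_g\,\rho(g)(v)$, recover $\rho$ from a module via $\rho(g)(m)=g\cdot m$, note that morphisms are literally the same maps, and observe that the two constructions are inverse on the nose. You check slightly more than the paper does, namely the $k$-linearity and invertibility of each $\rho_M(g)$ and associativity on the basis $G$, but the argument is the same.
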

\begin{proof}
    Given a $G$-representation over $k$, $(V,\rho)$, we can define a $k[G]$-module structure on $V$ by $u_g\cdot v := \rho(g)(v)$ for all $v\in V$ and $g\in G$ (so $V$ becomes a free vector space over $G$). 
    Precisely, $u_g\cdot(u_h\cdot v) = \rho(g)\cdot \rho(h)(v) = \rho(gh)(v) = u_{gh}\cdot v = (u_gu_h)\cdot v$ and $u_e\cdot v= \rho(e)(v)=\id_V(v) = v$.
    Given a morphism $f:V\rightarrow W$ of $G$-representation over $k$,  (it is $G$-equivariant), we obtain a morphism of $k[G]$-modules as follow. From above, any element of $V$ can be written as a sum element over the generating group.
    $$f \Bigl( (\sum_{g\in G} a_gu_g)\cdot v \Bigr) = f \Bigl( \sum_{g\in G} a_g(u_g\cdot v) \Bigr) \stackrel{\text{linearity}}{=} \sum_{g\in G} a_g f(u_g\cdot v) \stackrel{G\text{-equiv}}{=} \sum_{g\in G} a_g u_gf(v).$$
    This makes a functor $F :\Rep_k(G) \rightarrow\Mod_{k[G]}$.\\
    In the reverse direction, consider a $k[G]$-module $M$, we can define a morphism $\rho: G\rightarrow \End(M)$ given by $\rho(g)(m) = u_g\cdot m$. This makes $(M,\rho)$ into a representation of $G$ over $k$.
    Consider a morphism $f':M\rightarrow N$ of $k[G]$-modules, similarly as above, we can see it is also a morphism of $G$-representations over $k$.
    This makes a functor $G:\Mod_{k[G]} \rightarrow \Rep_k(G)$.
    Finally, since our constructions are mutual inverses, we have that $F\circ G (M) = M$ and $G\circ F(V)=V$ for all $M\in \Mod_{k[G]}$ and $V\in \Rep_k(G)$. Yielding an isomorphism of categories.\\
\end{proof}

\begin{corollary}
\label{corollary_Rpi_mod}
    Let $k$ be a field. 
    The category of locally constant sheave of $k$-modules $\LCS_k$ is equivalent to the category of representations of $\pi_1(X,x)$ over $k$.
\end{corollary}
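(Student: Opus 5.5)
The plan is to obtain Corollary \ref{corollary_Rpi_mod} by composing two equivalences already established in the text. First, apply Theorem \ref{equi_R_mod_loc_cst_sh} with the ring $R=k$. Since $X$ is assumed connected and locally simply-connected (the standing assumptions of Remark \ref{connected_constant_sheets}), this yields an equivalence $F_x:\LCS_k(X)\rightarrow \Mod_{k[\pi_1(X,x)]}$, $\mathcal{F}\mapsto\mathcal{F}_x$, with quasi-inverse $M\mapsto \mathcal{F}_M$, the sheaf of sections of $q_M:M\times_{\pi_1}\widetilde{X}\rightarrow X$. Second, apply the preceding theorem with $G=\pi_1(X,x)$, viewed as an abstract group with no topology involved. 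This gives an isomorphism of categories $\Mod_{k[\pi_1(X,x)]}\cong\Rep_k(\pi_1(X,x))$.

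Composing the two gives the functor
$$\LCS_k(X)\xrightarrow{\ F_x\ }\Mod_{k[\pi_1(X,x)]}\xrightarrow{\ \sim\ }\Rep_k(\pi_1(X,x)).$$
A composite of an equivalence with an isomorphism of categories is again an equivalence. Concretely, the composite is fully faithful and essentially surjective because each factor is. Alternatively, one composes the quasi-inverses and the unit and counit natural isomorphisms.

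Unwinding the definitions, a locally constant sheaf of $k$-vector spaces $\mathcal{F}$ is sent to the representation $\rho:\pi_1(X,x)\rightarrow\GL(\mathcal{F}_x)$ given by the monodromy action on the stalk $\mathcal{F}_x\cong p_{\mathcal{F}}^{-1}(x)$. The proof of Theorem \ref{equi_R_mod_loc_cst_sh} shows this action is $k$-linear, so each $\rho(g)$ is an invertible linear map with inverse $\rho(g^{-1})$. I would remark on this so that the target is genuinely representations and not merely module structures.

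There is no real obstacle, since all the substantive work sits in Theorem \ref{equi_R_mod_loc_cst_sh}. The only point needing care is that the corollary's hypotheses match those of that theorem: $X$ connected and locally simply-connected, with a chosen base point $x$. The resulting equivalence depends on the choice of $x$ only up to the non-canonical isomorphism of Proposition \ref{fund_grou_base_pt}.
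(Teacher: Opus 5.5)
Your proposal is correct and follows the paper's own route. The paper obtains the corollary by specializing Theorem \ref{equi_R_mod_loc_cst_sh} to $R=k$ and composing with the isomorphism of categories between $k[\pi_1(X,x)]$-modules and representations of $\pi_1(X,x)$ over $k$, under the same implicit standing hypotheses you point out (connected and locally simply-connected).
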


\noindent This equivalence is quite strong as it is also an equivalence that stands for the category of complex local systems.

\begin{definition}
    A \textbf{complex local system} on $X$ is a locally constant sheaf of finite-dimensional complex vector spaces. 
    If $X$ is connected, all stalks must have the same dimension, which is called the dimension of the local system.
\end{definition}

\noindent Hence the category of local systems $\LS_X$ on $X$ is equivalent to the category of locally constant sheaf $\LCS_{\mathbb{C}}(X)$.
The following corollary, is known as the \textit{monodromy representation of local systems}.

\begin{corollary}
\label{local_systems_representations}
    Let $X$ be a connected an locally simply-connected topological space with $x\in X$. 
    The category of complex local systems on $X$ is equivalent to the category of finite-dimensional representations of $\pi_1(X,s)$. 
\end{corollary}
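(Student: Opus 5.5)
The plan is to restrict the equivalence of Corollary \ref{corollary_Rpi_mod} (taken with $k=\mathbb{C}$) to the full subcategories of finite-dimensional objects on both sides. That corollary is obtained from Theorem \ref{equi_R_mod_loc_cst_sh} by composing with the isomorphism between $\mathbb{C}[\pi_1(X,x)]$-modules and representations of $\pi_1(X,x)$. It gives an equivalence $\LCS_{\mathbb{C}}(X)\simeq \Rep_{\mathbb{C}}(\pi_1(X,x))$, with the stalk functor $\mathcal{F}\mapsto\mathcal{F}_x$ in one direction and $V\mapsto \mathcal{F}_V$ in the other. Here $\mathcal{F}_V$ denotes the sheaf of sections of $q_V:V\times_{\pi_1}\widetilde{X}\rightarrow X$. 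An equivalence of categories restricts to an equivalence between full subcategories as soon as each functor sends the chosen subcategory into the other one. Since $\LS_X$ and the finite-dimensional representations are both full subcategories (morphisms are just sheaf morphisms, respectively equivariant linear maps), the statement reduces to two object-level checks.

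First, if $\mathcal{F}$ is a complex local system, then $\mathcal{F}_x$ is finite-dimensional. Choose an open neighbourhood $U$ of $x$ on which $\mathcal{F}\vert_U$ is constant with value a finite-dimensional vector space $W$. Then $\mathcal{F}_x\cong W$, because stalks of a constant sheaf are its value; this is also the fibre identification of Remark \ref{fibres_are_stalks}. The representation $F_x(\mathcal{F})$ is therefore finite-dimensional.

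Second, if $V$ is a finite-dimensional representation, then $\mathcal{F}_V$ is a local system. The proof of Theorem \ref{equi_R_mod_loc_cst_sh} shows that $q_V$ is a covering and that over a suitable neighbourhood $U$ one has $q_V^{-1}(U)\cong U\times V$ with $V$ discrete. By Proposition \ref{sheaf_of_sections_if_of_cover}, the restriction $\mathcal{F}_V\vert_U$ is then the constant sheaf with value $V$, which is finite-dimensional. One should also check that the vector space structure on $\mathcal{F}_V(U)$ is the one coming from $V$; this holds because the $\mathbb{C}$-action was defined on the first factor.

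Finally, the unit and counit isomorphisms $\alpha,\beta$ of Theorem \ref{equi_R_mod_loc_cst_sh} restrict to the subcategories, and this gives the equivalence. The base point $s$ in the statement should be read as $x$; for another base point one composes with the isomorphism of Proposition \ref{fund_grou_base_pt}. The main, and only mild, obstacle is the second check: local triviality of $q_V$ really needs an evenly covered neighbourhood of the universal cover around each point, not only around $x$. For that I would rely on the local simple-connectedness assumption, which guarantees that $\widetilde{X}\rightarrow X$ is a covering, so every point has such a neighbourhood.
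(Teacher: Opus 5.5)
Your proposal is correct and is essentially the paper's argument: the paper's proof simply cites Theorem~\ref{equi_R_mod_loc_cst_sh} together with Corollary~\ref{corollary_Rpi_mod} for $k=\mathbb{C}$. What you add is the explicit check that the stalk functor and $V\mapsto\mathcal{F}_V$ preserve finite-dimensionality, so that the equivalence restricts to the full subcategories $\LS_X$ and finite-dimensional representations. The paper glosses over this step, and even asserts $\LS_X\simeq\LCS_{\mathbb{C}}(X)$, whereas $\LS_X$ is only a full subcategory.
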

\begin{proof}
    Follows from Theorem \ref{equi_R_mod_loc_cst_sh} and Corollary \ref{corollary_Rpi_mod}.
\end{proof}

\noindent Local systems are a powerful tool from algebraic topology. 
They also arise naturally in differential geometry when one tries computes solutions of linear differential equations as the following example shows.

\begin{example}
    Let $D\subset \mathbb{C}$ be a connected open disc.
    Consider over $D$  homogeneous $n$-th order linear differential equation
    $$y^{(n)} +f_1y^{(n-1)} +\:...+ f_{n-1}y'+f_ny = 0$$
    where the $f_i$ are holomorphic functions on $D$.
    Consider the local holomorphic solutions of the differential equation over each open subset $U\subset D$. As $\mathbb{C}$-linear combinations of local solutions of the differential equation over $U$ are again local solutions, they form a $\mathbb{C}$-vector space $\mathcal{S}(U)$.
    And due to a classical theorem from linear differential equations (Theorem $11.2$ in \cite{Forster}), each point of $D$ has an open neighbourhood $U$ where the $\mathbb{C}$-vector space $\mathcal{S}(U)$ has a finite basis $x_1,...,x_n$.
    This actually makes the sets of local solutions of the differential equation a sheaf $\mathcal{S}\subset \mathcal{O}^n$, a sheaf of complex vector spaces.
    And since the restriction of the above basis $x_i$ to smaller open sets still form a basis for the solutions, we can conclude that the sheaf $\mathcal{S}$ is locally constant, hence a local system of dimension $n$.
\end{example}

\begin{remark}
    But local systems have a deeper connection to differential geometry. Notably, the \textit{Riemann-Hilbert correspondence} says that the category of holomorphic flat connections on a complex manifold $X$ is equivalent to the category of complex local systems on $X$.\\
    And with the above equivalence, this correspondence partial answers the $21$st Hilbert problem which asks about the existence of a linear differential equation having a prescribed monodromy group.
\end{remark}

\newpage
\section{Covers and Fundamental Groups for Schemes}

In the last chapter, we discussed how the theory of covering spaces provided a nice geometric interpretation of the fundamental group of topological spaces.
In this chapter, we explain in what manner(s) the Zariski topology is too coarse. 
Then we go through a study of \textit{\'etale morphisms} in order to define \'etale coverings,
which will allow us to define the \textit{\'etale fundamental group} similarly as in the previous chapter, denoted by $\pi_1^e$. 
Finally, we compare our new fundamental group with its topological counterpart by \textit{analytifying} non-singular projective schemes over $\mathbb{C}$. 
We also present a few structural properties of $\pi_1^e(X)$.\\

\subsection{The \'Etale Topology}
\label{nex_topology}

As one knows, we usually endow an algebraically variety over an algebraic closed field or a scheme with the Zariski topology. 
But this topology is not well-suited for the methods of algebraic topology. 
In the 1940's, Andr\'e Weil conjectured  that the number of solutions to a set of polynomials over a finite field might be explained by a cohomology theory for which the Lefshetz formula for fixed points would hold (roughly), giving motivation to the study of cohomology of algebraic spaces (see Appendix \ref{Sheaf_cohomology}).
But the cohomology groups computed using the Zariski topology (the Z-cohomology) are all zero for an important class of spaces (this follows from \textit{Grothendieck's vanishing theorem}).
This later led to the definition of the \textit{\'etale topology} of a scheme by Grothendieck, which in turn gave birth to the theory of \textit{\'etale cohomology}. 
It is thus quite awkward to motivate the need for the \'etale topology without mentioning strong ideas from cohomology 
(which fall outside the scope of this thesis).
Nonetheless, we will see that for our purposes, the Zariski topology is also ill-suited. 

\begin{theorem}[Grothendieck's vanishing theorem]
    If $X$ is an irreducible topological space, then $H^r(X,\mathcal{F}) = 0$ for all constant sheaves and all $r>0$. 
\end{theorem}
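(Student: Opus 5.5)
The plan is to show that on an irreducible space every constant sheaf is flasque, and then use that flasque sheaves are acyclic for derived-functor cohomology (the standard fact reviewed in Appendix \ref{Sheaf_cohomology}). First I fix the set-up. Let $X$ be irreducible and let $\mathcal{F}$ be the constant sheaf attached to an abelian group $A$, i.e. $\mathcal{F}(U)$ is the group of locally constant functions $U\rightarrow A$. The only topological input is the defining property of irreducibility: every non-empty open subset $U\subset X$ is dense, and any two non-empty open subsets meet. In particular, every non-empty open $U$ is itself irreducible, hence connected.

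The first step is to compute $\mathcal{F}(U)$ for every non-empty open $U$. A locally constant function $s:U\rightarrow A$ has open fibres $s^{-1}(a)$. Two distinct non-empty fibres would be disjoint non-empty open subsets of the irreducible space $U$, which is impossible. So $s$ is constant, and $\mathcal{F}(U)=A$. The restriction maps $\mathcal{F}(U)\rightarrow\mathcal{F}(V)$ for non-empty $V\subset U$ are the identity of $A$, and $\mathcal{F}(\emptyset)=0$. Hence every restriction map is surjective, which is exactly the statement that $\mathcal{F}$ is flasque.

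The second step is to deduce $H^r(X,\mathcal{F})=0$ for $r>0$ from flasqueness. If the appendix already contains the statement that flasque sheaves are $\Gamma$-acyclic, I invoke it directly. Otherwise I sketch the argument:
\begin{itemize}
\item[$a)$] Embed $\mathcal{F}$ into an injective sheaf $\mathcal{I}$. Injective sheaves are flasque, which one sees by extending sections using the extension-by-zero sheaves $j_!\mathbb{Z}_U$.
\item[$b)$] Let $\mathcal{Q}=\mathcal{I}/\mathcal{F}$. Show that for a short exact sequence $0\rightarrow\mathcal{F}\rightarrow\mathcal{G}\rightarrow\mathcal{Q}\rightarrow0$ with $\mathcal{F}$ flasque, the map $\mathcal{G}(X)\rightarrow\mathcal{Q}(X)$ is surjective. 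This is a Zorn's lemma argument: take a maximal partial lift of a global section and extend it using flasqueness of $\mathcal{F}$ to correct discrepancies.
\item[$c)$] Conclude that $\mathcal{Q}$ is flasque, because quotients of flasque sheaves by flasque sheaves are flasque, again by $b)$.
\item[$d)$] The long exact sequence gives $H^1(X,\mathcal{F})=0$ and $H^{r+1}(X,\mathcal{F})\cong H^r(X,\mathcal{Q})$. Induction on $r$ then finishes the proof.
\end{itemize}

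The main obstacle is step $b)$: the Zorn's lemma surjectivity of global sections for flasque kernels, together with the fact that injectives are flasque. The first step is purely topological and short. If the appendix only develops Čech cohomology, I will instead take a cover by non-empty opens. All finite intersections are then non-empty, so the Čech complex is the complex computing the cohomology of a full simplex with constant coefficients $A$. That complex is contractible, which gives vanishing in positive degrees without any need for injective resolutions.
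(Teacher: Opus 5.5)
Your argument is correct, and it is the proof behind the paper's citation (\cite{milneEtCoH}, Theorem 1.1): nonempty opens of an irreducible space are connected, so a constant sheaf satisfies $\mathcal{F}(U)=A$ on every nonempty open and is therefore flasque, and flasque sheaves are acyclic (\cite{Hartshorne}, III.2.5); the paper itself gives nothing beyond that reference. One caveat: the paper's appendix does not contain flasque acyclicity (its \v{C}ech theory is set up only for quasi-coherent sheaves on schemes), so your steps $a)$--$d)$ are genuinely needed, and your \v{C}ech fallback only yields the vanishing of \v{C}ech cohomology that the paper remarks on separately, not the derived-functor statement on a general irreducible space.
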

\begin{proof}
    See \cite{milneEtCoH}, Theorem $1.1$.
\end{proof}

\noindent And the $\check{C}ech$ $cohomology$ $groups$ are also zero.\\
The Zariski topology on an irreducible scheme $X$ is quite coarse; it has very few open sets which are additionally dense, and all its closed sets are finite. 
From this only, we already understand that some key notions of algebraic topology fail; 
it is impossible to ‘‘zoom in" on a point, and 
there is no notion of a small open ball centred at a point that usual topology is based on.\\
Even better, we can show that a very wide range of irreducible varieties and in fact every irreducible schemes are contractible (simply-connected) when endowed with the Zariski topology.


\begin{definition}
    Let $X$ be a topological space.
    A \textbf{cofinite subset} of a set $X$ is a subset $A$ whose complement is a finite set.
    The \textbf{cofinite topology} is a topology whose open subsets are precisely all cofinite subsets and the empty set.
\end{definition}

\noindent It is immediately clear that in dimension $1$, the Zariski topology is cofinite.

\begin{lemma}
    Let $X$ be a set of cardinality at least $2^{\aleph_0}$ endowed with the cofinite topology.
    Then $X$ is contractible.
\end{lemma}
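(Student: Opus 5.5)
The plan is to write down an explicit contraction. Since $|X|\geq 2^{\aleph_0}=|[0,1]|$, fix an injection $\iota:[0,1]\hookrightarrow X$ and a base point $x_0\in X$. Define $H:X\times[0,1]\rightarrow X$ by
$$H(x,t)=\begin{cases} x & t=0,\\ \iota(t) & 0<t<1,\\ x_0 & t=1.\end{cases}$$
Then $H(\cdot,0)=\id_X$ and $H(\cdot,1)$ is constant, so it only remains to prove that $H$ is continuous for the product topology on $X\times[0,1]$. Here $X$ carries the cofinite topology and $[0,1]$ the Euclidean one. If $x_0$ happens to equal $\iota(1)$ or $\iota(0)$, this causes no trouble; we may even choose $x_0=\iota(1)$.

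In the cofinite topology the closed sets are $X$ itself and the finite sets. So it suffices to check that $H^{-1}(\{y\})$ is closed in $X\times[0,1]$ for every $y\in X$: a finite union of closed sets is closed, and $H^{-1}(X)$ is everything. Fix $y$ and decompose the preimage according to the three cases.

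\begin{itemize}
\item[$a)$] The part with $t=0$ is $\{(y,0)\}$.
\item[$b)$] The part with $0<t<1$ is $X\times\{t_y\}$ if $y=\iota(t_y)$ with $t_y\in(0,1)$, and is empty otherwise. Injectivity of $\iota$ is used exactly here, to guarantee that this piece is at most one horizontal slice.
\item[$c)$] The part with $t=1$ is $X\times\{1\}$ if $y=x_0$, and is empty otherwise.
\end{itemize}

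Points are closed in $X$ (the $T_1$ property of the cofinite topology) and in $[0,1]$. Hence $\{(y,0)\}$, $X\times\{t_y\}$ and $X\times\{1\}$ are all closed in the product, being products of closed sets. The preimage is a union of at most three closed sets, hence closed, and $H$ is continuous.

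The main obstacle is essentially bookkeeping: verifying that no accumulation is hidden at $t\to 0$ or $t\to 1$. Closedness of fibres settles this. For example, a fibre with $y\ne x_0$ never contains the whole slice $X\times\{1\}$, and a sequence $(x,t_n)$ with $t_n\to0$ eventually leaves any single slice $X\times\{t_y\}$.

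One could worry that checking preimages of closed points is not enough in general. It is enough here, because every proper closed set of $X$ is a finite union of points.

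Finally, we note that the contraction makes $X$ simply-connected, in fact with all homotopy groups trivial. This is the intended application: irreducible curves with the Zariski topology have trivial $\pi_1$.
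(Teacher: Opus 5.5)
Your proof is correct. It has the same skeleton as the paper's proof: the same three-piece homotopy (identity at $t=0$, constant $x_0$ at $t=1$) and the same continuity test, namely that each fibre $H^{-1}(y)$ is closed because the proper closed sets of $X$ are finite.

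The difference is in the middle piece.
\begin{itemize}
\item[$a)$] The paper takes an injection $f\colon X\times[0,1]\hookrightarrow X$ and sets $H(x,t)=f(x,t)$ for $0<t<1$, so each fibre meets $X\times(0,1)$ in at most one point. To obtain $f$ it needs the cardinal identity $|X\times[0,1]|=\max\{|X|,2^{\aleph_0}\}=|X|$, which for an arbitrary set relies on the axiom of choice.
\item[$b)$] You let the middle piece ignore $x$ and use only an injection $\iota\colon[0,1]\hookrightarrow X$, which is literally the hypothesis $|X|\geq 2^{\aleph_0}$. A fibre then meets the middle strip in at most one horizontal slice $X\times\{t_y\}$, and this slice is still closed because $\{t_y\}$ is closed in $[0,1]$.
\end{itemize}

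What your version buys is that it avoids cardinal arithmetic entirely. It also makes clear that $H(\cdot,t)$ can be constant for every $t>0$. The real content is that the cofinite topology tolerates a jump from $\id_X$ to a constant map at $t=0$. Injectivity of $\iota$ is exactly what stops a fibre from consisting of infinitely many slices accumulating at $t=0$. The paper's choice gains nothing in return.

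One caveat concerns only your closing remark, not the proof. It applies to curves whose set of points has cardinality at least $2^{\aleph_0}$ (e.g.\ over $\mathbb{C}$). A countably infinite set with the cofinite topology is not even path-connected, as the paper's remark on $\mathbb{Q}$ indicates.
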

\begin{proof}
    The assumption is introduced simply to require that $\lvert X \rvert = \lvert [0,1]\rvert$ (in fact this forces $X$ to be path-connected).
    Now from \cite{set_theory} $\S 10.35$ , we have the following result : $\lvert X\times [0,1]\rvert = \max\{\lvert X\rvert , \lvert [0,1]\rvert \} = \lvert X\rvert$ (since they are both infinite sets).
    So, we may consider an injection $f:X\times [0,1] \hookrightarrow X$.
    Such an injection is continuous by the definition of the cofinite and product topologies. 
    Then for a fixed $x_0\in X$, we define a homotopy from the constant map to the identity map :
    \begin{equation*}
    H(x,t)=
    \begin{cases}
        x, & t=0\\
        f(x,t), & t\in (0,1)\\
        x_0, & t=1
    \end{cases}
    \end{equation*}
    continuity of the homotopy follows from the fact that the inverse image of any point in $X$ is closed in the product topology (and since for the cofinite topology, closed set are finite sets, it is enough to check only for points). 
    This proves that $X$ is contractible.
\end{proof}

\noindent In set theory, the \textit{cardinality of the continuum} is the cardinality of the set of real numbers $\mathbb{R}$.
It is an infinite cardinal number and it is denoted as $2^{\aleph_0}$, where $\aleph_0$ represents the cardinality of the set of natural numbers $\mathbb{N}$.

\begin{remark}
    Notice that it is not enough to assume that $X$ is infinite.
    For example, the set of rational numbers $\mathbb{Q}$ with the cofinite topology is not path-connected, which then implies that it cannot be contractible.
\end{remark}

\noindent Thus, an algebraic curve over an uncountable base field (e.g. $\mathbb{C}$ or even $\mathbb{R}$) has trivial fundamental groups. 
We can go even further, and claim that every irreducible schemes are contractible.
To show this, we give a preliminary result about irreducible schemes and \textit{generic points}. 
They are points that are non-necessarily closed and thus in the Zariski topology, their closure is the whole space.


\begin{lemma}
\label{irreducible_generic_point}
    Let $X$ be an irreducible scheme. Then $X$ has a unique generic point.
\end{lemma}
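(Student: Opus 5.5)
The proof splits into existence, which is local, and uniqueness, which is a purely topological fact about irreducible spaces.

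\emph{Existence.} Choose a non-empty affine open $U=\Spec A\subset X$. Since $X$ is irreducible, every non-empty open subset of $X$ is irreducible and dense, so $U$ is irreducible. In $\Spec A$ the closed sets are the $V(I)$, so irreducibility of $\Spec A$ means the nilradical $\Nil(A)$ is prime. Indeed, if $fg\in\Nil(A)$, then $\Spec A=V(f)\cup V(g)$, so one of $V(f)$, $V(g)$ is everything and that element is nilpotent. Let $\eta$ be the point corresponding to $\mathfrak{p}=\Nil(A)$. Its closure in $U$ is $V(\mathfrak{p})=\Spec A$, since every prime contains the nilradical. The closure of $\{\eta\}$ in $X$ is a closed set containing $U$, hence contains $\overline{U}=X$, because $U$ is dense in the irreducible space $X$. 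So $\eta$ is a generic point.

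\emph{Uniqueness.} Suppose $\eta,\eta'$ are both generic points of $X$. Every non-empty open set $V$ contains both of them. If $\eta\notin V$, then $\{\eta\}\subset X\setminus V$, a proper closed set, contradicting $\overline{\{\eta\}}=X$; the same holds for $\eta'$. So take an affine open $U=\Spec A$ containing $\eta$; it then contains $\eta'$ as well. Both points are dense in $U$, so they correspond to primes $\mathfrak{p},\mathfrak{p}'$ with $V(\mathfrak{p})=V(\mathfrak{p}')=\Spec A$. Then $\mathfrak{p}\supseteq\mathfrak{p}'$ and $\mathfrak{p}'\supseteq\mathfrak{p}$, since each prime lies in the closed set defined by the other. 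Hence $\mathfrak{p}=\mathfrak{p}'$ and $\eta=\eta'$. Equivalently, the underlying space of an affine scheme is $T_0$, and a $T_0$ space has at most one generic point.

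The only step needing care is in the existence part: showing that irreducibility of $\Spec A$ forces $\Nil(A)$ to be prime. This rests on the correspondence $V(I)\subseteq V(J)\iff \sqrt{J}\subseteq\sqrt{I}$, which I will take from the Appendices. Everything else is bookkeeping with closures.
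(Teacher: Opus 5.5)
Your proof is correct and takes essentially the paper's route: on an affine open the nilradical is prime because the open is irreducible, and it is the generic point there; you then globalize using that non-empty opens of an irreducible space are dense. Your globalization step is cleaner than the paper's argument that the point lies in every affine open. For existence you note that the closure of $\{\eta\}$ contains $\overline{U}=X$. For uniqueness you place both generic points in a common affine open and use the $T_0$ property there.
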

\begin{proof}
    If $X=\Spec\:R$, then the nilradical of $R$ ($\sqrt{(0)}$) is the generic point. 
    Indeed $\Spec\:R$ is irreducible if and only if $\sqrt{(0)}$ is a prime ideal : 
    $X$ is irreducible if and only if the intersection of two non-empty distinguished opens $D(f)$ and $D(g)$ which is $D(fg)$ is non-empty. But this is equivalent to saying that $f$ and $g$ are not nilpotent then $fg$ is not nilpotent, hence that $\sqrt{(0)}$ is a prime ideal. 
    And since the nilradical is the intersection of all prime ideals, it is contained in every point of $\Spec\:R$, so $\overline{\sqrt{(0)}} = \Spec\:R$.
    It is unique because if another prime ideal $P$ were to be a generic point, the $V(P) = V(\sqrt{(0)})$ and thus $P = \sqrt{(0)}$.\\
    Now in general, if $X$ is irreducible and $U=\Spec\:R$ is an affine open subset, then any generic point of $X$ is a generic point of $U$. Conversely if $P$ is a generic point of $U$ there cannot be another affine open subset $V\subset X$ that does not contain $P$, since if it were the case, $V\cap U$ it would be empty, not containing the generic point of $U$.
\end{proof}

\noindent It is now easy to show that
\begin{lemma}
    Every irreducible scheme is contractible.
\end{lemma}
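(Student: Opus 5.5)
We use the generic point $\eta$ from Lemma \ref{irreducible_generic_point} to write down an explicit contraction onto $\eta$. Define
\begin{equation*}
H : X\times[0,1]\rightarrow X,\qquad H(x,t)=\begin{cases} x, & t=0,\\ \eta, & t\in(0,1].\end{cases}
\end{equation*}
Then $H(\cdot,0)=\id_X$ and $H(\cdot,1)$ is the constant map at $\eta$. So everything reduces to showing that $H$ is continuous for the Zariski topology on $X$ and the product topology on $X\times[0,1]$. There is no cardinality assumption, unlike the cofinite lemma, because the generic point does all the work.

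To check continuity, take an open set $U\subseteq X$ and compute $H^{-1}(U)$. There are two cases.
\begin{itemize}
\item[$a)$] If $U=\emptyset$, the preimage is empty.
\item[$b)$] If $U\neq\emptyset$, then $\eta\in U$. Indeed, $\overline{\{\eta\}}=X$, so every non-empty open set meets $\{\eta\}$. Hence
$$H^{-1}(U)=\bigl(U\times\{0\}\bigr)\cup\bigl(X\times(0,1]\bigr)=\bigl(U\times[0,1]\bigr)\cup\bigl(X\times(0,1]\bigr).$$
This is a union of two products of open sets, so it is open in $X\times[0,1]$.
\end{itemize}
Hence $H$ is continuous, and $X$ deformation retracts, as a homotopy equivalence, onto the point $\{\eta\}$.

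As a consequence, $X$ is contractible. It is therefore path-connected, since for any $x$ the path $t\mapsto H(x,t)$ joins $x$ to $\eta$, and it is simply-connected by the Corollary that homotopy equivalent spaces have the same fundamental group. The main subtlety, and the only genuine step, is the observation in $b)$ that every non-empty open set contains $\eta$. This is exactly what makes the discontinuous-looking jump at $t=0$ harmless. Note also that $H$ is not a homotopy rel base point unless the base point is $\eta$, which is irrelevant for contractibility.
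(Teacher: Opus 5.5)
Your proof is correct and is the paper's argument: the same homotopy $H(x,0)=x$, $H(x,t)=\eta$ for $t>0$, built from the generic point of Lemma \ref{irreducible_generic_point}. Your explicit computation $H^{-1}(U)=(U\times\{0\})\cup(X\times(0,1])$ for non-empty open $U$ supplies the continuity check that the paper only sketches. It also shows why $\eta$ must be the value on the open part $(0,1]$ of the time interval.
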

\begin{proof}
    Let $X$ be an irreducible scheme and denote by $\eta$ its unique generic point. Then we have the following homotopy 
    \begin{equation*}
    H(x,t)=
    \begin{cases}
        x, & t=0\\
        \eta, & t \neq 0
    \end{cases}
    \end{equation*}
    which is again continuous since $\eta$ is a generic point, every point of $X$ is either in $\eta$ or in its boundary. 
    And which implies that the identity map is homotopic to the constant map. Hence $X$ is contractible.
\end{proof}

\noindent While the Zariski topology is ideal in many cases, mainly for keeping track of algebraic data (such as prime/maximal ideals, dimensionality, localisation, nilpotents, homomorphisms, ...) 
we have seen that it fails at discerning geometric properties topologically. 
So, how to proceed from there ?\\
Since there is already a large theory of algebraic geometry built with the Zariski topology, it is rather clever that we don't discard all of it.
To tackle this issue, Grothendieck had the brilliant idea to consider not only open sets, but at the same time, open immersions. 
Thus, introducing a new kind of topology (that can now even be defined on a whole category, see Appendix \ref{Groth_topo}).
One can further define the notion of a sheaf for a category. \\

\noindent In our case, recall that in the theory of coverings from algebraic topology, we are interested in certain morphisms, those which are locally homeomorphic, so a good guess would be to consider morphisms of schemes that ‘‘look like" local isomorphisms.
We will see later that these are naturally \'etale morphisms.
Although the \'etale topology is defined through the notion of \'etale morphisms, we will not define what they are yet. 
For now, it is only important to keep in mind that they are the algebraic analogue of local isomorphisms. All of this is explained in the next section.

\begin{definition}
\label{def_etale_covering}
    Let $X$ be a scheme. We call an \'etale covering of $X$ a family of morphisms  $\{f_i:X_i\rightarrow X\}_{i\in I}$ of schemes such that $f_i$ is \'etale for all $i\in I$ and $X = \cup_i f_i(X_i)$. \\
    The \textbf{\'etale topology} is a Grothendieck topology on the category of schemes such that for $X$ a scheme, $\mathcal{G}(X)$ is an \'etale covering of $X$.
\end{definition}

\noindent The category of schemes allows many different Grothendieck topologies for different purposes. 
In this thesis, we will focus on the \'etale topology.
To establish that it is indeed the correct notion, we can compare the outcome \'etale results with 
results from the analytification of a scheme of finite type over $\mathbb{C}$ (see Section \ref{comparaison_theorem} and Appendix \ref{analytification}), which has the induced complex topology, 
a topology standard in the context of algebraic topology and the fundamental groups.
The initial comparison theorem is the following, due to Serre and can be found in \cite{Serre} (Theorem $1$, Chapitre $12$).

\begin{theorem}
    For every coherent sheaf $\mathcal{F}$ on $X$ a projective scheme and for all $r \in \mathbb{N}$, the following  homomorphism is a bijection
    $$\varepsilon : H^r(X,\mathcal{F}) \rightarrow H^r(X^{an},\mathcal{F}^{an}).$$
\end{theorem}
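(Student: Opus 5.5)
The plan is to follow Serre's original strategy in \cite{Serre}: reduce to projective space, treat the twisting sheaves $\mathcal{O}(d)$ by direct computation, and then handle an arbitrary coherent sheaf by a dévissage and descending induction on $r$. First I would recall two properties of analytification (Appendix \ref{analytification}).
\begin{itemize}
    \item[$(i)$] The map of local rings $\mathcal{O}_{X,x}\rightarrow\mathcal{O}_{X^{an},x}$ is flat, because both rings have the same completion. Hence $\mathcal{F}\mapsto\mathcal{F}^{an}$ is exact.
    \item[$(ii)$] The comparison map $\varepsilon$ is natural and compatible with the connecting homomorphisms of long exact sequences.
\end{itemize}

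Next I reduce to $X=\mathbb{P}^n_{\mathbb{C}}$. Since $X$ is projective there is a closed immersion $i:X\hookrightarrow\mathbb{P}^n$, and $i_*\mathcal{F}$ is coherent. Because $i$ is affine (resp. a closed embedding of analytic spaces), we have $H^r(X,\mathcal{F})\cong H^r(\mathbb{P}^n,i_*\mathcal{F})$, and likewise on the analytic side. One checks that $(i_*\mathcal{F})^{an}\cong i^{an}_*(\mathcal{F}^{an})$, which can be verified on stalks via finiteness. So it suffices to prove the statement for coherent sheaves on $\mathbb{P}^n$.

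On $\mathbb{P}^n$ I would first prove the claim for $\mathcal{F}=\mathcal{O}(d)$, by induction on $n$. The case $n=0$ is trivial. For the inductive step, take a hyperplane $H\cong\mathbb{P}^{n-1}$ and use the exact sequence
$$0\rightarrow\mathcal{O}(d-1)\rightarrow\mathcal{O}(d)\rightarrow\mathcal{O}_H(d)\rightarrow 0$$
together with its analytification, which is exact by $(i)$. Applying the five lemma to the two long exact sequences, with the induction hypothesis on $H$, reduces the problem to showing bijectivity for a single $d$, say $d=0$. Algebraically, $H^0=\mathbb{C}$ and $H^r=0$ for $r>0$ by the standard Čech computation. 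Analytically, $H^0(\mathbb{P}^{n,an},\mathcal{O})=\mathbb{C}$ by compactness and the maximum principle. For $H^r(\mathbb{P}^{n,an},\mathcal{O})=0$ with $r>0$, I would compute Čech cohomology on the standard cover by polydisc-like Stein opens, expanding sections in Laurent series exactly as in the algebraic computation; alternatively, one can invoke Dolbeault's theorem with the Hodge-theoretic vanishing of $H^{0,r}(\mathbb{P}^n)$. \textbf{This analytic vanishing is the main obstacle}, since it is the only place where genuine analysis (Cartan--Leray acyclicity of the cover and convergence of Laurent expansions) enters. I would try the Laurent-series Čech argument first, because it directly mirrors the algebraic proof.

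Finally, let $\mathcal{F}$ be an arbitrary coherent sheaf on $\mathbb{P}^n$. I argue by descending induction on $r$. Both cohomologies vanish for $r>n$: on the algebraic side by Čech theory on $n+1$ affines, and on the analytic side by the same Stein cover via Leray's theorem. For the inductive step, Serre's theorem gives a surjection $E=\bigoplus\mathcal{O}(-m)\twoheadrightarrow\mathcal{F}$ with coherent kernel $\mathcal{K}$. Comparing the long exact sequences of $0\rightarrow\mathcal{K}\rightarrow E\rightarrow\mathcal{F}\rightarrow0$ and of its analytification, I know that $\varepsilon$ is bijective on $E$ in all degrees, and that $\varepsilon$ is bijective in degree $r+1$ for every coherent sheaf by the inductive hypothesis. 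The four lemma then shows first that $\varepsilon$ is surjective in degree $r$ for every coherent sheaf, in particular for $\mathcal{K}$. Applying the five lemma once more, now in degree $r$ for $\mathcal{F}$, yields bijectivity in degree $r$ for all coherent sheaves, which completes the induction.
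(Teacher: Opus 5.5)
Your outline is correct and is essentially Serre's own proof from GAGA. That is exactly what the paper relies on here: it states the theorem with a reference to \cite{Serre} and gives no argument of its own.

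One small correction to your commentary. Starting the descending induction at $r>n$ via Leray on the Stein cover needs acyclicity of $\mathcal{F}^{an}$ itself on the standard opens and their intersections (e.g.\ Cartan's Theorem B), which is analytic input beyond the $\mathcal{O}$-computation. Starting instead at $r>2n$ avoids this, since every sheaf on the $2n$-dimensional manifold $(\mathbb{P}^n)^{an}$ has vanishing cohomology in those degrees.
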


\subsection{On \'Etale Morphisms of Schemes}
\label{morphisms_of_schemes}

As motivated previously, to define a fundamental group over schemes, our first step will be to introduce \textit{\'etale morphisms}, the core of the \'etale topology.
But before doing so, we need to introduce important notions such as \textit{finiteness}, \textit{flatness}, \textit{sheaves of differentials} and \textit{smoothness} that are directly related to morphisms of schemes. 
Such notions are called relative notions.
\noindent As Vakil suggests in \cite{Vakil}, there are a gazillion finiteness conditions on morphisms of schemes. 
The ones we are immediately interested in are (locally of) \textit{finite type}, and (locally of) \textit {finite presentation}.

\subsubsection{Finite morphisms}

In a vacuum, the following finiteness conditions ensure that the dimensions of the fibres of our morphisms do not degenerate (that they stay finite).

\begin{definition}
    We say that a morphism of schemes $f:X\rightarrow Y$ is \textbf{locally of finite type} if $Y$ has an affine open covering $\{V_i = \Spec\:B_i\}$ so that $f^{-1}(V_i)$ has itself an affine open covering $\{U_{ij} = \Spec\:A_{ij}\}$ with $A_{ij}$ finitely generated $B_i$-algebras.\\
    Moreover, a morphism $f$ is said to be of \textbf{finite type}, if in addition, each $f^{-1}(V_i)$ can be covered by a finite number of $U_{ij}$.\\
    And a morphism $f$ is said to be \textbf{finite} if each $f^{-1}(V_i)=U_i$ an open affine subscheme of $X$.
\end{definition}

\begin{remark}
\label{loc_finite_pres_loc_finite_type_noetherian}
    Usually it is require that smooth or \'etale morphism are not only locally of finite type, but locally of finite presentation.
    A ring $A$ is said to be a \textbf{finitely presented}  $B$-algebra if 
    $$A \cong B[x_1,...,x_n]/(r_1(x_1,...,x_n), ..., r_j(x_1,...,x_n)).$$
    A morphism $f:X\rightarrow Y$ is \textbf{locally of finite presentation} if for each affine open set $\Spec\:B$ of $Y$, $f^{-1}(\Spec\:B)=\cup_i\Spec\:A_i$ with $B\rightarrow A_i$ finitely presented.\\
    Now notice that if we are in the Noetherian setting (so let $B$ be Noetherian) then being finitely presented is the same as being of finite type.
    And since we are restricting ourselves to the Noetherian case, in the following we will not consider morphisms locally of finite presentation.
\end{remark}

\begin{examples}\textbf{ }
\begin{itemize}
    \item[$a)$] By definition, finite morphisms are affine.
    \item[$b)$] And for a scheme $X$ and a field $k$, if the morphism of schemes $X\rightarrow \Spec\:k$ is finite, then $X$ is an affine scheme.
\end{itemize}
\end{examples}

\begin{proposition}[Base change]
\label{base_change_locally_finite}
    Let $f:X\rightarrow Y$ be a morphism of schemes, locally of finite type. And consider the following fibre product diagram for any morphism $g:Y'\rightarrow Y$with $X'= X\times_YY'$,
    \[
    \begin{tikzcd}
        X'  \arrow[r,"f'"] \arrow[d, "\rho_X"'] & Y' \arrow[d,"g"] \\
        X \arrow[r,"f"'] &  Y.
    \end{tikzcd}
    \] 
    Then the morphism $f': X'\rightarrow Y'$ is also locally of finite type.
\end{proposition}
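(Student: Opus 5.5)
The plan is to reduce to the affine case and then use the algebraic fact that tensor products preserve finite generation of algebras. Being locally of finite type is local on both source and target. It suffices to show the following: for every affine open $\Spec\:B' \subset Y'$ mapping into an affine open $\Spec\:B\subset Y$, and every affine open $\Spec\:A\subset f^{-1}(\Spec\:B)$ with $A$ a finitely generated $B$-algebra, the corresponding piece of $X'$ is affine and of finite type over $B'$. So first I fix an affine open covering $\{V_i=\Spec\:B_i\}$ of $Y$ as in the definition, with $f^{-1}(V_i)=\bigcup_j \Spec\:A_{ij}$ and each $A_{ij}$ a finitely generated $B_i$-algebra. Next I cover each $g^{-1}(V_i)$ by affine opens $\Spec\:B'_{ik}$. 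These together form an affine open covering of $Y'$.

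Second, I use the standard local description of the fibre product. For an open $V\subset Y$ and an open $V'\subset g^{-1}(V)$, we have $f'^{-1}(V')=f^{-1}(V)\times_V V'$. Moreover, for affine opens, $\Spec\:A\times_{\Spec\:B}\Spec\:B'\cong\Spec(A\otimes_B B')$, and these pieces glue. Hence
$$f'^{-1}(\Spec\:B'_{ik})=\bigcup_j \Spec(A_{ij}\otimes_{B_i}B'_{ik}),$$
which is an affine open covering of the preimage, as the definition requires.

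Third, the algebraic step. If $A_{ij}\cong B_i[x_1,\dots,x_n]/J$, then right exactness of the tensor product gives
$$A_{ij}\otimes_{B_i}B'_{ik}\cong B'_{ik}[x_1,\dots,x_n]/J B'_{ik}[x_1,\dots,x_n].$$
In other words, the images of $x_l\otimes 1$ generate it as a $B'_{ik}$-algebra, so it is finitely generated over $B'_{ik}$. This gives that $f'$ is locally of finite type.

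The main obstacle is the second step, namely justifying that the fibre product is computed locally by tensor products and that $f'^{-1}(V')$ is covered by the $\Spec(A\otimes_B B')$. I would simply invoke the construction of fibre products of schemes by glueing affine pieces (as in Hartshorne II.3.3), since it is not proved in the paper. The third step is routine once that is granted.
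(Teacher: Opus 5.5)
Your proposal is correct and follows essentially the same route as the paper. Both arguments cover $Y'$ by affines lying over the chosen affine cover of $Y$, identify $f'^{-1}$ of each piece as covered by the opens $\Spec(A_{ij}\otimes_{B_i}B'_{ik})$, and note that these are finitely generated over $B'_{ik}$; you are slightly more careful than the paper, which simply asserts both the refinement of the cover of $Y'$ and the finite generation of the tensor products, whereas you obtain the former by covering each $g^{-1}(V_i)$ and justify the latter via right exactness.
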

\begin{proof}
    Let $\{V'_i = \Spec\:B'_i\}$ be an affine open covering of $Y'$, which maps into some open affine covering $V_i = \Spec\:B_i$ of $Y$.
    Since $f$ is locally of finite type, $f^{-1}(V_i)$ has an affine open covering $\{U_{ij}=\Spec\:A_{ij}\}$ such that $A_{ij}$ are finitely generated $B_i$-algebras.\\
    Now $\pi_X^{-1}(U_{ij})_{V'} = U_{ij}\times_V V' = \Spec\:A_{ij}\otimes_{\Spec\:B_i}\Spec\:B'_i$ are affine opens and thus $f^{-1}(V_i)\times_VV' \subset X'$ has an open affine covering by the above affine opens which are now finitely generated $B'_i$-algebras. Thus $f'$ is locally of finite type.
\end{proof}

\begin{proposition}[Composition]
\label{composition_locally_finite_type}
    If $f:X\rightarrow Y$ and $g:Y\rightarrow Z$ are morphisms of schemes locally of finite type, then $g\circ f :X\rightarrow Z$ is again locally of finite type.
\end{proposition}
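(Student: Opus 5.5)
The plan is to reduce the statement to its affine, algebraic counterpart: a finitely generated algebra over a finitely generated algebra is finitely generated over the base. The only geometric input is that the local-of-finite-type condition can be checked on covers, and that preimages under $g$ of affine opens of $Z$ are covered by affines that are finitely generated over them.

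First I would pick an affine open cover $\{W_k=\Spec\:C_k\}$ of $Z$ as in the definition for $g$. Then each $g^{-1}(W_k)$ is covered by affine opens $V_{kl}=\Spec\:B_{kl}$ with $B_{kl}$ a finitely generated $C_k$-algebra.

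Next I would deal with the cover of $X$. The hypothesis on $f$ gives some affine open cover of $Y$ with good preimages, but not necessarily the cover $\{V_{kl}\}$. So I would first establish the standard fact that being locally of finite type is independent of the chosen affine cover. Concretely, for $f$ locally of finite type and \emph{any} affine open $\Spec\:B\subset Y$, the preimage $f^{-1}(\Spec\:B)$ should be covered by spectra of finitely generated $B$-algebras. This step is the main obstacle. I would prove it by covering $\Spec\:B\cap V_i$ by opens that are simultaneously distinguished in $\Spec\:B$ and in $V_i=\Spec\:B_i$, say $D_B(b)=D_{B_i}(b')$. I would then use $B_b\cong (B_i)_{b'}$. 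Over such an open, the preimage is covered by the $\Spec\:(A_{ij})_{b'}$, which are finitely generated over $(B_i)_{b'}=B_b$, since we add one generator $1/b'$. Each is therefore finitely generated over $B$, since we add $1/b$. This last observation is all we need: we only require covers of $f^{-1}(\Spec\:B)$ by spectra of finitely generated $B$-algebras, not a finite-cover gluing statement.

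Finally, I would apply this to each $V_{kl}$. The preimage $f^{-1}(V_{kl})$ is covered by $\Spec\:A_{klm}$ with $A_{klm}$ finitely generated over $B_{kl}$. By transitivity of finite generation, if $B_{kl}=C_k[b_1,\dots,b_r]$ and $A_{klm}=B_{kl}[a_1,\dots,a_s]$, then $A_{klm}=C_k[b_1,\dots,b_r,a_1,\dots,a_s]$, so $A_{klm}$ is a finitely generated $C_k$-algebra. Since $(g\circ f)^{-1}(W_k)=\bigcup_{l}f^{-1}(V_{kl})=\bigcup_{l,m}\Spec\:A_{klm}$, the composite $g\circ f$ is locally of finite type.
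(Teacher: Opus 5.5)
Your proof is correct and rests on the same key fact as the paper's one-line argument: transitivity of finite generation along $C_k\to B_{kl}\to A_{klm}$. The only difference is that you also prove the local-finite-type condition for $f$ holds over \emph{every} affine open of $Y$, using opens that are distinguished in both $\Spec B$ and $\Spec B_i$. This step is genuinely needed, because the cover of $Y$ witnessing that $f$ is locally of finite type need not be the cover $\{V_{kl}\}$ coming from $g$. The paper silently assumes the two covers can be matched, so your version is the more complete one.
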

\begin{proof}
    Follows by transitivity of finite generation;
    if $A$ is a finitely generated $B$- algebra and $B$ is a finitely generated $C$-algebra, then $A$ is a finitely generated $C$-algebra.
\end{proof}

\begin{lemma}
    In the Noetherian setting, schemes of finite type over a field are of finite dimension.
\end{lemma}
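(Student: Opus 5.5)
The plan is to reduce to the affine case and then apply Noether normalisation. Let $X$ be of finite type over a field $k$. By definition of finite type there is a finite affine open covering $X = U_1\cup\dots\cup U_n$ with $U_i = \Spec\:A_i$ and each $A_i$ a finitely generated $k$-algebra. The first step is the topological observation that $\dim X = \max_i \dim U_i$. Indeed, a chain of irreducible closed subsets $Z_0\subsetneq Z_1\subsetneq\dots\subsetneq Z_r$ in $X$ can be intersected with an open $U_i$ that meets $Z_0$. Since each $Z_j$ is irreducible and $U_i\cap Z_j$ is a non-empty open subset of it, $U_i\cap Z_j$ is dense in $Z_j$. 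Hence the intersections remain irreducible, closed in $U_i$, and strictly increasing (their closures in $X$ recover the $Z_j$). Because there are only finitely many $U_i$, it suffices to bound $\dim \Spec\:A$ for a single finitely generated $k$-algebra $A$.

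For the affine case, I would invoke Noether normalisation. There exist elements $y_1,\dots,y_d\in A$, algebraically independent over $k$, such that $A$ is a finite (integral) module over $k[y_1,\dots,y_d]$. By the going-up and incomparability theorems for integral extensions, the Krull dimension of $A$ equals that of $k[y_1,\dots,y_d]$. It then remains to show $\dim k[y_1,\dots,y_d] = d$, which is finite.

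The main obstacle is this last equality. The lower bound is the obvious chain $0\subset (y_1)\subset\dots\subset(y_1,\dots,y_d)$. For the upper bound, I would argue by induction on $d$. Given a chain $0\subsetneq \mathfrak{p}_1\subsetneq\dots$ in $k[y_1,\dots,y_d]$, pick a nonzero $f\in\mathfrak{p}_1$ and, after a Nagata-type change of variables, make $k[y_1,\dots,y_d]/(f)$ integral over a polynomial ring in $d-1$ variables. The shortened chain then lives in a ring of dimension at most $d-1$ by the induction hypothesis and incomparability. Alternatively, one can simply cite the standard fact $\dim A = \operatorname{trdeg}_k \operatorname{Frac}(A)$ for finitely generated domains, which gives finiteness at once. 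This is the route I would take if full proofs of commutative algebra are not expected at this point in the thesis.

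The Noetherian hypothesis is not needed beyond guaranteeing, via Hilbert's basis theorem, that the $A_i$ are Noetherian. Finite dimensionality comes from the finite type condition together with the finiteness of the affine cover.
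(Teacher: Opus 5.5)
Your argument is correct and shares the paper's outline: cover $X$ by finitely many $\Spec A_i$ with $A_i$ finitely generated over $k$, and work piece by piece. It also supplies the two ingredients the paper's proof leaves unargued.

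First, you justify the reduction to the affine pieces. You prove $\dim X\le\max_i\dim U_i$ by intersecting a chain with an affine open that meets its smallest member, and you recover each $Z_j$ as a closure. The paper takes this step for granted.

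Second, and more substantially, the paper's key step asserts that each chain of closed subsets is finite, ``therefore'' the supremum of their lengths is finite. Taken literally this does not follow. In Nagata's Noetherian ring of infinite Krull dimension every chain of primes is finite, yet the lengths are unbounded. What is really needed is a uniform bound on $\dim\Spec A$ for a finitely generated $k$-algebra $A$. That is exactly what your Noether normalisation argument provides, via incomparability plus $\dim k[y_1,\dots,y_d]=d$, or directly via the transcendence degree formula. It gives the explicit bound $\dim X\le\max_i d_i$, where $d_i$ is the number of generators of $A_i$.

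So your route costs a citation of standard commutative algebra but yields a genuine proof. The paper's route buys brevity by suppressing precisely the point that needs proof.

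Your closing observation is also right: the Noetherian hypothesis plays no role. Finite generation and finiteness of the affine cover are what matter.
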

\begin{proof}
    Let $X$ be a scheme of finite type over $k$, then $X$ can be covered by affine sets $\Spec\: A_i$ where the $A_i$ are finitely generated $k$-algebras.
    This means that each chain of closed subscheme with the strict inclusion is finite, therefore the supremum of the length of chains of strictly increasing irreducible closed sets of $X$ is also finite.
\end{proof}

\begin{proposition}
    Let $f:X\rightarrow Y$ be a morphism of schemes of finite type. Then for all $q\in Y$, the fibre scheme $X_y$ has finite dimension. 
\end{proposition}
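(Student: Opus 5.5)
The plan is to reduce the statement to the preceding lemma (schemes of finite type over a field have finite dimension) by base change. For a point $y\in Y$, write $\kappa(y)$ for its residue field and $\Spec\:\kappa(y)\rightarrow Y$ for the canonical morphism. The fibre is by definition the fibre product $X_y = X\times_Y \Spec\:\kappa(y)$. Once we know that $X_y\rightarrow \Spec\:\kappa(y)$ is of finite type, the preceding lemma, applied with $k=\kappa(y)$, gives $\dim X_y<\infty$.

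The first step is to see that the projection $f_y:X_y\rightarrow \Spec\:\kappa(y)$ is locally of finite type. This is exactly Proposition \ref{base_change_locally_finite} applied with $Y'=\Spec\:\kappa(y)$ and $g$ the canonical morphism.

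The second step is to upgrade this to finite type, i.e.\ to a finite affine cover of $X_y$. Since $\Spec\:\kappa(y)$ is a single point, I would choose an affine open $V=\Spec\:B\ni y$ of $Y$. Because $f$ is of finite type, $f^{-1}(V)$ is covered by finitely many $\Spec\:A_j$, with each $A_j$ a finitely generated $B$-algebra. Then $X_y$ is covered by the finitely many affines
$$\Spec\:(A_j\otimes_B \kappa(y)),$$
and each $A_j\otimes_B\kappa(y)$ is a finitely generated $\kappa(y)$-algebra: a presentation $B[x_1,\dots,x_n]/I$ of $A_j$ tensors to $\kappa(y)[x_1,\dots,x_n]/I\kappa(y)[x_1,\dots,x_n]$. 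In particular these rings are Noetherian by Hilbert's basis theorem, so the paper's Noetherian convention is respected.

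The last step is to apply the lemma. Concretely, $\dim X_y=\max_j \dim (A_j\otimes_B\kappa(y))$, and each term is bounded by the transcendence degree, which is at most the number of generators $n$ (Noether normalisation).

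The main obstacle is the finiteness of the cover rather than the dimension bound. I must check that the cover of $f^{-1}(V)$ given by the finite type hypothesis can be taken over an affine neighbourhood of the particular point $y$. Since $f$ is of finite type, this holds for the defining cover of $Y$ and any $V_i$ containing $y$, so I will simply pick such a $V_i$.

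A subtle point is that the dimension of $X_y$ is computed on its own topology. I use that a chain of irreducible closed subsets in $X_y$ restricts to a chain in one of the affine pieces meeting its smallest member, so the maximum over the finite cover suffices.
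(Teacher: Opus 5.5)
Your proposal is correct and follows the same route as the paper: you base change to $\Spec\:\kappa(y)$ to see that $X_y$ is of finite type over the residue field, then invoke the preceding lemma on finite type schemes over a field. You also fill in two points the paper leaves implicit: that finite type, and not just local finite type, survives this base change (via a finite affine cover over a neighbourhood of $y$), and that dimension can be computed as the maximum over a finite open cover.
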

\begin{proof}
    By base change, the fibre of a morphism of schemes of finite type is again of finite type over the base change, which is now the residue field.
    Hence by the above lemma, its fibres at any point are finite-dimensional.
\end{proof}

\begin{proposition}
\label{finite_is_closed}
    Let $f:X\rightarrow Y$ be a finite morphism of Noetherian schemes. Then $f$ is closed.
\end{proposition}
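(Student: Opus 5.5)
Closedness is local on the target, so the plan is to reduce to the affine case and then use integrality together with lying-over. Let $Z\subset X$ be closed. By the definition of a finite morphism, $Y$ has an affine open cover $\{V_i=\Spec\:B_i\}$ with $f^{-1}(V_i)=\Spec\:A_i$ affine and $A_i$ a finite $B_i$-module. Since $f(Z)\cap V_i = f|_{f^{-1}(V_i)}(Z\cap f^{-1}(V_i))$, and a subset of $Y$ is closed exactly when its intersection with each $V_i$ is closed in $V_i$, it suffices to treat $f:\Spec\:A\rightarrow\Spec\:B$ with $\varphi=f^{\#}:B\rightarrow A$ making $A$ a finite $B$-module.

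In the affine case, write $Z=V(I)$ for an ideal $I\subset A$, and let $J=\varphi^{-1}(I)$. The goal is to show $f(V(I))=V(J)$. One inclusion is immediate: if $\mathfrak{p}\supseteq I$ then $\varphi^{-1}(\mathfrak{p})\supseteq\varphi^{-1}(I)=J$. For the other, note that $B/J\hookrightarrow A/I$ is injective, and that $A/I$ is still finite over $B/J$, hence integral over it. Replacing $A,B$ by $A/I,B/J$, it remains to show that for an integral injective ring map $B\hookrightarrow A$ the induced map $\Spec\:A\rightarrow\Spec\:B$ is surjective.

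This surjectivity is the \emph{lying-over theorem} of Cohen--Seidenberg, and it is the only real content of the argument; I would cite it from a commutative algebra text (e.g.\ Atiyah--Macdonald, Theorem 5.10). If a self-contained proof is wanted, I would argue as follows. Take $\mathfrak{q}\in\Spec\:B$ and localise at $\mathfrak{q}$, so that $B_{\mathfrak{q}}\hookrightarrow A_{\mathfrak{q}}$ remains injective and integral (localisation is exact). Then pick any maximal ideal $\mathfrak{m}$ of $A_{\mathfrak{q}}$; this ring is nonzero because it contains $B_{\mathfrak{q}}$. Its contraction is maximal, since $B_{\mathfrak{q}}/(\mathfrak{m}\cap B_{\mathfrak{q}})\subset A_{\mathfrak{q}}/\mathfrak{m}$ is an integral extension with field on top, forcing the bottom ring to be a field. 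So the contraction is $\mathfrak{q}B_{\mathfrak{q}}$, and pulling $\mathfrak{m}$ back to $A$ gives a prime lying over $\mathfrak{q}$.

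Noetherianity is not actually needed beyond the setup. The main point to handle carefully is the compatibility of the reduction: the restriction of $f$ to $f^{-1}(V_i)$ is again finite, and closedness glues along the cover.
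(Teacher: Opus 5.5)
Your argument is correct, and it takes a genuinely different route from the paper.

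\textbf{The paper's route.} It reduces to $\Spec A\rightarrow\Spec B$ and writes $A$ as a quotient of $B[x_1,\dots,x_n]$. It then embeds $\Spec A$ into $\mathbb{P}^n_B$ to conclude that $f$ is locally projective. Finally it cites that projective morphisms are proper and that properness is local on the base, so $f$ is universally closed, hence closed.

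\textbf{Your route.} You prove the identity $f(V(I))=V(\varphi^{-1}(I))$ directly, reduce it to lying-over for the integral injection $B/J\hookrightarrow A/I$, and prove lying-over yourself. This is elementary and self-contained. It also makes visible the one hypothesis that matters: $A$ is a finite $B$-module, hence integral over $B$.

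\textbf{Where the paper's sketch is thin.} A surjection $B[x_1,\dots,x_n]\rightarrow A$ only shows that $\Spec A$ is closed in $\mathbb{A}^n_B$. That makes it merely locally closed in $\mathbb{P}^n_B$, which is not enough for projectivity. An argument using only finite generation as an algebra cannot work, because finite-type morphisms such as $\Spec B_g\rightarrow\Spec B$ need not be closed. To obtain a closed immersion into $\mathbb{P}^n_B$, one must use monic integral equations for the module generators to show that the closure misses the hyperplane at infinity.

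Relatedly, you correctly use the standard definition of a finite morphism, in which each $A_i$ is a finite $B_i$-module. The paper's stated definition only asks that each $f^{-1}(V_i)$ be affine, and under that definition the proposition would be false.

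\textbf{What each route buys.} Once repaired, the projective route gives properness, and hence universal closedness, in one stroke. Your argument yields universal closedness too, since finiteness is stable under base change. Your remark that the Noetherian hypothesis is unnecessary is also correct.
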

\begin{proof}
    Since $f$ is finite, locally, consider 
    $Y = \Spec\:B$ and $X=\Spec\:A$ 
    with $A$ a finitely generated $B$-algebra. 
    Pick generators $a_1,...,a_n \in A$ and define a surjection $B[x_1,...,x_n] \rightarrow A$, by $x_i\mapsto a_i$.\\
    So we can consider the following embedding $\Spec\: A \hookrightarrow Proj\:B[x_0,...,x_n] = \mathbb{P}_B^n$. 
    Thus we obtain a factorisation $X\hookrightarrow \mathbb{P}_Y^n \rightarrow Y$, so $f$ is projective (locally).
    Projective morphisms are proper (\cite{Hartshorne}, Theorem $4.9$) and 
    properness is local on the base (\cite{Hartshorne}, Corollary $4.8$). So $f$ is a proper morphisms which is closed since it is universally closed. 
\end{proof}

\begin{proposition}
\label{closed_is_finite}
    Closed immersions are finite 
\end{proposition}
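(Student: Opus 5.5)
The plan is to reduce to the affine case and use the standard description of closed immersions into affine schemes. Let $i:Z\rightarrow Y$ be a closed immersion. Finiteness is defined via an affine open covering $\{V_j=\Spec\:B_j\}$ of $Y$, and the definition asks two things. First, each preimage $i^{-1}(V_j)$ must be affine, say $\Spec\:A_j$. Second, following the standard meaning of finite morphisms (which the definition in the text implicitly intends), each $A_j$ must be a finitely generated $B_j$-module. I would cover $Y$ by affine opens $V=\Spec\:B$ and check both conditions over each such $V$.

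The first step is to show that $i^{-1}(V)$ is affine. Since $i$ is a closed immersion, its restriction $i^{-1}(V)\rightarrow V$ is again a closed immersion. This holds because being a homeomorphism onto a closed subset is preserved by restriction to an open set, and so is surjectivity of $\mathcal{O}_V\rightarrow i_*\mathcal{O}_{Z}$. The key input is the standard fact (Hartshorne II, Corollary 5.10) that a closed subscheme of an affine scheme $\Spec\:B$ is of the form $\Spec\:B/I$ for an ideal $I\subset B$. I expect this to be the main obstacle. I would cite it rather than reprove it: its proof requires knowing that the ideal sheaf $\ker(\mathcal{O}_V\rightarrow i_*\mathcal{O}_Z)$ is quasi-coherent, and then that $V(I)$ with its induced structure recovers $Z\cap V$. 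Applying it gives $i^{-1}(V)\cong\Spec\:B/I$, which is affine.

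The second step is immediate. The ring map $B\rightarrow B/I$ is surjective, so $B/I$ is generated as a $B$-module by the single element $1$. In particular it is a finitely generated $B$-module, and a fortiori a finitely generated $B$-algebra. Hence over each $V$ the morphism is given by a finite ring map, which proves that $i$ is finite. As a remark, the same argument shows that the property holds over every affine open of $Y$, not just over one chosen covering.
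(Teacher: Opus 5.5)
Your proposal is correct and follows the same route as the paper: you restrict to an affine open $\Spec\:B$, identify the preimage with $\Spec\:B/I$, and note that $B/I$ is generated by $1$ as a $B$-module. The paper simply asserts the local form $\Spec\:A/I\rightarrow\Spec\:A$, whereas you justify it by citing that every closed subscheme of an affine scheme is of the form $\Spec\:B/I$ (Hartshorne II, Corollary 5.10).
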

\begin{proof}
    Let $f:X\rightarrow Y$ be a closed immersion. 
    Then locally, it looks like $\Spec\: A/I \rightarrow \Spec A$. 
    Since $A/I$ is finitely generated as an $A$-module, it follows that $f$ is finite.
\end{proof}

\subsubsection{Flat Morphisms}

In \cite{Red_Book}, David Mumford begins his section on flat morphisms with the following quote:
\textit{The concept of flatness is a riddle that comes out of algebra, but which
technically is the answer to many prayers}.\\
And indeed, flatness is quite an intuitive notion in algebra while being quite mysterious in the geometric realm.
At the level of modules, flatness is a property about the extension of scalar functor. In broad terms, it claims that the tensor product behaves nicely.
Now, at the level of schemes, it is again a relative notion that encapsulates the concept of a ‘‘continuous family of schemes". 
By that we mean that the fibres of a \textit{flat} morphism of  schemes does not vary too wildly.

\begin{definition}
    An $R$-module $M$ is called \textbf{flat} if tensoring with $M$ over $R$ as a functor :\\
    $-\otimes_RM:Mod_R\rightarrow Mod_{R'}$ is an exact functor. 
    A morphism of rings $f: B \rightarrow A$ is said to be \textbf{flat} if $A$ is flat as a $B$-module.

\end{definition}

\noindent An additive functor $F:Mod_R\rightarrow Mod_{R'}$ is called an \textit{exact functor} if it sends (short) exact sequences of $R$-modules to (short) exact sequences of $R'$-modules. 
We make the distinction that a functor $F$ is \textit{left exact} if it sends a short exact sequence
$0 \rightarrow N \rightarrow N' \rightarrow N'' \rightarrow 0$ to a sequence that is exact at $F(N)$ and $F(N')$. Conversely, $F$ is \textit{right exact} if it sends the short exact sequence above to a sequence exact at $F(N')$ and $F(N'')$.

\begin{remark}
\label{restriction_on_showing_exactness}
Note that the functor $-\otimes_RM:Mod_R \rightarrow Mod_R$ is always right exact;
the extension of scalar functor is a left adjoint of the restriction of scalar functor. 
Left adjoint functors preserve all colimits, and in particular they preserve cokernels (much like kernels detect injectivity by trivial kernel, a map is surjective if and only if its cokernel is trivial). 
So the extension of scalar functor actually preserves surjectivity and thus is a right exact functor.
Hence, to show the exactness of the extension of scalar functor it is enough to show only left exactness (that it preserves injections).
\end{remark}

\noindent We now show a few important properties of flatness, such as compatibility with base change and localisation, which are crucial in algebraic geometry to make sense of pulling back structures and the interplay between local and global settings.

\begin{proposition}
    Projective modules are flat.
\end{proposition}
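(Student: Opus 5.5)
The plan is to reduce from projective modules to free modules, and from free modules to the base ring $R$ itself, using the fact that tensor products commute with direct sums. By Remark \ref{restriction_on_showing_exactness}, the functor $-\otimes_R M$ is always right exact. It is therefore enough to show that it preserves injections: for every injective $R$-linear map $\iota: N'\hookrightarrow N$, the induced map $\iota\otimes\id_M: N'\otimes_R M\rightarrow N\otimes_R M$ must be injective.

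The first step is the free module of rank one, $M=R$. The canonical isomorphism $N\otimes_R R\cong N$, $n\otimes r\mapsto rn$, is natural in $N$. Under this identification, $\iota\otimes\id_R$ becomes $\iota$ itself, which is injective by assumption.

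The second step handles an arbitrary free module $F=\bigoplus_{i\in I}R$. Tensor product commutes with arbitrary direct sums, again because $-\otimes_R N$ is a left adjoint and preserves colimits. So there is a natural isomorphism $N\otimes_R F\cong\bigoplus_{i\in I}N$, and under it $\iota\otimes\id_F$ becomes $\bigoplus_i\iota$. A direct sum of injective maps is injective, since one can check componentwise on finitely supported tuples.

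The last step passes to a projective module $P$. By definition (or the standard characterisation), $P$ is a direct summand of a free module: $F\cong P\oplus Q$ for some module $Q$. Tensoring again commutes with this finite direct sum, so $\iota\otimes\id_F$ identifies with $(\iota\otimes\id_P)\oplus(\iota\otimes\id_Q)$. This sum is injective by the second step. A direct sum of two maps is injective exactly when each summand map is injective, so $\iota\otimes\id_P$ is injective and $P$ is flat.

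The only point needing care is the naturality of the isomorphisms $N\otimes_R(\bigoplus_i M_i)\cong\bigoplus_i(N\otimes_R M_i)$. Compatibility with $\iota$ is what lets us transport injectivity across these isomorphisms. This can be checked on elementary tensors, or deduced from the universal property of the coproduct together with the left-adjointness noted in Remark \ref{restriction_on_showing_exactness}. Apart from that, the argument is routine.
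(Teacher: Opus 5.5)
Your proof is correct, but it is organised differently from the paper's.

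The paper uses the dual basis characterisation of projectivity. It fixes $(e_i,f_i)_{i\in I}$ with $p=\sum_i f_i(p)e_i$ and takes $\sum_j p_j\otimes n_j$ in the kernel of $P\otimes g$. Applying each $f_i\otimes N'$ gives $g\bigl(\sum_j f_i(p_j)n_j\bigr)=0$, and the paper then rewrites $\sum_j p_j\otimes n_j=\sum_i e_i\otimes\bigl(\sum_j f_i(p_j)n_j\bigr)=0$.

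This is your argument written in coordinates:
\begin{itemize}
\item the $f_i$ are the components of a split embedding $P\hookrightarrow R^{(I)}$;
\item the maps $f_i\otimes N'$ are the components of $P\otimes_R N'\to R^{(I)}\otimes_R N'\cong (N')^{(I)}$;
\item the final rewriting is the retraction $R^{(I)}\twoheadrightarrow P$, $(r_i)\mapsto\sum_i r_ie_i$.
\end{itemize}

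The two routes buy different things. The paper's element chase is self-contained. It sidesteps the one point you flag, naturality of $N\otimes_R\bigl(\bigoplus_i M_i\bigr)\cong\bigoplus_i N\otimes_R M_i$, because it never transports anything across an identification. Your version is more modular and proves more along the way: that $R$ is flat, and that flatness is stable under arbitrary direct sums and under direct summands.

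One small correction. The adjunction that makes $-\otimes_R M$ preserve colimits is $-\otimes_R M\dashv\Hom_R(M,-)$. It is not the extension/restriction of scalars adjunction invoked in Remark \ref{restriction_on_showing_exactness}, which only covers $M=S$ for a ring map $R\to S$. Your claim is true, but cite the tensor--hom adjunction instead, or simply verify the direct-sum isomorphism on elementary tensors as you suggest.
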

\begin{proof}
    Consider an injective morphism $g: N \rightarrow N'$ of $R$-modules and $P$ a projective $R$-module. We want to show that $P\otimes_R g:P\otimes_R N'\rightarrow P\otimes_R N'$ is injective.\\
    Let $\sum_j p_j \otimes n_j \in ker(P \otimes_R g)$ and since $P$ is projective, let $(e_i,f_i)_{i\in I}$ be a dual basis of $P$.\\
    For any $i\in I$, apply $f_i\otimes_R N'$ to $\sum_j p_j \otimes g(n_j)=0$, we obtain 
    $$0 = \sum_jf_i(p_j)g(n_j) = g(\sum_jf_i(p_j)n_j).$$
    And since $g$ is injective, $\sum_jf_i(p_j)n_j = 0$ for all $i\in I$.
    $$\sum_j p_j\otimes n_j = \sum_{i,j} e_if_i(p_j)\otimes n_j = \sum_i e_i \otimes (\sum_j f_i(p_j)n_j) = 0.$$
    Hence $P\otimes_R g$ is injective, and the conclusion holds by Remark \ref{restriction_on_showing_exactness}.
\end{proof}

\begin{corollary}
\label{free_modules_are_flat}
    Free modules $($which are projective$)$ are flat modules.
\end{corollary}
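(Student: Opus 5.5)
The plan is to deduce this directly from the preceding proposition, that projective modules are flat. It then suffices to check that a free $R$-module $F$ is projective.

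First I fix a basis $(e_i)_{i\in I}$ of $F$, so $F\cong R^{(I)}$. To see that $F$ is projective, I will exhibit a dual basis, since that is exactly the form in which projectivity was used in the previous proof. Let $f_i:F\rightarrow R$ be the coordinate functionals, determined by $f_i(e_j)=\delta_{ij}$. For any $p\in F$, only finitely many $f_i(p)$ are non-zero, and $p=\sum_i f_i(p)e_i$. Hence $(e_i,f_i)_{i\in I}$ is a dual basis of $F$. Equivalently, one can argue by the lifting property: given a surjection $M\twoheadrightarrow N$ and a map $F\rightarrow N$, choose preimages of the images of the $e_i$ and extend $R$-linearly using freeness.

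Once $F$ is known to be projective, the previous proposition gives that $-\otimes_R F$ preserves injections. Remark \ref{restriction_on_showing_exactness} then upgrades this to exactness, so $F$ is flat.

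There is no real obstacle here. The only point that needs care is the finiteness of the sum $\sum_i f_i(p)e_i$ when $I$ is infinite. This holds because elements of a free module have finite support in the basis, so the dual basis formula used in the preceding proof is valid.
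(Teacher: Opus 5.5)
Your proposal is correct and follows the paper's own route. The paper gives no separate proof: it states this as an immediate corollary of the proposition that projective modules are flat, with the parenthetical remark that free modules are projective. Your explicit dual basis $(e_i,f_i)_{i\in I}$ simply makes that parenthetical precise, in exactly the form in which projectivity is used in the preceding proof.
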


\begin{proposition}
\label{transitivity_flatness}
    If $M$ is a flat $A$-module and $N$ is a flat $M$-module, then $N$ is also a flat $A$-module.
\end{proposition}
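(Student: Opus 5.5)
The statement should be read in the only sensible way: $M$ is an $A$-algebra (say via a ring map $A\rightarrow M$) which is flat as an $A$-module, and $N$ is an $M$-module which is flat over $M$; we regard $N$ as an $A$-module by restriction of scalars along $A\rightarrow M$. The plan is to show that the functor $-\otimes_A N$ is exact by writing it as a composite of two exact functors. By Remark \ref{restriction_on_showing_exactness} it even suffices to prove that it preserves injections, but the composite argument gives full exactness directly.

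The key step is the natural isomorphism of $A$-modules
\begin{equation*}
P\otimes_A N \;\cong\; (P\otimes_A M)\otimes_M N,
\end{equation*}
valid for every $A$-module $P$ and natural in $P$. I would justify it by associativity of the tensor product together with $M\otimes_M N\cong N$, via $(P\otimes_A M)\otimes_M N\cong P\otimes_A(M\otimes_M N)\cong P\otimes_A N$, with $p\otimes m\otimes n\mapsto p\otimes mn$. Alternatively, both sides corepresent the same functor $\Hom_M(N,\Hom_A(P,-))$ on $M$-modules by the tensor--hom adjunction, so Yoneda gives the isomorphism. Naturality in $P$ is clear from the explicit formula.

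Then I would take an injection $g:P\rightarrow P'$ of $A$-modules. Since $M$ is $A$-flat, $g\otimes_A M:P\otimes_A M\rightarrow P'\otimes_A M$ is an injection of $M$-modules. Since $N$ is $M$-flat, $(g\otimes_A M)\otimes_M N$ is injective. Naturality of the isomorphism identifies this map with $g\otimes_A N$, so $g\otimes_A N$ is injective, and Remark \ref{restriction_on_showing_exactness} concludes that $N$ is a flat $A$-module.

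The only point needing care is checking that the isomorphism is natural and compatible with the $A$-module structures. This bookkeeping is routine once the explicit formula $p\otimes m\otimes n\mapsto p\otimes mn$ is written down.
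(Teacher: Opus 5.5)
Your proposal is correct and is essentially the paper's argument. The paper's proof consists only of the natural isomorphism $N\otimes_A P\cong N\otimes_M(M\otimes_A P)$ (stated there for $A$-modules $P$, despite the typo ``$N$-modules''), and you fill in the omitted details: reading $M$ as a flat $A$-algebra, and composing the exact functors $-\otimes_A M$ and $-\otimes_M N$.
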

\begin{proof}
    This follows from the following natural isomorphism for any $N$-modules $P$,
    $$N\otimes_A P \cong  N\otimes_M (M\otimes_A P).$$
\end{proof}

\begin{proposition}[Base change]
\label{bas_change_flat_mod}
    Let $A\rightarrow B$ be a ring map and $M$ be a flat $A$-module. Then the base change, $M\otimes_AB$, a $B$-modules is also flat.
\end{proposition}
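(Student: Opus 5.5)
The plan is to check exactness of the functor $-\otimes_B(M\otimes_AB)$ on $B$-modules directly, reducing it to flatness of $M$ over $A$ via associativity of the tensor product. By Remark \ref{restriction_on_showing_exactness}, tensoring is always right exact, so it suffices to show that for every injective $B$-linear map $g:N\rightarrow N'$ of $B$-modules, the induced map $(M\otimes_AB)\otimes_B N\rightarrow (M\otimes_AB)\otimes_B N'$ remains injective.

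The key step is the natural isomorphism, for any $B$-module $N$,
$$(M\otimes_AB)\otimes_B N \cong M\otimes_A(B\otimes_B N)\cong M\otimes_A N,$$
where on the right $N$ is regarded as an $A$-module by restriction of scalars along $A\rightarrow B$. I would write it on generators as $(m\otimes b)\otimes n\mapsto m\otimes bn$, with inverse $m\otimes n\mapsto (m\otimes 1)\otimes n$. Both maps are well defined by the universal property of the tensor product: the relevant maps are balanced and bilinear. Both are visibly natural in $N$, so the square formed with $(M\otimes_AB)\otimes_B g$ and $M\otimes_A g$ commutes.

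Restriction of scalars does not change underlying maps, so $g$ is still an injective $A$-linear map $N\rightarrow N'$. Since $M$ is flat over $A$, $M\otimes_A g$ is injective. Transporting along the natural isomorphisms, $(M\otimes_AB)\otimes_B g$ is then injective, which proves that $M\otimes_AB$ is flat over $B$.

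The only point needing care is the naturality and well-definedness of the associativity isomorphism, since this is what lets injectivity transfer. I expect this to be routine bookkeeping with balanced maps, and I would cite a standard reference such as \cite{Lang_algebra} for associativity of tensor products rather than redo it.
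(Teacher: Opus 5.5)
Your proof is correct and takes the same route as the paper. Both arguments rest on the isomorphism $(M\otimes_AB)\otimes_BN\cong M\otimes_AN$, with $N$ regarded as an $A$-module by restriction of scalars, and then apply flatness of $M$ over $A$; yours is somewhat more careful than the paper's one-line version, since you reduce to injectivity via right exactness and check the naturality in $N$ that transfers injectivity.
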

\begin{proof}
    Suppose we have the following exact sequence of $B$-modules
    $...\rightarrow N\rightarrow N' \rightarrow N' \rightarrow ...$.
    Since $M\otimes_AB\otimes_B N = M\otimes_A N$ for all modules in the above sequence and $M$ is flat as an $A$-module, the sequence 
    $$...\rightarrow M\otimes_AB\otimes_B N\rightarrow M\otimes_AB\otimes_B N' \rightarrow M\otimes_AB\otimes_B N'' \rightarrow ...$$ 
    is exact.
\end{proof}

\noindent The next two results about flatness of modules we show are tied to localisation, and they hint at the fact that the notion of flatness is well-defined for sheaves of modules.

\begin{proposition}
\label{localization_as_tensor}
    Let $S$ be a multiplicative closed subset in $R$ and $M$ and $R$-module. Then $S^{-1}M \cong M\otimes_R S^{-1}R$.
\end{proposition}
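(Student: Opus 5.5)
The natural approach is to write down explicit mutually inverse $S^{-1}R$-module homomorphisms between $M\otimes_R S^{-1}R$ and $S^{-1}M$. The tensor-side map comes from the universal property of the tensor product. The localisation-side map comes from the definition of $S^{-1}M$ as equivalence classes of fractions $m/s$.

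First, I will define
$$\varphi: M\otimes_R S^{-1}R\rightarrow S^{-1}M,\qquad m\otimes \tfrac{r}{s}\mapsto \tfrac{rm}{s}.$$
To see that this is well defined, I would check that the assignment $(m,r/s)\mapsto rm/s$ is $R$-bilinear and independent of the representative of $r/s$. If $r/s=r'/s'$, there is $u\in S$ with $u(rs'-r's)=0$, so $u(rs'm-r'sm)=0$ in $M$, which gives $rm/s=r'm/s'$ in $S^{-1}M$. The universal property of $\otimes_R$ then produces $\varphi$. It is visibly $S^{-1}R$-linear and surjective, since $m/s=\varphi(m\otimes 1/s)$.

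Next, I will define $\psi:S^{-1}M\rightarrow M\otimes_R S^{-1}R$ by $m/s\mapsto m\otimes 1/s$. Well-definedness again has to be checked. If $m/s=m'/s'$, pick $u\in S$ with $u(s'm-sm')=0$. Then
$$m\otimes\tfrac1s=m\otimes\tfrac{us'}{uss'}=us'm\otimes\tfrac1{uss'}=usm'\otimes\tfrac{1}{uss'}=m'\otimes\tfrac1{s'}.$$
Additivity follows by passing to a common denominator: $m/s+m'/s'=(s'm+sm')/(ss')$, and this is sent to $(s'm+sm')\otimes 1/(ss')=m\otimes 1/s+m'\otimes 1/s'$.

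Finally, I will verify that the two composites are the identity on generators. We have $\varphi\psi(m/s)=m/s$. In the other direction, $\psi\varphi(m\otimes r/s)=rm\otimes 1/s=m\otimes r/s$. Since elements of the form $m\otimes r/s$ generate the tensor product, this second composite is the identity, so $\varphi$ is an isomorphism. Naturality in $M$ is immediate from the formulas and would be noted for later use.

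The only genuinely delicate step is the well-definedness of $\psi$. There one must use the extra element $u$ coming from the definition of equality in a localisation, rather than the naive cross-multiplication, and the displayed chain above handles exactly that.
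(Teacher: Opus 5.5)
Your proof is correct and follows the same route as the paper. The paper also writes down the mutually inverse maps $m/s\mapsto m\otimes\frac{1}{s}$ and $m\otimes\frac{r}{s}\mapsto\frac{rm}{s}$, obtaining the latter from the universal property of the tensor product, and checks well-definedness with the auxiliary $u\in S$. Your write-up is somewhat more careful than the paper's: you explicitly check additivity of $m/s\mapsto m\otimes\frac1s$, and you justify that the composite on the tensor side is the identity by checking it on generators.
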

\begin{proof}
    Consider the $R$-module homomorphism $\varphi :S^{-1}M\rightarrow M\otimes_R S^{-1}R$ which sends an element $\frac{m}{s} \mapsto m\otimes\frac{1}{s}$.
    We can verify that since for $m'\in M$ and $s'\in S$ with $\frac{m}{s}=\frac{m'}{s'}$, we have $u(ms'-m's)=0$, we get any $u\in S$,
    $m\otimes\frac{1}{s}-m'\otimes\frac{1}{s'} = u(ms'-m's)\otimes\frac{1}{uss'} = 0$.\\
    Now considering the universal property of the tensor product, we obtain the map \\
    $\psi : M\otimes_R S^{-1}R \rightarrow S^{-1}M$ such that $m\otimes\frac{r}{s}\mapsto\frac{rm}{s}$.
    And again, for $r'\in R$ and $s'\in S$ with $u(rs'-r's)=0$, we get $u(rms'-r'ms)=0$, so $\frac{rm}{s}=\frac{r'm}{s'}$.\\
    Since $\varphi$ and $\psi$ are inverse to each other, they are isomorphisms.
\end{proof}

\noindent From the last proposition, we can obtain that flatness is preserved under localisation.We go further and show the more restrictive following proposition.

\begin{proposition}[Localisation]
\label{flatness_at_all_localizations}
    An $R$-module $M$ is flat if and only if $M_\mathfrak{p}$ is flat as an $R_\mathfrak{p}$-module for all $\mathfrak{p}\in R$.
\end{proposition}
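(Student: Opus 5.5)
The forward direction is the easy one and I would do it first. If $M$ is flat over $R$, then Proposition \ref{bas_change_flat_mod}, applied to the ring map $R\rightarrow R_\mathfrak{p}$, shows that $M\otimes_R R_\mathfrak{p}$ is a flat $R_\mathfrak{p}$-module. By Proposition \ref{localization_as_tensor} this module is $M_\mathfrak{p}$, so $M_\mathfrak{p}$ is flat over $R_\mathfrak{p}$ for every prime $\mathfrak{p}$.

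For the converse, Remark \ref{restriction_on_showing_exactness} reduces the problem to showing that $-\otimes_R M$ preserves injections. So let $g:N\rightarrow N'$ be an injective map of $R$-modules and let $K=\ker(g\otimes_R M)$. The goal is $K=0$. I would use two facts, both quick consequences of Proposition \ref{localization_as_tensor}.

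\begin{itemize}
\item[$a)$] Localisation $S^{-1}(-)\cong -\otimes_R S^{-1}R$ is exact. Concretely, if $\frac{n}{s}\mapsto 0$ in $S^{-1}N'$, then $u\,g(n)=g(un)=0$ for some $u\in S$. Injectivity of $g$ gives $un=0$, hence $\frac{n}{s}=0$. So $g_\mathfrak{p}:N_\mathfrak{p}\rightarrow N'_\mathfrak{p}$ is injective, and localisation commutes with kernels.
\item[$b)$] There are natural isomorphisms $(N\otimes_R M)_\mathfrak{p}\cong N_\mathfrak{p}\otimes_{R_\mathfrak{p}} M_\mathfrak{p}$. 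These come from rewriting both sides as $N\otimes_R M\otimes_R R_\mathfrak{p}$ and using $R_\mathfrak{p}\otimes_{R_\mathfrak{p}}R_\mathfrak{p}\cong R_\mathfrak{p}$.
\end{itemize}

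Using these, I get $K_\mathfrak{p}\cong\ker(g_\mathfrak{p}\otimes_{R_\mathfrak{p}}M_\mathfrak{p})$. This kernel vanishes because $g_\mathfrak{p}$ is injective and $M_\mathfrak{p}$ is flat by hypothesis. So $K_\mathfrak{p}=0$ for all primes $\mathfrak{p}$.

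The last step is the local-global principle: a module $K$ with $K_\mathfrak{p}=0$ for every prime (even every maximal ideal) is zero. I would prove it directly. For $x\in K$ nonzero, the annihilator $\mathrm{Ann}(x)$ is a proper ideal, so it lies in some maximal ideal $\mathfrak{m}$. Then $\frac{x}{1}=0$ in $K_\mathfrak{m}$ would need some $u\notin\mathfrak{m}$ with $ux=0$, which contradicts $\mathrm{Ann}(x)\subseteq\mathfrak{m}$.

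I expect the main care to be in the naturality bookkeeping of $b)$. I need the localisation of the map $g\otimes_R M$ to match $g_\mathfrak{p}\otimes M_\mathfrak{p}$ under these isomorphisms, so that kernels really correspond. I would handle this by writing everything as $-\otimes_R M\otimes_R R_\mathfrak{p}$, where associativity and commutativity of the tensor product make the compatibility automatic.
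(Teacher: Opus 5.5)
Your proposal is correct and follows essentially the same route as the paper. For the forward direction you identify $M_\mathfrak{p}\cong M\otimes_R R_\mathfrak{p}$ and use base change of flatness. For the converse you localise the kernel of $g\otimes_R M$, use exactness of localisation and $(N\otimes_R M)_\mathfrak{p}\cong N_\mathfrak{p}\otimes_{R_\mathfrak{p}}M_\mathfrak{p}$ to get $K_\mathfrak{p}=0$, and conclude $K=0$ by the local--global principle. The only differences are that you restrict to injections and explicitly prove exactness of localisation and the local--global principle, both of which the paper simply asserts.
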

\begin{proof}
    Suppose that $M$ is a flat $R$-module and let 
    $$0\rightarrow N\rightarrow N'\rightarrow N''\rightarrow 0$$ 
    be any short exact sequence of $R_{\mathfrak{p}}$-modules.\\
    Since $R_{\mathfrak{p}}$-modules are canonically $R$-modules ($R\rightarrow R_{\mathfrak{p}}, r\mapsto \frac{r}{1}$), 
    $$0\rightarrow M\otimes_R N\rightarrow M\otimes_R N'\rightarrow M\otimes_R N''\rightarrow 0 $$
    is also exact.
    And because $M\otimes_R N \cong M_{\mathfrak{p}}\otimes_{R_{\mathfrak{p}}}N$ (recall $S^{-1}M\cong M\otimes_RS^{-1}R$, so $M_{\mathfrak{p}}\otimes_{R_{\mathfrak{p}}}N\cong M\otimes_R R_{\mathfrak{p}}\otimes_{R_{\mathfrak{p}}} N\cong M\otimes_R N$), we have that $M_{\mathfrak{p}}$ is a flat $R_{\mathfrak{p}}$-module for any prime ideal $\mathfrak{p}$ in $R$.\\
    Suppose that $M_{\mathfrak{p}}$ is a flat $R_{\mathfrak{p}}$-module for any $\mathfrak{p}\in R$. Let 
    $$0\rightarrow N\rightarrow N'\rightarrow N''\rightarrow 0$$ 
    be a short exact sequence of $R$-modules. And since $M\otimes_R -$ is a right exact functor, we have
    \begin{equation}
    \label{(1)}
        ...\rightarrow L \rightarrow M\otimes_RN\rightarrow M\otimes_RN'\rightarrow M\otimes_RN''\rightarrow 0.
    \end{equation}
    We want to know if $L=0$ (remember that it is enough to show that $L_{\mathfrak{p}}=0$ for all $\mathfrak{p}\in R$).
    Since localisation is exact, we have the following short exact sequence
    $$0\rightarrow N_{\mathfrak{p}}\rightarrow N'_{\mathfrak{p}}\rightarrow N''_{\mathfrak{p}}\rightarrow 0$$
    And since $M_{\mathfrak{p}}$ are flat $R_{\mathfrak{p}}$-modules, the following sequences is also short exact,
    $$0\rightarrow M_{\mathfrak{p}}\otimes_{R_{\mathfrak{p}}} N_{\mathfrak{p}}\rightarrow M_{\mathfrak{p}}\otimes_{R_{\mathfrak{p}}}N'_{\mathfrak{p}}\rightarrow M_{\mathfrak{p}}\otimes_{R_{\mathfrak{p}}}N''_{\mathfrak{p}}\rightarrow 0$$
    And notice that by localising (\ref{(1)}) and using the fact that $(M\otimes_RN)_{\mathfrak{p}} \cong M_{\mathfrak{p}}\otimes_{R_{\mathfrak{p}}}N_{\mathfrak{p}}$ we get
    $$0\rightarrow L_{\mathfrak{p}}\rightarrow M_{\mathfrak{p}}\otimes_{R_{\mathfrak{p}}} N_{\mathfrak{p}}\rightarrow M_{\mathfrak{p}}\otimes_{R_{\mathfrak{p}}}N'_{\mathfrak{p}}\rightarrow M_{\mathfrak{p}}\otimes_{R_{\mathfrak{p}}}N''_{\mathfrak{p}}\rightarrow 0$$
    So $L_{\mathfrak{p}}$ must be zero for all $\mathfrak{p}\in R$.\\
\end{proof}

\begin{definition}
    A quasi-coherent sheaf $\mathcal{F}$ on a scheme $X$ is said to be \textbf{flat at $p\in X$} if $\mathcal{F}_p$ is a flat $\mathcal{O}_{X,p}$-module. It is flat over $X$ if it is flat at all $p\in X$.\\
    Let $f:X\rightarrow Y$ be a morphism of schemes, and let $\mathcal{F}$ be an $\mathcal{O}_X$-module. We say that $\mathcal{F}$ is \textbf{flat} over $Y$ at $p\in X$ (or that $f$ is flat at $p$), if the stalk $\mathcal{F}_p$ is a flat $\mathcal{O}_{Y,f(p)}$-module. We say that $\mathcal{F}$ is \textbf{flat } over $Y$ is if it is flat at every point $p\in X$. Moreover, we say that $X$ is \textbf{flat} over $Y$ if $\mathcal{O}_X$ is.
\end{definition}

\begin{remark}
    Any scheme $X$ over a field $k$ is flat. 
    If $k$ is a field, then any $k$-module is actually a free module, hence a flat module. Schematically, we have that $X$ is flat at a point $x$ if $\mathcal{O}_{X,x}$ is flat, and since it is a $k$-algebra, it is then flat by the preceding argument.\\
    This can be understood geometrically as well, flatness means that the fibres of a morphism $X\rightarrow\Spec\:k$ ‘‘vary continuously'', and $\Spec\:k$ being a single point, it is obviously the case.
\end{remark}

\noindent Many flatness results about modules carry over easily to the case of morphisms of schemes. 
We give a few below.

\begin{proposition}[Base change]
\label{base_change_flat_morphism}
    Let $f: X\rightarrow Y$ be a morphism of schemes and let $\mathcal{F}$ be an $\mathcal{O}_X$-module which is flat over $Y$. 
    And consider the following fibre product diagram for any morphism $g:Y'\rightarrow Y$with $X'= X\times_YY'$,
    then the pull-back sheaf $\rho_X^*\mathcal{F}$ is flat over $Y'$.
\end{proposition}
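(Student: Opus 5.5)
Flatness is defined stalkwise, so the plan is to reduce the statement to Proposition \ref{bas_change_flat_mod} on stalks. Fix a point $x'\in X'$ and set $x=\rho_X(x')$, $y'=f'(x')$, $y=f(x)=g(y')$. We must show that $(\rho_X^*\mathcal{F})_{x'}$ is a flat $\mathcal{O}_{Y',y'}$-module. By hypothesis, $\mathcal{F}_x$ is a flat $\mathcal{O}_{Y,y}$-module.

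The first step is to compute the stalk of the pull-back. The stalk of a pull-back sheaf is obtained by extension of scalars:
$$(\rho_X^*\mathcal{F})_{x'} \cong \mathcal{F}_x\otimes_{\mathcal{O}_{X,x}}\mathcal{O}_{X',x'}.$$
To understand $\mathcal{O}_{X',x'}$, we work affine-locally. Choose affine opens $\Spec\:B\ni y$, $\Spec\:A\ni x$ with $f(\Spec\:A)\subset\Spec\:B$, and $\Spec\:B'\ni y'$ with $g(\Spec\:B')\subset\Spec\:B$. Then $\Spec(A\otimes_BB')$ is an open neighbourhood of $x'$ in $X'$, and $x'$ corresponds to a prime $\mathfrak{q}'$ of $A\otimes_BB'$. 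Hence $\mathcal{O}_{X',x'}=(A\otimes_BB')_{\mathfrak{q}'}$, which is a localisation of $\mathcal{O}_{X,x}\otimes_{\mathcal{O}_{Y,y}}\mathcal{O}_{Y',y'}$.

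Next, set $M=\mathcal{F}_x$. Using associativity of tensor products, we rewrite
$$(\rho_X^*\mathcal{F})_{x'} \cong \bigl(M\otimes_{\mathcal{O}_{Y,y}}\mathcal{O}_{Y',y'}\bigr)\otimes_{\mathcal{O}_{X,x}\otimes_{\mathcal{O}_{Y,y}}\mathcal{O}_{Y',y'}}\mathcal{O}_{X',x'}.$$
By Proposition \ref{localization_as_tensor}, this is a localisation $S^{-1}N$ of the module $N=M\otimes_{\mathcal{O}_{Y,y}}\mathcal{O}_{Y',y'}$. By Proposition \ref{bas_change_flat_mod}, $N$ is a flat $\mathcal{O}_{Y',y'}$-module.

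It remains to show that the localisation $S^{-1}N$ is still flat over $\mathcal{O}_{Y',y'}$. Localisation is exact, and $S^{-1}N\otimes_{\mathcal{O}_{Y',y'}}P\cong S^{-1}(N\otimes_{\mathcal{O}_{Y',y'}} P)$ for every $\mathcal{O}_{Y',y'}$-module $P$. So $-\otimes S^{-1}N$ is the composite of two exact functors and is therefore exact. Note that $S$ is a multiplicative subset of the ring $\mathcal{O}_{X,x}\otimes\mathcal{O}_{Y',y'}$, not of $\mathcal{O}_{Y',y'}$. This causes no problem, because $S$ acts through the module structure and commutes with the $\mathcal{O}_{Y',y'}$-action.

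The main obstacle is the stalk identification for the fibre product, i.e.\ justifying that $\mathcal{O}_{X',x'}$ is a localisation of $\mathcal{O}_{X,x}\otimes_{\mathcal{O}_{Y,y}}\mathcal{O}_{Y',y'}$. To handle it, I would first pass from $A\otimes_BB'$ to $A_{\mathfrak{p}}\otimes_{B_{\mathfrak{q}}}B'_{\mathfrak{q}'}$ by transitivity of localisation, and then use the affine description of fibre products already used in Proposition \ref{base_change_locally_finite}.
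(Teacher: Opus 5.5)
Your argument is correct. Its core is the same as the paper's: both reduce to the module-level base change result (Proposition \ref{bas_change_flat_mod}) through the tensor-product description of the fibre product.

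The difference is in how the structure sheaf of $X'$ is handled.
\begin{itemize}
\item The paper works with sheaves. It asserts $\mathcal{O}_{X\times_YY'}\cong\pi_X^{-1}\mathcal{O}_X\otimes_{(f\circ\pi_X)^{-1}\mathcal{O}_Y}\pi_{Y'}^{-1}\mathcal{O}_{Y'}$ and applies Proposition \ref{bas_change_flat_mod} directly.
\item On stalks, that identity would say $\mathcal{O}_{X',x'}\cong\mathcal{O}_{X,x}\otimes_{\mathcal{O}_{Y,y}}\mathcal{O}_{Y',y'}$. This is false in general. For $X=Y'=\Spec\,\mathbb{C}$ over $Y=\Spec\,\mathbb{R}$, the right-hand side is $\mathbb{C}\otimes_{\mathbb{R}}\mathbb{C}\cong\mathbb{C}\times\mathbb{C}$, while each local ring of $X'$ is $\mathbb{C}$.
\item Your stalkwise route recognises that $\mathcal{O}_{X',x'}$ is only a localisation of $R=\mathcal{O}_{X,x}\otimes_{\mathcal{O}_{Y,y}}\mathcal{O}_{Y',y'}$.
\item You then add the needed step: flatness over $\mathcal{O}_{Y',y'}$ survives localisation at a multiplicative subset of the larger ring $R$. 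This works because the $\mathcal{O}_{Y',y'}$-structure on $N$ factors through $R$, so $S^{-1}N\otimes_{\mathcal{O}_{Y',y'}}P\cong S^{-1}(N\otimes_{\mathcal{O}_{Y',y'}}P)$.
\end{itemize}
So your proof is the rigorous version of the paper's one-line argument. The paper's version is shorter and global but rests on an identity that only holds after localising; yours costs some affine-local bookkeeping but is complete.

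One small fix: in your last paragraph you write $B'_{\mathfrak{q}'}$, reusing $\mathfrak{q}'$ for a prime of $B'$. You had already used $\mathfrak{q}'$ for the prime of $A\otimes_BB'$ corresponding to $x'$, so rename one of them.
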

\begin{proof}
    Recall that the pull-back sheaf is defined as $\pi_X^*\mathcal{F} = \pi_X^{-1}\mathcal{F}\:\otimes_{\pi_X^{-1}\mathcal{O}_X}\:\mathcal{O}_{X\times_YY'}$, and that from Appendix \ref{Annexe_scheme}, we have the following for the structure sheaf of the fibre product : 
    $\mathcal{O}_{X\times_YY'} \:\cong\: \pi_X^{-1}\mathcal{O}_X \: \otimes_{(f\circ \pi_X)^{-1}\mathcal{O}_Y}\: \pi_{Y'}^{-1}\mathcal{O}_{Y'}$. 
    Thus we have
    $$\pi_X^*\mathcal{F} = \pi_X^{-1}\mathcal{F}\:\: \otimes_{(f\circ \pi_X)^{-1}\mathcal{O}_Y}\: \pi_{Y'}^{-1}\mathcal{O}_{Y'}$$
    And now since $\mathcal{F}$ is flat over $Y$, by simple base change (from proposition \ref{bas_change_flat_mod}), $\pi_X^*\mathcal{F}$ becomes flat over $Y'$.
\end{proof}

\begin{proposition}[Composition]
\label{composition_flat}
    Let $f:X\rightarrow Y$ and $g:Y\rightarrow Z$ be morphisms such that $X$ is flat over $Y$ and  $Y$ is flat over $Z$.
    Then $X$ is flat over $Z$.
\end{proposition}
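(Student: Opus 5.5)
The plan is to reduce the statement to stalks and then use the transitivity of flatness for modules, Proposition \ref{transitivity_flatness}. By definition, we must show that for every point $p\in X$ the stalk $\mathcal{O}_{X,p}$ is a flat $\mathcal{O}_{Z,g(f(p))}$-module. So fix $p\in X$ and set $q=f(p)\in Y$ and $r=g(q)\in Z$. The morphisms $f$ and $g$ induce local ring homomorphisms
\[
\mathcal{O}_{Z,r}\xrightarrow{g^{\#}_q}\mathcal{O}_{Y,q}\xrightarrow{f^{\#}_p}\mathcal{O}_{X,p}.
\]
Their composite is the stalk map $(g\circ f)^{\#}_p$; this follows from the functoriality of stalks for morphisms of locally ringed spaces. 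It therefore suffices to prove that this composite is flat.

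The hypotheses give that $\mathcal{O}_{X,p}$ is a flat $\mathcal{O}_{Y,q}$-module, because $X$ is flat over $Y$ at $p$. They also give that $\mathcal{O}_{Y,q}$ is a flat $\mathcal{O}_{Z,r}$-module, because $Y$ is flat over $Z$ at $q$.

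Next I would verify transitivity directly, rather than only citing Proposition \ref{transitivity_flatness}, since that proposition is phrased loosely with ``a flat $M$-module''. Write $A=\mathcal{O}_{Z,r}$, $B=\mathcal{O}_{Y,q}$ and $C=\mathcal{O}_{X,p}$. For any $A$-module $P$ there is the natural associativity isomorphism $C\otimes_A P\cong C\otimes_B(B\otimes_A P)$. Hence the functor $C\otimes_A-$ is the composite of $B\otimes_A-$, which is exact by flatness of $B$ over $A$, with $C\otimes_B-$, which is exact by flatness of $C$ over $B$. A composite of exact functors is exact, so $C$ is a flat $A$-module. By Remark \ref{restriction_on_showing_exactness}, it would even suffice to check that injections are preserved, and this follows from the same composite argument.

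Since $p$ was arbitrary, $\mathcal{O}_X$ is flat over $Z$ at every point, which is exactly the definition of $X$ being flat over $Z$. The only step needing care is the first one: identifying the $\mathcal{O}_{Z,r}$-module structure on $\mathcal{O}_{X,p}$ coming from $g\circ f$ with the one obtained by restricting scalars along $B$. This is the compatibility of stalk maps under composition, and I expect it to be routine. No affine reduction is needed, because flatness is defined stalkwise.
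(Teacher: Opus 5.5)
Your proof is correct and takes the same route as the paper, whose proof just cites Proposition \ref{transitivity_flatness}. You spell out what that citation leaves implicit: the reduction to the stalk maps $\mathcal{O}_{Z,r}\to\mathcal{O}_{Y,q}\to\mathcal{O}_{X,p}$, and the associativity isomorphism $C\otimes_A P\cong C\otimes_B(B\otimes_A P)$.
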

\begin{proof}
    Follows from Proposition \ref{transitivity_flatness}
\end{proof}

\begin{proposition}
\label{flat_is_open}
    Let $f:Y\rightarrow Y$ be flat morphism of schemes locally of finite type. Then $f$ is universally open and in particular it is open.
\end{proposition}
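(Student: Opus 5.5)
Since flatness and being locally of finite type are both stable under base change (Propositions \ref{base_change_flat_morphism} and \ref{base_change_locally_finite}), it suffices to prove that a flat morphism $f:X\rightarrow Y$, locally of finite type, is open; universal openness then follows by applying this to every base change $X\times_Y Y'\rightarrow Y'$. Openness is local on source and target, so I will reduce to the affine case $f:\Spec\:A\rightarrow\Spec\:B$, with $A$ a finitely generated flat $B$-algebra and $B$ Noetherian (our standing convention). Because the distinguished opens $D(a)\cong\Spec\:A_a$ form a basis and each $A_a$ is again flat and of finite type over $B$, it is enough to show that the image $f(\Spec\:A)$ is open.

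The argument has two parts. First, the image is \emph{constructible}: by Chevalley's theorem, a morphism of finite type between Noetherian schemes sends constructible sets to constructible sets. I would quote Chevalley's theorem from Hartshorne (Exercise II.3.19) rather than prove it. Second, the image is \emph{stable under generisation}: if $\mathfrak{q}'\subset\mathfrak{q}$ in $B$ and $\mathfrak{q}=f(\mathfrak{p})$, then some $\mathfrak{p}'\subset\mathfrak{p}$ satisfies $f(\mathfrak{p}')=\mathfrak{q}'$. This is the going-down property for flat extensions. To prove it, localise: $B_{\mathfrak{q}}\rightarrow A_{\mathfrak{p}}$ is a flat local homomorphism (Proposition \ref{flatness_at_all_localizations}), hence faithfully flat. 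It follows that $\Spec\:A_{\mathfrak{p}}\rightarrow\Spec\:B_{\mathfrak{q}}$ is surjective, because for every prime $\mathfrak{q}'$ the fibre ring $A_{\mathfrak{p}}\otimes_{B_{\mathfrak{q}}}\kappa(\mathfrak{q}')$ is nonzero by faithful flatness. A prime of $A_{\mathfrak{p}}$ lying over $\mathfrak{q}'B_{\mathfrak{q}}$ then gives the required $\mathfrak{p}'$.

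Finally, I will use the topological lemma that, in a Noetherian topological space, a constructible set stable under generisation is open. Its complement $Z$ is constructible and stable under specialisation. Write $\overline{Z}=Z_1\cup\dots\cup Z_n$ as a union of irreducible components. Since $Z$ is constructible, it contains a dense open subset of each $Z_i$, and in particular the generic point of each $Z_i$. Stability under specialisation then gives $Z_i\subset Z$, so $Z=\overline{Z}$ is closed.

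I expect the main obstacle to be the going-down step, specifically justifying that a flat local homomorphism is faithfully flat, i.e.\ that $\mathfrak{m}_{\mathfrak{q}}A_{\mathfrak{p}}\neq A_{\mathfrak{p}}$ forces $M\otimes N\neq 0$ for every nonzero $N$. I plan to cite Matsumura for this rather than develop faithful flatness in full. Chevalley's theorem is also nontrivial, but I will treat it as a black box.
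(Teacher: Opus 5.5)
Your proof is correct. The paper gives no argument of its own: it simply cites \cite{EGA}, Th\'eor\`eme 2.4.6. What you have written is essentially the proof behind that citation:
\begin{enumerate}
\item reduce universal openness to openness, using that both hypotheses are stable under base change;
\item show that generisations lift along flat maps, because a flat local homomorphism is faithfully flat;
\item conclude with Chevalley's theorem and the fact that a constructible set stable under generisation is open.
\end{enumerate}
Your version therefore makes the mechanism explicit, at the cost of two black boxes: Chevalley's theorem and the faithful flatness of flat local homomorphisms.

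Two small points are worth tightening.

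First, as stated, your topological lemma is false for an arbitrary Noetherian topological space. An infinite set with the cofinite topology is Noetherian and $T_1$. So every finite set is constructible and trivially stable under generisation, yet no nonempty finite set is open. Your proof really uses that each irreducible component $Z_i$ has a generic point. You should therefore state the lemma for the underlying space of a Noetherian scheme such as $\Spec\:B$.

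Second, you quote Chevalley in Hartshorne's Noetherian form, so your argument proves openness only after base change to a Noetherian $Y'$. That is exactly what ``universally'' means under the paper's standing convention. The EGA result, however, covers arbitrary $Y'$. For that you need Chevalley for morphisms locally of finite presentation. This suffices because a morphism locally of finite type to a Noetherian scheme is locally of finite presentation, and that property is preserved by base change.
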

\begin{proof}
    See \cite{EGA}, Th\'eor\`eme $2.4.6$.
\end{proof}

\noindent Here \textit{universally open} simply means that $f$ is open under any base change, so obviously it implies that $f$ is open.

\begin{example}Open embeddings are flat:\\
    Let $f:X\rightarrow Y$ be an open embedding of schemes. This means that $X$ is isomorphic to an open subscheme $U$ of $Y$ (that $X$ and $U$ are homeomorphic and that the induced map $f^{\#}:\mathcal{O}_Y\vert_U \rightarrow f_*\mathcal{O}_X$ is an isomorphism).
    From this we have that maps between stalks are isomorphisms as well and so at any point of $X$, our map $f$ is flat.
\end{example}

\noindent The next class of morphism of schemes is a weaker kind of flat morphisms.

\begin{definition}
    A morphism $f:X\rightarrow Y$ is said to be \textbf{locally free} if the direct image sheaf $f_*\mathcal{O}_X$ is a  locally free as an $\mathcal{O}_Y$-module.
\end{definition}

\begin{remark}
\label{locally_free_are_flat_in_noetherian}
    It is easy and useful to see that locally free morphisms are flat.  
    Indeed if $f_*\mathcal{O}_X$ is locally free as an $\mathcal{O}_Y$-module, then locally on $Y$, $f_*\mathcal{O}_X$ is free, hence flat by Corollary \ref{free_modules_are_flat}.
    Thus $f$ is now a flat morphism.\\
    On the other hand, the converse is also true in the Noetherian setting.
    Recall that in this case morphisms of finite type are equivalently of finite presentations (Remark \ref{loc_finite_pres_loc_finite_type_noetherian}), then by Corollary $24.4.7$ in \cite{Vakil}, flatness and locally freeness are the same thing.
\end{remark}

\begin{proposition}[Composition]
    The composition of two locally free morphisms of schemes is again locally free.
\end{proposition}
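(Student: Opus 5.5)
Let $f:X\rightarrow Y$ and $g:Y\rightarrow Z$ be locally free morphisms. I want to show that $(g\circ f)_*\mathcal{O}_X = g_*(f_*\mathcal{O}_X)$ is a locally free $\mathcal{O}_Z$-module. The plan is to reduce everything to commutative algebra over affine pieces. Locally free morphisms in this sense are affine, and the thesis works throughout with finite (locally free) morphisms. So I take an affine open $W=\Spec\:C\subset Z$ on which $g_*\mathcal{O}_Y$ is free. Its preimage $g^{-1}(W)=\Spec\:B$ is affine, and $B$ is a free $C$-module.

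The first difficulty is that $f_*\mathcal{O}_X$ is only locally free on $\Spec\:B$, not free, so I will not compose two free bases directly. I will use the module-theoretic characterisation instead. A finitely generated module over a Noetherian ring is locally free if and only if it is flat, and this is exactly Remark \ref{locally_free_are_flat_in_noetherian}, which is available in our Noetherian setting. Then:
\begin{itemize}
    \item[$i)$] Write $f^{-1}(\Spec\:B)=\Spec\:A$, with $A$ a finitely generated, locally free (hence flat) $B$-module.
    \item[$ii)$] $B$ is a free, hence flat, $C$-module by Corollary \ref{free_modules_are_flat}.
    \item[$iii)$] By transitivity of flatness, Proposition \ref{transitivity_flatness}, $A$ is a flat $C$-module.
    \item[$iv)$] Finite generation is transitive: if $A$ is generated over $B$ by $a_1,\dots,a_n$ and $B$ is generated over $C$ by $b_1,\dots,b_m$, then the products $a_ib_j$ generate $A$ over $C$.
    \item[$v)$] So $A$ is a finitely generated flat module over the Noetherian ring $C$, and Remark \ref{locally_free_are_flat_in_noetherian} makes it locally free.
\end{itemize}
Since $\widetilde{A}=(g\circ f)_*\mathcal{O}_X\vert_W$ and the opens $W$ cover $Z$, the sheaf $(g\circ f)_*\mathcal{O}_X$ is locally free.

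As a sanity check I would also give a direct argument in the case where everything is free. If $A\cong B^{\oplus I}$ and $B\cong C^{\oplus J}$, then $A\cong C^{\oplus I\times J}$ with basis $\{e_ib_j\}$. In the general case one can alternatively refine: cover $\Spec\:B$ by basic opens $D(h)$ on which $A_h$ is free over $B_h$. The catch is that $D(h)$ need not be the preimage of an open of $Z$, which is why I route through flatness.

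The main obstacle is the finiteness hypothesis. The flat $\Leftrightarrow$ locally free equivalence requires finitely presented modules, and the thesis's definition of "locally free morphism" does not say explicitly that it is affine or finite. I will state at the outset that locally free morphisms are taken to be finite (affine with $f_*\mathcal{O}_X$ coherent), as is implicit in Remark \ref{locally_free_are_flat_in_noetherian}. Without this, the affineness of $g^{-1}(W)$ in the first step also fails.
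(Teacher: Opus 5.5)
Your proof is correct under the hypothesis you make explicit, namely that locally free morphisms are finite and hence affine. It shares the paper's skeleton: both reduce to showing that $(g\circ f)_*\mathcal{O}_X=g_*(f_*\mathcal{O}_X)$ is locally free.

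The difference is that the paper's proof stops at ``again $g_*(f_*\mathcal{O}_X)$ is a locally free $\mathcal{O}_Z$-module''. It silently assumes that $g_*$ takes locally free $\mathcal{O}_Y$-modules to locally free $\mathcal{O}_Z$-modules. Your affine reduction supplies exactly the argument that step needs: $B$ is finite free over $C$, $A$ is finitely generated and flat over $B$, hence finitely generated and flat over the Noetherian ring $C$, hence locally free.

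Your insistence on affineness is not cosmetic. The paper's definition only asks that $f_*\mathcal{O}_X$ be locally free, and in that generality the ``again'' step genuinely fails. Take $g\colon\mathbb{P}^1_{k[\epsilon]}\to\Spec k[\epsilon]$, which satisfies $g_*\mathcal{O}=\mathcal{O}$. Let $\mathcal{E}$ be a nontrivial first-order deformation of $\mathcal{O}(-2)\oplus\mathcal{O}$. The constant section of the $\mathcal{O}$ summand is obstructed, so $g_*\mathcal{E}\cong k[\epsilon]/(\epsilon)$. Composing $g$ with the finite locally free morphism $\mathbf{Spec}(\mathcal{O}\oplus\mathcal{E})\to\mathbb{P}^1_{k[\epsilon]}$, where $\mathcal{E}$ is a square-zero ideal, therefore gives a pushforward that is not locally free.

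As for the mechanics, your flatness detour costs you the Noetherian hypothesis and the flat $\Leftrightarrow$ locally free equivalence. A leaner route is available. Since $A$ is finitely generated projective over $B$, it is a direct summand of $B^n\cong C^{nm}$. Hence $A$ is finitely generated projective over $C$, and so locally free on $\Spec C$. This works over any base. It also disposes of your concern that the opens $D(h)\subset\Spec B$ need not be preimages of opens of $Z$.
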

\begin{proof}
    Let $f:X\rightarrow Y$ and $g:Y\rightarrow Z$ be locally free morphisms.
    Then $f_*\mathcal{O}_X$ is locally free as a $\mathcal{O}_Y$-module.
    And again $g_*(f_*\mathcal{O}_X)$ is a locally free $\mathcal{O}_Z$-module making $g\circ f$ a locally free morphism.
\end{proof}

\noindent Finally, we quote the following theorem by Grothendieck (appearing in  \textit{\'El\'ements de G\'eom\'etrie Alg\'ebrique IV}), a key result in the quest of understanding flatness,
\begin{corollary}
    Soient $Y$ un pr\'esch\'ema localement noeth\'erien, $f:X\rightarrow Y$ un morphisme propre, $y\in Y$ un point tel que $f$ soit ouvert en tous les points de $f^{-1}(y)$. 
    Alors la fonction $z\mapsto dim(f^{-1}(z))$ est constante dans un voisinage de $y$.
\end{corollary}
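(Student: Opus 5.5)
The plan is to reduce to the affine local situation over $y$ and combine two facts. First, upper semicontinuity of fibre dimension for proper morphisms (Chevalley). Second, a lower bound coming from openness: near points of $f^{-1}(y)$, fibres cannot suddenly drop in dimension. Set $d=\dim f^{-1}(y)$. Since $Y$ is locally Noetherian and $f$ is proper, hence of finite type, every fibre is of finite type over a field and therefore finite-dimensional, by the earlier lemma and proposition on fibres of morphisms of finite type. So the function $z\mapsto \dim f^{-1}(z)$ makes sense.

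For the upper bound, I would use Chevalley's theorem: $x\mapsto \dim_x f^{-1}(f(x))$ is upper semicontinuous on $X$. Hence $E=\{x\in X : \dim_x f^{-1}(f(x))\geq d+1\}$ is closed. Because $f$ is proper, $f(E)$ is closed in $Y$ (as in Proposition \ref{finite_is_closed}, proper maps are closed), and $y\notin f(E)$. Its complement $V_1$ is therefore an open neighbourhood of $y$ on which $\dim f^{-1}(z)\le d$.

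For the lower bound, pick an irreducible component $C$ of $f^{-1}(y)$ of dimension $d$, and a point $x\in C$ not lying on any other component. Replacing $Y$ by $\Spec\mathcal{O}_{Y,y}$ is harmless, since openness and the dimension function are compatible with this flat base change. The key claim is the dimension formula at $x$:
\[
\dim \mathcal{O}_{X,x} = \dim \mathcal{O}_{Y,y} + \dim_x f^{-1}(y),
\]
which holds whenever $f$ is open at $x$ and $Y$ is locally Noetherian (EGA IV 14.2). Openness of $f$ at $x$ makes $f$ universally open there (EGA IV 14.4), so this formula also survives base change. Now take any $z$ in a neighbourhood of $y$. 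Since $f$ is open at $x$, $f(U)$ is a neighbourhood of $y$ for every neighbourhood $U$ of $x$, so $z$ is a specialization-compatible image of points near $x$. Base change to the reduced closure $Y'=\overline{\{z\}}$ (through $y$ when $z$ generalises $y$), and apply the formula at $x$ in $X\times_Y Y'$. This shows that the generic fibre over $z$ has dimension at least $d$. Combined with upper semicontinuity, the dimension is constantly $d$ on all generalisations of $y$. Finally, constructibility of $\{z : \dim f^{-1}(z)\ge d\}$ (Chevalley, using that $f$ is of finite type), together with its stability under generalisation near $y$, gives an open neighbourhood $V_2$ of $y$ in the Noetherian space $Y$. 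Then $V_1\cap V_2$ is the required neighbourhood.

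The main obstacle is the lower bound. It rests on the dimension formula for morphisms that are open at a point, which in turn needs Chevalley's semicontinuity and the ``universally open'' upgrade. These are nontrivial EGA IV results (§§13–14) that I would cite rather than reprove, in the same way the paper cites Théorème 2.4.6 for flat morphisms. The first route I would try is to reduce to $Y$ the spectrum of a DVR: to go from $y$ to a generalisation $z$, choose a DVR dominating the specialization. There openness at $x$ says exactly that $x$ lies in the closure of the generic fibre, so that closure contains $C$, and the dimension inequality for closures in a fibration over a DVR gives $\dim f^{-1}(z)\ge d$.
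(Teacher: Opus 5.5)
Your upper bound (Chevalley's semicontinuity plus closedness of the proper map, i.e.\ EGA IV 13.1.5) and your concluding constructibility step are fine. The gap is in the lower bound. The displayed formula $\dim\mathcal{O}_{X,x}=\dim\mathcal{O}_{Y,y}+\dim_x f^{-1}(y)$ is false for morphisms that are merely open, or even universally open, at $x$. More seriously, your argument only ever uses openness at the single chosen point $x$, and that cannot suffice.

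Here is a counterexample. Let $R$ be a DVR with uniformizer $\pi$ and residue field $k$, and let $X=V(\pi T_0)\subset\mathbb{P}^1_R$ be the union of the section $\{T_0=0\}$ and the special fibre. Then $f\colon X\to\Spec R$ is proper, and $f^{-1}(s)=\mathbb{P}^1_k$ is a single component $C$, so $d=1$. At the closed point $x$ where the section meets $C$, $f$ is open, indeed universally open, because $x$ lies on a section of $f$. Yet $\dim\mathcal{O}_{X,x}=1\neq 1+1$, and the generic fibre is a single point. Every step of your lower-bound argument applies verbatim at this $x$ and yields a false conclusion.

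Two of your auxiliary claims also fail. Even for flat $f$, the correct formula has $\dim\mathcal{O}_{X_y,x}$ in place of $\dim_x f^{-1}(y)$; your version fails at the generic point of the closed fibre of $\mathbb{A}^1_Y\to Y$. Openness at a point also does not imply universal openness: the normalization $\widetilde{Y}\to Y$ of a nodal curve is finite and open, but its base change along itself is not, since the off-diagonal points of $\widetilde{Y}\times_Y\widetilde{Y}$ are isolated. Your DVR route has the same defect. From ``$x$ lies in the closure of the generic fibre'' you may conclude that this closure contains $C$ only when $x$ is the generic point of $C$. That is exactly where $f$ fails to be open in the example above.

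What you must use is openness at the generic point $\xi$ of a $d$-dimensional component $C$, combined with the dimension \emph{inequality}, and only across height-one specializations.

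First, the height-one case. Openness at $\xi$ (with $f$ of finite presentation) implies that every generization $z$ of $y$ equals $f(x')$ for some generization $x'$ of $\xi$. Put $W=\overline{\{x'\}}$ and $Y'=\overline{\{z\}}$. The dimension inequality (Matsumura, Thm.~15.5) applied at $\xi$ gives $\dim\mathcal{O}_{W,\xi}+d\le\dim\mathcal{O}_{Y',y}+\dim W_z$. If $\dim\mathcal{O}_{Y',y}=1$, then $x'\neq\xi$ gives $\dim\mathcal{O}_{W,\xi}\ge 1$, hence $\dim f^{-1}(z)\ge\dim W_z\ge d$.

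For higher heights this gives no control, so reduce to height one. The set $T=\{z:\dim f^{-1}(z)\ge d\}$ equals $f(E)$ with $E=\{x:\dim_x f^{-1}(f(x))\ge d\}$ closed, so $T$ is closed. It is therefore a neighbourhood of $y$ once it contains every generization of $y$. Suppose some generization $z$ of $y$ were not in $T$. Set $A=\mathcal{O}_{Y',y}$; the trace of $T$ on $\Spec A$ is $V(I)$ with $0\neq I\subset\mathfrak{m}_A$. Pick $0\neq a\in I$ and a prime $\mathfrak{p}$ maximal among those not containing $a$. Then $\dim A/\mathfrak{p}=1$: every nonzero prime of $A/\mathfrak{p}$ contains $\bar a$, so only finitely many height-one primes exist there, whereas a Noetherian local domain of dimension at least $2$ has infinitely many. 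This produces $z'\notin T$ with $\dim\mathcal{O}_{\overline{\{z'\}},y}=1$, contradicting the height-one case. Intersecting the resulting neighbourhood with your $V_1$ finishes the proof.
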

\begin{proof}
    See \cite{EGA}, Corollaire $14.2.5$.
\end{proof}


\noindent Translating everything, and combining it with the fact that flat morphism of finite type are open (Proposition \ref{flat_is_open}), we get 

\begin{theorem}
    Let $f:X\rightarrow Y$ be a proper, flat morphism of schemes locally of finite type. Then for $y\in Y$, the map $y\mapsto dim(X_y)$ is locally constant.
\end{theorem}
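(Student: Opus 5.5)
The statement is a translation of the quoted EGA Corollaire 14.2.5, so the plan is to check that its hypotheses hold and then read off the conclusion. That corollary asks for three things: $Y$ locally Noetherian, $f$ proper, and $f$ open at every point of $f^{-1}(y)$. Under our standing convention all schemes are Noetherian, so $Y$ is in particular locally Noetherian, and properness is assumed directly. Only openness needs an argument.

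For openness I would invoke Proposition \ref{flat_is_open}. Since $f$ is flat and locally of finite type, it is universally open, and in particular open. A morphism that is open as a map is open at every point $x\in X$: for every open neighbourhood $U$ of $x$, the set $f(U)$ is an open neighbourhood of $f(x)$. Applying this to $x\in f^{-1}(y)$ gives exactly the hypothesis of the corollary. Strictly, openness at $x$ means $f\vert_U$ is open for small enough $U$, and this follows because restricting an open map to an open subset keeps it open.

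Now fix $y\in Y$. The corollary gives an open neighbourhood $V_y$ of $y$ on which $z\mapsto \dim f^{-1}(z)=\dim X_z$ is constant. Since $y$ was arbitrary, the function $Y\to\mathbb{N}\cup\{-\infty\}$, $z\mapsto\dim X_z$, is locally constant. The preceding proposition on fibres of finite type morphisms shows these dimensions are finite; for proper $f$ this is automatic, since proper implies finite type. If $Y$ is connected, the function is then constant on all of $Y$.

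The only subtle step is matching ``open map'' with the pointwise notion ``ouvert en tous les points de $f^{-1}(y)$'' used in EGA. I would spell that out as above, and also note that empty fibres (dimension $-\infty$) are handled uniformly by the corollary, so no separate case is needed. Everything else is bookkeeping of hypotheses.
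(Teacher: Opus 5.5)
Your proposal is correct and follows the paper's route exactly: you check the hypotheses of EGA IV, Corollaire 14.2.5, using the Noetherian convention for $Y$, properness as given, and Proposition \ref{flat_is_open} to get that $f$ is (universally) open and hence open at every point of $f^{-1}(y)$. One small correction: EGA's ``ouvert au point $x$'' means that $f$ sends every neighbourhood of $x$ to a neighbourhood of $f(x)$, not that some restriction $f\vert_U$ is open. Since your $f$ is globally open, it is open at $x$ in either sense, so the argument is unaffected.
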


\subsubsection{The Sheaf of K\"ahler Differentials}

The motivation behind K\"ahler differentials is to generalise the intuitive concept of differential forms from differential geometry. 
One important aspect of the sheaf of K\"ahler differentials is that it is a quasi-coherent sheaf whose fibre at any point is precisely the dual of the tangent space at that point. 
It will also be central in the definition of smooth and \'etale morphisms. 
There are several ways of defining the module of K\"ahler differentials, 
one of them is by means of derivations. 

\begin{definition}
    Suppose $A$ is a $B$-algebra.
    We define a \textbf{$B$-linear derivation} on $A$ to be a $B$-module homomorphism $d:A \rightarrow M$ to an $A$-module $M$ satisfying the Leibniz rule : $d(fg) = f\:dg + g\:df$. 
    We denote the set of such $B$-linear derivations by $\Der_B(A,M)$
\end{definition}

\begin{definition}
    The \textbf{module of K\"ahler differentials} is defined to be the $A$-module $\Omega_{A/B}$ for which there is a universal derivation $d:A\rightarrow \Omega_{A/B}$. 
    The composition with $d$ provides, for every $A$-module $M$ an $A$-module isomorphism $\Hom_A(\Omega_{A/B}, M) \xrightarrow[]{\cong} \Der_B(A,M)$.\\
    Now one way to construct $\Omega_{A/B}$ and $d$ is by constructing a free $A$-module as linear combinations of the symbol $da$ for each $a\in A$ and imposing the following relations :
    \begin{itemize}
        \item  $da+da' = d(a+a')$ $\forall a'\in A$ (additivity),
        \item $d(aa') = ada' +a'da$ (Leibniz law) , and
        \item  $db = 0$ $\forall b\in B$ (triviality of pull-back).
    \end{itemize}
    Notice that $d:A\rightarrow\Omega_{A/B}$ is $B$-linear. And the universal derivation sends $a$ to $da$.
\end{definition}

\noindent From the Leibniz law of differentials, we can easily deduce the quotient law and the chain rule for differentials.

\begin{examples}\textbf{ }
\label{example_kahler}
\begin{itemize}
    
    \item[$a)$] If $A = k[x_1,...,x_n]$ and $B=k$, then $\Omega_{A/B} = Adx_1 \oplus ...\oplus Adx_n$ a free $A$-module of rank $n$.\\
    Since for all $f\in A$, $df = \sum_{i=1}^n \frac{\partial f}{\partial x_i} dx_i$, $\Omega_{A/B}$ is generated by $dx_1, ..., dx_n$ as an $A$-module. There being no further relations among the differentials, $\Omega_{A/B} = Adx_1 \oplus ...\oplus Adx_n$.

    \item[$b)$] If $A =S^{-1}B$ with $S$ a multiplicative closed subset of $B$, then $\Omega_{A/B} = 0$.\\
    Any element of $A$ can be written as $a=\frac{b}{s}$ for some $b,s\in B$ and by the quotient rule, $da = \frac{sdb-bds}{s^2}=0$ via the triviality of pull-back.
    
    \item[$c)$] If $L|k$ is a finite separable field extension, then $\Omega_{L/k} = 0$.\\
    Since $L|k$ is a separable extension, for all $\alpha\in L$, there exists a polynomial $p\in k[x]$ such that $p(\alpha)=0$ and $p'(\alpha)\neq 0$. So by the chain rule we have $0 = d(p(\alpha)) = p'(\alpha)d\alpha$, so $d\alpha=0$ for all $\alpha\in L$.
\end{itemize}
\end{examples}

\noindent In the last example the converse is also true and it can be shown using separable algebras. 
But before we need the following lemma : 
\begin{lemma}
    If a finitely generated ideal $I$ in a commutative ring $R$ has $I^2=  I$, then $I = pR$ for $p\in R$ an idempotent.
\end{lemma}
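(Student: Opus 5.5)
The plan is to use the standard determinant trick, i.e. Nakayama's lemma in its Cayley--Hamilton form. Let $I=(a_1,\dots,a_n)$ be finitely generated with $I^2=I$, and view $I$ as a finitely generated $R$-module. Since $I = I^2 = I\cdot I$, each generator can be written as $a_i=\sum_j c_{ij}a_j$ with $c_{ij}\in I$.

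First I form the matrix $M=\mathrm{Id}_n-(c_{ij})$ over $R$. The relations say $M\cdot(a_1,\dots,a_n)^T=0$. Multiplying on the left by the adjugate of $M$ gives $\det(M)\,a_i=0$ for every $i$, so $\det(M)$ annihilates $I$. Expanding the determinant, $\det(M)=1-e$ with $e\in I$, because every term other than the product of the $1$'s contains at least one entry $c_{ij}\in I$.

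Next I verify the two conclusions. Since $(1-e)x=0$ for all $x\in I$, we have $x=ex$ for every $x\in I$. Taking $x=e$ gives $e^2=e$, so $e$ is idempotent. Moreover $I\subseteq eR$ because each $x=ex$ lies in $eR$, and $eR\subseteq I$ because $e\in I$. Hence $I=eR$, and we may take $p=e$.

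The only delicate step is justifying $\det(M)\in 1+I$. I would argue it via the Leibniz expansion: modulo $I$, the matrix $M$ reduces to the identity, so $\det(M)\equiv 1 \pmod I$, using that the determinant commutes with the ring map $R\to R/I$. Everything else is routine linear algebra over a commutative ring, namely $\operatorname{adj}(M)M=\det(M)\,\mathrm{Id}$.
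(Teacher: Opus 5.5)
Your proof is correct and is essentially the paper's argument. The paper applies its version of Nakayama's lemma (Lemma \ref{Nakayama_lemma}) to $M=I$ to obtain $i\in I$ with $im=m$ for all $m\in I$, and then concludes $i^2=i$ and $I=iR$ exactly as you do; your determinant-trick step simply proves that form of Nakayama inline.
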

\begin{proof}
    The lemma is almost immediate following Nakayama's Lemma \ref{Nakayama_lemma}.
    We apply it to $M = I$ and we get that there exist $i\in I$ such that $im =m$ for all $m\in I$. 
    It follows that $i^2 = i$ and $I = iR$, and just let $p=i$.
\end{proof}

\begin{proposition}
    Let $A$ be a commutative $k$-algebra. If $\Omega_{A/k} = 0$ then $A$ is a separable $k$-algebra.
\end{proposition}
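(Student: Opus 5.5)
The plan is to use the splitting criterion of Lemma \ref{lemma_splitting_exact_sequence_separable}. I will produce an $A$,$A$-bimodule section of the multiplication map $m:A\otimes_k A\rightarrow A$, or equivalently an idempotent generating the kernel $I$. The link between $I$ and differentials is the standard identification $\Omega_{A/k}\cong I/I^2$, with the universal derivation given by $da\mapsto 1\otimes a - a\otimes 1 \bmod I^2$. I would verify this first. The map $a\mapsto (1\otimes a - a\otimes 1)+I^2$ is a $k$-linear derivation into the $A$-module $I/I^2$; the Leibniz rule follows from the identity $(1\otimes a-a\otimes 1)(1\otimes b - b\otimes 1)\in I^2$. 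It is universal because $I$ is generated as a left $A$-module by the elements $1\otimes a - a\otimes 1$: if $\sum a_i\otimes b_i\in I$, then $\sum a_i\otimes b_i=\sum (a_i\otimes 1)(1\otimes b_i - b_i\otimes 1)$. Given a derivation $D:A\rightarrow M$, one sends $x\otimes y\mapsto xD(y)$ restricted to $I$; this kills $I^2$. Hence $\Omega_{A/k}=0$ gives $I=I^2$.

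Next I apply the lemma just proved, which turns $I^2=I$ into $I=pR$ with $p$ idempotent in $R=A\otimes_kA$. This needs $I$ to be finitely generated. Here I will use that $A$ is finite-dimensional over $k$, which is implicit in the setting of this section (\'etale and separable algebras over a field). Then $A\otimes_kA$ is finite-dimensional, so every ideal, in particular $I$, is finitely generated. By the generators above, $I$ is in fact generated by $1\otimes a_j - a_j\otimes 1$ for a $k$-basis $(a_j)$ of $A$. I expect this finiteness hypothesis to be the main obstacle: without it the statement can fail, for example for $A=k(x)$ over $\mathbb{Q}$-type infinite extensions. So the proof should state explicitly that finite-dimensionality, or at least finite generation of $A$, is assumed.

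Finally, set $e=1\otimes 1 - p$, which is again idempotent in $R$. I will define $\Delta:A\rightarrow A\otimes_kA$ by $\Delta(a)=(a\otimes 1)e$. Since $p\in I$, we have $m(e)=1-m(p)=1$, hence $m(\Delta(a))=a$. For bimodule linearity, note that $Ie=I(1-p)=0$ because $I=pR$ and $p(1-p)=0$. So $(1\otimes a - a\otimes 1)e=0$, giving $(a\otimes 1)e=(1\otimes a)e$. This yields $a\Delta(b)=(ab\otimes 1)e=\Delta(ab)$ and $\Delta(a)b=(a\otimes b)e=(a\otimes 1)(1\otimes b)e=(a\otimes1)(b\otimes 1)e=\Delta(ab)$. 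Thus $\Delta$ is an $A$,$A$-bimodule splitting of $m$. By Lemma \ref{lemma_splitting_exact_sequence_separable}, or directly by the definition, $A$ is separable. Combined with Proposition \ref{commut_separable_is_etale}, this then also gives that $A$ is finite \'etale.
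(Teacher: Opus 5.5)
Your proof is correct and is the paper's argument. $\Omega_{A/k}\cong I/I^2$ forces $I=I^2$. The added finite-dimensionality of $A$ (which the paper also imposes in its proof) makes $I$ finitely generated, so $I=p(A\otimes_kA)$ for an idempotent $p$, and the sequence of Lemma~\ref{lemma_splitting_exact_sequence_separable} splits. The only difference is presentational: you write down the section $a\mapsto(a\otimes1)(1\otimes1-p)$ of $m$, whereas the paper uses the retraction $x\mapsto xp$ of the inclusion $I\hookrightarrow A\otimes_kA$.

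One correction to your side remark: $k(x)$ is not a counterexample, since $\Omega_{k(x)/k}=k(x)\,dx\neq0$. What shows the finiteness hypothesis cannot be dropped is an infinite separable algebraic extension such as $\overline{\mathbb{Q}}|\mathbb{Q}$, which has vanishing differentials but is not a separable $\mathbb{Q}$-algebra.
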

\begin{proof}
    We show this later on (Theorem \ref{second_def_Kalher}) but, $\Omega_{A/k} = 0$ implies that $I = I^2$ for $I$ the kernel of $\delta$ which is the multiplication map for our $k$-algebra $A$.
    When $k$ is a field and $A$ is finite-dimensional, notice that every ideal of $A\otimes A$ is finitely generated, so we can apply the above lemma and consider the multiplication by the idempotent $p$ for $p\in A\otimes A$ such that $I = p\cdot (A\otimes A)$.
    This gives an $A$,$A$-bimodule map $\cdot\: p: A\otimes A\rightarrow I$ and we have $(\cdot \:p)\circ i = \id_I$ so by the splitting lemma, the exact sequence in Lemma \ref{lemma_splitting_exact_sequence_separable} splits making $A$ separable.
\end{proof}

\noindent So by combining these results, and remembering that separable commutative algebras coincide with \'etale algebras (Proposition \ref{commut_separable_is_etale}) we have the following theorem.

\begin{theorem}
\label{separable_zero_kahler}
    If $L|k$ is finite algebraic field extension, then $\Omega_{L,k}=0$ if and only if $L|k$ is separable.
\end{theorem}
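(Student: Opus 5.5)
The plan is to prove the two directions separately, using the results already established in this section.

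For the direction ``$L|k$ separable $\Rightarrow \Omega_{L/k}=0$'', Example \ref{example_kahler} $c)$ already gives the argument. Each $\alpha\in L$ has a separable minimal polynomial $p$, so $p(\alpha)=0$ and $p'(\alpha)\neq 0$. The Leibniz rule and $dk=0$ then give $0=d(p(\alpha))=p'(\alpha)\,d\alpha$. Since $p'(\alpha)$ is a nonzero element of the field $L$, it is invertible, so $d\alpha=0$. Because $\Omega_{L/k}$ is generated by the $d\alpha$, it vanishes. I would simply cite this example.

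For the converse, I would use the chain of results just proved. Suppose $\Omega_{L/k}=0$.
\begin{itemize}
\item[$1)$] By the proposition preceding the statement, $L$ is then a separable $k$-algebra in the sense of the splitting definition. Its hypotheses are satisfied because $L$ is commutative and finite-dimensional over the field $k$. This forward-references Theorem \ref{second_def_Kalher} for the identification $\Omega_{L/k}\cong I/I^2$, where $I=\ker(L\otimes_kL\to L)$. I would accept that identification here.
\item[$2)$] The ideal $I$ is finitely generated, since $L\otimes_kL$ is finite-dimensional. From $I=I^2$, the idempotent lemma gives $I=p(L\otimes L)$ for an idempotent $p$. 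Multiplication by $p$ then splits the sequence of Lemma \ref{lemma_splitting_exact_sequence_separable}.
\item[$3)$] By Proposition \ref{commut_separable_is_etale}, a commutative separable $k$-algebra is finite étale, that is, a finite product of finite separable field extensions. Since $L$ is a field, it has no nontrivial idempotents, so the product has exactly one factor. Hence $L|k$ is a separable extension.
\end{itemize}

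The main obstacle is step $1)$. It relies on the isomorphism $\Omega_{A/k}\cong I/I^2$, which is only proved later. If I wanted to avoid the forward reference, I would instead give a direct argument by contradiction in characteristic $p$. Suppose $L|k$ is inseparable, and let $k_s$ be the separable closure of $k$ in $L$. Pick $\alpha\notin k_s$ with $\alpha^p\in L'$, where $k_s\subseteq L'\subsetneq L$ and $L=L'(\alpha)$. Then $L=L'[x]/(x^p-a)$, so $\Omega_{L/L'}=L\,d\alpha\neq 0$, because the derivative of $x^p-a$ vanishes. Finally, the surjection $\Omega_{L/k}\twoheadrightarrow\Omega_{L/L'}$ forces $\Omega_{L/k}\neq0$. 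Choosing a suitable intermediate field $L'$ is the delicate point in this alternative, so I would prefer the separable-algebra route.
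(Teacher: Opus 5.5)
Your proposal is correct and matches the paper's own argument: Example~\ref{example_kahler}~$c)$ gives separable $\Rightarrow \Omega_{L/k}=0$. For the converse, $\Omega_{L/k}\cong I/I^2=0$ yields an idempotent generator of $I$ and hence a splitting of the multiplication sequence, and then Proposition~\ref{commut_separable_is_etale} applies, with $L$ a field having only one factor. Your alternative characteristic-$p$ argument also works if you take $L'$ to be a maximal proper intermediate field of $L|k_s$: for any $\alpha\in L\setminus L'$, the field $L'(\alpha^p)$ has degree $<[L:L']$ over $L'$, so maximality forces $\alpha^p\in L'$.
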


\noindent As it was the case with flat modules, the module of K\"ahler differentials is also compatible with base change and localisation:

\begin{proposition}[Base change]
\label{Base_change_kahler_module}
    Let $B\rightarrow B'$ be any ring map and let the base change of $f$ by $B\rightarrow B'$ be the ring map $B'\rightarrow A\otimes_B B'$ (we often write $A' = A\otimes_B B'$).
    Then there is an isomorphism $\Omega_{A/B}\otimes_B B'\cong \Omega_{A'/B'}$ of $A'$-modules
\end{proposition}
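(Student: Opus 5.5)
The plan is to use the universal property of $\Omega_{A/B}$ on both sides. I will compare $B'$-linear derivations of $A'$ with $B$-linear derivations of $A$ into $A'$-modules, and then use Yoneda. Write $A' = A\otimes_B B'$ and $d:A\rightarrow\Omega_{A/B}$ for the universal derivation. Note that $\Omega_{A/B}\otimes_B B' \cong \Omega_{A/B}\otimes_A A'$ is naturally an $A'$-module.

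\textbf{Step 1: a candidate derivation.} Define $d':A'\rightarrow \Omega_{A/B}\otimes_B B'$ on pure tensors by
$$d'(a\otimes b') = da\otimes b'.$$
This is well defined, since the formula is $B$-bilinear in $(a,b')$. It is $B'$-linear, and it kills $1\otimes b'$ because $d1=0$. For the Leibniz rule I check it on pure tensors:
$$d'\bigl((a_1\otimes b_1')(a_2\otimes b_2')\bigr)=d(a_1a_2)\otimes b_1'b_2' = (a_1\otimes b_1')\,d'(a_2\otimes b_2') + (a_2\otimes b_2')\,d'(a_1\otimes b_1').$$
It then extends by biadditivity. By the universal property of $\Omega_{A'/B'}$, this gives an $A'$-linear map $\alpha:\Omega_{A'/B'}\rightarrow\Omega_{A/B}\otimes_B B'$ with $\alpha(d_{A'}x)=d'x$.

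\textbf{Step 2: the inverse map.} The composite $A\rightarrow A'\xrightarrow{d_{A'}}\Omega_{A'/B'}$, $a\mapsto d_{A'}(a\otimes 1)$, is a $B$-linear derivation into an $A$-module. Here $b\in B$ maps to $b\otimes 1 = 1\otimes b$, which lies in the image of $B'$, so it is killed. Hence there is an $A$-linear map $\Omega_{A/B}\rightarrow \Omega_{A'/B'}$ with $da\mapsto d_{A'}(a\otimes1)$. Since the target is an $A'$-module, extending scalars gives an $A'$-linear map
$$\beta:\Omega_{A/B}\otimes_A A'\cong\Omega_{A/B}\otimes_B B'\rightarrow\Omega_{A'/B'},\qquad da\otimes b'\mapsto b'\,d_{A'}(a\otimes 1).$$

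\textbf{Step 3: mutual inverses.} It is enough to check both composites on generators.
\begin{itemize}
\item For $\beta\circ\alpha$: $\Omega_{A'/B'}$ is generated as an $A'$-module by the $d_{A'}(a\otimes b')$. Each of these equals $b'\,d_{A'}(a\otimes 1)$, because $d_{A'}$ is $B'$-linear. Both composites send this element to itself.
\item For $\alpha\circ\beta$: the module $\Omega_{A/B}\otimes_B B'$ is generated by the elements $da\otimes b'$, and $\alpha\beta(da\otimes b') = b'\,d'(a\otimes 1)=da\otimes b'$.
\end{itemize}

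The main obstacle is bookkeeping rather than ideas. The delicate points are the well-definedness of $d'$ over the tensor product and the identification $\Omega_{A/B}\otimes_B B'\cong\Omega_{A/B}\otimes_A A'$ as $A'$-modules. Both follow from associativity of the tensor product.
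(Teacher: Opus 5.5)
Your proof is correct. The paper gives no argument of its own here, only a citation to Hartshorne, Proposition II.8.2A. The standard proof behind that result is the same universal-property comparison you carry out, usually phrased via Yoneda as the chain
\[
\Hom_{A'}(\Omega_{A/B}\otimes_B B',M)\cong\Hom_A(\Omega_{A/B},M)\cong\Der_B(A,M)\cong\Der_{B'}(A',M),
\]
natural in the $A'$-module $M$, where the last bijection is restriction along $A\to A'$. Your explicit maps $\alpha$ and $\beta$ are this argument unwound on the two modules that matter. The identity $d_{A'}(a\otimes b')=b'\,d_{A'}(a\otimes 1)$ you use in Step 3 is what makes that restriction map injective. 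Your write-up therefore has the advantage of making the statement self-contained. One small point: the appeal to Yoneda announced in your first paragraph is never actually used, since you exhibit the inverse maps directly.
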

\begin{proof}
    See \cite{Hartshorne}, Proposition $II.\: 8.2A$.
\end{proof}

\begin{proposition}[Localisation]
\label{localization_kahler_module}
    Let $S$ be a multiplicative closed subset of $A$, then $\Omega_{S^{-1}A/B} = S^{-1}\Omega_{A/B}$.
\end{proposition}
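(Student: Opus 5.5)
The cleanest route is through the universal property of $\Omega$: two modules representing the same functor are isomorphic. I will compare $\Hom_{S^{-1}A}(\,\cdot\,,N)$ applied to both sides, for an arbitrary $S^{-1}A$-module $N$.

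For the right-hand side, extension of scalars along $A\to S^{-1}A$ is left adjoint to restriction. Since $S^{-1}\Omega_{A/B}\cong \Omega_{A/B}\otimes_A S^{-1}A$ by Proposition \ref{localization_as_tensor}, this gives
\[
\Hom_{S^{-1}A}(S^{-1}\Omega_{A/B},N)\cong \Hom_A(\Omega_{A/B},N)\cong \Der_B(A,N).
\]
For the left-hand side we have directly $\Hom_{S^{-1}A}(\Omega_{S^{-1}A/B},N)\cong\Der_B(S^{-1}A,N)$.

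So everything reduces to a natural bijection $\Der_B(S^{-1}A,N)\to\Der_B(A,N)$, given by restriction along $A\to S^{-1}A$. This is the main step. For injectivity, a derivation $D$ on $S^{-1}A$ is determined by its values on $A$. Indeed, applying Leibniz to $s\cdot(1/s)=1$, and using $D(1)=0$, gives $D(1/s)=-D(s)/s^2$. Then the quotient rule, as noted after the definition of Kähler differentials, forces
\[
D(a/s)=\frac{sD(a)-aD(s)}{s^2}.
\]
For surjectivity, given $\delta\in\Der_B(A,N)$ I will define $D(a/s)$ by this formula. I must check that it is well defined: if $u(as'-a's)=0$ with $u\in S$, I apply $\delta$ to this relation and use the Leibniz rule. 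Since $u$ and $s,s'$ act invertibly on $N$, both expressions then agree. Additivity, the Leibniz rule, $B$-linearity and $D|_A=\delta$ are then routine computations.

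Naturality in $N$ is clear, since both bijections are given by composition. Yoneda then yields $\Omega_{S^{-1}A/B}\cong S^{-1}\Omega_{A/B}$. Explicitly, the isomorphism is $d(a/s)\mapsto (s\,da-a\,ds)/s^2$, with inverse $da/s\mapsto d(a)/s$.

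I expect the well-definedness check for the extended derivation to be the only delicate point. The key fact there is that elements of $S$ act invertibly on $N$, so the factor $u$ can be cancelled after differentiating.
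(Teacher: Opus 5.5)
Your proof is correct. It differs from the paper in that the paper gives no argument at all: its proof is just a citation of Hartshorne, Proposition II.8.2A.

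You instead reduce, via Proposition \ref{localization_as_tensor}, the extension/restriction adjunction and the universal property of $\Omega$, to the bijectivity of restriction $\Der_B(S^{-1}A,N)\rightarrow\Der_B(A,N)$ for every $S^{-1}A$-module $N$, and finish by Yoneda. This is the standard self-contained argument. It keeps the chapter independent of the reference and yields the explicit inverse isomorphisms $d(a/s)\mapsto (s\,da-a\,ds)/s^2$ and $da/s\mapsto \frac{1}{s}\,d(a/1)$.

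One point in your well-definedness check should be made explicit. Differentiating $u(as'-a's)=0$ gives $u\,\delta(as'-a's)+(as'-a's)\,\delta(u)=0$, so cancelling $u$ alone does not finish the job. You also need that $as'-a's$ acts as $0$ on $N$, which holds because its image in $S^{-1}A$ vanishes. You use the same fact again when you compare $\frac{s\delta(a)-a\delta(s)}{s^2}$ with $\frac{s'\delta(a')-a'\delta(s')}{s'^2}$ after clearing denominators.

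If you would rather avoid these computations, the paper's Theorem \ref{second_def_Kalher} gives a check-free alternative:
\begin{itemize}
\item $S^{-1}A\otimes_B S^{-1}A$ is the localisation of $A\otimes_B A$ at $\{s\otimes t\}$, and localisation is exact;
\item $A\otimes_B A$ acts on $I/I^2$ through the multiplication map;
\item hence the diagonal ideal $J$ of $S^{-1}A$ satisfies $J/J^2\cong S^{-1}(I/I^2)$.
\end{itemize}
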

\begin{proof}
    See \cite{Hartshorne}, Proposition $II.\: 8.2A$.
\end{proof}

\noindent We also give a useful exact sequence:

\begin{proposition}
\label{exact_seq_differential_module}
    Let $C\rightarrow B \rightarrow A$ be homomorphisms of rings.
    Then there is a natural exact sequence of $A$-modules
    $$\Omega_{B/C} \otimes_B A \xrightarrow{a\otimes db\:\mapsto\: adb} \Omega_{A/C} \xrightarrow{da\:\mapsto\:da} \Omega_{A/B} \rightarrow 0$$
\end{proposition}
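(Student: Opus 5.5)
We prove the exactness of the first fundamental exact sequence through the universal property of $\Omega$. Recall that for any $A$-module $M$ we have $\Hom_A(\Omega_{A/C},M)\cong \Der_C(A,M)$. Also, a sequence $N'\rightarrow N\rightarrow N''\rightarrow 0$ of $A$-modules is exact if and only if, for every $A$-module $M$, the induced sequence
$$0\rightarrow \Hom_A(N'',M)\rightarrow \Hom_A(N,M)\rightarrow \Hom_A(N',M)$$
is exact. (For the converse, take $M=\operatorname{coker}(N'\rightarrow N)$ and the quotient map.)

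First we check that the two maps are well defined and $A$-linear. The second map $\Omega_{A/C}\rightarrow\Omega_{A/B}$, $da\mapsto da$, comes from the universal property applied to $d_{A/B}:A\rightarrow\Omega_{A/B}$. This map is a $B$-linear derivation, hence $C$-linear. For the first map, the composite $B\rightarrow A\xrightarrow{d}\Omega_{A/C}$ is a $C$-linear derivation of $B$ into the $B$-module $\Omega_{A/C}$. It therefore induces a $B$-linear map $\Omega_{B/C}\rightarrow\Omega_{A/C}$, and extending scalars gives the $A$-linear map $a\otimes db\mapsto a\,db$.

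Next we dualise. We have $\Hom_A(\Omega_{B/C}\otimes_BA,M)\cong\Hom_B(\Omega_{B/C},M)\cong\Der_C(B,M)$ by the tensor–hom adjunction. Under these identifications, the sequence of Hom's becomes
$$0\rightarrow \Der_B(A,M)\rightarrow \Der_C(A,M)\rightarrow \Der_C(B,M),$$
where the first map views a $B$-linear derivation as a $C$-linear one, and the second restricts a derivation along $B\rightarrow A$. Exactness of this sequence is elementary. The first map is clearly injective. A $C$-derivation $D$ of $A$ restricts to zero on the image of $B$ exactly when $D$ is $B$-linear: the Leibniz rule gives $D(ba)=bD(a)+aD(b)=bD(a)$, and conversely $B$-linearity together with the Leibniz rule forces $D(b\cdot 1)=bD(1)=0$.

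The only point needing care, and it is mild, is checking that the identifications of the Hom's are compatible with the maps. This means tracing a homomorphism $\varphi:\Omega_{A/C}\rightarrow M$ to the derivation $\varphi\circ d$ and verifying that precomposition with $a\otimes db\mapsto a\,db$ corresponds to restricting $\varphi\circ d$ to $B$. This holds by construction of the maps. As an alternative, one could argue directly: surjectivity of the second map is immediate from the generators $da$, and its kernel is the submodule generated by the $db$, which is the image of the first map by the presentation of $\Omega_{A/B}$ as $\Omega_{A/C}$ modulo the extra relations $db=0$. The Hom-duality argument is the one we will follow.
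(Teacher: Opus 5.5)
Your argument is correct and is the standard derivation-duality proof (the one in Matsumura to which the paper's citation of \cite{Hartshorne}, Prop.~II.8.3A ultimately defers); the paper itself gives no proof beyond that reference. The only slip is in your parenthetical: taking $M=\operatorname{coker}(N'\to N)$ yields only $\ker\subseteq\im$, while surjectivity of $N\to N''$ and the vanishing of the composite need the choices $M=\operatorname{coker}(N\to N'')$ and $M=N''$. Both facts are, however, also visible directly here, from the generators $da$ and from $d_{A/B}(b)=0$.
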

\begin{proof}
    See \cite{Hartshorne}, Proposition $II.\: 8.3A$.
\end{proof}

\noindent And the following construction shows that there is a direct relation between the module of K\"ahler differentials and the tangent space. 

\begin{construction}
\label{construction_tangent_space_differential}
    Recall that the tangent space at a point $p$ of an algebraic variety can be thought of as assigning to a polynomial $f$ its linear term in the Taylor expansion at $p$. Since the derivation $df$ of a polynomial $f$ evaluated at a point is exactly that linear term, we define the tangent space of $X$ at $p$ to be $T_pX := V(\{df \mid f\in I(X)\})$.\\
    Denote $R =A(X)$ a $k$-algebra. From example \ref{example_kahler} and by the fact that for all $f\in I(X)$, $f=0$ implies $df=0$, we have, by imposing sufficient conditions on the generators of $I(X)$ (suppose they are $f_1,...,fr$) 
    $$\Omega_{R/k} = Rdx_1 \oplus ... \oplus Rdx_n \Big/ \Bigl< \sum_j \frac{\partial f_i}{\partial x_j} dx_j \mid i=1,...,r\Bigr>.$$
    Taking the fibre at a closed point $p\in X$ (corresponding to a maximal ideal $\mathfrak{m}\trianglelefteq A(X)$), we set $\kappa(p)= R/\mathfrak{m}$ the residue field and by evaluation at $p$ and extension of scalar, we have
    $$\Omega_{R/k}\otimes_R \kappa(p) = \kappa(p)dx_1 \oplus ... \oplus \kappa(p)dx_n \Big/ \Bigl< \sum_j \frac{\partial f_i}{\partial x_j}(p) dx_j \mid i=1,...,r\Bigr>$$
    Letting $x=(x_1,...,x_n)$ be the coordinates in $\mathbb{A}^n$, and $y = x-p$ be the shifted coordinates such that $p$ becomes the origin.
    By Taylor expansion of $f$ at $p=0$, the linear term of $f_i\in I(X)$ in these coordinates is $\sum_{j=1}^n \frac{\partial f_i}{\partial x_j}(p)\cdot y_j$. 
    And since the tangent space at $p$ is the zero locus of these linear terms, we can actually say that $\Omega_{R/k}\otimes_R \kappa(p)$ is the dual $(T_pX)^*$ of the tangent space of $X$ at $p$.
\end{construction}

\noindent Unfortunately, the above construction is not intrinsic, it crucially depends on the choice of embedding $B\rightarrow A$, and the choice of generators $x_i$ and $f_i$.
There is an alternative definition of $\Omega_{A/B}$, that fixes this issue.
Consider the following diagonal homomorphism (induced by diagonal maps of affine schemes)  $\delta : A \otimes_B A \rightarrow A$, given by multiplication.
We define the ideal $I = \ker\:\delta$ of $A \otimes_B A$. 

\begin{theorem}
\label{second_def_Kalher}
    $I/I^2$ inherits a structure of $A$-modules. 
    Define a map $d:A\rightarrow I/I^2$ by $da = 1\otimes a-a\otimes 1$.
    Then $I/I^2 \cong \Omega_{A/B}$ as $A$-modules
\end{theorem}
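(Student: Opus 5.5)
The plan is to show that $d:A\rightarrow I/I^2$ is a $B$-linear derivation, and then that it has the universal property which characterises $\Omega_{A/B}$. The isomorphism then follows by uniqueness of representing objects for $M\mapsto \Der_B(A,M)$.

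First, the $A$-module structure. $A\otimes_B A$ acts on $I/I^2$ by multiplication, and $I$ acts trivially, so $I/I^2$ is a module over $(A\otimes_B A)/I\cong A$ (surjectivity of $\delta$ gives this identification). Concretely, $a\otimes 1$ and $1\otimes a$ act in the same way, because their difference lies in $I$.

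Next, $d$ is a $B$-linear derivation. Additivity is clear, and $db=1\otimes b-b\otimes 1=0$ since the tensor product is over $B$. For Leibniz, I will expand
$$(1\otimes a-a\otimes 1)(1\otimes a'-a'\otimes 1)\in I^2.$$
This gives $1\otimes aa' + aa'\otimes 1 \equiv a\otimes a' + a'\otimes a$ modulo $I^2$. Hence
$$1\otimes aa'-aa'\otimes 1 \equiv (a\otimes 1)(1\otimes a'-a'\otimes 1)+(a'\otimes 1)(1\otimes a-a\otimes 1),$$
which reads $d(aa')=a\,da'+a'\,da$. I will also record that the elements $da$ generate $I$ as an $A$-module, in fact already as a left $A$-module via $a\otimes 1$. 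Indeed, if $\sum x_i\otimes y_i\in I$, then $\sum x_iy_i=0$, so
$$\sum x_i\otimes y_i=\sum (x_i\otimes 1)(1\otimes y_i-y_i\otimes 1).$$
This shows that the $da$ generate $I/I^2$ as well.

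Universality is the main step. Given an $A$-module $M$ and a derivation $D\in\Der_B(A,M)$, I must produce a unique $A$-linear map $\phi:I/I^2\rightarrow M$ with $\phi\circ d=D$. Uniqueness follows from the generation statement above. For existence, the trick is to define a map on all of $A\otimes_B A$. Set
$$\Phi:A\otimes_B A\rightarrow M,\qquad x\otimes y\mapsto x\,D(y).$$
This is well defined because it is $B$-bilinear, using that $D$ is $B$-linear. It is left $A$-linear, and $\Phi(1\otimes a-a\otimes 1)=Da-aD1=Da$, since $D(1)=0$ by Leibniz.

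The obstacle is to show that $\Phi$ vanishes on $I^2$ and is $A$-linear for the module structure on $I/I^2$. Because $I$ is generated by elements $1\otimes a-a\otimes 1$ as a left module, it suffices to check $\Phi$ on products of the form $(x\otimes 1)(1\otimes a-a\otimes 1)(1\otimes a'-a'\otimes 1)$. Expanding and applying the Leibniz rule gives $x\bigl(D(aa')-aD(a')-a'D(a)+aa'D(1)\bigr)=0$. For $A$-linearity it is enough to check that right multiplication by $1\otimes c$ agrees on $I$ with left multiplication by $c\otimes 1$. For $\sum x_i\otimes y_i\in I$, the Leibniz rule gives
$$\Phi\Bigl(\sum x_i\otimes y_ic\Bigr)=\sum x_iy_iD(c)+c\sum x_iD(y_i)=c\,\Phi\Bigl(\sum x_i\otimes y_i\Bigr),$$
using $\sum x_iy_i=0$. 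So $\Phi$ restricts to an $A$-linear map $I/I^2\rightarrow M$ with $\phi\circ d=D$. This yields a natural bijection $\Hom_A(I/I^2,M)\cong\Der_B(A,M)$, and therefore $I/I^2\cong\Omega_{A/B}$, with the isomorphism sending $da\mapsto 1\otimes a-a\otimes 1$.
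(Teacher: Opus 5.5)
Your proof is correct and is essentially the paper's argument. The paper builds the inverse $\psi\colon I/I^2\to\Omega_{A/B}$, $\sum x_i\otimes y_i\mapsto \sum x_i\,dy_i$, which is exactly your $\Phi$ taken with the universal derivation, and checks that it inverts the canonical map $\Omega_{A/B}\to I/I^2$; you instead verify the universal property for arbitrary $(M,D)$ and conclude by uniqueness of representing objects. Your version is in fact more complete, since you check that $x\otimes y\mapsto x\,D(y)$ is well defined, vanishes on $I^2$ and is $A$-linear, which the paper asserts without verification.
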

\begin{proof}
    First note that $A\otimes_B A$ is an $A$-algebra in two ways (by multiplication on the left and right factor), so $I$ is an $A$-submodule.
    The ideal $I$ is generated by elements of the form $1\otimes a- a\otimes 1$. Indeed, if $\delta(\sum x_i\otimes y_i)= \sum x_iy_i=0$, then 
    $$ \sum_i x_i\otimes y_i = \sum_i (x_i \otimes y_i - x_iy_i\otimes 1) = \sum_i x_i(1\otimes y_i -y_i \otimes 1).$$
    And $I^2$ is an $A$-submodule of $I$ generated by all products $x_1x_2\otimes y_1y_2$ with $x_1\otimes y_1,x_2\otimes y_2\in I$ for both multiplication on the left and on the right. So $I/I^2$ is an $A$-module. \\
    Let's check that the map $d: A\rightarrow I/I^2$, $a\mapsto 1\otimes a - a \otimes 1$ is a universal derivation.
    Additivity is evident, and if $b\in B$, then $1\otimes b-b\otimes 1 = 0$ since remember tensoring is done over $B$.
    It is left to show the Leibniz rule :
    \begin{equation*}
    \begin{split}
        d(aa') - ada'-a'da &=  (1\otimes aa' -aa' \otimes 1) - (a\otimes a'-aa'\otimes 1) -(a' \otimes a - a'a \otimes 1) \\
        & = 1\otimes aa' - a\otimes a' -a'\otimes a + a'a \otimes 1\\
        & = (1\otimes a - a\otimes1)(1\otimes a'- a'\otimes 1) \in I^2
    \end{split}
    \end{equation*}
    Since the maps $a\mapsto 1\otimes a$ and $a\mapsto a\otimes 1$ from $A$ to $A\otimes_B A$ are $B$-linear, then so is their difference $d$.\\
    It is left to check that the induced map $\phi : \Omega_{A/B}\rightarrow I/I^2$ sending $da$ to $1\otimes a-a\otimes 1$ is an isomorphism. 
    We can define the inverse $\psi :I/I^2\rightarrow\Omega_{A/B}$ sending $\sum x_i\otimes y_i$ to $\sum x_idy_i$. \\
    It is $B$-linear and an inverse to $\phi$:
    $$\psi\circ\phi: da \mapsto\psi(1\otimes a-a\otimes 1) = 1da - ad1 = da, \textit{ and}$$
    $$\phi\circ\psi : \sum_ix_i\otimes y_i \mapsto \phi(\sum_ix_idy_i) = \sum_i x_i(1\otimes y_i - y_i\otimes 1) = \sum_ix_i\otimes y_i$$
    $\phi$ is clearly injective, and it is surjective as well since every element of $I/I^2$ is a linear combination of $1\otimes a-a\otimes 1$ so an image of $da$ under $\phi$.
\end{proof}

\noindent With the above construction and analogous description, $\Omega_{A/B}$ and the tangent space become intrinsic notions, we use them to define the sheaf of K\"ahler differentials on schemes.
Let $f: X\rightarrow Y$ be a morphism of schemes (assumed to be separated) and consider the diagonal morphism $\Delta : X\rightarrow X\times_Y X$.
Remember it is a closed embedding, so we can consider the ideal sheaf $\mathcal{I}$ of of $X$ in $Y$ just as in Lemma \ref{ideal_sheaf}.

\begin{definition}
    We define the \textbf{sheaf of K\"ahler differentials} $\Omega_{X/Y}$ of $X$ over $Y$ as the pull-back $\Delta^*(\mathcal{I}/\mathcal{I}^2)$ on $X$. 
\end{definition}

\noindent Note that it is a quasi-coherent sheaf (see Appendix \ref{annexe_quasi_coherent}).
We can verify that in the affine case, it is consistent with our previous definition of module of relative differentials.
Let $U=\Spec\:A$ and $V=\Spec\:B$ be an affine open subsets of $X$ and $Y$ respectively such that $f(U) \subset V$, then $U\times_V U$ is an affine open subset of $X\times_Y X$ isomorphic to $\Spec\:(A\otimes_B A)$.
$\Delta(X)\cap (U\times_V U)$ is the closed subscheme defined by the kernel of the associated diagonal homomorphism $\delta :A\otimes _B A\rightarrow A$.
That way we see that $\mathcal{I}/\mathcal{I}^2$ is the sheaf of modules associated to the $A\otimes_BA$-module $I/I^2$. 
And the restricted sheaf of relative differentials $\Omega_{U/V}$ is the sheaf associated to $I/I^2$ as an $A$-module, so $\Omega_{X/Y}\vert_{U/V} \cong \widetilde{\Omega_{A/B}}$.
In the language of quasi-coherent sheaves, we have constructed a sheaf $\Omega_{X/Y}$ on $X$ whose fibre $\Omega_{X/Y}(P) = \Omega_{X/Y}\otimes_{\mathcal{O}_{X,P}} \kappa(P)$ at $P$ is now a finite-dimensional vector space over $\kappa(P)$ that is exactly the dual of the tangent space $T_PX$.

\begin{example}
\label{open_embeddings_zero_kahler}
    Let $U\hookrightarrow X$ be an open embedding of schemes, then $\Omega_{U/X} = 0$\\
    Indeed, we can check locally, take an affine open $V=\Spec\:A\subset X$ and suppose that $U\cap V = \Spec\: B$. Since $U$ is an open subset, then $B$ is a localisation of $A$ so $B=S^{-1}A$ for a multiplicative set $S\subset A$. And as we saw in Example \ref{example_kahler} $b)$, $\Omega_{A/B}=0$ thus on each affine pieces $\Omega_{\Spec\:S^{-1}A/\Spec\:A} = 0$ so globally $\Omega_{U/X} = 0$.
\end{example}

\noindent Similarly to flatness, algebraic properties of the module of K\"ahler differentials carry over to the sheaf of relative differentials.

\begin{proposition}[Base change]
\label{base_change_Kahler_sheaf}
    Let $f:X\rightarrow Y$ be a morphism of schemes and consider the following fibre product diagram for any morphism $g:Y'\rightarrow Y$ with $X' = X\times_Y Y'$,
    \[
    \begin{tikzcd}
        X'  \arrow[r,"f'"] \arrow[d, "\rho_X"']
        & Y' \arrow[d,"g"] \\
        X \arrow[r,"f"']
        &  Y
    \end{tikzcd}
    \]
    Then $\Omega_{X'/Y'} \cong \rho_X^*(\Omega_{X/Y})$.
\end{proposition}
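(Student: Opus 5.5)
The plan is to reduce to the affine case and invoke the algebraic base change statement, Proposition \ref{Base_change_kahler_module}. Both sides of the claimed isomorphism are quasi-coherent sheaves on $X'$. So it suffices to produce a canonical map $\rho_X^*(\Omega_{X/Y})\rightarrow\Omega_{X'/Y'}$ and check that it is an isomorphism on an affine open cover of $X'$. Canonicity ensures that the local isomorphisms glue.

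To construct the global map, I will use the definition of $\Omega_{X/Y}$ as $\Delta^*(\mathcal{I}/\mathcal{I}^2)$. The map $\rho_X$ induces a morphism $X'\times_{Y'}X'\rightarrow X\times_Y X$ compatible with the diagonals $\Delta'$ and $\Delta$. Indeed, $X'\times_{Y'}X' \cong (X\times_Y X)\times_Y Y'$, and $\Delta'$ is the base change of $\Delta$. The ideal sheaf $\mathcal{I}$ of $\Delta(X)$ therefore pulls back into the ideal sheaf $\mathcal{I}'$ of $\Delta'(X')$, which gives a map on conormal sheaves $\rho_X^*(\mathcal{I}/\mathcal{I}^2)\rightarrow \mathcal{I}'/\mathcal{I}'^2$ after pulling back along the diagonals. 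Alternatively, and more concretely, the map can be described by $a\otimes db\mapsto a\,d(b\otimes 1)$ on local sections. This is the form I will use to compare with the affine picture.

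Locally, pick affine opens $V=\Spec B\subset Y$, $V'=\Spec B'\subset Y'$ with $g(V')\subset V$, and $U=\Spec A\subset f^{-1}(V)$. Then $U\times_V V'=\Spec(A\otimes_B B')$ is an affine open of $X'$, and such opens cover $X'$. By the discussion following the definition of the sheaf of Kähler differentials, $\Omega_{X/Y}\vert_U\cong\widetilde{\Omega_{A/B}}$ and $\Omega_{X'/Y'}\vert_{U\times_V V'}\cong\widetilde{\Omega_{A'/B'}}$ with $A'=A\otimes_B B'$. Pullback of a quasi-coherent sheaf along an affine morphism corresponds to extension of scalars, so $\rho_X^*(\Omega_{X/Y})$ restricted there is $\widetilde{\Omega_{A/B}\otimes_A A'}=\widetilde{\Omega_{A/B}\otimes_B B'}$. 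Proposition \ref{Base_change_kahler_module} then gives the isomorphism $\Omega_{A/B}\otimes_B B'\cong\Omega_{A'/B'}$. I will note that it is given by exactly the formula $a\otimes db\otimes b'\mapsto (a\otimes b')\,d(b\otimes1)$. The underlying reason is that $A'\otimes_{B'}A'\cong (A\otimes_B A)\otimes_B B'$ and that the kernel of multiplication, together with $I/I^2$, is compatible with this base change via Theorem \ref{second_def_Kalher}.

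The main obstacle is the bookkeeping. I must check that the local isomorphisms agree with the restriction of the globally defined map, so that they glue, and that they do not depend on the chosen affine charts. Since the map is given by the intrinsic formula on differentials and is compatible with localisation (Proposition \ref{localization_kahler_module}), agreement on overlaps follows by passing to smaller distinguished opens. I expect to spend the most care on verifying that the formula is the same one in both descriptions.
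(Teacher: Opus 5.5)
Your proposal is correct and takes essentially the same approach as the paper, whose proof is just the one-line reduction to the affine statement, Proposition~\ref{Base_change_kahler_module}. Building the canonical global map $\rho_X^*(\Omega_{X/Y})\to\Omega_{X'/Y'}$ first is what makes that reduction rigorous, and it also removes the gluing concern you raise at the end: once the map is global, being an isomorphism is local on $X'$, so checking it on the opens $U\times_V V'=\Spec(A\otimes_B B')$ is enough.
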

\begin{proof}
    Follows from Proposition \ref{Base_change_kahler_module}.
\end{proof}

\noindent There is an exact sequence similar to the one in Proposition \ref{exact_seq_differential_module}, for morphisms of schemes.
\begin{proposition}
\label{exact_seq_sheaves_differential}
    Let $f:X\rightarrow Y$ and $g:Y\rightarrow Z$ be morphisms of schemes. Then there is an exact sequence of sheaves on $X$,
    $$f^*\Omega_{Y/Z} \rightarrow \Omega_{X/Z} \rightarrow \Omega_{X/Y} \rightarrow 0.$$
\end{proposition}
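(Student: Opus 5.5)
The plan is to reduce to the affine algebraic statement, Proposition \ref{exact_seq_differential_module}, and glue. The maps are defined globally and then exactness is checked locally on $X$. A sequence of sheaves of $\mathcal{O}_X$-modules is exact if and only if it is exact on every member of an open cover. All three sheaves are quasi-coherent, including $f^*\Omega_{Y/Z}$ as the pull-back of a quasi-coherent sheaf. So it suffices to check exactness on affine opens of a suitable cover, where it becomes the exactness of the associated modules.

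First I would construct the maps. The map $\Omega_{X/Z}\rightarrow\Omega_{X/Y}$ comes from the closed embedding $X\times_Y X\hookrightarrow X\times_Z X$. The diagonal $\Delta_{X/Z}$ factors through it, which gives a surjection of ideal sheaves modulo squares. Locally it is simply $da\mapsto da$, which is well defined because every $Y$-linear relation is in particular a $Z$-linear one.

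For $f^*\Omega_{Y/Z}\rightarrow\Omega_{X/Z}$, I would use adjunction $f^*\dashv f_*$. It is enough to give a map $\Omega_{Y/Z}\rightarrow f_*\Omega_{X/Z}$, which is locally $db\mapsto d(f^\#b)$. The derivation $f^\#\circ d$ is $Z$-linear, so the universal property of $\Omega_{B/C}$ produces this map, and the maps glue by the uniqueness in that universal property. Locally the adjoint map is $a\otimes db\mapsto a\,d(f^\# b)$.

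Next I would choose compatible affine covers. Take $W=\Spec\:C\subset Z$, then $V=\Spec\:B\subset g^{-1}(W)$, and then $U=\Spec\:A\subset f^{-1}(V)$. Such $U$ cover $X$. On $U$, the paper's identification $\Omega_{X/Y}\vert_U\cong\widetilde{\Omega_{A/B}}$ (and the same for $\Omega_{X/Z}$) applies. For the pull-back, $(f^*\Omega_{Y/Z})\vert_U\cong \widetilde{\Omega_{B/C}\otimes_B A}$, since the pull-back of $\widetilde{M}$ along $\Spec\:A\rightarrow\Spec\:B$ is $\widetilde{M\otimes_B A}$. Under these identifications the sheaf maps restrict to the maps of Proposition \ref{exact_seq_differential_module}. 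That sequence is exact, and $\widetilde{\cdot}$ is exact on $A$-modules, so the restricted sheaf sequence is exact.

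The main obstacle is the bookkeeping needed to show that the globally defined maps agree with the local algebraic maps on each affine piece. In particular, one must check that the adjunction map and the diagonal description of $\Omega$ are compatible with the identification $\Omega_{X/Y}\vert_U\cong\widetilde{\Omega_{A/B}}$. I would handle this by defining both maps directly on affines via universal properties, and then noting that on overlaps they agree by uniqueness. This makes the gluing automatic.
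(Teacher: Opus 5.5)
Your proposal is correct and takes the same route as the paper, whose proof consists only of the remark that the statement follows from the affine result, Proposition \ref{exact_seq_differential_module}. What you add are the gluing details the paper leaves implicit: defining the two maps globally via the universal property and the adjunction $f^*\dashv f_*$, choosing compatible affine covers $\Spec\:A\subset f^{-1}(\Spec\:B)$, $\Spec\:B\subset g^{-1}(\Spec\:C)$, and using that exactness is local together with exactness of $\widetilde{\cdot}$.
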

\begin{proof}
    Follows from Proposition \ref{exact_seq_differential_module}.
\end{proof}

\subsubsection{Regular Schemes and Smooth Morphisms}

Smoothness is a central and intuitive notion in geometry. In algebraic geometry, this notion is called \textit{regularity} and it is closely related to the dimension of the tangent space.
We begin by defining the tangent space of a scheme at a point and convince ourselves that it is reasonably defined.

\begin{definition}
    The \textbf{Zariski cotangent space} of a local ring $(A,\mathfrak{m})$ is the vector space $\mathfrak{m}/{\mathfrak{m^2}}$ over the residue field $A/\mathfrak{m}$ and its dual is the \textbf{Zariski tangent space}.
    Now if $X$ is a scheme, the \textbf{Zariski cotangent space} $T_pX^*$ of $X$ at $p\in X$ is the cotangent space of the local ring $\mathcal{O}_{X,p}$ (and similarly for the \textbf{Zariski tangent space} $T_pX$).
    Elements of the cotangent space are called differentials (yes, as defined previously) and elements of the tangent space are called tangent vectors.
\end{definition}

\noindent One way to see that this is the right definition is the following:

\begin{construction}
    Since we have defined tangent spaces on varieties, 
    let $X$ be a variety and $p=0$ be the origin in $X$ corresponding to the maximal ideal $\mathfrak{m} \trianglelefteq A(X)$.
    Consider the $k$-linear map 
    $$\varphi : \mathfrak{m} \rightarrow \Hom_k(TpX,k) \textbf{   } ;\textbf{ }f\mapsto df\vert_{T_pX}.$$
    By definition of the tangent space, this map is well-defined and surjective since linear maps on $T_pX$ must be combinations of the coordinate functions $dx_1,...,dx_n$
    We now show that $\ker\:\varphi = \mathfrak{m}^2$.\\
    Notice that $W := \{dg \mid g\in I(X)\}$ is a linear subspace of $k[x_1,...,x_n]$ of dimension $t$, and its zero locus $T_pX$ has dimension $n-t$. 
    So the space of linear forms vanishing on $T_pX$, (i.e. $\{ dg \mid dg\vert_{T_pX} = 0\}$) has again dimension $t$ and also contains $W$. Thus $W = \{ dg \mid dg\vert_{T_pX} = 0\}$.\\
    Now if $f\in \ker\:\varphi$, we know that $df \in W$, so there exists a polynomial $g\in I(X)$ such that $df=dg$, where $dg$ is a linear combination of the linear forms defining $W$. So $f-g$ contains no constant term (we are at the origin) and no linear term. This means exactly that $f-g\in\mathfrak{m}^2$. 
    Notice here that we have first understood $f\in \ker\:\varphi$ as the restriction of a formal polynomial, defining a variety.
    We managed to say a lot of things about $f$ as a formal polynomial and ended up with two polynomials $f$ and $g$ that are equal when restricted to $X$, hence we could write that $f = f-g$ with $f$ on the left hand side being the restriction of $f$ to $X$. 
    While this is confusing, we rather explain it this way than to introduce new notation.\\
    And since $\mathfrak{m}^2$ is generated by products $fg$ for $f,g\in\mathfrak{m}$, we have $\varphi(fg) =f(p)dg\vert_{T_pX} + g(p)df\vert_{T_pX} = 0$ since $f(p)=g(p) = 0$.\\
    By the isomorphism theorem, we can conclude that indeed $\mathfrak{m}/\mathfrak{m}^2 \cong T_pX^*$ (the dual of $T_pX$).
\end{construction}

\noindent We are now able to state the definition of regularity for local rings, 

\begin{definition}
    We say that a local ring $(R,\mathfrak{m})$ is \textbf{regular} if the dimension of the local ring is equal to the dimension of the Zariski cotangent space, i.e. $dim\:R = dim_\kappa\: \mathfrak{m}\big/\mathfrak{m}^2$, with $\kappa$ the residue field $R/\mathfrak{m}$.
    A scheme $X$ is called \textbf{regular} at a point $p$ if the local ring $\mathcal{O}_{X,p}$ is regular. Otherwise, $X$ is said to be singular at $p$.
\end{definition}

\noindent Recall that the sheaf of K\"ahler differentials encodes the data of tangent spaces of a scheme over a base scheme.
Then it would make sense to characterise regularity of a scheme through its sheaf of K\"ahler differentials.

\begin{proposition}
\label{regularity_kahler}
    Let $X$ be an irreducible scheme of finite type over $k$ an algebraically closed scheme. 
    Then $\Omega_{X/k}$ is a locally free sheaf of rank $n$ if and only if $X$ is a regular scheme over $k$.
\end{proposition}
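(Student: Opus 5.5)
The plan is to reduce everything to a computation at closed points, where $\Omega_{X/k}$ and the Zariski cotangent space can be compared directly. Set $n=\dim X$, which is finite by the lemma on schemes of finite type over a field. Since $X$ is irreducible and of finite type over the algebraically closed field $k$, every closed point $p$ has residue field $\kappa(p)=k$. Moreover $\dim \mathcal{O}_{X,p}=n$ at every closed point; this is standard dimension theory for finitely generated $k$-algebras, and I would cite it. The key local statement is the following (Hartshorne II.8.7). If $B$ is a local ring containing $k$, with residue field $k$ and maximal ideal $\mathfrak{m}$, then the map
$$\mathfrak{m}/\mathfrak{m}^2\longrightarrow \Omega_{B/k}\otimes_B k,\qquad f\mapsto df\otimes 1,$$
is an isomorphism. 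I would prove it using the exact sequence of Proposition \ref{exact_seq_differential_module} for $k\to B\to B/\mathfrak{m}^2$, combined with the description $\Omega\cong I/I^2$ of Theorem \ref{second_def_Kalher}. The problem then reduces to showing that the dual map $\Der_k(B,k)\to\Hom_k(\mathfrak{m}/\mathfrak{m}^2,k)$ is surjective. Given a linear form $h$ on $\mathfrak{m}/\mathfrak{m}^2$, I write $b=\lambda+c$ with $\lambda\in k$ and $c\in\mathfrak{m}$, and set $\partial b:=h(\bar c)$. Checking the Leibniz rule is routine, because products of elements of $\mathfrak{m}$ lie in $\mathfrak{m}^2$. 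Construction \ref{construction_tangent_space_differential} gives the same identification heuristically for affine varieties, and this lemma makes it intrinsic.

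Combining this lemma with Proposition \ref{localization_kahler_module}, which gives $(\Omega_{X/k})_p=\Omega_{\mathcal{O}_{X,p}/k}$, I get, for every closed point $p$,
$$\dim_k \Omega_{X/k}\otimes\kappa(p)=\dim_k \mathfrak{m}_p/\mathfrak{m}_p^2 .$$

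For the direction ($\Rightarrow$), suppose $\Omega_{X/k}$ is locally free of rank $n$. Then every fibre has dimension $n$, so $\dim\mathfrak{m}_p/\mathfrak{m}_p^2=n=\dim\mathcal{O}_{X,p}$ at every closed point, and $X$ is regular at its closed points. To pass to arbitrary points, I use that every point specialises to a closed point. I then invoke the theorem that a localisation of a regular local ring is regular (Serre), and cite it rather than prove it.

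For the direction ($\Leftarrow$), assume $X$ is regular. Regular local rings are domains, so $X$ is reduced, hence integral. The displayed equality shows that the fibres of $\Omega_{X/k}$ have dimension $n$ at all closed points. At the generic point $\eta$, the function field $K=\kappa(\eta)$ is a finitely generated separable extension of $k$ of transcendence degree $n$; separability holds because $k$ is perfect. Hence $\dim_K\Omega_{K/k}=n$, which I would obtain from a separating transcendence basis together with Example \ref{example_kahler}~c). Since $\Omega_{X/k}$ is coherent and $X$ is reduced, a coherent sheaf with constant fibre dimension is locally free (Hartshorne II Ex.~5.8, a Nakayama argument). Constancy holds at every point: the fibre dimension is upper semicontinuous, equals $n$ at all closed points, and equals $n$ at the generic point, so it equals $n$ at every point.

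I expect the main obstacle to be this last step. Both the reducedness needed to pass from constant fibre dimension to local freeness, and the control of fibres at non-closed points, rely on facts not proved earlier in the paper. My first approach would be to cite them, namely Hartshorne II.8.15 together with Ex.~5.8, and to state clearly that they are being imported rather than proved.
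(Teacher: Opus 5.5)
Your proposal is correct and is essentially the argument behind the paper's proof, which only cites \cite{Hartshorne}, Theorem II.8.15. It uses the closed-point identification $\mathfrak{m}_p/\mathfrak{m}_p^2\cong\Omega_{X/k}\otimes\kappa(p)$ (II.8.7) and Serre's localisation theorem for the forward direction, and the generic fibre plus a constant-rank criterion for the converse; you use Ex.~II.5.8 with upper semicontinuity where Hartshorne applies Lemma~II.8.9 at each closed stalk. One step needs more than you indicate: a separating transcendence basis $x_1,\dots,x_n$ with Example~\ref{example_kahler}~c) and the first exact sequence only gives $\dim_K\Omega_{K/k}\le n$, and the reverse inequality --- exactly what makes the fibre dimension constant --- requires that the derivations $\partial/\partial x_i$ of $k(x_1,\dots,x_n)$ extend to the finite separable extension $K$ (Hartshorne II.8.6A).
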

\begin{proof}
    See \cite{Hartshorne} Theorem $8.15$.  
\end{proof}

\noindent The following result gives us a necessary condition for regularity.

\begin{theorem}[Auslander–Buchsbaum]
    Any regular local ring is a unique factorisation domain (UFD).
\end{theorem}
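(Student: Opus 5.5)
The plan is the classical argument of Auslander and Buchsbaum: induction on $d=\dim R$, combined with the homological characterisation of regularity (Serre's theorem) and Nagata's criterion for unique factorisation. Regular local rings are integral domains, which is the standard first step, so we may speak of factorisation at all. For $d=0$ the ring $R$ is a field. For $d=1$ the maximal ideal is principal and $R$ is a discrete valuation ring, hence a principal ideal domain and in particular a UFD. So we assume $d\geq 2$ and that the statement holds for all regular local rings of smaller dimension.

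Pick $x\in\mathfrak{m}\setminus\mathfrak{m}^2$. Then $R/(x)$ is regular of dimension $d-1$, so it is a domain and $x$ is prime. By Nagata's criterion, a Noetherian domain $R$ with a prime element $x$ is a UFD as soon as the localisation $R_x=R[1/x]$ is a UFD. It therefore suffices to show that every height one prime $\mathfrak{P}$ of $R_x$ is principal; in a Noetherian domain this is equivalent to being a UFD. Write $\mathfrak{P}=\mathfrak{p}R_x$ with $\mathfrak{p}$ a height one prime of $R$.

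The homological input is Serre's theorem: a Noetherian local ring is regular if and only if it has finite global dimension, and regularity passes to localisations. Hence $\mathfrak{p}$ has a finite free resolution over $R$, and after localising, $\mathfrak{P}$ has a finite free resolution over $R_x$. For every prime $\mathfrak{q}$ of $R_x$, the local ring $(R_x)_{\mathfrak{q}}=R_{\mathfrak{q}}$ is regular of dimension $<d$, because $x\notin\mathfrak{q}$ forces $\mathfrak{q}\neq\mathfrak{m}$. By induction $R_{\mathfrak{q}}$ is a UFD, so $\mathfrak{P}_{\mathfrak{q}}$ is principal. This shows that $\mathfrak{P}$ is a projective $R_x$-module of rank one which admits a finite free resolution.

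The final step is a short lemma. A projective module $P$ with a finite free resolution is stably free, i.e. $P\oplus F\cong G$ with $F,G$ free, and a stably free module of rank one over a commutative ring is free. For the second claim, take top exterior powers: if $P\oplus R^n\cong R^{n+1}$, then $\bigwedge^{n+1}$ gives $P\otimes\bigwedge^n R^n\cong P\cong R$. Hence $\mathfrak{P}$ is principal and $R_x$ is a UFD, which completes the induction. The main obstacle is Serre's homological characterisation of regular local rings, which lies well beyond the tools developed in this thesis. In the paper I would quote it, or the whole theorem, for instance from Matsumura's \emph{Commutative Ring Theory}, Theorem 20.3, rather than prove it.
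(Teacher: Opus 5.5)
Your argument is correct, and it is the now-standard proof (as in Matsumura, Theorem 20.3). Each step checks out:
\begin{itemize}
\item $x\in\mathfrak{m}\setminus\mathfrak{m}^2$ is prime because $R/(x)$ is regular.
\item Nagata's criterion reduces the problem to $R_x$.
\item Induction together with Serre's theorem makes a height-one prime of $R_x$ a rank-one projective module with a finite free resolution, hence stably free.
\item The top exterior power then shows it is free.
\end{itemize}
The paper gives no argument at all: its proof is the single line ``See \cite{Auslander_Buchsbaum}''. The 1959 note it cites also proceeds differently from you. It settles the three-dimensional case by homological methods and relies on Nagata's earlier reduction of the general statement to dimension three. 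Your induction handles all dimensions uniformly, with the stably-free lemma doing the work that the dimension-three analysis does there. Your sketch has the merit of isolating exactly what is needed: Serre's homological characterisation, Nagata's criterion and a little linear algebra. None of these is developed in the thesis, so, as you say, a citation is unavoidable at some point. The paper's bare reference is reasonable given that the theorem is only used in passing, to get regular $\Rightarrow$ domain $\Rightarrow$ normal.

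One point of logical order matters if your proof were inserted. You use that regular local rings are integral domains as an input: for $R$ itself, for $R/(x)$, and in Nagata's criterion. The paper, in the sentence right after the statement, \emph{deduces} that regular local rings are domains from this theorem. There is no circularity, because the domain property has an independent proof, e.g.\ via $\operatorname{gr}_{\mathfrak{m}}R$ being a polynomial ring over the residue field. Still, with your argument in place, that sentence of the paper would have to become a prerequisite rather than a corollary.
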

\begin{proof}
    See \cite{Auslander_Buchsbaum}.
\end{proof}
\noindent And since, by definition, any UFD is an integral domain, regular local rings are integral domains. 
Then any point where $2$ irreducible components of a variety meet must be a singular point.

\begin{remark}
\label{normal_regular}
    From commutative algebra (Proposition $4.20$ in \cite{Eisenbud}), we have that UFD's are normal rings. 
    So for a scheme $X$, regularity at $P$ implies normality, i.e. we say that a scheme $X$ is normal at $P$ if the local ring $\mathcal{O}_{X,P}$ is normal: an integrally closed integral domain.\\
    But for schemes of degree $1$, so curves, the other implication is also true.
    Indeed, normal local rings are actually also discrete valuation rings (DVR) and in dimension $1$, regularity for a local ring is equivalent to being a DVR (again, see $11.1$, $11.2$ \cite{Eisenbud}).
    This means that for curves, regularity and smoothness are the same.
\end{remark}

\noindent Our next concern is that regularity does not refer to a base scheme.
For this reason, we introduce the notion of \textit{smooth morphisms} of schemes and say that a scheme $X$ is smooth over $k$ if $X \rightarrow \Spec\:k$ is a smooth morphism. 
One misconception is that smooth morphisms in algebraic geometry are the counterpart of smooth morphisms from differential geometry.
This is not the case, the correct picture is the one below, 
\begin{equation}
\label{relation_morphism}
\begin{split}
    \text{Submersions} & \longleftrightarrow \text{Smooth morphisms}\\
    \text{Local Isomorphism} & \longleftrightarrow \text{\'Etale morphisms}\\
    \text{Immersions} & \longleftrightarrow \text{Unramified morphisms}
\end{split}
\end{equation}
\noindent As a quick reminder, in differential geometry, a submersion is a smooth map of manifolds $f:M\rightarrow N$ whose differential $Df_p: T_pM\rightarrow T_{f(p)}N$ is surjective at any point $p\in M$. 
An immersion is similar but requires that the differential is injective at any points.
And a smooth map is called a local isomorphism if it is an immersion and a submersion at the same time.

\begin{definition}
    A morphism of schemes $f:X\rightarrow Y$ is \textbf{smooth of relative dimension $n$} if it is flat, locally of finite type and $\Omega_{X/Y}$ is locally free of rank $n$.
\end{definition}

\noindent Here, of relative dimension $n$ only means that all non-empty fibres have the same dimension $n$.
Flatness ensures that the fibres of the morphism are of the same dimension,
and $\Omega_{X/Y}$ is locally free of rank $n$ specifies what the dimensions of those fibres are; this last condition is not surprising considering Proposition \ref{regularity_kahler}. 
Again, smooth morphisms of schemes are also compatible with base change and composition.

\begin{proposition}[Base change]
\label{base_change_smooth}
    Let $f:X\rightarrow Y$ be a smooth morphisms of schemes of relative dimension $n$ and consider the following fibre product diagram for any morphism $g:Y'\rightarrow Y$ with $X'=X\times_Y Y'$, 
    Then the morphism $f': X'\rightarrow Y'$ is also smooth of relative dimension $n$.
\end{proposition}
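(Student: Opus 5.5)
The definition of smoothness of relative dimension $n$ has three parts: flat, locally of finite type, and $\Omega_{X/Y}$ locally free of rank $n$. I will check that each survives base change along $g:Y'\rightarrow Y$, using the base change results already proved for each notion. I will also check that the fibres of $f'$ are fibres of $f$ after a field extension, so that the relative dimension is unchanged.

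First, $f'$ is locally of finite type by Proposition \ref{base_change_locally_finite}. Second, flatness: apply Proposition \ref{base_change_flat_morphism} with $\mathcal{F}=\mathcal{O}_X$. This gives that $\rho_X^*\mathcal{O}_X\cong\mathcal{O}_{X'}$ is flat over $Y'$, so $f'$ is flat. Locally this is just Proposition \ref{bas_change_flat_mod} applied to $A\otimes_B B'$, with flatness at stalks handled by Proposition \ref{flatness_at_all_localizations}. Third, differentials: Proposition \ref{base_change_Kahler_sheaf} gives $\Omega_{X'/Y'}\cong\rho_X^*\Omega_{X/Y}$. Pull-back of a locally free sheaf of rank $n$ is locally free of rank $n$. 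Indeed, if $\Omega_{X/Y}\vert_U\cong\mathcal{O}_U^{\oplus n}$, then on $\rho_X^{-1}(U)$ we have $\rho_X^*\Omega_{X/Y}\cong\mathcal{O}_{\rho_X^{-1}(U)}^{\oplus n}$, because pull-back commutes with finite direct sums and $\rho_X^*\mathcal{O}_X=\mathcal{O}_{X'}$. Hence $\Omega_{X'/Y'}$ is locally free of rank $n$.

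Finally, for the remark that all non-empty fibres have dimension $n$: take $y'\in Y'$ with image $y=g(y')$. The fibre is
$$X'_{y'}=X'\times_{Y'}\Spec\:\kappa(y')\cong X_y\times_{\kappa(y)}\Spec\:\kappa(y').$$
So it is the fibre $X_y$ after extending scalars along the field extension $\kappa(y)\subset\kappa(y')$. Its dimension can be read off from the rank of $\Omega_{X'_{y'}/\kappa(y')}$, which is again $n$ by Proposition \ref{base_change_Kahler_sheaf}. Alternatively, it follows from the invariance of dimension under field extension for schemes of finite type.

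The main obstacle is this last point, making precise that relative dimension is preserved. Since the definition only imposes rank $n$ on $\Omega$, the cleanest route is to argue that the rank condition is the defining content and that it is preserved. Any extra claim about fibre dimension would be reduced to the same computation on the fibres, or quoted from \cite{Hartshorne}, Theorem III.10.2.
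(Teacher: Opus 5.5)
Your proof is correct and is the same as the paper's, which simply cites Propositions \ref{base_change_flat_morphism}, \ref{base_change_locally_finite} and \ref{base_change_Kahler_sheaf}. You additionally spell out that $\rho_X^*\mathcal{O}_X\cong\mathcal{O}_{X'}$ and that pull-back preserves local freeness of rank $n$, which the paper leaves implicit.

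Your final paragraph is not needed under the paper's definition, and of its two routes only the invariance of dimension under field extension is sound. Under this definition the rank of $\Omega$ does not determine fibre dimension: in characteristic $p$, $\Spec\:k[x]/(x^p)\rightarrow\Spec\:k$ is flat of finite type with $\Omega$ free of rank $1$, yet it has dimension $0$.
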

\begin{proof}
    By Proposition \ref{base_change_flat_morphism}, \ref{base_change_locally_finite} and \ref{base_change_Kahler_sheaf} $f'$ is smooth again.
\end{proof}

\begin{proposition}[Composition]
\label{composition_smooth}
    If $f:X\rightarrow Y$ is smooth of relative dimension $n$, and $g:Y\rightarrow Z$ is smooth of relative dimension $m$, then $g\circ f :X\rightarrow Z$ is smooth of relative dimension $n+m$.
\end{proposition}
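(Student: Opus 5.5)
We check the three defining conditions of smoothness for $g\circ f$ one at a time, using the composition results proved earlier for each ingredient. First, $g\circ f$ is locally of finite type by Proposition \ref{composition_locally_finite_type}. Second, flatness: for $p\in X$ with $q=f(p)$ and $r=g(q)$, the stalk $\mathcal{O}_{X,p}$ is flat over $\mathcal{O}_{Y,q}$ and $\mathcal{O}_{Y,q}$ is flat over $\mathcal{O}_{Z,r}$. Proposition \ref{composition_flat} (ultimately Proposition \ref{transitivity_flatness}) then shows that $\mathcal{O}_{X,p}$ is flat over $\mathcal{O}_{Z,r}$, so $g\circ f$ is flat.

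The real content is the statement that $\Omega_{X/Z}$ is locally free of rank $n+m$. We start from the exact sequence of Proposition \ref{exact_seq_sheaves_differential},
$$f^*\Omega_{Y/Z}\longrightarrow \Omega_{X/Z}\longrightarrow \Omega_{X/Y}\longrightarrow 0 .$$
Here $f^*\Omega_{Y/Z}$ is locally free of rank $m$, since pull-back preserves local freeness and rank, and $\Omega_{X/Y}$ is locally free of rank $n$. If the sequence is also exact on the left, it is a short exact sequence whose right-hand term is locally free, hence projective on affine pieces. The sequence then splits locally, giving $\Omega_{X/Z}\cong f^*\Omega_{Y/Z}\oplus\Omega_{X/Y}$ locally, which is locally free of rank $n+m$.

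The main obstacle is therefore the injectivity of $f^*\Omega_{Y/Z}\to\Omega_{X/Z}$, which fails for arbitrary morphisms and must use the smoothness of $f$. I would first reduce to the case where $Z$ is the spectrum of a field, using base change (Propositions \ref{base_change_smooth} and \ref{base_change_Kahler_sheaf}). Working at a point, injectivity can be checked after tensoring with residue fields, because all the sheaves involved are finite locally free. After this reduction, injectivity amounts to surjectivity of the dual map on tangent spaces, $T_pX\to T_{f(p)}Y$. This is exactly the submersion property of smooth morphisms in the dictionary (\ref{relation_morphism}).

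To prove it, I would use the fact that, locally, a smooth morphism of relative dimension $n$ factors through $\mathbb{A}^n_Y$. Equivalently, $\Omega_{X/Y}$ has a local basis $dx_1,\dots,dx_n$ with $x_i$ functions on $X$. On $\mathbb{A}^n_Y$ the sequence is split exact, which follows from Example \ref{example_kahler} $a)$ together with Proposition \ref{Base_change_kahler_module}. From there I would transport the splitting to $X$. If this local structure result is not available at this point in the text, the fallback is to cite the standard reference, \cite{Hartshorne} Proposition III.10.1(c), for this step.
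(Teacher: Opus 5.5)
Your bookkeeping is fine: local finite type and flatness compose, and once $0\to f^*\Omega_{Y/Z}\to\Omega_{X/Z}\to\Omega_{X/Y}\to 0$ is known to be exact on the left, local splitting gives rank $n+m$. The gap is the injectivity of $f^*\Omega_{Y/Z}\to\Omega_{X/Z}$. You never establish it, and it cannot follow from the three conditions in the paper's definition (flat, locally of finite type, $\Omega_{X/Y}$ locally free of rank $n$). Take $k$ of characteristic $p>0$ and $f\colon X=\Spec k[t]\to Y=\Spec k[s]$, $s\mapsto t^p$. Here $k[t]$ is free over $k[s]$, and $k[t]=k[s][T]/(T^p-s)$ with $d(T^p-s)=0$, so $\Omega_{X/Y}=k[t]\,dt$ is free of rank $1$. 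Yet for $g\colon Y\to\Spec k$ the map $f^*\Omega_{Y/k}\to\Omega_{X/k}$ sends $ds\mapsto d(t^p)=0$, and $\Omega_{X/k}=k[t]\,dt$ has rank $1$, not $2$. So even the proposition fails under those three conditions. The only thing excluding this example is the paper's gloss that all fibres have dimension $n$: the fibres $\Spec k[t]/(t^p-a)$ are $0$-dimensional. Any correct proof must use that clause, and your plan never does.

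None of your proposed devices supplies this input.
\begin{itemize}
\item Checking injectivity on residue fields ``because all the sheaves involved are finite locally free'' presupposes that $\Omega_{X/Z}$ is locally free, which is the conclusion. A map out of a free module can be injective on every fibre without being injective when the target is not flat (e.g.\ $k[\epsilon]/(\epsilon^2)\to k$). For the same reason, injectivity on the fibres over $Z$ does not descend, so the reduction to $Z$ a field is not justified either.
\item The submersion property is only the paper's heuristic dictionary. The remark that derives it assumes $X$ regular, i.e.\ that $\Omega_{X/k}$ already has rank $\dim X$, which is circular here.
\item A local basis $dx_1,\dots,dx_n$ of $\Omega_{X/Y}$ yields only an \emph{unramified} map $X\to\mathbb{A}^n_Y$. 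In the example it is the closed immersion onto $\{s=u^p\}\subset\mathbb{A}^2_k$, which is not flat, so your ``equivalently'' is false. Obtaining an \'etale map to $\mathbb{A}^n_Y$ is a genuine theorem that uses the fibre hypothesis.
\item Transporting the splitting along an \'etale $\pi$ needs $\pi^*\Omega_{\mathbb{A}^n_Y/Z}\cong\Omega_{X/Z}$. That is the same left exactness, in relative dimension $0$.
\end{itemize}
So the only complete part of your proposal is the fallback citation of \cite{Hartshorne} III.10.1(c), which is the paper's entire proof.

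In Hartshorne's framework, where the dimension condition is part of the definition, the missing idea is a dimension count. Right exactness gives $\dim_{k(x)}\Omega_{X/Z}\otimes k(x)\le n+m$. By flatness, every component of the fibre $X_z$ has dimension $n+m$. So at the generic point $\xi$ of a component through $x$ one gets $\dim_{k(\xi)}\Omega_{X/Z}\otimes k(\xi)\ge\dim_{k(\xi)}\Omega_{k(\xi)/k(z)}\ge\operatorname{trdeg}_{k(z)}k(\xi)=n+m$. Upper semicontinuity spreads this lower bound to $x$. Turning constant fibre rank into local freeness still needs reducedness or the local structure theory of smooth morphisms.
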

\begin{proof}
    See \cite{Hartshorne}, Proposition $10.1.\: (c)$.
\end{proof}

\begin{remark}
A nice way to see that smooth morphisms are indeed linked to submersions is by reasoning as follows,
let $f:X\rightarrow Y$ be a smooth morphism of regular schemes of relative dimension $n=dim\:X-dim\:Y$ over $k$ an algebraically closed field.
By Proposition \ref{exact_seq_sheaves_differential}, we get the exact sequence of sheaves : 
$$f^*\Omega_{Y/k} \rightarrow \Omega_{X/k} \rightarrow \Omega_{X/Y} \rightarrow 0.$$ 
Tensoring with the residue field $\kappa(P)$ for $P\in X$, we obtain a sequence of vector spaces over $\kappa(P)$,
$$f^*\Omega_{Y/k} \otimes \kappa(P) \xrightarrow{\phi} \Omega_{X/k}\otimes \kappa(P) \rightarrow \Omega_{X/Y}\otimes \kappa(P) \rightarrow 0.$$ Recall that since $X$ and $Y$ are regular schemes over $k$ and that $\Omega_{X/Y}$ is locally free of rank $n$, the now vector spaces above, have dimensions $\dim\:Y$, $\dim\:X$ and $n$ respectively.
By right-exactness, we have that $\dim (\im\phi) = \dim X - n = \dim Y$, and by the rank-nullity theorem, $\dim \ker \phi = 0$ so the map on the left is injective.
Finally, dualising it, we obtain (as seen in Construction \ref{construction_tangent_space_differential}), that the differential map $Df_P: T_PX\rightarrow T_PY$ at any point $P\in X$, coming from $f^*\Omega_{Y/k}\otimes \kappa(P)\rightarrow \Omega_{X/k}\otimes \kappa(P)$ is actually surjective.
\end{remark}

\subsubsection{\'Etale Morphisms}

Finally, we arrive at the main definitions of this section.

\begin{definition}
    A morphism of schemes $f: X\rightarrow Y$ is said to be \textbf{\'etale} if it is flat, locally of finite type and $\Omega_{X/Y}=0$. 
\end{definition}

\noindent Notice the resemblance to the definition of  smoothness for morphisms of schemes. 
The only difference being that the sheaf of relative differential need not only to be locally free, but it has to be the zero sheaf.
This can be interpreted as a ‘‘hidden" condition for \'etaleness, requiring that the morphism must be at the same time smooth and unramified (see relation picture (\ref{relation_morphism}))

\begin{definition}
    Let $f:X\rightarrow Y$ be a morphism of schemes locally of finite type, and $P\in X$. $f$ is said to be \textbf{unramified} at $P$ if the residue field of $X$ at $P$ is a separable field extension of the residue field of $Y$ at $f(P)$. And $f^{\#}(\mathfrak{m}_{f(P)})\mathcal{O}_{X,P} = \mathfrak{m}_P$ for $\mathfrak{m}_P$ and $\mathfrak{m}_{f(P)}$ are local rings of the local rings $\mathcal{O}_{X,P}$ and $\mathcal{O}_{Y,f(P)}$.
\end{definition}


\noindent We mention the following characterisation of unramified morphism through fibres.
\begin{proposition}
\label{unramified_carac}
    Let $f: X\rightarrow Y$ be a morphisms of finite type, then the following are equivalent:
    \begin{itemize}
        \item[$a)$] $f$ is unramified
        \item[$b)$] for all $Q\in Y$, the fibre $X_Q \rightarrow \Spec\:\kappa(Q)$ over $X$ is unramified.
        \item[$c)$] for all $Q\in Y$, the fibre $X_Q$ is the spectra of a finite separable $\kappa(Q)$-algebras.
        \item[$d)$] for all $Q\in Y$, the fibre $X_Q$ is a finite disjoint union of $\Spec\: k_i$, where $k_i$ are finite separable field extensions of $\kappa(Q)$ (or the spectrum of a finite \'etale $\kappa(s)$-algebra).
    \end{itemize}
\end{proposition}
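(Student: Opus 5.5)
The plan is to prove the cycle $a)\Rightarrow b)\Rightarrow c)\Leftrightarrow d)\Rightarrow a)$. The main tool is the definition of unramified at a point. Both conditions in it, separability of $\kappa(P)|\kappa(f(P))$ and $f^{\#}(\mathfrak{m}_{f(P)})\mathcal{O}_{X,P}=\mathfrak{m}_P$, only involve the local ring of the fibre. Indeed, for $Q=f(P)$ we have $\mathcal{O}_{X_Q,P}\cong \mathcal{O}_{X,P}/\mathfrak{m}_Q\mathcal{O}_{X,P}$, and the residue field of $X_Q$ at $P$ equals $\kappa(P)$. So $f$ is unramified at $P$ if and only if $\mathcal{O}_{X_Q,P}$ is a field and this field is a finite separable extension of $\kappa(Q)$. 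Finiteness of the extension comes from $f$ being of finite type: $\kappa(P)$ is then finitely generated, and algebraic over $\kappa(Q)$ because it is separable, hence finite. This gives $a)\Leftrightarrow b)$ at once, since $X_Q\to\Spec\:\kappa(Q)$ satisfies the same pointwise criterion at each $P\in X_Q$. The fibre $X_Q$ is of finite type over $\kappa(Q)$ by Proposition~\ref{base_change_locally_finite}.

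For $b)\Rightarrow d)$, let $X_Q$ be of finite type over a field $k=\kappa(Q)$ with every local ring a field finite over $k$. Each point then has residue field finite over $k$, so it is a closed point; equivalently, every local ring has dimension $0$. A Noetherian scheme of finite type over $k$ in which all points are closed has dimension $0$. On an affine piece $\Spec\:A$, the ring $A$ is a finitely generated $k$-algebra of dimension $0$, hence Artinian. An Artinian ring is a finite product of its localizations, here $A\cong\prod\mathcal{O}_{X_Q,P_i}=\prod k_i$, with finitely many points and each factor finite over $k$. Since $X_Q$ is quasi-compact and its points are isolated, $X_Q$ is a finite discrete union of points, and therefore $X_Q=\bigsqcup_i\Spec\:k_i$ with each $k_i|k$ finite separable. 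This identification, that the fibre is zero-dimensional and hence affine and Artinian, is the step I expect to be the main obstacle. I would handle it by applying Noether normalisation, or the Nullstellensatz, to each affine open piece, and then using quasi-compactness of $X_Q$ to glue the finitely many discrete pieces.

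$d)\Leftrightarrow c)$ is the definition of a finite \'etale algebra $\prod_i k_i$, so it is tautological by Theorem~\ref{etale_algebras} and the definition preceding it. Finally, $d)\Rightarrow b)$: for $X_Q=\bigsqcup\Spec\:k_i$, the local ring at the point of $\Spec\:k_i$ is $k_i$ itself. Its maximal ideal is $0$, which is generated by the image of the maximal ideal $0$ of $\kappa(Q)$, and $k_i|\kappa(Q)$ is separable, so the criterion above holds. Combined with $b)\Rightarrow a)$ from the first paragraph, this closes the cycle.
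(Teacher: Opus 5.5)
Your argument is correct and is essentially the standard proof behind the paper's bare citation of Milne (Prop.~3.2). The identification $\mathcal{O}_{X_Q,P}\cong\mathcal{O}_{X,P}/\mathfrak{m}_Q\mathcal{O}_{X,P}$ gives $a)\Leftrightarrow b)$ pointwise, and a finite-type fibre all of whose local rings are fields is zero-dimensional, so its affine pieces are Artinian and split as finite products of those fields. Only bookkeeping needs tightening: your finiteness step for $\kappa(P)|\kappa(Q)$ uses the convention that separable extensions are algebraic, and quasi-compactness of $X_Q$ needs stability of \emph{finite} type under base change (Proposition~\ref{base_change_locally_finite} only treats the local version). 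Also, $c)\Leftrightarrow d)$ is really Proposition~\ref{commut_separable_is_etale} rather than Theorem~\ref{etale_algebras}.
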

\begin{proof}
    See \cite{milneEtCoH}, Proposition $3.2$.
\end{proof}

\begin{proposition}
    A morphism of schemes $f:X\rightarrow Y$ locally of finite type is said to be \textbf{unramified} at $P\in X$ if and only  $\Omega_{X/Y} = 0$ at $P$.
\end{proposition}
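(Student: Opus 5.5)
The statement is local at $P$, so I will reduce to the fibre over $Q=f(P)$ and then to a statement about finite-dimensional algebras over a field. The stalk $(\Omega_{X/Y})_P$ is a finitely generated $\mathcal{O}_{X,P}$-module, since $f$ is locally of finite type and Example \ref{example_kahler} $a)$ together with the exact sequence of Proposition \ref{exact_seq_differential_module} give finite generation on affine pieces. By Nakayama's Lemma \ref{Nakayama_lemma}, $(\Omega_{X/Y})_P=0$ if and only if $\Omega_{X/Y}\otimes\kappa(P)=0$. By the base change Proposition \ref{base_change_Kahler_sheaf} applied to $\Spec\:\kappa(Q)\rightarrow Y$, this fibre equals $\Omega_{X_Q/\kappa(Q)}\otimes\kappa(P)$, where $X_Q$ is the scheme-theoretic fibre. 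So both conditions can be tested on the fibre. The definition of unramified at $P$ depends only on $\mathcal{O}_{X_Q,P}=\mathcal{O}_{X,P}/\mathfrak{m}_Q\mathcal{O}_{X,P}$: the condition $\mathfrak{m}_Q\mathcal{O}_{X,P}=\mathfrak{m}_P$ says exactly that this ring is the field $\kappa(P)$.

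We may therefore assume $Y=\Spec\:k$ and work in an affine neighbourhood $\Spec\:A$ of $P$ in $X_Q$, with $A$ of finite type over $k$. Write $B=A_{\mathfrak{p}}=\mathcal{O}_{X_Q,P}$ and $\mathfrak{m}=\mathfrak{p}B$. By Proposition \ref{localization_kahler_module}, $\Omega_{B/k}=(\Omega_{A/k})_{\mathfrak p}$.

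For the direction ``unramified $\Rightarrow \Omega=0$'', if $B=\kappa(P)$ is a separable extension of $k$, then $\Omega_{B/k}=0$. For a finite separable extension this is Example \ref{example_kahler} $c)$. In general I would first use the characterisation of Proposition \ref{unramified_carac} to see that the fibre is a disjoint union of spectra of finite separable extensions, so that $\kappa(P)|k$ is finite. This gives $(\Omega_{X_Q/\kappa(Q)})_P=0$, and Nakayama lifts it back to $(\Omega_{X/Y})_P=0$.

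For the converse, assume $\Omega_{B/k}=0$. The conormal sequence $\mathfrak{m}/\mathfrak{m}^2\xrightarrow{\;d\;}\Omega_{B/k}\otimes\kappa(P)\rightarrow\Omega_{\kappa(P)/k}\rightarrow 0$, a variant of Proposition \ref{exact_seq_differential_module} for a quotient, shows first that $\Omega_{\kappa(P)/k}=0$. I expect the main obstacle to be showing that $\kappa(P)|k$ is finite, so that Theorem \ref{separable_zero_kahler} applies. My plan is to argue that $\Omega_{A/k}$ vanishes on a neighbourhood of $P$, since it is coherent and has zero stalk. On such an open set, $\Omega$ of a finite-type $k$-scheme has rank at least the dimension at generic points of positive dimension; this is the rank statement in the spirit of Proposition \ref{regularity_kahler}, applied after passing to $\overline{k}$ via base change. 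Hence the neighbourhood is zero-dimensional and $P$ is a closed point with $\kappa(P)$ finite over $k$. Separability of $\kappa(P)|k$ then follows from Theorem \ref{separable_zero_kahler}.

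It remains to show $\mathfrak{m}=0$ in $B$. Because $\kappa(P)|k$ is finite separable, it lifts into the completion of $B$, which makes the map $d:\mathfrak{m}/\mathfrak{m}^2\rightarrow\Omega_{B/k}\otimes\kappa(P)$ injective: for $B\supset\kappa(P)$, $d$ is split by the derivation $B\rightarrow\mathfrak{m}/\mathfrak{m}^2$, $b\mapsto b-\overline{b}$. Hence $\mathfrak{m}/\mathfrak{m}^2=0$, and Nakayama gives $\mathfrak{m}=0$ since $B$ is Noetherian local. Therefore $\mathfrak{m}_Q\mathcal{O}_{X,P}=\mathfrak{m}_P$, and $f$ is unramified at $P$. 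If the splitting argument proves too delicate, I would instead cite \cite{milneEtCoH}, where this equivalence is proved in the same spirit as Proposition \ref{unramified_carac}.
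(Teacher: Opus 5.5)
The step that fails is in the forward direction. Proposition~\ref{unramified_carac} characterises morphisms that are unramified at \emph{every} point. You only know that $f$ is unramified at $P$, and the fibre $X_Q$ may have ramified points elsewhere. So you cannot conclude that $X_Q$ is a disjoint union of spectra of finite separable extensions. Shrinking to a neighbourhood of $P$ on which $f$ is unramified would need openness of the unramified locus. That openness is normally deduced from the very equivalence you are proving, so this repair is circular.

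The underlying issue is definitional: finiteness has to be part of the hypothesis. If ``separable'' in the paper's definition is allowed to include transcendental, separably generated extensions, the forward implication is false. At the generic point $\eta$ of $\mathbb{A}^1_k\rightarrow\Spec\:k$ one has $\mathfrak{m}_Q\mathcal{O}_{X,\eta}=0=\mathfrak{m}_\eta$, and $k(t)|k$ is purely transcendental, yet $(\Omega_{X/k})_\eta=k(t)\,dt\neq 0$. So read the definition as in \cite{milneEtCoH}: $\kappa(P)|\kappa(Q)$ is finite and separable. Finite is equivalent to algebraic here, since $\kappa(P)=\operatorname{Frac}(A/\mathfrak{p})$ is a finitely generated field over $\kappa(Q)$. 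Then $\mathcal{O}_{X_Q,P}=\kappa(P)$, and $\Omega_{\kappa(P)/\kappa(Q)}=0$ by the argument of Example~\ref{example_kahler}~$c)$, which works for any separable algebraic extension. Your base-change and Nakayama reduction then finishes the direction without Proposition~\ref{unramified_carac}.

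The converse is sound, but two points need tightening.
\begin{itemize}
\item The coefficient field lies in $\widehat{B}$ (or already in the Artinian ring $B/\mathfrak{m}^2$), not in $B$. It must also contain $k$, so that $b\mapsto b-\overline{b}$ is $k$-linear. Both come from Hensel's lemma applied to the separable minimal polynomial over $k$ of a primitive element of $\kappa(P)$.
\item Define the derivation on $\widehat{B}$ with values in $\widehat{\mathfrak{m}}/\widehat{\mathfrak{m}}^2=\mathfrak{m}/\mathfrak{m}^2$ and restrict it to $B$. The restriction is a $k$-derivation equal to the projection on $\mathfrak{m}$, hence it splits $d$.
\end{itemize}
Your dimension argument for finiteness of $\kappa(P)|k$ is correct. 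It is quicker, though, to apply $\dim_{\kappa(P)}\Omega_{\kappa(P)/k}\geq\operatorname{trdeg}_k\kappa(P)$ directly to the finitely generated field $\kappa(P)$.

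The paper gives no argument beyond the citation. The usual proof treats the global statement, where the fibre criterion may legitimately be used, and base-changes to an algebraically closed field. There closed points have residue field equal to the base, and $\mathfrak{m}/\mathfrak{m}^2\cong\Omega\otimes\kappa$ holds outright (\cite{Hartshorne}, Proposition II.8.7). Your Hensel-lifted coefficient field takes the place of the algebraically closed base. This lets you prove the pointwise statement the paper actually asserts directly over $\kappa(Q)$.
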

\begin{proof}
    See \cite{milneEtCoH} Proosition $3.5$.
\end{proof}
\noindent Thus, from Example \ref{open_embeddings_zero_kahler}, we have that open embeddings are unramified (in fact, any immersion is unramified) and with the previous proposition, we can reformulate our definition for \'etale morphisms of schemes.

\begin{definition}
\label{etale=flat+unramified}
    A morphism of schemes $f:X\rightarrow Y$ is said to be \'etale if it is equivalently flat and unramified.
\end{definition}
\noindent From this definition, it is clear that \'etale morphisms must behave like local isomorphisms, being at the same times submersions and immersions.

\begin{examples}
    From previous examples, open embeddings are \'etale.
\end{examples}

\begin{proposition}[Base change]
\label{base_change_etale}
    Let $f: X\rightarrow Y$ be an \'etale morphism of schemes and consider $f' : X' \rightarrow Y'$ the base change of $f$.
    Then $f'$ is \'etale again.
\end{proposition}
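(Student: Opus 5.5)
The plan is to use the definition of \'etale as flat, locally of finite type and $\Omega_{X/Y}=0$. Each of the three conditions is already known to be stable under base change, so the proof is mostly bookkeeping. Fix $g:Y'\rightarrow Y$, set $X'=X\times_Y Y'$, and write $f':X'\rightarrow Y'$ and $\rho_X:X'\rightarrow X$ for the projections, as in the diagram of Proposition \ref{base_change_Kahler_sheaf}.

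The three conditions are checked in turn.
\begin{itemize}
\item[$a)$] \textbf{Finite type.} Proposition \ref{base_change_locally_finite} gives directly that $f'$ is locally of finite type.
\item[$b)$] \textbf{Flatness.} Apply Proposition \ref{base_change_flat_morphism} with $\mathcal{F}=\mathcal{O}_X$, which is flat over $Y$ by hypothesis. It yields that $\rho_X^*\mathcal{O}_X$ is flat over $Y'$. Since $\rho_X^*\mathcal{O}_X=\mathcal{O}_{X'}$, this says exactly that $X'$ is flat over $Y'$.
\item[$c)$] \textbf{Differentials.} Proposition \ref{base_change_Kahler_sheaf} gives $\Omega_{X'/Y'}\cong\rho_X^*(\Omega_{X/Y})$. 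The pull-back of the zero sheaf is zero, so $\Omega_{X'/Y'}=0$.
\end{itemize}
Together these show that $f'$ is \'etale.

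The only step I expect to need care is the identification $\rho_X^*\mathcal{O}_X=\mathcal{O}_{X'}$ and the translation of flatness of a sheaf into flatness of the morphism. I would check both locally on affines $\Spec A\rightarrow\Spec B\leftarrow\Spec B'$. There $X'=\Spec(A\otimes_BB')$, and flatness of $A\otimes_BB'$ over $B'$ is Proposition \ref{bas_change_flat_mod}. Flatness at stalks then follows from Proposition \ref{flatness_at_all_localizations}, because a localization of a flat module is flat.

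One could instead argue through the equivalent Definition \ref{etale=flat+unramified}, using the fibre criterion Proposition \ref{unramified_carac}. The fibre of $f'$ over $y'\in Y'$ is $X_{g(y')}\otimes_{\kappa(g(y'))}\kappa(y')$. A finite \'etale algebra stays finite \'etale after extending the base field, by Theorem \ref{etale_algebras}~$c)$, since $(A\otimes_k k')\otimes_{k'}\overline{k'}=A\otimes_k\overline{k'}$ is reduced when $A\otimes_k\overline{k}$ is. This route is a fallback only; the $\Omega$ route above is shorter.
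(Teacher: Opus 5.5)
Your proposal is correct and is essentially the paper's argument. The paper gets flatness and local finite type from Proposition \ref{base_change_smooth} on stability of smoothness under base change, which is itself proved from Propositions \ref{base_change_flat_morphism}, \ref{base_change_locally_finite} and \ref{base_change_Kahler_sheaf}, and then concludes with $\Omega_{X'/Y'}\cong\rho_X^*(\Omega_{X/Y})=0$ exactly as in your step $c)$; you simply unpack that smoothness citation into its three ingredients.
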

\begin{proof}
    Since smooth morphisms are stable under base change (\ref{base_change_smooth}), we need to show that unramified morphisms are stable under base change.
    But they clearly are since by Proposition \ref{base_change_Kahler_sheaf} and by the fact that the pull-back of a zero sheaf is again a zero sheaf.
\end{proof}

\begin{proposition}[Composition]
\label{composition_etale}
    Let $f:X\rightarrow Y$ and $g:Y \rightarrow Z$ be two \'etale morphisms of schemes. Then $g\circ f:X\rightarrow Z$ is also \'etale.
\end{proposition}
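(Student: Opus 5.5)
The plan is to check separately each of the three conditions in the definition of an \'etale morphism: flatness, being locally of finite type, and vanishing of the sheaf of relative differentials. Each of these has already been shown, earlier in this section, to be stable under composition.

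First, since $f$ and $g$ are both locally of finite type, Proposition \ref{composition_locally_finite_type} shows directly that $g\circ f$ is locally of finite type.

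Second, for flatness, work at a point $P\in X$ and set $Q=f(P)$ and $R=g(Q)$. Flatness of $f$ at $P$ says that $\mathcal{O}_{X,P}$ is flat over $\mathcal{O}_{Y,Q}$. Flatness of $g$ at $Q$ says that $\mathcal{O}_{Y,Q}$ is flat over $\mathcal{O}_{Z,R}$. The local homomorphisms compose, and Proposition \ref{composition_flat}, which rests on the transitivity statement of Proposition \ref{transitivity_flatness}, then gives that $\mathcal{O}_{X,P}$ is flat over $\mathcal{O}_{Z,R}$. The one point needing care is that the transitivity statement must be read in the form: if $B$ is a flat $A$-algebra and $N$ is a flat $B$-module, then $N$ is a flat $A$-module. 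This follows from $N\otimes_A P\cong N\otimes_B(B\otimes_A P)$, since the right-hand side is a composite of two exact functors.

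Third, and this is the only substantive step, apply Proposition \ref{exact_seq_sheaves_differential} to the pair $X\xrightarrow{f}Y\xrightarrow{g}Z$. This gives an exact sequence of $\mathcal{O}_X$-modules
$$f^*\Omega_{Y/Z}\rightarrow \Omega_{X/Z}\rightarrow \Omega_{X/Y}\rightarrow 0.$$
Since $g$ is \'etale, $\Omega_{Y/Z}=0$, and so its pull-back $f^*\Omega_{Y/Z}$ is zero. Since $f$ is \'etale, $\Omega_{X/Y}=0$. The middle term $\Omega_{X/Z}$ therefore sits between two zero sheaves in an exact sequence, so it vanishes. Hence $g\circ f$ is flat, locally of finite type and has $\Omega_{X/Z}=0$, that is, it is \'etale.

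Equivalently, one could use Definition \ref{etale=flat+unramified} and show that unramified morphisms compose, by checking residue field separability and the condition on maximal ideals at the level of local rings. The differentials argument is cleaner, and I would follow it. I do not expect a real obstacle. The only points to state carefully are that the pull-back of the zero sheaf is zero, and the correct module-theoretic form of transitivity of flatness noted in the second step.
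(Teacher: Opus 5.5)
Your proposal is correct and follows the paper's proof exactly. Locally of finite type and flatness come from Propositions \ref{composition_locally_finite_type} and \ref{composition_flat}, and $\Omega_{X/Z}=0$ is read off the exact sequence of Proposition \ref{exact_seq_sheaves_differential} using $f^*\Omega_{Y/Z}=0$ and $\Omega_{X/Y}=0$. Your remark that transitivity of flatness should be read as ``$B$ flat over $A$ and $N$ flat over $B$'' usefully corrects the loose wording of Proposition \ref{transitivity_flatness}.
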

\begin{proof}
    By Propositions \ref{composition_locally_finite_type}, \ref{composition_flat} 
    and since $\Omega_{X/Y}$ and $f^*\Omega_{Y/Z}$ are zero sheave, by Proposition \ref{exact_seq_sheaves_differential}, $\Omega_{X/Z}= 0$.
\end{proof}

\begin{proposition}
\label{clopen_immersion}
    Let $f:X\rightarrow Y$ be a finite \'etale morphism,. 
    Then the diagonal morphism $\Delta : X\rightarrow X\times_YX$ coming from $f$ is an open and closed immersion (in particular $\Delta$ is an isomorphism of $X$ onto its image).
\end{proposition}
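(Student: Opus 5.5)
The diagonal $\Delta : X\rightarrow X\times_YX$ is a closed immersion because all schemes are assumed separated, so the work is to show it is also an open immersion. The plan is to prove that $\Delta$ is \'etale. An \'etale closed immersion is then an open immersion, since flat plus locally of finite type gives openness by Proposition \ref{flat_is_open}.

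To see that $\Delta$ is \'etale, consider the two projections $\pr_1,\pr_2 : X\times_YX\rightarrow X$.
\begin{itemize}
    \item[$a)$] By base change (Proposition \ref{base_change_etale}), $\pr_1$ is \'etale, being the base change of $f$ along $f$.
    \item[$b)$] We have $\pr_1\circ\Delta = \id_X$, which is \'etale.
    \item[$c)$] I would then invoke the standard cancellation property: if $g\circ h$ is \'etale and $g$ is \'etale, then $h$ is \'etale.
\end{itemize}
Cancellation is not stated earlier in the paper, so it has to be proved. For unramifiedness, the exact sequence of Proposition \ref{exact_seq_sheaves_differential} gives $\Omega_{X/X\times_YX}$ as a quotient of $\Omega_{X/Y}=0$; alternatively, one can check directly that a closed immersion is unramified, since $\Omega_{(A/I)/A}=0$. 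For flatness, I would factor $h$ as the graph $X\rightarrow X\times_Z W$ followed by the base change of $g$. The graph is itself a base change of the diagonal of $g$, which reduces flatness of $\Delta$ back to the case at hand.

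That circularity is why flatness of $\Delta$ is the main obstacle, and I would attack it directly by working locally. Since $\Delta$ is a closed immersion, it is flat at a point $P$ exactly when, near $\Delta(P)$, the ideal sheaf $\mathcal{I}$ of the diagonal vanishes. By Theorem \ref{second_def_Kalher}, $\mathcal{I}/\mathcal{I}^2$ pulls back to $\Omega_{X/Y}=0$, so $\mathcal{I}=\mathcal{I}^2$ along the diagonal. Locally on $Y=\Spec\:B$ we have $X=\Spec\:A$ with $A$ finite over $B$, so $A\otimes_BA$ is Noetherian and $I$ is finitely generated. The Lemma preceding Theorem \ref{separable_zero_kahler} (the Nakayama argument) then gives $I=e(A\otimes_BA)$ for an idempotent $e$.

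Hence $\Spec(A\otimes_BA)=V(e)\sqcup V(1-e)$, and the diagonal $V(I)=V(e)$ is a connected component. It is therefore open as well as closed, and $\Delta$ identifies $X$ with this clopen subscheme, with structure sheaf $(A\otimes_BA)/e\cong (A\otimes_BA)_{1-e}$. This idempotent computation in fact proves the statement directly, without the cancellation detour. Finiteness is used only to guarantee that $A\otimes_BA$ is Noetherian, so that $I$ is finitely generated, and to make the affine reduction global: cover $Y$ by affines $V_i$ with affine preimages, and note that the clopen pieces over each $V_i$ glue because the diagonal is defined globally.
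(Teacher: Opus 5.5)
Your argument is correct and is essentially the paper's proof. Both get $\mathcal{I}=\mathcal{I}^2$ from $\Omega_{X/Y}=0$ and then apply Nakayama to the finitely generated (coherent) ideal of the diagonal: you use the affine idempotent form $I=e(A\otimes_BA)$, while the paper argues stalkwise to get $\mathcal{I}=0$ on an open neighbourhood of $\Delta(X)$. The cancellation detour is unnecessary. Also, $V(e)$ is a clopen union of components rather than necessarily a single connected component, but clopenness is all you use.
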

\begin{proof}
    Since $f$ is finite, it is separated and hence the diagonal morphism $\Delta : X\rightarrow X\times_YX$ is a closed immersion, 
    it is left to show that it is an open map. 
    Consider $U$ an open subscheme of $X\times_YX$ such that $\Delta:Y\rightarrow U$ is a still a closed immersion.
    Let $\mathcal{I}$ be the sheaf of ideals on $X\times_YX$, then restricted to $\Delta(X)$, the quotient $\mathcal{I}/\mathcal{I}^2\cong \Omega_{X/Y}$. \\
    Since we are in the Noetherian setting, $X\times_YX$ is Noetherian, making the ideal sheaf $\mathcal{I}$ a coherent sheaf, so we can apply Nakayama's lemma \ref{Nakayama_lemma}. 
    Thus obtaining that $\mathcal{I}=0$ on an open subset $U$ of $X\times_YX$, restricting the closed immersion $\Delta(X) \hookrightarrow X\times_YX$ to an isomorphism $\Delta(X) \cong U$. So $\Delta(X)$ is both open and closed in $X\times_YX$.
\end{proof}

\noindent Notice in this case, $X$ is isomorphic to a union of connected component of $X\times_YX$.

\begin{example}
    A morphism of affine schemes $\Spec\:B \rightarrow \Spec\: A$ is finite \'etale if and only if $B$ is a finite projective $A$-module and for every point $P\in \Spec\:A$, $B\otimes_A\kappa(P)$ is a finite separable algebra over the residue field.
\end{example}

\begin{example}
\label{example_etale_over_field}
    Let $k$ be a field.
    A scheme $X=\Spec\:A$ over $k$ is finite \'etale if and only if $A$ is a finite \'etale $k$-algebra.
    Since $X$ is an affine scheme over $k$ it is automatically finitely flat, so it is \'etale if and only if $\Omega_{X/k} = 0$ which is equivalent to asking that $\Omega_{A/k}=0$. By Theorem \ref{separable_zero_kahler} this is similar to proving that $A$ is a finite \'etale $k$-algebra.
    Moreover, by the following equation:
    $$\Spec\: A = \Spec \Big( \prod_{i=1}^r L_i\Big) = \coprod_{i=1}^r \Spec\:L_i$$
    if $X$ is connected, $A$ is actually a single separable field extension of $k$.
\end{example}

\subsection{Finite \'Etale Covers and Algebraic Fundamental Groups of Schemes}

Remember that we are forgoing a Noetherian investigation; all our rings and schemes are to be Noetherian. 
In that case, we may replace flat by locally free and we can restrict to working with morphisms of finite type. 
Every scheme is defined over a base scheme $S$.
And since \'etale morphisms are insensitive to base change, we can consider the \textit{geometric fibres} instead of the classical fibres. One reason to consider such geometric fibres is the following construction.

\begin{construction}
    Let $f:X\rightarrow S$ be a finite \'etale morphism of schemes. 
    A \textbf{geometric point} $\overline{s}$ on $S$ is a morphism of schemes $\overline{s}:\Spec\:\Lambda \rightarrow S$, such that the image of $\overline{s}$ is a point $s\in S$ and where $\Lambda$ is an algebraically closed field extension of the residue field $\kappa(s)$.
    The \textbf{geometric fibre} $X_{\overline{s}}$ of $f$ over $\overline{s}$ is defined as the fibre product $X\times_S\Spec\:\Lambda$.
    It is often more useful to work with geometric fibres since it results in working with schemes defined over algebraically closed fields.
    Using Theorem \ref{etale_algebras}, we can actually conclude that the geometric fibre $X_{\overline{s}}$ has finitely many points.
    Indeed if $S = \Spec\:A$ and $X=\Spec\:B$ locally with $B$ a finite $A$-algebra, then $X_{\overline{s}} = \Spec\:(B\otimes_A\Lambda)$ with $B\otimes_A\Lambda$ a finite  ($n$-)dimensional $\Lambda$-algebra, invoking Theorem \ref{etale_algebras} we obtain that $B\otimes_A\Lambda \cong \Lambda \times ...\times\Lambda$ ($n$ times). 
    Therefore $X_{\overline{s}} = \Spec\:\Lambda \:\sqcup \:...\sqcup \:\Spec\:\Lambda$, a scheme consisting of $n$ points.
\end{construction}

\noindent So, in light of Proposition \ref{unramified_carac} and Corollary \ref{etale=flat+unramified}, we have an analogue definition. 

\begin{definition}
    A finite morphisms of schemes $f:X\rightarrow S$ is said to be \textbf{finite \'etale} if it is locally free (flat) and if each fibre $X_s$ of $f$ is the spectrum of a finite \'etale $\kappa(s)$-algebra (unramified).
\end{definition}

\noindent Notice from Propositions \ref{finite_is_closed} and \ref{flat_is_open}, that the image of a finite \'etale morphisms is both open and closed.

\begin{remark}
    Resuming our discussion over geometric fibres, we get from Theorem \ref{etale_algebras} that the fibres $X_s$ are spectra of a finite \'etale $\kappa(s)$-algebra if and only if its geometric fibres are of the form $\Spec\:(\Lambda \times ...\times \Lambda)$, or in other words that they are finite disjoint union of points defined over $\Lambda$ (when $f$ is a finite \'etale morphism).
This is the main reason behind our preference for the geometric fibres.
\end{remark}

\noindent In the following, we provide a theory of Galois ‘‘\`a la Grothendieck" for finite \'etale covers analogous to the theory developed for the classification of field extensions and of covering spaces in the first chapter. 

\subsubsection{Finite \'Etale Coverings}
\label{finite_etale_coverings}

\begin{definition}
    A morphism $f:X\rightarrow S$ of schemes is a \textbf{finite \'etale cover} if $f$ is a surjective finite \'etale morphism. 
    And $f$ is called a \textbf{trivial finite \'etale cover} if $X$ as a scheme over $S$ is isomorphic to a disjoint union of copies of $S$, and the map $f$ restricts to identity on each component.
\end{definition}

\begin{example}
    The simplest case would be to check how finite \'etale coverings would look like over $S=\Spec\:k$ for $k$ an algebraically closed field.
    Let us consider $X$ any finite \'etale covering over $S$. Since $f$ is finite and \'etale, we know that $X$ is the spectrum of a finite \'etale $k$-algebra $A$. 
    And by Theorem \ref{etale_algebras}, we have that $A \cong A\otimes_k\overline{k} \cong k^{\oplus^n}$ because $k$ is algebraically closed.
    So $X = \Spec\:k\: \sqcup \:...\sqcup\:\Spec\:k$ ($n$ times), meaning that $\Spec\:k$ only has trivial finite \'etale covers which makes sense since $S$ is just a point. 
\end{example}

\noindent In order to make more sense of trivial coverings, we show that finite \'etale coverings are locally trivial, we want the situation to be analogous to that of topological coverings.
Notice that the restriction of a cover $Y\rightarrow X$ above an open subset $U\subset X$ is none but the fibre product $Y\times_XU$, the proposition below then proves our claim.

\begin{proposition}
\label{etale_cover_locally_trivial}
    Let $f:X\rightarrow S$ be an affine surjective morphism of schemes with $S$ connected. Then $f$ is a finite \'etale cover if and only if, there is a finite, locally free and surjective morphism $g:Y\rightarrow S$ such that $X\times_SY$ is a trivial cover of $Y$.
\end{proposition}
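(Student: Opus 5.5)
The plan is to prove the two directions separately. For the forward direction I would take $Y=X$ and $g=f$ itself. For the converse I would use descent of the defining properties along the finite locally free surjection $g$. Throughout I reduce to the affine case. Since $f$ is affine, over an affine open $\Spec A\subset S$ we have $X\times_S\Spec A=\Spec B$, and $S$ is Noetherian.

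\textbf{Forward direction.} Suppose $f$ is a finite \'etale cover and set $g=f$. Then $g$ is finite, locally free (flat, hence locally free in our Noetherian setting by Remark \ref{locally_free_are_flat_in_noetherian}) and surjective. I would show by induction on the degree that $X\times_SX\to X$ becomes trivial. By Proposition \ref{clopen_immersion}, the diagonal $\Delta:X\to X\times_SX$ is an open and closed immersion. It is a section of the first projection $X\times_SX\to X$, which is finite \'etale by base change (Proposition \ref{base_change_etale}). Hence $X\times_SX=\Delta(X)\sqcup X'$ with $\Delta(X)\cong X$ a trivial component, and $X'\to X$ finite \'etale of degree one less. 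The degree is locally constant because $f_*\mathcal{O}_X$ is locally free, and it is constant because $S$ is connected. I would then iterate: base change $X'$ along itself and use that trivial covers pull back to trivial covers. After $n=\deg f$ steps I get a finite locally free surjective $Y\to S$ (a composite of such, by the composition results for locally free and finite morphisms) trivialising $X$. The points to check are that the complement $X'$ may be empty over some components, which is handled by a degree count, and that a composite of surjective finite locally free maps is again one.

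\textbf{Converse.} Suppose $X\times_SY\cong\bigsqcup Y$ over $Y$. Locally $\Spec B\to\Spec A$ and $\Spec C\to\Spec A$, with $C$ finite locally free, hence faithfully flat because $g$ is surjective and flat. Then $B\otimes_AC\cong C^n$ is finite and flat over $C$. By faithfully flat descent, $B$ is a finitely generated and flat $A$-module, hence finite and locally free. For the fibres, given $s\in S$ I choose $y\in Y$ over $s$, which exists by surjectivity. Then $B\otimes_A\kappa(y)\cong(B\otimes_A\kappa(s))\otimes_{\kappa(s)}\kappa(y)\cong\kappa(y)^n$. Tensoring further to $\overline{\kappa(y)}$ and applying Theorem \ref{etale_algebras}, $B\otimes_A\kappa(s)$ is a finite \'etale $\kappa(s)$-algebra. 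Surjectivity of $f$ is given.

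\textbf{Main obstacle.} I expect the main obstacle to be the descent of finiteness and flatness along faithfully flat $A\to C$. Flatness descends directly: exactness of $-\otimes_AB$ can be tested after $\otimes_AC$. For finite generation, I would take the finitely many elements of $B$ whose images generate $B\otimes_AC$ over $C$. The cokernel $Q$ of the map from $A^m$ they span satisfies $Q\otimes_AC=0$, hence $Q=0$ by faithful flatness.
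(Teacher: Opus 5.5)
Your proposal is correct and follows essentially the paper's argument. In the forward direction you split $X\times_SX=\Delta(X)\sqcup X'$ via Proposition~\ref{clopen_immersion} and induct on the degree, and the trivialising $Y$ is the top of the resulting tower of finite \'etale surjections, i.e.\ the paper's $g=f\circ g'$. Your opening choice $Y=X$, $g=f$ is only the first step and does not trivialise $X$ by itself, since $X'\to X$ is in general non-trivial. In the converse you reduce to $\Spec B\to\Spec A$ and check the fibres at a point $y$ over $s$, as the paper does.

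The only real difference is how you show $B$ is finite locally free: you use faithfully flat descent along $A\to C$. The paper's step is much terser: $B\otimes_AC$ is free over $A$ and is a finite direct sum of copies of $B$, so $B$ is a direct summand of a free module. Both arguments work, and yours spells out the justification that the paper's sketch leaves implicit.
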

\begin{proof}
    Suppose that $g$ is finite, locally free and surjective and $X\times_SY$ is a trivial cover of $Y$.
    As $g$ is locally free, each point in $S$ is contained in an affine open subset $U=\Spec\:A$ over which $g$ restricts to a morphism $\Spec\:C\rightarrow\Spec\:A$ with $C$ an $A$-algebra that is finitely generated and free as an $A$-module.
    If $f$ restricts to $\Spec\:B\rightarrow \Spec\:A$ over $U$, then we have the following base change $\Spec(B\otimes_AC) \rightarrow \Spec\:B$ where $B\otimes_AC$ is a finitely generated free $C$-module, hence also a finitely generated free $A$-module.
    On the other hand, $\Spec(B\otimes_AC)$ is isomorphic to a finite direct sum of $\Spec\:B$ since it is a trivial cover, and this is only possible if $B$ is finitely generated free $A$-module.
    This shows that $f$ must be finite and locally free.
    Let $\overline{s}:\Spec\:\Lambda \rightarrow Y$ be a geometric point of $Y$. By composition with $g$ it yields a geometric point of $S$.
    The geometric fibres $X_{(\overline{s}\circ g)} := X\times_S \Spec\:\Lambda$ and $(X\times_SY)_{\overline{s}}:= X\times_SY\times_Y\Spec\:\Lambda$ are isomorphic and the latter is a finite direct sum of copies of $\Spec\:\Lambda$ since $X\times_SY$ is a finite \'etale covering of $Y$.
    And since $g$ is surjective, we have that all the fibres of $f$ are indeed finite \'etale $\kappa(s)$-algebras (invoking Theorem \ref{etale_algebras}). Thus $f$ is a finite \'etale cover.\\
    In the other direction, suppose that $f$ is a finite \'etale cover.
    $S$ being a connected scheme, then all fibres of $f$ have the same cardinality say $n$.
    We now proceed by induction on $n$.\\
    If $n=1$, then since $f$ is finite \'etale, it is also an isomorphism. So it is possible to find a finite locally free surjective morphism $g:Y\rightarrow S$ such that $S\times_SY$ is a trivial cover of $Y$.\\
    For $n>1$, we consider the base change $X\times_SX\rightarrow X$ 
    (notice that $(X\times_SX)_x \cong X_s$ for $x$ a point lying above $s$, so $|(X\times_SX)_x|=n$ for all $x\in X$ as well), 
    by Proposition \ref{clopen_immersion}, the diagonal map $\Delta$ induces a section of the base change map. In fact, $X\times_SX$ is the disjoint union of $\Delta(X)$ with some open and closed subscheme $X'$ of $X$, with $X'$ being the complement of $\Delta(X)$.
    Now consider the inclusion map $X'\rightarrow X\times_SX$ and the projection map $X\times_SX\rightarrow X$, these maps are both finite \'etale since respectively, open embeddings are \'etale and finite \'etale morphisms are preserved under base change.
    Then, by Proposition \ref{composition_etale}, the composition $X'\rightarrow X$ is also finite \'etale.
    By construction, we have that the fibres of this morphism has cardinality $n-1$.
    Indeed, since $X\times_SX = \Delta(X) \:\sqcup X'$, we also have that $(X\times_SX)_x = \Delta(X)_x \:\sqcup X'_x$ and because $\Delta(X)\cong X$, $|\Delta(X)_x|=1$, hence $|X'_x|=n-1$.
    So the induction hypothesis yields a finite, locally free and surjective morphism $g':Y\rightarrow X$ such that $X'\times_XY$ is isomorphic to the disjoint union of $n-1$ copies of $Y$. But then $(X\times_SX)\times_SY \cong X\times_SY$ is the disjoint union of $n$ copies of $Y$.
    Lastly, notice that the composition $g=f\circ g'$ is also finite, locally free and surjective since $g'$ and $f$ are.
\end{proof}

\begin{definition}
    We can actually build a category whose objects consists of finite \'etale covering $X\rightarrow S$ over a base scheme $S$ and whose morphisms are morphisms of schemes that are compatible over $S$. 
    It is a full subcategory of the category of relative schemes over $S$ and we denote it $\Fet_S$.
\end{definition}

\begin{proposition}
\label{reduced_carry_above}
    Let $S$ be a connected scheme and $p:X\rightarrow S$ is a morphisms in $\Fet_S$. 
    If $S$ is a reduced scheme, then $X$ is a reduced scheme as well.
\end{proposition}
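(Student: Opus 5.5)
Reducedness is local, so we may work over an affine open $U=\Spec\:A\subset S$ with $A$ reduced and Noetherian. Since $p$ is finite and locally free, we may shrink $U$ so that $p^{-1}(U)=\Spec\:B$ with $B$ a finitely generated free $A$-module, say $B\cong A^n$. We must show that $B$ has no nonzero nilpotents. The strategy is to detect nilpotents of $B$ on fibres. Because $A$ is reduced, $A$ embeds into $\prod_{\mathfrak{p}}\kappa(\mathfrak{p})$, where $\mathfrak{p}$ runs over the (minimal) primes of $A$. Tensoring with the free module $B$ preserves injectivity, by Corollary \ref{free_modules_are_flat}. Since $B$ is finite free, tensor commutes with the product, and we get an injection of rings
$$B\hookrightarrow B\otimes_A\prod_{\mathfrak{p}}\kappa(\mathfrak{p})\cong\prod_{\mathfrak{p}}B\otimes_A\kappa(\mathfrak{p}).$$
Finiteness of the index set is also available in the Noetherian case by using only minimal primes.

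The next step is to show that each factor is reduced. By the definition of a finite \'etale morphism, $B\otimes_A\kappa(\mathfrak{p})$ is a finite \'etale $\kappa(\mathfrak{p})$-algebra, so it is a finite product of separable field extensions. Such a product is reduced, by Lemma \ref{finite_dimensional_direct_sum_reduced} or directly. A product of reduced rings is reduced, and a subring of a reduced ring is reduced. Hence $B$ is reduced, and since the affine opens $\Spec\:B$ cover $X$, the scheme $X$ is reduced.

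The main point to check carefully is the injectivity $A\hookrightarrow\prod\kappa(\mathfrak{p})$. For $A$ reduced, the kernel is the intersection of all primes, which is the nilradical and hence zero. With minimal primes the map is $A\to\prod A_{\mathfrak{p}}=\prod\kappa(\mathfrak{p})$, since $A_{\mathfrak{p}}$ is a reduced zero-dimensional local ring and therefore a field. We also need that the tensor product commutes with the infinite product if all primes are used. This is harmless because $B\cong A^n$ is free of finite rank, so $B\otimes_A\prod M_i\cong\prod B\otimes_A M_i$. Connectedness of $S$ plays no essential role beyond the setup.
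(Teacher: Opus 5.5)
Your proof is correct and follows the same route as the paper. Both arguments embed the reduced base ring into a product indexed by its minimal primes, tensor with the free algebra $B$ so that the map stays injective, and conclude because each factor is a finite étale algebra over a field and hence reduced. The paper does this in two steps, first localising at $P$ and mapping to $\prod A_P/P_i$ and then to the fraction fields, while you go directly to $\prod_{\mathfrak{p}}\kappa(\mathfrak{p})$ and read off reducedness from the fibre condition in the definition of a finite étale morphism.
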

\begin{proof}
     We may assume that $X= \Spec\:  B$ and $S= \Spec\:A$.
    Let $P$ be a point of $S$ (a prime ideal).
    By assumption, the localisation $A_P$ has no nilpotents and we will prove that the same holds for $C_P = B\otimes_A A_P$.
    This is sufficient since the local ring of points of $X$ lying above $P$ are localisations of $C_P$ and $p$ is surjective.\\
    Let $P_1,..P_r$ be the set of minimal prime ideals in $A_P$.
    Their intersection is the nilradical (see \cite{Eisenbud} Corollary $2.12$), so the ideal of nilpotents in $A_P$, then  the natural map $A_P\rightarrow \prod(A_P/P_i)$ is injective.\\
    Now, since $p$ is a finite \'etale cover, it is also locally free, so $C_P$ is a free $A_P$-module. 
    So tensoring the above map with $C_P$, we obtain an injective map $C_P\rightarrow \prod (C_P/P_iC_P)$.\\
    Each $\Spec\:C_P/P_iC_P$ is finite \'etale over $\Spec\:A_P/P_i$ (by base change). 
    So by considering this map, we can reduce to the case where $A_P$ is an integral domain.
    Denoting by $K_P$ its fraction field, we get that the following map $\Spec\: C_P\otimes_{A_P}K_P\rightarrow \Spec K_P$ is finite \'etale cover by base change.    
    And the natural map $K_P \rightarrow C_P\otimes_{A_P}K_P$ induced by the inclusion $A_P\rightarrow K_P$ is injective since $p$ is locally free.
    So $C_P\otimes_{A_P}K_P$ is a finite product of fields (by \'etaleness) which is clearly reduced (reduced rings are stable under finite product), hence $C_P$ ha no nilpotents.
\end{proof}

\noindent From now on, the reader might notice the resemblance of certain of the following results with lemmas and propositions from Section \ref{covering_spaces} and the theory of field extensions.

\begin{lemma}
\label{etale_between_composition}
    Let $f:X\rightarrow Y$ and $g:Z\rightarrow X$ be morphisms of schemes. If $f\circ g$ and $f$ are finite \'etale, then so is $g$.
\end{lemma}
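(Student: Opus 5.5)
The plan is the standard graph factorisation, the scheme-theoretic analogue of Proposition \ref{composition_covers}. Write $g$ as the composite
$$Z \xrightarrow{\;\Gamma_g\;} Z\times_X X \cong Z \;\text{ and more usefully }\; Z\xrightarrow{\;(\id_Z,\,g)\;} Z\times_Y X \xrightarrow{\;\pr_2\;} X,$$
where the fibre product is taken via $f\circ g:Z\rightarrow Y$ and $f:X\rightarrow Y$. Then I show that each factor is finite \'etale and conclude by composition (Proposition \ref{composition_etale} for \'etaleness; finiteness of a composite of finite morphisms is immediate from the definition, since preimages of affines stay affine and finiteness of modules is transitive).

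The second factor $\pr_2: Z\times_Y X\rightarrow X$ is the base change of $f\circ g$ along $f$, so it is finite \'etale by Proposition \ref{base_change_etale}; finiteness is preserved under base change by the affine computation $B\otimes_A C$ finite over $C$. The first factor, the graph $\Gamma=(\id_Z,g)$, is itself a base change of the diagonal $\Delta_{X/Y}:X\rightarrow X\times_Y X$. Concretely, the square with $\Gamma$ on top, $\Delta_{X/Y}$ at the bottom, $g$ on the left and $g\times\id_X: Z\times_Y X\rightarrow X\times_Y X$ on the right is cartesian, which is a routine functor-of-points check.

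By Proposition \ref{clopen_immersion}, applied to the finite \'etale map $f$, the diagonal $\Delta_{X/Y}$ is an open and closed immersion. Hence its base change $\Gamma$ is an open and closed immersion too, because open immersions and closed immersions are each stable under base change. An open immersion is \'etale (see the examples after Definition \ref{etale=flat+unramified}), and a closed immersion is finite (Proposition \ref{closed_is_finite}). So $\Gamma$ is an isomorphism onto a clopen subscheme, and in particular it is finite \'etale. Composing, $g=\pr_2\circ\Gamma$ is finite \'etale.

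The main obstacle is the finiteness and flatness bookkeeping around $\Gamma$. I have to make sure that ``finite \'etale'' in the paper's sense (finite, locally free, fibres spectra of finite \'etale algebras) is really preserved by the clopen immersion and by composition. For $\Gamma$ this reduces to the observation that a clopen immersion is locally either an isomorphism or the inclusion of a direct factor $A\rightarrow A\times A'\rightarrow A$, and such a factor is projective, hence locally free in the Noetherian setting (Remark \ref{locally_free_are_flat_in_noetherian}).

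Surjectivity is not asserted for $g$, since the lemma speaks of finite \'etale morphisms rather than covers, so I do not need to prove it. If it were needed, I would argue as in Proposition \ref{composition_covers}: the image of $g$ is open (Proposition \ref{flat_is_open}) and closed (Proposition \ref{finite_is_closed}), so it is everything when $X$ is connected and $Z$ is nonempty.
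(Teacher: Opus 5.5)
Your proposal is correct and follows the paper's own argument: factor $g$ as the graph $\Gamma_g\colon Z\to Z\times_Y X$, which is finite \'etale as a base change of the open and closed diagonal from Proposition~\ref{clopen_immersion}, followed by the projection $Z\times_Y X\to X$, which is finite \'etale as a base change of $f\circ g$, and then compose. You are more explicit than the paper on two points. You write down the cartesian square exhibiting $\Gamma_g$ as a base change of $\Delta_{X/Y}$, and you check that a clopen immersion is finite and locally free. Your opening display $Z\to Z\times_X X\cong Z$ is a harmless slip that you correct immediately.
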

\begin{proof}
    By Proposition \ref{clopen_immersion}, the diagonal map $X\rightarrow X\times_S X$ is finite as it is a closed immersion and since $X$ is isomorphic to a union of connected components in $X\times_SX$ it is also \'etale.
    Considering the fibre product of $Z$ over $X$ with $Y$ via $g$, we get a morphism $\Gamma_g : Z\rightarrow Z\times_YX$ (the "graph" of $g$) which is again finite and \'etale.
    The projection $p_2:Z\times_YX \rightarrow X$ is again finite \'etale (being the base change of $f\circ g:Z\rightarrow Y$).
    So the composition $p_2\circ\Gamma_g = g$ is finite \'etale.
\end{proof}

\begin{proposition}
\label{sections_etale_cover}
    Let $f:X\rightarrow S$ be a morphism in $\Fet_S$, and $s: S\rightarrow X$ be a section of $f$.
    Then $s$ induces an isomorphism of $S$ with an open and closed subscheme of $X$.
\end{proposition}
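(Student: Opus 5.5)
The plan is to apply Lemma \ref{etale_between_composition} to the composite $f\circ s=\id_S$. Both $f$ and $\id_S$ are finite \'etale, so the lemma shows that $s:S\rightarrow X$ is itself finite \'etale. From this we get the topological half of the statement at once. Since $s$ is finite, it is closed by Proposition \ref{finite_is_closed}. Since $s$ is flat and locally of finite type, it is open by Proposition \ref{flat_is_open}. Hence $s(S)$ is an open and closed subset of $X$.

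It remains to identify $S$ with the open subscheme $U=s(S)$. I would argue that $s$ is a closed immersion. Because $f$ is finite, it is affine and hence separated, and a section of a separated morphism is a closed immersion. Concretely, $s$ is the base change of the diagonal $\Delta:X\rightarrow X\times_S X$ along $(s\circ f,\id_X):X\rightarrow X\times_SX$, and $\Delta$ is a closed immersion. Alternatively one can check this locally: over an affine $\Spec A\subset S$ with $f^{-1}(\Spec A)=\Spec B$, the section corresponds to a ring map $B\rightarrow A$ that is a retraction of $A\rightarrow B$, hence surjective. So $s$ identifies $S$ with the closed subscheme $\Spec(B/I)$ where $I=\ker(B\rightarrow A)$.

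The main point, and the step I expect to be the obstacle, is to upgrade ``closed immersion with open image'' to ``isomorphism onto the open subscheme $U$''. That is, we must show that the ideal sheaf $\mathcal{I}$ of $s(S)$ vanishes on $U$. My first attempt uses that $s$ is flat. At a point $x=s(t)$, the map $\mathcal{O}_{X,x}\rightarrow\mathcal{O}_{S,t}=\mathcal{O}_{X,x}/\mathcal{I}_x$ is surjective and flat. A surjective flat local homomorphism $R\rightarrow R/J$ forces $J=J^2$: tensor $0\rightarrow J\rightarrow R$ with $R/J$ to get $J/J^2\hookrightarrow R/J$, and the image of this map is zero. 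Since $X$ is Noetherian, $J$ is finitely generated, so Nakayama's lemma \ref{Nakayama_lemma} gives $J=0$ in the local ring. Thus $\mathcal{I}_x=0$ for every $x\in U$, and therefore $s:S\rightarrow U$ is an isomorphism of schemes.

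As a backup for this step, I would use unramifiedness in the same way as in Proposition \ref{clopen_immersion}. There one gets $\mathcal{I}/\mathcal{I}^2\cong\Omega$ restricted to the image, which is zero because $s$ is \'etale, and Nakayama's lemma again yields $\mathcal{I}=0$ near $s(S)$.
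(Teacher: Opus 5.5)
Your proof is correct, and its first half is exactly the paper's argument: Lemma \ref{etale_between_composition} applied to $f\circ s=\id_S$ makes $s$ finite \'etale, and Propositions \ref{finite_is_closed} and \ref{flat_is_open} make $s(S)$ open and closed. The paper then ends with ``sections are injective''. On its own, that only gives a homeomorphism of $S$ onto a clopen subset. Your second half is what actually identifies the scheme $S$ with the open subscheme $s(S)$: a section of the separated morphism $f$ is a closed immersion, and the ideal $J$ of a flat closed immersion satisfies $J=J^2$ locally, hence vanishes by Nakayama. So your version is the more complete one.

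Since you already exhibit $s$ as the base change of $\Delta$ along $(s\circ f,\id_X)$, you could finish in one line. By Proposition \ref{clopen_immersion} the diagonal is an open and closed immersion, and both properties are stable under base change. Hence $s$ is an open and closed immersion, without even invoking Lemma \ref{etale_between_composition}.

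One small correction to your backup argument: the vanishing of $\mathcal{I}/\mathcal{I}^2$ should be attributed to $\Omega_{X/S}=0$, i.e.\ to $f$ being \'etale, not to $s$ being \'etale. For instance, via the base-change description, the conormal sheaf of $s$ is a quotient of a pull-back of the conormal sheaf of $\Delta$, which is $\Omega_{X/S}$. By contrast, $\Omega_{S/X}=0$ holds for every closed immersion and says nothing about $\mathcal{I}/\mathcal{I}^2$.
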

\begin{proof}
    By Lemma \ref{etale_between_composition}, $s$ is a finite \'etale morphism. 
    Now since $s$ is finite, it is closed (\ref{finite_is_closed}), so its image is closed as well. Since $s$ is flat and locally of finite type, the image of $s$ is open as well (\ref{flat_is_open})
    And since sections are injective morphisms, the proof is done.
\end{proof}

\noindent In particular, if $S$ is connected, then the section $s$ maps $S$ isomorphically onto a whole connected component of $X$ (this is similar to what happened in Chapter $1$ for coverings of connected topological spaces).
The next result is of utmost importance as it is analogous to Proposition \ref{property_morphism_covers}.
And if you recall, it as used profusely throughout the first chapter. 

\begin{proposition}
\label{proporty_mono_etale_cover}
    Let $g:Z\rightarrow S$ be a morphism of schemes with $S$ a connected scheme and $h_1,h_2: Z \rightarrow X$ are two morphisms of schemes over $S$, with $f:X\rightarrow S$ an \'etale morphism with $h_1\circ \overline{z} = h_2\circ \overline{z}$ for some geometric point $\overline{z}: \Spec\: \Lambda\rightarrow Z$. Then $h_1 = h_2$.
\end{proposition}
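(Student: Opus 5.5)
The idea is to reduce the statement to a rigidity property of sections of a finite \'etale cover, as in Proposition \ref{sections_etale_cover}, and then use connectedness. I will read the statement as the paper intends: $f$ is a finite \'etale cover (a morphism in $\Fet_S$), and $Z$ is connected. Some connectedness of $Z$ is clearly needed, since otherwise $h_1$ and $h_2$ could differ on a component of $Z$ not meeting $\overline{z}$. I expect the proof to actually use connectedness of $Z$, not just of $S$.

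First, base change $f$ along $g$ to get $f_Z : X_Z := X\times_S Z \rightarrow Z$. By Proposition \ref{base_change_etale}, and the stability of finiteness under base change, $f_Z$ is again finite \'etale, so it lies in $\Fet_Z$. By the universal property of the fibre product, each $h_i$ gives a morphism $s_i := (h_i,\id_Z) : Z\rightarrow X_Z$, and since $f\circ h_i = g$ this is a section of $f_Z$. Moreover $h_1=h_2$ holds if and only if $s_1=s_2$. So it suffices to show that two sections of a finite \'etale cover $X_Z\rightarrow Z$ which agree at one geometric point coincide.

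Next, apply Proposition \ref{sections_etale_cover} to $f_Z$. Each $s_i$ is an isomorphism of $Z$ onto an open and closed subscheme $W_i\subset X_Z$. The key step is to show that the locus $E=\{z\in Z \mid s_1(z)=s_2(z)\}$ is open and closed in $Z$. This follows from $E = s_1^{-1}(W_2)$: if $s_1(z)\in W_2=s_2(Z)$, say $s_1(z)=s_2(z')$, then applying $f_Z$ gives $z=z'$. Since $W_2$ is clopen, so is $E$.

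Then I need to upgrade set-theoretic agreement on $E$ to an equality of morphisms on $E$. On the open subscheme $s_1^{-1}(W_2)$, both $s_1$ and $s_2$ are sections of $f_Z$ landing in $W_2$. Since $f_Z|_{W_2}:W_2\rightarrow Z$ is an isomorphism onto its image (its inverse is $s_2$), we get $s_1|_E = s_2|_E$ as morphisms of schemes. The hypothesis $h_1\circ\overline{z}=h_2\circ\overline{z}$ gives $s_1\circ\overline{z}=s_2\circ\overline{z}$, so the image point $z$ of $\overline{z}$ lies in $E$, and $E$ is nonempty. Because $Z$ is connected, $E=Z$, hence $s_1=s_2$ and therefore $h_1=h_2$.

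The main obstacle is this scheme-theoretic step: passing from agreement of the images of points to equality of the morphisms, including their maps on structure sheaves. This is exactly where the clopen-immersion property of sections (ultimately Proposition \ref{clopen_immersion}, i.e. $\Omega_{X_Z/Z}=0$) is used. I would handle it by the inverse-isomorphism argument above rather than by comparing the maps on stalks directly.
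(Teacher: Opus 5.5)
Your proposal is correct and follows the same route as the paper: you base change along $g$ to turn $h_1,h_2$ into sections of the finite \'etale cover $X\times_S Z\rightarrow Z$, then combine Proposition \ref{sections_etale_cover} (sections are isomorphisms onto open and closed subschemes) with connectedness. You are right that $Z$, and not merely $S$, must be connected, which the paper uses tacitly when it writes ``we may assume $Z=S$''; your clopen equaliser $E=s_1^{-1}(W_2)$ together with the inverse-isomorphism argument spells out the scheme-theoretic step that the paper compresses into ``determined by the image of a geometric point''.
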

\begin{proof}
    Consider the fibre product $Z\times_SX$, since \'etaleness is preserved under base change, we may assume that $Z=S$. 
    Now, morphisms $h_1$ and $h_2$ become sections of $f$, so it is enough to check that 
    if two sections of a finite \'etale cover of a connected scheme $S$ coincide at a geometric point, then they are equal. 
    This follows from Proposition \ref{sections_etale_cover} since each section is an isomorphism of (now) $S$ onto a connected component of $X$, thus it is determined by the image of a geometric point.
\end{proof}

\noindent For a morphism of schemes $f:X\rightarrow S$, we define the group of scheme automorphisms of $X$ preserving $f$ to be $\Aut(X|S)$. 
Notice that there is a natural left action of $\Aut(X|S)$ on the geometric fibre $X_{\overline{s}} = X\times_S\Spec\;\Lambda$ (coming from the base change of its action on $X$).

\begin{corollary}
\label{no_fixed_points}
    Let $f: X\rightarrow S \in \Fet_S$. Then the nontrivial elements of $\Aut(X|S)$ act without fixed points on each geometric fibre. Hence $\Aut(X|S)$ is finite.
\end{corollary}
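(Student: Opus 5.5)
The corollary follows from Proposition \ref{proporty_mono_etale_cover} applied with $Z=X$ and $g=f$. Throughout, $S$ is taken connected, as in that proposition, and $f$ is finite \'etale.

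\textbf{No fixed points.} Let $\phi\in\Aut(X|S)$ and let $\overline{s}:\Spec\:\Lambda\rightarrow S$ be a geometric point. Suppose $\phi$ fixes a point $\overline{x}$ of the geometric fibre $X_{\overline{s}}$. By the Construction above, $X_{\overline{s}}$ is a finite disjoint union of copies of $\Spec\:\Lambda$, so $\overline{x}$ is a morphism $\Spec\:\Lambda\rightarrow X_{\overline{s}}$. Composing with the projection $X_{\overline{s}}\rightarrow X$ gives a geometric point $\overline{z}:\Spec\:\Lambda\rightarrow X$ lying over $\overline{s}$.

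The action of $\phi$ on $X_{\overline{s}}$ is the base change $\phi\times\id$. Saying that $\overline{x}$ is fixed therefore means exactly that $\phi\circ\overline{z}=\overline{z}=\id_X\circ\overline{z}$ as morphisms $\Spec\:\Lambda\rightarrow X$. To check this, project to $X$ and use the universal property of the fibre product, since the $\Spec\:\Lambda$ components agree automatically.

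Now apply Proposition \ref{proporty_mono_etale_cover} with $Z=X$, $g=f$, $h_1=\phi$ and $h_2=\id_X$. Both are morphisms over $S$ because $f\circ\phi=f$. They agree at $\overline{z}$, so $\phi=\id_X$. Hence every nontrivial element acts without fixed points.

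\textbf{Finiteness.} Fix one geometric point $\overline{s}$ (using that $f$ is surjective, so $X$ is nonempty) and choose a geometric point $\overline{z}$ of $X$ in $X_{\overline{s}}$. Consider the orbit map
\[
\Aut(X|S)\rightarrow X_{\overline{s}},\qquad \phi\mapsto \phi\cdot\overline{z}.
\]
If $\phi\cdot\overline{z}=\psi\cdot\overline{z}$, then $\psi^{-1}\phi$ fixes $\overline{z}$, so $\psi^{-1}\phi=\id$ by the first part. The orbit map is therefore injective. Since $X_{\overline{s}}$ consists of $n$ points, where $n$ is the degree, we get $|\Aut(X|S)|\leq n$.

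\textbf{Main obstacle.} The only delicate point is the identification between "$\phi$ fixes a point of $X_{\overline{s}}$" and "$\phi\circ\overline{z}=\overline{z}$ as geometric points of $X$". The worry is that a point of the geometric fibre is an actual $\Lambda$-point, not merely a scheme-theoretic point, since a scheme-theoretic point could carry a nontrivial action on its residue field. The Construction resolves this: because each point of $X_{\overline{s}}$ has residue field exactly $\Lambda$ (the fibre is a disjoint union of copies of $\Spec\:\Lambda$), points and $\Lambda$-valued sections coincide, and the identification holds.
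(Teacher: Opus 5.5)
Your first step is the paper's: Proposition \ref{proporty_mono_etale_cover} with $Z=X$, $h_1=\phi$, $h_2=\id_X$. There is a genuine gap, though, because the connectedness that proposition actually needs is that of $Z$, not of $S$. Its proof base-changes along $g$ to reduce to two sections of $X\times_S Z\rightarrow Z$. It then uses that a section of a finite \'etale cover over a \emph{connected} base maps isomorphically onto a connected component, and that base is $Z$. So taking $Z=X$ requires $X$ connected, which neither you nor the statement assumes. Without it, both of your conclusions are false.

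Take $S$ connected and the trivial cover $X=S\sqcup S\sqcup S$. Every $S$-automorphism permutes the three components and restricts on each to an $S$-morphism $S\rightarrow S$, i.e.\ to $\id_S$. Hence $\Aut(X|S)\cong S_3$. The transposition of the first two copies fixes the point of $X_{\overline{s}}$ coming from the third copy. Moreover $|\Aut(X|S)|=6>3=n$, contradicting your bound $|\Aut(X|S)|\le n$.

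If you add the hypothesis that $X$ is connected, your argument goes through verbatim. Your finiteness step is then sharper than the paper's: freeness makes the orbit map injective and gives $|\Aut(X|S)|\le n$, whereas the paper only uses faithfulness to embed $\Aut(X|S)$ in $\Sym(X_{\overline{s}})$.

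For arbitrary $X\in\Fet_S$ over connected $S$, only finiteness survives, and it must go through faithfulness, which needs its own argument:
\begin{itemize}
\item The finitely many connected components $X_i$ of $X$ are open and closed in $X$, hence finite \'etale over $S$.
\item Each has nonempty open and closed image, hence maps onto $S$, so each meets $X_{\overline{s}}$.
\item Suppose $\phi$ fixes $X_{\overline{s}}$ pointwise, and let $\iota_i\colon X_i\hookrightarrow X$ be the inclusion. Apply the proposition with $Z=X_i$, $h_1=\phi\circ\iota_i$ and $h_2=\iota_i$. This gives $\phi\circ\iota_i=\iota_i$ for every $i$, so $\phi=\id_X$.
\end{itemize}
Hence $\Aut(X|S)\hookrightarrow\Sym(X_{\overline{s}})$. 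The paper's proof derives faithfulness from the fixed-point statement and so shares the gap, but its route to finiteness is the one that can be repaired. Your discussion of $\Lambda$-points versus scheme-theoretic points is correct, but it is not where the difficulty lies.
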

\begin{proof}
    Just as in the proof of Corollary \ref{corollary_property_morphism_covers}, the first statement is immediate from Proposition \ref{proporty_mono_etale_cover} by taking $Z=X$, $h_1 = id$ and $h_2 = \phi\in \Aut(X|S)$. So it follows that the action of an elements of $\Aut(X|S)$ on the underlying set of geometric fibres is faithful. And since geometric fibres of finite \'etale morphisms are finite sets, and $\Aut(X|S)$ acts faithfully on such fibres, then $\Aut(X|S)$ is a subgroup of $\Sym(X_{\overline{s}})$ a finite group.
\end{proof}

\noindent Again, one important construction is that of making new finite \'etale covers from existing ones. This is done by considering quotients of group actions as follow, and then checking that they are indeed schemes.

\begin{construction}
    Let $f: X\rightarrow S$ be an \textit{affine} surjective morphism of schemes, and $G\leq \Aut(X|S)$ a finite subgroup. 
    We define the ringed space $X/G$ for which the underlying topology is the quotient topology of $X$ by the action of $G$. And the morphism of ringed spaces $p :X\rightarrow X/G$ for which the underlying continuous map is the natural projection.
    And we define the structure sheaf of $X/G$ to be the subsheaf $(p_*\mathcal{O}_X)^G$ of $G$-invariant elements in $p_*\mathcal{O}_X$.
\end{construction}

\begin{proposition}
    The ringed space $X/G$ above is indeed a scheme, the morphism $p$ is affine and surjective, and $f:X\rightarrow S$ factors as $f = g\circ p$ for $g:X/G \rightarrow S$ an affine morphism.
\end{proposition}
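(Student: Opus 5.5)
The strategy is to reduce to the affine case over $S$ and glue, using only the invariant-ring construction.

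\textbf{Affine case.} First suppose $S=\Spec\,A$. Since $f$ is affine, $X=\Spec\,B$ for an $A$-algebra $B$. The finite group $G$ acts on $B$ by $A$-algebra automorphisms; because $G\leq\Aut(X|S)$ acts contravariantly on $B$, we pass to $G^{op}$ if needed. The candidate quotient is $\Spec\,B^G$, and I will check three things.

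1. $B$ is integral over $B^G$. Each $b\in B$ is a root of the monic polynomial $\prod_{\sigma\in G}(T-\sigma(b))$, whose coefficients lie in $B^G$. By lying-over, $\Spec\,B\to\Spec\,B^G$ is surjective, and by going-up together with integrality it is closed.

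2. The fibres of $\Spec\,B\to\Spec\,B^G$ are exactly the $G$-orbits. This is the standard transitivity statement: if $P,Q\in\Spec\,B$ have the same contraction to $B^G$ but $Q\neq\sigma(P)$ for all $\sigma$, prime avoidance gives $x\in Q$ with $x\notin\sigma(P)$ for every $\sigma$. Then $\prod_\sigma\sigma(x)$ lies in $Q\cap B^G=P\cap B^G$ but not in $P$, a contradiction.

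3. The map $X/G\to\Spec\,B^G$ is a homeomorphism. By 1 and 2 the induced map is a continuous bijection. It is closed, because a closed subset of $X/G$ pulls back to a $G$-stable closed subset of $X$, whose image is closed by 1.

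Next I compare structure sheaves. For $h\in B^G$ the preimage of $D(h)$ is $D(h)\subset X$, and $(B_h)^G=(B^G)_h$, since $h$ is invariant and $G$ commutes with this localisation. Hence $(p_*\mathcal{O}_X)^G$ agrees with $\widetilde{B^G}$ on a basis, so $X/G\cong\Spec\,B^G$ as ringed spaces. Consequently $p$ is affine and surjective, and $g:\Spec\,B^G\to\Spec\,A$ is affine and induced by $A\to B^G\subset B$, which gives $f=g\circ p$.

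\textbf{Globalisation.} For general $S$, cover $S$ by affine opens $U_i$. Since $f\circ\sigma=f$, each $f^{-1}(U_i)$ is $G$-stable, and its image in $X/G$ is open and equal to $f^{-1}(U_i)/G$. Both the quotient topology and the invariant subsheaf are compatible with restriction to $G$-stable opens, so $X/G$ is covered by the affine schemes $f^{-1}(U_i)/G$. Thus $X/G$ is a scheme. The map $g$ is defined locally and glues because the local constructions agree on overlaps $U_i\cap U_j$. Affineness of $p$ and $g$, and surjectivity of $p$, are local on the target. For $p$ I use that the preimage of an affine open $U_i/G$-piece is affine by the first part.

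\textbf{Main obstacle.} The delicate step is the topological identification in 3, namely that the quotient topology on $X/G$ coincides with the Zariski topology of $\Spec\,B^G$. This requires both the orbit description of fibres and the closedness coming from integrality. The comparison of sheaves on a basis of opens of the form $D(h)$ with $h$ invariant also needs care: such opens form a basis of the quotient topology, which follows from the homeomorphism.
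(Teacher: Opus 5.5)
Your argument is correct and has the same skeleton as the paper's proof. Both reduce to $S=\Spec A$, $X=\Spec B$, get integrality of $B$ over $B^G$ from $\prod_{\sigma}(T-\sigma(b))$, use lying-over for surjectivity, show that the fibres of $\Spec B\to\Spec B^G$ are the $G$-orbits, identify the topologies, and compare structure sheaves. The paper's remark that $V(I)$ is sent to $V(I\cap B^G)$ is exactly your closedness coming from integrality.

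Two ingredients differ. For transitivity on fibres, the paper base-changes to $B\otimes_{B^G}\kappa(J)$. It asserts this is a finite-dimensional, hence Artinian and zero-dimensional, $\kappa(J)$-algebra, and separates the two orbits with the Chinese Remainder Theorem. You work directly in $B$ with prime avoidance and the norm $\prod_\sigma\sigma(x)$. Your version is the more robust one. Finite-dimensionality of that fibre needs something like module-finiteness of $B$ over $B^G$. That is available in the later application to finite \'etale covers, but it is not implied by the hypothesis that $f$ is merely affine. Your argument uses only that $G$ is finite.

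For the sheaves, the paper notes that $(p_*\mathcal{O}_X)^G$ is quasi-coherent on $\Spec B^G$, being the kernel of $s\mapsto(\sigma(s)-s)_{\sigma}$, and then compares global sections. Your identity $(B_h)^G=(B^G)_h$ on basic opens is the same fact made explicit, since taking invariants is a kernel and localisation is exact. Your globalisation paragraph also fills in what the paper covers with ``we may assume''.

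One step needs an extra line. Prime avoidance requires that $Q$ is not contained in any $\sigma(P)$, but you only have $Q\neq\sigma(P)$. Since $\sigma(P)\cap B^G=P\cap B^G=Q\cap B^G$, incomparability for the integral extension $B^G\subset B$ (a consequence of your step 1) rules out a proper inclusion $Q\subsetneq\sigma(P)$. After that, your norm argument goes through.
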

\begin{proof}
    Using the affine assumption on $f$, we may assume that $S=\Spec\:A$ and $X=\Spec\:B$ and $f$ comes from a ring homomorphism $\lambda: A\rightarrow B$.
    It is then sufficient to prove that the ringed space $X/G$ is isomorphic to the spectrum of $B^G$, the ring of $G$-invariants of $B$, let's denote it $X^G$. \\
    This will also imply the rest of the claim, that the projection $p$ is surjective.
    Note that $B^G\subset B$ is an integral extension, indeed every $b\in B$ is a root of the monic polynomial $\prod (x-\sigma(b)) \in B^G[x]$, where $\sigma$ runs over the elements of $G$. So by Proposition \ref{Lang_1_10}, every point $\mathfrak{p}$ in $X^G$ has a preimage $\mathfrak{q}$ in $\Spec\:B$.\\
    To identify $X/G$ with $X^G$ it is enough to identify them as sets, as closed subsets $V(I)\subset X$ induce closed subsets $V(I^G) \subset X^G$. 
    By the surjectivity hypothesis on $p$ we just need to show that the fibres of of $X\rightarrow \Spec\: B^G$ are the $G$-orbits of $X$.
    By contraposition, assume that we have two $G$-orbits $\{g(P)\mid g \in G\}$ and $\{g(Q)\mid g\in G\}$ for $P,Q\in X$ that are lying above the same point $J$ in $\Spec\:B^G$ (so $g(P)\cap B^G = J = g(Q)\cap B^G$).
    Notice that the fibre $X_{\kappa(J)} = \Spec\:(B\otimes_{B^G}\kappa(J))$ is zero dimensional, indeed by base change, $\overline{B}=B\otimes_{B^G}\kappa(J)$ is now a finite-dimensional $\kappa(J)$-vector space, which means that it is Artinian as well implying that it has zero Krull dimension \ref{Artin_zero_dim}.
    So any point $g(P)$ and $g(Q)$ induces maximal ideals $g(\overline{P})$ and $g(\overline{Q})$ of $\overline{B}$ with $\bigcap g(\overline{P}) = 0 = \bigcap g(\overline{Q})$.
    And using the Chinese Remainder Theorem, we fin an element $\overline{b}\in \overline{B}$ such that $\overline{b}\in g(\overline{P})$ but $\overline{b}\notin g(\overline{Q})$ for all $g\in G$ which gives us our contradiction.\\
    Finally, to show that $(p_*\mathcal{O}_X)^G \cong \mathcal{O}_{X^G}$, notice that the first sheaf is quasi-coherent as the kernel of the following morphism of quasi-coherent sheaves,
    $$p_*\mathcal{O}_X \rightarrow \bigoplus_{g\in G} p_*\mathcal{O}_X \: , \: s\mapsto(..., g(s)-s, ...)$$
    Thus it is enough to check the isomorphism on the ring of sections over $X^G$, and in both cases these are exactly $B^G$.    
\end{proof}

\begin{proposition}
\label{quotient_finite_etale_cover}
    Let $f: X\rightarrow S \in \Fet_S$, with $X$ being connected, and $G\leq \Aut(X|S)$ a finite subgroup. 
    Then $p:X\rightarrow X/G$ and $g:X/G \rightarrow S$ are finite \'etale covers.
\end{proposition}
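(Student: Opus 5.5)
The plan is to reduce first to the case where $S$ is trivialised, then descend. The key is the local triviality of Proposition \ref{etale_cover_locally_trivial}. Choose a finite, locally free, surjective $g':Y\rightarrow S$ such that $X\times_SY$ is a trivial cover of $Y$.

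Taking $G$-invariants commutes with flat base change. Indeed, $\mathcal{O}_{X/G}$ is the kernel of the map $p_*\mathcal{O}_X\rightarrow\bigoplus_{g\in G}p_*\mathcal{O}_X$, $s\mapsto (g(s)-s)_g$, used in the preceding proof. Since $Y\rightarrow S$ is flat, tensoring preserves this kernel. Hence $(X/G)\times_SY\cong (X\times_SY)/G$, with $G$ acting on the first factor. Finite \'etaleness of a morphism over $S$ can be detected after the faithfully flat base change $Y\rightarrow S$. For $g:X/G\rightarrow S$ I would invoke the converse direction of Proposition \ref{etale_cover_locally_trivial} directly. For this it suffices that $(X\times_SY)/G$ is a trivial cover of $Y$. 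Surjectivity of $g$ is clear because $f=g\circ p$ and $f$ is surjective, and $g$ is affine by the previous proposition.

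So the heart of the argument is the trivial case. Locally on $Y$ (taking $Y$ connected affine, or working over each connected component), $X\times_SY\cong\bigsqcup_{i=1}^nY$. Each element of $G$ acts as an automorphism over $Y$. By Proposition \ref{proporty_mono_etale_cover}, each sheet is sent isomorphically onto some sheet by a map over $Y$, hence by the identity. So $G$ acts through a permutation of the index set $\{1,\dots,n\}$. The invariant ring of $\prod_{i=1}^n\mathcal{O}(Y)$ under a permutation action is $\prod_{\text{orbits}}\mathcal{O}(Y)$. Thus the quotient is $\bigsqcup_{\text{orbits}}Y$, a trivial cover, and the map from $X\times_SY$ to it is also a trivial cover over each component. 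This gives both claims after base change.

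For $p:X\rightarrow X/G$, I would note that $p$ is finite, since $B$ is integral and finitely generated over $B^G$, being finite over $A$. Then I would apply Lemma \ref{etale_between_composition} to $f=g\circ p$: once $g$ is finite \'etale, $p$ is finite \'etale as well. Surjectivity of $p$ was shown in the preceding proposition.

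The main obstacle is the base change compatibility $(X/G)\times_SY\cong(X\times_SY)/G$ together with faithfully flat descent of the finite \'etale property, since the paper has not explicitly stated descent. To avoid needing it, I would phrase everything through Proposition \ref{etale_cover_locally_trivial}, which already provides exactly the descent needed. The connectedness of $X$ is not essential here; it matters only for identifying $G$ with the automorphisms later. Connectedness of $S$, required by that proposition, follows from $S$ being the image of the connected $X$ under the surjective $f$.
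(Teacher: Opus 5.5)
Your proposal is correct and follows essentially the same route as the paper. Both arguments reduce to $g$ by Lemma~\ref{etale_between_composition}, trivialise $X$ over a finite locally free surjective $Y\rightarrow S$ via Proposition~\ref{etale_cover_locally_trivial}, identify $(X/G)\times_S Y\cong (X\times_S Y)/G$ because invariants commute with flat base change (the paper says locally free), note that the quotient of a trivial cover is trivial, and conclude with the converse direction of Proposition~\ref{etale_cover_locally_trivial}.

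Your proof also fills in points the paper leaves implicit: you use Proposition~\ref{proporty_mono_etale_cover} on the connected components of $Y$ to show that $G$ only permutes sheets, and you check surjectivity and the connectedness of $S$ explicitly. One small caveat: working componentwise, the number of orbits could a priori differ between components of $Y$, so the quotient need not literally be a disjoint union of copies of $Y$; this does no harm, since componentwise triviality is all the converse direction of Proposition~\ref{etale_cover_locally_trivial} actually uses.
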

\begin{proof}
    Using Lemma \ref{etale_between_composition} it is sufficient to only prove that $g:X/G\rightarrow S$ is a finite \'etale cover.
    We apply Proposition \ref{etale_cover_locally_trivial} to obtain a base change $X\times_SY\rightarrow Y$ such that $X\times_SY$ is a trivial cover of $Y$, so $X\times_SY\cong F\times Y$ for $F$ a finite set.
    There is a natural action of $G$ on $X\times_SY$ coming from the base change of the action of $G$ on $X$, thus we have the following $(X\times_SY)/G\cong (F/G)\times Y$.\\ 
    Observe that we also have $(X\times_SY)/G\cong (X/G)\times_SY$. 
    Indeed, just as in the previous proof, consider a small affine neighbourhood $U:=\Spec\:A$ of a point $s\in S$ with $\Spec\:B$ and $\Spec\:C$ its preimages in $X$ and $Y$ respectively. 
    The isomorphism we want to prove then translates to $B^G\otimes_AC \cong (B\otimes_AC)^G$, and this holds for $U$ sufficiently small as $C$ is then a free $A$-module with trivial $G$-action.
    Finally obtain that $(X/G)\times_G Y \cong (F/G)\times Y$ which is again a finite disjoint union of copies of $Y$. And we conclude by applying Proposition \ref{etale_cover_locally_trivial} in the other direction.
\end{proof}

\noindent Before stating the analogue to the theorem of Galois classification of topological covers, we need to introduce a new type of finite \'etale cover: the \textit{finite \'etale Galois cover}.
\begin{definition}
    A connected object $f:X\rightarrow S \in \Fet_S$ is said to be a \textbf{finite \'etale Galois cover} or a \textbf{Galois object} if $\Aut(X|S)$ acts transitively on the underlying set of geometric fibres.
\end{definition}

\begin{remark} \textbf{ }
\label{remark_Galois}
\begin{itemize}
    \item[$a)$] Notice that $X\rightarrow S$ is a finite \'etale Galois cover if and only if the canonical morphism $X/\Aut(X|S) \rightarrow S$ is an isomorphism.
    This ties in well with the way we defined topological Galois covering spaces in the first chapter.\\
    Indeed, first notice that for $X$ a connected scheme, the evaluation map $\Aut(X|S)\rightarrow X_{\overline{s}}$ mapping $\sigma\mapsto\sigma(\overline{x})$ is injective (this follows from Proposition \ref{proporty_mono_etale_cover}).
    Then the evaluation is actually an isomorphism if and only if $X$ is Galois covering. 
    Now the isomorphism we want to prove follows by noticing that $S_{\overline{s}}\cong (X/\Aut(X|S))_{\overline{s}}$ in the category of finite sets since they are now both singletons, and that $(X/\Aut(X|S))_{\overline{s}} \cong X_{\overline{s}}/\Aut(X|S)$ from the proof of Proposition \ref{quotient_finite_etale_cover}.
    \item[$b)$] Similarly to Remark \ref{Galois_free}, for $X$ a finite \'etale Galois cover, the group $\Aut(X|S)$ acts also freely on $\Fib_{\overline{s}}(X)$. This is true since in this case, 
    Corollary \ref{no_fixed_points} plays the same role as Corollary \ref{corollary_property_morphism_covers} in chapter $1$.\\
\end{itemize}
\end{remark}

\begin{theorem}
\label{Galois_correpondance_etale_cov}
    Let $f:X\rightarrow S$ be a finite \'etale Galois cover.
    If $g: Z\rightarrow S$ is a connected finite \'etale cover such that $f = g \circ h$ for $h:X\rightarrow Z$, then $h$ is a finite \'etale Galois cover and $Z\cong X/H$ (where $H$ is a subgroup of $\Aut(X|S)$).
    That way we get a bijection between subgroups of $\Aut(X|S)$ and intermediate covers $Z$ as above.
    The cover $g:Z\rightarrow Y$ is Galois if and only if $H$ is a normal subgroup of $\Aut(X|S)$ (in which case, $\Aut(Z|S) \cong \Aut(X|S) / H$).
\end{theorem}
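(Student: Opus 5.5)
The plan follows the proof of Theorem \ref{clasification_of_covers} step by step. Proposition \ref{proporty_mono_etale_cover} and Corollary \ref{no_fixed_points} play the role of Theorem \ref{property_morphism_covers} and Corollary \ref{corollary_property_morphism_covers}, and Proposition \ref{quotient_finite_etale_cover} plays the role of Proposition \ref{quotient_cover}. Write $G=\Aut(X|S)$, fix a geometric point $\overline{s}$ of $S$, and work throughout with geometric fibres.

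First I will show that $h$ is finite \'etale and Galois. Since $f=g\circ h$ with $f,g$ finite \'etale, Lemma \ref{etale_between_composition} makes $h$ finite \'etale. For surjectivity: the image of $h$ is open and closed (Propositions \ref{finite_is_closed}, \ref{flat_is_open}) and nonempty, and $Z$ is connected, so $h$ is surjective. Thus $h$ is a finite \'etale cover, and $X$ is connected.

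Next set $H=\Aut(X|Z)\le G$. To see that $H$ acts transitively on geometric fibres of $h$, take geometric points $\overline{x}_1,\overline{x}_2$ of $X$ over the same geometric point $\overline{z}$ of $Z$. They then lie over the same geometric point of $S$, so since $f$ is Galois there is $\sigma\in G$ with $\sigma\circ\overline{x}_1=\overline{x}_2$. Then $h\circ\sigma$ and $h$ are two $S$-morphisms $X\to Z$ that agree at the geometric point $\overline{x}_1$. By Proposition \ref{proporty_mono_etale_cover} (with $Z\to S$ \'etale and $X$ connected) they coincide, so $\sigma\in H$. Hence $h$ is Galois. By Remark \ref{remark_Galois} $a)$ applied to $h$, the canonical map $X/H\to Z$ is an isomorphism.

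For the bijection, send $H\le G$ to $X/H$. This is a connected finite \'etale cover of $S$ through which $f$ factors, by Proposition \ref{quotient_finite_etale_cover}. In the other direction, send $Z$ to $\Aut(X|Z)$. The previous step gives $X/\Aut(X|Z)\cong Z$. It remains to check $\Aut(X|(X/H))=H$. The inclusion $\supseteq$ is clear. For the reverse, the fibre of $X\to X/H$ over a geometric point is an $H$-orbit by the construction of the quotient, so $\Aut(X|(X/H))$ and $H$ have the same (free, Remark \ref{remark_Galois} $b)$) action on that fibre. Cardinalities then force equality.

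For the normality statement, first suppose $H\lhd G$. Each $\sigma\in G$ commutes with the $H$-action, so it descends to $X/H$ through the invariant-ring description $(p_*\mathcal{O}_X)^H$. This gives a homomorphism $G\to\Aut(Z|S)$ with kernel $H$. The induced action on geometric fibres of $g$ is transitive, because $G$ is transitive on fibres of $f$ and $h$ is surjective on geometric points. So $Z$ is Galois. Moreover $\Aut(Z|S)$ acts freely and transitively on a fibre, which has $|G|/|H|$ points, so $G/H\cong\Aut(Z|S)$.

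Conversely, suppose $Z$ is Galois. For $\sigma\in G$ I will produce $\varphi\in\Aut(Z|S)$ with $\varphi\circ h=h\circ\sigma$. First choose $\varphi$ matching the images of one geometric point, using transitivity. Then Proposition \ref{proporty_mono_etale_cover} upgrades this agreement at one point to the equality $\varphi\circ h=h\circ\sigma$, and the same proposition shows $\varphi$ is unique. This yields a homomorphism $G\to\Aut(Z|S)$, and its kernel is exactly $\Aut(X|Z)=H$, so $H$ is normal.

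I expect the main obstacle to be making descent of automorphisms along $X\to X/H$ rigorous at the scheme level, together with the claim that fibres of the quotient are exactly $H$-orbits on geometric points. I would handle both affine-locally via $B^H$, as in the proof that $X/G$ is a scheme.
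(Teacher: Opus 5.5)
Your proposal is correct and takes the same route as the paper. The paper's proof just says to transport the proof of Theorem \ref{clasification_of_covers}, with Lemma \ref{etale_between_composition}, Propositions \ref{proporty_mono_etale_cover} and \ref{quotient_finite_etale_cover}, and Remark \ref{remark_Galois} standing in for the topological lemmas; that is exactly your dictionary, and you work out the details (including the identity $\Aut(X|(X/H))=H$ and the descent along $B^H$) more carefully than the paper does. One small wording fix: in the normal case each $\sigma\in G$ normalises $H$ rather than commuting with the $H$-action, and normalising is all you need for $\sigma$ to preserve $B^H$ and so descend to $X/H$.
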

\begin{proof}
    This proof is similar to that of Theorem \ref{clasification_of_covers}.
    Here, Lemma \ref{etale_between_composition},
    Propositions \ref{proporty_mono_etale_cover}, \ref{quotient_finite_etale_cover} and Remark \ref{remark_Galois}
    are analogous to
    Proposition \ref{composition_covers},
    Theorem \ref{property_morphism_covers},
    Proposition \ref{quotient_cover} and  \ref{G_is_aut_group} and Proposition \ref{caracterize_Galois_cov}.\\
\end{proof}

\noindent Finally, we prove a useful lemma which says that any finite \'etale cover of the separable closure scheme $\overline{X}$ for a certain class of schemes $X$ over $k$ come from finite separable extensions of $k$.
And then, we prove a key proposition that generalises Lemma \ref{embed_finite_subextension}, remember, it stated that every finite separable field extension can be embedded in a finite Galois extension and there is a smallest such extension called the Galois closure. In relation to the former, we write:
\begin{itemize}
    \item[]$\overline{X}:= X\times_{\Spec\:k}\Spec\:k_s$ for $X$ a scheme over $k$ and $k_s$ the separable closure, and
    \item[] $X_L:= X\times_{\Spec\:k}\Spec \:L$ for $L$ a finite extension of $k$ contained in $k_s$.
\end{itemize}
\noindent Of course, by base change, we have that $X$ is a finite \'etale cover of $S$ if and only $\overline{X}$ and $X_L$ are respectively finite \'etale covers of $\overline{S}$ and $S_L$. 

\begin{lemma}
\label{etale_cover_closure_come_from_extension}
    Given a finite \'etale cover $\overline{f}:\hat{Y}\rightarrow \overline{X}$ with $X$ a quasi-compact scheme over $k$, 
    there is a finite extension $L|k$ contained in $k_s$ and a finite \'etale cover $Y_L$ of $X_L$ so that $\hat{Y} \cong Y_L\times_{\Spec\:L}\Spec\:k_s$.\\
    And similarly, elements of $\Aut(\hat{Y}|\overline{X})$ come from $\Aut(Y_L|X_L)$.
\end{lemma}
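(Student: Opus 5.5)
Since $\Spec\:k_s$ is the limit of the affine schemes $\Spec\:L$ over finite subextensions $L|k$ in $k_s$, the scheme $\overline{X}$ is the inverse limit of the $X_L$ with affine transition maps. I will therefore run a ``spreading out'' argument: write the data defining $\hat{Y}$ using finitely many elements of $k_s$, and take $L$ to be the field they generate. Concretely, since $X$ is quasi-compact (and Noetherian, hence also quasi-separated), cover $X$ by finitely many affine opens $U_i=\Spec\:A_i$, and cover the intersections $U_i\cap U_j$ by finitely many affines as well. Then $\overline{X}$ is covered by $\overline{U_i}=\Spec(A_i\otimes_k k_s)$. Since $\overline{f}$ is finite, hence affine, its restriction over $\overline{U_i}$ is $\Spec\:\hat{B}_i$, with $\hat{B}_i$ a finite $A_i\otimes_k k_s$-algebra that is locally free as a module.

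The key step is to descend each $\hat{B}_i$. Because $A_i\otimes_k k_s=\varinjlim_L A_i\otimes_k L$ is a filtered colimit, I choose finitely many module generators of $\hat{B}_i$, together with the finitely many relations among them and the structure constants of the multiplication written in these generators. All these coefficients lie in $A_i\otimes_k L$ for some finite $L$. This defines an $A_i\otimes_k L$-algebra $B_{i,L}$ with $B_{i,L}\otimes_L k_s\cong\hat{B}_i$, and its algebra axioms (associativity, unit) hold after enlarging $L$, since each axiom is a finite set of equalities that hold in the colimit. I do the same for the gluing isomorphisms over the finitely many affine pieces of $\overline{U_i}\cap\overline{U_j}$ and for the cocycle condition, each time enlarging $L$ finitely often. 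This yields a scheme $Y_L$ with a finite morphism $Y_L\rightarrow X_L$ and an isomorphism $\hat{Y}\cong Y_L\times_{\Spec\:L}\Spec\:k_s$.

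Next, I upgrade $Y_L\rightarrow X_L$ to a finite \'etale cover after a further finite enlargement of $L$. For local freeness, $B_{i,L}$ is finitely presented, and its base change to $k_s$ is locally free, i.e.\ projective. A splitting of a surjection from a free module onto it descends to some finite $L$, so $B_{i,L}$ is projective, hence locally free. For unramifiedness, $\Omega_{B_{i,L}/A_i\otimes L}$ is a finitely generated module, and by Proposition~\ref{Base_change_kahler_module} it becomes $0$ after tensoring with $k_s$. Its finitely many generators therefore become zero at some finite stage. Surjectivity can be checked on fibres, or follows because the base change is faithfully flat over $X_L$ (the field extension $k_s|L$ is free). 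By Definition~\ref{etale=flat+unramified}, $Y_L\rightarrow X_L$ is then finite \'etale.

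For automorphisms, an element $\sigma\in\Aut(\hat{Y}|\overline{X})$ corresponds on each affine piece to an $A_i\otimes k_s$-algebra automorphism of $\hat{B}_i$. This automorphism is determined by the images of the finitely many generators, and those images have coefficients in some finite $L'\supseteq L$. The compatibility conditions, and the existence of an inverse, are again finitely many equations. So $\sigma$ comes from $\Aut(Y_{L'}|X_{L'})$, where $Y_{L'}=Y_L\times_L L'$. Since $\Aut(\hat{Y}|\overline{X})$ is finite by Corollary~\ref{no_fixed_points}, one $L$ works for all automorphisms at once.

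I expect the main obstacle to be the descent of the properties, specifically local freeness and surjectivity, rather than of the data. Finiteness of the number of affine charts and of the generators makes the data easy to descend. Local freeness, however, requires a finite-presentation argument, and this is precisely where the Noetherian, finitely presented hypothesis recalled in Remark~\ref{loc_finite_pres_loc_finite_type_noetherian} has to be used.
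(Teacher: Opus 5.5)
Your proposal is correct and is essentially the paper's spreading-out argument: a finite affine cover, finitely many coefficients generating a finite extension $L$, and descent of the gluing isomorphisms. You also supply steps the paper leaves implicit, namely the finite presentation of $\hat{B}_i$, the algebra and cocycle identities, the fact that $Y_L\rightarrow X_L$ is again a surjective finite \'etale morphism, and the automorphism part. One caveat concerns your final step choosing a single $L$ for all automorphisms: it rests on finiteness of $\Aut(\hat{Y}|\overline{X})$, which requires $\overline{X}$ to have only finitely many connected components (e.g.\ $X$ geometrically integral, the case used later), and it fails in the lemma's stated generality. For example, take $X=\Spec\:k_s$ with $k_s|k$ infinite and $\hat{Y}=\overline{X}\sqcup\overline{X}$; the descent of each individual automorphism that you give still holds.
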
 
\begin{proof}
    Since $X$ is quasi-compact, it has a finite covering by open affine subschemes.
    Let $\{U_i\mid 1\leq i\leq n\}$ be a finite open covering of $X$ such that $U_i=\Spec\:A_i$. \\
    For each such $i$ associate an affine open $\overline{U}_i:= \Spec\:A_i\otimes_kk_s$
    in $\overline{X}$ so that the preimage via $\overline{f}$ of each $\overline{U}_i$ in $\hat{Y}$ is an affine open subscheme of the form $\Spec\:\hat{B}_i$, where $\hat{B}_i$ is a finitely generated $A_i\otimes_kk_s$-module, note that this holds since $\overline{f}$ is a finite morphism.
    It is a standard fact that we can write a finitely generated $A_i\otimes_kk_s$-module as $(A_i\otimes_kk_s)[x_1,...,x_m]/I$ where $I$ is the ideal generated by the polynomials $f_1,...,f_r$.
    For $1\leq j \leq r$, we have $f_j =\sum_t c_{jt}x^t$ with coefficients $c_{jt} = \sum_la_{jtl}\otimes\lambda_{jtl}\in A\otimes_kk_s$, and since there are finitely many polynomials with finitely many monomials, there are finitely many coefficients with finite tensor factorisation.\\
    If $L$ is the finite extension of $k$ generated by the finite elements $\lambda_{jtl}$, then the coefficients of the polynomials $f_j$ are all contained in $A_i\otimes_k L$
    And hence $\hat{B}_i \cong (( A_i\otimes_kL)[x_1,...,x_m]/I)\otimes_L k_s$.
    Moreover, such an isomorphism holds for all $1\leq i\leq n$
    And by a similar reasoning, we can see that the isomorphism showing the compatibility of the $\hat{Y}\vert_{U_i}$ over the overlaps $U_i\cap U_j$ can be defined by polynomials involving only finitely many coefficients.
    Thus an extension $L$ that is so large that it contains all of the coefficients above satisfies the requirements of the lemma.
\end{proof}

\begin{proposition}
\label{Galois_embed_etale_cover}
    Let $f:X\rightarrow S\in \Fet_S$, with $X$ being connected. There is a morphism $p:Y\rightarrow X$ such that $f\circ p : Y\rightarrow S$ is a Galois object in $\Fet_S$ and 
    moreover every morphisms over $S$ from a Galois object to $X$ factors through $Y$, we call $Y$ the Galois closure.
\end{proposition}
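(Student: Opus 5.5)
We follow Szamuely's standard construction of the Galois closure via a fibre power of $X$ over $S$. Fix a geometric point $\overline{s}:\Spec\:\Lambda\rightarrow S$. The geometric fibre $X_{\overline{s}}$ is finite, say with $n$ points $\overline{x}_1,\dots,\overline{x}_n$. Consider the $n$-fold fibre product $X^n := X\times_S\cdots\times_S X$. By base change and composition of finite \'etale morphisms (Propositions \ref{base_change_etale} and \ref{composition_etale}), $X^n\rightarrow S$ is finite \'etale. Its geometric fibre over $\overline{s}$ is $X_{\overline{s}}^n$. The tuple $\overline{x}=(\overline{x}_1,\dots,\overline{x}_n)$ defines a geometric point of $X^n$.

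We take $Y$ to be the connected component of $X^n$ containing $\overline{x}$. Since $X^n$ is Noetherian, it has finitely many connected components, and each is open and closed. Hence $Y\rightarrow S$ is again finite \'etale, being the composite of a clopen immersion with $X^n\rightarrow S$. We then set $p:=\pr_1\vert_Y$. Its image is open and closed, since finite \'etale morphisms have clopen image, so $Y\rightarrow S$ is surjective because $S$ is connected. Note that the points of $Y_{\overline{s}}$ are $n$-tuples of \emph{pairwise distinct} points. To see this, observe that the diagonals $\{x_i=x_j\}$ are clopen by Proposition \ref{clopen_immersion}, so the complement of their union is clopen. That complement contains $\overline{x}$, and hence contains $Y$.

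\textbf{Main step: $Y$ is Galois.} We must show that $\Aut(Y|S)$ acts transitively on $Y_{\overline{s}}$. Take $\overline{y}=(\overline{x}_{\sigma(1)},\dots,\overline{x}_{\sigma(n)})\in Y_{\overline{s}}$; it is a permutation of $\overline{x}$ by the distinctness remark. The permutation $\sigma$ induces an $S$-automorphism $\sigma^*$ of $X^n$ permuting the factors. Since $\sigma^*$ maps connected components homeomorphically onto connected components, and $\sigma^*(\overline{x})=\overline{y}\in Y$, it restricts to an automorphism of $Y$ sending $\overline{x}$ to $\overline{y}$. This gives transitivity, which is the step I expect to require the most care: one must argue precisely that the fibre points of $Y$ are exactly permutations of $\overline{x}$, which needs $n$ equal to the full cardinality of the fibre.

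\textbf{Universality.} Let $Z\rightarrow S$ be a Galois object and $q:Z\rightarrow X$ a morphism over $S$. Pick $\overline{z}\in Z_{\overline{s}}$ with $q(\overline{z})=\overline{x}_1$; this is possible after composing with an automorphism, since $Z_{\overline{s}}\rightarrow X_{\overline{s}}$ is surjective because $q$ is finite \'etale with clopen nonempty image in connected $X$ (Lemma \ref{etale_between_composition}). For each $i$, choose $\overline{z}_i$ with $q(\overline{z}_i)=\overline{x}_i$. By transitivity there is $\phi_i\in\Aut(Z|S)$ with $\phi_i(\overline{z})=\overline{z}_i$. The morphism $(q\circ\phi_1,\dots,q\circ\phi_n):Z\rightarrow X^n$ sends $\overline{z}$ to $\overline{x}$. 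Since $Z$ is connected, it lands in $Y$, and composing with $p$ recovers $q\circ\phi_1$; taking $\overline{z}_1=\overline{z}$ and $\phi_1=\id$ gives $q$ itself. The factorisation is a morphism in $\Fet_S$ by Lemma \ref{etale_between_composition}, and it is unique once the base points are fixed, by Proposition \ref{proporty_mono_etale_cover}.
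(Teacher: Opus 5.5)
Your proposal is correct and follows the paper's proof essentially step for step. Both take $Y$ to be the connected component of $X^n$ through $\overline{x}=(\overline{x}_1,\dots,\overline{x}_n)$, get distinct coordinates from the clopen diagonals, get transitivity from the factor-permuting automorphisms preserving $Y$, and get universality from $(q\circ\phi_1,\dots,q\circ\phi_n):Z\rightarrow X^n$ landing in $Y$ by connectedness; your extra checks (surjectivity of $Y\rightarrow S$, clopen components, uniqueness of the factorisation via Proposition \ref{proporty_mono_etale_cover}) only fill in details the paper leaves implicit.
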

\begin{proof}
    First we fix a geometric point $\overline{s}:\Spec\:\Lambda\rightarrow S$ and let $F=\{\overline{x}_1,...,\overline{x}_n\}$ be the finite set of $\Spec\:\Lambda$-points of the geometric fibre $X_{\overline{s}}$.
    We consider the n-fold product scheme $X^n = X\times_S ...\times_S X$ and a geometric point $\overline{x} = (\overline{x}_1, ..., \overline{x}_n)$. $\overline{x}$ is induced by an ordering of the $\overline{x}_i$'s, that is because of the bijection $X^n\times_S\Spec\:\Lambda \rightarrow (X\times_S\Spec\:\Lambda)^n$.\\
    Let $Y$ be a connected component of $X^n$ containing the image of $\overline{x}$
    and let $p:Y\rightarrow X^n\rightarrow X$ be the composition of the embedding of $Y$ into $X^n$ and the projection onto the first coordinate $X$. Since embeddings and trivial coverings are finite \'etale, by composition (Proposition \ref{composition_etale}), $f\circ p :Y\rightarrow S$ is a finite \'etale covering.\\
    Let $p_{ij} : X^n \rightarrow X_i\times_SX_j$ for $1\leq i <j\leq n$ be the projection onto the product of the $i$-th and $j$-th factors. Let $\Delta(X)$ be the diagonal of $X$ in $X\times_S X$ of image $p_{ij}(X)$. By Proposition \ref{clopen_immersion}, $\Delta(X)$ is open and closed and therefore so is its inverse image by a projection $p_{ij}$. 
    Denote $\Delta^{-1} := \bigcup_{i,j} p_{ij}^{-1}(\Delta(X))$.
    As $Y$ is connected, if $Y\cap \Delta^{-1} \neq \emptyset$, then $Y\subset \Delta^{-1}$, but this cannot occur since $\overline{x} \notin \Delta^{-1}$. Whence $Y\cap \Delta^{-1}$ is empty and each point in the geometric fibre $Y_{\overline{s}}$ is represented by an $n$-tuple with distinct elements.\\
    We now show that $Y$ is Galois over $S$. 
    Every permutation $\sigma$ of the $\overline{x}_i$ induces an automorphism $f_{\sigma}$ of $X^n$ over $S$ permuting the components. 
    If $f_{\sigma}(\overline{x}) \in Y_{\overline{s}}$, since $f_{\sigma}(Y)\cap Y \neq \emptyset$ and $Y$ is connected, then $f_{\sigma}(Y) = Y$ and  $f_{\sigma}\in \Aut(Y|S)$.
    Thus $\Aut(Y|S)$ act transitively on one geometric fibre, making $Y\rightarrow S$ Galois in $\Fet_S$.\\
    Finally, let $q:Z\rightarrow X$ be a morphism in $\Fet_S$ with $Z\rightarrow S$ a Galois object and let us choose a preimage $\overline{z}$ of $\overline{x}_1$. Then by Theorem \ref{Galois_correpondance_etale_cov}, $q$ is in particular a surjective morphism of covers.
    Composing $q$ with distinct elements of $\Aut(Z|S)$, we get $n$ maps $q_1, ..., q_n : Z\rightarrow X$ such that $q_i\circ \overline{z} = \overline{x}_i$ for every $\overline{x}_i\in F$.
    From these $q_1,...,q_n$, we get a morphisms of schemes $Z\rightarrow X^n$ over $S$ that factors through $P$ since it maps $\overline{y}$ to $\overline{x}$, and $Z$ is connected.
\end{proof}

\subsubsection{\'Etale Group Schemes}
\label{etale_group_scheme}

Another useful point of view is to consider schemes with the extra structure of a group. 
This yields a nice parallel between finite \'etale covers as group schemes and the notion of \textit{principal homogeneous spaces}, that we will explain below.
We take this opportunity to introduce the concept of group schemes a little further, since we will review it in the next chapter.

\begin{definition}
\label{group_scheme_def_1}
    Let $S$ be a scheme. A \textbf{group scheme} over $S$ is a morphism of schemes $p:G\rightarrow S$ that has a section $e:S\rightarrow G$,
    together with multiplication $m:G\times_SG\rightarrow G$ and inverse $i:G\rightarrow G$ morphisms over $S$ satisfying the following associative laws commutative diagrams: 
    \[
    \begin{tikzcd}
        G\times_S G \times_S G \arrow[r,"\id\times m"] \arrow[d, "m\times \id"'] & G\times_S G \arrow[d,"m"] \\
        G\times_S G \arrow[r,"m"'] &  G
    \end{tikzcd}
    \hspace{1cm}
    \begin{tikzcd}
        G\times_S G \arrow[dr, "m"]  & G\times_S S \arrow[l,"\id\times e"'] \arrow[d,"\cong"] \\
        S\times_S G \arrow[u,"e\times\id"] \arrow[r,"\cong"'] &  G
    \end{tikzcd}
    \]
    \\
    and
    \[
    \begin{tikzcd}[column sep=small]
        & G\times_S G \arrow[rr, "\id\times i"] && G\times_S G \arrow[dr,"m"] \\
        G\arrow[rr, "p"] \arrow[ur,"d"] \arrow[dr,"d"'] && S \arrow[rr,"e"] && G\\
        & G\times_S G \arrow[rr,"i\times\id"'] &&  G\times_S G \arrow[ur,"m"']
    \end{tikzcd}
    \]
    Where $d:G\rightarrow G\times_SG$ is the map obtained by pulling back the identity on $G$ along itself (the diagonal morphism), it sends elements $g\in G$ onto $(g,g) \in G\times_S G$.
    We say that $G$ is finite, flat, unramified if the morphism $p:G\rightarrow S$ is respectively finite, flat or unramified. And a morphism between group schemes $G_1$ and $G_2$ over $S$ is a morphism of schemes $G_1\rightarrow G_2$ over $S$.
\end{definition}

\begin{example}\textbf{ }
\begin{itemize}
    \item[$a)$] We can put a group scheme structure on the general linear group $\GL_n$. \\
    By considering the affine space $\mathbb{A}^{2n+1}$, with coordinates $(x_{ij},t)$ where $x_{ij}$ are the entries of a $n\times n$ matrix and $t$ is an additional variable.
    We define a closed affine subscheme in $\mathbb{A}^{2n-1}$ as $\GL_n \cong V(\det(x_{ij})\cdot t -1)$.
    This is an isomorphism since $t=1/\det(A)$ is the inverse of the determinant.
\end{itemize}
    
\end{example}

\begin{remark}
\label{Hopf_algebra_group_scheme}
The case of affine group schemes is even more interesting, in the following we show an anti-equivalence between the category of affine group schemes over $k$ and that of commutative Hopf algebras.\\
Let $G$ be an affine group scheme over $k$ and consider its coordinate ring $\mathcal{O}(G)$, it is an object from the opposite category of commutative $k$-algebras. 
But this objects carries additional structure, recall that the coordinate algebras of cartesian products are given by the tensor product of coordinate algebras and that the coordinate algebra of a one point scheme over $k$ has $k$ as coordinate ring.\\
Hence the coordinate algebra of affine group schemes comes equipped with the following algebra morphisms from the affine group scheme structure, $\Delta: \mathcal{O}(G)\rightarrow \mathcal{O}(G\times_S G) = \mathcal{O}(G)\otimes \mathcal{O}(G)$, $\epsilon : \mathcal{O}(G)\rightarrow k$ and $\Sigma: \mathcal{O}(G)\rightarrow \mathcal{O}(G)$.\\
Moreover, $\Delta$ and $\epsilon$ are also algebra morphisms.
From the commutative diagram of Definition \ref{group_scheme_def_1}, we obtain the following commutative diagrams
\[
    \begin{tikzcd}
        \mathcal{O}(G)\otimes \mathcal{O}(G) \otimes \mathcal{O}(G)   & \mathcal{O}(G)\otimes \mathcal{O}(G) \arrow[l,"\id\otimes \Delta"'] \\
        \mathcal{O}(G)\otimes \mathcal{O}(G) \arrow[u, "\Delta\otimes \id"]  &  \mathcal{O}(G) \arrow[u,"\Delta"'] \arrow[l,"\Delta"]
    \end{tikzcd}
    \hspace{1cm}
    \begin{tikzcd}
        \mathcal{O}(G)\otimes \mathcal{O}(G) \arrow[r,"\id\otimes\epsilon"] \arrow[d,"\epsilon\otimes\id"']& \mathcal{O}(G)\otimes k \\
        k\times \mathcal{O}(G) &  \mathcal{O}(G)\arrow[u,"\cong"'] \arrow[l,"\cong"] \arrow[ul, "\Delta"']
    \end{tikzcd}
    \]
    \\
    and
    \[
    \begin{tikzcd}[column sep=small]
        & \mathcal{O}(G)\otimes \mathcal{O}(G) \arrow[rr, "\id\otimes \Sigma"] && \mathcal{O}(G)\otimes \mathcal{O}(G) \arrow[dr,"m"] \\
        \mathcal{O}(G)\arrow[rr, "p"] \arrow[ur,"\Delta"] \arrow[dr,"\Delta"'] && k \arrow[rr,"\epsilon"] && \mathcal{O}(G)\\
        & \mathcal{O}(G)\otimes \mathcal{O}(G) \arrow[rr,"\Sigma\otimes\id"'] &&  \mathcal{O}(G)\otimes \mathcal{O}(G) \arrow[ur,"m"']
    \end{tikzcd}
    \]
    But this is exactly the structure of a Hopf algebra (see Definition \ref{def_hopf_alg}). 
    The construction being duals, one sees that $(G, m, e,i)$ is an affine group scheme over $k$ if and only if $\mathcal{O}(G)$ with $\Delta$, $\epsilon$ and $\Sigma$ becomes a commutative Hopf algebra.
    
\end{remark}

\noindent We have defined group schemes over $S$ as group objects from the category of schemes over $S$. 
But sometimes this definition is not practical, for example, to define a group scheme one would have to give a scheme $G$, then one needs to define the morphisms $m,i$ and $e$ and finally that they respect the commutative diagrams.
There is an incentive to describe a group scheme as a scheme whose points form a group, it can be done as follows:

\begin{construction}
Suppose we have a scheme $X$ over $S$. 
The notion of points of $X$ is not as transparent as in the case of sets or topological spaces.
If $T$ is a scheme over $S$, then we define a $T$-valued point of $X$, as a morphism $x : T\rightarrow X$ over $S$ and denote the set of such points by $X(T)$. 
In the particular case where $S = \Spec\: k$ and $T : \Spec\:K$ where $K|k$ is a field extension, we refer to such points of $X$ as $K$-rational points.
\end{construction}

\begin{definition}
    A functor $F:\mathcal{C}\rightarrow \Grp$ is said to be a \textbf{representable group-valued functor} if and only if the functor $U\circ F :\mathcal{C}\rightarrow \SET$ is representable where $U:\Grp \rightarrow \SET$ is the forgetful functor.
    Now the category of representable group-valued functors and natural transformations between them is denoted as $[\mathcal{C}, \Grp]$
\end{definition}

\noindent The following result is quite powerful. 
It says that we can understand group schemes as functors and it concludes our above construction by endowing spaces of the form $X(T)$ with a group structure for $X$ and $T$ schemes over $S$.

\begin{proposition}
\label{group_scheme_are_representable_functors}
    There is an equivalence between the category of group schemes over $S$ and the category $[\Sch_S, \Grp]$ of representable group-valued functors from $\Sch_S$ to $\Grp$.
\end{proposition}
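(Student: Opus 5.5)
The plan is to apply the Yoneda lemma to the slice category $\Sch_S$. To a group scheme $(G,m,e,i)$ over $S$ I associate the functor $h_G=\Hom_S(-,G)$. For each $T$ in $\Sch_S$ the set $G(T)$ becomes a group as follows. Two points $x,y\in G(T)$ are composed by $x\cdot y:=m\circ(x,y)$, where $(x,y):T\rightarrow G\times_SG$ is the map given by the universal property of the fibre product. The unit is $e\circ q_T$, where $q_T:T\rightarrow S$ is the structure morphism. The inverse is $x^{-1}:=i\circ x$. The group axioms in $G(T)$ follow by precomposing the three commutative diagrams of Definition \ref{group_scheme_def_1} with $(x,y,z):T\rightarrow G\times_SG\times_SG$, with $(x,x)=d\circ x$, and so on. For $f:T'\rightarrow T$, precomposition $f^*:G(T)\rightarrow G(T')$ is a group homomorphism because $(x,y)\circ f=(x\circ f,y\circ f)$. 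Hence $h_G$ lifts to a functor into $\Grp$, and it is representable after forgetting to $\SET$ by construction. A morphism of group schemes $\varphi:G_1\rightarrow G_2$, meaning one compatible with $m$, $e$ and $i$, induces $\varphi\circ-$, which is a natural transformation of group-valued functors. This gives a functor $\Phi$ from group schemes to $[\Sch_S,\Grp]$.

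Next I show that $\Phi$ is fully faithful. By the Yoneda lemma, natural transformations $h_{G_1}\Rightarrow h_{G_2}$ of set-valued functors correspond bijectively to $S$-morphisms $\varphi:G_1\rightarrow G_2$. I then have to check that such a transformation respects the group structures exactly when $\varphi$ is compatible with $m$. To do this I evaluate the transformation on the universal pair of points $(\pr_1,\pr_2)\in G_1(G_1\times_SG_1)$. Their product is $m_1$, so the homomorphism condition there reads $\varphi\circ m_1=m_2\circ(\varphi\times\varphi)$. Conversely, this identity gives the homomorphism condition at every $T$ by precomposition. Compatibility with units and inverses then follows automatically, since group homomorphisms preserve them.

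For essential surjectivity, let $F$ be a group-valued functor with $U\circ F\cong h_G$ for some $S$-scheme $G$. I transport the group structure of each $F(T)$ to $G(T)$. The multiplications $G(T)\times G(T)\rightarrow G(T)$ are natural in $T$, and $G(T)\times G(T)\cong (G\times_SG)(T)$ naturally, so this family is a natural transformation $h_{G\times_SG}\Rightarrow h_G$. By Yoneda it comes from a unique morphism $m:G\times_SG\rightarrow G$. In the same way the unit maps give a natural transformation $h_S\Rightarrow h_G$, hence a morphism $e:S\rightarrow G$, and the inverse maps give $i:G\rightarrow G$. Each diagram of Definition \ref{group_scheme_def_1} holds because, under Yoneda, two morphisms into $G$ are equal once they induce the same natural transformation, and these transformations agree by the group axioms holding in every $F(T)$. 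Finally, $\Phi(G,m,e,i)\cong F$ by construction.

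I expect the main obstacle to be bookkeeping rather than any single hard step. The delicate part is making precise the natural identifications $h_{G\times_SG}\cong h_G\times h_G$ and $h_S\cong *$, the latter because $S$ is terminal in $\Sch_S$. I also need to check that "representable group-valued functor" is correctly matched with group objects, including the naturality of the transported structure. Once these identifications are set up, the axiom checks reduce to evaluation at universal points.
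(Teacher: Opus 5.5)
Your proposal is correct and follows essentially the same Yoneda-lemma argument as the paper. A group scheme gives the group-valued functor $h_G$, and conversely the group structures on the sets $G(T)$ yield natural transformations that Yoneda turns into $m$, $e$, $i$ satisfying the axioms. Your explicit full-faithfulness check, evaluating at the universal pair $(\pr_1,\pr_2)\in G_1(G_1\times_S G_1)$, is a step the paper leaves implicit, and it correctly uses morphisms compatible with $m$ rather than the arbitrary $S$-morphisms allowed in Definition~\ref{group_scheme_def_1}, for which $\varphi\circ-$ would not even be a morphism in $[\Sch_S,\Grp]$.
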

\begin{proof}

Consider the category $\Sch_S$ of schemes over $S$ and write $\mathcal{C} = \Fun(\Sch_S, \SET)$ the category of representable functors.
Now for any scheme $X\in \Sch_S$, we define the representable functor $h_X = \Hom_{\Sch_S}(\_,X)$ in $\mathcal{C}$. 
Sending $X$ to $h_X$ gives a covariant functor $h:\Sch_S \rightarrow \mathcal{C}$ that is moreover fully faithful (it is actually a Yoneda embedding, see Proposition \ref{yoneda_embeddings}).
So by the Yoneda lemma, any $X\in \Sch_S$ is uniquely determined by a representable functor $F\in \mathcal{C}$ of the form  by being its unique representative up to isomorphism.\\
We can now define group objects in $\Sch_S$ via the above embedding $h$. 
Let $X\in \Sch_S$, to give a group law on an object $X$ means that for each scheme $T$ over $S$, we have to specify a group law on the set $h_X(T) = \Hom_{\Sch_S}(T,X) = X(T)$ such that for every morphism $f:T_1\rightarrow T_2$, the induced map $h_X(f) : X(T_2) \rightarrow X(T_1)$ is a homomorphism of groups. 
Note that a category allows group objects if it has a terminal object and finite product, which is the case for $\Sch_S$ (a terminal object is the scheme $S$ and fibre products of schemes over $S$ are finite products).\\
So we can now define a functor $\widehat{h}_G = \Hom_{\Sch_S}(\_,G): \Sch \rightarrow \Grp$ sending any scheme $T$ over $S$ to the group $\widehat{h}_G(T) = G(T)$, by the above discussion this is a representable group-valued functor. 
And it gives rise to the functor $\widehat{h}$ from the category of group schemes over $S$ to the category of representable group-valued functors by sending $G\mapsto h_G$.\\
We now establish the equivalence : \\
Given a group scheme $(G,m,e,i)$ over $S$, its group axioms ensure that for every scheme $T\in \Sch_S$, the set $G(T)$ inherits a group structure as above, making the functor $\widehat{h}_G$ a representable group-valued functor.
Conversely, given a representable group-valued functor $\widehat{h}_G$, the group structure on $\widehat{h}_G(T)$ defines natural transformations  
$\widehat{m}_G :\widehat{h}_G\times \widehat{h}_G\rightarrow \widehat{h}_G$,
$\widehat{e}_G :\widehat{h}_S\rightarrow \widehat{h}_G$ and
$\widehat{i}_G :\widehat{h}_G\rightarrow \widehat{h}_G$.\\
And the Yoneda lemma tells us that these natural transformations correspond uniquely to $S$-morphisms $m_G:G\times_S G\rightarrow G$, $e_G:S\rightarrow G$ and $i_G:G\rightarrow G$. 
Moreover, because the embedding is fully faithful, the above natural transformations induced by group axioms over $\widehat{h}_G(T)$ for any schemes $T$ over $S$ translate directly into the commutative diagrams required for $G$ to be a group scheme.
\end{proof}

\begin{remark}
\label{affine_group_schemes_as_functors}
    Since the category of affine schemes is anti-equivalent to that of commutative $k$-algebras via the $\Spec$ functor, from the above proposition, 
    an affine group schemes $G$ over $k$ is equivalent to a group-valued representable functor from the category of $k$-algebras to the category of groups, represented by some $k$-algebra $A$ called the coordinate ring of $G$.
\end{remark}

\noindent Recall from algebraic geometry that there is an anti-equivalence between the category of affine schemes over $k$ and the category of commutative $k$-algebras. 
We can then restrict the above equivalence by an equivalence between affine group schemes over $k$ and representable group-valued functors on $\Alg_k$.
We continue by studying a few examples of group schemes.

\begin{example}
    Let $\Gamma$ be a finite group of order $n$. 
    We define the \textbf{constant group scheme} over $S$, denoted $\Gamma_S\rightarrow S$, to be the disjoint union of $n$ copies of $S$ indexed by $\Gamma$,
    equipped with the projective map given by the identity on each element, i.e.,
    $$\Gamma_S = \bigsqcup_{g\in \Gamma} S_g$$
    Then,
    $$\Gamma_S\times \Gamma_S = \bigsqcup_{g,h\in \Gamma\times\Gamma} S_{(g,h)}, \text{ with }S_{(g,h)} = S_g\times S_h$$
    The group operation on $\Gamma_S$ is induced by $\Gamma$ and defined by the multiplication map $m$ sending $S_{(g,h)}$ to $S_{gh}$.
\end{example}


\noindent In light of Theorem \ref{equivalence_catgory_etale_algebra}, we can ask ourselves if an equivalence of this type exists for finite \'etale group schemes over a field $k$.
The next lemma is necessary to answer this question. It is actually a result from Galois descent theory, and is due to \textit{Andreas Speiser} (it can be seen as a reinterpretation of Hilbert's Theorem $90$). For more on Galois descent, see \cite{galois_descent_Szamuely} $\S\: 2.3$.

\begin{lemma}
\label{descent_argument}
    Let $L|k$ be a Galois extension with Galois group $G=\Gal(L|k)$.
    Given a relative Galois representation $V$
    $$V \cong L \otimes V^G$$
\end{lemma}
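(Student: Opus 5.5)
I will prove Speiser's lemma for a finite Galois extension $L|k$; the infinite case reduces to this by the continuity assumption on the action. By a \emph{relative Galois representation} I mean an $L$-vector space $V$ with a semilinear action of $G$, that is, $\sigma(\lambda v)=\sigma(\lambda)\sigma(v)$ for $\sigma\in G$, $\lambda\in L$, $v\in V$. The invariants $V^G$ then form a $k$-vector space. The claim is that the natural $L$-linear map
$$\mu: L\otimes_k V^G\rightarrow V,\quad \lambda\otimes v\mapsto \lambda v$$
is an isomorphism. This map is $G$-equivariant when $G$ acts on the left factor of $L\otimes_k V^G$.

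First I show that $\mu$ is injective. Suppose the kernel is nonzero, and choose a nonzero element $\sum_{i=1}^n \lambda_i\otimes v_i$ in it with $n$ minimal. By minimality the $\lambda_i$ are $k$-linearly independent, and the $v_i$ are $k$-linearly independent in $V^G$. After scaling we may assume $\lambda_1=1$. Since $\mu$ is equivariant, for every $\sigma\in G$ the element $\sum_i(\sigma(\lambda_i)-\lambda_i)\otimes v_i$ also lies in the kernel. Its first term vanishes, so it is a shorter kernel element and must be zero. Hence $\sigma(\lambda_i)=\lambda_i$ for all $\sigma$, so every $\lambda_i$ lies in $L^G=k$ by the Main Theorem of Galois Theory \ref{galois_finite_field}. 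The original element then equals $1\otimes\sum_i\lambda_i v_i$, and $\sum_i\lambda_i v_i=\mu(\text{element})=0$. This contradicts the choice of a nonzero element, so $\mu$ is injective.

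Next I show that $\mu$ is surjective, which is the main obstacle; it requires producing enough invariants. I will use the trace operator $\operatorname{Tr}(v)=\sum_{\sigma\in G}\sigma(v)$, which maps $V$ into $V^G$. For $v\in V$ and any $\lambda\in L$, the vector $\operatorname{Tr}(\lambda v)=\sum_\sigma\sigma(\lambda)\sigma(v)$ lies in $V^G$. Fix a $k$-basis $\lambda_1,\dots,\lambda_n$ of $L$, where $n=|G|$. Consider the matrix $M=(\sigma(\lambda_j))_{\sigma,j}$. Dedekind's lemma on the linear independence of characters shows that $M$ is invertible over $L$. Writing $w_j=\operatorname{Tr}(\lambda_j v)\in V^G$, we have $(w_j)_j=M^{T}(\sigma(v))_\sigma$. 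Inverting $M^{T}$ expresses each $\sigma(v)$, and in particular $v=\operatorname{id}(v)$, as an $L$-linear combination of the $w_j$. Thus $v$ lies in the image of $\mu$.

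Surjectivity and injectivity together show that $\mu$ is an isomorphism. The only real input is Dedekind's independence of characters, which is the point where I would check that the invertibility of $M$ is stated cleanly. For an infinite Galois $L|k$ with a continuous action, every $v$ is fixed by some open subgroup $\Gal(L|M)$ with $M|k$ finite Galois. I would then apply the finite case to $V^{\Gal(L|M)}$ over $M$, and pass to the union.
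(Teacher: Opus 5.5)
Your proof is correct. The minimal-length kernel element argument gives injectivity, and the trace-plus-Dedekind argument gives surjectivity; together this is the standard proof of Speiser's lemma, which the paper does not prove but delegates to the cited Lemma 2.3.8. Your continuity reduction for infinite $L|k$ is a worthwhile addition: the lemma is usually stated for finite Galois extensions, while the paper applies it to $k_s|k$, where the semilinear action on $\{f:X\to k_s\}$ is indeed continuous.
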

\begin{proof}
    See \cite{galois_descent_Szamuely}, Lemma $2.3.8$.
\end{proof}

\begin{proposition}
    The category of finite \'etale group schemes over $k$ is equivalent to the category of finite groups carrying a continuous $\Gal(k)$-action.
\end{proposition}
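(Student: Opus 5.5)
The plan is to obtain the statement from the anti-equivalence of Theorem \ref{equivalence_catgory_etale_algebra} between $\FeAl_k$ and $\Gal(k)$-$\FCSet$. Example \ref{example_etale_over_field} shows that finite étale $k$-schemes are exactly $\Spec A$ with $A$ a finite étale $k$-algebra. Composing the anti-equivalences $\Spec$ and $\Hom_k(\_,k_s)$ therefore gives a \emph{covariant} equivalence
\[
\Phi:\Fet_{\Spec k}\longrightarrow \Gal(k)\text{-}\FCSet,\qquad X=\Spec A\longmapsto X(k_s)=\Hom_k(A,k_s).
\]
The idea is then to show that $\Phi$ carries group objects to group objects, since an equivalence preserving finite products induces an equivalence between the categories of group objects on both sides.

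First, I would check that $\Phi$ preserves finite products. The terminal object $\Spec k$ goes to the one-point set. For fibre products,
\[
(X\times_k Y)(k_s)=\Hom_k(A\otimes_k B,k_s)\cong \Hom_k(A,k_s)\times\Hom_k(B,k_s)
\]
by the universal property of the tensor product, and this bijection is clearly $\Gal(k)$-equivariant. Note also that $A\otimes_k B$ is again étale, by criterion $c)$ of Theorem \ref{etale_algebras}.

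Second, I would apply Definition \ref{group_scheme_def_1}. A finite étale group scheme is an object $G$ of $\Fet_{\Spec k}$ with morphisms $m,e,i$ making the diagrams commute. Applying $\Phi$ gives maps $G(k_s)\times G(k_s)\to G(k_s)$, $\{*\}\to G(k_s)$ and $G(k_s)\to G(k_s)$ satisfying the same diagrams. So $G(k_s)$ is a finite group, and $\Gal(k)$ acts by group automorphisms because $m,e,i$ are equivariant. The action is continuous because $G(k_s)$ is a finite continuous $\Gal(k)$-set.

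Conversely, I would start from a finite group $\Gamma$ with a continuous action of $\Gal(k)$ by automorphisms. Its multiplication, unit and inverse are equivariant maps, hence morphisms in $\Gal(k)$-$\FCSet$. By full faithfulness and essential surjectivity of $\Phi$, they come from unique morphisms $m,e,i$ between the corresponding finite étale schemes, with $\Phi$ of the product taken into account via the first step. Faithfulness transports the group axioms back, and morphisms of group schemes correspond to equivariant homomorphisms for the same reason. This is the Yoneda-style argument of Proposition \ref{group_scheme_are_representable_functors}, restricted to a subcategory.

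I expect the main obstacle to be the product-preservation step, in particular making sure the equivalence respects the chosen product structures. Transporting an internal group law through an equivalence needs compatibility of $\Phi$ with $\times$ and with the terminal object up to \emph{natural} isomorphism, so that the associativity and unit diagrams correspond. I would handle this by writing the isomorphism $\Phi(X\times Y)\cong\Phi(X)\times\Phi(Y)$ explicitly as $x\mapsto(\pr_1\circ x,\pr_2\circ x)$, which is visibly natural. As a sanity check, when the action is trivial the construction should return the constant group scheme $\Gamma_{\Spec k}$ of the previous example. Lemma \ref{descent_argument} could serve as an alternative route: descend the $k_s$-algebra of functions $\prod_{\Gamma}k_s$ to $k$ via its $\Gal(k)$-invariants.
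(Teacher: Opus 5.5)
Your proposal is correct, and it argues differently from the paper. The paper also starts from the anti-equivalence $\FeAl_k\simeq\Gal(k)$-$\FCSet$ to obtain the group $G(k_s)$ with its compatible Galois action. For the converse, however, it does not use full faithfulness and essential surjectivity. Instead it builds the coordinate ring by hand via Galois descent (Lemma \ref{descent_argument}): it takes $A=(\prod_{x\in\Gamma}k_s)^{\Gal(k)}$ for the semilinear action $\sigma\cdot(a_x)_{x}=(\sigma(a_{\sigma^{-1}x}))_{x}$, and checks $A\otimes_k k_s\cong\prod k_s$ to see that $A$ is étale. It then writes down $\Delta(f)(x,y)=f(xy)$, with $\epsilon$ and $\Sigma$ only handled by ``similarly'', and appeals to the Hopf algebra/group scheme dictionary of Remark \ref{Hopf_algebra_group_scheme}.

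Your route replaces all of this with the general fact that an equivalence compatible with finite products induces an equivalence of group objects. This buys three things:
\begin{itemize}
\item[$\cdot$] The descent step becomes logically redundant, because essential surjectivity is already contained in Theorem \ref{equivalence_catgory_etale_algebra}.
\item[$\cdot$] The group (equivalently Hopf) axioms are transported all at once by faithfulness, rather than verified one by one.
\item[$\cdot$] You also get the correspondence on morphisms (equivariant homomorphisms), which the paper's proof never addresses.
\end{itemize}
In exchange, the paper's route gives an explicit formula for the coordinate Hopf algebra attached to $\Gamma$. That ties in with the Hopf-algebraic viewpoint used in Chapter 3, and it is essentially the alternative you mention at the end.

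Two minor remarks. Your explicit check of product preservation is fine but not strictly necessary, since any equivalence preserves existing finite products; what matters is that you use the canonical comparison map, which you do. Also, the paper's $\Fet_{\Spec k}$ consists of surjective covers, so $\Phi$ lands in nonempty $\Gal(k)$-sets. This is harmless, because both sides are closed under finite products and contain the terminal object, and group objects are nonempty.
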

\begin{proof}
    Assume the base field $S = \Spec\:k$ for $k$ a field.
    Then by Theorem \ref{etale_algebras}, a finite \'etale group scheme $G\rightarrow S$ is of the form $G = \Spec\: A$ with a $A$ finite \'etale $k$-algebra.\\
    And by Theorem \ref{equivalence_catgory_etale_algebra}, it corresponds to a finite set equipped with a continuous $\Gal(k)$-action.\\
    That finite set being the fibre $G_{k_s}$ of $G$ over the geometric point $\Spec \: k_s \rightarrow G$, it carries a group structure coming from that of $G$, meaning it is compatible with the Galois action.\\
    Conversely, the Galois-equivariant group operations on a finite continuous $\Gal(k)$-set, yield a group scheme structure on the corresponding  finite \'etale scheme over $k$.
    Over $k_s$, the corresponding constant group of such a $\Gal(k)$-set $X$ has a coordinate algebra $\prod_{x\in X} k_s$.
    Naturally, it is isomorphic to the vector space $V = \{f:X\rightarrow k_s\}$ where an element $a\in \prod k_s$ is of the form $(a_{x_1},...,a_{x_n})$ and is defined by a function $f(x) = a_x$. 
    The absolute Galois group has an action over this vector space, for any $\sigma\in \Gal(k)$, we have $\sigma\cdot (a_x)_{x\in X} = (\sigma(a_{\sigma^{-1}x}))_{x\in X}$.
    Notice that this equips $V$ with a semi-linear $\Gal(k)$-action.
    Now we can apply our above lemma for the descent argument. 
    Consider the subalgebra $A = V^G$ consisting of all $\Gal(k)$-equivariant functions in $V$.
    Since all the requirements of the lemma are verified, we have $A\otimes_k k_s \cong \prod k_s$.
    From Theorem \ref{etale_algebras}, this means that $A$ is a finite \'etale algebra, and so its spectrum gives a finite \'etale group scheme over $\Spec\: k$.\\
    To conclude that the resulting finite \'etale scheme admits a group structure making it a group scheme, we are going to check that its coordinate ring $A$ is a Hopf $k$-algebra.\\
    We define a map $\Delta:A\rightarrow A\otimes_k A$ as follow, 
    for an element $f\in A$, we set $\Delta(f)$ a function on $X\times X$ to be $\Delta(f)(x,y) = f(x\cdot y)$.
    Since the group operation on $X$ is compatible with the action of $\Gal(k)$, for $\sigma\in\Gal(k)$, we hve
    $$\sigma (\Delta(f)(x,y)) = \sigma (f(x\cdot y)) = f(\sigma (x\cdot y)) = f(\sigma(x)\cdot \sigma(y)) = \Delta(f)(\sigma(x),\sigma(y))$$
    so it is a well-defined map of $k$-algebras, induced by the group law on $X$.
    Similarly, we can define maps $\epsilon$ and $\Sigma$.\\
    Thus by Remark \ref{Hopf_algebra_group_scheme}, $\Spec\:A$ is a finite \'etale group scheme over $k$.
\end{proof}

\begin{remark}
\label{constant_group_scheme_etale}
The previous proposition actually tells us more.
Consider $G$ a finite \'etale group scheme over $k$, and $\Gamma = G(k_s)$ the group of $k_s$-valued points of $G$.
Then we can consider the constant group scheme $\Gamma_k$ over $k$, and the above proposition tells us that $G\otimes k_s \cong \Gamma_k \otimes k_s$.
This can be translated into the following, a finite group scheme $G$ over $k$ is \'etale if and only if its separable fibre $G_{k_s}$ is a constant group scheme.
\end{remark}

\begin{definition}
    Let $S$ be a connected scheme, and $G\rightarrow S$ a finite flat group scheme.
    $X$ is a (left) \textbf{$G$-torsor} or \textbf{$G$-principal homogeneous space} over $S$ is a finite locally free surjective morphism $X\rightarrow X$ together with a group action $\rho : G\times_S X\rightarrow X$ such that the map $(\rho,\id):G \times_S X\rightarrow X \times_S X$ is an isomorphism and the following diagrams commute 
    \[
    \begin{tikzcd}
        G\times_S G \times_S X \arrow[r,"\id\times \rho"] \arrow[d, "m\times \id"'] & G\times_S X \arrow[d,"\rho"] \\
        G\times_S X \arrow[r,"\rho"'] &  X
    \end{tikzcd}
    \hspace{1cm}
    \begin{tikzcd}
        S\times_S X \arrow[dr, "\cong"']\arrow[r,"\epsilon \times \id"]  & G\times_S X \arrow[d,"\rho"] \\
        &  X
    \end{tikzcd}
    \]
    
\end{definition}

\noindent A similar characterisation of $G$-torsors arise as follow:
\begin{lemma}
\label{other_charac_torsors}
    Let $S$ be a connected scheme, $G\rightarrow S$ a finite flat group scheme, and $X\rightarrow S$ a scheme over $S$ equipped with an action $\rho: G\times_SX \rightarrow X$ (on the left).
    These data define a $G$-torsor over $S$ if and only if there exists a finite locally free surjective morphism $Y\rightarrow S$ such that $X\times_SY\rightarrow Y$ is isomorphic (as a scheme over $Y$) with a $G\times_SY$-action, to $G\times_SY$ acting on itself by translations (on the left).
\end{lemma}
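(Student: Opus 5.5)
The plan is to prove the two implications separately, using the isomorphism $(\rho,\id)\colon G\times_S X\to X\times_S X$ from the definition in one direction and the fact that isomorphisms can be detected after a faithfully flat base change in the other.

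For the direction ``torsor $\Rightarrow$ locally trivial'', I take $Y := X$ itself. By definition $X\to S$ is finite, locally free and surjective, so it is an admissible choice of $Y$. The base change $X\times_S X\to X$, taken as a scheme over $X$ via the second projection, is identified through $(\rho,\id)$ with $G\times_S X\to X$, where the map is the second projection. I will then verify on $T$-valued points, using Proposition \ref{group_scheme_are_representable_functors}, that this identification carries the action of $G\times_S X$ on $X\times_S X$ to left translation of $G\times_S X$ on itself. On a point $(g,x)$, an element $h$ acts by $(h\cdot(g\cdot x),x)=((hg)\cdot x,x)$, and this is precisely the image of $(hg,x)$. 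The identity uses the associativity diagram in the definition, so this direction reduces to bookkeeping.

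For the converse, suppose $Y\to S$ is finite, locally free and surjective, and suppose $X\times_S Y\cong G\times_S Y$ equivariantly over $Y$. I first show that $X\to S$ is finite, locally free and surjective. Surjectivity is immediate, since $G\times_S Y\to Y$ is surjective (it has the unit section) and $Y\to S$ is surjective. Finiteness and local freeness follow by repeating the affine argument of Proposition \ref{etale_cover_locally_trivial}. Locally we have $S=\Spec A$, $X=\Spec B$, $Y=\Spec C$ and $G=\Spec H$, with $C$ free of finite rank over $A$. Then $B\otimes_A C\cong H\otimes_A C$ is finite free over $C$, hence finite free over $A$, and I deduce from this that $B$ is finite and locally free over $A$. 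I expect this step to be the main obstacle, because it requires descending finiteness and flatness along the faithfully flat map $A\to C$. Flatness descends directly by faithful flatness. Finite generation descends by choosing finitely many elements of $B$ whose images generate $B\otimes_A C$; the cokernel $Q$ of the resulting map then satisfies $Q\otimes_A C=0$, so $Q=0$. In the Noetherian setting, finite together with flat gives locally free by Remark \ref{locally_free_are_flat_in_noetherian}.

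It remains to show that $(\rho,\id)\colon G\times_S X\to X\times_S X$ is an isomorphism. After base change along $Y\to S$, this map becomes, by the equivariant identification, the map $G\times_S G\times_S Y\to G\times_S G\times_S Y$ given by $(h,g)\mapsto(hg,g)$. That map is an isomorphism with inverse $(a,g)\mapsto(a\,i(g),g)$. A morphism of $S$-schemes that becomes an isomorphism after the faithfully flat, quasi-compact base change $Y\to S$ is itself an isomorphism. Locally this says that a map of $A$-modules is bijective whenever it is bijective after applying $-\otimes_A C$, which again holds by faithful flatness of $C$ over $A$. The commutativity of the action diagrams is given as part of the data, so the proof is complete.
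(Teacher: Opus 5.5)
Your proof follows the paper's route: take $Y=X$ for the forward direction, and for the converse descend finiteness, local freeness and surjectivity along $Y\to S$, then check that $(\rho,\id)$ is an isomorphism after the faithfully flat base change to $Y$. The paper phrases this last step as the corresponding map of $\mathcal{O}_S$-algebras from Proposition \ref{affine_morph_quasi_coherent} becoming an isomorphism after tensoring with $\phi_*\mathcal{O}_Y$, and your version supplies the descent details that the paper leaves as ``by base change''.

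One step both you and the paper pass over is that writing $X=\Spec B$ locally over $S$ presupposes that $X\to S$ is affine, which is not among the hypotheses. It does hold: it follows from fpqc descent of affineness, or, in the paper's Noetherian separated setting, from Chevalley's theorem, since over each affine open $U\subset S$ the scheme $X_U\times_U Y_U\cong G_U\times_U Y_U$ is affine and maps finitely and surjectively onto $X_U$.
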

\begin{proof}
    Suppose such data define a $G$-torsor.
    By assumption, the $G$-torsor $X\rightarrow S$ is a finite locally free surjective morphism, and $X\times_SX\rightarrow X$ is isomorphic to $G\times_SX\rightarrow X$ (basicaly take $Y=X)$.\\
    For the other direction, notice that since $G\times_S Y\rightarrow Y$ and $Y\rightarrow S$ are finite locally free and surjective, the same is true for $X\rightarrow S$ (by base change).\\
    Now, the map $G\times_SX\rightarrow X\times_SX$ corresponds via Proposition \ref{affine_morph_quasi_coherent}
    to a morphism $\varphi :(\phi_*\mathcal{O}_X)\otimes_{\mathcal{O}_S}(\phi_*\mathcal{O}_X) \rightarrow (\phi_*\mathcal{O}_G)\otimes_{\mathcal{O}_S}(\phi_*\mathcal{O}_X)$.
    And since, by assumption, $G\times_S Y\cong X\times_SY$, we have that the map $(\rho \times \id_X\times \id_Y) : G\times_SX\times_SY\rightarrow X\times_S X\times_S Y$ is an isomorphism.
    This means that the morphism $\varphi$ becomes an isomorphism after tensoring with $\phi_*\mathcal{O}_Y$.
    But $\phi_*\mathcal{O}_Y$ is locally free (by assumption), so $\varphi$ must be an isomorphism.
\end{proof}

\noindent We finnish this section by giving a partial classification of $G$-torsors over $S$ for $G$ a finite \'etale group scheme over $S$.

\begin{theorem}
    Let $S$ be a connected scheme, and $G$ a finite \'etale group scheme over $S$.
    \begin{itemize}
        \item[$a)$] If $G$ is a constant group scheme $\Gamma_S$, then a $G$-torsor is the same as a finite Galois \'etale cover with group $\Gamma$.
        \item[$b)$] If there is a morphism $S\rightarrow \Spec\:k$ for $k$ a field, and $G$ arise from an \'etale group scheme $G_k$ over $k$ by base change to $S$, then every $G$-torsor $Y\rightarrow S$ is a finite \'etale cover of $S$.

        Moreover, there is a finite separable extension $L|k$ such that $Y\times_k\Spec\:L \rightarrow S\times_k \Spec\: L$ is a Galois \'etale cover.
    \end{itemize}
\end{theorem}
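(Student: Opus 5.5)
For part $a)$, I would go in both directions using Lemma \ref{other_charac_torsors} and Proposition \ref{etale_cover_locally_trivial}. Let $X\rightarrow S$ be a $\Gamma_S$-torsor. Lemma \ref{other_charac_torsors} gives a finite locally free surjective $Y\rightarrow S$ with $X\times_SY\cong \Gamma_S\times_SY=\bigsqcup_{g\in\Gamma}Y$, which is a trivial cover of $Y$. Proposition \ref{etale_cover_locally_trivial} then shows that $X\rightarrow S$ is a finite \'etale cover. An action $\rho:\Gamma_S\times_SX\rightarrow X$ is the same as a homomorphism $\Gamma\rightarrow\Aut(X|S)$, since $\Gamma_S\times_SX=\bigsqcup_{g}X$. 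The isomorphism $(\rho,\id):\bigsqcup_g X\rightarrow X\times_SX$, taken on geometric fibres over $\overline{s}$, says that $\Gamma$ acts simply transitively on $X_{\overline{s}}$. Hence $\Gamma\hookrightarrow\Aut(X|S)$, and $\Aut(X|S)$ acts transitively on the geometric fibre. By Remark \ref{remark_Galois}, $\Aut(X|S)$ acts freely, so $\Gamma\cong\Aut(X|S)$.

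The delicate point is that the definition of Galois cover requires $X$ to be connected, while a torsor need not be (the trivial torsor $\Gamma_S$ is not). I would therefore read ``Galois cover with group $\Gamma$'' as a finite \'etale cover with a $\Gamma$-action that is simply transitive on geometric fibres, and note that when $X$ is connected this is exactly the earlier definition.

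For the converse, let $X\rightarrow S$ be Galois with group $\Gamma=\Aut(X|S)$. This defines an action $\Gamma_S\times_SX\rightarrow X$. The map $\bigsqcup_g X\rightarrow X\times_SX$, $(g,x)\mapsto(gx,x)$, is a morphism of finite \'etale covers of $X$ via the second projection. On each geometric fibre it is a bijection, by Remark \ref{remark_Galois} (transitive and free). A morphism of finite \'etale covers that is bijective on all geometric fibres is an isomorphism; I would prove this using Lemma \ref{etale_between_composition}, which makes the map finite \'etale, then counting degrees so that it is finite locally free of rank $1$. Therefore $X$ is a $\Gamma_S$-torsor.

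For part $b)$, let $G=G_k\times_kS$. By Remark \ref{constant_group_scheme_etale} there is a finite Galois extension $L|k$ such that $G_k\times_k L\cong\Gamma_L$ is constant: take $L$ large enough that $\Gal(k)$ acts on the finite group $G_k(k_s)$ through $\Gal(L|k)$, using openness of stabilisers from Lemma \ref{continuous_open_stabiliser}. Given a $G$-torsor $Y\rightarrow S$, I would first show it is finite \'etale exactly as in $a)$. Lemma \ref{other_charac_torsors} trivialises $Y$ after a finite locally free surjective base change to $G\times_S Y'$, and this is finite \'etale over $Y'$ because $G_k$ is \'etale and \'etaleness is stable under base change (Proposition \ref{base_change_etale}). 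The remaining needed direction of Proposition \ref{etale_cover_locally_trivial} (fibres \'etale after a surjective flat base change) then applies.

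Next I would base change along $S_L\rightarrow S$. The torsor structure is preserved, so $Y_L\rightarrow S_L$ is a $G\times_SS_L=\Gamma_{S_L}$-torsor. Part $a)$ then shows that $Y_L\rightarrow S_L$ is Galois with group $\Gamma$, in the sense fixed above. The main obstacle I expect is this connectedness issue again: $S_L$ and $Y_L$ may be disconnected. I would handle it by working with the torsor definition of Galois, or by passing to a connected component and replacing $\Gamma$ by the stabiliser of that component.
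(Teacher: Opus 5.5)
Your proposal is correct and follows the paper's route. Finite \'etaleness comes from Lemma \ref{other_charac_torsors} together with Proposition \ref{etale_cover_locally_trivial}, the Galois property from simple transitivity of $\Gamma$ on geometric fibres, and $b)$ from a base change that makes $G_k$ constant followed by an application of $a)$. Beyond the paper's argument, you also prove the converse in $a)$ and correctly flag that disconnected torsors (e.g.\ $\Gamma_S$ itself) or a disconnected $S_L$ are not Galois in the paper's sense. You further choose a \emph{finite} $L$ via continuity of the Galois action, whereas the paper takes $L=k_s$, which does not give the finite extension the statement asks for.
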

\begin{proof}
    The first statement follows by combining Proposition \ref{etale_cover_locally_trivial} and  Lemma \ref{other_charac_torsors}, we have
    $$Y\times_SY \cong \Gamma_S\times_SY = (\bigsqcup_{\Gamma} S)\times_SY = \bigsqcup_{\Gamma} Y$$
    By base change we obtain an action of $\Gamma_S$ on $Y_{\overline{s}}$ that is transitive (by definition of $G$-torsors) for $\overline{s}$ a geometric point of $S$, and since $\Gamma_S$ is the constant group scheme, the action comes from $\Gamma$.
    Thus the finite \'etale cover associated to the $\Gamma_S$-torsor is moreover Galois.\\
    For the second statement, consider a separable extension $L|k$ for which $G_k\times_k\Spec\:L$ is a constant group scheme (this is possible at least for $L=k_s$ from our discussion in Remark \ref{constant_group_scheme_etale}), 
    then we can apply $a)$ to $Y\times_k \Spec\:L \rightarrow S\times_k\Spec\:L$, and we can conclude similarly.
    The rest of the statement follows from another application of Proposition \ref{etale_cover_locally_trivial}.
\end{proof}

\subsubsection{\'Etale Fundamental Groups}

In previous sections we have illustrated a theory of finite \'etale coverings of schemes and shown that the classification of those covers is ``Galoisienne".
In continuation of a similar analogy with Chapter $1$, we now wish to define an object that behaves like the fundamental group but for connected schemes.\\
Like in Subsection \ref{universal_cover_section}, we will define this fundamental group via a fibre functor constructed as follows:    

\begin{construction}
    Let $S$ be a scheme, and consider the category $\Fet_S$.
    For the rest of the construction, we fix a geometric point $\overline{s}:\Spec\:\Lambda \rightarrow S$. 
    We now define a fibre functor on $\Fet_S$ as
    the functor $\Fib_{\overline{s}}$ sending an object $X\rightarrow S$ in $\Fet_S$ to the underlying set of its geometric fibre $X\times_S\Spec\:\Lambda$ at $\overline{s}$. 
    And sends a morphism $X\rightarrow Y$ from $\Fet_S$ to the induced set-theoretic map $X\times_S\Spec\:\Lambda \rightarrow Y\times_S\Spec\:\Lambda$.\\
    And finally we define an action on the fibres as follow:
    Consider $F:\mathcal{C}\rightarrow\mathcal{D}$ a functor between two categories.
    An automorphism of $F$ is a natural transformation $\omega:F\rightarrow F $ that has a two sided inverse, we write $\omega \in \Aut(F)$. Composition of natural transformations then equips the set $\Aut(F)$ with a group structure. 
    Notice that for any object $C\in \mathcal{C}$, any $\omega\in \Aut(F)$ which is a collection of bijections $\omega_C : F(C)\rightarrow F(C)$, and given a morphism $f:C\rightarrow C'$, we have the following commutative diagram,
    \[
    \begin{tikzcd}
        F(C) \arrow[r,"F(f)"] \arrow[d, "\omega_C"'] & F(C') \arrow[d,"\omega_{C'}"] \\
        F(C) \arrow[r,"F(f)"'] &  F(C').
    \end{tikzcd}
    \]
\end{construction}

\noindent In the literature the geometric fibre of a scheme say $X$ over $S$ at a geometric point say $\overline{s}:\Spec\:\Lambda \rightarrow S$ is often denoted by $X_{\overline{s}} : = X\times_S\Spec\:\Lambda$. 
In our case however, we want to stress that the main object of study is actually the functor $\Fib_{\overline{s}}$ and not the geometric point or the scheme.
So even though the commonly used notation would lighten our notations, we stick with writing the fibre functor.

\begin{definition}
    Given a scheme $S$ and a geometric point $\overline{s}:\Spec\:\Lambda\rightarrow S$, we define the \textbf{\'etale fundamental group} $\pi_1^e(S,\overline{s})$ as the automorphism group of the fibre functor $\Fib_{\overline{s}}$ on the category of finite \'etale covers of $S$.
\end{definition}

\begin{remark}
    This obviously implies that $\pi_1^e(S,\overline{s})$ acts naturally on $\Fib_{\overline{s}}(X)$ for any $X$ being a finite \'etale cover of $S$, so $\Fib_{\overline{s}}$ takes its values in the category of $\pi^e_1(S,\overline{s})$-sets. However this is not a monodromy action as it was the case for topological coverings.
\end{remark}

\noindent In order to understand the \'etale fundamental group of a connected scheme $S$ at a geometric point $\overline{s}$, we need to understand the automorphisms of the fibre functor. 
And just as it was the case for topological coverings (a key fact in proving Theorem \ref{covering_reformulation_1} and Theorem \ref{covering_reformulation_2}), a wise step would be to find a representation of the fibre functor. 
Awkwardly enough, the functor $\Fib_{\overline{s}}$ is not representable.
Which means there will not be a universal finite \'etale covering.
To address this issue, we work in a more general categorical setting, and introduce the weaker notion of pro-representability.
Many of the following proofs and concepts are inspired by \cite{fundamental_group_of_schemes_thesis}.

\begin{definition}
    Let $S$ be a connected scheme and $\overline{s}$ a geometric point. We define the cofiltered category $\mathcal{J}$ by 
    \begin{itemize}
        \item[$a)$] Objects of $\mathcal{J}$ are pairs $(Y_i,\overline{y}_i)$ where the $Y_i \in \Fet_S$ and $\overline{y}_i$ is an element of $\Fib_{\overline{s}}(Y_i)$ for any $Y_i \in \Fet_S$. We denote objects of $\mathcal{J}$ by $Y_i$ instead of $(Y_i,\overline{y}_i)$ when possible
        \item [$b)$] Morphisms of $\mathcal{J}$ are maps $f_{ij}: (Y_j,\overline{y}_j)\rightarrow (Y_i,\overline{y}_i)$ satisfying $\Fib_{\overline{s}}(f_{ij})(\overline{y}_j) = \overline{y}_i$. And we denote morphisms in $\mathcal{J}$ by the partial order $i\leq j$ if such a morphism exist.
    \end{itemize}
\end{definition}

\begin{lemma}
\label{making_Galois_cofinal}
    Let $S$ be a connected scheme and $\overline{s}$ a geometric point.
    Let $f:X\rightarrow S$ be in $\Fet_S$ with a fixed geometric point $\overline{x}\in \Fib_{\overline{s}}(X)$.
    Then there exists a unique finite \'etale cover $Y_k\in\mathcal{J}$ that is Galois over $S$ with a morphism $p:Y_k\rightarrow X$ such that $\Fib_{\overline{s}}(p)(\overline{y}_k) =\overline{x}$.
\end{lemma}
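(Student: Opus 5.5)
The plan is to reduce to the connected case and then reuse the Galois closure construction of Proposition \ref{Galois_embed_etale_cover}, keeping track of the chosen geometric point throughout. I read ``unique'' as ``unique up to a unique isomorphism in $\mathcal{J}$ among minimal such objects'', that is, the pointed Galois closure. A literal uniqueness cannot hold, since any larger Galois cover dominating $Y_k$ would also qualify.

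First, reduce to $X$ connected. $X$ is Noetherian, so it has finitely many connected components, each open and closed. Let $X_0$ be the component through which $\overline{x}:\Spec\:\Lambda\rightarrow X$ factors. The inclusion $X_0\hookrightarrow X$ is an open immersion, hence \'etale, and it is also a closed immersion, hence finite (Proposition \ref{closed_is_finite}). The composite $X_0\rightarrow S$ is therefore finite \'etale by Proposition \ref{composition_etale}. Its image is open and closed (Propositions \ref{finite_is_closed} and \ref{flat_is_open}) and nonempty, so it is all of $S$ because $S$ is connected. Thus $X_0\in\Fet_S$ is connected and $\overline{x}\in\Fib_{\overline{s}}(X_0)$. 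It then suffices to find $Y_k$ over $X_0$ and compose with the inclusion.

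Second, construct $Y_k$ exactly as in the proof of Proposition \ref{Galois_embed_etale_cover}, but with the ordering chosen so that $\overline{x}_1=\overline{x}$. Enumerate $\Fib_{\overline{s}}(X_0)=\{\overline{x}_1=\overline{x},\overline{x}_2,\dots,\overline{x}_n\}$. Let $\overline{y}_k=(\overline{x}_1,\dots,\overline{x}_n)\in\Fib_{\overline{s}}(X_0^n)$, and let $Y_k$ be the connected component of $X_0^n$ containing $\overline{y}_k$. By that proposition $Y_k\rightarrow S$ is a Galois object. The map $p$ is the projection $Y_k\rightarrow X_0$ onto the first factor, so $\Fib_{\overline{s}}(p)(\overline{y}_k)=\overline{x}_1=\overline{x}$. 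Hence $(Y_k,\overline{y}_k)\in\mathcal{J}$ and $p$ is a morphism in $\mathcal{J}$.

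Third, prove uniqueness via the pointed universal property. Let $(Z,\overline{z})$ be Galois with a morphism $q:(Z,\overline{z})\rightarrow(X,\overline{x})$. Since $Z$ is connected, $q$ lands in $X_0$. Transitivity of $\Aut(Z|S)$ on $\Fib_{\overline{s}}(Z)$, together with surjectivity of $q$ on geometric fibres (Theorem \ref{Galois_correpondance_etale_cov}), gives maps $q_i=q\circ\sigma_i$ with $q_i(\overline{z})=\overline{x}_i$ and $\sigma_1=\id$. These assemble to $Z\rightarrow X_0^n$ sending $\overline{z}$ to $\overline{y}_k$, and connectedness of $Z$ forces it to factor through $Y_k$. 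Proposition \ref{proporty_mono_etale_cover} shows that a morphism over $S$ out of a connected scheme is determined by its value at one geometric point, so this pointed factorization is unique. Consequently two minimal pointed objects with the property dominate each other, and the pointed composites are identities by the same proposition. This yields uniqueness up to unique isomorphism.

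I expect the main obstacle to be the precise meaning of uniqueness and the verification that the factorization respects base points. The existence part is essentially Proposition \ref{Galois_embed_etale_cover} plus bookkeeping of the first coordinate.
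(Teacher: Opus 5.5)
Your proof is correct. Its existence part rests on the same idea as the paper's: pass to the connected component through $\overline{x}$ and take the Galois closure of Proposition~\ref{Galois_embed_etale_cover}. The two arguments differ in two places.

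\emph{Existence.} The paper takes an arbitrary Galois closure $Y_k\to Z$ with its own distinguished point $\overline{y}_k$. It lifts $\overline{x}$ to some $\overline{y}$ using surjectivity, then corrects by an element of the transitive group $\Aut(Y_k|S)$ so that $\overline{y}_k$ lands on $\overline{x}$. You skip the correction by ordering the fibre so that $\overline{x}_1=\overline{x}$ and taking $(\overline{x}_1,\dots,\overline{x}_n)$ as base point. The lift-and-translate step still appears in your uniqueness argument, so the ingredients are the same.

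\emph{Uniqueness.} This is the real divergence. The paper's proof attaches ``unique'', via Proposition~\ref{proporty_mono_etale_cover}, to the pointed morphism (equivalently, the correcting automorphism): given $(Y_k,\overline{y}_k)$, there is only one $p$ with $\Fib_{\overline{s}}(p)(\overline{y}_k)=\overline{x}$. That rigidity is what is used later, for the uniqueness of transition maps in Lemma~\ref{inverse_system_Galois} and the injectivity of $\Hom(Y_k,X)\to\Fib_{\overline{s}}(X)$ in Proposition~\ref{fib_pro_representable}. You instead show that the pointed Galois closure is terminal among pointed Galois covers over $(X,\overline{x})$, hence unique up to unique isomorphism. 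That is a stronger, canonical statement, and you are right that uniqueness of the object itself is false.

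Two small adjustments would tighten it. Because later results rely on the paper's form, state explicitly that $p$ is unique given $(Y_k,\overline{y}_k)$; this follows at once from Proposition~\ref{proporty_mono_etale_cover}, since $Y_k$ is connected. And replace the vague ``minimal'' with ``terminal'', which is what your factorization argument actually establishes.
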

\begin{proof}
    Let $Z$ be the connected component $\overline{f}:Z\rightarrow X$ that lies over $\overline{x}$. 
    From Proposition \ref{Galois_embed_etale_cover}, we know there exists a Galois closure $Y_k \in \mathcal{J}$ with $p:Y_k\rightarrow Z$ such that $p_k$ factors through $\overline{f}$ by $\overline{f}\circ p$.
    $p$ being a Galois cover, it is connected and surjective, meaning that we can find an element $\overline{y}\in \Fib_{\overline{s}}(Y_k)$ such that $\Fib_{\overline{s}}(\overline{f}\circ p)(\overline{y}) = \overline{x}$.
    But again, since $\Aut(Y_k|S)$ is transitive, we can find $\sigma\in \Aut(Y_k|S)$ such that $\sigma(\overline{y}) = \overline{y}_k$ from the pair $(Y_k,\overline{y}_k)$ in the cofiltered category $\mathcal{J}$. 
    And by Proposition \ref{proporty_mono_etale_cover}, this is unique, giving us $\Fib_{\overline{s}}(\overline{f}\circ p \circ \sigma)(\overline{p}_k) = \overline{x}$ as we wanted.
\end{proof}

\noindent Note, we have proven above that we can find a Galois covering $Y_k$ in $\mathcal{J}$ with a morphism $p$ such that $\overline{x}$ is the pull-back of a distinguished point $\overline{y}_k$ in the geometric fibre of $Y_k$ by $p$.

\begin{lemma}
\label{direct_system_fibre}
    There is a covariant functor $F:\mathcal{J}\rightarrow \Fet_S$ with direct system $(Y_i,f_{ij})$ for $Y_i$ objects $\mathcal{J}$ and $f_{ij}$ morphisms in $\mathcal{J}$ which has the following direct limit for every $X\in \Fet_S$ : $$ \colim\:F = \varinjlim_{Y_i\in\mathcal{J}}\Hom(Y_i,X).$$
\end{lemma}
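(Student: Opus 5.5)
The statement asks for a direct system indexed by $\mathcal{J}$ whose colimit computes the value of the fibre functor. Concretely, the plan is to show that for every $X\in\Fet_S$ there is a natural bijection
$$\varinjlim_{(Y_i,\overline{y}_i)\in\mathcal{J}}\Hom_S(Y_i,X)\;\xrightarrow{\;\sim\;}\;\Fib_{\overline{s}}(X),$$
that is, that $\Fib_{\overline{s}}$ is pro-represented by the system $(Y_i)$. The functor $F$ is the forgetful functor $(Y_i,\overline{y}_i)\mapsto Y_i$. Since $\Hom_S(-,X)$ is contravariant, a morphism $f_{ij}:Y_j\to Y_i$ in $\mathcal{J}$ induces the transition map $f_{ij}^*:\Hom(Y_i,X)\to\Hom(Y_j,X)$, $\varphi\mapsto\varphi\circ f_{ij}$. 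This makes $i\mapsto\Hom(Y_i,X)$ a direct system of sets, so its colimit makes sense.

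The first step is to check that $\mathcal{J}^{op}$ is filtered, so that the colimit is a filtered colimit of sets. Given $(Y_i,\overline{y}_i)$ and $(Y_j,\overline{y}_j)$, the product $Y_i\times_S Y_j$ lies in $\Fet_S$ by base change and composition (Propositions \ref{base_change_etale} and \ref{composition_etale}), and $(\overline{y}_i,\overline{y}_j)$ is a point of its geometric fibre, so both objects are dominated by a common one. Two parallel morphisms $(Y_j,\overline{y}_j)\rightrightarrows(Y_i,\overline{y}_i)$ in $\mathcal{J}$ agree at the geometric point $\overline{y}_j$. Restricting to the connected component $Y'_j$ of $Y_j$ through $\overline{y}_j$, which is again in $\Fet_S$ by Proposition \ref{sections_etale_cover}-type clopen arguments, Proposition \ref{proporty_mono_etale_cover} shows the two morphisms coincide there. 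Hence $(Y'_j,\overline{y}_j)$ equalises them.

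The second step constructs the map. Send the class of $\varphi\in\Hom(Y_i,X)$ to $\Fib_{\overline{s}}(\varphi)(\overline{y}_i)$. This respects the transition maps because $\Fib_{\overline{s}}(\varphi\circ f_{ij})(\overline{y}_j)=\Fib_{\overline{s}}(\varphi)(\overline{y}_i)$, by the defining property of morphisms in $\mathcal{J}$. Naturality in $X$ is immediate from functoriality of $\Fib_{\overline{s}}$.

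Surjectivity is easy: for $\overline{x}\in\Fib_{\overline{s}}(X)$, the pair $(X,\overline{x})$ is itself an object of $\mathcal{J}$, and the class of $\id_X$ maps to $\overline{x}$. Alternatively, one can use Lemma \ref{making_Galois_cofinal} to land in a Galois object, which also shows that Galois objects are cofinal.

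Injectivity is the main obstacle. Suppose $\varphi_1:Y_i\to X$ and $\varphi_2:Y_j\to X$ satisfy $\Fib_{\overline{s}}(\varphi_1)(\overline{y}_i)=\Fib_{\overline{s}}(\varphi_2)(\overline{y}_j)$. Pass to a common refinement $(Y_k,\overline{y}_k)$ dominating both, for instance the connected component of $Y_i\times_S Y_j$ through $(\overline{y}_i,\overline{y}_j)$. The pullbacks of $\varphi_1$ and $\varphi_2$ to $Y_k$ then agree at the geometric point $\overline{y}_k$. Since $Y_k$ is connected, Proposition \ref{proporty_mono_etale_cover} forces them to be equal on $Y_k$, so $\varphi_1$ and $\varphi_2$ define the same element of the colimit. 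The delicate point is ensuring the connectedness hypothesis required by Proposition \ref{proporty_mono_etale_cover}, namely that the restriction to a connected component is still an object of $\mathcal{J}$. I would handle this with the clopen-component argument used in Proposition \ref{sections_etale_cover}, or else by replacing $Y_k$ with a Galois object from Lemma \ref{making_Galois_cofinal}, which is connected by definition.
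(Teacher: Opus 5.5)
Your proof is correct, and it takes a different route from the paper's proof of this lemma while proving more.

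\textbf{The paper's route.} The paper only shows that $\mathcal{J}$ is directed, treating it as a partially ordered set. It applies Lemma \ref{making_Galois_cofinal} to $Y_i\times_S Y_j$ with the point $(\overline{y}_i,\overline{y}_j)$ to obtain a Galois object dominating both. The identification of the colimit with $\Fib_{\overline{s}}(X)$ is postponed to Proposition \ref{fib_pro_representable}, where it is carried out over the cofinal Galois subsystem $\mathcal{K}$ of Lemma \ref{inverse_system_Galois}.

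\textbf{Your route.} You treat $\mathcal{J}$ as a genuine category and prove pro-representability over all of $\mathcal{J}$ directly:
\begin{itemize}
\item[$a)$] You dominate by $Y_i\times_S Y_j$ itself, or its component through $(\overline{y}_i,\overline{y}_j)$, so no Galois closure (Proposition \ref{Galois_embed_etale_cover}) is needed.
\item[$b)$] You check the equaliser axiom. The paper's poset language silently skips this, even though morphisms in $\mathcal{J}$ are not unique when the source is disconnected.
\item[$c)$] Surjectivity comes for free from the tautological object $(X,\overline{x})$.
\item[$d)$] Injectivity comes from a connected common refinement.
\end{itemize}

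Your handling of the delicate point is sound. $Y$ is Noetherian, so its connected components are clopen, and the inclusion of one is finite \'etale. Its image in the connected $S$ is clopen and nonempty, hence all of $S$. You also correctly invoke Proposition \ref{proporty_mono_etale_cover} with the \emph{source} connected, which is what its proof actually uses.

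\textbf{What each route buys.} Yours gives a self-contained, more elementary proof of pro-representability. The paper's detour through $\mathcal{K}$ buys uniqueness of the transition maps. That uniqueness is exactly what is needed next, to assemble the groups $\Aut(Y_k|S)^{op}$ into an inverse system with limit $\pi_1^e(S,\overline{s})$. For that step you would still need Lemma \ref{making_Galois_cofinal}.
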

\begin{proof}
    Since $\mathcal{J}$ is a cofiltered category its underlying set is clearly partially ordered.
    We now show that it is also directed. 
    Let $Y_i,Y_j\in \mathcal{J}$, then $f:Y_i\times_S Y_j \rightarrow S$ is finite \'etale. 
    By applying the previous lemma to $f$, we can find an object $Y_k\in \mathcal{J}$ with a morphism $p:Y_k\rightarrow Y_i\times_SY_j$ such that $\Fib_{\overline{s}}(p)(\overline{y}_k) = (\overline{y}_i,\overline{y}_j)$. 
    We just have to composing $p$ with the projections of the fibre product onto the respective factors to get that $i\leq k$ and $j\leq k$.
\end{proof}

\begin{lemma}
\label{inverse_system_Galois}
    Let $\mathcal{K}$ be the subcategory of $\mathcal{J}$ restricting to the objects $Y_i$ that are Galois covers. $\mathcal{K}$ is then a cofinal set and we have 
    $$\varinjlim_{Y_i\in\mathcal{J}}\Hom(Y_i,X) \cong \varinjlim_{Y_i\in \mathcal{K}} \Hom(Y_i,X).$$
    The transition maps of the rightmost colimit $f_{ij}: Y_j\rightarrow Y_i$ are unique if they exist .
\end{lemma}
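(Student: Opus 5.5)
The plan has three parts: cofinality of $\mathcal{K}$ in $\mathcal{J}$, the resulting isomorphism of colimits, and uniqueness of the transition maps.

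\emph{Cofinality.} Let $(Y_i,\overline{y}_i)\in\mathcal{J}$ be arbitrary. Apply Lemma \ref{making_Galois_cofinal} to $Y_i\rightarrow S$ with the marked point $\overline{x}=\overline{y}_i$. It produces a Galois object $(Y_k,\overline{y}_k)\in\mathcal{K}$ and a morphism $p:Y_k\rightarrow Y_i$ with $\Fib_{\overline{s}}(p)(\overline{y}_k)=\overline{y}_i$, so $i\leq k$. Since $\mathcal{J}$ is directed (Lemma \ref{direct_system_fibre}), $\mathcal{K}$ is then directed as well: given $Y_i,Y_j\in\mathcal{K}$, choose $Y_l\in\mathcal{J}$ dominating both, then a Galois $Y_k$ dominating $Y_l$.

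\emph{Isomorphism of colimits.} Use the standard fact that restricting a directed colimit to a cofinal subsystem does not change it. Concretely, the natural map
\[\varinjlim_{\mathcal{K}}\Hom(Y_i,X)\rightarrow\varinjlim_{\mathcal{J}}\Hom(Y_i,X)\]
is surjective because every $\phi:Y_i\rightarrow X$ with $Y_i\in\mathcal{J}$ is equivalent to $\phi\circ p$ with $p:Y_k\rightarrow Y_i$ and $Y_k\in\mathcal{K}$. It is injective because two elements of the $\mathcal{K}$-colimit that agree in the $\mathcal{J}$-colimit already agree after pulling back to some $Y_l\in\mathcal{J}$, and then also after pulling back further to a Galois $Y_k$ dominating $Y_l$.

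\emph{Uniqueness of transition maps.} Take $Y_i,Y_j\in\mathcal{K}$ and two morphisms $f,f':Y_j\rightarrow Y_i$ over $S$, both satisfying $\Fib_{\overline{s}}(f)(\overline{y}_j)=\overline{y}_i=\Fib_{\overline{s}}(f')(\overline{y}_j)$. Then $f\circ\overline{y}_j=f'\circ\overline{y}_j$ as geometric points of $Y_i$. Since $Y_j$ is connected (being Galois) and $Y_i\rightarrow S$ is \'etale, Proposition \ref{proporty_mono_etale_cover} (with $Z=Y_j$, $X=Y_i$) gives $f=f'$. This step uses only connectedness, not the Galois property.

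\emph{Main obstacle.} The delicate point is making sure Lemma \ref{making_Galois_cofinal} is applied to a possibly disconnected $Y_i$ while still yielding a morphism that respects the marked points. That lemma handles it by passing to the connected component through $\overline{y}_i$ and correcting by an automorphism of the Galois object, so I would invoke it as stated. I would also note that the uniqueness argument is exactly what justifies writing the indexing as a partial order $i\leq j$.
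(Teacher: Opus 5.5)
Your proof is correct and follows the paper's own argument: cofinality from Lemma \ref{making_Galois_cofinal}, the isomorphism of colimits from cofinality, and uniqueness of the transition maps from Proposition \ref{proporty_mono_etale_cover}. For the colimit step the paper simply cites Theorem \ref{cofinal_identic_colim}, whereas you prove it directly and also check that $\mathcal{K}$ is directed; your remark that the uniqueness step needs the source $Y_j$ to be connected, not just $S$, correctly states how that proposition has to be applied.
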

\begin{proof}
    For any $Y_i\in \mathcal{J}$, by Lemma \ref{making_Galois_cofinal}, we can find $Y_k\in \mathcal{K}$ such that $Y_i\leq Y_k$. 
    Uniqueness of the transition maps follows from Proposition \ref{proporty_mono_etale_cover}. 
    And the isomorphism of colimits follows from Theorem \ref{cofinal_identic_colim}.
\end{proof}

\noindent Note that in Lemmas \ref{direct_system_fibre} and \ref{inverse_system_Galois}, the index sets are actually implied. 
We can construct them by numbering the objects in the categories $\mathcal{J}$ and $\mathcal{K}$ respectively, so in a sense we can write them as $|\mathcal{J}|$ and $|\mathcal{K}|$.
Finally, we can prove what we wanted.

\begin{definition}
    A functor $F: \mathcal{C} \rightarrow Set$ is said to be \textbf{pro-representable} if there exist an inverse system $(Y_\alpha, f_{\alpha\beta})$ of objects of $\mathcal{C}$ indexed by a partially ordered set $I$, and a functorial isomorphism $\varinjlim \Hom (Y_\alpha, X) \cong F(X)$ for each $X\in \mathcal{C}$.
\end{definition}

\begin{proposition}
\label{fib_pro_representable}
    Let $S$ be a connected scheme and $\overline{s}:\Spec\:\Lambda \rightarrow S$ a geometric point. The functor $\Fib_{\overline{s}}$ is pro-representable, that is we have the following natural isomorphisms for every $X\in \Fet_S$.

    $$\varinjlim_{Y_k\in\mathcal{K}} \Hom(Y_k,X) \cong \Fib_{\overline{s}}(X).$$
\end{proposition}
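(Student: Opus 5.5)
I would prove the statement in two steps. First I show that the fibre functor is pro-represented by the whole cofiltered system $\mathcal{J}$. Then I pass to the Galois subsystem $\mathcal{K}$ using Lemma \ref{inverse_system_Galois}, which identifies $\varinjlim_{Y_i\in\mathcal{J}}\Hom(Y_i,X)$ with $\varinjlim_{Y_k\in\mathcal{K}}\Hom(Y_k,X)$. So it suffices to build a natural bijection
$$\Phi_X:\varinjlim_{(Y_i,\overline{y}_i)\in\mathcal{J}}\Hom_S(Y_i,X)\longrightarrow \Fib_{\overline{s}}(X).$$

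On a representative $\phi\in\Hom_S(Y_i,X)$, set $\Phi_X([\phi])=\Fib_{\overline{s}}(\phi)(\overline{y}_i)$. This is compatible with the transition maps. Indeed, if $f_{ij}:(Y_j,\overline{y}_j)\to(Y_i,\overline{y}_i)$ is a morphism in $\mathcal{J}$, then $\Fib_{\overline{s}}(\phi\circ f_{ij})(\overline{y}_j)=\Fib_{\overline{s}}(\phi)(\overline{y}_i)$ by functoriality and the defining property of morphisms in $\mathcal{J}$. Since the system is directed (Lemma \ref{direct_system_fibre}), $\Phi_X$ is therefore well defined on the colimit.

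Naturality in $X$ is immediate. A morphism $g:X\to X'$ acts on the colimit by post-composition, and $\Fib_{\overline{s}}(g\circ\phi)(\overline{y}_i)=\Fib_{\overline{s}}(g)(\Fib_{\overline{s}}(\phi)(\overline{y}_i))$.

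Surjectivity is essentially tautological. Given $\overline{x}\in\Fib_{\overline{s}}(X)$, the pair $(X,\overline{x})$ is itself an object of $\mathcal{J}$, and the identity $\id_X$ represents a class mapping to $\overline{x}$. Alternatively, Lemma \ref{making_Galois_cofinal} supplies a Galois $(Y_k,\overline{y}_k)$ and a map $p:Y_k\to X$ with $\Fib_{\overline{s}}(p)(\overline{y}_k)=\overline{x}$; this lands directly in $\mathcal{K}$.

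Injectivity is the main step. Suppose $\phi_1:Y_i\to X$ and $\phi_2:Y_j\to X$ send $\overline{y}_i$ and $\overline{y}_j$ to the same point of $\Fib_{\overline{s}}(X)$. By directedness, choose $(Y_k,\overline{y}_k)$ dominating both, with Galois (hence connected) $Y_k$ by cofinality of $\mathcal{K}$. Pulling back along the transition maps gives two morphisms $\psi_1,\psi_2:Y_k\to X$ over $S$ with $\Fib_{\overline{s}}(\psi_1)(\overline{y}_k)=\Fib_{\overline{s}}(\psi_2)(\overline{y}_k)$. In other words, $\psi_1\circ\overline{y}_k=\psi_2\circ\overline{y}_k$ as geometric points of $X$. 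Proposition \ref{proporty_mono_etale_cover}, applied with $Z=Y_k$ connected and $X\to S$ étale, then forces $\psi_1=\psi_2$, so the two classes agree in the colimit.

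The delicate point is exactly this appeal to rigidity. Proposition \ref{proporty_mono_etale_cover} needs the source to be connected, and the reduction to sections over a connected base requires working componentwise. This is why I would insist on passing to a Galois, hence connected, dominating object before comparing.

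Finally, composing $\Phi_X$ with the isomorphism of Lemma \ref{inverse_system_Galois} gives the stated natural isomorphism $\varinjlim_{Y_k\in\mathcal{K}}\Hom(Y_k,X)\cong\Fib_{\overline{s}}(X)$, which is natural in $X$ because both maps are.
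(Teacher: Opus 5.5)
Your proof is correct and is essentially the paper's argument. It uses the same evaluation map $[\phi]\mapsto\Fib_{\overline{s}}(\phi)(\overline{y})$, surjectivity via Lemma~\ref{making_Galois_cofinal}, injectivity by passing to a common connected Galois refinement and invoking rigidity (Proposition~\ref{proporty_mono_etale_cover}), and the same naturality square. The only difference is that you first set this up over $\mathcal{J}$ and then transport it with Lemma~\ref{inverse_system_Galois}, whereas the paper works over $\mathcal{K}$ directly.

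That detour is harmless but unnecessary. Over $\mathcal{J}$ the bijection is in fact formal: each $\phi:(Y_i,\overline{y}_i)\to(X,\Fib_{\overline{s}}(\phi)(\overline{y}_i))$ is itself a morphism of $\mathcal{J}$, so $[\phi]=[\id_X]$ in the colimit. Your refinement-plus-rigidity argument is exactly what proves injectivity over $\mathcal{K}$, so you can run it there and drop the appeal to Lemma~\ref{inverse_system_Galois}.
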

\begin{proof}
    We have already shown in Lemmas \ref{direct_system_fibre} and \ref{inverse_system_Galois} that the direct limit on the left hand side is well-defined, it is only left to show the functorial isomorphism.
    For every $Y_k\in\mathcal{K}$ and $X\in \Fet_S$ consider the map of sets defined by 
    $$\Hom(Y_k, X) \rightarrow \Fib_{\overline{s}}(X) \: : \:\phi\mapsto\Fib_{\overline{s}}(\phi)(\overline{y}_k)$$
    
    \noindent And these maps are compatible with the transition maps of the inverse system by composition, yielding a functorial map $\Phi_X :\varinjlim\Hom(Y_k, X) \rightarrow \Fib_{\overline{s}}(X)$, sending $[\phi]$ to $\Fib_{\overline{s}}(\phi)(\overline{y}_k)$ where the representative of $[\phi]$ is $\phi:Y_k\rightarrow X$.\\
    The maps $\Hom(Y_k,X) \rightarrow \Fib_{\overline{s}}(X)$ are injective, indeed, assume $f,g:Y_k\rightarrow X$ are to Galois covers such that $\Fib_{\overline{s}}(f)(\overline{y}_k) = \Fib_{\overline{s}}(g)(\overline{y}_k)$, then by Lemma \ref{making_Galois_cofinal}, $f=g$.
    So the maps $\Phi_X$ are again injective for all $X\in\Fet_S$.
    And $\Phi_X$ is also surjective for all finite \'etale covering $X$ of $S$. Given $\overline{x}\in\Fib_{\overline{s}}(X)$, we can find an object $Y_k\in \mathcal{K}$ and a morphism $p:Y_k\rightarrow X$ over $S$ such that $\Fib_{\overline{s}}(p)(\overline{y}_k) = \overline{x}$, implying surjectivity.
    Finally if $f:X\rightarrow Y$ is a morphism in $\Fet_S$ then the induced maps defined by $G_k(f):\Hom(Y_k,X)\rightarrow\Hom(Y_k,Y)$ for all $Y_k\in \mathcal{K}$ gives a map $G(f)$ between the direct limits and the diagram below commutes :
    \[
    \begin{tikzcd}
        \varinjlim \Hom(Y_k,X) \arrow[r,"\Phi_X"] \arrow[d, "G(f)"'] & \Fib_{\overline{s}}(X) \arrow[d,"\Fib_{\overline{s}}(f)"] \\
        \varinjlim \Hom(Y_k,Y) \arrow[r,"\Phi_Y"'] &  \Fib_{\overline{s}}(Y)
    \end{tikzcd}
    \]
    The maps $\Phi_X$ of sets are thus functorial isomorphisms, so the functor $\Fib_{\overline{s}}$ is pro-representable. 
\end{proof}

\begin{theorem}
    The \'etale fundamental group of a connected scheme $S$ with geometric point $\overline{s}:\Spec\:\Lambda\rightarrow S$ is a profinite group with isomorphism 
    $$\pi_1^e(S,\overline{s}) \cong \varprojlim_{Y_k\in\mathcal{K}} \Aut(Y_k|S)^{op}.$$
    And its action on $\Fib_{\overline{s}}(X)$ is continuous for every $X\in\Fet_S$.
\end{theorem}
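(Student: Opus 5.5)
The plan is to use the pro-representability of Proposition \ref{fib_pro_representable}, $\Fib_{\overline{s}}(X)\cong\varinjlim_{Y_k\in\mathcal{K}}\Hom(Y_k,X)$, together with a Yoneda-type argument: an automorphism of a pro-representable functor is the same thing as a compatible family of automorphisms of the pro-representing system.

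First I make the inverse system $(\Aut(Y_k|S))_{k}$ explicit. For $Y_k,Y_j\in\mathcal{K}$ with $j\leq k$ there is a unique transition map $f_{jk}:Y_k\rightarrow Y_j$ with $\overline{y}_k\mapsto\overline{y}_j$ (Lemma \ref{inverse_system_Galois}). Since $Y_k$ is Galois, Remark \ref{remark_Galois} shows that evaluation $\sigma\mapsto\Fib_{\overline{s}}(\sigma)(\overline{y}_k)$ is a bijection $\Aut(Y_k|S)\xrightarrow{\sim}\Fib_{\overline{s}}(Y_k)$. Given $\sigma\in\Aut(Y_k|S)$, the point $f_{jk}(\sigma(\overline{y}_k))$ lies in $\Fib_{\overline{s}}(Y_j)$, so it equals $\tau(\overline{y}_j)$ for a unique $\tau\in\Aut(Y_j|S)$. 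Proposition \ref{proporty_mono_etale_cover} then gives $f_{jk}\circ\sigma=\tau\circ f_{jk}$. This yields maps $\Aut(Y_k|S)\rightarrow\Aut(Y_j|S)$, and rigidity makes them homomorphisms compatible with composition. Each group is finite by Corollary \ref{no_fixed_points}, so $\varprojlim\Aut(Y_k|S)$ is profinite by definition.

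Next I build the isomorphism. Let $\omega\in\Aut(\Fib_{\overline{s}})$. Its component $\omega_{Y_k}$ sends $\overline{y}_k$ to some point, which is $\sigma_k(\overline{y}_k)$ for a unique $\sigma_k\in\Aut(Y_k|S)$. Naturality of $\omega$ with respect to $f_{jk}$ shows that $(\sigma_k)$ is a compatible family. Conversely, a compatible family $(\sigma_k)$ acts on $\varinjlim\Hom(Y_k,X)$ by $[\phi]\mapsto[\phi\circ\sigma_k]$; compatibility ensures this respects the colimit. Transporting through $\Phi_X$ gives a natural automorphism of $\Fib_{\overline{s}}$.

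To see that these constructions are mutually inverse, I use $\Phi_X([\phi])=\Fib_{\overline{s}}(\phi)(\overline{y}_k)$ and naturality. This reduces to
\[
\omega_X(\phi(\overline{y}_k))=\phi(\omega_{Y_k}(\overline{y}_k))=\phi(\sigma_k(\overline{y}_k)).
\]
Since precomposition reverses the order of composition, $\omega\mapsto(\sigma_k)$ is an anti-homomorphism. This is where the $^{op}$ in the statement comes from, exactly as in Theorem \ref{aut_iso_fund}.

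\textbf{Main obstacle.} I expect the main obstacle to be checking carefully that the maps $\Aut(Y_k|S)\rightarrow\Aut(Y_j|S)$ are well-defined homomorphisms, and keeping the op-convention consistent; rigidity (Proposition \ref{proporty_mono_etale_cover}) is the tool for every uniqueness claim.

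Continuity comes last. For any $X\in\Fet_S$ and any $\overline{x}\in\Fib_{\overline{s}}(X)$, choose $\phi:Y_k\rightarrow X$ representing $\overline{x}$. The action of $(\sigma_j)$ on $\overline{x}$ then depends only on $\sigma_k$. Hence the stabiliser of $\overline{x}$ contains the kernel of the projection $\pr_k$, which is open (Remark \ref{prof_are_top}). A subgroup containing an open subgroup is open, so every stabiliser is open, and Lemma \ref{continuous_open_stabiliser} gives continuity of the action, $\Fib_{\overline{s}}(X)$ being finite and discrete.
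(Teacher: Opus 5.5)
Your proposal is correct and takes the same route as the paper. Both build the transition maps on $\Aut(Y_k|S)$ from the evaluation bijections and rigidity, then identify $\Aut(\Fib_{\overline{s}})$ with the inverse limit via pro-representability (you write out both directions of the bijection and the origin of the $^{op}$, where the paper only invokes the Yoneda embedding), and obtain continuity because the action on a point factors through a finite quotient $\Aut(Y_k|S)$, which you phrase equivalently, and a bit more cleanly point by point, through open stabilisers and Lemma~\ref{continuous_open_stabiliser} rather than through $\pr_k$.
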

\begin{proof}
    By Corollary \ref{corollary_property_morphism_covers}, the groups $\Aut(Y_k|S)$ are all finite for all $Y_k\in \mathcal{K}$.
    We now construct maps $\varphi_{kl}$ such that $(\Aut(Y_k|S)^{op}, \varphi_{kl})$ forms an inverse system.
    Consider the map $f_{kl}: (Y_k,\overline{y}_k)\rightarrow(Y_l,\overline{y}_l)$ in $\mathcal{K}$ and recall that they are unique such that $\Fib_{\overline{s}}(f_{kl})(\overline{y}_l) = \overline{y}_k$ (unicity follows by Lemma \ref{making_Galois_cofinal} and the classification of finite \'etale covers).
    We have the bijections $\psi_k : \Aut(Y_k|S) \rightarrow \Fib_{\overline{s}}(Y_k)$ given by $\phi\mapsto \Fib_{\overline{s}}(\phi)(\overline{y}_k)$ 
    (indeed for $\Fib_{\overline{s}}(\phi)(\overline{y}_k) = \overline{y} \in \Fib_{\overline{s}}(Y_k)$ such a $\phi$ is unique by Lemma \ref{making_Galois_cofinal} and as in the proof of the previous proposition, $\psi_k$ is surjective).
    At last, we define the maps $\varphi_{kl} : \Aut(Y_l|S)\rightarrow\Aut(Y_k|S)$ as compositions following the below diagram
    \[
    \begin{tikzcd}
        \Aut(Y_l|S) \arrow[r,"\psi_l"] \arrow[d, dashed, "\varphi_{kl}"'] & \Fib_{\overline{s}}(Y_l) \arrow[d,"\Fib_{\overline{s}}(f_{lk})"] \\
        \Aut(Y_k|S) &  \Fib_{\overline{s}}(Y_k) \arrow[l, "\psi_k^{-1}"].
    \end{tikzcd}
    \]
    For $\phi\in\Aut(Y_l|S)$, the element $\varphi_{kl}(\phi)$ is determined by the automorphisms of $Y_k$ that satisfies the following commutative diagram,
    following the below diagram,
    \[
    \begin{tikzcd}
        Y_l  \arrow[d, "\phi"'] \arrow[r,"f_{lk}"] & Y_k \arrow[d,"\varphi_{kl}(\phi)"] \\
        Y_l \arrow[r,"f_{lk}"'] &  Y_k.
    \end{tikzcd}
    \]
    Now these elements are uniquely determined since the groups of automorphisms act transitively on the fibres meaning that  $\Fib_{\overline{s}}(\varphi_{kl}(\phi))(\overline{y}_k) = \Fib_{\overline{s}}(f_{lk}\circ\phi)(\overline{y}_k)$. The transition maps $\varphi_{kl}$ are clearly group homomorphisms as well as being surjective (by the above diagram we have that each $\varphi_{kl}(\phi)\circ f_{lk} = f_{lk}\circ \phi$).
    Therefore, we have a limit $\varprojlim_{Y_k\in\mathcal{K}} \Aut(Y_k|S)^{op}$.\\
    The contravariant Yoneda embedding $\mathcal{Y}^*: \mathcal{K}\rightarrow \Fun(\mathcal{K},\SET) :Y_k\mapsto \Hom(Y_k,\_)$ then gives a morphism 
    $$\varprojlim_{Y_k\in\mathcal{K}} \Aut(Y_k|S)^{op} \rightarrow \Aut(\varinjlim_{Y_k\in\mathcal{K}}\Hom(Y_k,\_))\cong \Aut(\Fib_{\overline{s}})$$
    which is seen as a bijective group homomorphism by the uniqueness of the transition maps for both (co)limits.

    \noindent Finally, for $\overline{x}\in \Fib_{\overline{s}}(X)$ recall from Lemma \ref{making_Galois_cofinal} that there is a Galois cover $Y_k\in\mathcal{K}$ such that $\Fib_{\overline{s}}(p)(\overline{y}_k)=\overline{x}$ (in that case we often say that the geometric point $\overline{x}$ is \textit{dominated} by a Galois cover $Y_k$). 
    Notice that there is a $1$ to $1$ correspondence between the action of $\pi_1^e(S,\overline{s})$ on $\Fib_{\overline{s}}$,  $\pi_1^e(S,\overline{s})\times \Fib_{\overline{s}}(X) \rightarrow\Fib_{\overline{s}}(X)$ and the group homomorphism $\pi_1^e(S,\overline{s})\rightarrow \Perm(\Fib_{\overline{s}}(X))$.
    This correspondence is quite natural, the group axioms make any action by an element of the group onto the finite set into a permutation of that finite set and the converse follows similarly by the group homomorphism axioms.
    And the action of $\pi_1^e(S,\overline{s})$ on $\overline{x}$ is factored through an action from $\Aut(Y_k|S)^{op}$ ($\pi_1^e(S,\overline{s}) \xrightarrow[]{\pr_k} \Aut(Y_k|S)\xrightarrow[]{\alpha}\Perm(\Fib_{\overline{s}}(X))$)
    As we have seen earlier, the first map is continuous because the topology on the profinite group is defined exactly so that each projection homomorphisms are continuous (see Remark \ref{prof_are_top}) and since $\Aut(Y_k|S)$ and $\Perm(\Fib_{\overline{s}}(X))$ are both finite discrete groups, any map between them is continuous, which concludes our proof.
\end{proof}

\noindent Having shown that the \'etale fundamental group is profinite, we will prove that there is again a classification of finite \'etale covers ‘‘\`a la Grothendieck".

\begin{theorem}
\label{main_theorem_etale_fundamental_group}
     Let $S$ be a connected scheme, and $\overline{s}:\Spec\: \Lambda \rightarrow S$ a geometric point and $G = \pi_1^e(S,\overline{s})$ the \'etale fundamental group.
     The functor $\Fib_{\overline{s}}$ induces an equivalence of category between $\Fet_S$ and finite $G$-sets with a continuous action. 
     Moreover, connected covers correspond to sets with a transitive $G$-action, 
     and Galois covers correspond to finite quotients of $G$.
\end{theorem}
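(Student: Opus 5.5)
The plan is to imitate the proof of Theorem \ref{covering_reformulation_1}, with the pro-representing system $(Y_k,\overline{y}_k)_{k\in\mathcal{K}}$ of Proposition \ref{fib_pro_representable} playing the role of the universal cover. By the previous theorem, $\Fib_{\overline{s}}$ takes values in finite continuous $G$-sets. Since every object of $\Fet_S$ is a finite disjoint union of connected components, and both $\Fib_{\overline{s}}$ and $\Hom$ turn such decompositions into disjoint unions and products respectively, I will reduce to connected objects.

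The first step is to show that a cover $X$ is connected if and only if $\Fib_{\overline{s}}(X)$ is a transitive $G$-set. For a connected $X$ and $\overline{x},\overline{x}'\in\Fib_{\overline{s}}(X)$, Lemma \ref{making_Galois_cofinal} gives a Galois $Y_k$ and $p:Y_k\to X$ with $\Fib_{\overline{s}}(p)(\overline{y}_k)=\overline{x}$. By Theorem \ref{Galois_correpondance_etale_cov}, $p$ is surjective on geometric fibres, so $\overline{x}'=\Fib_{\overline{s}}(p)(\overline{y})$ for some $\overline{y}$. Transitivity of $\Aut(Y_k|S)$ then yields $\sigma$ with $\sigma(\overline{y}_k)=\overline{y}$. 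The element of $G=\varprojlim\Aut(Y_k|S)^{op}$ lifting $\sigma$ (the transition maps are surjective) moves $\overline{x}$ to $\overline{x}'$. Conversely, a disconnected $X=X_1\sqcup X_2$ splits the fibre into two $G$-stable pieces.

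The second step is full faithfulness, for connected $X,Z$. Faithfulness follows from Proposition \ref{proporty_mono_etale_cover}: two morphisms agreeing on one geometric point are equal. For fullness, take a $G$-map $\phi:\Fib_{\overline{s}}(X)\to\Fib_{\overline{s}}(Z)$ and pick $\overline{x}$. Choose a single Galois $Y_k$ dominating both $\overline{x}$ and $\phi(\overline{x})$, via maps $p:Y_k\to X$ and $q:Y_k\to Z$; this uses the directedness of Lemma \ref{direct_system_fibre} applied to $X\times_SZ$. By Theorem \ref{Galois_correpondance_etale_cov}, $X\cong Y_k/H_X$ and $Z\cong Y_k/H_Z$, where $H_X$ and $H_Z$ correspond to the stabilisers of $\overline{x}$ and $\phi(\overline{x})$ under the surjection $G\to\Aut(Y_k|S)^{op}$. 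Equivariance of $\phi$ gives $\mathrm{Stab}(\overline{x})\subseteq\mathrm{Stab}(\phi(\overline{x}))$, hence $H_X\subseteq H_Z$. This produces a morphism $Y_k/H_X\to Y_k/H_Z$ over $S$ sending $\overline{x}\mapsto\phi(\overline{x})$, and by transitivity it induces $\phi$.

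The third step is essential surjectivity. A finite continuous transitive $G$-set $T$ has an open stabiliser $U$ (Lemma \ref{continuous_open_stabiliser}), so $U$ contains the kernel of some projection $\pr_k:G\to\Aut(Y_k|S)^{op}$, by Remark \ref{prof_are_top} and the cofinality of $\mathcal{K}$. Let $H=\pr_k(U)$. Then $T\cong G/U\cong\Aut(Y_k|S)^{op}/H$, and $Y_k/H$ is a connected finite étale cover by Proposition \ref{quotient_finite_etale_cover}. Its fibre is $\Fib_{\overline{s}}(Y_k)/H\cong\Aut(Y_k|S)/H$ via $\psi_k$, and I must check this identification is $G$-equivariant. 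Non-transitive sets are handled by disjoint unions. Galois covers are exactly the $Y_k$ up to isomorphism, whose fibres are the $G$-torsors $\Aut(Y_k|S)^{op}=G/\ker\pr_k$, i.e.\ finite quotients of $G$. Conversely $G/N$ with $N$ open normal is $Y_k/H$ with $H$ normal, which is Galois by Theorem \ref{Galois_correpondance_etale_cov}.

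The main obstacle is bookkeeping the \emph{op} conventions. I need to verify that the left $G$-action on $\Fib_{\overline{s}}(Y_k)$ coming from $\pi_1^e$ corresponds, under $\psi_k$, to right multiplication in $\Aut(Y_k|S)$, so that quotients by $H$ on the scheme side match coset spaces on the set side. I would settle this first by computing the action on $\overline{y}_k$ directly from the isomorphism with $\Aut(\varinjlim\Hom(Y_k,\_))$.
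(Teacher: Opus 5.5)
Your proposal is correct and follows the same route as the paper: you reduce to connected objects, get faithfulness from Proposition \ref{proporty_mono_etale_cover}, get fullness from a Galois cover dominating a point of $X\times_S Z$ together with Theorem \ref{Galois_correpondance_etale_cov}, and get essential surjectivity from the quotient $Y_k/\pr_k(U)$. Your version is more explicit than the paper's in two places: you prove that connected covers have transitive fibres, which the paper assumes without argument, and you derive the inclusion $H_X\subseteq H_Z$ from $G$-equivariance, which is exactly what justifies the factorisation through the quotient that the paper asserts without comment.
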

\begin{proof}
    Our goal will be to prove that the functor $\Fib_{\overline{s}}$ is fully faithful and essentially surjective similar to the proof of theorem \ref{Grot_reformulation_Galois}.
    We first show fully faithfulness, so let $f,g:X\rightarrow Y$ be two morphisms of finite \'etale covers (in $\Fet_S$) such that $\Fib_{\overline{s}}(f) = \Fib_{\overline{s}}(g)$.
    Take $i: Z\hookrightarrow X$ a connected component, we have $\Fib_{\overline{s}}(f\circ i)= \Fib_{\overline{s}}(g\circ i)$ and by Proposition \ref{proporty_mono_etale_cover}, we have that $f\circ i = g\circ i$. We do the same for all connected components to obtain that $f = g$ which shows faithfulness.\\
    To complete the first step of this proof, we need to show that $\Fib_{\overline{s}}$ is full. For that, consider $X$ and $Y$ objects in $\Fet_S$ and $F: \Fib_{\overline{s}}(X)\rightarrow \Fib_{\overline{s}}(Y)$ a $G$-equivariant map between sets. 
    We want to construct a morphism in $\Fet_S$ which gets sent to $F$ by the fibre functor.
    The problem can be reduced to $X$ and $Y$ connected, indeed, let $X = \bigsqcup X_i$ be a decomposition of $X$ into connected components, then $\Hom(X,Y) \cong \bigsqcup \Hom(X_i,Y)$ and the same is true for $\Fib_{\overline{s}}$, indeed
    $$(X_1\sqcup X_2) \times_S \Spec\:\Lambda = (X_1\times_S\Spec\:\Lambda) \sqcup(X_2\times_S\Spec\:\Lambda).$$
    In a similar fashion we can assume that $Y$ is connected.\\
    Now, since $\pi_1^e(S,\overline{s})$ acts transitively on both $\Fib_{\overline{s}}(X)$ and $\Fib_{\overline{s}}(Y)$, the $G$-equivariant map $F$ will be determined by the image of $\overline{x}\in \Fib_{\overline{s}}(X)$ denoted by $\overline{y}:= F(\overline{x})$.
    Consider $Z\rightarrow S$ a Galois covering, since finite \'etale morphisms are closed under base change, 
    we can consider $p:Z\rightarrow X\times_S Y$ a morphism of finite \'etale covers that maps $\Fib_{\overline{s}}(p)(\overline{z})$ onto $(\overline{x},\overline{y})$ which exists by Lemma \ref{making_Galois_cofinal} for $\overline{z}\in\Fib_{\overline{s}}(Z)$.
    Let $\pr_X$ and $\pr_Y$ be the projection onto each factor, then $\Fib_{\overline{s}}(\pr_X)(\overline{z}) = \overline{x}$ and $\Fib_{\overline{s}}(\pr_Y)(\overline{z})=\overline{y}$.
    By Theorem \ref{Galois_correpondance_etale_cov}, notice that $X$ is a quotient $Z/\Aut(Z|X)$ and an intermediate cover of $Z\rightarrow Y$. 
    Therefore, we have a unique morphism $f:X\rightarrow Y$ (a Galois covering as well) such that the following diagram commutes,
    \[
    \begin{tikzcd}
        Z  \arrow[d, "\pr_X"'] \arrow[r,"\pr_Y"] & Y \\
        X \arrow[ur,"f"'] 
    \end{tikzcd}
    \]
Choosing an element $\overline{w}\in\Fib_{\overline{s}}(Z)$ with $\Fib_{\overline{s}}(\pr_Y)(\overline{w}) =\Fib_{\overline{s}}(\pr_X\circ f)(\overline{z})$ and an automorphism $\sigma\in\Aut(Z|S)$ that maps $\Fib_{\overline{s}}(\sigma)(\overline{z}) = \overline{w}$ then by Proposition \ref{proporty_mono_etale_cover}, we have that $\Fib_{\overline{s}}(f)(\overline{x}) = \overline{y}$ and thus that $\Fib_{\overline{s}}(f) = F$.\\
We are now going to show essential surjectivity.
Suppose $R$ is a finite set with a continuous $G$-action.
As above, we may assume that the action of $G$ on $R$ is transitive by decomposing $R$ into finitely many orbits and considering them separately since the fibre product of schemes preserves disjoint unions.
The action being continuous, we know that the stabiliser $U_r$ of the point $r\in R$ is an open subgroup of $G$ (Lemma \ref{continuous_open_stabiliser}) and by Corollary \ref{open_subgroup_profinite} it is also finite and closed. 
By Theorem \ref{Cameron_1_3}, just as in the Reformulation of Galois Theory $\ref{Grot_reformulation_Galois}$, we have that $R \cong G / U_r$ as a $G$-set.
The stabiliser $U_r$ being open, it also contains subgroups $V$ which are basis of open neighbourhoods of $1$ in $G$ arising as the kernel of the projection $G \xrightarrow[]{\pr_k}\Aut(Y_k|S)^{op}$, moreover these subgroups are normal in $G$ (see Remark \ref{prof_are_top}).\\
Consider $Y_k$ a Galois cover of $S$ as above, as we have seen in previous proofs, the action of $G$ on the fibre $\Fib_{\overline{s}}(Y_k)$ factors through $\Aut(Y_k|S)^{op}$ and of course since $Y_k$ is Galois, $\Aut(Y_k|S)$ acts transitively on $\Fib_{\overline{s}}(Y_k)$. 
Thus $\Fib_{\overline{s}}(Y_k)$ is a transitive $G$-set.
So once again, from Theorem \ref{Cameron_1_3}, we have $\Fib_{\overline{s}}(Y_k) \cong G/H$ for $H$ the stabiliser of a point in $\Fib_{\overline{s}}(Y_k)$.
But the stabiliser of a point of the fibre is precisely the subgroup $V$ above. 
Indeed, an element $g\in G$ acts trivially on a point of $\Fib_{\overline{s}}(Y_k)$ if and only if the image in the quotient group $\Aut(Y_k|S)^{op}$ is trivial.
That condition is precisely $g\in \ker(\pr_k)= V$, so $V = H$.\\
Now denote by $\overline{U}$ the image of $U_r$ by the group homomorphism $\pr_k$, which is a finite subgroup of $\Aut(Y_k|S)^{op}$ since $\pr_k$ is continuous as well, and notice that by the first isomorphism theorem, we also have that $\overline{U}\cong U_r /V$.
As in Proposition \ref{quotient_finite_etale_cover}, we construct the finite \'etale cover $X$ as the quotient of $Y_k$ by the action of $\overline{U}$, then we have 
$$\Fib_{\overline{s}}(X) = \Fib_{\overline{s}}(Y_K/\overline{U}) = \Fib_{\overline{s}}(Y_k)/ \overline{U} \cong (\pi_1^e(S,\overline{s})/ V) / (U_r/V) \cong \pi_1^e(S,\overline{s}) / U_r$$
concluding our proof.
\end{proof}

\noindent There is an interesting way of seeing sets equipped with a certain action that we describe now.

\begin{remark}
    Recall that a permutation representation of a group $G$ on a finite set $X$ is a homomorphism $\rho:G\rightarrow \Sym(X)$.
    There is an equivalence of categories between 
    finite sets equipped with a continuous action of a group $G$
    and finite continuous permutation representations of $G$.\\
    Indeed, in our case, for $S$ a connected scheme and $\overline{s}$ a geometric point of $S$, given a continuous action of $\pi^e_1(S,\overline{s})$ onto a finite set $X$, 
    we define a homomorphism $\rho : \pi^e_1(S,\overline{s}) \rightarrow \Sym(X)$ by $\rho(g)(x):= g\cdot x$.
    Then $\rho:$ is a group homomorphism, and it is a continuous map when $\Sym(X)$ is equipped with the discrete topology. 
    It is thus a finite continuous permutation representation.\\
    On the other hand, 
    given a finite continuous permutation representation $\rho : \pi^e_1(S,\overline{s}) \rightarrow \Sym(X)$, similarly as above we clearly have that $\pi^e_1(S,\overline{s})$ acts continuously on $X$ via the operation defined by $g\cdot x := \rho(g)(x)$, making it a continuous finite $\pi^e_1(S,\overline{s})$-set.\\
    Both constructions being literally the same data viewed differently, composing them gives back exactly what we started with, and the same is true for morphisms of the two categories. 
    Therefore, we have an equivalence of categories described below.
\end{remark}

\begin{corollary}
\label{corollary_representation}
    For $S$ a connected scheme with a geometric point $\overline{s}$, 
    we have an equivalence of categories between finite \'etale covers of $S$ 
    and finite continuous permutations representation of its \'etale fundamental group
\end{corollary}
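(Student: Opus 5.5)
The plan is to obtain the corollary by composing two equivalences: the one of Theorem \ref{main_theorem_etale_fundamental_group} and the one of the preceding remark. Write $G=\pi_1^e(S,\overline{s})$. Theorem \ref{main_theorem_etale_fundamental_group} says that $\Fib_{\overline{s}}$ is an equivalence between $\Fet_S$ and the category of finite sets with a continuous $G$-action, morphisms being $G$-equivariant maps. The remark supplies an equivalence, in fact an isomorphism of categories, $\Phi$, from that category to the category of finite continuous permutation representations $\rho:G\rightarrow\Sym(X)$. Since a composite of equivalences is an equivalence, $\Phi\circ\Fib_{\overline{s}}$ is the desired equivalence.

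The only step needing care is to make the remark's equivalence precise at the level of morphisms. First, define a morphism of permutation representations $(X,\rho)\rightarrow(X',\rho')$ as a map $h:X\rightarrow X'$ with $h\circ\rho(g)=\rho'(g)\circ h$ for all $g\in G$. This is exactly $G$-equivariance for the actions $g\cdot x=\rho(g)(x)$, so $\Phi$ is the identity on morphisms and is fully faithful. Second, on objects $\Phi$ and its inverse are mutually inverse on the nose. Third, continuity must match. A continuous action on a finite discrete set has open stabilisers by Lemma \ref{continuous_open_stabiliser}. Then $\ker\rho=\bigcap_x G_x$ is a finite intersection of open subgroups, hence open, so $\rho$ is continuous for the discrete topology on $\Sym(X)$. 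Conversely, if $\rho$ is continuous, each stabiliser $G_x=\rho^{-1}(\mathrm{Stab}_{\Sym(X)}(x))$ is open, and Lemma \ref{continuous_open_stabiliser} gives that the action is continuous.

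Finally, the corollary can be refined by transporting the statements of Theorem \ref{main_theorem_etale_fundamental_group}. Connected covers correspond to transitive representations, and Galois covers correspond to representations isomorphic to the regular permutation representation of a finite quotient $G/V$. The main obstacle, though a mild one, is the continuity bookkeeping in the previous paragraph. Everything else is formal.
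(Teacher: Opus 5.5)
Your proposal is correct and follows the paper's route exactly: it composes the equivalence of Theorem~\ref{main_theorem_etale_fundamental_group} with the identification, given in the remark preceding the corollary, of finite continuous $\pi_1^e(S,\overline{s})$-sets with finite continuous permutation representations. Your explicit check fills in a continuity detail that the paper's remark only asserts: $\ker\rho=\bigcap_x G_x$ is open, and conversely each stabiliser is the preimage of a subset of the discrete group $\Sym(X)$.
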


\noindent Our goal was to reconcile the \'etale fundamental group with its topological counterpart. 
These results align perfectly with the classical properties of topological fundamental groups, confirming that our definition of $\pi_1^e$ is the appropriate algebraic analogue.
We can already compute a few examples.

\begin{example}
\label{fundamental_group_over_field}
    Let $S=\Spec\:k$ for $k$ a field and $\overline{s}:\Spec\Lambda \rightarrow S$ a geometric point with $k_s\subseteq \Lambda$.
    From Example \ref{example_etale_over_field}, connected finite \'etale covers are spectrums of separable field extensions $L_i$ of $k$.
    The fibre functor sends $\Spec\:L_i$ to the underlying set of $\Spec\:(L_i\otimes_k\Lambda)$.
    The image of the fibre functor is thus a finite set indexed by the $k$-algebra homomorphisms $L\rightarrow \Lambda$.
    Finally, since the image of each such homomorphism lies in the separable closure $k_s$ of $k$ in $\Lambda$, we have that $\Fib_{\overline{s}}(X) = \Hom_k(L,k_s)$ for all $X = \Spec\:L$, so by classical Galois theory, the \'etale fundamental group is the absolute Galois group $\pi_1^{e}(S,\overline{s}) = \Gal(k_s|k)$. 
    In particular, $\pi^e_1(S,\overline{x})$ is trivial for $k$ separably closed.
\end{example}

\begin{remark}
    Using the above example, we can justify our initial assumption: that the fibre functor is not representable. 
    We may think that if $\Fib_{\overline{s}}$ over $\Spec\: k$ were representable then it would be by $\Spec \: k_s$, although, $k_s$ is not a finite \'etale $k$-algebra. 
    So by Definition \ref{(pro)_representable} even over $\Spec\:k$ the fibre functor cannot be representable.
\end{remark}

\begin{example}
    Let $\mathbb{F}_q$ be a finite field with $q$ elements and denote $\overline{\mathbb{F}}_q$ its closure.
    From Galois theory we know that each extension of $\mathbb{F}_q$ is of the form $\mathbb{F}_{q^n}$ and that they are all Galois with $\Gal(\mathbb{F}_{q^n}|\mathbb{F}_q) \cong \mathbb{Z}/n\mathbb{Z}$ generated by the Frobenius morphism $x\mapsto x^q$ (see \cite{Lang_algebra}, Chapter $V.\:5$).\\
    By the previous example, we know that for fields, $\pi^e_1(\Spec\: \mathbb{F}_q)$ is isomorphic to the absolute Galois group  $\Gal(\overline{\mathbb{F}}_q)$ which is itself the profinite completion of $\mathbb{Z}/n\mathbb{Z}$ by Proposition \ref{Galois_groups_are_profinite}.\\
    Hence $\pi^e_1(\Spec\: \mathbb{F}_q) \cong \widehat{\mathbb{Z}}$.
\end{example}

\noindent These first two examples  already show us that for an algebraic space as ‘‘simple'' as a point, the \'etale fundamental group, while quite enlightening, can be far from being trivial.\\

\subsection{Comparison Theorems}
\label{comparaison_theorem}

In this section, our goal is to convince the reader that our definition of \'etale fundamental groups does make sense. 
We do this by comparing \'etale fundamental group of schemes over $\mathbb{C}$ with the topological fundamental group of complex analytic spaces.

\noindent A first issue is that the theorems used to compare such objects are very elaborate. 
In this thesis, we present no proof of these renowned theorems, but instead try to motivate and guide their usage. 
It is important to know that in the following we will frequently mention concepts or ideas arising from Riemann surfaces and complex analytic spaces which in themself are already very rich theories and certainly deserve another presentation like this one.
An interesting comment we can already make about (complex) analytic spaces is that they are topological spaces, which can usually be embedded with very nice topologies, this is important because it means that our previous discussion on topological covers is totally applicable here and it will not result in an irrelevant study as it was the case for schemes.\\

\noindent Before delving into deeper explanations, recall that we have presented in Section \ref{analytification} the \textit{analytification functor}, sending objects from 
the category of algebraic spaces (such as affine varieties, projective curves or schemes) to  objects of the category of analytic spaces (such as manifolds, Riemann surfaces or complex manifolds).



\noindent One of the first comparison theorems available is the following result by Chow from $1949$ \cite{Chow}.
\begin{theorem}[Chow]
    Let $\chi$ be a compact analytic subspace of the complex manifold $\mathbb{P}^n_{\mathbb{C}}$. 
    Then there is a subscheme $X\subset \mathbb{P}^n$ with $X^{an} = \chi$.
    Furthermore, holomorphic maps between compact analytic subspaces of $\mathbb{P}^n_{\mathbb{C}}$ induce regular rational morphisms between subschemes.
\end{theorem}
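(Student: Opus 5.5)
The plan follows the classical route through GAGA-type comparison with Hodge theory avoided; the guiding idea is to reduce to the closed analytic subsets of projective space. The first step treats the first claim. I would view $\chi\subset\mathbb{P}^n_{\mathbb{C}}$ as a closed analytic subset and pass to the affine cone $C(\chi)\subset\mathbb{C}^{n+1}$. This is the preimage of $\chi$ under $\mathbb{C}^{n+1}\setminus\{0\}\to\mathbb{P}^n$, together with the origin. The key observation is that $C(\chi)$ is an analytic subset of $\mathbb{C}^{n+1}\setminus\{0\}$ that is invariant under scalar multiplication. By the Remmert--Stein extension theorem (or a direct argument when $\dim\chi\geq 0$), its closure is analytic in all of $\mathbb{C}^{n+1}$. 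I then take a holomorphic function $f$ vanishing on $C(\chi)$ near $0$ and expand it in a convergent power series $f=\sum_d f_d$ into homogeneous parts. Invariance under $\lambda\cdot$ gives $f(\lambda z)=\sum_d\lambda^d f_d(z)=0$ for all small $\lambda$, so each homogeneous polynomial $f_d$ vanishes on $C(\chi)$. Hence the ideal of germs of $C(\chi)$ at $0$ is generated by homogeneous polynomials. By Noetherianity of $\mathbb{C}[z_0,\dots,z_n]$, finitely many of them cut out $C(\chi)$, and their homogeneous ideal defines a closed subscheme $X\subset\mathbb{P}^n$ with $X^{an}=\chi$ set-theoretically.

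To match $X^{an}$ and $\chi$ as analytic spaces, including nilpotent structure, I would instead apply the same homogeneity argument to the ideal sheaf $\mathcal{I}_\chi\subset\mathcal{O}_{\mathbb{P}^{n,an}}$. Alternatively, I would invoke Serre's GAGA theorem, quoted earlier as the comparison theorem: the coherent sheaf $\mathcal{O}_\chi$ on $(\mathbb{P}^n)^{an}$ is the analytification of a coherent algebraic sheaf $\mathcal{F}$. The surjection $\mathcal{O}\to\mathcal{O}_\chi$ is then algebraic by full faithfulness of GAGA on $\mathrm{Hom}$ (which is $H^0$ of a coherent sheaf). Its kernel is an ideal sheaf defining the desired $X$. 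I would present this GAGA route as the main argument and the cone argument as the geometric intuition.

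For the second claim, given a holomorphic map $\phi:\chi\to\chi'$ between such subspaces, I would consider its graph $\Gamma_\phi\subset\chi\times\chi'\subset\mathbb{P}^n\times\mathbb{P}^m$. This graph is a compact analytic subspace. Via the Segre embedding it becomes a compact analytic subspace of a projective space, so the first part makes $\Gamma_\phi$ algebraic, say $\Gamma_\phi=Z^{an}$. The projection $Z\to X$ is algebraic and becomes an isomorphism after analytification. A morphism of schemes of finite type over $\mathbb{C}$ whose analytification is an isomorphism is itself an isomorphism: bijectivity on closed points and isomorphism on completed local rings give this, since $\widehat{\mathcal{O}}$ agrees for algebraic and analytic local rings. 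Hence $\phi=\mathrm{pr}_2\circ(\mathrm{pr}_1|_Z)^{-1}$ is a morphism of schemes.

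The main obstacle is the first step, namely showing that the cone is analytic at the origin, or equivalently establishing GAGA's essential surjectivity on coherent sheaves. I would not reprove it; I would cite Serre's Théorème 3 in GAGA (or Chow's original argument) and only verify the homogeneous-expansion step explicitly.
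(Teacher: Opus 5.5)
Your outline is correct, but it is not the paper's route. The paper gives no argument and only refers to \cite{Chow}, Theorem~V. You instead follow Serre's deduction of Chow's theorem from GAGA \cite{Serre}, which the paper mentions only in passing after the statement.

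The two routes do not deliver the same thing. Chow's theorem proper concerns analytic sets. Your cone argument (Remmert--Stein at the vertex, then splitting $f=\sum_d f_d$ into homogeneous parts and using Noetherianity) is the standard proof of exactly that reduced, set-theoretic version. The GAGA route (essential surjectivity plus full faithfulness) is what produces a subscheme with $X^{an}=\chi$ also when $\chi$ carries nilpotents, which is how the statement is phrased. Your graph trick then gives the second assertion with no further analytic input.

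The price is that the ingredient you need is GAGA's Th\'eor\`eme~3, not the cohomology comparison quoted in the paper (Th\'eor\`eme~1). Essential surjectivity additionally requires that $\mathcal{M}(k)$ be globally generated for $k\gg 0$, for every coherent analytic $\mathcal{M}$ on $\mathbb{P}^n$. For the same reason, running ``the same homogeneity argument'' on $\mathcal{I}_\chi$ is not a cheap alternative. It needs a coherent extension of the $\mathbb{C}^*$-invariant ideal sheaf across the vertex of the cone, which essentially amounts to global generation of $\mathcal{I}_\chi(d)$ for $d\gg 0$, and Remmert--Stein does not give it.

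Two steps should be made explicit.

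First, three facts all follow from exactness and faithfulness of analytification (faithful flatness of $\mathcal{O}_x\to\mathcal{O}^{an}_x$ at closed points): surjectivity of the algebraic map $\mathcal{O}_{\mathbb{P}^n}\to\mathcal{F}$, the equality $\mathcal{I}^{an}=\mathcal{I}_\chi$ for its kernel $\mathcal{I}$, and the inclusion $Z\subset X\times X'$ for the algebraic graph. You should invoke this explicitly.

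Second, bijectivity on closed points together with isomorphic completed local rings only shows that $g=\mathrm{pr}_1|_Z\colon Z\to X$ is \'etale. To get an isomorphism you must also rule out extra points over non-closed points. One way: $Z\times_X Z\setminus\Delta(Z)$ is open and closed without closed points, hence empty, so $g$ is a surjective \'etale monomorphism, i.e.\ an isomorphism. Here it is simpler to note that $g$ is proper and quasi-finite, hence finite. Then $\mathcal{O}_X\to g_*\mathcal{O}_Z$ becomes an isomorphism after analytification, hence already is one.
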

\begin{proof}
    See \cite{Chow} Theorem $V$.
\end{proof}

\noindent This theorem is often considered as the first bridge between complex analytical geometry and algebraic geometry. 
The next collection of results we mention are found in the more than influential paper written in French by J.P Serre in $1955$ called \textit{G\'eom\'etrie Analytique et G\'eom\'etrie Alg\'ebirque} or GAGA for short \cite{Serre}.
Which as the name suggests contains results deepening the connection between analytical spaces and algebraic ones. 
In particular, Serre showed that there is an equivalence between the category of coherent sheaves on $X$ and the category of coherent analytic sheaves on $X^{an}$ for a $X$ a projective variety over $\mathbb{C}$. 
From his results, he was able to prove Chow's theorem as a consequence of this, GAGA can now provide the backbone for comparing covers of schemes and covers of analytical spaces. A few of his results are listed in Appendix \ref{analytification}\\

\noindent For simplicity, from now on we will focus our study on a subcategory of complex analytic spaces, namely that of compact Riemann surfaces. 
\subsubsection{Overview of Riemann Surfaces}

\begin{definition}
    A \textbf{Riemann surfaces} $X$ is a $1$-dimensional complex manifold. 
    That is, $X$ is a Hausdorff topological space with an open covering $\{U_i\mid i\in I\}$, together with maps $f_i:U_i\rightarrow \mathbb{C}$ called \textit{complex charts} mapping $U_i$ homeomorphically onto an open subset $V\subset \mathbb{C}$ such that for each pair $i,j\in I$, the map $f_i\circ f_j^{-1} : f_j(U_i\cap U_j) \rightarrow \mathbb{C}$ is holomorphic. This is called a \textit{complex atlas} on $X$ and two complex atlases $\{U_i\mid i\in I\}$ and $\{ U'_i\mid i\in I\}$ on $X$ are equivalent if their union is also a complex atlas.
\end{definition}

\noindent From this definition, we can already see that Riemann surfaces inherits the topological property of locally compactness from $\mathbb{C}$.

\begin{definition}
    A \textbf{holomorphic (or analytic) map} $\varphi$ between two Riemann surfaces $X$ and $Y$ is a continuous map such for each pair of open subsets $U\subset X$, $V\subset Y$ satisfying $f(U)\subset V$ and complex charts $f:U\rightarrow \mathbb{C}$, $g:V\rightarrow \mathbb{C}$, the functions $g\circ\varphi\circ f^{-1}: g(V)\rightarrow \mathbb{C}$ are holomorphic.
\end{definition}

\noindent For more on this topic, we refer to the book \textit{Riemann Surfaces} \cite{Donaldson} of S.Donaldson and we suppose below that basic results of Riemann surfaces and complex analysis are known.
We will also assume that holomorphic maps between Riemann surfaces are non-constant and that the Riemann surfaces we consider are connected.

\begin{remark}
    Since compact Riemann surfaces are obviously topological spaces, one can apply our theory of topological covering spaces to them.
    But notice that we obtain much more information on these covers when they carry the additional structure of a complex manifold. 
    In particular, we can consider a whole new kind of covers for compact Riemann surfaces.
\end{remark}

\noindent The following results are well-known facts about holomorphic maps between Riemann surfaces.

\begin{proposition}
\label{holo_are_locally_ramified}
    Let $\varphi:Y\rightarrow X$ be a holomorphic map of Riemann surfaces. For each point $y\in Y$, there is a unique integer $k=k_y\ge 1$ such that we can find complex charts around $y$ in $Y$ and $\varphi(y)$ in $X$ in which $\varphi$ is represented by the complex map $z\mapsto z^k$.
\end{proposition}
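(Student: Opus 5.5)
The plan is to reduce everything to the local theory of holomorphic functions of one complex variable. Fix $y\in Y$ and set $x=\varphi(y)$. I will choose complex charts $g:V\rightarrow\mathbb{C}$ around $x$ with $g(x)=0$, and $f:U\rightarrow\mathbb{C}$ around $y$ with $f(y)=0$ and $\varphi(U)\subset V$. Then $h:=g\circ\varphi\circ f^{-1}$ is a holomorphic function on a neighbourhood of $0$ with $h(0)=0$. Since $\varphi$ is assumed non-constant and $Y$ connected, the identity theorem shows $h$ is not identically zero near $0$. Indeed, otherwise $\varphi$ would be locally constant at $y$, and the set of points where $\varphi$ is locally equal to $x$ is open and closed, forcing $\varphi$ constant.

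Expanding $h$ in a power series, I will write $h(z)=z^k u(z)$ with $k\ge 1$ the order of vanishing and $u$ holomorphic, $u(0)\neq 0$. Shrinking the neighbourhood so that $u$ has no zeros on a small disc (simply connected), $u$ admits a holomorphic $k$-th root $v$, with $v^k=u$. I then set $w(z)=z\,v(z)$. Since $w'(0)=v(0)\neq 0$, the holomorphic inverse function theorem makes $w$ a biholomorphism between small neighbourhoods of $0$. The new chart $\tilde f:=w\circ f$ around $y$ is compatible with the atlas, because it differs from $f$ by a biholomorphism. In the charts $\tilde f$ and $g$, the map $\varphi$ reads $g\circ\varphi\circ\tilde f^{-1}(t)=h(w^{-1}(t))=t^k$. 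This settles existence.

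For uniqueness, I note that $k$ is intrinsic: it equals the number of preimages near $y$ of points near $x$ (other than $x$ itself). Concretely, if $z\mapsto z^k$ and $z\mapsto z^{k'}$ both represent $\varphi$ in suitable charts, then every punctured neighbourhood of $y$ contains a smaller one on which $\varphi$ is exactly $k$-to-one onto a punctured neighbourhood of $x$. The same holds with $k'$, so $k=k'$. Alternatively, the order of vanishing of $g\circ\varphi\circ f^{-1}$ at $0$ is unchanged under composing with biholomorphisms fixing $0$, since their derivatives there are nonzero.

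The only delicate step is extracting the holomorphic $k$-th root of $u$. I will handle it by taking $\log u$ on a disc where $u$ is nonvanishing and setting $v=\exp(\tfrac1k\log u)$. The other points needing care are verifying that the modified chart still belongs to the complex structure and justifying non-degeneracy of $h$ from the non-constancy assumption.
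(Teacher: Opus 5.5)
Your proof is correct and is essentially the standard argument that the paper defers to (Donaldson, Chapter 4, Proposition 3). That argument writes the local expression as $z^k u(z)$ with $u(0)\neq 0$, takes a holomorphic $k$-th root $v$ of $u$, and reparametrises the source chart by $z\,v(z)$; $k$ is then recovered intrinsically either as the local number of preimages or as the order of vanishing.

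One precision is worth stating explicitly: the uniqueness of $k$ requires the charts to be centred at $y$ and $\varphi(y)$. Your construction and both of your uniqueness arguments tacitly use this. Without centring, $z\mapsto z^k$ near a nonzero point is a local biholomorphism, so $k$ would not be determined.
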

\begin{proof}
    See Proposition $3$ of Chapter $4$ in \cite{Donaldson}.
\end{proof}

\noindent As a natural consequence, one sees that holomorphic maps between Riemann surfaces are open, indeed the maps $z\mapsto z^k$ is open.

\begin{definition}
    The above integer $k_y$ is called the \textbf{ramification index} of the map $\varphi$ at $y$. And the points $y$ with $k_y> 1$ are called \textbf{branch points}, we call the set of branch points of $\varphi$  \textbf{ramification locus} and denoted it by $B_{\varphi}$
\end{definition}

\begin{proposition}
    Let $\varphi: Y\rightarrow Y$ be a holomorphic map of Riemann surfaces. Then the set of branch points $B_{\varphi}$ is finite.
\end{proposition}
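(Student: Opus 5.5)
The plan is to show that $B_{\varphi}$ is both discrete and closed in $Y$, and then use compactness of $Y$ to conclude finiteness. The statement should be read with $\varphi : Y\rightarrow X$ a non-constant holomorphic map and $Y$ compact, which is the standing setting of this section (compact, connected Riemann surfaces). Without compactness the statement fails: the map $z\mapsto \exp(z)^2$, or a suitable map from an infinite union of charts, has infinitely many branch points. So I will explicitly invoke compactness of $Y$.

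\textbf{Step 1: discreteness.} Fix $y\in Y$. By Proposition \ref{holo_are_locally_ramified} there are charts around $y$ and $\varphi(y)$ in which $\varphi$ is $z\mapsto z^{k_y}$, with $y$ corresponding to $0$. For any point $z_0\neq 0$ of that chart, the derivative $k_y z_0^{k_y-1}$ is nonzero. Hence $\varphi$ is a local biholomorphism near $z_0$, and by uniqueness of the ramification index in Proposition \ref{holo_are_locally_ramified} we get $k_{z_0}=1$. So every point of $Y$ has a neighbourhood $W_y$ containing no branch point other than possibly $y$ itself. The only care needed is to justify that ``locally injective'' forces $k=1$. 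This follows because $z\mapsto z^k$ with $k>1$ is not injective on any neighbourhood of $0$, and this property is independent of the charts chosen.

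\textbf{Step 2: closedness.} The same neighbourhoods show that the complement $Y\setminus B_{\varphi}$ is open. Indeed, if $k_y=1$, then every point of $W_y$ has index $1$. So $B_{\varphi}$ is closed in $Y$.

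\textbf{Step 3: finiteness.} Since $Y$ is compact and $B_{\varphi}$ is closed, $B_{\varphi}$ is compact. The open cover $\{W_y\}_{y\in Y}$ restricts to a cover of $B_{\varphi}$, and each $W_y$ meets $B_{\varphi}$ in at most one point. Extracting a finite subcover therefore shows that $B_{\varphi}$ is finite.

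I expect Step 1 to be the main obstacle, namely the well-definedness of $k_y$ under change of charts and the deduction that nearby points have index $1$. Everything else is routine point-set topology.
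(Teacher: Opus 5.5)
Your argument is correct. The paper does not actually prove this proposition; it only cites Donaldson (Proposition 4, Chapter 4), so there is no internal argument to compare with, and yours is the standard proof that citation stands in for. Your repair of the hypotheses is the right one. The printed $\varphi\colon Y\to Y$ should be a non-constant $\varphi\colon Y\to X$ with $Y$ compact, which matches the section's standing conventions. The chain ``local normal form $z\mapsto z^{k}$, hence branch points are isolated, hence finitely many by compactness'' then settles it.

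Two details need correcting.

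\begin{itemize}
\item Your counterexample is wrong. $\exp(z)^2=\exp(2z)$ has nowhere-vanishing derivative, so it has no branch points at all. An infinite disjoint union of charts is disconnected, which violates the standing convention. A correct witness that compactness is needed is $\sin\colon\mathbb{C}\to\mathbb{C}$, whose branch points are the points $\pi/2+n\pi$, $n\in\mathbb{Z}$.
\item Step 2 is superfluous. Cover $Y$ itself by the $W_y$ and extract a finite subcover $W_{y_1},\dots,W_{y_n}$; Step 1 then gives $B_{\varphi}\subseteq\{y_1,\dots,y_n\}$ directly.
\end{itemize}

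What Steps 1--2 do establish without compactness is that $B_{\varphi}$ is closed and discrete in $Y$. That is all one can say in the paper's next proposition, where $\varphi$ is only assumed proper onto a possibly non-compact $X$. For example, the projection of the smooth curve $\{(z,w)\in\mathbb{C}^2 : w^2=\sin z\}$ to the $z$-line is proper of degree $2$ with infinitely many branch points. So the ``finite set $\varphi(B_{\varphi})$'' used there really needs compactness, although closedness is all that argument requires.
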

\begin{proof}
    See Proposition $4$ of Chapter $4$ from \cite{Donaldson}.
\end{proof}

\noindent We now restrict our attention to proper maps, recall that a map $f : S\rightarrow T$ between topological spaces $S,T$ is said to be \textbf{proper} if for any compact set $K\subset T$, the preimage $f^{-1}(K)$ is also compact.
Note that any map between compact topological sets is proper, and for $f:S\rightarrow T$ a proper map of topological spaces, its fibre at any point $t\in T$ is finite (Proposition $4$ Chapter $4$ \cite{Donaldson}).

\begin{lemma}
\label{proper_map_closed}
    A proper map $f:Y\rightarrow X$ to a compactly generated Hausdorff space is a closed map (a space $X$ is called \textit{compactly generated} if any subset $A$ of $X$ is closed when $A\cap K$ is closed in $K$ for each compact $K\subset X$).
\end{lemma}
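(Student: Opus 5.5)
To show that a proper map $f:Y\rightarrow X$ into a compactly generated Hausdorff space is closed, I will use the definition of compact generation directly. Let $C\subset Y$ be closed. By hypothesis on $X$, it suffices to show that $f(C)\cap K$ is closed in $K$ for every compact $K\subset X$. So fix such a $K$; the whole argument then reduces to a statement about compact subsets.

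\textbf{Key steps.}

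First, observe the set identity
$$f(C)\cap K = f\bigl(C\cap f^{-1}(K)\bigr).$$
A point of $K$ lies in $f(C)$ exactly when it is the image of some point of $C$, and any such point automatically lies in $f^{-1}(K)$.

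Second, by properness $f^{-1}(K)$ is compact. Since $C$ is closed, $C\cap f^{-1}(K)$ is a closed subset of the compact set $f^{-1}(K)$, hence compact.

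Third, continuity of $f$ shows that $f\bigl(C\cap f^{-1}(K)\bigr)$ is compact in $X$. Since $X$ is Hausdorff, compact subsets are closed, so this image is closed in $X$. Intersecting with $K$ (it already lies inside $K$), it is closed in $K$.

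Fourth, conclude via compact generation that $f(C)$ is closed in $X$, so $f$ is a closed map.

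\textbf{Where care is needed.} None of the steps is difficult; the only point requiring attention is to use the Hausdorff property of $X$ and not of $Y$. We never need $Y$ to be Hausdorff, because "closed subset of a compact set is compact" holds in any topological space. The Hausdorff hypothesis enters only in the third step, to pass from "compact image" to "closed image". I would state this explicitly so the hypotheses of the lemma are seen to be exactly what is used.
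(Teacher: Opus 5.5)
Your proof is correct and follows the same route as the paper: you intersect $f(C)$ with a compact $K$, identify it with $f\bigl(C\cap f^{-1}(K)\bigr)$, and use properness and the closedness of $C$ to get compactness. You then use the Hausdorff property of $X$ to get closedness and conclude by compact generation. Your explicit remark that only the Hausdorff property of $X$ is needed, and not that of $Y$, is accurate and states the hypotheses more cleanly than the paper does.
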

\begin{proof}
    Let $C\subset Y$ be closed, and let $K$ be a compact subspace of $X$.
    Then $f^{-1}(K)$ is compact and so is $f^{-1}(K) \cap C$ that we denote as $B$.
    Then $f(B) = K\cap f(C)$ which is compact, 
    indeed $f(C\cap f^{-1}(K)) \subseteq K \cap f(C)$ and 
    suppose $y\in f(C)\cap K$, there exists $c\in C$ with $y= f(c)$ and for $y\in K$, we have that $c\in f^{-1}(K)$.
    And as $X$ is Hausdorff, $f(B)$ is closed. Finally by compact generation, $f(C)$ is closed in $Y$.
\end{proof}
\noindent Notice that locally compact spaces (in particular Riemann surfaces) are compactly generated, thus we can conclude that the finite set $\varphi(B_{\varphi})$ of critical values is also closed.
We can now state the main topological property of proper holomorphic maps.

\begin{proposition}
    Let $X$ be a Riemann surface (connected), and $\varphi:Y\rightarrow X$ a proper holomorphic map.
    The map $\varphi$ is surjective with finite fibres, and its restriction to $Y\:\backslash\:\varphi^{-1}(\varphi(B_{\varphi}))$ is a finite topological cover.
\end{proposition}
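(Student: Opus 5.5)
We argue in three steps: surjectivity, finiteness of fibres, and the covering property on the complement of the critical values. The main tools are Proposition~\ref{holo_are_locally_ramified} (local normal form $z\mapsto z^k$) and Lemma~\ref{proper_map_closed} (proper maps to locally compact Hausdorff spaces are closed).

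\emph{Surjectivity.} Since $\varphi$ is non-constant, the local normal form $z\mapsto z^k$ shows that $\varphi$ is open, so $\varphi(Y)$ is open in $X$. By Lemma~\ref{proper_map_closed}, $\varphi$ is closed, so $\varphi(Y)$ is also closed. It is non-empty, and $X$ is connected, hence $\varphi(Y)=X$.

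\emph{Finite fibres.} For $x\in X$, the fibre $\varphi^{-1}(x)$ is compact because $\varphi$ is proper. By the local normal form, each point of the fibre has a chart neighbourhood in which it is the only point mapping to $x$ (namely $z=0$ for $z^k$). So the fibre is discrete, and a compact discrete set is finite. The same argument shows that $B_\varphi$ is closed and discrete in $Y$, because $k_y=1$ is an open condition in the normal form. Since $\varphi$ is closed, $\varphi(B_\varphi)$ is then closed, and it is discrete because each fibre meets $B_\varphi$ in a finite set. Put $X'=X\setminus\varphi(B_\varphi)$ and $Y'=\varphi^{-1}(X')$. Both are open, and the restriction $\varphi':Y'\to X'$ is still proper, being the restriction of a proper map to a full preimage. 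It is also a local homeomorphism, since every point of $Y'$ has $k_y=1$.

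\emph{Covering property.} This is the main step. Fix $x\in X'$ with fibre $\{y_1,\dots,y_n\}$. Choose pairwise disjoint open neighbourhoods $V_i\ni y_i$ (using Hausdorffness) such that each $\varphi|_{V_i}$ is a homeomorphism onto an open neighbourhood of $x$. The set $C=Y\setminus\bigcup_i V_i$ is closed, so $\varphi(C)$ is closed in $X$, and it does not contain $x$. Take a connected neighbourhood
\[
U\subset \Bigl(\bigcap_i\varphi(V_i)\Bigr)\setminus\varphi(C)
\]
of $x$ inside $X'$. Then $\varphi^{-1}(U)\subset\bigcup_i V_i$, so
\[
\varphi^{-1}(U)=\bigsqcup_i \bigl(V_i\cap\varphi^{-1}(U)\bigr),
\]
and each piece maps homeomorphically onto $U$. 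This is exactly the local triviality required by Definition~\ref{def_cover}, with finitely many sheets, so $\varphi'$ is a finite topological cover.

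The delicate point is the use of closedness to rule out extra preimages escaping to infinity near $x$. This is precisely where properness enters through Lemma~\ref{proper_map_closed}, and it is what makes the number of sheets constant on $U$.
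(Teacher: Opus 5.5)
Your proof is correct and follows the paper's route: surjectivity from the image being open and closed, then intersecting sheet neighbourhoods of the finitely many unramified preimages. It also fills the step the paper glosses over, since intersecting the images $\varphi(V_i)$ alone does not exclude other preimages near $x$, and your removal of the closed set $\varphi(C)$ (closed by properness) is exactly what is needed. The only blemish is the side claim that $\varphi(B_\varphi)$ is discrete: this does not follow from each fibre meeting $B_\varphi$ finitely (use instead that $\varphi^{-1}(K)\cap B_\varphi$ is finite for compact $K$), but your argument only ever uses that $\varphi(B_\varphi)$ is closed.
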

\begin{proof}
    Finiteness of fibres follows from the above discussion. And surjectivity follows from the fact that the image $\varphi(Y)$ is open and closed (proper maps are open and closed by Lemma \ref{proper_map_closed}) and that $X$ is connected.
    For the last statement, note that by Proposition \ref{holo_are_locally_ramified}, each of the finitely many preimages of $x\in X\:\backslash\: \varphi(B_{\varphi})$ has an open neighbourhood mapping homeomorphically onto some open neighbourhood of $x$ ; the intersection of these is a distinguished open neighbourhood of $x$ as in the definition of cover.
\end{proof}

\begin{definition}
    We call a proper surjective map of locally compact Hausdorff spaces that restricts to a cover outside a discrete closed subset a \textbf{finite branched cover}.
\end{definition}

\noindent We say that a finite branch cover is of degree $d$ or has $d$ many sheets if the covering obtained by restriction is $d$-sheeted. And since we have assumed Riemann surfaces to be connected, by Remark \ref{connected_constant_sheets}, the degree of finite branch covers is also constant. In that regard, for $X$ a Riemann surface, studying covering spaces above $X\backslash B_{\varphi}$ is done similarly as in Chapter $1$. 
We thus already know how to compute universal covers, automorphism groups of covers and fundamental groups of Riemann surfaces.\\

\noindent However, it is interesting to discuss that in the case of compact Riemann surfaces, fundamental groups are much more restricted. 
An interesting way to understand this is by looking at the classification of compact Riemann surfaces,
\begin{theorem}
    Every compact Riemann surface is homeomorphic to a torus with $g$ holes
\end{theorem}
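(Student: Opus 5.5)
The plan is the classical route. First, a compact Riemann surface is a compact, connected, orientable real $2$-manifold; then the classification of compact orientable surfaces says any such surface is homeomorphic to a sphere with $g$ handles, i.e.\ a ``torus with $g$ holes'' (with $g=0$ giving $S^2$).

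The first step is orientability. Take a complex atlas $\{(U_i,f_i)\}$ as in the definition of Riemann surfaces. Each transition map $f_i\circ f_j^{-1}$ is holomorphic and bijective onto its image, so its complex derivative $h'(z)$ never vanishes. Viewed as a map of open subsets of $\mathbb{R}^2$, its real Jacobian determinant is $|h'(z)|^2>0$. Hence the atlas is an oriented $C^\infty$ atlas, and $X$ is an orientable smooth surface. Compactness and connectedness hold by hypothesis (the paper assumes Riemann surfaces connected), and Hausdorffness is part of the definition.

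The second step is to triangulate $X$. Every surface is triangulable (Rad\'o's theorem); for a compact Riemann surface one can instead argue directly. By the Riemann existence theorem there is a non-constant meromorphic function, i.e.\ a holomorphic map $\varphi:X\to\mathbb{P}^1$. This map is proper, since $X$ is compact, so by the proposition above it is a finite branched cover whose ramification locus $B_\varphi$ is finite. Triangulate $\mathbb{P}^1\cong S^2$ so that the finitely many points of $\varphi(B_\varphi)$ are vertices. Then lift the triangulation through the finite topological cover on the complement of the branch values, using the lifting lemma \ref{unique_lift} on each (simply connected) open simplex. This produces a finite triangulation of $X$.

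The last step is to apply the classification of compact surfaces to the triangulated surface. Write $X$ as a polygon with edge identifications, and reduce to the normal form $a_1b_1a_1^{-1}b_1^{-1}\cdots a_gb_ga_g^{-1}b_g^{-1}$ by the standard cut-and-paste moves. Orientability excludes crosscaps, i.e.\ words of the form $aa$, so only handles remain, and $X$ is the connected sum of $g$ tori. The genus $g$ is recovered from the Euler characteristic, $\chi(X)=2-2g$, so $X$ is a torus with $g$ holes.

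I expect the main obstacle to be the triangulation step. It needs either Rad\'o's theorem or the existence of a non-constant meromorphic function, and neither is proved in the paper, so I would cite it (e.g.\ from \cite{Donaldson}). The orientability computation and the combinatorial reduction are routine and standard (cf.\ \cite{Hatcher}).
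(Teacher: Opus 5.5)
Your proposal is correct. The paper gives no argument of its own here; it only cites \cite{Donaldson}. What you have written is the classical argument behind such a citation: orientability from $|h'(z)|^2>0$, then a triangulation, then the cut-and-paste normal form.

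The one place where your route differs from the most economical one is the triangulation step. After your first step, $X$ is a compact connected oriented smooth surface, and the claim is purely topological. The classification of such surfaces therefore finishes the proof with no further complex analysis; it can be proved, for example, with a Morse function and handle attachments, or combinatorially after Rad\'o's theorem. Producing the triangulation from a non-constant meromorphic function instead imports a deep analytic existence theorem to prove a topological fact. In exchange, your construction uses exactly the machinery the paper develops (finite branched covers and lifting). Counting simplices in the lifted triangulation also gives you the Riemann--Hurwitz formula $\chi(X)=2d-\sum_{y\in X}(k_y-1)$, with $d=\deg\varphi$, for free.

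If you keep that route, tighten three points:
\begin{itemize}
\item Lifting over a simply connected open simplex uses the lifting criterion (Lemma \ref{lift_criterion}), not just path lifting (Lemma \ref{unique_lift}).
\item The lift must be extended continuously to the branch vertices, using the local form $z\mapsto z^k$.
\item The result is in general only a $\Delta$-complex, not a simplicial complex. For $z\mapsto z^2$ on a tetrahedral triangulation of $S^2$ with $0$ and $\infty$ as vertices, the two lifts of the face $\{0,\infty,a\}$ meet in the two vertices $0,\infty$ but have different edges joining them. So either pass to a barycentric subdivision, or note that the polygon reduction works for $\Delta$-complexes.
\end{itemize}
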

\begin{proof}
    See $\cite{Donaldson}$ Theorem $3$ in Chapter $6$.
\end{proof}

\begin{remark}
The takeaway here is that one can construct a torus by identifying opposite sides of a square, with the same orientation. This generalises for tori with $g$ holes in the following way :\\
Take a regular $4g$-gon and label its sides clockwise by $b,a,b,a,d,c,d,c, ...$, considering the first two letters of each quadruplet with clockwise orientation and the last two having counter-clockwise orientation. 
By identifying each pair of same letters, respecting the orientation, one obtains a sphere with $g$ handles, and the sides of each quadruplet get mapped to closed paths all going through a common point $x$. 
The following picture from \cite{Hatcher} page $5$ is much more intuitive:
\begin{center}
    \includegraphics[scale=0.25]{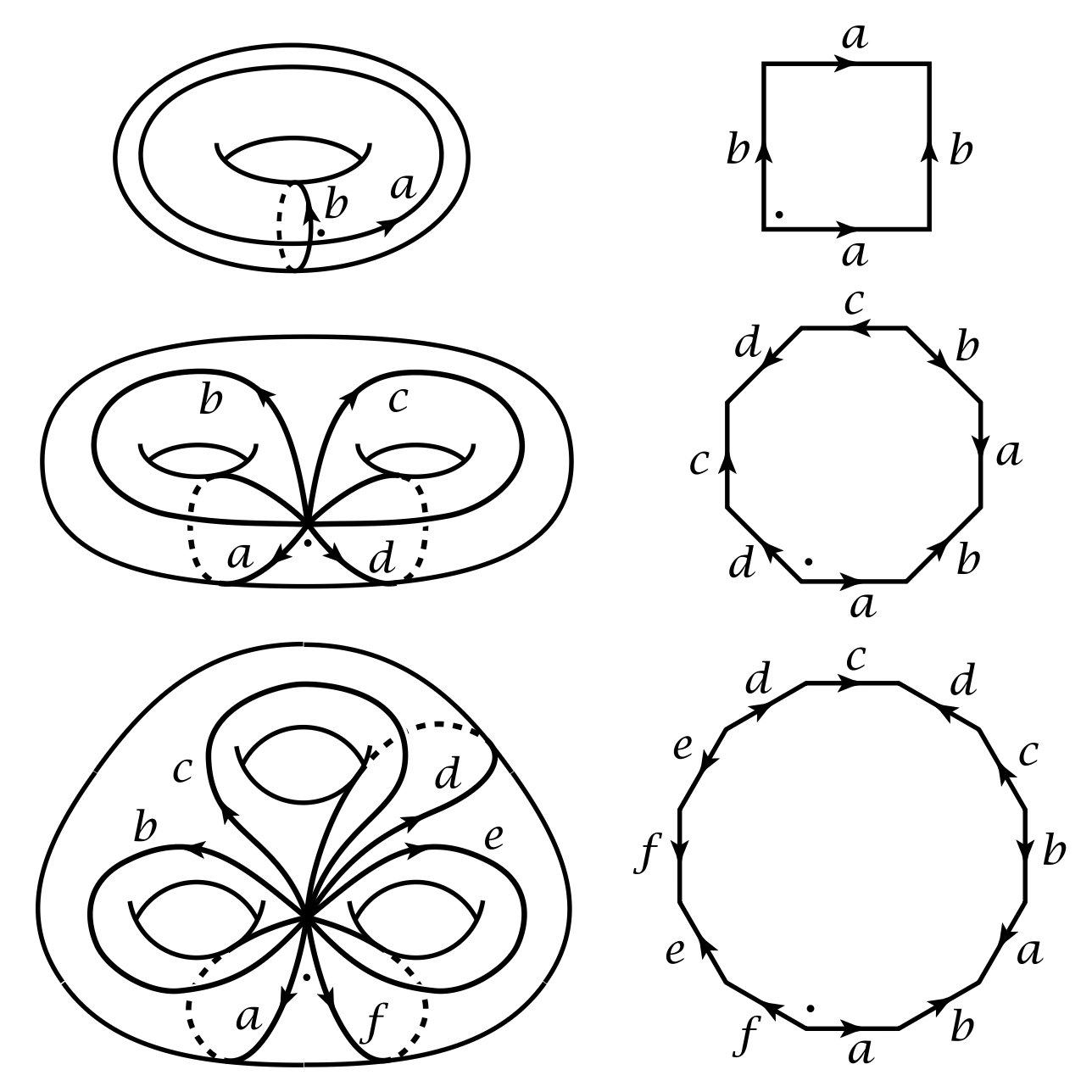}
\end{center}
\end{remark}

\noindent The above representation of tori with $g$ holes, together with the classification of compact Riemann surfaces with $g$ holes allows us to compute the fundamental group of a Riemann surface of genus $g$. 
We conveniently take the base point $x$ to be where paths coming from the first quadruplet $a,b,a,b$ meet. Then
\begin{theorem}
    The fundamental group of a compact Riemann surface $X$ of genus $g$ has a finite presentation of the form $\pi_1(X,x) = \langle a,b,c,d,...\mid [a,b][c,d]... = 1\rangle$, where the brackets $[a,b]$ denote the commutators $aba^{-1}b^{-1}$.
\end{theorem}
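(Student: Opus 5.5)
The plan is to use the polygon model from the preceding remark together with the Seifert--Van Kampen theorem, in the form used for CW complexes in \cite{Hatcher}. By the classification theorem, $X$ is homeomorphic to the surface $M_g$ obtained from a regular $4g$-gon $P$ by identifying sides in pairs according to the word $a b a^{-1} b^{-1} c d c^{-1} d^{-1}\cdots$. Since the fundamental group is a topological invariant (homeomorphic spaces have isomorphic fundamental groups), it suffices to compute $\pi_1(M_g,x)$. The case $g=0$ is the sphere, which is simply connected, and the presentation with no generators is trivial, so we assume $g\geq 1$.

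\textbf{Step 1: the $1$-skeleton.} Under the identification all $4g$ vertices of $P$ go to the single point $x$. Each pair of identified sides becomes a loop at $x$. The image of $\partial P$ is therefore a wedge of $2g$ circles $W=\bigvee_{2g}S^1$, with loops labelled $a,b,c,d,\dots$. I would first compute $\pi_1(W,x)$ as the free group on these $2g$ loops. This follows from Van Kampen applied inductively to a wedge, using $\pi_1(S^1)\cong\mathbb{Z}$ proved above. Open neighbourhoods deformation retracting onto each circle are needed for this step.

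\textbf{Step 2: attaching the $2$-cell.} Write $M_g=W\cup_\varphi D^2$, where $\varphi:S^1\to W$ traverses the boundary word $[a,b][c,d]\cdots$. Choose the open sets $U=M_g\setminus\{p\}$, with $p$ the centre of the disc, and $V$ the open interior of the disc. Then $U$ deformation retracts radially onto $W$, so $\pi_1(U)\cong F(a,b,c,d,\dots)$. The set $V$ is contractible, and $U\cap V$ is a punctured open disc with $\pi_1\cong\mathbb{Z}$. Its generator is a small circle around $p$, which pushes out to the boundary loop $\varphi$. Van Kampen then gives
\[
\pi_1(M_g,x)\cong F(a,b,c,d,\dots)/\langle\langle [a,b][c,d]\cdots\rangle\rangle .
\]
This is exactly the claimed presentation.

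\textbf{Main obstacle: the base point.} The intersection $U\cap V$ does not contain $x$, so Van Kampen must be applied at a base point $x_0\in U\cap V$. I would then transport the result back to $x$ along a radial path $\gamma$ from $x_0$ to a vertex, using the change-of-base-point isomorphism of Proposition \ref{fund_grou_base_pt}. Under this transport, the image of the generator of $\pi_1(U\cap V,x_0)$ in $\pi_1(U,x_0)$ becomes the conjugate $\gamma\,\varphi\,\gamma^{-1}$, which is the boundary word up to conjugation. The normal closure of a conjugate equals the normal closure of the original element, so the relation is unaffected. Finally, I would check that the orientation conventions of the remark produce $aba^{-1}b^{-1}$ and not some other cyclic arrangement. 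Any cyclic permutation or inversion of the word gives an isomorphic one-relator group, so the resulting presentation is the same in any case.
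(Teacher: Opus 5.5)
Your proposal is correct and follows the same route as the paper. The paper's proof is just a citation of Hatcher's Van Kampen computation for cell complexes ($X$ is a wedge of $2g$ circles with one $2$-cell attached along the product of commutators), and you supply those details: the free group of the wedge, the $U$/$V$ decomposition, and the transport of the base point. One small point: the labelling in the paper's remark gives the word $bab^{-1}a^{-1}dcd^{-1}c^{-1}\cdots$, which becomes $[a,b][c,d]\cdots$ by relabelling the generators within each pair (cyclic permutation or inversion alone is not enough), but this is harmless since the statement only asks for a presentation of that form.
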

\begin{proof}
    The proof is an application of Van Kampen's theorem to cell complexes, see \cite{Hatcher} Chapter $1.2$
\end{proof}

\subsubsection{Main Results}

\noindent Next, we touch upon the main tool for this section, namely, non-singular projective curves over $\mathbb{C}$ and compact Riemann surfaces are equivalent. 
One direction is easier than the other.

\begin{construction}
    Let $X\subset \mathbb{CP}^n$ be a smooth projective curve over $\mathbb{C}$ given by $r$ homogeneous polynomials (with $r\geq n-1$).
    Consider $\{U_i \mid i\in I\}$ an affine covering of $\mathbb{CP}^n$, then passing to affine charts, the homogeneous polynomials become holomorphic (polynomial) functions $f_1,...,f_r :\mathbb{C}^n\rightarrow \mathbb{C}$.
    Recall that since $X$ is a smooth projective variety of dimension $1$ (so of codimension $n-1$), then its Jacobian at any (smooth) point $p$ of $X$ has full rank $n-1$. 
    So there exists a neighbourhood $V\subset\mathbb{C}^n$ of $p\in X$ and $n-1$ holomorphic functions $f_1,...,f_{n-1}:V\rightarrow \mathbb{C}$ such that the ideal of $X\cap V$ is generated by those $n-1$ functions.
    Let us define a map $F=(f_1,...,f_{n-1}) : V\rightarrow \mathbb{C}^{n-1}$.
    Since $F:\mathbb{C}^n\rightarrow\mathbb{C}^{n-1}$ is holomorphic and its differential at $p$ has rank $n-1$, by the holomorphic implicit function function theorem, near $p$, the zero locus $F^{-1}(0)$ is a complex submanifold of dimension $1$.
    Thus near every point $p\in X$, the set $X$ is biholomorphic to an open subset of $\mathbb{C}$, giving us local holomorphic charts on $X$.\\
    And what is left to check is that these local charts define a global complex structure. 
    Suppose we have two different systems of equations  $F=(f_1,...,f_{n-1})$ and $G=(g_1,...,g_{n-1})$ as above.
    By the implicit function theorem they each give a local parametrisation of $X$, 
    $\varphi : U\subset \mathbb{C}\rightarrow X\: ,\:t\mapsto (t,a_2(t),...,a_n(t))$
    and $\psi : V\subset \mathbb{C}\rightarrow X\: ,\: s\mapsto(s,b_2(s),...,b_n(s))$.
    It is now easy to check that the transition map $\psi^{-1}\circ \varphi : U\rightarrow V$ is holomorphic, because $\psi^{-1}$ is a biholomorphism onto its image (by the implicit function theorem) so its inverse is also holomorphic.
    And on overlaps $U_i\cap U_j$, the transition maps are restrictions of the standard projective coordinate changes $\mathbb{C}^n\rightarrow \mathbb{C}^n$.
    So $X$ naturally has the structure of a Riemann surface. 
    Moreover, since $X$ is embedded in a compact space $\mathbb{CP}^n$ with the usual topology,
    $X$ is actually a compact Riemann surface. 
    And we can identify the compact Riemann surface structure of a smooth projective curve $X$ over $\mathbb{C}$ by denoting it $X^{an}$.
\end{construction}

\noindent The other direction is significantly deeper.
A proof may require the introduction of notion of divisors and a thorough study of meromorphic functions, as well as the Riemann-Roch theorem (Theorem $VI, 1.3$ in \cite{Hartshorne}) and Chow's theorem.\\
A similar reasoning allows us to see smooth projective schemes over $\mathbb{C}$ to be compact complex manifolds.\\

\noindent Returning to branched coverings, we now study when finite branched coverings are \'etale. 
It is important to note that we are now not only considering finite branched coverings between Riemann surface, but as morphism of complex analytical spaces, so we also consider the associated structure sheaf.
And the structure sheaf of a Riemann surface $X$ is the sheaf of holomorphic functions, so for all $U\subset X$ open, $\mathcal{O}_X(U)=\{f:U\rightarrow \mathbb{C}\mid f \text{ holomorphic}\}$.
From complex analysis, one knows that holomorphic functions are locally given by convergent power series, so at any point $x\in U$, the stalk at $x$, $\mathcal{O}_{X,x}$ consists in germs of functions, where each germ corresponds to a convergent power series, at each point $x$, $\mathcal{O}_{X,x} = \mathbb{C}\{z\}$. 
From Proposition \ref{holo_are_locally_ramified}, around any point $x\in X$ there exist local coordinates such that a covering $f:Y\rightarrow X$ is of the form $z\mapsto z^k$, this is now also true for the induced map between the structure sheaves. 
In that regard, on every stalk, we can consider 
$\mathbb{C}\{z\}$ as a module over $\mathbb{C}\{w\}$ via $w = z^k$ (we get $\mathbb{C}\{w\}=\mathbb{C}\{z\}/(z^k-w)$), more precisely, $\mathbb{C}\{z\} = \mathbb{C}\{w\}\oplus z\cdot\mathbb{C}\{w\}\:\oplus\:...\oplus \:z^{k-1}\cdot\mathbb{C}\{w\}$ is a free and finite $\mathbb{C}\{w\}$-module of rank $k$.\\
So what we have just shown is that finite branch coverings are indeed finite flat morphisms.\\

\noindent Finally, we show that a finite branch covering is unramified if and only if it has no branch points.
Let us consider the sheaf of K\"ahler differentials of the Riemann surface $X$ over $\mathbb{C}$ locally. 
As we have seen above, $\mathcal{O}_{X,x} \cong \mathbb{C}\{z\}$ so locally we have $\Omega_{\mathbb{C}\{z\} /\mathbb{C}}\cong \mathbb{C}\{z\}\cdot dz$, the free module of rank $1$. 
Indeed, any derivations satisfies the chain rule, $d(f(z)) = f(z)dz$ and since $X$ is a smooth $1$-dimensional complex manifold, $dz$ is just a local basis.
With the same being true for $Y$,
we can now write $\Omega_X = \mathcal{O}_X\cdot dz$ and $\Omega_Y = \mathcal{O}_Y\cdot dw$.
From Proposition \ref{exact_seq_sheaves_differential}, for $f:Y\rightarrow X$, the exact sequence gives us the following isomorphism : $\Omega_{Y/X} = \Omega_Y / (\im f^*\Omega_X$).\\
So we now compute the pull-back of $\Omega_X$ locally. 
There is a natural map $f^*\Omega_X\rightarrow \Omega_Y$ coming from the universal property of differentials, which concretely gives $dw \mapsto d(f^\#(w))$. Recall that since locally $f(z) = z^k$, then $f^\#(w) = z^k$, so $dw\mapsto d(z^k) = kz^{k-1}dz$ (by the chain rule). Therefore, $f^*\Omega_X \subset \Omega_Y$ is generated by $kz^{k-1}dz$.
So, as above, we can write $f^*\Omega_Y = \mathcal{O}_X\cdot (kz^{k-1}dz)$. 
And we now compute $\Omega_{Y/X}$ in each cases : $k=1$, or $k>1$ ,\\
Case $k=1$ : we have $\Omega_{Y/X} \cong \frac{\mathcal{O}_X\cdot dz}{\mathcal{O}_X\cdot (kz^{k-1}dz)}\cong \frac{\mathcal{O}_X}{(kz^{k-1})}\cdot dz$ and,\\
since $k$ is invertible in $\mathbb{C}$, $\Omega_{Y/X} \cong \frac{\mathcal{O}_X}{(z^{k-1})} \cdot dz \cong \frac{\mathcal{O}_X}{(1)}\cdot dz \cong 0$.\\
Case $k>1$ : we have $\Omega_{Y/X} \cong \frac{\mathcal{O}_X}{(z^{k-1})}\cdot dz$ is supported at $z=0$, indeed, if we look at a point $x$ with $z=0$, on the local ring $\mathcal{O}_{X,x}\cong \mathbb{C}\{z\}$, $z^{k-1}$ is not a unit, so the quotient $\mathbb{C}\{z\}/(z^{k-1})$ is non-zero and in fact it is finite-dimensional over $\mathbb{C}$ with basis $1,z,...,z^{k-2}$. So it is ramified.

\begin{remark}
We have established that (non-constant) finite branched coverings with zero ramification locus (so finite topological covering spaces) are the same as finite \'etale coverings over compact Riemann surfaces. 
And from our previous discussion, we know that there is an equivalence (through the analytification functor) between compact Riemann surfaces and non-singular projective curves over $\mathbb{C}$.
Let $X$ be a projective curve over $\mathbb{C}$, the analytification functor thus, induces an equivalence of the category of finite \'etale covers of $X$ with that of finite topological coverings of $X^{an}$.
\end{remark}

\noindent Since we know that for a projective curve $X$ over $\mathbb{C}$ (also a scheme), the \'etale fundamental group is the automorphism group of the fibre functor $\Fib_{\overline{x}}$ at any geometric point $\overline{x}$ of $X$,
we now show that this fibre functor is actually identical to the topological fibre functor defined in Section \ref{universal_cover_section}, restricted to the finite case, which we now denote $\Fib^{top}_x$.
First, notice that over $\mathbb{C}$, for $Y\in \Fet_X$ the underlying set $\Fib_{\overline{x}}(Y) = Y\times_X\Spec\:\mathbb{C}$ consists of points $y\in Y$ whose image in $X$ must be $x$, together with a map $\kappa(x)\rightarrow \mathbb{C}$ (which is always the identity).
So we get that $|\Fib_{\overline{x}}(Y)| = \{y\in Y \mid f(y)=x\}$ for $f$ the associated finite \'etale morphism (as a morphism of sets). \\
Now, by ‘‘analytifying" the setup, we get a finite topological covering $f^{an}:Y^{an}\rightarrow X^{an}$.
Recall that points of $X^{an}$ are closed points of $X$ and since $Y$ is finite \'etale over $X$, as we have seen, the algebraic fibre of $Y$ is a disjoint union of copies of $\Spec\:\mathbb{C}$, so closed points.
Hence $\Fib^{top}_{(\im x)}(Y^{an}) = \{y\in Y \mid f(y)=x\}$. 
And we have that $\Fib_{\overline{x}}(Y) \cong \Fib^{top}_{(\im x)}(Y^{an})$ as sets.
Taking a morphism of covers $\phi:Y\rightarrow Z$, we get a morphism of sets $Y\times_X\Spec\:\mathbb{C} \rightarrow Z\times_X\rightarrow\Spec\:\mathbb{C}$ which sends $y\mapsto \phi(y)$. On the topological side, we get a morphism of sets $\Fib^{top}_{(\im x)}(Y^{an})\rightarrow \Fib^{top}_{(\im x)} (Z^{an})$ which also sends $y\mapsto \phi(y)$.
And the above identification of sets commutes with such morphisms of covers, yielding a natural isomorphism of the fibre functors.
Now, what would be the automorphism group of the topological fibre functor ?
By fully faithfulness of the Yoneda embedding, and since the topological fibre functor is represented by the universal cover, we get that the automorphisms of the topological fibre functor are precisely elements of $\Aut(\widetilde{X^{an}}|X^{an})$.
As usual, this forms a group under composition and from the reformulation Theorem \ref{covering_reformulation_2}, we actually have that $\Aut(\widetilde{X^{an}}|X^{an}) \cong \widehat{\pi_1(X^{an},x')}$ for $x'\in X^{an}$ the corresponding point of $\overline{x}$. And by chain of isomorphisms, we get that 
$$\pi^e_1(X,\overline{x}) \cong \widehat{\pi_1(X^{an},x')}.$$

\noindent Before ending this section, it is worthwhile pointing out that the equivalence of categories between compact Riemann surfaces and projective curves over $\mathbb{C}$ is deeper. 
Indeed, we have the following three-way equivalence of categories: 
\begin{itemize}
    \item[$a)$] Smooth projective algebraic curves over $\mathbb{C}$ and non-constant algebraic maps,
    \item[$b)$] Compact connected Riemann surfaces and non-constant holomorphic maps, and
    \item[$c)$] Finite field extensions of $\mathbb{C}$ of transcendence degree $1$ and morphisms of extensions of $\mathbb{C}$.
\end{itemize}
with the first two categories being anti-equivalent to the third one. This situation is explained in detail in Chapter $4$ of \cite{Szamuely}.\\
We end this section by mentioning that Grothendieck, in \cite{SGA} (Th\'eor\`em $XII.\:5.1$) proved a generalisation of Riemann's existence theorem:
\begin{theorem}
    Let $X$ be a scheme over $\mathbb{C}$ locally of finite type and $X^{an}$ the analytical space associated to $X$.
    The analytification functor which to any finite \'etale cover $Y$ associates $Y^{an}$ is an equivalence of categories between finite \'etale covers of $X$ and finite topological covers of $X^{an}$
\end{theorem}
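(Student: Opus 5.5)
The plan is to show that the analytification functor $Y\mapsto Y^{an}$ from $\Fet_X$ to the category of finite topological covers of $X^{an}$ is well defined, fully faithful and essentially surjective. We may assume $X$ connected, since both categories split over connected components ($X$ and $X^{an}$ have the same connected components, a standard GAGA-type fact). We also assume $X$ of finite type, or at least quasi-compact; this is what the paper's Noetherian convention suggests, and otherwise we work on each quasi-compact piece.

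\emph{Well-definedness.} Given a finite \'etale $f:Y\rightarrow X$, the map $f^{an}$ is finite and flat. Since $\Omega_{Y/X}=0$, it is a local isomorphism of analytic spaces, by the holomorphic inverse function theorem applied locally in embedded charts. This is the same computation as in the Riemann surface case above, where $k=1$ forces $\Omega_{Y/X}=0$. A finite (hence proper) local homeomorphism with finite fibres is a topological covering, by the argument of the proposition on proper holomorphic maps with empty branch locus.

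\emph{Full faithfulness.} Here I would exploit that both sides are Galois categories with fibre functors that agree. Fix a closed point $x\in X$. Exactly as in the discussion preceding the statement, $\Fib_{\overline{x}}(Y)\cong \Fib^{top}_x(Y^{an})$ naturally in $Y$. A morphism $Y\rightarrow Z$ over $X$ corresponds to its graph, which is an open and closed subscheme of $Y\times_X Z$ mapping isomorphically to $Y$ (Proposition \ref{sections_etale_cover}, via Lemma \ref{etale_between_composition}). On the analytic side, morphisms of covers similarly correspond to clopen subsets of $Y^{an}\times_{X^{an}}Z^{an}=(Y\times_XZ)^{an}$ mapping bijectively to $Y^{an}$. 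So full faithfulness reduces to the following claim: for $W\in\Fet_X$, connected components of $W$ and of $W^{an}$ correspond. I would prove this claim by showing that a connected finite \'etale $W$ has connected $W^{an}$. This is the analytic connectedness theorem for irreducible varieties (SGA/GAGA), applied after noting that connected $W$ étale over normal-enough $X$ has connected analytification. I would cite this as a known comparison result.

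\emph{Essential surjectivity.} This is the main obstacle, and it is the genuine content of Riemann's existence theorem. Given a finite topological cover $Y'\rightarrow X^{an}$, we must algebraise it. The plan has four steps.

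First, reduce to $X$ normal by normalisation, using that finite covers descend along finite surjective maps, and then to $X$ affine or quasi-projective.

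Second, use Nagata compactification and resolution of singularities to embed $X$ as a dense open in a normal projective $\overline{X}$ with complement a divisor.

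Third, extend $Y'$ to a finite branched analytic cover $\overline{Y'}\rightarrow\overline{X}^{an}$. This is the Grauert--Remmert extension theorem: normalisation of the analytic closure over the boundary, which works because local fundamental groups near a normal crossings divisor are controlled.

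Fourth, apply GAGA (Serre's theorem above together with Chow's theorem) to the coherent $\mathcal{O}_{\overline{X}^{an}}$-algebra $\overline{f}_*\mathcal{O}_{\overline{Y'}}$. This gives a coherent algebraic $\mathcal{O}_{\overline{X}}$-algebra $\mathcal{A}$. Then $Y:=\Spec\,\mathcal{A}|_X$ is finite over $X$, with $Y^{an}\cong Y'$. It is étale by the first step run in reverse, since flatness and unramifiedness can be checked on completed local rings, which agree algebraically and analytically.

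Step three is where I expect the real difficulty. In the curve case it is elementary: a punctured disc has $\pi_1=\mathbb{Z}$, so finite covers are $z\mapsto z^k$ and fill in. For this reason I would first prove the theorem for curves in full detail, via this filling-in argument and the curve equivalence already discussed. In higher dimension I would then invoke Grauert--Remmert rather than reprove it.
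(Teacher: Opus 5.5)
Your outline is correct and is essentially the proof of \cite{SGA}, XII.5.1, which the paper cites without reproducing: full faithfulness via graphs of morphisms and the comparison of connected components of $W$ and $W^{an}$, then essential surjectivity by reduction to normal affine $X$, a normal projective compactification, Grauert--Remmert and GAGA. Two small corrections: the connectedness comparison holds for every $W$ locally of finite type over $\mathbb{C}$, not only over normal $X$ (reduce to the irreducible case, since irreducible components that meet share closed points), and you need it in that generality because your descent step requires full faithfulness over the non-normal $X'\times_X X'$; and neither resolution of singularities nor normal crossings is needed (resolution would in any case alter a singular normal $X$), because Grauert--Remmert applies to any normal $\overline{X}^{an}$ and any nowhere dense analytic boundary.
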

\begin{proof}
    See \cite{SGA}, Th\'eor\`em $XII.\:5.1$
\end{proof}
\noindent Which has the following corollary ($XII.\:5.2$ in \cite{SGA}) who generalises our whole discussion to schemes of higher dimensions:
\begin{corollary}
    Let $X$ be a connected scheme over $\mathbb{C}$ locally of finite type, $x'$ a point in $X^{an}$ and $\overline{x}$ its corresponding point in $X$.
    We have the following canonical isomorphism $\pi^e_1(X,\overline{x}) \cong \widehat{\pi_1(X^{an},x')}$.
\end{corollary}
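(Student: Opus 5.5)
The corollary follows by passing to automorphism groups of fibre functors, given the generalised Riemann existence theorem just stated (Th\'eor\`eme $XII.\:5.1$ of \cite{SGA}).

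First, I will show that the analytification equivalence $\Fet_X \to \{\text{finite topological covers of } X^{an}\}$, $Y\mapsto Y^{an}$, intertwines the fibre functors. That is, I want a natural isomorphism $\Fib_{\overline{x}} \cong \Fib^{top}_{x'}\circ(\,\cdot\,)^{an}$. This goes exactly as in the curve case treated above. The geometric point $\overline{x}:\Spec\:\mathbb{C}\to X$ has image a closed point, since $X$ is locally of finite type over $\mathbb{C}$, and its residue field is $\mathbb{C}$. For $Y\in\Fet_X$, the geometric fibre $Y\times_X\Spec\:\mathbb{C}$ is a finite disjoint union of copies of $\Spec\:\mathbb{C}$, by the construction on geometric fibres and Theorem \ref{etale_algebras}. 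Its points are therefore exactly the closed points of $Y$ over $x$, which are the points of $(Y^{an})$ over $x'$. Naturality in morphisms $\phi:Y\to Z$ is clear, because both sides send $y\mapsto\phi(y)$.

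Second, an equivalence of categories compatible with fibre functors induces an isomorphism of the automorphism groups of those functors. This gives $\pi_1^e(X,\overline{x}) = \Aut(\Fib_{\overline{x}}) \cong \Aut(\Fib^{top}_{x'}\vert_{\text{finite}})$. I must then identify the right-hand side with $\widehat{\pi_1(X^{an},x')}$. By Theorem \ref{covering_reformulation_2}, $\Fib^{top}_{x'}$ gives an equivalence between finite covers of $X^{an}$ and finite continuous $\widehat{\pi_1}$-sets. So it suffices to show that the automorphism group of the forgetful functor on finite continuous $G$-sets, for $G$ profinite, is $G$ itself.

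That identification is the main obstacle. I would prove it by mimicking the proof that $\pi_1^e$ is an inverse limit. The forgetful functor is pro-represented by the system of quotients $G/U$, where $U$ runs over the open normal subgroups. An automorphism is therefore determined by a compatible family of elements of $\Aut_G(G/U)\cong (G/U)^{op}$, which yields $\varprojlim (G/U) \cong G$. Here I use Lemma \ref{trivial_intersection_normal_subgroups} and Remark \ref{prof_are_top} to see that $G\to\varprojlim G/U$ is an isomorphism. Alternatively, since $X^{an}$ is connected and locally path-connected, and hence $\pi_1^e$ is profinite, I would compare the inverse-limit descriptions directly. The Galois objects on the two sides correspond under the equivalence, with the same automorphism groups, so
\[
\pi_1^e(X,\overline{x})\cong\varprojlim_{Y_k}\Aut(Y_k|X)^{op}\cong\varprojlim_{N}\pi_1(X^{an},x')/N=\widehat{\pi_1(X^{an},x')},
\]
using Corollary \ref{co_link_aut_fund}. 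Here $N$ runs over the finite-index normal subgroups, which correspond to finite Galois covers. Canonicity holds because no choices were made beyond the compatible base points.
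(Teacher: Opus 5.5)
Your proof is correct, and its skeleton matches the paper's. The paper gives no separate proof of this corollary and defers to \cite{SGA}, XII.5.2. Its curve-case discussion just before, however, runs in three steps. First, it uses the analytification equivalence. Second, it builds a natural isomorphism $\Fib_{\overline{x}}\cong\Fib^{top}_{x'}\circ(\,\cdot\,)^{an}$. Third, it identifies the automorphism group of the topological fibre functor.

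You diverge from the paper in the third step. The paper argues that the topological fibre functor is represented by the universal cover, so its automorphisms are $\Aut(\widetilde{X^{an}}|X^{an})$. It then appeals to Theorem~\ref{covering_reformulation_2} to identify this group with $\widehat{\pi_1(X^{an},x')}$. You instead compute the automorphisms of the fibre functor restricted to \emph{finite} covers. You do this via pro-representability by the finite quotients $G/U$, or equivalently by pointed finite Galois covers together with Corollary~\ref{co_link_aut_fund}, and you obtain $\varprojlim G/U$.

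Your version is the one that fits the categories actually in play. The Riemann existence equivalence only concerns finite covers. The universal cover is in general not finite, so it does not represent the restricted functor. Its deck group is $\pi_1(X^{an},x')$ itself rather than the completion; for $X=\mathbb{G}_m$ one gets $\mathbb{Z}$, not $\widehat{\mathbb{Z}}$. The profinite completion appears exactly through the inverse limit over finite Galois objects, which mirrors how the paper proves that $\pi_1^e$ is profinite.

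One point you should supply rather than assert is that $X^{an}$ is connected. Theorem~\ref{covering_reformulation_2} requires this, and Proposition~\ref{analytification_space} does not list connectedness. It follows from the equivalence itself. Suppose $X^{an}=A\sqcup B$ with $A,B$ non-empty, open and closed. Then the finite cover $A\sqcup A\sqcup B\to X^{an}$ would be the analytification of a finite \'etale cover $Y\to X$. Its geometric fibres would have $2$ points over closed points in $A$ and $1$ point over closed points in $B$. This contradicts the fact that the rank of the finite locally free morphism $Y\to X$ is constant on the connected scheme $X$.
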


\begin{remark}\textbf{ }
\label{etale_fund_grp_fin_gen}
\begin{itemize}
    \item[$a)$] One might guess that such a result is quite powerful.
    An immediate application is that every smooth projective scheme over $\mathbb{C}$ has a topologically finitely generated \'etale fundamental group.
    Indeed, when $X^{an}$ is a compact complex manifold, its fundamental group is finitely presented (so also finitely generated) and by Remark \ref{finite_generated_prof_grp}, its profinite completion must be topologically finitely generated.
    \item[$b)$] It is also worthwhile mentioning that a similar result holds for the cohomology groups of coherent sheaves and analytic coherent sheaves. 
\end{itemize}
    
\end{remark}

\subsection{Properties of \'Etale Fundamental Groups}
In this section, we discuss a few interesting properties of the \'etale fundamental group. 
Namely, we prove its insensitivity to the change of geometric base points for connected schemes; this should not be surprising as it was already the case for the topological fundamental group.
Then, as the \'etale fundamental group is an object emanating from algebraic geometry, we will study how it behaves under functorial base change.
And finally, we finish this section by providing useful tools for computing \'etale fundamental groups, relying on powerful exact sequences.\\

\noindent Let us first show the following lemma.
\begin{lemma}
\label{fiber_isomorphism}
    Let $S$ be a connected scheme. Give, two geometric points $\overline{s}:\Spec\:\Lambda\rightarrow S$ and $s':\Spec\:\Lambda'\rightarrow S$, there exists an isomorphism of fibre functors $\Fib_{\overline{s}}\xrightarrow[]{\sim}\Fib_{\overline{s}'}$.
\end{lemma}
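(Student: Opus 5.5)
The plan is to use the pro-representability of fibre functors (Proposition \ref{fib_pro_representable}), in the form given by the cofinal system $\mathcal{K}$ of Galois covers from Lemma \ref{inverse_system_Galois}. For $\overline{s}$, choose for each Galois object $Y\in\Fet_S$ a point $\overline{y}\in\Fib_{\overline{s}}(Y)$ compatibly, so that $\Fib_{\overline{s}}(X)\cong\varinjlim_{(Y,\overline{y})}\Hom(Y,X)$. Do the same for $\overline{s}'$ with points $\overline{y}'$. Both colimits run over the same underlying Galois covers $Y$ and differ only in the distinguished points. An isomorphism of the two pro-objects will therefore give an isomorphism of the fibre functors, natural in $X$.

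First I reduce to the Galois objects. Since $S$ is connected and every connected cover is dominated by a Galois one (Proposition \ref{Galois_embed_etale_cover}), any natural isomorphism on Galois objects that is compatible with morphisms between them extends to all of $\Fet_S$. This extension uses the colimit description. Alternatively, it uses the fact that a connected $X$ is a quotient $Y/H$ (Theorem \ref{Galois_correpondance_etale_cov}), together with $\Fib(Y/H)=\Fib(Y)/H$ as in Proposition \ref{quotient_finite_etale_cover}. Finite disjoint unions are then handled componentwise.

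Next I construct a compatible family of points. For a single Galois $Y$, the evaluation maps $\Aut(Y|S)\to\Fib_{\overline{s}}(Y)$ and $\Aut(Y|S)\to\Fib_{\overline{s}'}(Y)$ are bijections (Remark \ref{remark_Galois}). Choosing $\overline{y}'$ therefore amounts to choosing an identification $\Fib_{\overline{s}}(Y)\cong\Aut(Y|S)\cong\Fib_{\overline{s}'}(Y)$. For a morphism $f:Y\to Y''$ of Galois covers, these identifications are compatible exactly when $f(\overline{y}')=\overline{y}''{}'$. So I must choose, for each Galois $Y$, a point $\overline{y}'\in\Fib_{\overline{s}'}(Y)$ such that all the transition maps $f_{kl}$ of $\mathcal{K}$ respect the chosen points. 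For each $Y_k$, let $\Phi_k\subset\prod_{l\le k}\Fib_{\overline{s}'}(Y_l)$ be the nonempty finite set of compatible choices on the objects below $Y_k$. The sets $\Phi_k$ form an inverse system of nonempty finite sets. This is the step I expect to be the main obstacle. I will handle it by the standard fact that an inverse limit of nonempty finite sets over a directed index set is nonempty, proved via Tychonoff compactness of $\prod_k\Fib_{\overline{s}'}(Y_k)$ with discrete factors. Nonemptiness of each stage holds because $\Fib_{\overline{s}'}(Y_k)\neq\emptyset$ by surjectivity of covers, and any point of $Y_k$ pushes down compatibly to the $Y_l$ with $l\le k$. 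The uniqueness of transition maps (Proposition \ref{proporty_mono_etale_cover}) guarantees that "compatible" is well-defined.

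Given a compatible family $(\overline{y}'_k)$, define $\Fib_{\overline{s}}(X)\to\Fib_{\overline{s}'}(X)$ by sending $\Fib_{\overline{s}}(\phi)(\overline{y}_k)$ to $\Fib_{\overline{s}'}(\phi)(\overline{y}'_k)$ for $\phi:Y_k\to X$. By the pro-representability isomorphism, applied once with base point $\overline{s}$ and once with $\overline{s}'$, both sides are canonically $\varinjlim_{\mathcal{K}}\Hom(Y_k,X)$, and the map is the identity on this colimit. It is therefore well-defined, bijective, and natural in $X$, since naturality of the pro-representing isomorphisms was shown in Proposition \ref{fib_pro_representable}. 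This yields the desired isomorphism $\Fib_{\overline{s}}\xrightarrow{\sim}\Fib_{\overline{s}'}$.
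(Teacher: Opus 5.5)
Your proposal is correct and takes essentially the paper's route. Both arguments identify the two fibre functors with the same pro-object $\varinjlim_{k}\Hom(Y_k,-)$ over the Galois system and obtain the needed compatible data from the non-emptiness of an inverse limit of non-empty finite sets (Proposition \ref{compact_inverse_system}); your compatible points $\overline{y}'_k\in\Fib_{\overline{s}'}(Y_k)$ correspond exactly to the paper's compatible automorphisms $\phi_k\in\Aut(Y_k|S)$ via the simply transitive action of $\Aut(Y_k|S)$ on the fibre, and the re-pointed system $(Y_k,\overline{y}'_k,f_{kl})$, though not literally the one in Proposition \ref{fib_pro_representable}, pro-represents $\Fib_{\overline{s}'}$ by the same proof (rigidity for injectivity, transitivity of $\Aut(Y_k|S)$ on $\Fib_{\overline{s}'}(Y_k)$ for surjectivity).
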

\begin{proof}
    By proposition \ref{fib_pro_representable}, both fibre functors are pro-representable. And from our discussion on the \'etale fundamental group we know that the representing inverse system have the same index set and objects $Y_k\in \mathcal{K}$ as defined in Lemma \ref{inverse_system_Galois}, only the transition maps may be different. 
    We denote them by $f_{ij}$ and $g_{ij}$ respectively.
    Constructing an isomorphism between the inverse systems $(Y_k,f_{ij})$ and $(Y_k,g_{ij})$, consists in defining a system of automorphisms $\phi_k\in\Aut(Y_k|S)$ mapping $f_{ij}$ onto $g_{ij}$ thus actually proving the lemma (since both functors are pro-representable).\\
    Assume that we have $i\leq j$ in $|\mathcal{K}|$ an automorphism $\phi_j\in \Aut(Y_j|S)$ and consider the fibre points $\overline{y}_i\in \Fib_{\overline{s}}(P_i)$ and $\overline{y}_j\in\Fib_{\overline{s}}(Y_j)$.
    Thus, by construction we have, $\Fib_{\overline{s}}(f_{ij})(\overline{y}_j) = \overline{y}_i$ and let us set $\overline{y}_i':= \Fib_{\overline{s}}(g_{ij})(\phi_j(\overline{y}_j))$.
    Since $Y_i$ is Galois, from Remark \ref{remark_Galois} $b)$, there exist a unique $\phi_i\in\Aut(Y_i|S)$ mapping $\overline{y}_i\mapsto \overline{y}_i'$.
    We can then apply Proposition \ref{proporty_mono_etale_cover} with $\overline{z} = \overline{y}_i$, $h_1= g_{ij}\circ \phi_j$ and $h_2=\phi_i\circ f_{ij}$, which then implies that the following diagram commutes 
    \[
    \begin{tikzcd}
        Y_j \arrow[r,"\phi_j"] \arrow[d,"f_{ij}"'] & Y_j\arrow[d,"g_{ij}"]\\
        Y_i \arrow[r, "\phi_i"'] & Y_i
    \end{tikzcd}
    \]
    Finally, we define a map $\psi_{ij} : \Aut(Y_j|S)\rightarrow \Aut(Y_i|S)$ to be the map sending each $\phi_j\in\Aut(Y_j|S)$ to the $\phi_i\in\Aut(Y_i|S)$ in the above diagram.
    It is a map of finite set (again by Corollary \ref{no_fixed_points}), thus we obtain an inverse system of finite sets : $(\Aut(Y_k|S), \psi_{ij})$ and by Proposition \ref{compact_inverse_system}, the inverse limit of such an inverse system is non-empty.
    Any element of it defines the required isomorphism.
\end{proof}

\begin{proposition}
    Let $S$ be a connected scheme. For any two geometric points $\overline{s}:\Spec\:\Lambda\rightarrow S$ and $\overline{s}':\Spec\:\Lambda'\rightarrow S$, there exists a continuous isomorphism of profinite groups $\pi^e_1(S,\overline{s}')\xrightarrow{\sim} \pi^e_1(S,\overline{s})$.
\end{proposition}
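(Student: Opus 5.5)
The proof will go through Lemma \ref{fiber_isomorphism}, which gives an isomorphism of fibre functors $\lambda:\Fib_{\overline{s}}\xrightarrow{\sim}\Fib_{\overline{s}'}$ (a natural transformation whose components $\lambda_X:\Fib_{\overline{s}}(X)\rightarrow\Fib_{\overline{s}'}(X)$ are bijections, natural in $X\in\Fet_S$). Since $\pi^e_1(S,\overline{s})=\Aut(\Fib_{\overline{s}})$ by definition, conjugation by $\lambda$ is the natural candidate: define
$$\Phi:\pi^e_1(S,\overline{s}')\rightarrow\pi^e_1(S,\overline{s}),\qquad \omega\mapsto \lambda^{-1}\circ\omega\circ\lambda,$$
where the composite is taken componentwise, $(\lambda^{-1}\omega\lambda)_X=\lambda_X^{-1}\circ\omega_X\circ\lambda_X$.

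The first step is to check that $\Phi$ is well defined and a group isomorphism. Naturality of $\lambda^{-1}\omega\lambda$ follows by pasting the naturality squares of $\lambda$, $\omega$ and $\lambda^{-1}$ for any morphism $f:X\rightarrow Y$ in $\Fet_S$. The map $\Phi$ is a homomorphism because $\lambda^{-1}\omega\omega'\lambda=(\lambda^{-1}\omega\lambda)(\lambda^{-1}\omega'\lambda)$. It is bijective, with inverse $\eta\mapsto\lambda\eta\lambda^{-1}$.

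The remaining step, which is the main point, is continuity. Via the description $\pi_1^e(S,\overline{s})\cong\varprojlim_{\mathcal{K}}\Aut(Y_k|S)^{op}$, a basis of open neighbourhoods of $1$ is given by the kernels of the projections, i.e. by stabilisers of finite sets of points, or equivalently by the subgroups $U_X=\{\eta\mid \eta_X=\id\}$ for $X\in\Fet_S$ (each projection to $\Aut(Y_k|S)^{op}$ is determined by the action on $\Fib_{\overline{s}}(Y_k)$, which is free and transitive). Now $\Phi(\omega)_X=\id$ if and only if $\omega_X=\id$, so $\Phi^{-1}(U_X)=U'_X$, the corresponding open subgroup of $\pi^e_1(S,\overline{s}')$. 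Hence $\Phi$ is continuous. The same argument shows $\Phi^{-1}$ is continuous; alternatively, a continuous bijection between compact Hausdorff spaces (Theorem \ref{profinite_groups_other}) is a homeomorphism. Thus $\Phi$ is an isomorphism of profinite groups.

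The main obstacle is checking carefully that the topology on $\Aut(\Fib_{\overline{s}})$ transported from the inverse limit agrees with the topology generated by the $U_X$. I would handle this by noting that the bijection $\psi_k:\Aut(Y_k|S)\rightarrow\Fib_{\overline{s}}(Y_k)$ identifies $\ker(\pr_k)$ with $U_{Y_k}$. Since every $X$ is dominated by Galois objects (Lemma \ref{making_Galois_cofinal}), each $U_X$ contains some $U_{Y_k}$, so the two families generate the same topology. The isomorphism $\Phi$ is non-canonical, since it depends on the choice of $\lambda$ from Lemma \ref{fiber_isomorphism}, much as in Proposition \ref{fund_grou_base_pt}.
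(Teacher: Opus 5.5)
Your proposal is correct and follows the paper's route: take the isomorphism of fibre functors from Lemma \ref{fiber_isomorphism} and conjugate automorphisms by it. The only difference is the continuity step. The paper appeals briefly to the compatible system of automorphisms $\phi_k\in\Aut(Y_k|S)$ used to build $\lambda$. Your argument instead uses the open subgroups $U_X=\{\eta\mid\eta_X=\id\}$, which conjugation carries to one another. It is intrinsic, works for any isomorphism of fibre functors, and makes precise what the paper only sketches.
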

\begin{proof}
    From Lemma \ref{fiber_isomorphism}, we have an isomorphism $\varphi : \Fib_{\overline{s}}\xrightarrow{\sim}\Fib_{\overline{s}'}$ of fibre functors, which induces an isomorphism of their automorphism groups via $\phi\mapsto \varphi^{-1}\circ\phi\circ\varphi$ for $\phi$ an automorphism.
    Continuity of this isomorphism with respect to the profinite group structure follows from our construction in the previous proof. 
    Indeed, in the discrete topology, maps in $(\Aut(Y_k|S),\rho_{ij})$ are continuous, and since they satisfy the compatibility condition with the transition morphisms, they actually yield a continuous mapping between the inverse systems.
\end{proof}

\noindent Our analogy between the \'etale fundamental group and its topological counterpart checks out, on a connected scheme, the \'etale fundamental group is independent of the choice of base point.

\subsubsection{Base Change}

In order to study how the \'etale fundamental group behaves under base change, we need to define the correct notion of base change, such that it keeps track of the appropriate geometric points.
\begin{construction}
    Let $S$ and $S'$ be two connected schemes, equipped with the geometric points $\overline{s}:\Spec\:\Lambda\rightarrow S$ and $\overline{s}':\Spec\:\Lambda \rightarrow S'$ respectively. 
    Now consider a morphism $\phi$ between $S'$ and $S$ over $\Spec\:\Lambda$ : 
    \[
    \begin{tikzcd}[column sep=small]
        S'\arrow[rr,"\phi"] && S\\
        &\Spec\:\Lambda \arrow[ul, "\overline{s}'"]\arrow[ur, "\overline{s}"']
    \end{tikzcd}
    \]
    Then $\phi$ induces a \textit{base change functor} $\Bc_{S,S'} : \Fet_S\rightarrow \Fet_{S'}$ sending $X\in\Fet_S$ to $X\times_SS'\in\Fet_{S'}$, and a morphism of finite \'etale covers $X\rightarrow Y$ to the induced morphism $X\times_SS'\rightarrow Y\times_SS'$ 
    (this makes sense by Proposition \ref{base_change_etale} and the functor conditions are immediately verified).
    The condition $\phi\circ\overline{s}' = \overline{s}$ gives us the following equality, $\Fib_{\overline{s}}= \Fib_{\overline{s}'}\circ \Bc_{S,S'}$ and means that the geometric point $\overline{s}'$ lies above $\overline{s}$.
    As a consequence of this, every automorphism of $\Fib_{\overline{s}}$ is induced by an automorphism of $\Fib_{\overline{s}'}$ via composition with the functor $\Bc_{S,S'}$, and thus we obtain a map 
    $$\phi_* : \pi^e_1(S',\overline{s}')\rightarrow \pi^e_1(S,\overline{s}).$$
    The map $\phi_*$ is actually a continuous homomorphism of profinite groups. 
    The homomorphism part is easily verified since the morphism is defined functorially.
    And it is continuous because $\phi_*$ is compatible with finite quotients of the inverse system. 
    Indeed, any canonical projection $\pi^e_1(S,\overline{s})\rightarrow \Aut(Y_k|S)$ for $Y_k$ a Galois finite \'etale cover factors through a finite quotient of $\pi^e_1(S',\overline{s}')$.
\end{construction}

\noindent What we have now is a morphism $\phi_*$ of base change for \'etale fundamental groups. 
We study when this morphism is injective or surjective and we will fit such morphisms into an exact sequence.

\begin{proposition}
\label{image_base_change_induce}
\textbf{ }
\begin{itemize}
    \item[$a)$] The map $\phi_*$ is trivial if and only if for every connected finite \'etale cover $X\rightarrow S$, the base change $\Bc_{S,S'}(X)$ is a trivial cover.
    \item[$b)$] The map $\phi_*$ is surjective if and only if for every connected finite \'etale cover $X\rightarrow S$, the base change $\Bc_{S,S'}$ is connected as well.
\end{itemize}
\end{proposition}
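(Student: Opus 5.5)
The plan is to translate both statements, via the main theorem (Theorem \ref{main_theorem_etale_fundamental_group}), into statements about finite continuous $G$-sets, where $G=\pi^e_1(S,\overline{s})$ and $G'=\pi^e_1(S',\overline{s}')$. The key input is the identity $\Fib_{\overline{s}}=\Fib_{\overline{s}'}\circ\Bc_{S,S'}$ from the construction. It shows that for $X\in\Fet_S$ the $G'$-set $\Fib_{\overline{s}'}(\Bc_{S,S'}(X))$ is exactly the $G$-set $\Fib_{\overline{s}}(X)$, with $G'$ acting through $\phi_*$. Hence restriction of scalars along $\phi_*$ corresponds, under the two equivalences, to the base change functor.

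For $a)$, I would argue as follows. Under the equivalence, a trivial cover of $S'$ is one on which $G'$ acts trivially. Indeed, trivial covers $\bigsqcup S'$ have fibre a disjoint union of points with trivial action. Conversely, a trivial action means every orbit is a singleton, so each connected component has degree one, and a degree-one finite \'etale cover is an isomorphism.

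So $\Bc_{S,S'}(X)$ is trivial if and only if $\phi_*(G')$ acts trivially on $\Fib_{\overline{s}}(X)$. If $\phi_*$ is trivial, this holds for every $X$. Conversely, suppose it holds for all connected $X$, hence for all $X$ by decomposing into components. The sets $G/V$, with $V$ an open normal subgroup, are fibres of Galois covers by essential surjectivity. Triviality on $G/V$ gives $\phi_*(G')\subset V$. By Lemma \ref{trivial_intersection_normal_subgroups}, the intersection of all such $V$ is trivial, so $\phi_*(G')=1$.

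For $b)$, connectedness corresponds to transitivity. So $\Bc_{S,S'}(X)$ is connected if and only if $\phi_*(G')$ acts transitively on $\Fib_{\overline{s}}(X)$. If $\phi_*$ is surjective, transitivity of $G$ passes to $\phi_*(G')$. Conversely, take $X$ connected with fibre $G/U$ for an arbitrary open subgroup $U$. Transitivity of $\phi_*(G')$ on $G/U$ means $\phi_*(G')U=G$.

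The main obstacle is deducing surjectivity from this. First I would note that $\phi_*(G')$ is closed, being the continuous image of a compact group in a Hausdorff group. Then I would apply the condition with $U=V$ ranging over open normal subgroups, so that $\phi_*(G')V=G$ for all $V$. Thus $\phi_*(G')$ is dense, and by closedness $\phi_*(G')=G$. Alternatively, the density step could go through Lemma \ref{lemma_profinite} $a)$, using that the closure of $\phi_*(G')$ equals the intersection of the open subgroups containing it. Any such open subgroup $U$ must equal $G$, since $\phi_*(G')U=U$.
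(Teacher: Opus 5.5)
Your proposal is correct and follows essentially the paper's route. Both arguments use Theorem \ref{main_theorem_etale_fundamental_group} together with $\Fib_{\overline{s}}=\Fib_{\overline{s}'}\circ\Bc_{S,S'}$ to turn base change into restriction of the $G$-action along $\phi_*$, then use open (normal) subgroups for $a)$, and closedness of $\im\phi_*$ plus the open subgroups containing it for $b)$. The only difference is that you prove the converses directly, via Lemma \ref{trivial_intersection_normal_subgroups} and the identity $\phi_*(G')U=G$, rather than by contraposition with Lemma \ref{lemma_profinite}; this also avoids the paper's imprecise claim that $G'$ acts trivially on $G/U$ when $U$ is not normal.
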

\begin{proof} \textbf{ }
\begin{itemize}
    \item[$a)$] Suppose the map $\phi_*$ is trivial. 
    We have, for any finite \'etale connected cover $X$ of $S$ a corresponding finite $\pi^e_1(S,\overline{s})$-set, $F:= \Fib_{\overline{s}}(X)$.
    The finite set corresponding to the base change $\Bc_{S,S}(X)$ which is now a finite \'etale cover of $S'$ is again $F$ (by the above construction) and has a $\pi^e_1(S',\overline{s}')$-action, given by pull-back via $\phi_*$.
    Since $\phi_*$ is trivial, for every $x\in F$, $g\cdot x =x$ for all $g\in\pi^e_1(S',\overline{s}')$, so the action of $\pi^e_1(S',\overline{s}')$ on $F$ is trivial.
    And from the main Theorem \ref{main_theorem_etale_fundamental_group}, a finite \'etale cover is trivial if and only if it corresponds to a finite set with trivial action of the fundamental group. \\
    For the other direction, assume $\im \phi_*$ is non-trivial, and choose an open subgroup $U\subset\pi^e_1(S,\overline{s})$, not containing the whole of $\im \phi_*$.
    From Theorem \ref{Galois_correpondance_etale_cov}, we can obtain every finite \'etale cover via correspondence with a coset space $\pi^e_1(S,\overline{s}) / U$.
    Since $U$ does not contain the whole of $\im\phi_*$, the action of $\pi^e_1(S',\overline{s}')$ on $\pi^e_1(S,\overline{s})/U$ induced by $\phi_*$ is non-trivial.
    Thus the finite \'etale cover corresponding to the above coset space pulls back to a non-trivial cover of $S'$.

    \item[$b)$] Suppose $\phi_*$ is surjective. 
    Similarly as above, any connected finite \'etale cover $X$ of $S$, corresponds to a finite set $F$ which has a transitive $\pi^e_1(S,\overline{s})$-action and by base change,that same finite set has a $\pi^e_1(S',\overline{s}')$-action given by pull-back via $\phi_*$.
    But since $\phi_*$ is surjective, $\forall \:x,y\in F$ and for any $g\in\pi^e_1(S,\overline{s})$ such that $g\cdot x = y$ (transitivity), there exists $\sigma\in\pi^e_1(S',\overline{s}')$ such that $\sigma\cdot x=\phi_*(\sigma)\cdot x= g\cdot x = y$. 
    In that last equation, the first equality is obtained via the fibred functors equality (with composition of the base change functor from the above construction). 
    Indeed, $\phi_*(\sigma)$ acts on $\Fib_{\overline{s}}(X)$ via $\sigma$ acting on $\Fib_{\overline{s}'}(\Bc_{S,S'}(X))$.
    So the corresponding base change finite \'etale cover of $S'$ is also connected.\\
    And finally, again suppose that $\phi_*$ is not surjective.
    By compactness of profinite groups and Corollary \ref{closed_profinite_subgroup}, $\im\phi_*$ is a proper closed subgroup of $\pi^e_1(S,\overline{s})$.
    By Lemma \ref{lemma_profinite}, we can find an open subgroup $U\subset \pi^e_1(S,\overline{s})$ containing the closed subgroup $\im\phi_*$.
    Then $\pi^e_1(S',\overline{s}')$ acts trivially on $\pi^e_1(S,\overline{s})/U$ via $\phi_*$, which means that the connected cover corresponding to $\pi^e_1(S,\overline{s})$ pulls back to a trivial cover of $S'$ (different from $S'$) so not connected.
\end{itemize}
\end{proof}

\begin{remark}
\label{subgroup_stabiliser_etale}
    Observe that by Theorem \ref{main_theorem_etale_fundamental_group}, identifying a finite continuous transitive $\pi^e_1(S,\overline{s})$-set $F$ corresponds to choosing a base point $\overline{x}\in F$ (by transitive action). 
    Indeed, connected finite \'etale covers correspond to transitive $\pi^e_1(S,\overline{s})$-set,s and by Theorem \ref{Cameron_1_3}, any transitive $G$-set is isomorphic to the coset space $G/U$ for some subgroup $U\subset G$, where $U$ is actually the stabiliser of an element $\overline{x}\in F$.
    Also, notice here that the stabiliser $U$ is actually an open subgroup since the action is continuous (Lemma \ref{continuous_open_stabiliser}). 
\end{remark}

\begin{proposition}
\label{base_chage_image_last}
    Let $U\subset \pi^e_1(S,\overline{s})$ be an open subgroup, and let $X$ be the connected finite \'etale cover corresponding to the coset $\pi^e_1(S,\overline{s})/U$, let it have the base point $\overline{x}$ as above as well.\\
    The subgroup $U$ contains the image of $\phi_*$ if and only if the finite \'etale cover $\Bc_{S,S'}(X)$ of $S'$ has a section $S'\rightarrow \Bc_{S,S'}(X)$ sending $\overline{s}$ to $\overline{x}$.
\end{proposition}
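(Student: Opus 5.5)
The plan is to pass everything through the equivalence of Theorem \ref{main_theorem_etale_fundamental_group}, applied over $S'$, and to reduce the statement to a fixed-point condition on finite $\pi^e_1(S',\overline{s}')$-sets. By the construction of the base change functor, $\Fib_{\overline{s}'}(\Bc_{S,S'}(X)) = \Fib_{\overline{s}}(X) \cong \pi^e_1(S,\overline{s})/U$ as sets. Here the point $\overline{x}$ corresponds to the trivial coset $U$, by Remark \ref{subgroup_stabiliser_etale}, and $\pi^e_1(S',\overline{s}')$ acts by pulling back along $\phi_*$. The first step is therefore to check that the action of $\pi^e_1(S',\overline{s}')$ on $\Fib_{\overline{s}'}(\Bc_{S,S'}(X))$ is exactly $\sigma\cdot gU = \phi_*(\sigma)gU$. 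This follows from the equality $\Fib_{\overline{s}} = \Fib_{\overline{s}'}\circ\Bc_{S,S'}$ and the definition of $\phi_*$.

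The second step identifies sections with $\pi^e_1(S',\overline{s}')$-equivariant maps. The fibre functor sends the trivial cover $S'\to S'$ to a one-point set with trivial action. By full faithfulness in Theorem \ref{main_theorem_etale_fundamental_group}, morphisms $S'\to\Bc_{S,S'}(X)$ over $S'$ are then in bijection with equivariant maps from the point to $\pi^e_1(S,\overline{s})/U$, that is, with points fixed by $\pi^e_1(S',\overline{s}')$. Such a section sends $\overline{s}'$ to $\overline{x}$ exactly when the corresponding equivariant map picks out the coset $U$. One should note that the statement's ``sending $\overline{s}$ to $\overline{x}$'' means the geometric point $\overline{s}'$, which lies over $\overline{s}$. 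So a section sending $\overline{s}'$ to $\overline{x}$ exists if and only if $U$ is fixed by every $\phi_*(\sigma)$, i.e.\ $\phi_*(\sigma)U=U$ for all $\sigma$.

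The final step is elementary: $\phi_*(\sigma)U = U$ holds if and only if $\phi_*(\sigma)\in U$, so the fixed-point condition is equivalent to $\im\phi_*\subset U$. Uniqueness of the section follows from Proposition \ref{proporty_mono_etale_cover}, though it is not needed.

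The main obstacle is the second step. Remark \ref{remark_Galois} and the main theorem are stated for covers, and $S'\to S'$ must be recognised as the object of $\Fet_{S'}$ whose fibre is a singleton with trivial action. One also has to make sure that fullness produces an actual section, i.e.\ a morphism over $S'$, rather than just a map of fibres. I would handle this by citing full faithfulness directly, with the identity $S'\to S'$ as source, and then using faithfulness to see that the composite with the projection is $\id_{S'}$.
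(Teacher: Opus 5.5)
Your proof is correct and follows essentially the same route as the paper. Both arguments reduce $\im\phi_*\subset U$ to $\overline{x}$ being fixed by $\pi^e_1(S',\overline{s}')$ acting through $\phi_*$, and then convert fixed points into sections using the classification over $S'$. The only difference is in that last step: the paper observes that the connected component of $X\times_S S'$ through $\overline{x}$ is then a one-sheeted cover, hence isomorphic to $S'$, and leaves the converse implicit. You instead apply full faithfulness of $\Fib_{\overline{s}'}$ with the trivial cover $S'$ as source, which gives both directions at once. Your worry about getting a genuine section is moot, since morphisms in $\Fet_{S'}$ are by definition morphisms over $S'$.
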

\begin{proof}
    We have $\im \phi_*\subset U$ if and only if $\pi^e_1(S',\overline{s}')$ acts trivially on $\overline{x}$ via $\phi_*$ (by the previous Proposition \ref{image_base_change_induce}).
    Then this implies that the whole connected component of $\overline{x}$ in $X\times_SS'$ is fixed by $\pi^e_1(S',\overline{s}')$ (by transitivity).
    So this component is actually a one-sheeted trivial covering of $S'$. Thus mapped isomorphically onto $S'$ by $X\times_SS'\rightarrow S'$.
\end{proof}

\noindent We can now prove two propositions about the kernel of the map $\phi_*$.
\begin{proposition}
\label{base_change_kernel}
    Let $U'\subset \pi^e_1(S',\overline{s})$ be an open subgroup, and let $X'$ be the finite \'etale cover of $S'$ corresponding to the coset space $\pi^e_1(S',\overline{s}')/U'$.\\
    The subgroup $U'$ contains the kernel of $\phi_*$ if and only if there exists a finite \'etale cover $X$ of $S$ and a morphism $X_i\rightarrow X'$ over $S$, where $X_i$ is a connected component of $X\times_SS'$.
\end{proposition}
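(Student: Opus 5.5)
This is the standard statement from Szamuely, Proposition 5.5.4: $U'$ contains $\ker\phi_*$ if and only if there is a finite étale cover $X$ of $S$ and a morphism over $S'$ from a connected component $X_i$ of $X\times_SS'$ to $X'$. I plan to translate everything into group theory via Theorem \ref{main_theorem_etale_fundamental_group} and then argue with profinite topology. Write $G=\pi^e_1(S,\overline{s})$, $G'=\pi^e_1(S',\overline{s}')$ and $\phi=\phi_*$.

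By the construction of $\Bc_{S,S'}$ and the equality $\Fib_{\overline{s}}=\Fib_{\overline{s}'}\circ\Bc_{S,S'}$, the $G'$-set attached to $X\times_SS'$ is $\Fib_{\overline{s}}(X)$ with $G'$ acting through $\phi$. A connected component $X_i$ then corresponds to one $G'$-orbit in $\Fib_{\overline{s}}(X)$. Pick a point $\overline{x}$ in that orbit, let $V$ be its stabiliser in $G$, which is open by Lemma \ref{continuous_open_stabiliser}. The stabiliser in $G'$ is then $\phi^{-1}(V)$, so $X_i$ corresponds to $G'/\phi^{-1}(V)$. By full faithfulness in Theorem \ref{main_theorem_etale_fundamental_group}, a morphism $X_i\rightarrow X'$ is the same as a $G'$-equivariant map $G'/\phi^{-1}(V)\rightarrow G'/U'$. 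Using Theorem \ref{Cameron_1_3}, such a map exists if and only if some conjugate $gU'g^{-1}$ contains $\phi^{-1}(V)$. So the statement reduces to the following claim: $\ker\phi\subset U'$ if and only if $\phi^{-1}(V)\subset gU'g^{-1}$ for some open $V\subset G$ and some $g\in G'$.

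For the direction ($\Leftarrow$): $\ker\phi\subset\phi^{-1}(V)\subset gU'g^{-1}$. Since $\ker\phi$ is normal, conjugating by $g^{-1}$ gives $\ker\phi\subset U'$.

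For the direction ($\Rightarrow$), which I expect to be the main obstacle, I first replace $U'$ by the finite intersection of its conjugates. This is an open normal subgroup $N'\subset U'$ that still contains $\ker\phi$, and it suffices to find an open $V$ with $\phi^{-1}(V)\subset N'$. Because $G'$ is compact and $G$ is Hausdorff, $\phi$ is closed. Hence $\phi(N')$ is closed in $G$, and $\phi(G')$ is profinite by Corollary \ref{closed_profinite_subgroup}. Moreover $\phi(N')$ has finite index in $\phi(G')$, so it is open in $\phi(G')$ by Corollary \ref{open_subgroup_profinite}. Lemma \ref{lemma_profinite} $b)$ applied to $H=\phi(G')$ and $V'=\phi(N')$ then gives an open $V\subset G$ with $V\cap\phi(G')=\phi(N')$. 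Finally, if $\phi(h)\in V$, then $\phi(h)=\phi(n)$ for some $n\in N'$, so $h\in n\ker\phi\subset N'$. This yields $\phi^{-1}(V)\subset N'\subset U'$.

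Geometrically, I take $X$ to be the connected cover corresponding to $G/V$ and $X_i$ the component through $\overline{x}$; the equivariant map $G'/\phi^{-1}(V)\rightarrow G'/U'$ with $g=1$ gives the required morphism. The points to check carefully are the closedness of $\phi(N')$ and that "over $S$" in the statement should read "over $S'$".
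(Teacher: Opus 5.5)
Your proposal is correct and follows the paper's route (itself Szamuely's). You use Theorem \ref{main_theorem_etale_fundamental_group} to pass to stabilisers; for the converse you note that the stabiliser $\phi_*^{-1}(V)$ of a point of $X\times_SS'$ contains $\ker\phi_*$; and for the forward direction you apply Lemma \ref{lemma_profinite} $b)$ to the closed image $\phi_*(\pi^e_1(S',\overline{s}'))$. Your detour through the normal core $N'$ is harmless but unnecessary: applying the lemma directly to $\phi_*(U')$, as the paper does, gives $\phi_*^{-1}(V)=U'\ker\phi_*=U'$, so the component $X_i$ is in fact isomorphic to $X'$.
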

\begin{proof}
    Suppose there exists an \'etale cover $X$ of $S$ and a morphism $X_i\rightarrow X'$ as above, and let $U\subset \pi^e_1(S,\overline{s})$ be an open subgroup such that $X\cong \pi^e_1(S,\overline{s})/U$.
    By choosing an appropriate geometric base point $\overline{x}$ in $\Fib_{\overline{s}'}(X)$, we may identify the component $X_i$ of $X\times_SS'$ with the coset space $\pi^e_1(S',\overline{s}')/U''$ for some open subgroup $U''\subset \pi^e_1(S',\overline{s}')$.
    Importantly, note that $U''$ contains $\ker\phi_*$ since $\ker\phi_*$ stabilises $\overline{x}$ via pull-back of $\phi_*$ (by Remark \ref{subgroup_stabiliser_etale}).
    By hypothesis, there is a morphism $X_i\rightarrow X$, so we have the inclusion $U''\subset U'$, (this is Theorem \ref{Galois_correpondance_etale_cov}) so $U'$ contains $\ker\phi_*$ as well.\\
    Conversely, assume that the open subgroup $U'$ contains $\ker\phi_*$. 
    Since $H=\phi_*(\pi^e_1(S',\overline{s}'))$ is a closed subgroup of $\pi^e_1(S,\overline{s})$ ($\phi_*$ is continuous and so $H$ is compact), and $V'=\phi_*(U')$ is moreover open in $H$ (being compact and of finite index), so we may apply Lemma \ref{lemma_profinite}.
    That way, we find an open subgroup $V\subset \pi^e_1(S,\overline{s})$ with $V\cap H = V'$, which gives rise to a connected finite \'etale cover $X$ of $S$.
    Again consider a connected component $X_i$ of $X\times_SS'$ corresponding to some coset space $\pi^e_1(S',\overline{s}')/U''$ for $U''$ an open subgroup, and there is a morphism $X_i\rightarrow X$ if and only if $U''\subset U'$ by the classification of finite \'etale covers.
    And since both subgroups contain $\ker\phi_*$, the required inclusion is equivalent to $\phi_*(U'')\subset \phi_*(U')$, which is true by construction.
\end{proof}

\begin{corollary}
\label{criteria_injective_morphism_fund_group}
    The map $\phi_*$ is injective if and only if for every connected finite \'etale cover $X'$ of $S$, there exists a finite \'etale cover $X$ of $S$ and a morphism $X_i\rightarrow X'$ over $S'$, where $X_i$ is a connected component of $X\times_SS'$.
\end{corollary}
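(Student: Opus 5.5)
The plan is to deduce the corollary from Proposition \ref{base_change_kernel}, using that in a profinite group the open subgroups separate points (Lemma \ref{lemma_profinite} $a)$ and Lemma \ref{trivial_intersection_normal_subgroups}). Here $X'$ should be read as a connected finite \'etale cover of $S'$, and the morphism $X_i\rightarrow X'$ as a morphism over $S'$. By Theorem \ref{main_theorem_etale_fundamental_group} together with Remark \ref{subgroup_stabiliser_etale}, connected finite \'etale covers $X'$ of $S'$ with a chosen geometric point correspond to the coset spaces $\pi^e_1(S',\overline{s}')/U'$, where $U'$ ranges over the open subgroups of $\pi^e_1(S',\overline{s}')$. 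The statement therefore becomes: $\phi_*$ is injective if and only if every open subgroup $U'$ contains $\ker\phi_*$.

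For the forward direction, suppose $\phi_*$ is injective. Then $\ker\phi_*$ is trivial, so it is contained in every open subgroup $U'$. Proposition \ref{base_change_kernel} then gives, for each connected $X'$, a finite \'etale cover $X$ of $S$ and a morphism $X_i\rightarrow X'$ from a connected component of $X\times_SS'$.

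For the converse, assume the covering condition holds for every connected $X'$. Proposition \ref{base_change_kernel} then shows that every open subgroup $U'$ contains $\ker\phi_*$. Hence $\ker\phi_*$ lies in the intersection of all open subgroups, and in particular of all open normal subgroups, of the profinite group $\pi^e_1(S',\overline{s}')$. By Lemma \ref{trivial_intersection_normal_subgroups} this intersection is trivial, so $\phi_*$ is injective.

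I expect the only delicate point to be the bookkeeping in applying Proposition \ref{base_change_kernel}. Its hypotheses are stated for a fixed open subgroup $U'$ and its associated cover, so I must check that every connected $X'$ actually arises this way: this requires choosing a geometric point of $X'$ over $\overline{s}'$, and transitivity makes the choice harmless. I must also check that the base points in that proposition are compatible. Apart from this, the argument is a formal consequence of the earlier results.
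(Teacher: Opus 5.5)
Your proposal is correct and is essentially the paper's argument. Via Proposition \ref{base_change_kernel}, the covering condition for every connected finite \'etale $X'\to S'$ is equivalent to $\ker\phi_*\subset U'$ for every open subgroup $U'\subset\pi^e_1(S',\overline{s}')$, and Lemma \ref{trivial_intersection_normal_subgroups} then forces $\ker\phi_*$ to be trivial. You also make explicit three points the paper's proof leaves implicit: the easy forward direction, the reading of $S$ as $S'$, and why the choice of base point on $X'$ is harmless (the kernel is normal).
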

\begin{proof}
    By Lemma \ref{trivial_intersection_normal_subgroups}, the intersection of every normal open subgroup of $\pi^e_1(S',\overline{s}')$ is trivial.
    And by the above proposition, for any finite \'etale cover $X$ of $S$, there exists a morphism $X_i\rightarrow X'$ over $S'$, where $X_i$ is a connected component of the base change $\Bc_{S,S'}(X)$ if and only if $\ker \phi_* \subset U'$ for $U'$ an open subgroup of $\pi^e_1(S'\overline{s}')$.
    Thus $\ker\phi_*$ being included in all of those open normal subgroups of the above Lemma we refereed, it must be trivial as well.
\end{proof}

\noindent Notice that in the above corollary, if every connected finite \'etale cover $X'$ of $S'$ is actually a base change $\Bc_{S,S'}(X)$ of a finite \'etale cover $X$ of $S$, then the map $\phi: S'\rightarrow S$ is injective.

\begin{corollary}
\label{exact_sequence_middle}
    Let $S''\xrightarrow{\psi} S'\xrightarrow{\phi}$ be a sequence of morphisms of connected schemes, and let $\overline{s}$, $\overline{s}'$ and $\overline{s}''$ be geometric points of $S$, $S'$ and $S''$ respectively such that $\overline{s}= \phi\circ\overline{s}'$ ans $\overline{s}=\psi\circ \overline{s}''$. The sequence 
    $$\pi^e_1(S'',\overline{s}'')\rightarrow \pi^e_1(S',\overline{s}')\rightarrow \pi^e_1(S,\overline{s})$$
    is exact if and only if the two following  conditions are satisfied :
    \begin{itemize}
        \item[$a)$] For every connected cover $X$ of $S$, the base change $\Bc_{S,S''}(X)$ induced by the composition $\phi\circ\psi$ is a trivial cover of $S''$.
        \item[$b)$] Given a connected finite \'etale cover $X'$ of $S'$ such that $\Bc_{S',S''}(X')$ has a section over $S''$, there is a connected finite \'etale cover $X$ of $S$ and a morphism from a connected component of $\Bc_{S,S''}(X)$ onto $X'$ over $S'$.
    \end{itemize}
\end{corollary}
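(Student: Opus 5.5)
Exactness at $\pi^e_1(S',\overline{s}')$ means $\im\psi_*=\ker\phi_*$. I will split this into the inclusion $\im\psi_*\subseteq\ker\phi_*$, which is equivalent to condition $a)$, and the inclusion $\ker\phi_*\subseteq\im\psi_*$, which is equivalent to condition $b)$. Throughout I use the base change compatibility $\Bc_{S',S''}\circ\Bc_{S,S'}\cong\Bc_{S,S''}$ and the functoriality $(\phi\circ\psi)_*=\phi_*\circ\psi_*$. Both follow from $\Fib_{\overline{s}}=\Fib_{\overline{s}''}\circ\Bc_{S,S''}$ and the construction of the induced maps.

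For the first inclusion, note that $\im\psi_*\subseteq\ker\phi_*$ holds exactly when $(\phi\circ\psi)_*$ is trivial. By Proposition \ref{image_base_change_induce} $a)$ applied to the composite $\phi\circ\psi:S''\rightarrow S$, this is equivalent to condition $a)$.

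For the second inclusion, I use that $\im\psi_*$ is a closed subgroup, being a compact image under a continuous map. By Lemma \ref{lemma_profinite} $a)$ it is the intersection of the open subgroups $U'\subset\pi^e_1(S',\overline{s}')$ containing it. Hence $\ker\phi_*\subseteq\im\psi_*$ holds if and only if every open $U'\supseteq\im\psi_*$ also contains $\ker\phi_*$. I then translate both sides through the Galois correspondence of Theorem \ref{main_theorem_etale_fundamental_group}. Let $X'$ be the connected cover corresponding to $\pi^e_1(S',\overline{s}')/U'$, with base point $\overline{x}'$. Proposition \ref{base_chage_image_last}, applied to $\psi$, says that $U'\supseteq\im\psi_*$ exactly when $\Bc_{S',S''}(X')$ has a section through $\overline{x}'$. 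Proposition \ref{base_change_kernel}, applied to $\phi$, says that $U'\supseteq\ker\phi_*$ exactly when there is a finite \'etale cover $X$ of $S$ and a morphism from a connected component of $X\times_SS'$ to $X'$. Combining these, the inclusion becomes the statement: whenever $\Bc_{S',S''}(X')$ has a section, such an $X$ and morphism exist. This is condition $b)$ for connected $X'$.

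The main obstacle is the bookkeeping of base points, and I will handle it in three steps. First, I need to replace ``has a section'' by ``has a section through $\overline{x}'$''. If a section passes through some other point of the fibre, I change the base point of $X'$, which replaces $U'$ by a conjugate. Second, a conjugate $gU'g^{-1}$ contains the normal subgroup $\ker\phi_*$ if and only if $U'$ does. The condition on $\im\psi_*$ does not need to be conjugation invariant: it is enough that every $U'$ containing $\im\psi_*$ arises from some pair $(X',\overline{x}')$. Third, I need to reconcile the slight mismatch of wording between ``component of $\Bc_{S,S''}(X)$'' in $b)$ and ``component of $X\times_SS'$'' in Proposition \ref{base_change_kernel}. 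I read $b)$ as referring to $X\times_SS'$, which is the version that typechecks, since the target $X'$ lives over $S'$. I also take $X$ connected, passing to a component if necessary.
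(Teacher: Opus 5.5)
Your proposal is correct and follows the same route as the paper, whose proof simply cites Proposition \ref{image_base_change_induce} $a)$ for the inclusion $\im\psi_*\subseteq\ker\phi_*$ and Propositions \ref{base_chage_image_last} and \ref{base_change_kernel} for the reverse inclusion. What you add is detail the paper leaves implicit: $\im\psi_*$ is closed, hence by Lemma \ref{lemma_profinite} $a)$ it equals the intersection of the open subgroups containing it; you also handle the base-point and conjugation bookkeeping, and you read $\Bc_{S,S''}(X)$ in condition $b)$ as $X\times_SS'$.
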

\begin{proof}
    This is a direct consequence of the previous, Proposition \ref{image_base_change_induce} $a)$, Proposition \ref{base_chage_image_last} and Proposition \ref{base_change_kernel}.
\end{proof}

\subsubsection{The Homotopy Exact Sequence}

In this section, we show very useful results that facilitate the computation of \'etale fundamental groups.
These are all more or less direct consequences of the homotopy exact sequence.
Recall from Section \ref{finite_etale_coverings} that $\overline{X}= X\times_kk_s$ and that a scheme $X$ over $k$ is geometrically integral if for all field extension $L|k$, $X\times_k\Spec\: L$ is integral.

\begin{theorem}[Homotopy Exact Sequence]
\label{first_homotopy_exact_sequence}
    Let $X$ be a quasi-compact and geometrically integral scheme over a field $k$. 
    Fix an algebraic closure $\overline{k}$ of $k$, and let $k_s$ be the separable closure.
    Consider the schemes $\overline{X}$ and $X$, with geometric points $\overline{x}':\Spec\:\Lambda \rightarrow \overline{X}$ and $\overline{x}:\Spec\:\Lambda\rightarrow X$ such that given the map $\phi:\overline{X}\rightarrow X$, $\phi\circ \overline{x}'=\overline{x}$.
    The sequence of profinite groups 
    $$1\rightarrow \pi^e_1(\overline{X},\overline{x}') \rightarrow \pi^e_1(X,\overline{x})\rightarrow \Gal(k_s|k)\rightarrow 1$$
    that is induced by the maps $\overline{X}\rightarrow X$ and $X\rightarrow \Spec\:k$ is exact.
\end{theorem}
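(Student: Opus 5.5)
We will check exactness at each of the three spots separately, using the dictionary between properties of a base-change homomorphism $\phi_*$ and properties of covers from Propositions \ref{image_base_change_induce}, \ref{base_chage_image_last}, \ref{base_change_kernel}, Corollary \ref{criteria_injective_morphism_fund_group} and Corollary \ref{exact_sequence_middle}. Throughout, $\pi^e_1(\Spec\:k)$ is identified with $\Gal(k_s|k)$ via Example \ref{fundamental_group_over_field}, so that finite \'etale covers of $\Spec\:k$ are spectra of finite \'etale $k$-algebras.

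\emph{Surjectivity on the right.} By Proposition \ref{image_base_change_induce} $b)$, it suffices to show that for every finite separable field extension $L|k$, the base change $X_L=X\times_k\Spec\:L$ is connected. This follows from geometric integrality: $X_L$ is integral, hence irreducible, hence connected.

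\emph{Composite is trivial.} We use Corollary \ref{exact_sequence_middle} $a)$: every connected cover $\Spec\:L\rightarrow\Spec\:k$ must pull back to a trivial cover of $\overline{X}$. Indeed,
$$\overline{X}\times_k\Spec\:L=\overline{X}\times_{k_s}\Spec(L\otimes_k k_s),\qquad L\otimes_k k_s\cong k_s^{[L:k]},$$
by Theorem \ref{etale_algebras}. So the pullback is a disjoint union of copies of $\overline{X}$.

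\emph{Injectivity on the left.} We apply Corollary \ref{criteria_injective_morphism_fund_group}. Given a connected finite \'etale cover $\hat Y\rightarrow\overline X$, Lemma \ref{etale_cover_closure_come_from_extension} (using quasi-compactness) supplies a finite extension $L\subset k_s$ and a finite \'etale cover $Y_L\rightarrow X_L$ with $\hat Y\cong Y_L\times_L k_s$. The composite $Y_L\rightarrow X_L\rightarrow X$ is finite \'etale by Proposition \ref{composition_etale}, since $X_L\rightarrow X$ is the base change of $\Spec\:L\rightarrow\Spec\:k$. Its base change to $\overline X$ is
$$Y_L\times_k k_s=Y_L\times_L(L\otimes_k k_s)=\bigsqcup_{\sigma} Y_L\times_{L,\sigma}k_s,$$
and one of these components is $\hat Y$ itself. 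This gives the required morphism from a connected component onto $\hat Y$.

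\emph{Exactness in the middle.} This is the main obstacle. By Corollary \ref{exact_sequence_middle} $b)$, let $X'\rightarrow X$ be a connected finite \'etale cover such that $X'\times_X\overline X$ has a section over $\overline X$. We must produce $L|k$ and a morphism from a component of $X_L$ onto $X'$ over $X$. The plan is as follows.

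First, the section of $X'\times_X\overline X\rightarrow\overline X$ is defined over some finite $L\subset k_s$: it is determined by finitely many coefficients, as in the proof of Lemma \ref{etale_cover_closure_come_from_extension}. This yields a section $X_L\rightarrow X'\times_X X_L$ over $X_L$.

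Second, composing this section with the projection to $X'$ gives a morphism $X_L\rightarrow X'$ over $X$. Since $X_L$ is connected, it is itself a connected component of $X\times_k\Spec\:L$.

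Third, this morphism is a map of connected finite \'etale covers of $X$, so it is surjective by Theorem \ref{Galois_correpondance_etale_cov} or Lemma \ref{etale_between_composition}, and so it lands \emph{onto} $X'$.

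The delicate point is the descent of the section to a finite level. We intend to handle it via quasi-compactness, covering $X$ by finitely many affines and using that sections of affine finite morphisms are given by finitely many ring elements of $B\otimes_k k_s=\varinjlim_L B\otimes_k L$.
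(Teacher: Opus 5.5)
Your treatment of surjectivity, of the triviality of the composite, and of injectivity matches the paper's proof. The paper uses Proposition~\ref{image_base_change_induce}~$b)$ with geometric integrality, the splitting $L\otimes_k k_s\cong k_s^{[L:k]}$, and Corollary~\ref{criteria_injective_morphism_fund_group} together with Lemma~\ref{etale_cover_closure_come_from_extension}, exactly as you do. The real difference is condition $b)$ of Corollary~\ref{exact_sequence_middle}. You correctly read it with components of $X\times_k\Spec\:L=X_L$; the printed version, which uses the base change to $\overline{X}$, would be vacuous.

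The paper does not actually prove $b)$. Following Szamuely's Proposition~5.6.1, it only sketches the construction of a finite \'etale Galois cover of $X$ with a section over $\overline{X}$, to be identified with some $X_L$ over the generic point of the integral scheme $X$.

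Your argument is different and more direct:
\begin{itemize}
\item You view the section as an element of $\Hom_X(\overline{X},X')$.
\item You use that $\overline{X}=\varprojlim_L X_L$ with affine transition maps.
\item You use that $X'\to X$ is finite, hence of finite presentation over the Noetherian $X$.
\item Hence the morphism factors through some $X_L$, and the resulting $X_L\to X'$ over $X$ is exactly what $b)$ asks for, with surjectivity automatic.
\end{itemize}
This is the geometric form of the compactness fact that an open subgroup containing a filtered intersection of closed subgroups already contains one of them. It is shorter and self-contained, and it avoids Galois closures and generic points. Integrality is used only for the connectedness of $X_L$ in the surjectivity step. The paper's route, by contrast, aims at the sharper statement that such covers are the $X_L$ themselves.

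Two details for when you write out the descent:
\begin{itemize}
\item On $U_i=\Spec\:A_i$ with $X'|_{U_i}=\Spec\:B_i$, the section is an $A_i$-algebra map $B_i\to A_i\otimes_k k_s=\varinjlim_L A_i\otimes_k L$. So the finitely many relevant elements lie in $A_i\otimes_k k_s$, not in $B_i\otimes_k k_s$.
\item The local descents glue because $U_i\cap U_j=\Spec\:A_{ij}$ is affine ($X$ is separated) and $A_{ij}\otimes_k L\to A_{ij}\otimes_k k_s$ is injective.
\end{itemize}
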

\begin{proof}
    Injectivity of the first map $\pi^e_1(\overline{X}, \overline{x}')\rightarrow \pi^e_1(X,\overline{x})$ is given by the Corollary \ref{criteria_injective_morphism_fund_group} in view of Lemma \ref{etale_cover_closure_come_from_extension}.
    This may not be directly obvious. 
    By the Corollary, we start with a connected finite \'etale cover $\hat{Y}\rightarrow \overline{X}$, from the Lemma, there exist a separable extension $L$ of $k$ such that $Y_L\rightarrow X_L$ is a finite \'etale cover. 
    Moreover, for any finite separable extension $L$ of $k$, the projection $X_L\rightarrow X$ is a finite \'etale cover as well, this follows from Theorem \ref{etale_algebras} on \'etale algebras. 
    Hence we find a map $Y_L\rightarrow X$ that is a finite \'etale cover.
    Now the connected component in $Y_L\times_X\overline{X} \cong Y_L\times_k \Spec\:k_s$ is given by the connected finite \'etale cover from our initial assumption $\hat{Y}$. 
    Indeed, by the Lemma we referred to, $\hat{Y}\cong Y_L\times_L \Spec\:k_s$, and $Y_L\times_L \Spec\:k_s \hookrightarrow Y_L\times_k \Spec\:k_s$ is actually a closed immersion.\\
    Similarly, the map $\pi^e_1(X,\overline{x})\rightarrow \Gal(k_s|k)$ is surjective following Proposition \ref{image_base_change_induce}. This is more or less obvious by noticing that $\Gal(k_s|k)\cong \pi^e_1(\Spec\:k, x)$ for any geometric point $x$ of $\Spec\:k$ (this was Example \ref{fundamental_group_over_field}) and that the only connected finite \'etale covers of $\Spec\: k$ are $\Spec\:L$ where $L$ is a finite separable extension of $k$. We can conclude that since $X$ is geometrically an integral scheme, $X\times_k\Spec\:\overline{k}$ is integral, hence connected, any other scheme of the form $X\times_k\Spec\:L$ is also connected.\\
    What is left to show is exactness in the middle.
    For this we apply the Corollary \ref{exact_sequence_middle}.\\
    The condition $a)$ is satisfied as follows:
    we start with a connected cover $\Spec\:L$ of $\Spec\:k$, and then consider the base change cover $\Spec\:L \times_k \overline{X} \cong (\Spec\:L\times_k \Spec\:k_s) \times_{k_s} \overline{X}$.
    By Theorem \ref{etale_algebras}, $\Spec\:L\times_k \Spec\:k_s \cong \bigsqcup^n_{i=1} \Spec\:k_s$, so by distributivity of the fibre product over disjoint unions, 
    $$\Spec\:L \times_k \overline{X} \cong (\bigsqcup^n_{i=1} \Spec\:k_s) \times_{k_s} \overline{X} \cong \bigsqcup^n_{i=1} \overline{X}$$
    Condition $b)$ is the hard part of this proof.
    Here the idea is to construct a finite \'etale Galois cover $Y\rightarrow X$ such that $Y\times_X\overline{X} \cong Y\times_k\Spec\:k_s \rightarrow \overline{X}$ has a section, and show that it is isomorphic to $X_L$ over the generic point of $X$ (which is integral). This is skilfully done in \cite{Szamuely}, Proposition $5.6.1$.

\end{proof}

\noindent A more general statement of the above theorem is also true, but requires heavier algebraic geometry tools.

\begin{theorem}
    Let $S$ be a Noetherian integral scheme, and $\phi : X\rightarrow S$ a proper, flat morphism with geometrically integral fibres.
    Let $\overline{s}:\Spec\:k\rightarrow S$ be a geometric point of $S$ such that $k$ is the algebraic closure of the residue field of the image of $\overline{s}$ in $S$, and let $\overline{x}$ be a geometric point of the geometric fibre $X_{\overline{s}}$.
    The sequence 
    $$\pi^e_1(X_{\overline{s}},\overline{x})\rightarrow \pi^e_1(X,\overline{x}) \rightarrow \pi^e_1(S,\overline{s}) \rightarrow 1$$
    is exact.
\end{theorem}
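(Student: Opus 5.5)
We prove exactness by checking, in turn, surjectivity on the right and exactness in the middle, using the criteria of Proposition \ref{image_base_change_induce} and Corollary \ref{exact_sequence_middle}. Here $S''=X_{\overline{s}}$, $S'=X$, $S=S$, and the geometric points are compatible by construction. As in the proof of Theorem \ref{first_homotopy_exact_sequence}, the proof splits into a surjectivity statement and the two conditions $a)$ and $b)$.

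\emph{Surjectivity.} By Proposition \ref{image_base_change_induce} $b)$, we must show that for every connected finite \'etale cover $S_1\rightarrow S$, the base change $X\times_SS_1$ is connected. The morphism $X_1:=X\times_SS_1\rightarrow S_1$ is again proper and flat with geometrically integral fibres, by base change. Since $S_1$ is connected, it suffices to show that $\phi_*\mathcal{O}_X=\mathcal{O}_S$ holds universally. Then $\mathcal{O}(X_1)=\mathcal{O}(S_1)$, which has no nontrivial idempotents, so $X_1$ is connected. To get $\phi_*\mathcal{O}_X=\mathcal{O}_S$, take the Stein factorisation $X\rightarrow S'=\Spec\,\phi_*\mathcal{O}_X\rightarrow S$. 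The map $S'\rightarrow S$ is finite. Over the generic point $\eta$ of the integral scheme $S$, geometric integrality of $X_\eta$ gives $H^0(X_\eta,\mathcal{O})=\kappa(\eta)$, so $S'\rightarrow S$ is birational and finite. Flatness of $\phi$, together with cohomology and base change, shows that $\phi_*\mathcal{O}_X$ is locally free of rank $1$ (here we use that $H^0(X_s,\mathcal{O})=\kappa(s)$ on every fibre). Hence $\phi_*\mathcal{O}_X=\mathcal{O}_S$, and this equality commutes with flat base change such as $S_1\rightarrow S$.

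\emph{Condition $a)$.} The composite $X_{\overline{s}}\rightarrow S$ factors through $\Spec\,k$ with $k$ algebraically closed. Any finite \'etale cover of $S$ pulls back to a trivial cover of $\Spec\,k$ (by the example on covers of $\Spec\,k$), hence to a trivial cover of $X_{\overline{s}}$. This gives condition $a)$ of Corollary \ref{exact_sequence_middle}, and so the composite of the two maps is trivial.

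\emph{Condition $b)$.} This is the main obstacle. We are given a connected finite \'etale $X'\rightarrow X$ whose restriction to $X_{\overline{s}}$ has a section, and we must show that $X'$ is dominated by a component of a pullback of a cover of $S$. The plan is to apply Stein factorisation to the proper flat composite $X'\rightarrow X\rightarrow S$, writing it as $X'\rightarrow S'\rightarrow S$ with $S'=\Spec\,(\phi\circ f)_*\mathcal{O}_{X'}$ finite over $S$. We then need to show two things. First, $S'\rightarrow S$ is finite \'etale; for this we aim to use the fact that the fibres of $X'\rightarrow S$ are finite \'etale over geometrically integral fibres, so their $H^0$ are \'etale algebras, together with flatness. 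Second, $X'\rightarrow X\times_SS'$ is an isomorphism onto a connected component. For this we use the section over $X_{\overline{s}}$: it forces the component of $X'_{\overline{s}}$ through it to map isomorphically to $X_{\overline{s}}$, so the fibres of $X'\rightarrow S'$ are geometrically connected. A map between finite \'etale covers of $X$ that has degree $1$ on a fibre is an isomorphism onto a component. The delicate part is the \'etaleness of $S'$, which relies on the Noetherian hypothesis and on semicontinuity of $h^0$ along flat proper families. We would follow EGA/SGA1 X.1.2 (equivalently Szamuely 5.6) for this step. With $a)$ and $b)$ in hand, Corollary \ref{exact_sequence_middle} gives exactness in the middle.
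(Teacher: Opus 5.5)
Your proposal is correct and is essentially the Stein-factorisation argument of \cite{Szamuely} (Proposition $5.6.4$), which the paper cites in place of a proof, here organised through Proposition \ref{image_base_change_induce} $b)$ and Corollary \ref{exact_sequence_middle}; note that the finite \'etaleness of $S'\rightarrow S$ rests on $g_*\mathcal{O}_{X'}$ commuting with arbitrary base change for proper flat maps with geometrically reduced fibres (\cite{SGA}, Expos\'e X), not on semicontinuity of $h^0$ alone. One slip in condition $b)$: the geometric connectedness of the fibres of $X'\rightarrow S'$ comes from the Stein factorisation, not from the section; the section's role is that its image is a connected open and closed subset of $X'_{\overline{s}}=\bigsqcup_{t}X'_t$, so it coincides with a single connected fibre $X'_{t_0}$, and this gives degree one for $X'\rightarrow X\times_SS'$ over the corresponding component.
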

\begin{proof}
    A proof of this theorem can be found in \cite{Szamuely} (Proposition $5.6.4$), it use the Stein factorisation.
    Another proof can be found in \cite{SGA}, (Th\'eor\`eme $IX.\: 6.1$) and it uses effective descent of morphisms.
\end{proof}

\noindent Interestingly enough, when $X$ is a scheme over $\Spec\:k$, the \'etale fundamental group of the base scheme $\pi^e_1(S,\overline{s})$ is isomorphic to the absolute Galois group $\Gal(k_s|k)$ (see Example \ref{fundamental_group_over_field}). 

\begin{corollary}
    Let $k$ be an algebraically closed field and $X$ and $Y$ Noetherian connected schemes over $k$.
    Assume moreover that $X$ is proper and geometrically integral, and choose geometric points $\overline{x}:\Spec \: k \rightarrow X$ and $\overline{y}:\Spec\:k\rightarrow Y$ of $X$ and $Y$ respectively, with values in $k$.
    Then the natural morphism
    $$\pi^e_1(X\times Y, (\overline{x},\overline{y})) \rightarrow \pi^e_1(X,\overline{x}) \times \pi^e_1(Y,\overline{y})$$
    induced by the projections of $X\times Y$ to $X$ and $Y$, respectively is actually an isomorphism.
\end{corollary}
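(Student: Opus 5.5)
Apply the general homotopy exact sequence (the theorem just before the statement) to the second projection $p_2 : X\times Y\rightarrow Y$. It is the base change of the structure morphism $X\rightarrow \Spec\:k$. So it is proper and flat, the latter because every scheme over a field is flat. Its fibres are base changes of $X$ to residue fields of $Y$, hence geometrically integral. Strictly speaking, that theorem asks for an integral Noetherian base $S$. I would first reduce to $Y$ integral: for reduced $Y$, pass to irreducible components and use that finite \'etale covers do not see nilpotents. If this reduction is too costly, I would instead rerun the proof of the general statement in this special situation; I treat that as a technical point.

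The geometric fibre of $p_2$ over $\overline{y}$ is $X\times_k\Spec\:k\cong X$, since $k$ is algebraically closed and $\overline{y}$ has values in $k$. The geometric point $(\overline{x},\overline{y})$ lies on this fibre. The theorem therefore gives an exact sequence
$$\pi^e_1(X,\overline{x})\xrightarrow{i_*} \pi^e_1(X\times Y,(\overline{x},\overline{y}))\xrightarrow{p_{2*}}\pi^e_1(Y,\overline{y})\rightarrow 1,$$
where $i:X\rightarrow X\times Y$ is the inclusion $x\mapsto (x,\overline{y})$.

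Next I would build splittings. Let $j:Y\rightarrow X\times Y$ be $y\mapsto(\overline{x},y)$. Then $p_2\circ j=\id_Y$ and $p_1\circ i=\id_X$, and these composites respect the chosen geometric points. By functoriality of $\phi\mapsto\phi_*$ (the base change construction), we get $p_{2*}\circ j_*=\id$ and $p_{1*}\circ i_*=\id$. The second identity shows $i_*$ is injective, so the sequence is in fact short exact. Now consider the natural map $\Phi=(p_{1*},p_{2*})$. It is injective: if $\Phi(g)=1$, then $p_{2*}(g)=1$, so $g=i_*(h)$ by exactness, and then $h=p_{1*}i_*(h)=p_{1*}(g)=1$. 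It is surjective: given $(a,b)$, put $g=i_*(a)\,j_*(b)\,i_*(p_{1*}j_*(b))^{-1}$. Since $p_1\circ j$ is the constant map to $\overline{x}$, it factors through $\Spec\:k$, whose fundamental group is trivial by Example \ref{fundamental_group_over_field}. Hence $p_{1*}j_*=1$, so $g=i_*(a)j_*(b)$ and $\Phi(g)=(a,b)$, using also $p_{2*}i_*=1$, which holds because $p_2\circ i$ is constant. Finally, $\Phi$ is a continuous bijection between compact Hausdorff groups, so it is a topological isomorphism by Theorem \ref{profinite_groups_other}.

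The main obstacle is checking that the hypotheses of the general homotopy exact sequence really hold for $p_2$: the integrality of $Y$ and the identification of the geometric fibre with $X$ compatibly with the base points. The group-theoretic part is routine once exactness on the left is obtained from the section $p_1$.
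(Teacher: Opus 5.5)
Your proof is correct and follows the paper's route: you use the homotopy exact sequence for $p_2\colon X\times Y\to Y$, get injectivity on the left from the section given by $p_1$, and compare with the split sequence for $\pi^e_1(X,\overline{x})\times\pi^e_1(Y,\overline{y})$. Your element-wise bijectivity check is just the five lemma unrolled (with the second section $j$ supplying surjectivity), and you make explicit the identity $p_{2*}i_*=1$ that the paper's left square uses silently.

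The paper also applies the theorem without checking that $Y$ is integral. A naive reduction to irreducible components would not work, since \'etale fundamental groups do not glue componentwise without a descent argument. Your fallback of rerunning the Stein factorisation proof is the right way to close that point, because that proof does not need an integral base.
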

\begin{proof}
    We can insert the above natural morphism in the following commutative diagram of exact sequences,
    \[
    \begin{tikzcd}
    1\arrow[r] & \pi^e_1(X,\overline{x})\arrow[r]\arrow[d] & \pi^e_1(X\times Y, (\overline{x},\overline{y}))\arrow[r]\arrow[d] & \pi^e_1(Y,\overline{y})\arrow[r]\arrow[d] & 1\\
    1\arrow[r] & \pi^e_1(X,\overline{x})\arrow[r] & \pi^e_1(X ,\overline{x})\times \pi^e_1(Y,\overline{y})\arrow[r] & \pi^e_1(Y,\overline{y})\arrow[r] & 1 
    \end{tikzcd}
    \]
    The upper row comes from applying the previous theorem to the morphism $X\times Y\rightarrow Y$ and the geometric point $\overline{y}$.
    Injectivity on the left comes from the fact that the projection $X\times Y\rightarrow X$ yields a section of the inclusion map $X\rightarrow X\times Y$, thus the homomorphism $\iota_*: \pi^e_1(X,\overline{x}) \rightarrow \pi^e_1(X\times Y,(\overline{x},\overline{y}))$ has a left inverse, making it injective.
    So the upper row is an exact sequence.\\
    The vertical maps on both sides are the identity maps.\\
    And the lower exact sequence directly follows from the splitting lemma with the inclusion $ \pi^e_1(X,\overline{x})\rightarrow \pi^e_1(X,\overline{x}) \times \pi^e_1(Y,\overline{y})$ and the projection $\pi^e_1(X,\overline{x}) \times \pi^e_1(Y,\overline{y}) \rightarrow \pi^e_1(Y,\overline{y})$.\\
    Hence the arrow in the middle must be an isomorphism.
\end{proof}

\noindent We end this section with an application of the previous corollary.

\begin{proposition}
\label{invariant_under_field_extension}
    Let $L$ be an extension of an algebraically closed field $k$ and $X$ be a proper integral scheme over $k$.
    The natural map $\pi^e_1(X_L,\overline{x}_L)\rightarrow \pi^e_1(X,\overline{x})$ is an isomorphism for every geometric point $\overline{x}$ of $X$.
\end{proposition}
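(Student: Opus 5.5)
Since the paper announces this as an application of the product corollary, the plan is to deduce it from the homotopy exact sequence for the proper flat morphism $X_L = X\times_k \Spec\:L \rightarrow \Spec\:L$, compared with $X\rightarrow \Spec\:k$. It is cleanest to first reduce to $L$ algebraically closed. Let $\overline{L}$ be an algebraic closure of $L$. By Example \ref{fundamental_group_over_field}, $\pi^e_1(\Spec\:L)=\Gal(L_s|L)$. The sequence of Theorem \ref{first_homotopy_exact_sequence} applied to $X_L$ over $L$ gives
\[
1\rightarrow \pi^e_1(X_{\overline{L}})\rightarrow \pi^e_1(X_L)\rightarrow \Gal(L_s|L)\rightarrow 1 .
\]
Here $X_L$ is geometrically integral, since $X$ is integral over the algebraically closed field $k$ and hence geometrically integral. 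So it suffices to prove that $\pi^e_1(X_{\overline{L}})\rightarrow\pi^e_1(X)$ is an isomorphism. The original statement then needs the further observation that the map factors accordingly; for a general $L$ I would honestly restrict to the algebraically closed case first and then treat the Galois part separately.

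For $L$ algebraically closed, the natural map is $\phi_*$ for $\phi: X_L\rightarrow X$, and I check surjectivity and injectivity via the base-change criteria.

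\textbf{Surjectivity.} By Proposition \ref{image_base_change_induce} $b)$, I need that for every connected finite \'etale $Y\rightarrow X$, the base change $Y_L$ is connected. A connected finite \'etale cover of the normal-free but integral proper $X$ is again integral by Proposition \ref{reduced_carry_above} together with connectedness. It is also proper over $k$, so $\mathcal{O}(Y)$ is a finite field extension of $k$, hence equals $k$. Then $\mathcal{O}(Y_L)=L$, so $Y_L$ has no nontrivial idempotents and is connected.

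\textbf{Injectivity.} This is the main obstacle. By Corollary \ref{criteria_injective_morphism_fund_group}, every connected finite \'etale $Y'\rightarrow X_L$ must be dominated by a component of a base change of some cover of $X$. I would attempt a spreading-out argument in the spirit of Lemma \ref{etale_cover_closure_come_from_extension}. First, $Y'$ is defined over a finitely generated $k$-subalgebra $A\subset L$, i.e. there is a finite \'etale $\mathcal{Y}\rightarrow X\times_k \Spec A$ with generic base change $Y'$. Next, I view $\mathcal{Y}$ as a finite \'etale cover of the product $X\times_k T$ with $T=\Spec A$, a connected variety over $k$. Choose a $k$-point $t\in T$, which exists by the Nullstellensatz since $k$ is algebraically closed. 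The product corollary gives $\pi^e_1(X\times T)\cong\pi^e_1(X)\times\pi^e_1(T)$. Hence $\mathcal{Y}$ corresponds to a continuous $\pi^e_1(X)\times\pi^e_1(T)$-set, which is dominated by a product cover $Y_0\times T'$. Restricting to the generic point of $T$ kills the $T'$ factor after passing to $L$, since $T'\times_T\Spec L$ splits because $L$ is algebraically closed. This shows that $Y'$ is dominated by a component of $(Y_0)_L$.

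The delicate points are the base-point bookkeeping in the product corollary and ensuring $T$ can be chosen connected and geometrically integral. These are where I expect to spend most effort.
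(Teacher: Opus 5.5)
Your handling of a general extension $L$ in the first paragraph is the genuine problem. It cannot be closed by any argument, because for $L$ not separably closed the statement is false.

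Your own exact sequence shows this. Once $\pi^e_1(X_{\overline{L}})\rightarrow\pi^e_1(X)$ is an isomorphism, the normal subgroup $\pi^e_1(X_{\overline{L}})\subset\pi^e_1(X_L)$ maps isomorphically onto $\pi^e_1(X)$. Hence $\pi^e_1(X_L)\cong\pi^e_1(X)\times\Gal(L_s|L)$, and the natural map is the first projection, with kernel $\Gal(L_s|L)$. There is no ``Galois part'' to treat separately: it is exactly the obstruction. The simplest case is $X=\Spec\:k$ and $L=k(t)$, where $\pi^e_1(X_L)=\Gal(L_s|L)\neq 1=\pi^e_1(X)$.

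So instead of a reduction, you should say that the hypothesis must be that $L$ is algebraically closed. This is how the statement appears in Corollaire X.1.8 of \cite{SGA} and Proposition 5.6.7 of \cite{Szamuely}, which the paper relies on, and it is the only case the paper later uses ($\overline{\mathbb{Q}}\subset\mathbb{C}$, $\overline{\mathbb{Q}}\subset k$). Your proof actually pinpoints where the general case fails: the surjectivity step works for every $L$, while the injectivity step needs $T'\times_T\Spec\:L$ to split, i.e.\ $L$ separably closed.

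For $L$ algebraically closed, your argument is correct and follows the same route as the paper. Surjectivity goes through Proposition \ref{image_base_change_induce} $b)$. Injectivity goes through Corollary \ref{criteria_injective_morphism_fund_group} and the product formula, which is exactly the role the paper assigns to the preceding corollary before deferring to the references. Your steps supply that deferred argument:
\begin{itemize}
\item spreading out to a cover $\mathcal{Y}$ of $X\times_k T$;
\item dominating $\mathcal{Y}$ by some $Y_0\times_k T'$, since an open subgroup of $\pi^e_1(X)\times\pi^e_1(T)$ contains a product of open normal subgroups;
\item splitting $T'\times_T\Spec\:L$.
\end{itemize}
Base points cause no trouble, since domination of covers does not depend on them.

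There is one slip in the surjectivity step. A connected finite \'etale cover of an integral scheme need not be integral when $X$ is not normal: an $n$-gon of projective lines is a connected finite \'etale cover of a nodal cubic. You do not need integrality, though. $Y$ is reduced by Proposition \ref{reduced_carry_above}, and it is connected and proper. So $\mathcal{O}(Y)$ is a finite-dimensional reduced $k$-algebra without nontrivial idempotents, hence a field by Lemma \ref{finite_dimensional_direct_sum_reduced}, hence equal to $k$. Flat base change then gives $\mathcal{O}(Y_L)=L$. With this fix, your $H^0$ argument is more robust than the paper's, which asserts that $Y$ is irreducible and that $K(Y)\otimes_k L$ is a field (in general it is only a domain).
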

\begin{proof}
    If $Y\rightarrow X$ is a connected finite \'etale cover, then $Y$ is reduced by Proposition \ref{reduced_carry_above} since $X$ is also reduced (Proposition \ref{integral_iff_irr_red}).
    Since $Y$ is reduced, so is the generic fibre $Y_{\eta}$ by Proposition \ref{reduced_carry_above}.
    And by Lemma \ref{finite_dimensional_direct_sum_reduced}, $Y_{\eta}$ is the spectrum of a direct sum of field extensions of $K(X)$, the function field of $X$. 
    The generic fibre also being connected, it must be a single finite field extension of the function field of $X$.
    Now $Y$ is also irreducible, it has $1$ minimal prime ideal (as does $Y_{\eta}$), then $Y$ is an integral scheme as well.
    And since $k$ is algebraically closed in $L$, the tensor product $K(Y)\otimes_k L$ is a field, where $K(Y)$ is the function field of $Y$.
    This shows that the base change $\Spec (K(Y)\otimes_kL)$, the part of $Y_L$ lying over its generic point $\eta$, is connected, and in turn, that $Y_L$ is connected.
    With Proposition \ref{image_base_change_induce} $b)$, we have that  $\pi^e_1(X_L,\overline{x}_L) \rightarrow \pi^e_1(X,\overline{x})$ is surjective.\\
    For injectivity, this is where the previous corollary intervenes. 
    We refer to Corollaire $X.\:1.8$ from \cite{SGA} or Proposition $5.6.7$ in \cite{Szamuely}.
\end{proof}

\noindent The previous proposition is actually quite useful.
Using the comparison theorems from the last section, we know that a smooth projective scheme $X$ over $\mathbb{C}$ has a topologically finitely generated \'etale fundamental group.
Now, if $X$ is also integral, we can apply Proposition \ref{invariant_under_field_extension} to the inclusion $\overline{\mathbb{Q}} \hookrightarrow \mathbb{C}$ and get that the \'etale fundamental group of $X$ over $\overline{\mathbb{Q}}$ is topologically finitely generated at every geometric point.
And by considering the inclusion $\overline{\mathbb{Q}}\hookrightarrow k$, for $k$ an algebraically closed field of characteristic $0$ we get the following result.
\begin{corollary}
\label{lats_thm}
    Let $k$ be an algebraically closed field of characteristic $0$, and let $X$ be a smooth connected projective scheme over $k$. For every geometric base point $\overline{x}$ the \'etale fundamental group $\pi^e_1(X,\overline{x})$ is topologically finitely generated.
\end{corollary}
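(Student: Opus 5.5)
The plan is to reduce from $k$ to $\mathbb{C}$ through $\overline{\mathbb{Q}}$, using Proposition \ref{invariant_under_field_extension} twice together with the comparison theorem over $\mathbb{C}$. The first step is to \emph{spread out} $X$. Since $X$ is projective over $k$, it is cut out in some $\mathbb{P}^n_k$ by finitely many homogeneous polynomials. Let $k_0\subset k$ be the subfield generated over $\mathbb{Q}$ by their finitely many coefficients; it is finitely generated over $\mathbb{Q}$. Then $X\cong X_0\times_{k_0}\Spec\:k$ for a projective scheme $X_0$ over $k_0$. Smoothness and geometric connectedness descend to $X_0$ after possibly enlarging $k_0$ by finitely many elements, exactly as in the proof of Lemma \ref{etale_cover_closure_come_from_extension}. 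Smooth and connected over an algebraically closed field gives integral (a connected regular scheme is integral, via Auslander--Buchsbaum), so every base change of $X_0$ to an algebraically closed field is again integral.

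Next, let $K$ be the algebraic closure of $k_0$ inside $k$; this is an algebraically closed field of characteristic $0$ with finite transcendence degree over $\mathbb{Q}$, hence countable. Put $X_K:=X_0\times_{k_0}K$, a proper integral scheme over $K$ with $X\cong (X_K)_k$. Proposition \ref{invariant_under_field_extension}, applied to $K\subset k$, gives $\pi^e_1(X,\overline{x})\cong\pi^e_1(X_K,\overline{x}_K)$. Here we choose $\overline{x}$ compatible with a geometric point of $X_K$; this is harmless, because the fundamental group is independent of the geometric base point up to continuous isomorphism by Lemma \ref{fiber_isomorphism} and the proposition following it. Since $K$ is countable of characteristic $0$, $\mathbb{C}$ has uncountable transcendence degree over $\mathbb{Q}$, and $\mathbb{C}$ is algebraically closed, we may choose an embedding $K\hookrightarrow\mathbb{C}$. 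Applying Proposition \ref{invariant_under_field_extension} again gives $\pi^e_1(X_{\mathbb{C}})\cong\pi^e_1(X_K)$. (When $k_0$ is a number field one has $K=\overline{\mathbb{Q}}$, and this step is exactly the passage through $\overline{\mathbb{Q}}$ mentioned before the statement.)

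Finally, $X_{\mathbb{C}}$ is a smooth connected projective scheme over $\mathbb{C}$, so $X_{\mathbb{C}}^{an}$ is a compact connected complex manifold. Its fundamental group is finitely presented by the CW-complex results of Chapter 1. By the comparison corollary from \cite{SGA}, $\pi^e_1(X_{\mathbb{C}})\cong\widehat{\pi_1(X^{an}_{\mathbb{C}})}$, and this profinite completion is topologically finitely generated by Remark \ref{finite_generated_prof_grp}; this is Remark \ref{etale_fund_grp_fin_gen} $a)$. Chaining the three isomorphisms proves the corollary.

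I expect the main obstacle to be the descent step: producing $k_0$ and $X_0$ so that smoothness, connectedness and integrality are preserved, and matching the geometric points across the two base changes. For smoothness I would use the Jacobian criterion, which is defined by polynomial conditions on the coefficients and so descends. For connectedness I would use geometric integrality of $X_K$, which holds because $X_K\times_K k=X$ is integral and integrality descends along the faithfully flat field extension $K\subset k$. Everything else is a direct citation of earlier results.
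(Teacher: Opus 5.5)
Your proof is correct and has the same skeleton as the paper's. You move $\pi^e_1$ to $\mathbb{C}$ with Proposition \ref{invariant_under_field_extension}, then use $\pi^e_1(X_{\mathbb{C}})\cong\widehat{\pi_1(X^{an}_{\mathbb{C}})}$ and the finite presentability of the fundamental group of a compact manifold. The difference lies in the intermediate field, and it is substantive. The paper's argument, in the paragraph preceding the statement, runs through $\overline{\mathbb{Q}}$: it handles smooth projective integral schemes over $\overline{\mathbb{Q}}$ via $\overline{\mathbb{Q}}\hookrightarrow\mathbb{C}$, then transports along $\overline{\mathbb{Q}}\hookrightarrow k$. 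As written, that only reaches schemes of the form $X'\times_{\overline{\mathbb{Q}}}k$, and a general $X$ over $k$ is not of this form. An elliptic curve over $k$ whose $j$-invariant is transcendental over $\mathbb{Q}$ is a counterexample, since $j$ is unchanged by base change. Comparing $k$ with $\mathbb{C}$ directly would only help when $k$ embeds in $\mathbb{C}$, which fails once $k$ is larger than $\mathbb{C}$. Your choice of $K$, the algebraic closure in $k$ of a finitely generated field of definition $k_0$, closes this gap: $X$ descends to $K$ by construction, and $K$ is countable and so embeds in $\mathbb{C}$. When $X$ is defined over a number field, your argument reduces to the paper's. The descent step you flag as the main obstacle is also lighter than you expect, and no enlargement of $k_0$ is needed. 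A decomposition of $X_K$ into two disjoint non-empty open pieces would base change to one of $X$, so $X_K$ is connected. Smoothness of $X_K$ over $K$ can be checked after the faithfully flat base change to $k$. Integrality of $X_K$ then follows from smooth plus connected over the algebraically closed field $K$, as you say.
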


\subsubsection{Some Examples}

We have already computed the \'etale fundamental groups of $X = \Spec\: k$ and $X = \Spec\: \mathbb{F}_p$.

\noindent The next examples illustrate the projective case. 
We know already that studying projective schemes is rather interesting as, over some number field $k$ whose algebraic closure can be embedded in $\mathbb{C}$,  their \'etale fundamental group is isomorphic to the profinite completion of the topological fundamental group of the analytification $X^{an}$. 

\begin{example}
    Let $k$ be an arbitrary field of characteristic $0$ and $X = \mathbb{P}^1_k$ the projective line over $k$.
    One way to compute the \'etale fundamental group of $X$ at a geometric point $\overline{x}$ of $X$ is to use the homotopy exact sequence from Theorem \ref{first_homotopy_exact_sequence}.
    Since every projective scheme over a field is compact, it is also quasi-compact.
    It is geometrically integral as well, indeed for every field extension $K$ of $k$, the scheme $\mathbb{P}^1_k\times_k\Spec\:K$ is isomorphic to $\mathbb{P}^1_K$ which is integral as it is irreducible and reduced ($K[x_1,x_2]$ is an integral domain).
    So we have the following exact sequence 
    $$0\rightarrow \pi^e_1(\mathbb{P}^1_{\overline{k}}, \overline{x}') \rightarrow \pi^e_1(\mathbb{P}^1_{k}, \overline{x}) \rightarrow \Gal(k_s|k) \rightarrow 0.$$
    And by Proposition \ref{invariant_under_field_extension} with the injection $\overline{k}\hookrightarrow \mathbb{C}$, we can compute $\pi^e_1(\mathbb{P}^1_{\overline{k}}, \overline{x}') \cong \pi^e_1(\mathbb{P}^1_{\mathbb{C}}, \overline{x}'_{\mathbb{C}})$, which is exactly the profinite completion of $\pi_1(S^2)$ for $\mathbb{P}^1_{\mathbb{C}}$ can be identified with the Riemann sphere.
    Now since any loop on $S^2$ is contractible to a point, $\pi_1(S^2) = 1$ and so $\pi^e_1(\mathbb{P}^1_{\overline{k}}, \overline{x}')$ is trivial as well.
    We can conclude that $\pi^e_1(\mathbb{P}^1_{k}, \overline{x}) \cong \Gal(k_s|k)$. 
    Which is trivial when $k$ is separably closed.
\end{example}

\noindent Using the above example we can compute the \'etale fundamental group of the following punctured projective lines over $\mathbb{Q}$.

\begin{examples} \textbf{ }
\begin{itemize}
    \item[$a)$] Suppose we consider the projective line over $\mathbb{Q}$ minus two points, say $\{0,\infty\}$. 
    As in the previous example, using a combination of Theorem \ref{first_homotopy_exact_sequence} and Proposition \ref{invariant_under_field_extension}, provided the embedding $\overline{\mathbb{Q}}\hookrightarrow \mathbb{C}$, we can compute its \'etale fundamental group. 
    We can conclude that the following sequence is exact
    $$0\rightarrow \widehat{\mathbb{Z}}\rightarrow \pi^e_1(\mathbb{P}^1_{\mathbb{Q}} \backslash\{0,\infty\}, \overline{x}) \rightarrow \Gal(\overline{\mathbb{Q}}|\mathbb{Q})\rightarrow 0.$$
    Indeed, notice that the Riemann sphere minus two points is homotopically equivalent to the circle which we showed has fundamental group $\mathbb{Z}$ at any point.
    
    \item[$b)$] We can proceed similarly to compute the \'etale fundamental group of the projective line over $\mathbb{Q}$ minus three points (say $\{0,1,\infty\}$).
    In this case, notice that the complex projective line minus three points is actually homotopically equivalent to a figure eight: the wedge sum of two circle $S^1\vee S^1$.
    Now to compute the fundamental group of a wedge sum of say two WC complexes (which is the case for $S^1$) one can use Van Kampen' theorem \cite{Hatcher}, (Theorem $1.20$). 
    It says that the fundamental group of the wedge is the free product of the fundamental groups of the two spaces. Thus $\pi_1(S^1\vee S^1) = \mathbb{Z}*\mathbb{Z}$, which is the free group on two generators $F_2$. This gives us the following exact sequence,
     $$0\rightarrow \widehat{F_2}\rightarrow \pi^e_1(\mathbb{P}^1_{\mathbb{Q}} \backslash\{0,1,\infty\}, \overline{x}) \rightarrow \Gal(\overline{\mathbb{Q}}|\mathbb{Q})\rightarrow 0.$$
\end{itemize}
\end{examples}

\noindent Grothendieck suggested that the most interesting group in mathematics is arguably $\pi^e_1(\mathbb{P}^1_{\mathbb{Q}} \backslash\{0,1,\infty\},\overline{x})$. 
Later on, he advised P. Deligne to study this group, which he did extensively, leading to a $200$-pages monograph \cite{droit_p_minus_3_pts}.
In his paper, Deligne introduced a Tannakian framework for the category of \textit{mixed motives}.
Without delving further into motivic theory, we can summarize Deligne’s objective in defining a ‘‘\textit{motivic}" version of the fundamental group through the following question:\\
There are many ways through which one can understand a (algebraic) space, this can be done through theories such as \textit{Betti} cohomology, \textit{de Rham} cohomology, \textit{$\ell$-adic} cohomology and \textit{crystalline} cohomology.
The problem is that these theories may feel like different ‘‘shadows" of the same object. 
Now Grothendieck hoped for a universal theory, sitting above them all, which he called the theory of motives.
And Deligne's approach consisted of finding a way to unify this perspective by applying it to objects richer than cohomology groups: non-abelian fundamental groups.\\
In \cite{droit_p_minus_3_pts}, he successfully defined what it meant for a non-abelian group to be motivic by working with a Tannakian realisation of the \'etale fundamental group of $\mathbb{P}^1_{\mathbb{Q}} \backslash \{0,1,\infty\}$ as a non-trivial example.
This the motivation behind the next chapter.

\newpage

\section{Tannakian Fundamental Groups}

In the last chapter, we established an equivalence of categories between the category of finite \'etale covers over a connected scheme and that of finite continuous representations of its \'etale fundamental group. 
In this chapter, we study a linearisation of this concept by replacing finite \'etale covers by linear algebraic data. 
We begin by studying representations of affine group schemes and their relation with comodules over Hopf algebras. 
Then introduce monoidal categories and develop the categorical language required for \textit{Tannaka duality}.
And finish with a specialisation of this reconstruction for neutral Tannakian categories, and define their Tannakian fundamental group.

\subsection{Representation of Group Schemes and Comodules}

Before defining comodules, one has to make sense of the kind of object we define comodules over. 
It is quite natural that we want comodules to be the dual notion of modules. 
And since we will study modules over algebras, we introduce below the notion of a coalgebra.
This will allow us to have a clean definition for coations of comodules.

\begin{definition}
    A coalgebra is a $k$-vector space $C$ endowed with a $k$-linear comultiplication : $\Delta:C\rightarrow C\otimes_k C$ and a $k$-linear counit : $\epsilon : C\rightarrow k$ satisfying the following coassociativity and counity axioms,
    \[
    \begin{tikzcd}
        C \arrow[r,"\Delta"] \arrow[d, "\Delta"'] & C \otimes C \arrow[d,"\Delta \otimes \id_C"] \\
        C \otimes C \arrow[r,"\id_C \otimes \Delta"'] & C \otimes C\otimes C
    \end{tikzcd}
    \hspace{1cm}
    \begin{tikzcd}
        C \arrow[rd, "\Delta"]
        \arrow[d, "\cong"']
        \arrow[r, "\cong"] &
        C \otimes k  \\
         k\otimes C &  
         C \otimes C
         \arrow[u,"\id_C\otimes \epsilon"']
         \arrow[l, "\epsilon\otimes \id_C"]
    \end{tikzcd}
    \]
    
\end{definition}

\noindent One can see from the above definition that coalgebras over $k$ are the dual notion of algebras over $k$. 
The next result tells us that under a finiteness condition, dualising a coalgebra, we get an algebra as well.

\begin{proposition}
\label{dualize_algebra_coalgebra}
    The contravariant dualisation functor $A\mapsto A^*$ induces an anti-equivalence between the category of finite-dimensional $k$-coalgebras and that of finite-dimensional $k$-algebras.
\end{proposition}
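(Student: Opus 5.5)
The plan is to prove directly that linear duality $V\mapsto V^*=\Hom_k(V,k)$ carries coalgebra structure to algebra structure and back. The key linear-algebra input is that for finite-dimensional $V,W$ the canonical map
$$\theta_{V,W}: V^*\otimes_k W^*\longrightarrow (V\otimes_k W)^*,\qquad \theta_{V,W}(f\otimes g)(v\otimes w)=f(v)g(w),$$
is an isomorphism (it is injective by a basis argument, and both sides have dimension $\dim V\cdot\dim W$). We also use the identification $k^*\cong k$, and the fact that the double-dual map $V\to V^{**}$ is a natural isomorphism in the finite-dimensional case.

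First, for a finite-dimensional coalgebra $(C,\Delta,\epsilon)$, define the multiplication on $C^*$ by $m=\Delta^*\circ\theta_{C,C}:C^*\otimes C^*\to C^*$, so that $(fg)(c)=(f\otimes g)(\Delta(c))$, and define the unit by $\eta=\epsilon^*:k\cong k^*\to C^*$, so that $1_{C^*}=\epsilon$. Associativity of $m$ follows by dualising the coassociativity square and using naturality of $\theta$ together with the compatibility $\theta_{U\otimes V,W}\circ(\theta_{U,V}\otimes\id)=\theta_{U,V\otimes W}\circ(\id\otimes\theta_{V,W})$ under the associativity isomorphisms. The unit axiom follows in the same way from the counit triangle. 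Conversely, for a finite-dimensional algebra $(A,m,\eta)$, set $\Delta=\theta_{A,A}^{-1}\circ m^*:A^*\to A^*\otimes A^*$ and $\epsilon=\eta^*:A^*\to k$. Here finite-dimensionality is used essentially, since $\theta_{A,A}$ must be invertible. The coalgebra axioms are again obtained by dualising the algebra diagrams.

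On morphisms, a coalgebra map $\phi:C\to D$ satisfies $\Delta_D\phi=(\phi\otimes\phi)\Delta_C$ and $\epsilon_D\phi=\epsilon_C$. Transposing these identities and applying naturality of $\theta$ shows that $\phi^*:D^*\to C^*$ is multiplicative and unital, and the dual argument handles algebra maps. This gives two contravariant functors. To see that they are quasi-inverse, we check that the evaluation isomorphism $C\to C^{**}$ is a coalgebra isomorphism, and similarly that $A\to A^{**}$ is an algebra isomorphism. Concretely, this amounts to the identity $\theta_{C^*,C^*}\circ(\mathrm{ev}\otimes\mathrm{ev})=\mathrm{ev}_{C\otimes C}\circ\theta_{C,C}^{*}{}^{-1}$ up to the canonical identifications, which one checks on elementary tensors. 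Naturality of evaluation then yields the natural isomorphisms $\id\Rightarrow(-)^{**}$ on both categories, and hence the anti-equivalence.

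The main obstacle is not any single computation but keeping the identifications coherent. One has to verify that dualising the coassociativity diagram, which lives in $(C\otimes C\otimes C)^*$, really gives the associativity diagram in $C^*\otimes C^*\otimes C^*$. This requires the compatibility of $\theta$ with the associativity constraint, and it is the step where I would write out the check on elementary tensors $f\otimes g\otimes h$ evaluated against $c$ using Sweedler-type expansions $\Delta(c)=\sum c_{(1)}\otimes c_{(2)}$. Everything else is formal transposition of commutative diagrams.
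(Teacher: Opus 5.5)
Your proposal is correct and follows the paper's construction: you dualise $\Delta$ and $\epsilon$ to get the multiplication and unit on $C^*$, and you use invertibility of $\theta_{A,A}\colon A^*\otimes A^*\to(A\otimes A)^*$ for finite-dimensional $A$ to go back; you also supply what the paper omits, namely functoriality on morphisms and the double-dual isomorphisms $C\to C^{**}$, $A\to A^{**}$ that make the two functors quasi-inverse. One small slip: the identity you need is $\theta_{C^*,C^*}\circ(\mathrm{ev}_C\otimes\mathrm{ev}_C)=\theta_{C,C}^{*}\circ\mathrm{ev}_{C\otimes C}$ as maps $C\otimes C\to(C^*\otimes C^*)^*$ (no inverse, composed in that order), which together with naturality of $\mathrm{ev}$ and $m_{C^*}^*=\theta_{C,C}^*\circ\Delta_C^{**}$ gives $\Delta_{C^{**}}\circ\mathrm{ev}_C=(\mathrm{ev}_C\otimes\mathrm{ev}_C)\circ\Delta_C$.
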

\begin{proof}
    Given a $k$-coalgebra $A$, the $k$-linear dual $A^* = \Hom_k(A,k)$ of the underlying finite-dimensional $k$-vector space carries additional structure.
    The comultiplication $\Delta$ induces a $k$-bilinear map $m:A^*\otimes_kA*\rightarrow A^*$ mapping a pair $(f,g)$ of maps in $\Hom_k(A,k)$ to the $k$-linear map $(f\otimes g)\circ \Delta : A\rightarrow k$.
    The counit $\epsilon$ has a $k$-linear dual $e :k\rightarrow A^*$ determined by the image of $1\in k$ in $A^*$.
    By the coassociativity and counity axioms, the maps $m$ and $e$ define a multiplication and a unit satisfying associativity and unity, making $A^*$ a $k$-algebra.\\
    Conversely, given a finite-dimensional $k$-algebra $B$, the multiplication map $m$ induces a map $B^*\rightarrow (B\otimes B)^*$ on the $k$-linear duals. 
    Here enters our crucial assumption that algebras must be finite-dimensional.
    Indeed, the natural map $B^*\otimes B^*\rightarrow (B\otimes B)^*$ sending $(f,g)$ to the $k$-linear map $a\otimes b\mapsto f(a)g(b))$ is an isomorphism if $B$ is finite-dimensional over $k$, in that case it is an injective map between vector spaces of the same dimension.
    And by reversing the above procedure, we can endow $B^*$ with a structure of $k$-coalgebra.
\end{proof}

\begin{remark}
\label{dual_algebara_no_coalgebra}
    The above equivalence restricts to the finite-dimensional case, because whereas it is true that arbitrary $k$-coalgebras dualise into algebras, the converse is false for infinite-dimensional $k$-algebras.
    For example, the $k$-algebra $A = k[x]$ does not dualise into a coalgebra.
    Denote $A^* = (k[x])^*$,
    then the natural map $A^*\otimes A^*\rightarrow (A\otimes A)^*$ is not surjective anymore.
    Indeed, elements of $A^*\otimes A^*$ are finite sum tensors (they have finite rank) while elements of $(A\otimes A)^*$ are infinite sequences of evaluation on the basis.
\end{remark}

\begin{definition}
    A \textbf{bialgebra} $H$ is a vector space that is at the same time an algebra with multiplication $m$ and unit $u$ and a coalgebra with comultiplication $\Delta$ and counit $\epsilon$, such that $\Delta$ and $\epsilon$ are algebra morphisms (or equivalently that $m$ and $u$ are coalgebra morphisms).\\
    A \textbf{Hopf algebra} is a bialgebra that possesses an additional map $\Sigma: H\rightarrow H$, which makes the following diagram commute:
    \[
    \begin{tikzcd}
        & H\otimes H
        \arrow[rr,"\Sigma\otimes \id"] &&
        H\otimes H
        \arrow[dr,"m"]\\
        H
        \arrow[ur,"\Delta"]
        \arrow[rr, "\epsilon"]
        \arrow[dr,"\Delta"']&&
        k
        \arrow[rr,"u"]&&
        H\\
        &H\otimes H
        \arrow[rr,"\id\otimes \Sigma"']&&
        H\otimes H
        \arrow[ur,"m"']   
    \end{tikzcd}
    \]
\end{definition}

\begin{definition}
\label{def_hopf_alg}
    Let $C$ be a $k$-coalgebra.
    A (right) \textbf{comodule} for $C$ is a vectorspace $M$ endowed with a linear map (a coaction) $\rho:M\rightarrow M \otimes C$ such that the following diagrams commute 
    \[
    \begin{tikzcd}
        M \arrow[r,"\rho"] \arrow[d, "\rho"'] & M \otimes C \arrow[d,"\rho \otimes \id_C"] \\
        M \otimes C \arrow[r,"\id_M \otimes \Delta"'] & M\otimes C\otimes C
    \end{tikzcd}
    \hspace{1cm}
    \begin{tikzcd}
        M \arrow[r, "\rho"]\arrow[dr, "\cong"']  &  M\otimes C \arrow[d,"\id_M\otimes \epsilon"] \\
         &  M \otimes k
    \end{tikzcd}
    \]
    A \textbf{subcomodule} of a comodule $M$ is a $k$-subspace $N$ such that $\rho(N)\subset N\otimes C$.
\end{definition}

\begin{remark}
    Similarly to Proposition \ref{dualize_algebra_coalgebra}, there is an equivalence between (left) finite-dimensional $C$-comodules and (right) finite-dimensional $C^*$-modules. 
    For $A$ a finite-dimensional $k$-algebra, we even have the following equivalence of categories: $\Mod^{\text{f.d.}}_A \simeq \Comod^{\text{f.d.}}_{A^*}$.
\end{remark}

\noindent We now state the fundamental theorem of comodules, 

\begin{theorem}
\label{fundamental_theorem_comodule}
    Let $C$ be a $k$-coalgebra and $M$ a right $C$-comodule.
    The any element $m\in M$ is contained in a $C$-subcomodule that is finite-dimensional over $k$. Consequently, any finite-dimensional subspace $V\subset M$ is contained in a $C$-subcomodule of $M$ that is finite-dimensional over $k$.
\end{theorem}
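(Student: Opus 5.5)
We prove the first claim by writing down an explicit finite-dimensional subcomodule containing $m$. Fix a basis $(c_i)_{i\in I}$ of $C$ as a $k$-vector space. Then $\rho(m)$, being an element of $M\otimes C$, can be written uniquely as a finite sum
$$\rho(m)=\sum_{i\in I_0} m_i\otimes c_i,$$
with $I_0\subset I$ finite and $m_i\in M$. Let $N$ be the $k$-span of the finitely many vectors $m_i$, $i\in I_0$; it is finite-dimensional, and the claim is that $N$ is a subcomodule containing $m$.

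To see that $m\in N$, apply the counit axiom of Definition \ref{def_hopf_alg}. It gives
$$m=(\id_M\otimes\epsilon)\rho(m)=\sum_i \epsilon(c_i)\, m_i\in N.$$

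The main point is stability, that is, $\rho(N)\subset N\otimes C$. Expand each $\rho(m_i)=\sum_j m_{ij}\otimes c_j$ in the same basis, and write $\Delta(c_i)=\sum_{j,l} a_{ijl}\, c_j\otimes c_l$ with scalars $a_{ijl}\in k$. Coassociativity of the coaction, $(\rho\otimes\id_C)\rho=(\id_M\otimes\Delta)\rho$, then reads
$$\sum_{i,j} m_{ij}\otimes c_j\otimes c_i=\sum_{i,j,l} a_{ijl}\, m_i\otimes c_j\otimes c_l .$$
Since the tensors $c_j\otimes c_l$ form a basis of $C\otimes C$, we may compare coefficients of $c_j\otimes c_l$ on both sides. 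This gives
$$m_{lj}=\sum_i a_{ijl}\, m_i\in N$$
for every $j$ and $l$. Consequently $\rho(m_l)=\sum_j m_{lj}\otimes c_j\in N\otimes C$ for each $l$, and therefore $\rho(N)\subset N\otimes C$. I expect the only delicate step to be the bookkeeping of indices in this comparison; it must be done against the basis $c_j\otimes c_l$, which is legitimate because tensor products over a field are free.

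For the second claim, let $V\subset M$ be finite-dimensional and choose a basis $v_1,\dots,v_r$ of $V$. By the first part, each $v_t$ lies in a finite-dimensional subcomodule $N_t$. A sum of subcomodules is again a subcomodule, since $\rho\bigl(\sum_t N_t\bigr)\subset\sum_t N_t\otimes C$. Hence $N_1+\dots+N_r$ is a finite-dimensional subcomodule of $M$ containing $V$.
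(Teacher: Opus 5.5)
Your proof is correct, and it is the standard argument. The paper gives no proof of its own and cites \cite{Milne_alg_group}, Proposition 4.7, which proceeds exactly as you do: expand $\rho(m)$ in a basis of $C$, take the span of the coefficients, recover $m$ by the counit axiom, compare coefficients of $c_j\otimes c_l$ in the coassociativity identity to get $\rho(N)\subset N\otimes C$, and pass to a finite-dimensional $V$ by summing subcomodules.
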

\begin{proof}
    See \cite{Milne_alg_group}, Proposition $4.7$, where the statement can be generalised to $\mathcal{O}(G)$ an arbitrary $k$-coalgebra
\end{proof}

\begin{remark}
\label{representation_of_group_as_functor}
    Recall that in the last chapter, we identified group schemes with group-valued representable functors (Proposition \ref{group_scheme_are_representable_functors}).
    And that in particular, we can see affine group schemes $G$ over $k$ as representable group-valued functors $R\mapsto \Hom_k(\Spec\:R,G)$ from the category of $k$-algebras to that of groups, that is represented by the coordinate ring $A$ of $G$ which is a Hopf algebra (Remark \ref{Hopf_algebra_group_scheme}).
    In that case, a representation $(V,\rho)$ of an affine group scheme $G$ consists of giving for each $k$-algebra $R$ a $G(R)\rightarrow \End(V\otimes_kR)$. Functorially, this amounts to defining a natural transformation $\rho:G\rightarrow \GL$.
\end{remark}

\begin{proposition}
\label{representations_iso_comodules}
    There is a bijection between right comodules over the commutative Hopf algebra $A$ and left representations of the corresponding affine group scheme.
\end{proposition}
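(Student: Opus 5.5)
The plan is to pass through the functorial description of $G$ from Remark \ref{representation_of_group_as_functor}: a representation of $G$ on a $k$-vector space $V$ is a natural transformation of group-valued functors $G(R)\to \GL(V\otimes_k R)$, natural in the $k$-algebra $R$. By Remark \ref{affine_group_schemes_as_functors}, $G(R)=\Hom_{k}(A,R)$. The key observation is that such a natural family of maps is determined by a single universal element. Take $R=A$ and the universal point $\id_A\in G(A)$. Its image $\rho_A(\id_A)$ is an $A$-linear endomorphism of $V\otimes_k A$. Any $A$-linear map of this form is determined by its restriction to $V\otimes 1$, which gives a $k$-linear map $\rho:V\to V\otimes_k A$.

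Conversely, starting from a $k$-linear $\rho:V\to V\otimes A$ and a point $g\in G(R)$, I will define
\[
\rho_R(g)(v\otimes r)=(\id_V\otimes g)(\rho(v))\cdot r .
\]
A Yoneda-type argument shows that these two constructions are mutually inverse. Naturality forces $\rho_R(g)=(\id\otimes g)\circ\rho_A(\id_A)$ after base change along $g:A\to R$, which is the same argument used in Proposition \ref{group_scheme_are_representable_functors}. This yields a bijection between natural families of $R$-linear endomorphisms and $k$-linear maps $V\to V\otimes A$.

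Next I check that the group-theoretic conditions on the left correspond exactly to the comodule axioms of Definition \ref{def_hopf_alg}.

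\textbf{Unit.} The condition $\rho_R(e)=\id$ is checked at the universal level, where the unit $e$ corresponds to $\epsilon:A\to k$. It becomes $(\id_V\otimes\epsilon)\circ\rho=\id_V$, which is the counit diagram.

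\textbf{Multiplication.} The condition $\rho_R(gh)=\rho_R(g)\rho_R(h)$ is tested on the universal pair in $G(A\otimes A)$, namely the two coprojections $A\to A\otimes A$. Their product is $\Delta$, by Remark \ref{Hopf_algebra_group_scheme}. Unwinding the definition, the left side gives $(\id\otimes\Delta)\circ\rho$ and the right side gives $(\rho\otimes\id)\circ\rho$, which is the coassociativity square.

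\textbf{Invertibility.} Once the unit and multiplicativity hold, invertibility of each $\rho_R(g)$ is automatic. Indeed, $\rho_R(g^{-1})$ is an inverse, with $g^{-1}=g\circ\Sigma$, so the antipode is only needed here and nowhere else.

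Finally, I will verify that morphisms match. A $k$-linear $f:V\to W$ is $G$-equivariant if and only if $(f\otimes\id_A)\circ\rho_V=\rho_W\circ f$, again by testing at the universal point. Hence the bijection is in fact an isomorphism of categories.

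The main obstacle is the multiplicativity step. One has to keep track of which tensor factor each of $g$ and $h$ acts on, and the order of composition matters here. With right comodules and the convention that $G(R)$ acts on the left, the two sides only match if the coordinates of $\Delta$ are read in the correct order. I would first do the computation on elements, writing $\rho(v)=\sum v_i\otimes a_i$ and $\Delta(a)=\sum a'\otimes a''$, and then fix the convention so that the products agree. If needed, I would insert an opposite group at this point, mirroring the ${}^{op}$ appearing in Theorem \ref{aut_iso_fund}.
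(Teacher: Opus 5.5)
Your proposal is correct and follows the same route as the paper. In both, the coaction is read off from the universal point $\id_A\in G(A)$, the action is recovered as $(\id\otimes g)\circ\rho$ extended $R$-linearly, and unit and multiplicativity are matched with the counit and coassociativity axioms. Your testing on $\epsilon$ and on the coprojections in $G(A\otimes A)$, the antipode argument for invertibility, and the check on morphisms fill in steps the paper only asserts, namely that a representation yields a genuine comodule and that each $\pi_R(g)$ is invertible.

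Your closing hedge is unnecessary. With the product $g\cdot h=m_R\circ(g\otimes h)\circ\Delta$, your own universal-pair computation already produces $(\id\otimes\Delta)\circ\rho$ against $(\rho\otimes\id)\circ\rho$ in the right order. So right comodules correspond to left representations directly, with no opposite group.
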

\begin{proof}
    Let $M$ be an $A$-comodule where $A$ is a commutative Hopf algebra.
    Given a $k$-algebra $R$, an element of $G(R)$ corresponds to a $k$-algebra homomorphism $g: A\rightarrow R$.
    The composite map $\pi_R : M\xrightarrow{\rho}M\otimes_kA\xrightarrow{\id\otimes g} M\otimes_kR$ induces an $R$-linear map $M\otimes_kR\rightarrow M\otimes_kR$, so an object of $\GL(M\otimes_kR)$ that depends on $R$ functorially.
    The maps $\pi_R$ are actually actions, as in remark \ref{representation_of_group_as_functor}.
    The identity $e\in G(R)$ is given by the counit $\epsilon:A\rightarrow k \rightarrow R$ and for $g,h\in G(R)$, the product $g\cdot h$ in $G(R)$ is $m_R\circ (g\otimes h) \circ \Delta$.
    By the counit and coassociativity of the comodule $M$, we have $\pi_R(e) = \id$ and $\pi_R(g\cdot h) = \pi_R(g)\circ\pi_R(h)$. 
    So this is a left representation of the affine group scheme $G$.\\
    On the other hand, let $(V,\rho)$ be a left representation of an affine group scheme $G$, so it has an action $\pi_A$, such that for $\id_A\in G(A)$ the identity element, $\pi_A(\id_A):V\otimes A\rightarrow V\otimes A$ is $A$-linear.
    The composition of $\pi_A(\id_A)$ with $v\mapsto v\otimes 1$ defines a map $\nu:V\rightarrow V\otimes A$, which can be seen as a coaction on the $A$-comodule $V$.
    The two construction being inverse to one another, we have a bijection.\\
    
\end{proof}

What we have seen so far is that for an affine group scheme $G$, one can consider its coordinate ring, which is a Hopf algebra.
Then studying its category of representations is the same as studying comodules over its coordinate ring.
Now we want to know if the reverse computation is possible.
By that, we mean if it is possible to recover the affine group scheme $G$ from its category of representations (which is equivalent to a certain category of comodules).
This problem has been extensively studied and is largely understood through the lens of \textit{Tannaka-Krein duality}. 
We shall give a somewhat motivation for this.
We start with the theory of \textit{Morita equivalence}, which states that one can determine an algebra $A$ from its category of modules $\Mod_A$ up to Morita equivalence (see \cite{Etingof}, Chapter $7.8$).
But in the case of finite-dimensional, by considering the associated forgetful functor $\omega:\Mod_A\rightarrow \Vect_k$, our $k$-algebra can be recovered up to \textit{isomorphism} as the endomorphism algebra of the forgetful functor.
If $A$ is a finite-dimensional $k$-algebra, then the category of finite-dimensional $A$-modules is a $k$-linear abelian category. 
The other way around, for $\mathcal{C}$ a $k$-linear abelian category, then one can take $A=\End(P)^{op}$ for $P$ a progenerator of $\mathcal{C}$. That same progenerator in $\mathcal{C}$ represents a forgetful functor $\omega :\mathcal{C}\rightarrow \Vect^{\text{f.d.}}_k$, giving us $A = \End(P)^{op} \cong \End(\omega)$ (see \cite{Etingof}, Chapter $1.8$).
And a similar statement holds for arbitrary $k$-algebras.
What we are interested in is what happens in the case of comodules (corresponding to representations), we want to dualise the finite-dimensional statement to obtain a similar reconstruction but for coalgebras from a category of comodules over a coalgebra.
However, as we saw in Remark \ref{dual_algebara_no_coalgebra} a similar procedure in the infinite-dimensional case fails in general.
Still, it is possible to recover an arbitrary $k$-coalgebra $C$ from the category $\Comod^{\text{f.d.}}_C$ of finite-dimensional right $C$-comodules with the associated forgetful functor $\omega :\Comod^{\text{f.d.}}_C\rightarrow \Vect^{f.d.}_k$.  \\
Our first result in that direction comes from an application of the fundamental theorem of comodules. 
We call the forgetful functor $\omega :\Comod_C\rightarrow \Vect_k$ a \textit{fibre functor} and we define its restriction $\omega^f: \Comod^{\text{f.d.}}_C\rightarrow \Vect^{\text{f.d.}}_k$.

\begin{lemma}
    Let $\omega$ and $\omega^f$ be fibre functors as above, then
    $$\Hom(\omega,\omega) =\Hom(\omega^f,\omega^f)$$
\end{lemma}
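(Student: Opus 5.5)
The plan is to build a restriction map $\Hom(\omega,\omega)\rightarrow\Hom(\omega^f,\omega^f)$ and an extension map going the other way, and then to check that the two are mutually inverse. The fundamental theorem of comodules (Theorem \ref{fundamental_theorem_comodule}) carries the whole argument: every right $C$-comodule $M$ is the directed union of its finite-dimensional subcomodules, because each finite-dimensional subspace sits inside such a subcomodule, and the sum of two finite-dimensional subcomodules is again one. Restriction is the obvious map. Given $\alpha\in\Hom(\omega,\omega)$, the components $\alpha_M$ with $M$ finite-dimensional are still natural with respect to morphisms between finite-dimensional comodules, so they give $\alpha^f\in\Hom(\omega^f,\omega^f)$.

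For the extension, take $\beta\in\Hom(\omega^f,\omega^f)$ and an arbitrary comodule $M$. For $m\in M$, choose a finite-dimensional subcomodule $N\ni m$ and set $\widetilde\beta_M(m):=\beta_N(m)$. The main obstacle is showing that this is well defined, and also showing that the value lands in $M$ at all.
\begin{enumerate}
\item Well-definedness. If $N,N'$ are two finite-dimensional subcomodules containing $m$, then $N+N'$ is a finite-dimensional subcomodule, and the inclusions $N\hookrightarrow N+N'$ and $N'\hookrightarrow N+N'$ are comodule morphisms. Naturality of $\beta$ along these inclusions gives $\beta_N(m)=\beta_{N+N'}(m)=\beta_{N'}(m)$.
\item Linearity. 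For $m,m'\in M$ we compute both values inside a common subcomodule $N\ni m,m'$; there $\beta_N$ is linear, so $\widetilde\beta_M$ is linear.
\end{enumerate}

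Naturality of $\widetilde\beta$ comes next. Let $f:M\rightarrow M'$ be a comodule map and $m\in N\subset M$ with $N$ finite-dimensional. Then $f(N)$ is a finite-dimensional subcomodule of $M'$, and $f|_N:N\rightarrow f(N)$ is a morphism in $\Comod^{\text{f.d.}}_C$. Naturality of $\beta$ therefore gives $f(\beta_N(m))=\beta_{f(N)}(f(m))$, which is exactly $f\circ\widetilde\beta_M=\widetilde\beta_{M'}\circ f$.

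Finally we check that the two constructions are inverse. First, $\widetilde\beta$ restricts to $\beta$: when $M$ is itself finite-dimensional, we may take $N=M$ in the definition. Second, for $\alpha\in\Hom(\omega,\omega)$, naturality of $\alpha$ along each inclusion $N\hookrightarrow M$ gives $\alpha_M(m)=\alpha_N(m)$, so $\widetilde{\alpha^f}=\alpha$. Both constructions are compatible with composition, so the bijection also identifies the monoids of endomorphisms, and in particular identifies their automorphism groups.
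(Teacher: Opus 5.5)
Your proof is correct and rests on the same idea as the paper's: the fundamental theorem of comodules, together with naturality along the inclusions $N\hookrightarrow M$ of finite-dimensional subcomodules. The paper's argument only shows that a transformation on all comodules is determined by its finite-dimensional components, which is injectivity of restriction. Your explicit extension $\widetilde\beta$ supplies the surjectivity half that the paper leaves implicit: well-definedness comes from passing to $N+N'$, and naturality from restricting $f$ to $N\rightarrow f(N)$.
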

\begin{proof}
    Let $M$ be a $C$-comodule and let $N$ be a subcomodule of $M$ via the inclusion $\iota :N \hookrightarrow M$. Then, for $\lambda\in\End(\omega)$, by naturality, the restriction of $\lambda_M$ is given exactly by $\lambda_N$, i.e.
    \[
    \begin{tikzcd}[column sep = large]
        N \arrow[r,"\lambda_N"]\arrow[d,"\iota"'] & N\arrow[d,"\iota"]\\
        M\arrow[r,"\lambda_M"'] & M.
    \end{tikzcd}
    \]
    From the fundamental theorem of comodules \ref{fundamental_theorem_comodule}, $M$ is the sum of all its finite-dimensional subcomodules.
    Hence, $\lambda_M$ is completely determined by the natural transformation $\lambda$ in each of these finite-dimensional subcomodules.
\end{proof}

\begin{remark}
    In light of Proposition \ref{representations_iso_comodules},
    the previous lemma states that in order to study arbitrary representations of an affine group scheme, it suffices to study the finite-dimensional ones.
\end{remark}

\noindent The following lemma will help us recover a coalgebra from its category of comodules with the fibre functor $\omega$. 
We denote $\omega\otimes V$, the functor from $\Comod^{\text{f.d.}}_C\rightarrow \Vect_k$ sending $M\mapsto \omega(M)\otimes V$.

\begin{lemma}
\label{useful_lemma_underlying_coalgebra}
    Let $A$ be a $k$-coalgebra and $\omega^f$ the fibre functor as above.
    The underlying $k$-vector space of $A$ represents the functor $V\mapsto \Hom(\omega, \omega\otimes V)$ on the category $\Vect_k$. In particular, we have 
    $$\Hom(C,V) \cong \Hom(\omega, \omega\otimes V).$$
\end{lemma}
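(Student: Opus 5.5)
We construct mutually inverse maps $\Hom_k(C,V) \rightleftarrows \Hom(\omega,\omega\otimes V)$, natural in $V$. Here $A$ and $C$ both denote the coalgebra, and $\omega$ is understood as $\omega^f$ on $\Comod^{\text{f.d.}}_C$. By the previous lemma, nothing changes if we work on all comodules instead.

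\textbf{From linear maps to transformations.} Given a linear map $f:C\rightarrow V$, define for every comodule $(M,\rho)$
$$\lambda^f_M := (\id_M\otimes f)\circ\rho : \omega(M)\rightarrow \omega(M)\otimes V.$$
Naturality is immediate. For a comodule map $\phi:M\rightarrow N$ we have $\rho_N\circ\phi=(\phi\otimes\id_C)\circ\rho_M$, and applying $\id\otimes f$ gives $\lambda^f_N\circ\phi=(\phi\otimes\id_V)\circ\lambda^f_M$. This assignment $f\mapsto\lambda^f$ is clearly natural in $V$.

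\textbf{From transformations to linear maps.} The natural candidate is to evaluate at the regular comodule $C$ (with $\rho=\Delta$) and apply the counit:
$$f_\lambda := (\epsilon\otimes\id_V)\circ\lambda_C : C\rightarrow V.$$
The regular comodule $C$ is generally not finite-dimensional. There are two ways to handle this. One is to use the previous lemma, extending $\lambda$ to all comodules by gluing over finite-dimensional subcomodules, which is well defined by naturality with respect to inclusions. The other is to define $f_\lambda$ on each finite-dimensional subcomodule $D\subset C$ and check compatibility. Both approaches rest on Theorem \ref{fundamental_theorem_comodule}.

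\textbf{Checking the two composites.} First, $f_{\lambda^f}=(\epsilon\otimes f)\circ\Delta=f$ by the counit axiom.

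The converse, $\lambda^{f_\lambda}=\lambda$, is the main obstacle; I would handle it by the standard trick. For a comodule $M$, the coaction $\rho:M\rightarrow M\otimes C$ is a morphism of comodules, where $M\otimes C$ is given the coaction $\id_M\otimes\Delta$; this is exactly the coassociativity axiom. Moreover, $M\otimes C$ is a direct sum of $\dim M$ copies of $C$ as a comodule. Naturality of $\lambda$ with respect to the maps $C\rightarrow M\otimes C$, $c\mapsto m_i\otimes c$ (for a basis $(m_i)$ of $M$) gives
$$\lambda_{M\otimes C}=\id_M\otimes\lambda_C.$$
Naturality with respect to $\rho$ then gives
$$(\rho\otimes\id_V)\circ\lambda_M=(\id_M\otimes\lambda_C)\circ\rho.$$
Apply $\id_M\otimes\epsilon\otimes\id_V$ to both sides. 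On the left, the counit axiom for $M$ turns $(\id_M\otimes\epsilon)\circ\rho$ into the identity, leaving $\lambda_M$. On the right we get $(\id_M\otimes f_\lambda)\circ\rho=\lambda^{f_\lambda}_M$. So $\lambda^{f_\lambda}=\lambda$.

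The subtle point is that this argument uses $\lambda$ on the possibly infinite-dimensional comodules $C$ and $M\otimes C$. This is exactly what the previous lemma, together with the fundamental theorem of comodules, legitimises: $\lambda$ extends uniquely to arbitrary comodules, compatibly with all morphisms, because every comodule is the union of its finite-dimensional subcomodules. For the step $\lambda_{M\otimes C}=\id_M\otimes\lambda_C$ we also need that $\lambda$ commutes with direct sums. This follows from naturality with respect to the inclusions and projections of the summands, which are comodule maps. The two constructions are therefore mutually inverse, which gives the representability statement.
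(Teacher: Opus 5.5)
Your proof is correct. The paper gives no argument of its own here; it only points to Schauenburg, Lemma 2.2.1. What you write is a self-contained version of the standard argument. In one direction you send $f\mapsto(\id_M\otimes f)\circ\rho$. In the other you evaluate at the regular comodule and then apply $\epsilon$. To show the two are inverse, you use that $\rho\colon M\to M\otimes C$ is a morphism of comodules into a direct sum of copies of $C$, which forces $\lambda$ to be determined by $\lambda_C$.

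The only point to tighten is your appeal to the paper's preceding lemma. That lemma is stated only for endomorphisms of $\omega$. Its proof also gives only uniqueness: each $\lambda_M$ is determined by the $\lambda_N$ on finite-dimensional subcomodules. So the existence of an extension of $\lambda\colon\omega^f\to\omega^f\otimes V$ to $C$ and $M\otimes C$ is something you must supply yourself, as you sketch. That gluing works because sums and images of finite-dimensional subcomodules are again finite-dimensional subcomodules.

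Alternatively, you can stay inside $\Comod^{\text{f.d.}}_C$ throughout. For finite-dimensional $M$ one has $\rho(M)\subset M\otimes D$ for some finite-dimensional subcomodule $D\subset C$, by Theorem~\ref{fundamental_theorem_comodule}. Running your two naturality steps with $D$ in place of $C$ then gives $\lambda_M=(\id_M\otimes f_\lambda)\circ\rho$ directly.
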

\begin{proof}
    See \cite{Schauenburg}, Lemma $2.2.1$. 
\end{proof}


\begin{corollary}
    A $k$-coalgebra $C$ is determined up to unique isomorphism by the category of $\Comod^{\text{f.d.}}_C$ and the functor $\omega$.
\end{corollary}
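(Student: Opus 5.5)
The plan is to get the statement from Lemma \ref{useful_lemma_underlying_coalgebra} by a Yoneda argument. First we recover the underlying vector space of $C$, and then the comultiplication and counit. By that lemma, the functor $V\mapsto \Hom(\omega,\omega\otimes V)$ on $\Vect_k$ is represented by $C$. It is built only from the category $\Comod^{\text{f.d.}}_C$ and the functor $\omega$, so any representing object is determined up to unique isomorphism by the Yoneda lemma (Proposition \ref{yoneda_embeddings}). Concretely, the universal element is the natural transformation $\delta:\omega\rightarrow\omega\otimes C$ whose component at a comodule $M$ is its coaction $\rho_M: M\rightarrow M\otimes C$. Naturality of $\delta$ is exactly the statement that morphisms of comodules commute with the coactions. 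This step is formal once the lemma is granted.

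Next, the coalgebra structure must be read off the pair $(\Comod^{\text{f.d.}}_C,\omega)$ alone. For the comultiplication, consider the composite natural transformation
$$\omega\xrightarrow{\delta}\omega\otimes C\xrightarrow{\delta\otimes\id_C}\omega\otimes C\otimes C,$$
which is an element of $\Hom(\omega,\omega\otimes(C\otimes C))$. By representability it corresponds to a unique linear map $\Delta':C\rightarrow C\otimes C$. To identify $\Delta'$ with $\Delta$, use the coassociativity square in the definition of a comodule: it gives $(\rho_M\otimes\id)\circ\rho_M=(\id\otimes\Delta)\circ\rho_M$ for every $M$. Uniqueness in the Yoneda correspondence then forces $\Delta'=\Delta$. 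Similarly, $\id_\omega\in\Hom(\omega,\omega\otimes k)$ corresponds to a unique map $C\rightarrow k$, and the counit axiom for comodules shows that this map is $\epsilon$. Coassociativity and counitality of $(C,\Delta,\epsilon)$ need no new proof, but uniqueness of the isomorphism requires a check. Suppose $C'$ is another coalgebra with an equivalence $\Comod^{\text{f.d.}}_{C}\simeq\Comod^{\text{f.d.}}_{C'}$ compatible with the forgetful functors. The induced linear isomorphism $C\cong C'$ from the representability step is the unique one intertwining the universal transformations $\delta$ and $\delta'$. Since $\Delta,\epsilon$ were characterised purely in terms of $\delta$, this isomorphism intertwines comultiplications and counits, so it is a coalgebra isomorphism, and it is unique.

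The main obstacle is the input lemma itself, which the paper cites from \cite{Schauenburg}. It requires representability by $C$ even though only finite-dimensional comodules are used and $C$ may be infinite-dimensional. If that had to be proved, the plan would be to use Theorem \ref{fundamental_theorem_comodule}. It writes $C$, as a right comodule over itself, as the filtered union of its finite-dimensional subcomodules $C_i$. A natural transformation $\lambda:\omega\rightarrow\omega\otimes V$ is then determined on each $C_i$, and the maps $(\id\otimes f)\circ\delta$ for $f\in\Hom(C,V)$ exhaust these transformations by evaluating $\lambda_{C_i}$ followed by $\epsilon\otimes\id_V$. The remaining details, checking compatibility along the union, are where the real work lies, and we would take them from the cited lemma.
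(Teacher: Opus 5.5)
Your proposal is correct and follows the paper's own argument. You recover $C$ as the object representing $V\mapsto\Hom(\omega,\omega\otimes V)$ via Lemma \ref{useful_lemma_underlying_coalgebra}, then read off $\Delta$ from $(\delta\otimes\id)\circ\delta$ and $\epsilon$ from the canonical isomorphism $\omega\cong\omega\otimes k$. Your extra checks fill in details the paper leaves implicit: that the comodule axioms force these maps to be the original $\Delta$ and $\epsilon$, and that the induced isomorphism respects the coalgebra structure.
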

\begin{proof}
    By the previous proposition, the underlying vector space of $C$ is uniquely determined up to isomorphism as the representing object of the functor $V\mapsto \Hom(\omega, \omega\otimes V)$.
    W endow this underlying vector space with a coalgebra structure as follows:\\
    We recover the comultiplication $\Delta : C \rightarrow C\otimes_k C$, by considering $\Hom(C,V)\xrightarrow{\sim} \Hom(\omega, \omega\otimes V)$, setting $V= C\otimes_k C$. 
    We see that $\Delta$ corresponds to a canonical natural transformation $\omega\mapsto \omega \otimes (C\otimes_k C)$.
    By considering the canonical natural transformation $\alpha : \omega\rightarrow \omega \otimes C$ induced by the coaction of comodules over $C$, and taking the composition $(\alpha \otimes \id) \circ \alpha :\omega\rightarrow \omega \otimes (C\otimes_k C)$, we obtained the natural transformation we wanted.\\
    Similarly, the counit $\epsilon : C\rightarrow k$ corresponds to the natural isomorphism $\omega\xrightarrow{\sim}\omega\otimes k$.

\end{proof}

\begin{remark}
\label{remark_abelian_category}
    In the literature, it is often shown that the coalgebra $C$ is recovered as the vector space of coendomorphisms of the fibre functor $\omega$, with a coalgebra structure (see \cite{Schauenburg}, Lemma $2.1.9$).
    Moreover, one can show that for a $k$-linear abelian category $\mathcal{C}$, and $\omega$ a $k$-linear fibre functor, there exists a $k$-coalgebra such that $\mathcal{C}$ is equivalent to the category of (right) $C$-comodules of finite-dimension over $k$ (see \cite{Schauenburg}, Theorem $2.2.8$).
\end{remark}

\noindent Assume now that $C$ has an additional structure of a Hopf algebra. 
Then, we have a multiplication map $m: C\otimes_k C\rightarrow C$, and it is a $k$-coalgebra homomorphism.
Using this, we can define a $C$-comodule structure on the vector space $\omega(M)\otimes_k \omega(N)$, for $M,N\in \Comod_C^{\text{f.d.}}$, by defining a coaction as:
$$M\otimes_k N\xrightarrow{\rho_M\otimes\rho_N} M\otimes_k C \otimes_k N \otimes_k C \xrightarrow{\sim} M \otimes_k N \otimes_k  C \otimes_k C \xrightarrow{\id\otimes \id \otimes m} M \otimes_k N\otimes_k C.$$
In the same way, each $k$-linear map $C\otimes_k C\rightarrow V$ induces a $k$-linear map $\omega(M)\otimes_k \omega(N) \rightarrow \omega(M)\otimes_k \omega(N)\otimes_k V$. 
We then have the following analogue of Lemma \ref{useful_lemma_underlying_coalgebra}.

\begin{lemma}
\label{useful_lemma_underlying_coalgebra_2}

    The underlying $k$-vector space of $C\otimes_kC$ represents the functor $V\mapsto \Hom_{\Vect_k}(\omega\otimes\omega, \omega\otimes\omega\otimes V)$.
    In particular, we have 
    $$\Hom(C\otimes_kC, V)\cong \Hom_{\Vect_k}(\omega\otimes\omega , \omega\otimes\omega \otimes V).$$
\end{lemma}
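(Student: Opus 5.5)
We prove Lemma \ref{useful_lemma_underlying_coalgebra_2} by mimicking the proof of Lemma \ref{useful_lemma_underlying_coalgebra}, now for the functor $\omega\otimes\omega$ on the product category $\Comod^{\text{f.d.}}_C\times\Comod^{\text{f.d.}}_C$, sending $(M,N)\mapsto \omega(M)\otimes_k\omega(N)$. The key observation is that a pair $(M,N)$ of finite-dimensional $C$-comodules is the same as a finite-dimensional comodule over the tensor product coalgebra $C\otimes_k C$. We do not use the multiplication $m$ here. The comultiplication on $C\otimes_k C$ is $(\id\otimes\tau\otimes\id)\circ(\Delta\otimes\Delta)$, with $\tau$ the flip, and its counit is $\epsilon\otimes\epsilon$. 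The $C\otimes_kC$-coaction on $M\otimes_k N$ is $(\id\otimes\tau\otimes\id)\circ(\rho_M\otimes\rho_N)$.

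First we construct the map. A $k$-linear $f:C\otimes_kC\to V$ gives, for each $(M,N)$, the map
$$M\otimes N\xrightarrow{\rho_M\otimes\rho_N} M\otimes C\otimes N\otimes C\xrightarrow{\sim} M\otimes N\otimes C\otimes C\xrightarrow{\id\otimes\id\otimes f} M\otimes N\otimes V.$$
This is natural in $M$ and in $N$ separately, since comodule morphisms commute with the coactions.

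Next we construct the inverse. Given a natural transformation $\lambda:\omega\otimes\omega\to\omega\otimes\omega\otimes V$, we define $f_\lambda$ on a finite-dimensional subspace $D\subset C$. By Theorem \ref{fundamental_theorem_comodule}, applied to $C$ as a right comodule over itself via $\Delta$, $D$ lies in a finite-dimensional subcomodule $E\subset C$. We then set $f_\lambda|_{D\otimes D}:=(\epsilon\otimes\epsilon\otimes\id_V)\circ\lambda_{(E,E)}$, restricted to $D\otimes D$. Naturality of $\lambda$ with respect to the inclusions $E\hookrightarrow E'$ shows this is independent of $E$. Since $C\otimes C$ is the union of the subspaces $D\otimes D$, we obtain a well-defined $f_\lambda$.

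Then we check that the two constructions are mutually inverse. Starting from $f$, the counit axiom gives $(\epsilon\otimes\epsilon\otimes\id)\circ(\id\otimes\id\otimes f)\circ(\text{coaction})=f$ on $E\otimes E$, so $f_\lambda=f$.

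Conversely, start from $\lambda$ and fix $(M,N)$. By the fundamental theorem of comodules, each comodule is the sum of finite-dimensional subcomodules, so it suffices to show that $\lambda_{(M,N)}$ agrees with the map induced by $f_\lambda$. For this we use the standard trick: for $\xi\in M^*$, the map $(\xi\otimes\id)\circ\rho_M:M\to C$ is a comodule morphism into $C$. Its image is finite-dimensional, so it lands in some finite-dimensional subcomodule $E$. Applying naturality of $\lambda$ in each variable to these morphisms, for $\xi\in M^*$ and $\eta\in N^*$, shows that $(\xi\otimes\eta\otimes\id)\circ\lambda_{(M,N)}$ coincides with $f_\lambda\circ\big((\xi\otimes\id)\rho_M\otimes(\eta\otimes\id)\rho_N\big)$. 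Since $M$ and $N$ are finite-dimensional, functionals of the form $\xi\otimes\eta$ separate points of $M\otimes N\otimes V$, and equality follows.

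Finally, naturality of the bijection in $V$ is clear from the construction. The main obstacle is this last step, namely handling the two-variable naturality while $C$ itself is infinite-dimensional. The approach is to reduce everything to finite-dimensional subcomodules of $C$ via the comodule maps $(\xi\otimes\id)\circ\rho_M$. Alternatively, the whole lemma follows at once from Lemma \ref{useful_lemma_underlying_coalgebra} applied to the coalgebra $C\otimes_kC$, once one checks that restricting from all finite-dimensional $C\otimes_k C$-comodules to the external products $M\otimes N$ loses nothing. Every finite-dimensional $C\otimes_kC$-comodule embeds into a finite sum of such products, namely the subcomodules of $E\otimes E$, which would give a second route.
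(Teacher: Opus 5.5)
Your proof is correct and follows the same route as the paper. The paper builds exactly the transformation $f\mapsto(\id\otimes\id\otimes f)\circ(\text{flip})\circ(\rho_M\otimes\rho_N)$ and then only asserts that bijectivity is ``analogous'' to Lemma~\ref{useful_lemma_underlying_coalgebra}, deferring to Schauenburg; you actually carry that argument out, recovering $f_\lambda$ from $(\epsilon\otimes\epsilon\otimes\id_V)\circ\lambda_{(E,E)}$ on finite-dimensional subcomodules $E\subset C$ and checking the other composite via the colinear coefficient maps $(\xi\otimes\id)\circ\rho_M\colon M\to C$.

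Two asides in your proposal are inaccurate, though your main argument does not use them:
\begin{itemize}
\item[$a)$] A pair $(M,N)$ is not ``the same as'' a finite-dimensional $C\otimes_kC$-comodule, since not every such comodule is an external product $M\otimes N$.
\item[$b)$] In your second route, embedding a $C\otimes_kC$-comodule into copies of $E\otimes E$ only shows that restricting natural transformations to external products is injective. Surjectivity would additionally need $\lambda$ to be natural for all $C\otimes_kC$-colinear maps between external products, not just product maps $\alpha\otimes\beta$, and proving that is essentially your main argument again.
\end{itemize}
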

\begin{proof}
    Since we have just seen that each $k$-linear map $C\otimes_k C\rightarrow V$ induces a $k$-linear map $\omega(M)\otimes_k \omega(N) \rightarrow \omega(M)\otimes_k \omega(N)\otimes_k V$ for each $M,N\in\Comod^{\text{f.d.}}_C$.
    This yields a natural transformation $\Hom(C\otimes_kC,\_)\rightarrow \Hom(\omega\otimes\omega ,\omega\otimes \omega\_)$.
    And the proof that it is an isomorphism is analogous to thath of Lemma \ref{useful_lemma_underlying_coalgebra}.
    (again see \cite{Schauenburg} Lemma $2.2.1$).
\end{proof}

\noindent Tannaka-Krein duality tells us that the coalgebra we recover from its category of finite-dimensional comodules and $\omega$ has extra structure when $\Comod^{\text{f.d.}}$ and the functor $\omega$ also have extra configurations. 
We now introduce the language necessary to understand those extra conditions.\\

\subsection{Monoidal Categories and Tannaka-Krein Theorem}

Monoidal categories, also often called \textit{tensor categories}, can be seen as the generalisation of categories carrying a tensor product, such as the categories of vector spaces, abelian groups, $R$-modules, or $R$-algebras.
We will use the theory developed in the previous section to prove that the category of finite-dimensional representations of an affine group scheme over $k$ is equivalent to a monoidal category under necessary and sufficient conditions. 
And we will end this section by introducing the Tannaka-Krein Duality theorem.

\subsubsection{Monoidal Categories}

\begin{definition}
    A \textbf{monoidal category} is 
    a category $\mathcal{C}$ together with 
    a bifunctor $\_\otimes\_:\mathcal{C}\times \mathcal{C}\rightarrow \mathcal{C}$ called the \textbf{tensor product}, 
    a \textbf{unit object} $1$ in $\mathcal{C}$ together with natural isomorphisms $\Phi$ of functors from $\mathcal{C}\times\mathcal{C}\times\mathcal{C}$ to $\mathcal{C}$ given on a triple $(X,Y,Z)$ of objects by 
    $$\Phi_{X,Y,Z} : (X\otimes Y)\otimes Z \xrightarrow{\sim} X\otimes (Y\otimes Z),$$
    and natural isomorphisms $l$ from $\otimes\circ (I\times \id)$ to $\id$ 
    and $r$ from $\otimes \circ (\id\times I)$ to $\id$.
    Such that the following diagrams commute for each four-tuple $(X,Y,Z,W)$ of objects in $\mathcal{C}$,
    \[
    \begin{tikzcd}[row sep = large]
        (X\otimes (Y\otimes Z)) \otimes W 
        \arrow[rr, "\Phi_{X,Y,Z}\otimes\id_W"] \arrow[d,"\Phi_{X,Y\otimes Z,W}"'] && ((X\otimes Y) \otimes Z) \otimes W \arrow[d,"\Phi_{X\otimes Y, Z, W}"]\\
        X\otimes ((Y\otimes Z)\otimes W)
        \arrow[dr, "\id_X\otimes \Phi_{Y,Z,W}"'] && (X\otimes Y)\otimes (Z\otimes W)
        \arrow[dl, "\Phi_{X,Y, Z\otimes W}"]\\
        & X\otimes (Y\otimes (Z\otimes W))
    \end{tikzcd}
    \]
    \[
    \begin{tikzcd}
        (X\otimes I)\otimes Y 
        \arrow[rr,"\Phi_{X,I,Y}"]
        \arrow[dr,"r_X\otimes \id_Y"']&&
        X\otimes (I\otimes Y)
        \arrow[dl,"\id_X\otimes l_Y"]\\
        & X\otimes Y
    \end{tikzcd}
    \]
    The first diagram is called the associativity constraint, while the second is referred to as the unitary constraint.
    A monoidal category $\mathcal{C}$ is called \textbf{strict} if $\nu$ and $\Phi$ are identities.
\end{definition}

\begin{remark}
    A consequence of Mac Lane's coherence theorem is that everything that can be proven in a strict monoidal category is also valid in a monoidal category. 
    For that reason, we will usually denote monoidal categories by tuples $(\mathcal{C},\otimes, I)$, omitting associativity and unitary constraints.
    We will formulate this coherence theorem later but assume its validity from now on.
\end{remark}

\begin{examples}\textbf{ }
\begin{itemize}
    \item[$a)$] Let $k$ be a field and $R$ a commutative ring or a $k$-algebra. The tuples $(\Vect_k, \otimes, k)$ and $(\Mod_R, \otimes_R,R)$ are clearly monoidal categories, and so is $(\SET, \times, \{*\})$
    
    \item[$b)$] Let $G$ be a group. Then the category $(\Rep^{\text{f.d.}}_k(G), \otimes, k)$ is a monoidal category.\\
    For two representations $(V,\alpha)$ and $(W,\beta)$, we define
    $$(V,\alpha) \otimes (W,\beta) = (V\otimes W, \alpha\otimes \beta)$$
    where $V\otimes W$ is the usual tensor product of $k$-vector spaces, and $\alpha\otimes\beta:G\rightarrow \GL(V\otimes W)$ is defined as 
    $$(\alpha\otimes\beta)(g)(\sum_i v_i\otimes w_i) = \sum_i \alpha(g)(v_i)\otimes \beta(g)(w_i).$$
    The unit being the trivial representation on $k$, i.e. $(k,\epsilon)$, where
    $$\epsilon:G\rightarrow \GL(k) = k, \: \epsilon(g)= 1.$$
\end{itemize}
\end{examples}

\begin{definition}
    A monoidal category $(\mathcal{C}, \otimes, I)$ is called \textbf{braided} if there exists an isomorphism $\gamma$ of functors from $\mathcal{C}\times \mathcal{C}$ to $\mathcal{C}$ given on a pair $(X,Y)$ of objects by 
    $\gamma_{X,Y} : X\otimes Y\xrightarrow{\sim} Y\otimes X$
    such that the following diagrams commute for all $X,Y,Z\in \mathcal{C}$,
    \[
    \begin{tikzcd}[column sep = large]
        (X\otimes Y)\otimes Z  
        \arrow[r,"\Phi_{X,Y,Z}"] 
        \arrow[d, "\gamma_{X,Y}\otimes\id_Z"'] & X\otimes (Y\otimes Z)
    \arrow[d,"\gamma_{X,Y\otimes Z}"]\\
        (Y\otimes X)\otimes Z
        \arrow[d,"\Phi_{Y,X,Z}"']&
        (Y\otimes Z)\otimes X
        \arrow[d,"\Phi_{Y,Z,X}"]\\
        Y\otimes (X\otimes Z) 
        \arrow[r, "\id_Y\otimes\gamma_{X,Z}"'] &
        Y\otimes(Z\otimes X)
    \end{tikzcd}
    \hspace{1cm}
    \begin{tikzcd}[column sep = large]
        X\otimes (Y\otimes Z) 
        \arrow[r, "\Phi_{X,Y,Z}^{-1}"] \arrow[d,"\id_X\otimes \gamma_{Y,Z}"']  & 
        (X\otimes Y)\otimes Z
        \arrow[d,"\gamma_{X\otimes Y, Z}"]\\
        X\otimes (Y\otimes Z) \arrow[d,"\Phi_{X,Z,Y}^{-1}"']  
        &  Z\otimes (X\otimes Y)
        \arrow[d,"\Phi_{Z,X,Y}^{-1}"]\\
        (X\otimes Z)\otimes Y
        \arrow[r,"\gamma_{X,Z}\otimes \id_Y"']&
        (Z\otimes X)\otimes Y
    \end{tikzcd}
    \]
    And if $\gamma_{X,Y} = \gamma^{-1}_{Y,X}$ for all $X,Y\in \mathcal{C}$, then $\mathcal{C}$ is a \textbf{symmetric} monoidal category.
\end{definition}

\begin{definition}
    We say that a symmetric monoidal category $(\mathcal{C},\otimes,I)$ is \textbf{left rigid} if each object $X$ in $\mathcal{C}$ is left rigid.
    And we say that an object $X$ is left rigid if there exists an object $X^*$ together with morphisms $\ev:X\otimes X^*\rightarrow I$ and
    $\coev:I\rightarrow X^*\otimes X$ such that the following diagrams commute,
    \[
    \begin{tikzcd}
        X 
        \arrow[r,"\cong"] 
        \arrow[d, "\cong"'] & 
        I\otimes X \\
        X\otimes I
        \arrow[r,"\id_X\otimes \coev"'] &  X\otimes X^*\otimes X 
        \arrow[u,"\ev\otimes\id_X"']
    \end{tikzcd}
    \hspace{1cm}
    \begin{tikzcd}
        X^* 
        \arrow[r, "\cong"] 
        \arrow[d,"\cong"']  & 
        X^*\otimes I\\
        I \otimes X^* 
        \arrow[r,"\coev\otimes\id_{X^*}"']  &  
        X^*\otimes X\otimes X^*
        \arrow[u,"\id_{X^*}\otimes \ev"']
    \end{tikzcd}
    \]
    We define symmetrically \textbf{right dual} objects in a monoidal category and say that a monoidal category is right dual if all of its objects admit right duals.
    If every object of a monoidal category admits simultaneously right and left duals, we say that the category is \textbf{rigid}.
    Notice that if a monoidal category is symmetric, it is left rigid if and only if it is right rigid and its left and right duals coincide.
\end{definition}

\begin{remark} \textbf{ }
\begin{itemize}
    \item[$a)$] In a rigid monoidal category,
    the dual of $(X\otimes Y)$ is $Y^*\otimes X^*$.\\
    The evaluation and coevaluation maps are respectively given by the composites \\
    $(X\otimes Y) \otimes (Y^* \otimes X^*) \xrightarrow[]{\id\otimes \ev_Y\otimes \id} X\otimes I\otimes X^*\xrightarrow[]{\sim} X\otimes X^*\xrightarrow[]{\ev_X} I$, and \\
    $I\xrightarrow[]{\coev_Y} Y^*\otimes Y \xrightarrow[]{\sim} Y^*\otimes I\otimes Y \xrightarrow[]{\id\otimes \coev\id} Y^*\otimes X^*\otimes X\otimes Y$.
    
    \item[$b)$] In a rigid monoidal category, every morphism $f:X\rightarrow Y$ has a \textbf{transpose} :
    it is the morphism $f^t: Y^*\rightarrow X^*$ defined as the composite
    $$Y^*\xrightarrow[]{\sim} I\otimes Y^*\xrightarrow[]{\coev\otimes\id} X^*\otimes X\otimes Y^* \xrightarrow[]{\id\otimes f\otimes \id} X^*\otimes Y\otimes Y^* \xrightarrow[]{\id\otimes \ev} X^*\otimes I \xrightarrow[]{\sim} X^*.$$
    And the map $f\mapsto f^t$ induces a bijection $\Hom(X,Y)\xrightarrow[]{\sim}\Hom(Y^*,X^*)$.
\end{itemize}
    
\end{remark}

\begin{examples}\textbf{ }
\begin{itemize}
    \item[$a)$] The category $(\Vect^{\text{f.d.}}_k, \otimes, k)$ is rigid.\\
    Let $V$ be a finite-dimensional $k$-vector space with basis $\{e_1,...,e_n\}$ and consider its dual vector space $\Hom(V,k)$ with corresponding dual basis $\{e^1,...,e^n\}$.
    We define the maps $\ev: V\otimes \Hom(V,k)\rightarrow k$ mapping $v\otimes f\mapsto f(v)$ 
    and $\coev : k\rightarrow \Hom(V,k)\otimes V$ sending $r\mapsto r\sum e^i\otimes e_i$
    Let $v = \sum_{i=1}^n v_ie_i\in V$, the following equation satisfy rigidity constraints :
    \begin{equation*}
    \begin{split}
        v\mapsto v\otimes 1 & \mapsto v\otimes \Bigl(\sum^n_{i=1}e^i\otimes e_i\Bigr)\\
        & = v \otimes \Bigl(\sum^n_{i=1}e^i \otimes \sum^n_{i=1}e_i\Bigr)\\
        & = \Bigl(\sum^n_{i=1} v_ie_i \otimes \sum^n_{i=1} e^i\Bigr)\otimes \sum^n_{i=1} e_i\\
        & \mapsto \sum^n_{i=1} v_i \otimes \sum^n_{i=1} e_i \mapsto \sum^n_{i=1} v_ie_i = v.
    \end{split}
    \end{equation*}
    And the second constraint is similarly verified. 
    Since in the category of finite-dimensional vector space over $k$, we have a canonical isomorphism $V\otimes V^*\cong V^*\otimes V$, the above equation is also symmetrically true, making $\Vect_k$ a rigid category.

    \item[$b)$] For $G$ a group, the category $(\Rep(G), \otimes, k)$ is rigid.\\
    Let $(V,\rho)$ be a representation of $G$, we define its dual representation to be the pair $(\Hom(V,k),\rho_{V^*})$ where for all $v\in V$, $\rho_V^*(g)(f(v)) = f(\rho_V(g^{-1})(v))$.
    
    The evaluation and coevaluation maps are defined similarly as above, here we just have to check that they are $G$-linear :
    \begin{equation*}
    \begin{split}
        \ev(\rho_{V\otimes V^*}(g)(v\otimes f)) &= \ev (\rho_V(g)(v)\otimes \rho_{V^*}(g)(f))\\
        & = \rho_{V^*}(g)(f(\rho_V(g)(v))) \\
        & = f(\rho_V(g^{-1})(\rho_V(g)(v))) \\
        & = f(\rho_V(g^{-1}g)(v))\\
        & = f(v) = \ev (v\otimes f)
    \end{split}
    \end{equation*}
    And since it is sufficient to check for a basis element, we get that for $1\in k$
    \begin{equation*}
    \begin{split}
        \rho_{V^*\otimes V}(g)(\coev(1)) = \rho_{V^*\otimes V}(g)\Bigl(\sum^n_{i=1} e^i\otimes e_i\Bigr) & = \sum^n_{i=1}\rho_{V^*}(g)(e^i)\otimes \rho_V(g)(e_i)\\
        & =\sum^n_{i=1} \Bigl( \sum_{j=1}^n g^{-1}_{ji} e^n\Bigr)\otimes \Bigl(\sum^n_{l=1}g_{il}e_j\Bigr) \\
        &= \sum_{i=1}^n\sum_{j=1}^n\sum_{l=1}^n g^{-1}_{ji}g_{il}(e^j\otimes e_l)\\
        & = \sum^n_{j=1}\sum^n_{l=1}(\delta_{jl}(e^j\otimes e_l)) = \sum^n_{i=1} e^i\otimes e_i
    \end{split}
    \end{equation*}
    We could have proved the same by considering right duals objects, hence $\Rep(G)$ is a rigid category.
    
\end{itemize}
\end{examples}

\begin{definition}
    Let $(\mathcal{C}, \otimes, I)$ and $(\mathcal{D}, \boxtimes, J)$ be monoidal categories.
    A \textbf{monoidal functor} from $\mathcal{C}$ to $\mathcal{D}$ is a functor $F$ together with a natural transformation $\Omega$ of functors from $\mathcal{C}\times \mathcal{C}$ to $\mathcal{D}$ given a pair $(X,Y)$ of objects in $\mathcal{C}$ by 
    $\Omega_{X,Y} : F(X\otimes Y)\rightarrow F(X) \boxtimes F(Y)$
    such that the following diagram commute for all $X,Y,Z\in \mathcal{C}$,
    \[
    \begin{tikzcd}[row sep = large, column sep = huge]
        F((X\otimes Y)\otimes Z) \arrow[r, "F(\Phi_{X,Y,Z})"] \arrow[d, "\Omega_{X\otimes Y, Z}"] & 
        F(X\otimes (Y\otimes Z))
        \arrow[d, "\Omega_{X,Y\otimes Z}"]\\
        F(X\otimes Y) \boxtimes F(Z)
        \arrow[d,"\Omega_{X,Y}\boxtimes \id_{F(Z)}"] & 
        F(X)\boxtimes F(Y\otimes Z)
        \arrow[d, "\id_{F(X)}\boxtimes \Omega_{Y,Z}"]\\
        (F(X)\boxtimes F(Y))\boxtimes F(Z)
        \arrow[r,"\Psi_{F(X),F(Y),F(Z)}"'] &
        F(X)\boxtimes (F(Y)\boxtimes F(Z))
    \end{tikzcd}
    \]
    Moreover, we require that there is a morphism $\Omega_0 : J\rightarrow F(I)$ such that the following diagrams commute : 
    \[
    \begin{tikzcd}[column sep = large]
        F(X)\boxtimes J
        \arrow[r,"\id_{F(X)}\boxtimes \Omega_0"] 
        \arrow[d, "\cong"'] & 
        F(X)\otimes F(I) 
        \arrow[d,"\Omega_{X,I}"]\\
        F(X) 
        \arrow[r,"\cong"'] &  
        F(X\otimes I) 
    \end{tikzcd}
    \hspace{1cm}
    \begin{tikzcd}[column sep = large]
        J \boxtimes F(X)
        \arrow[r," \Omega_0 \boxtimes \id_{F(X)}"] 
        \arrow[d, "\cong"']& 
        F(I)\otimes F(X) 
        \arrow[d,"\Omega_{I,X}"]\\
        F(X) 
        \arrow[r,"\cong"'] &  
        F(I\otimes X) 
    \end{tikzcd}
    \]
    We say that a monoidal functor is \textbf{strong} if $\Omega_0$ and $\Omega_{X,Y}$ are isomorphisms for all $X,Y\in \mathcal{C}$, and that it is \textbf{strict} if those maps are identities.
\end{definition}

\begin{example} \textbf{ }
\begin{itemize}
    \item[$a)$] Let $G$ be a group. The forgetful functor $U:\Rep_k^{\text{f.d.}}(G)\rightarrow \Vect_k$ is monoidal.\\
    And the maps $\Omega_{V,W}: U(V\otimes W,\alpha\otimes\beta) \xrightarrow[]{\sim} U(V,\alpha)\otimes U(W,\beta)$ and $k\mapsto U(k,\epsilon) =k$ are identities making it a strict monoidal functor.
\end{itemize}
\end{example}

\begin{definition}
    Let $(F,\Omega)$ and $(G, \Lambda)$ be monoidal functors from $(\mathcal{C},\otimes,I)$ to $(\mathcal{D},\boxtimes,J)$.
    A \textbf{monoidal natural transformation} $\alpha:F\rightarrow G$ is a natural transformation such that for $X,Y\in \mathcal{C}$, we have the following compatibility conditions,
    \[
    \begin{tikzcd}
        F(X\otimes Y) 
        \arrow[r, "\alpha_{X\otimes Y}"] \arrow[d, "\Omega_{X,Y}"'] & 
        G(X\otimes Y) 
        \arrow[d, "\Lambda_{X,Y}"']\\
        F(X)\boxtimes F(Y) 
        \arrow[r,"\alpha_X\boxtimes \alpha_Y"'] & 
        G(X) \boxtimes G(Y) 
    \end{tikzcd}
    \hspace{1cm}
    \begin{tikzcd}
        F(I) 
        \arrow[rr, "\alpha_I"] \arrow[dr,"\cong"']  && 
        G(I) 
        \arrow[dl,"\cong"]\\
        & J
    \end{tikzcd}
    \]
    We say that two monoidal categories are \textbf{monoidally equivalent} if there exists an equivalence of categories that is a strong monoidal functor.
\end{definition}
\begin{remark}
    If a monoidal functor is an equivalence of categories, then it is automatically a strong monoidal functor
\end{remark}

\noindent The next theorem is Mac Lane's coherence theorem. It can be understood as a strictification result and the original statement along with its original proof can be found in \cite{McLane_Sau}, Chapter $VII.\:2$ of 
\begin{theorem}
\label{Mac_Lane_coherence}
    Any monoidal category is monoidally equivalent to a strict monoidal category.
\end{theorem}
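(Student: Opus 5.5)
I would prove Theorem \ref{Mac_Lane_coherence} by a Yoneda-type strictification. I would not try to build a strict category by hand out of words in objects. Given a monoidal category $(\mathcal{C},\otimes,I)$, I define a category $\mathcal{S}$ whose objects are pairs $(F,\theta)$. Here $F:\mathcal{C}\rightarrow\mathcal{C}$ is a functor and $\theta$ is a natural isomorphism $\theta_{X,Y}: F(X)\otimes Y\xrightarrow{\sim} F(X\otimes Y)$, compatible with the associator $\Phi$ in the evident pentagon-shaped way. In other words, $(F,\theta)$ is a right $\mathcal{C}$-module endofunctor. A morphism $(F,\theta)\rightarrow(F',\theta')$ is a natural transformation commuting with $\theta,\theta'$. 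The tensor product on $\mathcal{S}$ is composition of functors, $(F,\theta)\otimes(G,\eta)=(F\circ G,\ \ldots)$, with the structure isomorphism obtained by pasting $\theta$ and $\eta$, and the unit is $(\id_{\mathcal{C}},\id)$. Since composition of functors is strictly associative and strictly unital, $\mathcal{S}$ is a strict monoidal category. I only need to check that the pasted $\theta$ again satisfies the compatibility condition, which is a direct diagram chase.

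Next I define the comparison functor $L:\mathcal{C}\rightarrow\mathcal{S}$ by $L(A)=(A\otimes\_,\ \Phi_{A,\_,\_})$, i.e.\ left multiplication by $A$, with $\theta_{X,Y}=\Phi_{A,X,Y}:(A\otimes X)\otimes Y\rightarrow A\otimes(X\otimes Y)$. On a morphism $f$, $L(f)=f\otimes\id$. The condition that $\Phi_{A,\_,\_}$ is a valid structure isomorphism is exactly the pentagon axiom in the definition of monoidal category. The monoidal structure $\Omega_{A,B}:L(A\otimes B)\rightarrow L(A)\circ L(B)$ is given componentwise by $\Phi_{A,B,X}$, and $\Omega_0:\id\rightarrow L(I)$ by $l_X^{-1}$. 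Checking the axioms of a strong monoidal functor again reduces to the pentagon and the unitary (triangle) constraint. With these checks done, $L$ is a strong monoidal functor.

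It remains to show that $L$ is an equivalence onto its essential image $\mathcal{S}'\subset\mathcal{S}$. I take $\mathcal{S}'$ to be the full subcategory of objects isomorphic to some $L(A)$; it is closed under composition because $L(A)\circ L(B)\cong L(A\otimes B)$, and it contains the unit, so it is a strict monoidal subcategory. Essential surjectivity onto $\mathcal{S}'$ then holds by construction, and by the remark that a monoidal equivalence of categories is automatically strong, full faithfulness is all that is left.

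This full faithfulness is the step I expect to be the main obstacle. The key observation is that a module transformation $\alpha:L(A)\rightarrow L(B)$ is determined by its component at the unit. Concretely, evaluating at $X=I$ gives $\alpha_I:A\otimes I\rightarrow B\otimes I$, and I set $f=r_B\circ\alpha_I\circ r_A^{-1}$. Compatibility of $\alpha$ with $\theta$ at the pair $(I,Y)$, combined with naturality and the triangle axiom, should force $\alpha_Y=f\otimes\id_Y$. That gives surjectivity of $\Hom(A,B)\rightarrow\Hom(L(A),L(B))$. Injectivity follows since $f\otimes\id_I$ determines $f$ via the unitors $r_A,r_B$. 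Here I would be careful to use only the given triangle axiom and not the derived identities relating $l_I$ and $r_I$, proving any such identity I need along the way.
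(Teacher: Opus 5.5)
The paper gives no proof of this theorem; it only refers to Mac Lane. There, strictification rests on the coherence theorem, which says that every formal diagram built from $\Phi$, $l$, $r$ commutes and is proved by a normal-form argument on bracketed words. The strict category has finite strings of objects of $\mathcal{C}$ as objects, morphisms between strings are morphisms between their left-bracketed tensor products, and concatenation is the tensor product; coherence is what makes this tensor product strictly associative on morphisms.

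Your proposal is correct and takes a genuinely different route: a Cayley/Yoneda-type embedding into the strict category of right $\mathcal{C}$-module endofunctors, as in Joyal--Street or Etingof--Gelaki--Nikshych--Ostrik. It avoids the combinatorial coherence theorem entirely, using only the pentagon, the triangle and naturality, and coherence can be recovered afterwards from the strictification. The cost is a less concrete strict model whose objects are functors with extra structure. Also, $\mathcal{S}$ need not be locally small when $\mathcal{C}$ is large; this is harmless on $\mathcal{S}'$, which is locally small by full faithfulness of $L$.

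The step you flag as the main obstacle is in fact immediate. Compatibility of $\alpha$ at $(I,Y)$, naturality of $\alpha$ with respect to $l_Y$, and the triangle $(\id_A\otimes l_Y)\circ\Phi_{A,I,Y}=r_A\otimes\id_Y$ give $\alpha_Y\circ(r_A\otimes\id_Y)=(r_B\circ\alpha_I)\otimes\id_Y$. Hence $\alpha_Y=f\otimes\id_Y$, with no derived identity needed.

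A derived identity does enter at the unit, which you pass over. You need $l_{X\otimes Y}\circ\Phi_{I,X,Y}=l_X\otimes\id_Y$ in two places: for $\Omega_0=l^{-1}$ to be a morphism $(\id,\id)\to L(I)$ of $\mathcal{S}$, which is also what puts the unit into $\mathcal{S}'$, and for the left unit axiom of $L$. This identity is not an instance of the triangle axiom; it is Kelly's lemma. To prove it:
\begin{itemize}
\item apply the pentagon to $(I,I,X,Y)$;
\item rewrite using the triangle for $(I,X\otimes Y)$ and for $(I,X)$, together with naturality of $\Phi$ with respect to $r_I$ and $l_X$;
\item this gives $\id_I\otimes(l_{X\otimes Y}\circ\Phi_{I,X,Y})=\id_I\otimes(l_X\otimes\id_Y)$;
\item cancel $I\otimes(-)$, which is faithful because $l$ is a natural isomorphism.
\end{itemize}

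Finally, you do not need to invoke the remark that monoidal equivalences are automatically strong. Your $L$ is strong by construction, since $\Phi$ and $l^{-1}$ are invertible.
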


\noindent Some consequences of Mac Lane's coherence theorem are the following proposition.

\begin{proposition}
    Strong monoidal functors between rigid monoidal categories preserve duals.    
\end{proposition}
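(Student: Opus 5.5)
The plan is to transport the duality data of $X$ along $F$ and check the zig-zag identities in $\mathcal{D}$. Let $(F,\Omega)$ be a strong monoidal functor from $(\mathcal{C},\otimes,I)$ to $(\mathcal{D},\boxtimes,J)$. Let $X$ be an object of $\mathcal{C}$ with left dual $(X^*,\ev,\coev)$. I would show that $F(X^*)$ is a left dual of $F(X)$, with structure maps
$$\ev' : F(X)\boxtimes F(X^*) \xrightarrow{\Omega_{X,X^*}^{-1}} F(X\otimes X^*) \xrightarrow{F(\ev)} F(I) \xrightarrow{\Omega_0^{-1}} J,$$
$$\coev' : J \xrightarrow{\Omega_0} F(I) \xrightarrow{F(\coev)} F(X^*\otimes X) \xrightarrow{\Omega_{X^*,X}} F(X^*)\boxtimes F(X).$$
Here strongness is used exactly to invert $\Omega_0$ and $\Omega_{X,Y}$. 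The right-dual case is symmetric, so rigidity of $F(X)$ and of the image follows.

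To simplify the bookkeeping, I would first invoke Mac Lane's coherence theorem (Theorem \ref{Mac_Lane_coherence}) to replace $\mathcal{C}$ and $\mathcal{D}$ by strict monoidal categories. Associators and unitors then become identities, and the two rigidity diagrams reduce to the zig-zag equations
$$(\ev\otimes\id_X)\circ(\id_X\otimes\coev)=\id_X,\qquad (\id_{X^*}\otimes\ev)\circ(\coev\otimes\id_{X^*})=\id_{X^*}.$$
I would argue that a strong monoidal equivalence transports both rigidity and the duals themselves, so nothing is lost by this reduction.

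Next, I would compute the composite $(\ev'\boxtimes\id_{F(X)})\circ(\id_{F(X)}\boxtimes\coev')$. The aim is to move every $\Omega$ through the $F$-images using naturality of $\Omega$ in each variable, obtaining
$$F\bigl((\ev\otimes\id_X)\circ(\id_X\otimes\coev)\bigr)=F(\id_X)=\id_{F(X)}.$$
This rewriting requires three ingredients:
\begin{itemize}
\item the hexagon compatibility of $\Omega$ with the associators, applied to the triple $(X,X^*,X)$, which identifies the two ways $F(X\otimes X^*\otimes X)\to F(X)\boxtimes F(X^*)\boxtimes F(X)$ obtained by splitting with $\Omega$;
\item the two unit diagrams for $\Omega_0$, which cancel the $\Omega_0$ and $\Omega_0^{-1}$ appearing in $\coev'$ and $\ev'$ against the unitors;
\item functoriality of $F$ applied to the original zig-zag equation.
\end{itemize}
The second zig-zag identity, for $F(X^*)$, is proved by the same argument with the roles of $X$ and $X^*$ exchanged.

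I expect the main obstacle to be this diagram chase. Naturality of $\Omega$ must be applied to $\id_X\otimes\coev$ and to $\ev\otimes\id_X$, with one leg passing through the unit object, so the unit axioms for $\Omega_0$ have to enter at exactly the right places. I would organise it as a single large commutative diagram: the top row is the zig-zag in $\mathcal{D}$, the bottom row is $F$ of the zig-zag in $\mathcal{C}$, and each cell is either a naturality square, the associativity hexagon, or a unit triangle. Finally, I would remark that uniqueness of duals up to unique isomorphism gives a canonical isomorphism $F(X^*)\cong F(X)^*$, compatible with evaluation and coevaluation.
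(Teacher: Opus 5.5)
Your proposal is correct and is essentially the paper's proof: strictify via Mac Lane's coherence theorem, transport $\ev$ and $\coev$ along $F$ using $\Omega$ and $\Omega_0$, and deduce the zig-zag identities from those in $\mathcal{C}$. The paper writes $\Omega$ with the opposite orientation, which does not matter since $F$ is strong, and your chase through naturality of $\Omega$, its associativity axiom and the unit axioms is exactly what the paper compresses into one line.

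One small caution. Justifying the strictification by ``strong monoidal equivalences transport duals'' is itself a special case of the claim. It is cleaner either to cite coherence in the form ``all formal diagrams commute'' (as the paper implicitly does), or to run the same chase with the associators and unitors left in.
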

\begin{proof}
    Let $(F,\Omega_{\_,\_},\Omega_0) : (\mathcal{C},\otimes, I) \rightarrow (\mathcal{D},\boxtimes, J)$ be a strong monoidal functor between rigid categories.
    Due to Mac Lane's coherence theorem, we can freely restrict ourselves to the case of strict monoidal categories.
    Consider the following evaluation and coevaluation maps for $F(X)$ and $F(X^*)$:
    \[
    \begin{tikzcd}
        \ev_{\mathcal{D}} : F(X)\boxtimes F(X^*)
        \arrow[r,"\Omega_{X,X^*}"] &
        F(X\otimes X^*) 
        \arrow[r,"F(\ev_{\mathcal{C}})"] &
        F(I)
        \arrow[r,"\Omega_0^{-1}"]&
        J\\
        \coev_{\mathcal{D}} : J 
        \arrow[r,"\Omega_0"]&
        F(I)
        \arrow[r,"F(\coev_{\mathcal{C}})"]&
        F(X^*\otimes X) 
        \arrow[r,"\Omega_{X^*,X}^{-1}"]&
        F(X^*)\boxtimes F(X)
    \end{tikzcd}
    \]
    Then for all $X\in \mathcal{C}$, with $X^*$ its dual, the conditions $$ (\id_{F(X^*)}\boxtimes \ev_{\mathcal{D}}) \circ (\coev_{\mathcal{D}} \boxtimes \id_{F(X^*)}) : J\boxtimes F(X^*) \xrightarrow[]{\sim} F(X^*)\boxtimes J$$ and 
    $$(\ev_{\mathcal{D}} \boxtimes\id_{F(X)})\circ (\id_{F(X)}\boxtimes \coev_{\mathcal{D}}) : F(X)\boxtimes J \xrightarrow[]{\sim} J\boxtimes F(X)$$
    are satisfied since such conditions hold as for evaluation and coevaluation for $X$ and $X^*$ in $\mathcal{C}$
\end{proof}

\begin{proposition}
\label{rigid_monoidal_are_iso}
    A natural transformation of strong monoidal functors between rigid monoidal categories is always an isomorphism.
\end{proposition}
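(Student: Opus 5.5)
The plan is to construct an explicit inverse for a monoidal natural transformation $\alpha : F\rightarrow G$ between strong monoidal functors $(F,\Omega),(G,\Lambda):(\mathcal{C},\otimes,I)\rightarrow(\mathcal{D},\boxtimes,J)$, using duals in $\mathcal{C}$. By Mac Lane's coherence theorem (Theorem \ref{Mac_Lane_coherence}) we may work as if all categories were strict. By the previous proposition we may also treat $F(X^*)$ and $G(X^*)$ as duals of $F(X)$ and $G(X)$, with evaluation and coevaluation $\ev_{\mathcal{D}},\coev_{\mathcal{D}}$ transported through $\Omega$, respectively $\Lambda$, and $\Omega_0,\Lambda_0$.

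For each object $X$, the candidate inverse $\beta_X : G(X)\rightarrow F(X)$ is the transpose of $\alpha_{X^*}:F(X^*)\rightarrow G(X^*)$, read with the identifications above:
$$G(X)\xrightarrow{\sim} G(X)\boxtimes J\xrightarrow{\id\boxtimes \coev^F}G(X)\boxtimes F(X^*)\boxtimes F(X)\xrightarrow{\id\boxtimes\alpha_{X^*}\boxtimes\id}G(X)\boxtimes G(X^*)\boxtimes F(X)\xrightarrow{\ev^G\boxtimes\id}F(X).$$
Here $\coev^F$ is $\Omega_{X^*,X}^{-1}\circ F(\coev)\circ\Omega_0$ and $\ev^G$ is $\Lambda_0^{-1}\circ G(\ev)\circ\Lambda_{X,X^*}$, and the last arrow also uses the unit isomorphism $J\boxtimes F(X)\cong F(X)$.

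Next, check that $\beta_X\circ\alpha_X=\id_{F(X)}$. By naturality of $\boxtimes$, move $\alpha_X$ past the coevaluation so that the composite contains $\alpha_X\boxtimes\alpha_{X^*}$ followed by $\ev^G$. The monoidal compatibility square of $\alpha$, namely $\Lambda_{X,X^*}\circ\alpha_{X\otimes X^*}=(\alpha_X\boxtimes\alpha_{X^*})\circ\Omega_{X,X^*}$, together with naturality of $\alpha$ applied to $\ev:X\otimes X^*\rightarrow I$ and the unit triangle $\Lambda_0^{-1}\alpha_I=\Omega_0^{-1}$, turns $\ev^G\circ(\alpha_X\boxtimes\alpha_{X^*})$ into $\ev^F$. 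The composite then reduces to a zig-zag of $\ev^F,\coev^F$, which is $\id_{F(X)}$ by the rigidity identities holding in the image of $F$. Symmetrically, for $\alpha_X\circ\beta_X=\id_{G(X)}$, push $\alpha_X$ inside onto the last factor and convert $\alpha_{X^*}\boxtimes\alpha_X$ composed with $\coev^F$ into $\coev^G$ using the same three compatibilities for $\coev:I\rightarrow X^*\otimes X$. The remaining zig-zag in $G$ is the identity.

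Since each $\alpha_X$ is invertible, $\beta=\alpha^{-1}$ is automatically natural (and monoidal), so $\alpha$ is an isomorphism. The main obstacle is the bookkeeping in the second step: keeping track of where $\Omega$, $\Lambda$, $\Omega_0$ and $\Lambda_0$ appear, and making sure the unit axiom of a monoidal natural transformation is used in the right direction. Working in strictified categories and writing everything as string-diagram-style composites should keep this manageable.
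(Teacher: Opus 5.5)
Your proposal is correct and follows the paper's route: the inverse of $\alpha_X$ is the transpose of $\alpha_{X^*}$, read through the dualities $F(X^*)\cong F(X)^*$ and $G(X^*)\cong G(X)^*$ supplied by the previous proposition. The paper mostly asserts $\alpha^t_X\circ\alpha_X=\id$ and $\alpha_X\circ\alpha^t_X=\id$ and spends its effort showing $\alpha^t$ is monoidal, whereas you verify both identities (monoidal square, naturality at $\ev$ and $\coev$, unit axiom, zig-zag) and correctly note that naturality and monoidality of the inverse then come for free; the one thing to tidy is that you use $\Lambda_{X,X^*}$ in opposite directions in your formula for $\ev^G$ and in the compatibility square.
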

\begin{proof}
    Let $F$ and $G$ be two strong monoidal functors as in the previous proof, then we have $F(X^*)\cong F(X)^*$ and $G(X^*)\cong G(X)^*$ for every $X\in\mathcal{C}$.
    Consider $\alpha : F\rightarrow G$ a monoidal natural transformation.
    The morphism $\alpha_{X^*}$ induces a morphism $F(X)^*\rightarrow G(X)^*$ which has a transpose $G(X)\rightarrow F(X)$.
    By applying this to all $X$, we obtain a natural transformation $\alpha^t:G\rightarrow F$.\\
    By considering the following compatibility commutative diagram,
    \[
    \begin{tikzcd}[column sep = large]
        F(Y^*\otimes X^*)
        \arrow[r, "\alpha_{(X\otimes Y)^*}"]
        \arrow[d] &
        G(Y^*\otimes X^*)
        \arrow[d]\\
        F(Y^*)\boxtimes F(X^*)
        \arrow[r, "(\alpha_{X^*}\boxtimes\alpha_{Y^*})^*"'] &
        G(Y^*)\boxtimes G(X^*)
    \end{tikzcd}
    \]
    by transposing the maps, considering the isomorphism $(X\otimes Y)^*\cong Y^*\otimes X^*$ and noticing that the dual of the identity element is itself, we obtain the compatibility conditions for $\alpha^t$ to be a monoidal natural transformation.\\
    Moreover, since $\alpha^t$ is constructed by considering transposes of $\alpha$ on the dual of $X$, $(\alpha_{X^*})^t\circ \alpha_X = \id_{F(X)}$ and $\alpha_{X^*}\circ (\alpha_{X})^t = \id_{G(X)^*}$ for all $X\in\mathcal{C}$.
    Making $\alpha$ a natural isomorphism.
\end{proof}


\noindent Since finite-dimensional representation of an affine group scheme over $k$ are equivalent to finite comodules over a commutative Hopf algebra.
For $G$ an affine group scheme, we can consider the fibre functor $\omega$ above as a functor from the category $\Rep^{\text{f.d.}}(G)$ onto $\Vect^{\text{f.d.}}_k$.
And $\omega$ induces a monoidal functor $\omega\otimes R :  V\rightarrow V\otimes_k R$
\begin{definition}
\label{automorphism_fibre_functor}
    We define three set-valued functors, 
    $\End(\omega): \Alg_k\rightarrow \SET$ sending $k$-algebras $R$ to the sets of $R$-linear natural transformations $\Hom_R(\omega\otimes R,\omega\otimes R)$,
    $\End^{\otimes}(\omega)$ sending $R$ to the set monoidal natural transformation $\Hom^{\otimes}_R(\omega\otimes R,\omega\otimes R)$, 
    and $\Aut^{\otimes}(\omega)$ sending $R$ to the set of monoidal natural isomorphisms $\Isom^{\otimes}_R(\omega\otimes R,\omega\otimes R)$
\end{definition}

\begin{remark}
    Notice that $\Isom^{\otimes}_R(\omega\otimes R,\omega\otimes R)$ is actually carrying a group structure, making $\Aut^{\otimes}(\omega)$ a group-valued functor.
    Indeed, the group operation is defined by composition, component wise,
    for $\alpha,\beta \in \Isom_R^{\otimes}(\omega\otimes R,\omega\otimes R)$, $\alpha_M\circ\beta_M = (\alpha\circ\beta)_M$.\\
    The naturality condition for an element $\alpha\circ\beta$ still holds:
    $$\alpha_N\circ\beta_N\circ(f\otimes\id_A) = \alpha_N\circ(f\otimes \id_A)\circ\beta_M= (f\otimes\id_A)\circ\alpha_M\circ\beta_M$$
    The identity natural transformation $\id_{\omega\otimes R}$ is the identity element, for each $M$, $(\id)_M = \id_{M\otimes A}$
    $$\id_N\circ(f\otimes\id_A) = (f\otimes\id_A)\circ\id_M$$
    And since each component $\alpha_M$ is an isomorphism, it has a unique inverse linear map $(\alpha_M)^{-1}$.
    $$\alpha_N^{-1}\circ(f\otimes\id_A) = (f\otimes \id_A)\circ \alpha_M^{-1}$$
\end{remark}

\begin{proposition}
\label{automorphism_group_scheme}
    Let $G$ be an affine group scheme.
    Then there is a canonical isomorphism of group-valued functors $G\xrightarrow{\sim} \Aut^{\otimes}(\omega)$. Consequently, $\Aut^{\otimes}(\omega)$ is an affine group scheme.
\end{proposition}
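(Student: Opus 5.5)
We construct the map $G(R)\rightarrow \Aut^{\otimes}(\omega)(R)$ explicitly, functorially in the $k$-algebra $R$. Then we prove it is bijective by recovering the coordinate Hopf algebra $A=\mathcal{O}(G)$ from natural transformations, using Lemma \ref{useful_lemma_underlying_coalgebra}. For the map itself, let $g\in G(R)$, i.e.\ a $k$-algebra homomorphism $g:A\rightarrow R$. By Proposition \ref{representations_iso_comodules}, every finite-dimensional representation $V$ is an $A$-comodule with coaction $\rho_V$. We set $\lambda^g_V:=(\id_V\otimes g)\circ\rho_V$, extended $R$-linearly to $V\otimes R\rightarrow V\otimes R$.

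Next we check the required properties of $\lambda^g$. Naturality holds because morphisms of representations are comodule maps. Monoidality holds because the coaction on $V\otimes W$ uses the multiplication $m$ of $A$, and $g$ is multiplicative; the unit object $k$ carries the coaction $1\mapsto 1\otimes 1$, and $g(1)=1$. Thus $\lambda^g$ is a monoidal natural transformation. Since $\Rep^{\text{f.d.}}(G)$ and $\Vect^{\text{f.d.}}_R$ are rigid, Proposition \ref{rigid_monoidal_are_iso} shows that $\lambda^g$ is an isomorphism. The assignment $g\mapsto\lambda^g$ is a group homomorphism: the product $g\cdot h=m_R\circ(g\otimes h)\circ\Delta$ corresponds to composition by coassociativity, exactly as in the proof of Proposition \ref{representations_iso_comodules}. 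Functoriality in $R$ is clear.

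For injectivity and surjectivity we reduce to the regular representation. By Theorem \ref{fundamental_theorem_comodule} and the lemma $\Hom(\omega,\omega)=\Hom(\omega^f,\omega^f)$, a natural transformation of $\omega\otimes R$ on finite-dimensional comodules extends uniquely to all comodules, in particular to $A$ itself with coaction $\Delta$. Lemma \ref{useful_lemma_underlying_coalgebra} with $V=R$ gives $\Hom_k(A,R)\cong\Hom(\omega,\omega\otimes R)$, and this bijection is exactly $g\mapsto\lambda^g$. It remains to show that, under this bijection, the $k$-linear maps $g$ whose $\lambda^g$ is monoidal are precisely the algebra homomorphisms. Given a monoidal $\lambda$, we write $\lambda=\lambda^g$ for a linear $g:A\rightarrow R$. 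Then the condition $\lambda_{V\otimes W}=\lambda_V\otimes\lambda_W$ translates, via Lemma \ref{useful_lemma_underlying_coalgebra_2}, into the statement that the two maps $A\otimes A\rightarrow R$ given by $g\circ m$ and $m_R\circ(g\otimes g)$ coincide. The unit condition gives $g(1)=1$.

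The main obstacle is this last step. It requires identifying, through the representability isomorphism of Lemma \ref{useful_lemma_underlying_coalgebra_2}, the natural transformation of $\omega\otimes\omega$ induced by $g\circ m$ with $\lambda_{V\otimes W}$, and the one induced by $m_R\circ(g\otimes g)$ with $\lambda_V\otimes\lambda_W$. Faithfulness of that representation then forces equality. I would evaluate both on the comodule $A\otimes A$, obtained by extending from finite-dimensional subcomodules, at the element $1\otimes1$; this should read off $g(ab)$ versus $g(a)g(b)$ directly. Finally, since $G(R)\cong\Aut^{\otimes}(\omega)(R)$ naturally in $R$ and $G$ is representable by $A$, $\Aut^{\otimes}(\omega)$ is representable as well, hence an affine group scheme.
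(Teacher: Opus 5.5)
Your proposal is correct and follows the paper's route. You identify $\End(\omega)(R)$ with $\Hom_k(A,R)$ via Lemma \ref{useful_lemma_underlying_coalgebra}, use the injectivity in Lemma \ref{useful_lemma_underlying_coalgebra_2} to show that $\lambda^g$ is monoidal exactly when $g\circ m=m_R\circ(g\otimes g)$ and $g(1)=1$, and pass from $\End^{\otimes}$ to $\Aut^{\otimes}$ by Proposition \ref{rigid_monoidal_are_iso}. You are in fact more explicit than the paper about the group law and about which transformation corresponds to $g\circ m$ and which to $m_R\circ(g\otimes g)$.

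The only slip is in your optional direct check. Evaluating at $1\otimes 1$ yields only $g(1)=g(1)^2$. To read off $g(ab)=g(a)g(b)$ you must evaluate $\lambda_{A\otimes A}$ and $\lambda_A\otimes\lambda_A$ at $a\otimes b$ and then apply $\epsilon\otimes\epsilon\otimes\id_R$. The injectivity you already cite makes this check unnecessary anyway.
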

\begin{proof}
    We first check that we have $\End(\omega)(R)\cong\Hom_{\Mod_R}(A\otimes_kR,R)$ functorially in $R$, for $A$ the coordinate ring of $G$.
    Notice that we have an isomorphism 
    $\Hom_{\Mod_R}(A\otimes_kR,R) \cong \Hom_{\Vect_k}(A,R)$
    induced by composition with extension of scalars $\varphi:A\rightarrow A\otimes_kR,\:a\mapsto a\otimes 1$ and restriction of scalars $\psi :A\otimes_kR\rightarrow R,\:a\otimes r\mapsto ar$.\\
    Similarly, we have the following isomorphism $\Hom_R(\omega\otimes R,\omega\otimes R) \cong \Hom_k(\omega,\omega\otimes R)$,
    induced by morphisms $\Phi: \Hom_R(\omega\otimes R,\omega\otimes R) \rightarrow \Hom_k(\omega, \omega\otimes R)$ defined as $\Phi(\alpha_V):\omega(V)\rightarrow\omega(V)\otimes R$ with $\Phi(\alpha_V)(v):= \alpha_V(v\otimes 1)$ for $\alpha\in \Hom_R(\omega\otimes R,\omega\otimes R)$,
    and $\Psi:\Hom_k(\omega, \omega\otimes R)\rightarrow \Hom_R(\omega\otimes R,\omega\otimes R)$ defined as $\Psi(\beta_V):\omega(V)\otimes R\rightarrow \omega(V)\otimes R$ with $\Psi(\beta_V)(v\otimes r) :=:=\beta_V(v)\cdot r$ for $\beta \in \Hom_k(\omega, \omega\otimes R)$.\\
    And, by Lemma \ref{useful_lemma_underlying_coalgebra}, we have $\FEnd(\omega)(R) \cong \Hom_{\Mod_R}(A\otimes_kR, R)$.\\
    Notice that we have shown that $\End(\omega)(R)$ and $G(R)$ are functorially isomorphic in $R$ as sets.\\
    Next, we show that elements in $\End(\omega\otimes R)$ who are moreover monoidal correspond to $k$-algebra homomorphisms $A\rightarrow R$.
    By definition, a morphism $\varphi_R \in \End(\omega\otimes R)$ is a monoidal natural transformation, if for all $V,W\in \Rep^{\text{f.d.}}(G)$, the following diagram commutes 
    \[
     \begin{tikzcd}
        \omega(V\otimes W) \otimes R 
        \arrow[r, "\varphi_{R}"] \arrow[d, ""] & 
        \omega(V\otimes W) \otimes R 
        \arrow[d, ""]\\
        \omega(V)\otimes\omega(W)\otimes R
        \arrow[r,"\varphi_R \otimes \varphi_R"'] & 
        \omega(V)\otimes\omega(W)\otimes R
    \end{tikzcd}
    \]
    where the vertical arrows are the isomorphisms $\Omega_{V,W}$ in the definition of monoidal functors.\\
    Using our discussion above, by Propositions \ref{useful_lemma_underlying_coalgebra} and \ref{useful_lemma_underlying_coalgebra_2}, the $R$-linear maps $\varphi_R$ and $\varphi_R\otimes \varphi_R$ correspond to the $k$-linear maps $\lambda:A\rightarrow R$ and $\lambda\circ m: A\otimes_kA\rightarrow R$ respectively, where $m:A\otimes_kA\rightarrow A$ is the multiplication of $A$ (a $k$-algebra).
    Then replacing the maps in the above diagrams and the corresponding objects, the resulting commutative diagram implies the condition for $\lambda$ to be a $k$-algebra homomorphism.
    Thus $\End^{\otimes}(\omega)(R) \cong \Hom_{\Alg_k}(A,R)$ .\\
    And finally, by Proposition \ref{rigid_monoidal_are_iso}, $\Aut^{\otimes}(\omega) \cong \End^{\otimes}(\omega)$.
\end{proof}

\subsubsection{Tannaka-Krein Duality}

Before finishing this section, we have to talk about the Tannaka-Krein Duality.
This result can be understood as a reconstruction result. 
Usually, its statement holds for any $k$-linear abelian category (see Remark \ref{remark_abelian_category}), and it automatically gives that $\Comod^{\text{f.d.}}$ is a $k$-linear abelian category (which we have already reviewed, see \ref{comodules_abelian}). Thus, below we state a restriction of the famous Tannaka Duality.

\begin{theorem}[Tannaka Duality]
\label{Tannaka_duality}
    Let $C$ be a $k$-coalgebra for $k$ a field and $\omega:\Comod^{\text{f.d.}}_C\rightarrow \Vect^{\text{f.d.}}_k$ the fibre functor. We have the following equivalences :
    \begin{itemize}
        \item[$a)$] The category $\Comod^{\text{f.d.}}_C$ is monoidal category and $\omega$ is a monoidal functor if and only if there is a canonical structure of $k$-bialgebra on $C$.
        \item[$b)$] $\Comod^{\text{f.d.}}_C$ is moreover a rigid monoidal category if and only if $C$ has the structure of a Hopf algebra.
        \item[$c)$] $\Comod_C^{\text{f.d.}}$ is a rigid symmetric monoidal category and $\omega$ is a  symmetric monoidal functor if and only if $C$ has the structure of a commutative Hopf algebra.
    \end{itemize}
\end{theorem}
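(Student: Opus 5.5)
The plan is to treat each of the three equivalences as a pair of constructions, one in each direction, and in each case to use the representability results Lemma \ref{useful_lemma_underlying_coalgebra} and Lemma \ref{useful_lemma_underlying_coalgebra_2} as the bridge. These lemmas say that $k$-linear maps $C\rightarrow V$ correspond to natural transformations $\omega\rightarrow\omega\otimes V$, and maps $C\otimes_k C\rightarrow V$ to natural transformations $\omega\otimes\omega\rightarrow\omega\otimes\omega\otimes V$.

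\textbf{Part $a)$.} For the direction ``bialgebra $\Rightarrow$ monoidal'', I would use the coaction on $M\otimes_k N$ already written down before Lemma \ref{useful_lemma_underlying_coalgebra_2}. It is $(\id\otimes\id\otimes m)\circ(\text{twist})\circ(\rho_M\otimes\rho_N)$, with unit object $k$ carrying the coaction $1\mapsto 1\otimes 1$. Coassociativity and counitality of this coaction follow because $m$ and $u$ are coalgebra morphisms. The associativity and unit constraints are simply those of $\Vect_k$, once one checks that they are comodule maps (associativity of $m$, unitality of $u$). Then $\omega$ is strict monoidal.

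For the converse, suppose the tensor product on $\Comod^{\text{f.d.}}_C$ is compatible with $\omega$, in the sense that $\omega(M\otimes N)\cong\omega(M)\otimes\omega(N)$ naturally. The coaction of $M\otimes N$ is a natural transformation $\omega\otimes\omega\rightarrow\omega\otimes\omega\otimes C$. By Lemma \ref{useful_lemma_underlying_coalgebra_2}, it comes from a unique linear map $m:C\otimes C\rightarrow C$. Similarly, the coaction on the unit object $I$, with $\omega(I)=k$, gives $u:k\rightarrow C$. Associativity and unitality of $m,u$ follow from the coherence diagrams and from uniqueness in the representing property, now applied to $\omega^{\otimes 3}$ (the same argument as in Lemma \ref{useful_lemma_underlying_coalgebra_2}). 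Finally, the coaction axioms for $M\otimes N$, translated through the representing lemmas, say exactly that $\Delta$ and $\epsilon$ are algebra maps.

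\textbf{Part $b)$.} Given an antipode $\Sigma$, I would define on $M^*$ the coaction determined by $\langle f_{(0)},m\rangle f_{(1)}=\langle f,m_{(0)}\rangle\Sigma(m_{(1)})$. The usual evaluation and coevaluation maps from the example of $\Vect^{\text{f.d.}}_k$ then need to be checked to be comodule maps. For $\ev$ this is exactly the antipode identity $m\circ(\id\otimes\Sigma)\circ\Delta=u\circ\epsilon$. Conversely, if every object has a dual, I would use the transpose construction to obtain a natural transformation $\omega\rightarrow\omega\otimes C$ sending $M$ to the transpose of the coaction of $M^*$. By Lemma \ref{useful_lemma_underlying_coalgebra}, this yields $\Sigma:C\rightarrow C$, and the zig-zag identities for $\ev$ and $\coev$ translate into the two antipode axioms.

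\textbf{Part $c)$.} For this part I would note that the twist $v\otimes w\mapsto w\otimes v$ is a comodule map $M\otimes N\rightarrow N\otimes M$ for all $M,N$ precisely when $m\circ\tau=m$ under Lemma \ref{useful_lemma_underlying_coalgebra_2}. Here $\tau$ is the flip of $C\otimes C$. So symmetry with $\omega$ symmetric is equivalent to commutativity of $C$.

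The main obstacle I expect is the converse of $a)$. There, the given monoidal structure on $\Comod^{\text{f.d.}}_C$ must be shown to be, up to equivalence, the one that $\omega$ transports, with $\omega$ strong monoidal. One must also make sure that the uniqueness in Lemma \ref{useful_lemma_underlying_coalgebra_2} applies to triple tensor products. I would handle this by first invoking Mac Lane's coherence (Theorem \ref{Mac_Lane_coherence}) to reduce to strict structures, and then proving a threefold version of Lemma \ref{useful_lemma_underlying_coalgebra_2} by the same finite-subcomodule argument (Theorem \ref{fundamental_theorem_comodule}).
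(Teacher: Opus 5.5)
The gap is in the forward direction of $b)$. Take your coaction $\langle f_{(0)},x\rangle f_{(1)}=\langle f,x_{(0)}\rangle\Sigma(x_{(1)})$ on $M^*$ and the tensor coaction $m\otimes n\mapsto m_{(0)}\otimes n_{(0)}\otimes m_{(1)}n_{(1)}$. With these, the two antipode axioms $\Sigma(c_{(1)})c_{(2)}=\epsilon(c)1=c_{(1)}\Sigma(c_{(2)})$ say exactly that $\ev:M^*\otimes M\rightarrow k$ and $\coev:k\rightarrow M\otimes M^*$ are colinear. So they give a dual on one side only.

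The paper's convention is the other side: $\ev:M\otimes M^*\rightarrow k$ and $\coev:k\rightarrow M^*\otimes M$. For the maps from $\Vect^{\text{f.d.}}_k$, colinearity there amounts to $c_{(2)}\Sigma(c_{(1)})=\epsilon(c)1=\Sigma(c_{(2)})c_{(1)}$. This is not an antipode axiom; it holds for all $c$ if and only if $\Sigma^2=\id$. In general, having duals on that side for every $M$ amounts to an antipode of $C^{\mathrm{cop}}$. That exists if and only if $\Sigma$ is bijective, and it is then $\Sigma^{-1}$.

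The paper's ``rigid'' means two-sided. So ``Hopf $\Rightarrow$ rigid'' cannot be obtained from $\Sigma$ alone, and it is in fact false for Hopf algebras with non-bijective antipode (e.g.\ Takeuchi's free Hopf algebras). Part $b)$ holds with ``left rigid'', or with ``Hopf with bijective antipode''. Your claim that colinearity of $\ev$ ``is exactly $m\circ(\id\otimes\Sigma)\circ\Delta=u\circ\epsilon$'' conceals this mismatch of sides.

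Apart from this, your route is the standard reconstruction argument of the source the paper cites; the paper itself gives no argument. That route encodes each structure on $\Comod^{\text{f.d.}}_C$ as a linear map out of $C$, $C\otimes C$ or $C^{\otimes 3}$ via Lemmas \ref{useful_lemma_underlying_coalgebra} and \ref{useful_lemma_underlying_coalgebra_2}. Parts $a)$ and $c)$ are sound.

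\begin{itemize}
\item In $c)$ nothing is lost: a commutative Hopf algebra satisfies $\Sigma^2=\id$, so your $\Sigma$-dual is a dual on both sides.
\item In the converse of $b)$, it is the colinearity of $\ev$ and $\coev$ that yields the antipode identities, not the zig-zag identities. The zig-zags only identify $\omega(M^*)$ with $\omega(M)^*$. Duals on both sides then produce both $\Sigma$ and $\Sigma^{-1}$.
\item Mac Lane's coherence is unnecessary in $a)$. The monoidal-functor axiom for $(\omega,\Omega)$ already says that $\omega(\Phi)$, conjugated by the $\Omega$'s, is the trivial associator of $\Vect_k$. Its colinearity, read through a threefold version of Lemma \ref{useful_lemma_underlying_coalgebra_2}, is exactly associativity of $m$.
\end{itemize}
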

\begin{proof}
    See \cite{Schauenburg} Chapter $2.3$ and $2.4$.\\ 
\end{proof}

\noindent We say that the above functor $\omega$ is \textit{symmetric monoidal} if the following diagram commutes 

\[
\begin{tikzcd}
    \omega(V\otimes W)
    \arrow[r,"\Omega_{V,W}"]
    \arrow[d, "\omega(\gamma'_{V,W})"']
    &
    \omega(V)\otimes \omega(W)
    \arrow[d, "\gamma_{\omega(V),\omega(W)}"] \\
    \omega(W\otimes V) \arrow[r,"\Omega_{W,V}"'] & 
    \omega(W)\otimes \omega(V),
\end{tikzcd}
\]
where $\gamma$ and $\gamma'$ are respectively, the symmetry in $\Comod_C^{\text{f.d.}}$ and the flip map in $\Vect^{\text{f.d.}}_k$.

\subsection{Neutral Tannakian Categories and Tannakian Fundamental Groups}

We are finally able to introduce \textit{Tannakian categories} and the \textit{Tannakian fundamental group}.

\begin{definition}
    A \textbf{neutral Tannakian category} over a field $k$ is a rigid, $k$-linear, abelian, symmetric monoidal category $\mathcal{C}$ whose unit $1$ satisfies $\End(1) \cong k$, and is moreover equipped with an exact faithful tensor functor $\omega :\mathcal{C}\rightarrow \Vect^{f.d.}_k$. The functor $\omega$ is called a (neutral) \textbf{fibre functor}
\end{definition}

\begin{example}
    Let $G$ be an affine group scheme over $k$.
    The category $\Rep^{\text{f.d.}}(G)$ of finite-dimensional representations of $G$ with its usual tensor product and the forgetful functor $\omega:\Rep^{\text{f.d.}}(G)\rightarrow \Vect^{\text{f.d.}}_k$ as the fibre functor is a neutral Tannkian category.
\end{example}

\noindent The converse is a consequence of Tannaka duality \ref{Tannaka_duality} and we get the following theorem,

\begin{theorem}
    Neutral Tannakian categories over $k$ are equivalent to categories of finite-dimensional representations of an affine group schemes over $k$.
\end{theorem}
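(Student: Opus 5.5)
One direction has already been checked in the example preceding the statement: for an affine group scheme $G$ over $k$, the pair $(\Rep^{\text{f.d.}}(G),\omega)$ with $\omega$ the forgetful functor is a neutral Tannakian category. So the plan concerns the converse. Given a neutral Tannakian category $(\mathcal{C},\omega)$ over $k$, I will produce an affine group scheme $G$ and an equivalence $\mathcal{C}\simeq\Rep^{\text{f.d.}}(G)$ that is compatible with the fibre functors. The natural candidate is $G=\Aut^{\otimes}(\omega)$ from Definition \ref{automorphism_fibre_functor}.

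First I will apply Remark \ref{remark_abelian_category} to the $k$-linear abelian category $\mathcal{C}$ with its exact faithful $k$-linear functor $\omega$. This yields a $k$-coalgebra $C$, namely the coendomorphism coalgebra of $\omega$, together with an equivalence $F:\mathcal{C}\xrightarrow{\sim}\Comod^{\text{f.d.}}_C$ satisfying $\omega_C\circ F\cong\omega$, where $\omega_C$ is the forgetful functor. Next I will transport the structure of $\mathcal{C}$ along $F$. The tensor product, associativity and unit constraints, the symmetry, and the duals all pass to $\Comod^{\text{f.d.}}_C$. Because $\omega$ is a symmetric monoidal functor, the forgetful functor $\omega_C$ becomes a symmetric monoidal functor as well.

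Tannaka duality \ref{Tannaka_duality} $c)$ then says that $C$ carries the structure of a commutative Hopf algebra. I set $G=\Spec C$, which is an affine group scheme by Remark \ref{Hopf_algebra_group_scheme}. Proposition \ref{representations_iso_comodules} identifies finite-dimensional $C$-comodules with finite-dimensional representations of $G$. This identification respects tensor products, because the coaction on $M\otimes N$ is built from the multiplication of $C$, and this matches the diagonal action. It respects duals in the same way. Composing these identifications gives a monoidal equivalence $\mathcal{C}\simeq\Rep^{\text{f.d.}}(G)$ under which $\omega$ corresponds to the forgetful functor. Finally, Proposition \ref{automorphism_group_scheme} recovers $G\cong\Aut^{\otimes}(\omega)$ canonically, which justifies calling this group the Tannakian fundamental group.

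The main obstacle is the transport step, that is, checking that the Hopf structure provided by Tannaka duality is the structure induced by the given tensor product on $\mathcal{C}$, and not some other one. I will handle this through Lemmas \ref{useful_lemma_underlying_coalgebra} and \ref{useful_lemma_underlying_coalgebra_2}. The multiplication $C\otimes C\to C$ corresponds to the natural transformation $\omega\otimes\omega\to\omega\otimes\omega\otimes C$ obtained by applying the coaction to $\omega(X\otimes Y)\cong\omega(X)\otimes\omega(Y)$. Commutativity of $C$ comes from the symmetry constraint. The antipode comes from the duals, via the transposes $f^t$. The hypothesis $\End(1)\cong k$ ensures that the unit object goes to the trivial comodule $k$, so that the counit is compatible with the monoidal unit.
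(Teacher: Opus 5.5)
Your proposal is correct and follows the paper's route, which simply invokes Theorem~\ref{Tannaka_duality} and Proposition~\ref{representations_iso_comodules}. You also usefully make explicit the preliminary step via Remark~\ref{remark_abelian_category} that realises $\mathcal{C}$ as $\Comod^{\text{f.d.}}_C$; the paper's one-line proof leaves this implicit, even though Tannaka duality as stated only applies to comodule categories.

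One small inaccuracy: $\End(1)\cong k$ is not what sends the unit object to the trivial comodule. That comes from $\omega$ being a tensor functor, so that $\omega(1)\cong k$, together with how the unit of $C$ is reconstructed. In fact $\End(1)\cong k$ is automatic once $\omega$ is a faithful $k$-linear tensor functor, since the composite $k\to\End(1)\hookrightarrow\End_k(\omega(1))\cong k$ is the identity.
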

\begin{proof}
    This is a direct application of Theorem \ref{Tannaka_duality} and Proposition \ref{representations_iso_comodules}.
\end{proof}

\begin{definition}
    Consider $(\mathcal{C},\omega)$ a Tannakian category. The associated functor $\Aut^{\otimes}(\omega)$ is called the \textbf{Tannakian fundamental group} of $(\mathcal{C},\omega)$
\end{definition}

\noindent Notice that with the previous theorem and Proposition \ref{automorphism_group_scheme}, the Tannakian fundamental group is always isomorphic to some group scheme as a functor.\\

\noindent Our next goal is to give a result identifying somewhat quotients of Tannakian fundamental groups with affine subgroup schemes of $\GL_n$.
But first, let us give two constructions:
\begin{construction}
\label{construction_algebraic_hull}
    Let $G$ be an arbitrary group and $k$ a field. 
    Then the category of finite-dimensional representations of such an arbitrary group is still a neutral Tannakian with its usual forgetful functor.
    The Tannakian fundamental group of its category of finite-dimensional representations together with the forgetful functor is usually called the \textit{algebraic hull} .
    One can also consider the algebraic hull of a topological group by taking its category of finite-dimensional continuous representations.
\end{construction}

\noindent The next construction tells us how to consider quotient of the Tannakian fundamental group,
\begin{construction}
    Given a Tannakian category $(\mathcal{C},\omega)$ and an object $X\in \mathcal{C}$, we denote $\langle X\rangle_{\otimes}$ the \textit{smallest full Tannakian subcategory of $\mathcal{C}$ containing $X$}.
    Its objects are subquotients of finite direct sums of objects of the form $X^{\otimes r}\otimes (X^*)^{\otimes s}$ for some $r,s\geq 0$.
\end{construction}

\noindent And we have the following result,

\begin{proposition}
\label{Tannaka_subcat_Zariski_closure}
    Let $G$ be an arbitrary group, $k$ a field and $(V,\rho)$ finite-dimensional representation of $G$ over $k$.
    Then the Tannakian fundamental group of the full Tannakian subcategory $\langle V\rangle_{\otimes}$ of $\Rep^{\text{f.d.}}_k(G)$ is canonically isomorphic to the Zariski closure of the image of $\rho$ in $\GL_n$.
\end{proposition}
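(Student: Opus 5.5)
Let $H\subset \GL(V)\cong\GL_n$ be the Zariski closure of $\rho(G)$, that is, the smallest closed subgroup scheme whose $k$-points contain $\rho(G)$. Concretely, $H=\Spec\: \mathcal{O}(\GL_n)/I$, where $I$ is the ideal of regular functions on $\GL_n$ vanishing on $\rho(G)$. First I would check that $I$ is a Hopf ideal, so that $H$ is indeed an affine group scheme. This is the standard argument: $\rho(G)$ is a subgroup, so $\Delta(I)\subset I\otimes\mathcal{O}(\GL_n)+\mathcal{O}(\GL_n)\otimes I$ and $\Sigma(I)\subset I$.

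The plan is to prove that the two neutral Tannakian categories $(\langle V\rangle_{\otimes},\omega)$ and $(\Rep^{\text{f.d.}}(H),\omega_H)$ are equivalent compatibly with the fibre functors. The isomorphism then follows from Proposition \ref{automorphism_group_scheme}, which gives $\Aut^{\otimes}(\omega_H)\cong H$.

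\textbf{Step 1: the functor.} Every object of $\langle V\rangle_{\otimes}$ is a subquotient of a finite direct sum of tensor powers $T^{r,s}=V^{\otimes r}\otimes (V^*)^{\otimes s}$. The group $\GL(V)$ acts algebraically on each $T^{r,s}$, so $H$ acts on it by restriction, and $G$ acts through $\rho(G)\subset H(k)$. I then need to show two things.
\begin{itemize}
\item[$a)$] A subspace $W\subset T^{r,s}$ is $G$-stable if and only if it is $H$-stable. The condition that $g$ stabilises $W$ is cut out by polynomial equations in the matrix coefficients of $g$, so it is a Zariski-closed condition on $\GL(V)$. Hence it holds on $\rho(G)$ exactly when it holds on $H$.
\item[$b)$] Similarly, a linear map between two such sums is $G$-equivariant if and only if it is $H$-equivariant.
\end{itemize}
Together these show that restriction along $G\to H(k)$ gives a fully faithful, exact, monoidal functor $\Rep^{\text{f.d.}}(H)'\to\langle V\rangle_{\otimes}$ commuting with the forgetful functors. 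Here $\Rep^{\text{f.d.}}(H)'$ denotes the subcategory of subquotients of the $T^{r,s}$. For a general $k$-algebra $R$ one should phrase $a)$ and $b)$ scheme-theoretically: the stabiliser of $W$ and the equaliser of the two actions are closed subgroup schemes of $\GL(V)$ containing $\rho(G)$, hence containing $H$.

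\textbf{Step 2: every representation of $H$ occurs.} I must show that every finite-dimensional representation of $H$ is a subquotient of a sum of the $T^{r,s}$. By Proposition \ref{representations_iso_comodules} and Theorem \ref{fundamental_theorem_comodule}, every finite-dimensional $H$-representation embeds into a finite sum of copies of the regular comodule $\mathcal{O}(H)$, and lands in a finite-dimensional subcomodule of it. Moreover, $\mathcal{O}(H)$ is a quotient of $\mathcal{O}(\GL_n)=k[x_{ij},\det^{-1}]$. The matrix coefficients $x_{ij}$ span a subcomodule isomorphic to a sum of copies of $V$ (or $V^*$, depending on conventions), and $\det^{-1}$ spans a line inside $(V^*)^{\otimes n}$. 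Therefore each filtration piece of polynomials of bounded degree in $x_{ij}$ and $\det^{-1}$ is a quotient of a sum of $T^{r,s}$, and its image in $\mathcal{O}(H)$ is again such a quotient. This step I expect to be the main obstacle. It needs care with the comodule conventions, and with the fact that the subcomodule found in $\mathcal{O}(H)$ sits inside a quotient of a tensor expression, so the object obtained is genuinely a subquotient.

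\textbf{Step 3: conclusion.} Steps 1 and 2 show that restriction $\Rep^{\text{f.d.}}(H)\to\langle V\rangle_{\otimes}$ is a tensor equivalence compatible with the fibre functors. It induces an isomorphism $\Aut^{\otimes}(\omega|_{\langle V\rangle_{\otimes}})\cong\Aut^{\otimes}(\omega_H)$, and the latter is isomorphic to $H$ by Proposition \ref{automorphism_group_scheme}. This isomorphism is canonical: on $k$-points it sends $g\in G$ to the natural transformation $\rho(g)$ acting on every object.
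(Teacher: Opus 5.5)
Your proof is correct and follows the same route as the paper. You identify $\langle V\rangle_{\otimes}$ with the tensor subcategory of $\Rep^{\text{f.d.}}(H)$ generated by $V$, show that this subcategory is all of $\Rep^{\text{f.d.}}(H)$, and conclude with Proposition~\ref{automorphism_group_scheme}. The differences are that you reprove the generation step, which the paper takes from Waterhouse's theorem, via the embedding into copies of the regular comodule. Your closedness argument for stabilisers and equalisers also supplies the justification, only asserted in the paper, that $G$-stable subquotients and $G$-equivariant maps are automatically $H$-stable and $H$-equivariant.
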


\noindent Notice that $\overline{\im\:\rho}$ is actually an affine group scheme in $\GL_n$.
It is a closed subvariety and since $G$ is a group, using the fact $\rho(\overline{G})\subset \overline{\rho(G)}$, then $\overline{\rho(G)}$ has a group structure.\\
In order to prove the above proposition, we need a lemma that is equivalent to the following Theorem in \cite{waterhouse}
\begin{theorem}[Waterhouse]
    Let $k$ be a field and $G$ a closed subgroup scheme of $\GL_n$. Every finite-dimensional representation of $G$ can be reconstructed from its original representation on $k^n$ by the process of forming tensor products, direct sums, subrepresentations, quotients and duals.
\end{theorem}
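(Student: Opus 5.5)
The plan is to prove Waterhouse's theorem by realising every finite-dimensional representation of $G$ inside a finite sum of copies of the coordinate ring. Then we exhibit the coordinate ring itself as built from $k^n$ and its dual.

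Write $A=\mathcal{O}(\GL_n)=k[x_{ij},t]/(\det(x_{ij})t-1)$ and $B=\mathcal{O}(G)=A/I$, where $I$ is the Hopf ideal defining $G$. By Proposition \ref{representations_iso_comodules}, a finite-dimensional representation $V$ of $G$ is the same as a finite-dimensional right $B$-comodule $(V,\rho)$.

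The first step is a regular-representation embedding. The coaction $\rho:V\rightarrow V\otimes B$ is injective, because $(\id\otimes\epsilon)\circ\rho=\id$. The coassociativity diagram says that $\rho$ is a comodule map when $V\otimes B$ carries the coaction $\id_V\otimes\Delta$. Choosing a basis of $V$, the comodule $V\otimes B$ (with coaction on the second factor) is isomorphic to $B^{\oplus \dim V}$. Hence $V$ is a subcomodule of $B^{\oplus d}$ with $d=\dim V$. Since $\rho(V)$ is finite-dimensional, it lies in $W^{\oplus d}$ for some finite-dimensional subcomodule $W\subset B$, which exists by Theorem \ref{fundamental_theorem_comodule}. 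It therefore suffices to show that every finite-dimensional subcomodule of $B$ is obtained from $k^n$ by the listed operations.

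The second step presents $B$ as a quotient of $A$. The map $A\rightarrow B$ is a surjective morphism of $B$-comodules, where $A$ is given the coaction $(\id\otimes\pi)\circ\Delta_A$. Filter $A$ by the finite-dimensional subspaces $A_{r,s}$ spanned by monomials of degree at most $r$ in the $x_{ij}$ and at most $s$ in $t$. These are subcomodules, because $\Delta(x_{ij})=\sum_l x_{il}\otimes x_{lj}$ and $\Delta(t)=t\otimes t$. Since $W$ is finite-dimensional, $W$ lies in the image of some $A_{r,s}$, so $W$ is a subquotient of $A_{r,s}$. It then remains to see that each $A_{r,s}$ is itself a quotient of a finite sum of the representations $V^{\otimes a}\otimes(\det^{-1})^{\otimes b}$, where $V=k^n$ is the standard representation.

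The identification in this last step is the main obstacle. The span of $x_{i1},\dots,x_{in}$ for a fixed $i$ is a subcomodule isomorphic to $V$, and $\det^{-1}$ corresponds to the line spanned by $t$. Multiplication $A\otimes A\rightarrow A$ is a comodule map, so the span of monomials of degree exactly $p$ in the $x$'s and $q$ in $t$ is a quotient of $(V^{\oplus n})^{\otimes p}\otimes (kt)^{\otimes q}$. It remains to obtain $\det^{-1}$ from $V$. The line $\det=\Lambda^nV$ is a quotient of $V^{\otimes n}$, and $\det^{-1}$ is its dual, which is a subquotient of $(V^*)^{\otimes n}$. Taking direct sums over $p\le r$ and $q\le s$ gives $A_{r,s}$. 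One detail needs care here: the coaction on $A_{r,s}$ must match the induced tensor coaction, with the left/right conventions correct. Combining the three steps shows that every finite-dimensional representation of $G$ is a subquotient of a finite direct sum of the representations $V^{\otimes a}\otimes (V^*)^{\otimes b}$, as claimed.
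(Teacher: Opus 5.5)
Your proof is correct. It is essentially the standard argument of \cite{waterhouse}, \S3.5, which the paper only cites and does not reproduce: you embed an arbitrary representation into $W^{\oplus d}$ via its coaction, realise $W$ as a subquotient of the degree-filtered piece $A_{r,s}$ of $\mathcal{O}(\GL_n)$, and build $A_{r,s}$ from the row spans $\cong k^n$ and $\det^{-1}=(\Lambda^n k^n)^*$. Two small points should be stated explicitly: $A_{r,s}$ is the image of the direct sum of the bidegree pieces rather than equal to it, and the inclusion $(\det^{-1})^{\otimes q}\subset ((k^n)^*)^{\otimes nq}$ relies on exactness of $\otimes_k$.
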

\begin{proof}
    See \cite{waterhouse} Chapter $3.5$.
\end{proof}

\begin{corollary}
    Let $k$ be a field and $G$ a closed subgroup scheme of $\GL_n$ over $k$.
    Denote by $X$ the $n$-dimensional representation of $G$ corresponding to its action on $k^n$ via the inclusion $G\rightarrow \GL_n$.
    Then the full Tannakian subcategory $\langle X\rangle_{\otimes} \subset \Rep_k(G)$ is the whole of $\Rep_k(G)$.
\end{corollary}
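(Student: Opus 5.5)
The plan is to derive the corollary almost directly from Waterhouse's theorem, using the description of $\langle X\rangle_{\otimes}$ given in the construction preceding Proposition~\ref{Tannaka_subcat_Zariski_closure}. We prove two inclusions. The inclusion $\langle X\rangle_{\otimes}\subset \Rep_k(G)$ holds by definition, since $\langle X\rangle_{\otimes}$ is a full subcategory of $\Rep_k(G)$. Everything rests on the reverse inclusion: every object $W$ of $\Rep_k(G)$ is isomorphic to an object of $\langle X\rangle_{\otimes}$. Since $\langle X\rangle_{\otimes}$ is full, we need not worry about morphisms.

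First, take $W$ finite-dimensional. Waterhouse's theorem produces $W$ from $X$ by a finite sequence of operations: tensor products, direct sums, subrepresentations, quotients and duals. We argue by induction on the length of this sequence that every intermediate object lies in $\langle X\rangle_{\otimes}$. The base case $X\in\langle X\rangle_{\otimes}$ is immediate.

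For the inductive step we check that the class of subquotients of finite direct sums of objects $X^{\otimes r}\otimes (X^*)^{\otimes s}$ is closed under each of the five operations.

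\textbf{Direct sums.} A direct sum of subquotients of two such sums is a subquotient of their direct sum.

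\textbf{Subobjects and quotients.} A subquotient of a subquotient is a subquotient. This holds in the abelian category $\Rep_k(G)$ by the usual isomorphism theorems.

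\textbf{Tensor products.} Tensoring over a field is exact, so if $A$ is a subquotient of $P$ and $B$ a subquotient of $Q$, then $A\otimes B$ is a subquotient of $P\otimes Q$. Expanding $P\otimes Q$ by distributivity, and using the symmetry $\gamma$ to reorder factors, gives again a finite direct sum of objects $X^{\otimes r}\otimes (X^*)^{\otimes s}$.

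\textbf{Duals.} Duality is exact and contravariant, so it sends subquotients to subquotients. Using $(X\otimes Y)^*\cong Y^*\otimes X^*$ from the rigidity remark, together with $X^{**}\cong X$ for finite-dimensional representations, the dual of $X^{\otimes r}\otimes (X^*)^{\otimes s}$ is isomorphic to $X^{\otimes s}\otimes (X^*)^{\otimes r}$ after reordering.

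Since every operation in Waterhouse's theorem is also available inside $\langle X\rangle_{\otimes}$ (a full Tannakian subcategory closed under these operations), the induction closes.

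The main obstacle is interpreting the statement correctly: $\Rep_k(G)$ should be read as finite-dimensional representations, since $\langle X\rangle_{\otimes}$ consists only of finite-dimensional objects. If arbitrary representations were intended, we would invoke Theorem~\ref{fundamental_theorem_comodule} together with Proposition~\ref{representations_iso_comodules}. These write any representation as the union (filtered colimit) of its finite-dimensional subrepresentations, each of which lies in $\langle X\rangle_{\otimes}$ by the argument above. We would then state the conclusion as generation under filtered colimits. In the finite-dimensional reading, which we adopt, the remaining care is to keep track of the associativity and symmetry isomorphisms in the tensor step, which Mac Lane's coherence theorem~\ref{Mac_Lane_coherence} lets us suppress.
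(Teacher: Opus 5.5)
Your proof is correct and follows the paper's route: Waterhouse's theorem, combined with the closure of $\langle X\rangle_{\otimes}$ under tensor products, duals, direct sums, subobjects and quotients. The only difference is that you verify this closure explicitly from the description of $\langle X\rangle_{\otimes}$ as subquotients of finite sums of $X^{\otimes r}\otimes (X^*)^{\otimes s}$, whereas the paper just appeals to the rigid abelian monoidal structure. Closure under arbitrary subrepresentations really comes from that subquotient description, not from $\langle X\rangle_{\otimes}$ merely being abelian, so your version is the more careful of the two.
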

\begin{proof}
    Since Tannakian categories are rigid monoidal and abelian,
    they are closed under taking the tensor product, dual, direct sum and taking quotients and subrepresentations since both last are instance of kernels and cokernels.
\end{proof}

\begin{proof}[Proof of Proposition \ref{Tannaka_subcat_Zariski_closure}]
    Denote $G$ the affine group scheme over $k$ defined by $\overline{\im\:\rho}$ in $\GL_n$.
    The representation $\rho:V\rightarrow \GL_n(V)$ give rise to a representation of the affine group scheme $G$ in a natural way by taking the inclusion $G\rightarrow \GL_n(V)$, we denote it by $\overline{V}$.
    Similarly, any object of $\langle V\rangle_{\otimes}$ give rise to an object of $\Rep_G$ and e obtain an equivalence between $\langle V\rangle_{\otimes}$ and the full Tannakian subcategory $\langle \overline{V}\rangle_{\otimes}$ of $\Rep_G$.
    In particular the two categories have isomorphic Tanakian fundamental group.
    But by the above corollary, $\langle\overline{V}\rangle_{\otimes}$ is the whole of $\Rep_G$, proving our proposition.\\
\end{proof}

\begin{example}
    One interesting example of Tannakian categories, is that of complex local systems.
    Consider $X$ a connected and locally simply-connected topological space and fix $x\in X$ a base point.
    The category $\LS_X$ of local systems on $X$ is a $\mathbb{C}$-linear abelian category (since $\LCS_{\mathbb{C}}(X) \simeq \Mod_{\mathbb{C}[\pi_1(X,x)]}$, and a category of $R$-modules is abelian). 
    With the usual tensor product and dual construction from linear algebra, $\LS_X$ is endowed with a symmetric rigid monoidal structure.
    Taking the stalk of a local system at $x$ yields a fibre functor with values in $\Vect^{\text{f.d.}}_{\mathbb{C}}$.
    This means that $\LS_X$ with the functor taking stalks of local systems at $x$ is a neutral Tannakian category.\\
    Recall that in Corollary \ref{local_systems_representations} we have established an equivalence of categories between complex local systems on $X$ and finite-dimensional representations of the fundamental group $\pi_1(X,x)$. 
    Notice that this is an equivalence of (neutral) Tannakian categories, thus the Tannakian fundamental group of $\LS_X$ is isomorphic to the algebraic hull of $\pi_1(X,x)$ over $\mathbb{C}$ as in Construction \ref{construction_algebraic_hull}.\\
    Finally, using our previous Proposition \ref{Tannaka_subcat_Zariski_closure}, the Tannakian fundamental group of each full Tannakian subcategory $\langle \mathcal{S}\rangle_{\otimes}$ of $\LS_X$ is isomorphic to the Zariski closure of the image of $\rho_{\mathcal{S},x} : \pi_1(X,x)\rightarrow \GL_n(\mathbb{C})$.
\end{example}

\subsubsection{Nori's Fundamental Group Scheme}

\noindent We end this chapter with a correspondence between the above Tannakian formalism and the \'etale fundamental group of schemes from the previous chapter presented by Nori in his PhD thesis \cite{Nori}.
For $S$ a proper integral scheme over a field $k$ and $s:\Spec\: k\rightarrow S$ a $k$-rational point, Nori showed that 
the full subcategory of essentially finite vector bundles over $S$ denoted $\EF_S$ of the category of locally free sheaves on $S$ spanned by the essentially finite sheaves together with the usual tensor product of sheaves and the functor $\mathcal{E}\rightarrow s^*\mathcal{E}$, is a neutral Tannakian category over $k$
(see section $2.3$, page $83$, in \cite{Nori}).

\begin{definition}
    For $S$ and $s$ as above, we define the \textbf{Nori fundamental group scheme} of $S$ at the base point $s$ to be the Tannakian fundamental group of the neutral Tannakian category $(\EF_S, s^*)$ mentionned above.
    We denote it by $\pi_1^N(S,s)$.
\end{definition}

\begin{proposition}
\label{Nori_invers_limit}
    The fundamental group scheme $\pi_1^N(S,s)$ is an inverse limit of finite $k$-group schemes.
\end{proposition}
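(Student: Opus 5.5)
The plan is to show that $\EF_S$ is the filtered union of full Tannakian subcategories $\langle \mathcal{E}\rangle_{\otimes}$, each of which has a finite Tannakian fundamental group. The inverse limit statement then follows formally.

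\textbf{Step 1 (filtered union).} Each object $\mathcal{E}$ of $\EF_S$ generates the full Tannakian subcategory $\langle \mathcal{E}\rangle_{\otimes}$. Given two objects $\mathcal{E}_1,\mathcal{E}_2$, the subcategory $\langle \mathcal{E}_1\oplus\mathcal{E}_2\rangle_{\otimes}$ contains both $\langle \mathcal{E}_1\rangle_{\otimes}$ and $\langle \mathcal{E}_2\rangle_{\otimes}$. So these subcategories form a directed system whose union is $\EF_S$.

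\textbf{Step 2 (the limit formula).} For each $\mathcal{E}$, restricting $s^*$ to $\langle\mathcal{E}\rangle_{\otimes}$ gives a neutral Tannakian category. Let $G_{\mathcal{E}}=\Aut^{\otimes}(s^*|_{\langle\mathcal{E}\rangle_{\otimes}})$. By the theorem identifying neutral Tannakian categories with $\Rep^{\text{f.d.}}$ of affine group schemes, together with Proposition \ref{automorphism_group_scheme}, each $G_{\mathcal{E}}$ is an affine group scheme. An inclusion $\langle\mathcal{E}\rangle_{\otimes}\subset\langle\mathcal{E}'\rangle_{\otimes}$ induces a restriction map $G_{\mathcal{E}'}\to G_{\mathcal{E}}$.

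A monoidal automorphism of $s^*$ on $\EF_S$ is exactly a compatible family of monoidal automorphisms on each $\langle\mathcal{E}\rangle_{\otimes}$. This holds functorially in every $k$-algebra $R$, because each object and each morphism of $\EF_S$ already lies in some $\langle\mathcal{E}\rangle_{\otimes}$. Hence
\[
\pi_1^N(S,s)=\Aut^{\otimes}(s^*)\cong\varprojlim_{\mathcal{E}} G_{\mathcal{E}}.
\]
On coordinate rings this is the statement that $\mathcal{O}(\pi_1^N)$ is the filtered colimit of the Hopf subalgebras $\mathcal{O}(G_{\mathcal{E}})$.

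\textbf{Step 3 (finiteness of each $G_{\mathcal{E}}$; main obstacle).} It remains to show that each $G_{\mathcal{E}}$ is a finite $k$-group scheme. Here I use Nori's definition of essential finiteness. An essentially finite bundle is a subquotient, within the relevant abelian category of semistable degree-zero bundles, of a finite bundle $\mathcal{F}$. Being finite means there are distinct polynomials $f\neq g$ with nonnegative integer coefficients such that $f(\mathcal{F})\cong g(\mathcal{F})$.

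Since $\langle\mathcal{E}\rangle_{\otimes}\subset\langle\mathcal{F}\rangle_{\otimes}$, it suffices to treat $\mathcal{F}$ finite and use the quotient $G_{\mathcal{F}}\to G_{\mathcal{E}}$. By Nori's argument, the finiteness relation forces the set of isomorphism classes of indecomposable objects in $\langle\mathcal{F}\rangle_{\otimes}$ to be finite. Equivalently, there are only finitely many indecomposable summands among all tensor powers $\mathcal{F}^{\otimes r}\otimes(\mathcal{F}^*)^{\otimes t}$. Let $\mathcal{P}$ be the direct sum of these indecomposables. Every object of $\langle\mathcal{F}\rangle_{\otimes}$ is then a subquotient of some $\mathcal{P}^{\oplus m}$.

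The key technical point, and the step I expect to be hardest, is to deduce that $\mathcal{O}(G_{\mathcal{F}})$ is finite-dimensional. I would argue via comodules. The regular comodule $\mathcal{O}(G_{\mathcal{F}})$ is the union of its finite-dimensional subcomodules (Theorem \ref{fundamental_theorem_comodule}). Each such subcomodule embeds into some $s^*(\mathcal{P})^{\oplus m}$. Because every indecomposable summand already occurs in $s^*(\mathcal{P})$, the matrix coefficients of all these subcomodules are spanned by the finitely many matrix coefficients of $s^*(\mathcal{P})$. So $\mathcal{O}(G_{\mathcal{F}})$ is finite-dimensional, and $G_{\mathcal{F}}$ is a finite group scheme. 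Its quotient $G_{\mathcal{E}}$ is then finite as well. Combined with Step 2, this exhibits $\pi_1^N(S,s)$ as an inverse limit of finite $k$-group schemes.
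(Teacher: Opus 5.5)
The paper gives no argument for this proposition beyond citing Nori (Conclusion~I.3), and your outline is essentially Nori's own proof. You exhaust $\EF_S$ by the subcategories $\langle\mathcal{E}\rangle_{\otimes}$, pass to a finite bundle $\mathcal{F}$ dominating $\mathcal{E}$, and bound $\mathcal{O}(G_{\mathcal{F}})$ using Nori's lemma that the mixed tensor powers of a finite bundle have only finitely many indecomposable summands. The reduction to finite $\mathcal{F}$ is necessary, since in characteristic $p$ an essentially finite bundle need not itself be finite. Steps~1 and~2 are correct as written.

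The defect is in Step~3. You assert that $\langle\mathcal{F}\rangle_{\otimes}$ has only finitely many isomorphism classes of indecomposable objects. This is false in general, and it is \emph{not} equivalent to the tensor-power statement. For a counterexample, work in characteristic $p$ with $\Gamma=(\mathbb{Z}/p\mathbb{Z})^2$, and suppose $\pi_1^N(S,s)\to\Gamma$ is surjective (e.g.\ $S=E\times E$ with $E$ an ordinary elliptic curve over $k=\overline{k}$). Let $\mathcal{F}$ be the bundle attached to the regular representation. Then $\mathcal{F}^{\otimes 2}\cong\mathcal{F}^{\oplus p^2}$, so $\mathcal{F}$ is finite. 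Yet $\langle\mathcal{F}\rangle_{\otimes}\simeq\Rep_k(\Gamma)$ has infinite representation type, because $\Gamma$ is not cyclic.

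Your last paragraph leans on this false claim in two places: when it says each subcomodule \emph{embeds} into $s^*(\mathcal{P})^{\oplus m}$, and when it says every indecomposable summand already occurs in $s^*(\mathcal{P})$. You need neither. What you correctly deduce, via Krull--Schmidt, is that every object of $\langle\mathcal{F}\rangle_{\otimes}$ is a \emph{subquotient} of some $\mathcal{P}^{\oplus m}$, and that suffices. For a finite-dimensional comodule $(W,\rho)$, let $C(W)\subset\mathcal{O}(G_{\mathcal{F}})$ be the span of all $(\varphi\otimes\id)\rho(w)$ with $\varphi\in W^*$ and $w\in W$. Then:
\begin{itemize}
\item[$\cdot$] $C(W')\subset C(W)$ for every subquotient $W'$ of $W$;
\item[$\cdot$] $C(W^{\oplus m})=C(W)$;
\item[$\cdot$] a subcomodule $W$ of the regular comodule satisfies $W\subset C(W)$, because $w=(\epsilon\otimes\id)\Delta(w)$.
\end{itemize}
Hence every finite-dimensional subcomodule of $\mathcal{O}(G_{\mathcal{F}})$ lies in $C(s^*\mathcal{P})$. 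By the fundamental theorem of comodules, $\dim_k\mathcal{O}(G_{\mathcal{F}})\le(\dim s^*\mathcal{P})^2$. With this repair the rest of your argument goes through, including the passage to the quotient $G_{\mathcal{E}}$.
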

\begin{proof}
    See \cite{Nori}, Conclusion $I.\:3.$
\end{proof}

\noindent The majority of \cite{Nori} focuses on what happens to that fundamental group, in the case where $S$ is defined over fields of positive characteristic.
However, what is really surprising is that over a field of characteristic $0$, we can compare the Nori fundamental group scheme and the \'etale fundamental group scheme.
Indeed, this is due to the fact that in characteristic $0$, every finite group scheme is \'etale and combining this with the above Proposition \ref{Nori_invers_limit}, gives us the following corollary.

\begin{corollary}
    When $k$ is an algebraically closed field of characteristic $0$.
    For $S$ a proper integral scheme over $k$ and $s:\Spec\:k \rightarrow S$ a $k$-rational point, there is a canonical isomorphism
    $\pi_1^N(S,s) \xrightarrow[]{\sim} \pi_1^e(S,\overline{s})$ for each $k$-valued geometric point $s = \overline{s}$ of $S$.
\end{corollary}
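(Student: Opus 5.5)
We have to compare the Nori group scheme $\pi_1^N(S,s)$, which is an affine group scheme over $k$, with the profinite group $\pi_1^e(S,\overline{s})$. Since $k$ is algebraically closed, a profinite group $\Gamma$ is identified with the pro-constant group scheme $\varprojlim \Gamma_k/N$, where $N$ runs over the open normal subgroups. So the statement should be read as an isomorphism of pro-finite group schemes. Equivalently, it says that $\pi_1^N(S,s)(k)\cong\pi_1^e(S,\overline{s})$ as topological groups.

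\textbf{Step 1: reduce to constant finite quotients.} By Proposition \ref{Nori_invers_limit}, $\pi_1^N(S,s)=\varprojlim G_i$ with each $G_i$ a finite $k$-group scheme. In characteristic $0$ every finite group scheme is étale (Cartier). By Remark \ref{constant_group_scheme_etale}, with $k=k_s$ algebraically closed, each $G_i$ is the constant group scheme $(\Gamma_i)_k$ with $\Gamma_i=G_i(k)$. Hence $\pi_1^N(S,s)$ is pro-constant with underlying profinite group $\varprojlim\Gamma_i$. It then suffices to match the finite quotients of both sides compatibly.

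\textbf{Step 2: identify quotients with torsors and Galois covers.} On Nori's side, I would use Nori's description of $\EF_S$: a quotient $\pi_1^N(S,s)\twoheadrightarrow G$ onto a finite group scheme corresponds, via Tannakian duality (Theorem \ref{Tannaka_duality} and Proposition \ref{automorphism_group_scheme}), to a $G$-torsor $P\to S$ together with a $k$-point of $P$ over $s$. This torsor is reduced, and the quotient is surjective exactly when $P$ is connected.

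For constant $G=\Gamma_S$, part $a)$ of the classification theorem for torsors shows that such a torsor is exactly a finite étale Galois cover with group $\Gamma$. A chosen point above $s$ then gives a point of $\Fib_{\overline{s}}(P)$. By Theorem \ref{main_theorem_etale_fundamental_group} and the Galois correspondence (Theorem \ref{Galois_correpondance_etale_cov}), pointed connected Galois covers correspond to continuous surjections $\pi_1^e(S,\overline{s})\twoheadrightarrow\Gamma$. This yields a bijection
\[\{\text{quotients } \pi_1^N(S,s)\twoheadrightarrow\Gamma_k\}\longleftrightarrow\{\text{open normal } N\subset\pi_1^e(S,\overline{s}) \text{ with } \pi_1^e/N\cong\Gamma\}.\]

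\textbf{Step 3: naturality and passage to the limit.} The bijection must be compatible with morphisms of quotients. A map of pointed Galois covers $P\to P'$ corresponds both to the induced homomorphism $\Gamma\to\Gamma'$ and to the inclusion of open subgroups. This uses Proposition \ref{proporty_mono_etale_cover}, which says pointed morphisms are unique. Taking inverse limits gives the canonical isomorphism of profinite groups. Equivalently, one can construct a comparison morphism $\pi_1^e\to\pi_1^N$ directly. Each finite étale cover $f:Y\to S$ gives the essentially finite bundle $f_*\mathcal{O}_Y$, which is finite because $f_*\mathcal{O}_Y\otimes f_*\mathcal{O}_Y\cong f_*\mathcal{O}_{Y\times_S Y}$ splits as a sum of such pushforwards. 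Its fibre $s^*f_*\mathcal{O}_Y=k^{\Fib_{\overline{s}}(Y)}$ carries compatible actions, and Step 2 shows that this morphism is bijective on finite quotients.

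The main obstacle is Step 2 on Nori's side: showing that every essentially finite bundle lies in a Tannakian subcategory $\langle E\rangle_\otimes$ whose group is finite, and that this group is realized by a torsor. This is the substance of Nori's reconstruction. I would cite Nori's thesis for this step rather than reprove it. The remaining work is checking that connectedness and reducedness of $P$ are automatic, which follows from Proposition \ref{reduced_carry_above}.
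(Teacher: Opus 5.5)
Your proposal is correct, and its skeleton is the paper's. Step 1 is the paper's entire justification for the corollary: finite group schemes are étale in characteristic $0$, hence constant over $k=\overline{k}$, and this is combined with Proposition \ref{Nori_invers_limit}.

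The paper stops there. You rightly observe that pro-constancy of $\pi_1^N(S,s)$ only shows it is the pro-constant group scheme of \emph{some} profinite group. Identifying that group with $\pi_1^e(S,\overline{s})$ needs your Step 2: Nori's universal property of $\pi_1^N$ for pointed torsors under finite group schemes, together with part $a)$ of the torsor classification and Theorem \ref{main_theorem_etale_fundamental_group}. These give bijections $\Hom(\pi_1^N(S,s),\Gamma_k)\cong\Hom_{\mathrm{cont}}(\pi_1^e(S,\overline{s}),\Gamma)$, natural in the finite group $\Gamma$. The isomorphism of pro-objects then follows by the Yoneda-type argument of Step 3. The paper's sketch is shorter but leaves this identification implicit. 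Your route actually proves the statement and explains the word ``canonical'', at the price of citing Nori for the torsor correspondence.

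Two small corrections.
\begin{itemize}
\item The comparison map built from the algebra objects $f_*\mathcal{O}_Y$ goes $\pi_1^N(S,s)\to\pi_1^e(S,\overline{s})$, which is the direction in the statement, not $\pi_1^e\to\pi_1^N$. Tensor automorphisms of $s^*$ act by algebra automorphisms of $k^{\Fib_{\overline{s}}(Y)}$, hence by permutations of $\Fib_{\overline{s}}(Y)$, naturally in $Y$.
\item Proposition \ref{reduced_carry_above} only shows that $P$ is a reduced scheme. Connectedness of $P$ comes from surjectivity of the quotient, i.e.\ transitivity on the fibre. Alternatively, you can avoid the issue by working with all homomorphisms to finite groups and using naturality with respect to subgroup inclusions.
\end{itemize}
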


\appendix

\newpage
\section{Selected Results from Commutative Algebra and Groups Theory}

For this thesis, most results from (non-)commutative algebra that can be found in \cite{Lang_algebra} and \cite{Eisenbud} are assumed to be known by the reader. 
However, for the sake of keeping this document self-contained, we recall a few important and necessary results.

\begin{theorem}
\label{Lang_1_10}
    Let $A$be a subring of $B$, let $\mathfrak{p}$ be a prime ideal in $A$ and assume that $B$ is integral over $A$. Then $\mathfrak{p}B\neq B$ and there exists a prime ideal $\mathfrak{q}$ in $B$ lying above $\mathfrak{p}$
\end{theorem}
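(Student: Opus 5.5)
The plan is to reduce to the case of a local ring via localisation and then to invoke Zorn's lemma for maximal ideals. For the first claim, $\mathfrak{p}B\neq B$, localise at $S=A\setminus\mathfrak{p}$. The ring $B_{\mathfrak{p}}:=S^{-1}B$ is integral over the local ring $A_{\mathfrak{p}}$, because integrality equations $b^n+a_{n-1}b^{n-1}+\dots+a_0=0$ survive division by $s^n$. Suppose $\mathfrak{p}B=B$. Then $1=\sum p_ib_i$ with $p_i\in\mathfrak{p}$, and consequently $\mathfrak{m}B_{\mathfrak{p}}=B_{\mathfrak{p}}$, where $\mathfrak{m}=\mathfrak{p}A_{\mathfrak{p}}$. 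Let $B'=A_{\mathfrak{p}}[b_1,\dots,b_r]$ be the subalgebra generated by the finitely many $b_i$ involved. Since it is generated by finitely many integral elements, $B'$ is a finitely generated $A_{\mathfrak{p}}$-module, and we still have $\mathfrak{m}B'=B'$. Nakayama's Lemma \ref{Nakayama_lemma} then forces $B'=0$. This is impossible, because $A\subset B$ and $S$ contains no zero divisors of $B$ lying in $A$; more simply, $1\neq 0$ in $B_{\mathfrak{p}}$ since $0\notin S$ and $A\subset B$ is a subring.

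For the second claim, $\mathfrak{m}B_{\mathfrak{p}}$ is a proper ideal by the first step, applied to $A_{\mathfrak{p}}\subset B_{\mathfrak{p}}$. Using Zorn's lemma, choose a maximal ideal $\mathfrak{n}$ of $B_{\mathfrak{p}}$ containing it. The contraction $\mathfrak{n}\cap A_{\mathfrak{p}}$ is a prime ideal containing $\mathfrak{m}$, and since $\mathfrak{m}$ is maximal, the contraction equals $\mathfrak{m}$. Now let $\mathfrak{q}$ be the preimage of $\mathfrak{n}$ in $B$. It is prime, and chasing the commutative square
\[
A\to A_{\mathfrak{p}}\to B_{\mathfrak{p}}, \qquad A\to B\to B_{\mathfrak{p}}
\]
gives $\mathfrak{q}\cap A=\mathfrak{m}\cap A=\mathfrak{p}$.

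I expect the main obstacle to be the Nakayama step. One must check that a finitely generated subalgebra generated by integral elements is module-finite, which follows by induction on the number of generators using the monic equations. One must also make sure that the reduction to $B'$ preserves $\mathfrak{m}B'=B'$, which holds because the witnesses $b_i$ were placed inside $B'$. A second small point is that $B_{\mathfrak{p}}\neq 0$, which needs $S\cap\{0\}=\emptyset$ and the fact that $A$ embeds in $B$.
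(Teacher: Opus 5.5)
Your proof is correct and is essentially the proof of Proposition VII.1.10 in Lang, which the paper cites instead of proving: localise at $S=A\setminus\mathfrak{p}$, apply Nakayama to the module-finite subalgebra $A_{\mathfrak{p}}[b_1,\dots,b_r]$, then pull back a maximal ideal of $B_{\mathfrak{p}}$ containing $\mathfrak{p}B_{\mathfrak{p}}$ around the commutative square. One small correction: your first justification, that $S$ contains no zero divisors of $B$, is false in general (take $A=B=k\times k$ and $\mathfrak{p}=0\times k$, so that $(1,0)\in S$), but your second one, that $1\neq 0$ in $B_{\mathfrak{p}}$ because $0\notin S$ and $A\subset B$, is correct and suffices.
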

\begin{proof}
    See Proposition $1.10$ in Chapter $VII, \S1$ of \cite{Lang_algebra}.
\end{proof}

\noindent The following lemmas are the famous Nakayama Lemmas. 
They have many different statements, we give two of them here and the latter can be seen as a consequence of the former.
\begin{lemma}[Nakayama]
\label{Nakayama_first_lemma}
     Let $I$ be an ideal in $R$, and $M$ a finitely generated module over $R$.
    If $IM=M$, then there exists $r\in R$ with $r\equiv 1 \mod{I}$ such that $rM=0$.
\end{lemma}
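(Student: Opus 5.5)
The plan is the classical determinant trick, also known as the Cayley--Hamilton argument. Choose generators $m_1,\dots,m_n$ of $M$ over $R$. Since $IM=M$, each generator lies in $IM$. Hence we can write
\[
m_i=\sum_{j=1}^n a_{ij}m_j, \qquad a_{ij}\in I,
\]
for every $i$, because every element of $IM$ is an $R$-combination of products $a\,m$ with $a\in I$. Expanding each such $m$ in terms of the generators gives coefficients that still lie in $I$, since $I$ is an ideal.

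Next, consider the $n\times n$ matrix $T=\mathrm{Id}_n-(a_{ij})$ with entries in $R$. The relations above say exactly that $T\cdot(m_1,\dots,m_n)^{t}=0$ in $M^n$. Multiply on the left by the adjugate matrix $\mathrm{adj}(T)$ and use the identity $\mathrm{adj}(T)\,T=\det(T)\,\mathrm{Id}_n$, which holds over any commutative ring. This gives $\det(T)\,m_i=0$ for all $i$. Set $r=\det(T)$. Since the $m_i$ generate $M$, it follows that $rM=0$.

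Finally, we must check that $r\equiv 1 \bmod I$. Reducing the matrix modulo $I$ sends every $a_{ij}$ to $0$, so $T$ becomes the identity matrix over $R/I$. Because the determinant is a polynomial in the entries, it commutes with the ring map $R\to R/I$. Therefore $\det(T)\equiv\det(\mathrm{Id}_n)=1 \bmod I$.

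The only point needing care is the adjugate step. One must justify applying matrix identities to a vector with entries in a module $M$ rather than in $R$. I would handle this by noting that $M^n$ is a module over the matrix ring $M_n(R)$, and that associativity $(\mathrm{adj}(T)T)\,\underline{m}=\mathrm{adj}(T)(T\underline{m})$ holds there. Everything else is routine.
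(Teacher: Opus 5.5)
Your proof is correct and complete. It is the standard determinant (Cayley--Hamilton) trick, and there is nothing in the paper to compare it against: the paper states this lemma without proof, recalling it only as background from commutative algebra.

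The identity $\mathrm{adj}(T)\,T=\det(T)\,\mathrm{Id}_n$ is where the paper's standing convention that all rings are commutative is needed. Your justification of the associativity $(\mathrm{adj}(T)T)\,\underline{m}=\mathrm{adj}(T)(T\underline{m})$ on $M^n$ handles the only delicate step.

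Your conclusion also yields the second version, Lemma \ref{Nakayama_lemma} (whose hypothesis should read $IM=M$), just as the paper claims. Set $i:=1-r\in I$; then $im=m-rm=m$ for all $m\in M$.
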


\begin{lemma}[Nakayama]
\label{Nakayama_lemma}
    Let $I$ be an ideal in $R$, and $M$ a finitely generated $R$-module. 
    If $I=IM$, then there exists an $i\in I$ such that $im = m$ for all $m\in M$. 
\end{lemma}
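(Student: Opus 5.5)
The statement is the second form of Nakayama's lemma (Lemma \ref{Nakayama_lemma}), and I read its hypothesis as $IM=M$, since that is how the paper uses it in its idempotent lemma. Under that reading, Lemma \ref{Nakayama_first_lemma} gives the result almost at once.

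The plan is to first prove Lemma \ref{Nakayama_first_lemma} by the determinant trick and then deduce the stated version by a one-line substitution.

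First, the determinant trick. Choose generators $m_1,\dots,m_n$ of $M$. Since $M=IM$, each generator can be written as
\[
m_i=\sum_j a_{ij}m_j \qquad \text{with } a_{ij}\in I.
\]
Put $P=\mathrm{Id}_n-(a_{ij})$, a matrix over $R$. Then $P$ annihilates the column vector $(m_1,\dots,m_n)^T$. Multiplying on the left by the adjugate of $P$ gives $\det(P)\,m_i=0$ for every $i$, so $\det(P)\,M=0$. Expanding the determinant, $r:=\det(P)$ satisfies $r\equiv 1 \pmod I$, because every term other than the product of the diagonal $1$'s involves an entry $a_{ij}\in I$. This step is the only real obstacle. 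The point to be careful about is that the adjugate identity $\operatorname{adj}(P)P=\det(P)\,\mathrm{Id}$ holds over any commutative ring, so it may be applied to a vector with entries in the $R$-module $M$.

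Second, the deduction. Write $r=1-i$ with $i\in I$. From $rM=0$ we get $(1-i)m=0$, that is, $im=m$, for all $m\in M$, which is exactly the claim.

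If the hypothesis is instead read literally as $I=IM$, I would note that this only makes sense when $M$ is viewed inside $R$, in particular when $M=I$. In that case the condition is $I^2=I$, which is precisely how the lemma is applied in the paper, so the same argument covers it.
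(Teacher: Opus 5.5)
Your proof is correct and follows the paper's route. The paper gives no separate argument for this lemma and only remarks that it is a consequence of Lemma~\ref{Nakayama_first_lemma}, which is exactly your substitution $r=1-i$; your determinant-trick proof of that first form, which the paper merely states, is the standard one. You are also right to read the hypothesis as $IM=M$: the paper's $I=IM$ is a typo, and it is harmless in the paper's uses, where $M=I$.
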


\subsection{Krull Dimension and Artinian Rings}

\begin{definition}
    The \textbf{Krull dimension} of a commutative ring $R$ is the supremum of the lengths of all increasing chain of prime ideals in $R$
\end{definition}

\begin{definition}
    A ring $R$ is called \textbf{Noetherian} if there is no infinite increasing sequence of ideals $I_1\subsetneq I_2 \subsetneq \dots\subsetneq I_n \subsetneq \dots$, while it is called \textbf{Artinian} if there is no infinite descending sequence of ideals $I_1\supsetneq I_2 \supsetneq \dots\supsetneq I_n \supsetneq \dots$.
\end{definition}

\begin{example} \textbf{ }
\label{f_d_k_alg_artinian}
\begin{itemize}
    \item[$a)$] Let $k$ be a field, then finite-dimensional $k$-algebras are Artinian.
    \item[$b)$] The ring of polynomials with finite variables over a field, such as $k[x_1,\dots , x_n]$ is Noetherian.
    \end{itemize}
\end{example}

\noindent In the following we wish to show that ant Artinian ring has zero Krull dimension.

\begin{lemma}
Let $A$ be an Artinian ring which is also an integral domain. Then $A$ is a field.
\end{lemma}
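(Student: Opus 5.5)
To show that $A$ is a field, I will take an arbitrary nonzero element $x\in A$ and prove that it is invertible. The tool is the descending chain condition, applied exactly as in the proof of Lemma \ref{finite_dimensional_direct_sum_reduced}, with the idempotent step replaced by cancellation in a domain.

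First I consider the descending chain of principal ideals
$$(x)\supseteq (x^2)\supseteq (x^3)\supseteq \dots$$
Since $A$ is Artinian, this chain cannot descend strictly forever. So there is an $n\geq 1$ with $(x^n)=(x^{n+1})$. In particular $x^n\in (x^{n+1})$, so there exists $y\in A$ such that $x^n = x^{n+1}y$.

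Next I rewrite this as $x^n(1-xy)=0$. Because $A$ is an integral domain and $x\neq 0$, we have $x^n\neq 0$ (a power of a nonzero element in a domain is nonzero, by induction). Cancelling $x^n$ therefore gives $1-xy=0$, that is, $xy=1$. So $x$ is a unit.

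Since $x$ was an arbitrary nonzero element, every nonzero element of $A$ is invertible. Together with the fact that $A$ is commutative with $1\neq 0$ (which is part of being an integral domain), this shows that $A$ is a field.

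There is no real obstacle in this argument. The only point needing care is the extraction of $y$ from the stabilisation of the chain, which I handle by choosing the first index at which the chain stops strictly decreasing. The cancellation step then uses nothing beyond the domain hypothesis.
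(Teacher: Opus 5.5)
Your proof is correct and follows the paper's argument exactly: stabilisation of the chain $(x)\supseteq(x^2)\supseteq\cdots$ gives $x^n=yx^{n+1}$, and cancelling $x^n\neq 0$ in the domain yields $xy=1$. Your explicit remark that $x^n\neq 0$ is a small piece of care the paper leaves implicit.
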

\begin{proof}
    Let $x\in A$ be non-zero. We hae a descending chain $1\supseteq (x) \supseteq (x^2) ...$, which must stabilise, so $(x^n) = (x^{n+1})$ for some $n$.
    This implies that $x^n = yx^{n+1}$ for some $y\in A$. 
    Since $A$ is an integral domain and $x\neq 0$, we can cancel $x^n$ to get $xy = 1$ making $x$ a unit. 
    This holds for all non-zero $x\in A$, hence the ring $A$ is a field.
\end{proof}
\begin{lemma}
    In an Artinian ring, every prime ideal is maximal.
\end{lemma}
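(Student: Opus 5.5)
The plan is to reduce to the integral-domain case via quotients, using the lemma immediately preceding this one: an Artinian integral domain is a field.

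Let $A$ be an Artinian ring and $\mathfrak{p}\subset A$ a prime ideal. First I will check that the quotient $A/\mathfrak{p}$ is again Artinian. Ideals of $A/\mathfrak{p}$ correspond bijectively, and in an inclusion-preserving way, to ideals of $A$ containing $\mathfrak{p}$. Hence a strictly descending chain of ideals in $A/\mathfrak{p}$ pulls back to a strictly descending chain of ideals in $A$. Such a chain must stop, so the descending chain condition passes to the quotient.

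Second, since $\mathfrak{p}$ is prime, $A/\mathfrak{p}$ is an integral domain. It is also Artinian by the first step, so the previous lemma shows it is a field.

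Finally, I will use the standard fact that an ideal $I$ is maximal exactly when $A/I$ is a field, which follows from the same ideal correspondence. This gives that $\mathfrak{p}$ is maximal.

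I expect no real obstacle here. The only point needing care is the transfer of the Artinian property to quotients, and the correspondence theorem handles it directly. As a remark, the statement then yields the desired consequence at once: every chain of primes has length zero, so an Artinian ring has Krull dimension $0$.
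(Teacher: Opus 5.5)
Your proof is correct and follows the same route as the paper: pass to $A/\mathfrak{p}$, observe that it is an Artinian integral domain, apply the preceding lemma to get a field, and conclude that $\mathfrak{p}$ is maximal. The only difference is that you justify, via the correspondence theorem, that the descending chain condition passes to quotients, which the paper simply asserts.
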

\begin{proof}
    If $A$ is Artinian and $\mathfrak{p}\subset A$ is a prime ideal, then $A/\mathfrak{p}$ is still Artinian.
    Since $A/\mathfrak{p}$ is an integral domain, by the previous lemma it is a field, so $\mathfrak{p}$ is actually a maximal ideal.
\end{proof}
\begin{proposition}
    A ring $A$ has Krull dimension zero if and only if every prime ideal is maximal.
\end{proposition}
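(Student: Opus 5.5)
The plan is to unwind the definition of Krull dimension directly: the dimension is the supremum of lengths $n$ of strictly increasing chains of primes $\mathfrak{p}_0\subsetneq \mathfrak{p}_1\subsetneq\dots\subsetneq\mathfrak{p}_n$ in $A$. Having Krull dimension zero therefore means that no chain of length $1$ exists, i.e.\ there are no two primes $\mathfrak{p}\subsetneq\mathfrak{q}$. Both implications are then statements about comparability of primes and the fact that maximal ideals are prime. Throughout I assume $A\neq 0$, so that $A$ has at least one maximal ideal and hence at least one prime, and the supremum is taken over a non-empty set.

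For the direction ``every prime maximal $\Rightarrow$ dimension zero'', suppose $\mathfrak{p}\subsetneq\mathfrak{q}$ are primes. Since $\mathfrak{p}$ is maximal and $\mathfrak{q}$ is a proper ideal containing it, we get $\mathfrak{q}=\mathfrak{p}$, a contradiction. So every chain has length $0$. Since $A$ has a prime by the usual Zorn's lemma argument giving a maximal ideal, the supremum is exactly $0$.

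For the converse, let $\mathfrak{p}$ be a prime ideal. By Zorn's lemma, or more directly the existence of a maximal ideal in the nonzero ring $A/\mathfrak{p}$, there is a maximal ideal $\mathfrak{m}\supseteq\mathfrak{p}$. Maximal ideals are prime, so if $\mathfrak{p}\neq\mathfrak{m}$ then $\mathfrak{p}\subsetneq\mathfrak{m}$ is a chain of primes of length $1$, contradicting dimension zero. Hence $\mathfrak{p}=\mathfrak{m}$ is maximal.

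There is no real obstacle here. The only point needing care is the existence of a maximal ideal containing a given prime, which is the standard Zorn's lemma fact and which I will simply cite. Combined with the preceding lemma, this proposition yields that Artinian rings have Krull dimension zero, which is the result used earlier as \ref{Artin_zero_dim}.
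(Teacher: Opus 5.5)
Your proof is correct and is essentially the paper's argument: both directions come down to the fact that a chain $\mathfrak{p}\subsetneq\mathfrak{q}$ of primes exists exactly when some prime fails to be maximal, with a maximal ideal containing a given prime supplying the chain in one direction. Your explicit assumption $A\neq 0$, which ensures the supremum equals $0$ rather than being taken over an empty set, is a small point of care that the paper leaves implicit.
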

\begin{proof}
    If $\mathfrak{p}\subset A$ is a non-maximal prime ideal, then there is some maximal ideal $\mathfrak{m}$ with $\mathfrak{p}\subsetneq\mathfrak{m}$, giving a chain of length $1$, and so $\dim A \geq 1$.
    Conversely, if $\dim A \geq 1$, then there is some chain of prime ideals $\mathfrak{p}_0 \supsetneq \mathfrak{p}_1\supsetneq ...$, whereby $\mathfrak{p}_1$ is not a maximal prime ideal. 
\end{proof}

\begin{corollary}
\label{Artin_zero_dim}
    Artinian rings have zero Krull dimension.
\end{corollary}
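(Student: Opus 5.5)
The corollary should follow directly from the two preceding results. I will combine the lemma that every prime ideal of an Artinian ring is maximal with the proposition that a ring has Krull dimension zero exactly when every prime ideal is maximal. Concretely, let $A$ be an Artinian ring. By the previous lemma, each prime ideal $\mathfrak{p}\subset A$ is maximal. This is the hypothesis of the ``if'' direction of the proposition, so that direction gives $\dim A = 0$.

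For the proposition's direction to apply cleanly, I will take the contrapositive form already argued there. If $\dim A\geq 1$, there is a strict chain $\mathfrak{p}_0\subsetneq\mathfrak{p}_1$ of primes. Then $\mathfrak{p}_0$ is a prime that is not maximal, which contradicts the lemma. Hence no chain of length at least one exists, and the supremum of chain lengths is $0$.

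One small point needs checking: the lemma's proof uses that a quotient $A/\mathfrak{p}$ of an Artinian ring is again Artinian. This holds because ideals of $A/\mathfrak{p}$ correspond to ideals of $A$ containing $\mathfrak{p}$, so a strictly descending chain in the quotient lifts to one in $A$. I also assume $A\neq 0$, so that at least one prime ideal exists and the dimension is $0$ rather than $-\infty$. That is the only place any care is needed. Otherwise the argument is a two-line chaining of the earlier results and has no real obstacle.
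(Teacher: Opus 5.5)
Your proposal is correct and is the paper's own argument: the corollary is exactly the lemma that every prime ideal of an Artinian ring is maximal, combined with the proposition that Krull dimension zero is equivalent to every prime ideal being maximal. Your side remarks, that $A/\mathfrak{p}$ is again Artinian and that one should take $A\neq 0$, are sound and fill in small points the paper leaves implicit.
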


\subsection{Classification of Transitive $G$-Spaces}

The result we mention in this subsection is used extensively in the first chapter.
It is quite powerful as it gives a classification of transitive $G$-spaces by means of stabiliser.
\begin{theorem}\textbf{ }
\label{Cameron_1_3}
    \begin{itemize}
        \item[$a)$] Let $\Omega$ be a transitive $G$-space. Then $\Omega$ is isomorphic to the coset space $G/H$ where $H = G_{\alpha}$, the stabiliser for $\alpha\in\Omega$.
        \item[$b)$] Two coset spaces $G/H$ and $G/K$ are isomorphic if and only if $H$ and $K$ are conjugate subgroups of $G$.
    \end{itemize}
\end{theorem}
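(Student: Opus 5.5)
For part a), I would fix a point $\alpha\in\Omega$, let $H=G_\alpha$ be its stabiliser, and use the orbit map $\Psi:G/H\rightarrow\Omega$, $gH\mapsto g\cdot\alpha$. The checks run in this order.

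\textbf{Well-definedness and injectivity.} For $g,g'\in G$,
$$gH=g'H \iff g^{-1}g'\in G_\alpha \iff g\cdot\alpha=g'\cdot\alpha.$$
Read in one direction this gives well-definedness of $\Psi$, and in the other it gives injectivity.

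\textbf{Surjectivity.} This is exactly transitivity: every point of $\Omega$ has the form $g\cdot\alpha$.

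\textbf{Equivariance.} For $x\in G$ we have $\Psi(x\cdot gH)=xg\cdot\alpha=x\cdot\Psi(gH)$.

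So $\Psi$ is a bijective $G$-map, hence an isomorphism of $G$-spaces, since the inverse of an equivariant bijection is equivariant. If $G$ is topological and the action continuous, we only need set-level $G$-isomorphism here, and that is all the paper uses.

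For part b), I first compute stabilisers in a coset space. In $G/H$ the stabiliser of the coset $gH$ is $gHg^{-1}$, because $x\,gH=gH \iff g^{-1}xg\in H$. Hence, as the point varies, the stabilisers of points of $G/H$ form exactly the conjugacy class of $H$.

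\textbf{Isomorphic implies conjugate.} Suppose $\varphi:G/H\rightarrow G/K$ is an isomorphism of $G$-spaces. An equivariant bijection preserves stabilisers, since $x\cdot p=p \iff x\cdot\varphi(p)=\varphi(p)$. Writing $\varphi(H)=gK$, this gives $H=G_{H}=G_{gK}=gKg^{-1}$, so $H$ and $K$ are conjugate.

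\textbf{Conjugate implies isomorphic.} Suppose $H=gKg^{-1}$. The point $gK\in G/K$ has stabiliser $H$, and $G/K$ is transitive. Applying part a) to $\Omega=G/K$ with $\alpha=gK$ gives $G/K\cong G/G_{gK}=G/H$. Explicitly, the isomorphism is $xH\mapsto xgK$; it is well defined because $Hg=gK$.

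I expect no step to be a real obstacle. The only place needing care is the well-definedness of the map in the converse of b), which comes down to the identity $Hg=gK$, and the convention that actions are on the left so that stabilisers conjugate as $gHg^{-1}$ rather than $g^{-1}Hg$. Either convention yields the same conjugacy class, so the statement is unaffected.
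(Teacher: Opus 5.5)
Your proof is correct: the orbit map $gH\mapsto g\cdot\alpha$ handles part a), and for part b) you combine $G_{gH}=gHg^{-1}$ with the fact that equivariant bijections preserve stabilisers. The paper gives no proof of its own and cites Theorem~1.3 of Cameron, whose proof is this same orbit--stabiliser argument, written there for right actions and right cosets; you have transposed it correctly to the paper's left-action convention.
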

\begin{proof}
    See Theorem $1.3$ of \cite{Cameron_groups}.\\
\end{proof}

\section{Review of Category Theory}

As category theory provides the fundamental mathematical framework for this work and to ensure a self-contained treatment, we provide below a brief overview of the category-theoretic foundations utilised throughout this document.

\subsection{Definitions and Basic Properties}

\begin{definition}
    A \textbf{category} consists of the following data:
    \begin{itemize}
        \item[$a)$] a class $|\mathcal{C}|= ob(\mathcal{C}) = \mathcal{C}$ of objects, denoted by $X,Y,Z,...$,
        \item[$b)$] for any two objects $X,Y$, a set $\Hom_{\mathcal{C}}(X,Y)$ of morphisms,
        \item[$c)$] for any three objects $X,Y,Z$ a composition law for the morphisms : \\$\Hom(X,Y) \times \Hom(Y,Z) \rightarrow \Hom(X,Z)$, $(f,g) \rightarrow g\circ f$, and
        \item[$d)$] for any object $X$ a unit morphism on $X$, denoted by $1_X$ or $X$ for short.
    \end{itemize}
   
    These data are subjected to the following compatibility conditions :
    \begin{itemize}
        \item[$1)$] for all objects $X,Y,Z,U$, and all morphisms $f \in \Hom(X,Y), g\in \Hom(Y,Z)$ and $h\in \Hom(Z,U)$, we have $h\circ (g \circ f) = (h\circ g)\circ f$, and
        \item[$2)$] for all objects $X,Y,Z$, and all morphisms $f \in \Hom(X,Y)$ and $g \in \Hom(Y,Z)$, we have $1_Y \circ f= f $ and $g\circ 1_Y= g$.
    \end{itemize}
\end{definition}

\begin{definition}
    A (covariant) \textbf{functor} $F$ between two categories $\mathcal{C}$ and $\mathcal{D}$ consists of a rule $X \mapsto F(X)$ on objects and a map on sets of morphisms $\Hom_{\mathcal{C}}(X, Y)\rightarrow \Hom_{\mathcal{D}}(F(X), F(Y))$ which sends identity morphisms to identity morphisms and preserves composition. And a contravariant functor $F :\mathcal{C}\rightarrow\mathcal{D}$ is a covariant functor $F :\mathcal{C}^{op}\rightarrow\mathcal{D}$.
\end{definition}

\begin{definition}
    Let $F,G : \mathcal{C} \rightarrow\mathcal{D}$ be two functors. A \textbf{natural transformation} or morphism of functors $\alpha : F \rightarrow G$ $($sometimes denoted by $\alpha : F \Rightarrow G)$, assigns to every object $X \in \mathcal{C}$ a morphism $\alpha_X : F(X) \rightarrow G(X)$ in $\mathcal{D}$ rendering for every $f : X \rightarrow Y$ in $\mathcal{C}$ the following diagram in $\mathcal{D}$ commutative,
    \[
    \begin{tikzcd}
        F(X) \arrow[r,"F(f)"] \arrow[d, "\alpha_X"']
        & F(Y) \arrow[d,"\alpha_Y"] \\
        G(X) \arrow[r,"G(f)"']
        &  G(Y)
    \end{tikzcd}
    \]
    The morphism $\alpha$ is an \textbf{natural isomorphism} if each $\alpha_X$ is an isomorphism, in that case we write $\mathcal{C}\cong\mathcal{D}$.
\end{definition}

\begin{definition}
    A \textbf{subcategory} $\mathcal{D}$ of a category $\mathcal{C}$ consists of:
    \begin{itemize}
        \item[$a)$] a subclass $\mathcal{D}$ of the class of objects of $\mathcal{C}$, and
        \item[$b)$] for every pair $X, X'\in\mathcal{D}$, we have a subset $\Hom_{\mathcal{D}}(Y,Y')$ of $\Hom_{\mathcal{C}}(X,X')$, such that:\\ $1)$ for all $X,X',X''\in\mathcal{D}$, and all $f \in \Hom_{\mathcal{D}}(X,X')$ and $g \in \Hom_{\mathcal{D}}(X',X'')$, then $g\circ f \in \Hom_{\mathcal{D}}(X,X'')$, and\\
        $2)$ for all $X\in\mathcal{D}$, $1_X\in \Hom_{\mathcal{D}}(X,X)$.

    \end{itemize}
\end{definition}

\begin{definition}
    Consider a functor $F : \mathcal{C}\rightarrow\mathcal{D}$. For any two objects $X,Y\in\mathcal{C}$ consider the map of sets $F_{XY} : \Hom_{\mathcal{C}} (X,Y) \rightarrow \Hom_{\mathcal{D}}(F(X),F(Y))$ with  $F_{XY}(f) = F(f)$.
    The functor $F$ is called
    \begin{itemize}
        \item[-] \textbf{faithful} if and only if $F_{XY}$ is injective for all objects $X,Y\in\mathcal{C}$,
        \item[-] \textbf{full} if and only if $F_{XY}$ is surjective for all objects $X,Y\in\mathcal{C}$,
        \item[-] \textbf{fully faithful} if and only if $F_{XY}$ is bijective for all objects $X,Y\in\mathcal{C}$,
        \item[-] an \textbf{isomorphism of categories} when $F$ is fully faithful and $F$ induces a bijection on the classes of objects of $\mathcal{C}$ and $\mathcal{D}$, and
        \item[-] \textbf{essentially surjective} if every object $\mathcal{D}$ is isomorphic to some object of the form $F(X)$.
    \end{itemize}
\end{definition}

\noindent Clearly, a subcategory $\mathcal{D}$ of $\mathcal{C}$ is itself a category and there is a faithful functor $\mathcal{D}\rightarrow\mathcal{C}$. 
A subcategory is said to be \textit{full} if this functor is moreover a full functor.

\begin{definition}
\label{(pro)_representable}
    A functor $F:\mathcal{C}\rightarrow Set$ is said to be \textbf{representable} if there is an object $X\in\mathcal{C}$ and an isomorphism of functors $F\simeq \Hom_{\mathcal{C}}(X,\_)$. 
    $F: \mathcal{C} \rightarrow Set$ is said to be \textbf{pro-representable} if there exist an inverse system $(P_\alpha, \phi_{\alpha\beta})$ of objects of $\mathcal{C}$ indexed by a partially ordered set $I$, and a functorial isomorphism $\varinjlim \Hom (P_\alpha, X) \cong F(X)$ for each $X\in \mathcal{C}$ (it is a filtered limit).
\end{definition}

\noindent The following well-known lemma is of pivotal importance. Observe that if $X$ and $Y$ are objects of $\mathcal{C}$, every morphism $Y \rightarrow X$ induces a morphism of
functors $\Hom(X,\_) \rightarrow \Hom(Y,\_)$ via composition.

\begin{lemma}[Yoneda]
    If $F$ and $G$ are functors $\mathcal{C}\rightarrow \textbf{Sets}$ represented by objects $X\in\mathcal{C}$ and $Y\in\mathcal{D}$, respectively $($so $F\simeq \Hom_{\mathcal{C}}(X,\_)$ and $G\simeq \Hom_{\mathcal{C}}(Y,\_))$, every natural transformation $\phi : F \rightarrow G$ is induced by a unique morphism $Y \rightarrow X$ as above.
\end{lemma}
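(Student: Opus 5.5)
Since $X\simeq \Hom_{\mathcal{C}}(X,\_)$ and $G\simeq\Hom_{\mathcal{C}}(Y,\_)$, I would first replace $F$ and $G$ by the representable functors themselves, composing with the given natural isomorphisms at the end. The problem then reduces to showing that every natural transformation $\phi:\Hom_{\mathcal{C}}(X,\_)\rightarrow\Hom_{\mathcal{C}}(Y,\_)$ is of the form $f\mapsto f\circ u$ for a unique morphism $u:Y\rightarrow X$. (The statement writes $Y\in\mathcal{D}$, but for both functors to live on $\mathcal{C}$ we take $Y\in\mathcal{C}$.)

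For existence, I would evaluate $\phi$ at the object $X$ on the identity and set $u:=\phi_X(1_X)\in\Hom_{\mathcal{C}}(Y,X)$. Take any object $Z$ and any $f\in\Hom_{\mathcal{C}}(X,Z)$. Naturality of $\phi$ with respect to $f$ gives the commutative square
\[
\begin{tikzcd}
\Hom(X,X) \arrow[r,"f\circ\_"] \arrow[d,"\phi_X"'] & \Hom(X,Z) \arrow[d,"\phi_Z"]\\
\Hom(Y,X) \arrow[r,"f\circ\_"'] & \Hom(Y,Z).
\end{tikzcd}
\]
Chasing $1_X$ around this square yields $\phi_Z(f)=\phi_Z(f\circ 1_X)=f\circ\phi_X(1_X)=f\circ u$. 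Hence $\phi$ coincides with the transformation induced by $u$ via composition, as described just before the lemma.

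For uniqueness, suppose $u'$ also induces $\phi$. Then $\phi_X(1_X)=1_X\circ u'=u'$, which forces $u'=u$.

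Finally, I would transport the result back to $F$ and $G$ through the chosen isomorphisms. The morphism $u$ obtained this way depends on those identifications, and the statement should be read with them fixed. The only point requiring care is the diagram chase above; it is the one place where naturality is used. Everything else is bookkeeping: checking that precomposition by $u$ really is natural, which follows from associativity of composition.
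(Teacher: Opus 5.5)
Your argument is correct and is the standard one: set $u=\phi_X(1_X)$ and chase $1_X$ through the naturality square. The paper gives no proof of its own but cites Mac Lane, Chapter III.2, which uses exactly this argument. Your reading of $Y\in\mathcal{D}$ as $Y\in\mathcal{C}$ is the right fix, and your opening ``$X\simeq\Hom_{\mathcal{C}}(X,\_)$'' should read $F\simeq\Hom_{\mathcal{C}}(X,\_)$.
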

\begin{proof}
    See \cite{McLane_Sau}, chapter $III. \: 2.$
\end{proof}

\begin{definition}
    Let $\mathcal{C}$ be a small category. Denote $\Fun(\mathcal{C},\SET)$ the category of covariant functors from $\mathcal{C}$ to $\SET$ and $\CFun(\mathcal{C},\SET)$ the category of contravariant functors from $\mathcal{C}$ to $\SET$. Then the functor
    $$\mathcal{Y}^* : \mathcal{C}\rightarrow \Fun(\mathcal{C},\SET) \:, \: \mathcal{Y}^*(C) = \Hom_{\mathcal{C}}(C, \_) \: , \: \mathcal{Y}^*(f) = \Hom_{\mathcal{C}}(f,\_)$$
    is called the \textbf{contravariant Yoneda embedding functor}. And
    $$\mathcal{Y}_* : \mathcal{C}\rightarrow \CFun(\mathcal{C},\SET) \:, \: \mathcal{Y}_*(C) = \Hom_{\mathcal{C}}(\_,C) \: , \: \mathcal{Y}_*(f) = \Hom_{\mathcal{C}}(\_,f)$$
    is called the \textbf{covariant Yoneda embedding functor}.
\end{definition}

\begin{proposition}
\label{yoneda_embeddings}
    The Yoneda embedding functors are full and faithful.
\end{proposition}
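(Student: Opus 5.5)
We prove the two halves of Proposition \ref{yoneda_embeddings} by reducing each to the Yoneda lemma stated just above. We treat the covariant embedding $\mathcal{Y}_*$ first; the contravariant one $\mathcal{Y}^*$ is the same argument applied to $\mathcal{C}^{op}$, since a contravariant functor on $\mathcal{C}$ is a covariant functor on $\mathcal{C}^{op}$. Fix objects $C,D\in\mathcal{C}$. We must show that the map
$$\Hom_{\mathcal{C}}(C,D)\longrightarrow \Hom\bigl(\Hom_{\mathcal{C}}(\_,C),\Hom_{\mathcal{C}}(\_,D)\bigr),\qquad f\longmapsto \Hom_{\mathcal{C}}(\_,f)=f\circ\_,$$
is bijective.

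\textbf{Faithfulness.} This is a direct computation. Suppose $f\circ\_ = g\circ\_$ as natural transformations. Evaluate both at the component $C$ on the element $1_C\in\Hom(C,C)$. This gives $f\circ 1_C=g\circ 1_C$, hence $f=g$ by the unit axiom.

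\textbf{Fullness.} Let $\alpha:\Hom(\_,C)\Rightarrow\Hom(\_,D)$ be a natural transformation, and set $f:=\alpha_C(1_C)\in\Hom(C,D)$. For any $h:X\to C$, apply naturality of $\alpha$ to the morphism $h$, regarded as a map $X\to C$ in $\mathcal{C}$. Since the functors are contravariant in the variable, naturality reads $\alpha_X\circ\Hom(h,C)=\Hom(h,D)\circ\alpha_C$. Evaluate both sides on $1_C$:
$$\alpha_X(1_C\circ h)=\alpha_C(1_C)\circ h,$$
that is, $\alpha_X(h)=f\circ h$. Hence $\alpha=\mathcal{Y}_*(f)$.

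Alternatively, fullness follows immediately from the Yoneda lemma as stated above, by taking $F=\Hom(C,\_)$ and $G=\Hom(D,\_)$ with their tautological representing objects. The lemma then says every natural transformation between them is induced by a unique morphism, which gives full faithfulness of $\mathcal{Y}^*$ at once. Note that this version of the lemma is phrased for covariant functors, so it yields $\mathcal{Y}^*$ directly, and dualising gives $\mathcal{Y}_*$.

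We expect no real obstacle. The only delicate point is the variance bookkeeping: $\mathcal{Y}^*$ reverses arrows, so the bijection is $\Hom(D,C)\cong\Hom(\Hom(C,\_),\Hom(D,\_))$. We must also state that smallness of $\mathcal{C}$ is what guarantees that the functor categories have sets of morphisms.
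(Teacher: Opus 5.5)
Your proof is correct. Evaluating at $1_C$ gives both faithfulness and fullness of $\mathcal{Y}_*$, passing to $\mathcal{C}^{op}$ handles $\mathcal{Y}^*$, and your variance bookkeeping $\Hom_{\mathcal{C}}(D,C)\cong\Hom(\Hom_{\mathcal{C}}(C,\_),\Hom_{\mathcal{C}}(D,\_))$ is right. The paper gives no argument of its own and only cites Mac Lane, Chapter III.2, where full faithfulness is obtained as the special case of the Yoneda lemma with both functors representable; your direct argument, and your alternative through the paper's stated Yoneda lemma, are essentially that same route.
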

\begin{proof}
    See \cite{McLane_Sau}, chapter $III. \: 2.$
\end{proof}

\subsection{Limits}
\begin{definition}
    Let $F : \mathcal{Z} \rightarrow\mathcal{C}$ be a covariant functor. A \textbf{cone} on $F$ is couple $(M,\mu)$, where $M\in \mathcal{C}$ and $\mu_Z \in \Hom_{\mathcal{C}}(M, F(Z))$ for every object $Z\in \mathcal{Z}$, such that $F(f) \circ \mu_Z = \mu_{Z'}$ for all morphisms $f : Z \rightarrow Z'$.\\
    A morphism of cones between the cones $(M,\mu)$ and $(N,\nu)$ on the functor $F : \mathcal{Z}\rightarrow\mathcal{C}$ is a morphism $f : M \rightarrow N$ in $\mathcal{C}$ such that $\nu_Z \circ f=\mu_Z$.
\end{definition}

\begin{definition}
    Let $F : \mathcal{Z}\rightarrow\mathcal{C}$ be a covariant functor. A \textbf{limit} of $F$ is a cone $(L,\lambda)$ on $F$, such that for any (other) cone $(T,\tau)$ on $F$, there exists a unique morphism of cones $u: (T,\tau) \rightarrow (L,\lambda)$. We denote this as $\Lim F = (L,\lambda)$.
\end{definition}

\noindent Similarly we define a cocones, morphisms of cocones and a colimits (by reversing the arrows).

\begin{theorem}
    If $(L,\lambda)$ and $(L',\lambda')$ are two limits of a functor $F : \mathcal{Z}\rightarrow\mathcal{C}$, then $(L,\lambda)\cong(L,\lambda')$ as cones. In other words, if the limit of a functor exists, it is unique up to isomorphism.
\end{theorem}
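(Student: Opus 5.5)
This is the standard uniqueness argument for universal objects, run through the universal property of the limit twice and then closed off with the uniqueness clause of that property. Let $(L,\lambda)$ and $(L',\lambda')$ be two limits of $F:\mathcal{Z}\rightarrow\mathcal{C}$.

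First, regard $(L,\lambda)$ as an arbitrary cone on $F$. Since $(L',\lambda')$ is a limit, there is a unique morphism of cones $u:(L,\lambda)\rightarrow(L',\lambda')$, that is, a morphism $u:L\rightarrow L'$ with $\lambda'_Z\circ u=\lambda_Z$ for every $Z\in\mathcal{Z}$. Symmetrically, there is a unique morphism of cones $v:(L',\lambda')\rightarrow(L,\lambda)$ with $\lambda_Z\circ v=\lambda'_Z$.

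Next, consider the composite $v\circ u:L\rightarrow L$. It is a morphism of cones $(L,\lambda)\rightarrow(L,\lambda)$, because
$$\lambda_Z\circ(v\circ u)=(\lambda_Z\circ v)\circ u=\lambda'_Z\circ u=\lambda_Z$$
for all $Z$, using associativity of composition. The identity $1_L$ is trivially also a morphism of cones $(L,\lambda)\rightarrow(L,\lambda)$. Now apply the universal property of $(L,\lambda)$ to the cone $(T,\tau)=(L,\lambda)$: it asserts that there is exactly one morphism of cones from $(L,\lambda)$ to itself. Hence $v\circ u=1_L$. The same argument applied to $(L',\lambda')$ gives $u\circ v=1_{L'}$.

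Therefore $u$ is an isomorphism in $\mathcal{C}$ whose inverse $v$ is also a morphism of cones. So $u$ is an isomorphism of cones, and $(L,\lambda)\cong(L',\lambda')$.

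The only point that needs care is this last use of the uniqueness clause to identify $v\circ u$ with $1_L$. For it, one has to check that both morphisms really are cone morphisms into the limit; this is exactly the computation displayed above. No other difficulty is expected.
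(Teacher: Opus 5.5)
Your argument is correct: the two applications of the universal property give cone morphisms $u$ and $v$, and the uniqueness clause applied to the cones $(L,\lambda)$ and $(L',\lambda')$ themselves forces $v\circ u=1_L$ and $u\circ v=1_{L'}$. The paper gives no argument of its own and only refers to \cite{McLane_Sau}, where this is exactly the standard proof; you have also proved the intended statement $(L,\lambda)\cong(L',\lambda')$, since the paper's $(L,\lambda)\cong(L,\lambda')$ is a typo.
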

\begin{proof}
    See \cite{McLane_Sau}
\end{proof}

\noindent We give below a few examples of (co)-limits.

\subsubsection*{Filtered limits}

Let $(I,\leq)$ be a directed set, i.e. $I$ is a poset such that for all $i,j\in I$, there exist an element $k\in I$ such that if $i\leq k$ and $j\leq k$.
And let $(C_i, f_{ij})$ be a \textbf{direct system} of objects and morphisms in a category $\mathcal{C}$.
That is $\{C_i\mid i\in I\}$ is a family of objects indexed by $I$ in $\mathcal{C}$ and $f_{ij}:C_i\rightarrow C_j$ are morphisms in $\mathcal{C}$ for all $i\leq j$ such that $f_{ii}$ is the identity on $C_i$ and $f_{ik} = f_{jk}\circ f_{ij}$ for all $i\leq j \leq k$.
An \textbf{inverse system} is a direct system but with reversed arrows.

\begin{lemma}
\label{lemma_joost_direct_limit}
    Let $\mathcal{Z}$ be the category associated to the directed set $(I,\leq)$ and $\mathcal{C}$ be any category.
    There is a bijective correspondence between covariant functors $F : \mathcal{Z}\rightarrow \mathcal{C}$ and directed systems in $\mathcal{C}$.
    There is a bijective correspondence between contravariant functors $F : \mathcal{Z}\rightarrow \mathcal{C}$ and inverse systems in $\mathcal{C}$.
\end{lemma}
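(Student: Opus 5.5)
We must prove Lemma \ref{lemma_joost_direct_limit}: covariant functors $F:\mathcal{Z}\rightarrow\mathcal{C}$ correspond bijectively to direct systems in $\mathcal{C}$ indexed by $(I,\leq)$, and contravariant functors correspond to inverse systems. The plan is to write down the two assignments explicitly and check that they are mutually inverse.

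First I fix the category $\mathcal{Z}$ attached to the directed set. Its objects are the elements of $I$. The set $\Hom_{\mathcal{Z}}(i,j)$ is a singleton $\{\iota_{ij}\}$ if $i\leq j$ and empty otherwise. Composition is forced: $\iota_{jk}\circ\iota_{ij}=\iota_{ik}$, which is well defined by transitivity. The identity of $i$ is $\iota_{ii}$, which exists by reflexivity.

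Given a covariant functor $F$, set $C_i:=F(i)$ and $f_{ij}:=F(\iota_{ij})$ for $i\leq j$. Functoriality then supplies the two axioms of a direct system:
\begin{itemize}
\item[] $f_{ii}=F(\iota_{ii})=\id_{C_i}$, because $F$ preserves identities;
\item[] $f_{ik}=F(\iota_{jk}\circ\iota_{ij})=f_{jk}\circ f_{ij}$, because $F$ preserves composition.
\end{itemize}
Conversely, given a direct system $(C_i,f_{ij})$, define $F(i):=C_i$ and $F(\iota_{ij}):=f_{ij}$. Every morphism of $\mathcal{Z}$ is some unique $\iota_{ij}$, so this defines $F$ on all of $\mathcal{Z}$. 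The same two identities show that $F$ preserves identities and composition.

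The two assignments are mutually inverse essentially by construction. The only point to note is that a functor out of $\mathcal{Z}$ is completely determined by its values on objects and on the morphisms $\iota_{ij}$, since these exhaust all of $\mathcal{Z}$.

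For the contravariant statement, apply the covariant case to $\mathcal{Z}^{op}$. A contravariant functor $\mathcal{Z}\rightarrow\mathcal{C}$ is by definition a covariant functor $\mathcal{Z}^{op}\rightarrow\mathcal{C}$. It therefore amounts to objects $C_i$ and maps $f_{ij}:C_j\rightarrow C_i$ for $i\leq j$, with $f_{ii}=\id$ and $f_{ik}=f_{ij}\circ f_{jk}$. This is exactly an inverse system, with the same mutual-inverse check.

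There is no real obstacle here. The only care needed is the bookkeeping of arrow directions in the contravariant case, so that the composition law in $\mathcal{Z}^{op}$ matches the compatibility condition $f_{ik}=f_{ij}\circ f_{jk}$ for inverse systems. Directedness of $I$ is never used; it matters only later, for filtered colimits.
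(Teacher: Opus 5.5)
Your proof is correct and complete. The paper states this lemma without proof, treating it as a direct unpacking of the definitions. Your dictionary $C_i=F(i)$, $f_{ij}=F(\iota_{ij})$, with the contravariant case obtained via $\mathcal{Z}^{op}$ (giving $f_{ij}:C_j\to C_i$ and $f_{ik}=f_{ij}\circ f_{jk}$, consistent with the paper's convention $\phi_{ij}:G_j\to G_i$ for inverse systems), is exactly the standard verification the paper leaves implicit. Your remark that directedness of $I$ plays no role is also right; it only matters for the filtered (co)limits built on this correspondence.
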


\begin{definition}
\label{def_direct_inverse_limit}
    Thus the \textbf{direct limit} of a covariant functor $F : \mathcal{Z}\rightarrow \mathcal{C}$ with corresponding direct system $(C_i, f_{ij})$ is the following colimit $(C, c_i:C_i\rightarrow C)$ :  
    \[
    \begin{tikzcd}[column sep=small]
        C_i \arrow[rr,"f_{ij}"] \arrow[dr, "c_i"'] \arrow[ddr, bend right, "t_i"'] && C_j \arrow[dl, "c_j"] \arrow[ddl, bend left, "t_j"]\\
        & C \arrow[d, dashed, "u"] &\\
        &  T 
    \end{tikzcd}
    \]
    It is denoted as $C=\varinjlim_{i\in I} C_i$.\\
    Similarly, the \textbf{inverse limit} of a contravariant functor $F : \mathcal{Z}\rightarrow \mathcal{C}$ with corresponding inverse system $(C_i, f_{ji})$ is the following limit $(L, l_i: L\rightarrow C_i)$ :
    \[
    \begin{tikzcd}[column sep=small]
        & T \arrow[d, dashed, "u"] \arrow[ddr, bend left, "t_i"] \arrow[ddl, bend right, "t_j"']&\\
        & L \arrow[dr, "l_i"] \arrow[dl, "l_j"'] &\\
        C_j \arrow[rr,"f_{ji}"']   && C_i 
    \end{tikzcd}
    \]
    It is denoted as $L=\varprojlim_{i\in I} C_i$.
\end{definition}

\begin{proposition}
\label{compact_inverse_system}
    An inverse system $(C_i,f_{ij})$ of non-empty compact topological spaces is non-empty
\end{proposition}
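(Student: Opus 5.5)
The plan is the standard compactness argument via the finite intersection property. Let $(C_i,f_{ij})_{i\in I}$ be the inverse system, indexed by a directed set $(I,\leq)$, with $f_{ij}:C_j\rightarrow C_i$ continuous for $i\leq j$. I will also need each $C_i$ to be Hausdorff, which is the intended reading and holds in the application to finite discrete sets $\Aut(Y_k|S)$ in Lemma \ref{fiber_isomorphism}. Form the product $P=\prod_{i\in I}C_i$. It is non-empty by the axiom of choice, since every $C_i\neq\emptyset$, and it is compact by Tychonoff's theorem. The inverse limit is the subset
$$L=\{(x_i)\in P \mid f_{ij}(x_j)=x_i \text{ for all } i\leq j\}.$$
It suffices to show that $L\neq\emptyset$.

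First step: for each pair $i\leq j$, set $P_{ij}=\{(x_l)\in P\mid f_{ij}(x_j)=x_i\}$. This is the equaliser of the continuous maps $f_{ij}\circ\pr_j$ and $\pr_i$ into the Hausdorff space $C_i$, so it is closed in $P$. Then $L=\bigcap_{i\leq j}P_{ij}$. By compactness of $P$, it is enough to prove that any finite intersection $P_{i_1j_1}\cap\dots\cap P_{i_nj_n}$ is non-empty.

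Second step, which carries the real content: use directedness. Choose $k\in I$ with $j_m\leq k$ for all $m$; then also $i_m\leq k$. Pick any $x_k\in C_k$. For each index $l\leq k$ put $x_l=f_{lk}(x_k)$, and for all other indices $l$ choose $x_l\in C_l$ arbitrarily. The compatibility $f_{ik}=f_{ij}\circ f_{jk}$ gives
$$f_{i_mj_m}(x_{j_m})=f_{i_mj_m}f_{j_mk}(x_k)=f_{i_mk}(x_k)=x_{i_m}.$$
So this point lies in every $P_{i_mj_m}$, and the finite intersection property holds. Compactness then yields $L\neq\emptyset$.

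The main obstacle is only bookkeeping. One must make sure the index $k$ dominates all the indices involved, which is exactly where directedness enters, and one must state the Hausdorff hypothesis explicitly, since without it the sets $P_{ij}$ need not be closed.
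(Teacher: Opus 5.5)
Your proof is correct and follows the same route as the paper: the compatibility conditions cut out closed subsets $P_{ij}$ of the Tychonoff product, directedness gives the finite intersection property, and compactness finishes the argument. The Hausdorff hypothesis you make explicit is exactly what the paper uses silently when it asserts these subsets are closed, and it cannot be dropped: the sets $\{m\in\mathbb{N}\mid m\geq n\}$ with the indiscrete topology and inclusion maps form an inverse system of non-empty compact spaces whose limit is empty.
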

\begin{proof}
    Consider the subsets $C_{\alpha\beta} \subset \prod C_{i}$ consisting of sequences $(c_i)$ satisfying $f_{\alpha\beta}(c_{\beta}) = c_{\alpha}$ for a fixed pair $\alpha\leq \beta$.
    These are closed subsets of the product, and their intersection is precisely $\varprojlim C_i$.
    Furthermore, since the index set is directed, then finite intersections of the $C_{\alpha\beta}$ are non-empty.
    Since $\prod C_i$ is compact by Tychonoff's theorem (see \cite{Intro_manifolds}, Proposition $IV.\:4.36.$), it ensures that $\varprojlim C_i$ is non-empty.
\end{proof}

\begin{remark}
\label{direct_limit_modules}
    Direct and inverse limits are usually referred to as \textit{filtered colimit/limit} respectively. 
    Often filtered (co)limits preserve a lot of structure, notably, taking the direct limit of $R$-module gives again a $R$-module. This is checked in the following theorem.
\end{remark}

\begin{theorem}
\label{direct_limit_R_module}
    Let $R$ be a ring and let $\{M_i, f_{ij}\}$ be a directed system of $R$-modules over some directed set $(I\leq)$.
    Let $M=\bigoplus_{i\in I}M_i$ and for every $i\in I$, let $\rho_i:M_i\rightarrow M$ be the natural injection map.
    Define the $N$ the $R$-submodule of $M$ by 
    $$N = \langle \rho_jf_{ij}(x_i) - \rho_i(x_i) \mid i\leq j,x_i\in M_i\rangle.$$
    Then $\varinjlim M_i = M/N$.
\end{theorem}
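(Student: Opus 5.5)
The plan is to verify directly that $M/N$, equipped with the maps $c_i := \pi\circ\rho_i : M_i\rightarrow M/N$ (where $\pi:M\rightarrow M/N$ is the quotient map), is a colimit of the direct system in the sense of Definition \ref{def_direct_inverse_limit}. Uniqueness of colimits up to isomorphism, the dual of the uniqueness theorem for limits, then gives $\varinjlim M_i \cong M/N$. There are two things to check: that $(M/N, c_i)$ is a cocone, and that it has the universal property.

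\textbf{Cocone condition.} For $i\leq j$ and $x_i\in M_i$ we must show $c_j\circ f_{ij}(x_i) = c_i(x_i)$. This is the identity $\pi(\rho_j f_{ij}(x_i) - \rho_i(x_i)) = 0$, which holds because the element in parentheses is one of the generators of $N$. Each $c_i$ is $R$-linear, being a composite of $R$-linear maps.

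\textbf{Universal property.} Let $(T, t_i)$ be another cocone of $R$-modules, so that $t_j\circ f_{ij} = t_i$.
\begin{itemize}
\item[$1)$] By the universal property of the direct sum (the coproduct in $\Mod_R$), there is a unique $R$-linear map $\tilde t : M\rightarrow T$ with $\tilde t\circ\rho_i = t_i$ for all $i$. Explicitly, $\tilde t\bigl(\sum_i \rho_i(x_i)\bigr) = \sum_i t_i(x_i)$, a finite sum.
\item[$2)$] On each generator of $N$ we get $\tilde t(\rho_j f_{ij}(x_i) - \rho_i(x_i)) = t_j f_{ij}(x_i) - t_i(x_i) = 0$. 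Since $N$ is generated by these elements, $N\subseteq \ker\tilde t$.
\item[$3)$] Hence $\tilde t$ factors uniquely through the quotient as $u : M/N\rightarrow T$ with $u\circ\pi = \tilde t$, and therefore $u\circ c_i = t_i$.
\item[$4)$] For uniqueness, let $u'$ be any map with $u'\circ c_i = t_i$ for all $i$. Then $u'\circ\pi\circ\rho_i = t_i$, so the uniqueness part of the coproduct property gives $u'\circ\pi = \tilde t$. Because $\pi$ is surjective, it follows that $u' = u$.
\end{itemize}

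There is no real obstacle here. The only point that needs care is the claim that $N\subseteq\ker\tilde t$ follows from checking generators. This uses that $\ker\tilde t$ is a submodule, together with the fact that the coproduct of $R$-modules is the direct sum (only finitely many nonzero components), so that $\tilde t$ is well defined.

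This shows the colimit exists in $\Mod_R$ and is an $R$-module, which is the point made in Remark \ref{direct_limit_modules}. If one also wants the familiar description of elements, it follows from directedness: every element of $M/N$ is the class of a single $\rho_k(x_k)$, obtained by pushing all summands forward to a common upper bound $k$.
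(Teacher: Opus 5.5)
Your proof is correct and is essentially the standard argument the paper defers to (Lang, \cite{Lang_algebra}, Theorem III.10.1): equip $M/N$ with the maps $\pi\circ\rho_i$ and check the cocone condition on the generators of $N$; the universal property then comes from the coproduct property of $\bigoplus_i M_i$ followed by factoring through the quotient, with uniqueness from the surjectivity of $\pi$. Directedness of $I$ plays no role in the universal property itself and is needed only for your closing description of elements of $M/N$ as classes of single elements $\rho_k(x_k)$.
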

\begin{proof}
    See \cite{Lang_algebra}, Theorem $III.\:10.1$
\end{proof}

\noindent An important construction we can make from filtered limits are \textit{pro-objects}. They allow one to work with systems that behave like a limit even if a limit does not exist in a category. 
We enumerate the following interesting pro-objects :

\begin{definition}
\label{profinite_group}
A \textbf{profinite group} is defined to be an inverse limit of a system of finite groups.
\end{definition}

\begin{definition}
\label{pro_representable}
    Let $\mathcal{C}$ be a category, and $F$ a set-valued functor on $\mathcal{C}$.
    We say that $F$ is \textbf{pro-representable} if there exists an direct system $(C_i , f_{ij})$ of objects and morphisms in $ \mathcal{C}$ indexed by the directed set $(I,\leq)$ and a functorial isomorphism $\varinjlim \Hom(C_i, X) \cong F(X)$ for each object $X\in \mathcal{C}$.
\end{definition}

\noindent A way of generalising the notion of directed system is by considering the directed system to be a category called \textit{(co)-filtered} in the following way :

\begin{definition}
    A category $\mathcal{J}$ is said to be \textbf{filtered} when 
    it is non-empty,
    for every two objects $J$ and $J'$ in $\mathcal{J}$, there exists an object $K$ and two morphisms $f:J\rightarrow K$ and $f':J'\rightarrow K$ in $\mathcal{J}$, and
    for every two morphisms $u,v:I\rightarrow J$ in $\mathcal{J}$, there exists an object $K$ and a morphism $w:J\rightarrow K$ such that $w\circ u = w\circ v$.
    Dually, a category $\mathcal{J}$ is said to be \textbf{cofiltered} if the opposite category $\mathcal{J}^{op}$ is filtered.
\end{definition}

\noindent The category $\mathcal{Z}$ in Lemma \ref{lemma_joost_direct_limit} and Definition \ref{def_direct_inverse_limit} is first a filtered category and secondly a cofiltered category and the functor $F:\mathcal{Z}\rightarrow \mathcal{C}$ is thus often called a filtered diagram and dually a cofiltered diagram.

\begin{theorem}
\label{cofinal_identic_colim}
    Let $\mathcal{K}$ be a subcatecory of a filtered category $\mathcal{J}$ such that it is a cofinal set.
    Suppose that the functors $F:\mathcal{J}\rightarrow \mathcal{C}$ and $F':\mathcal{K}\rightarrow \mathcal{C}$ have colimits.
    Then there is a canonical isomorphism between there colimits.
\end{theorem}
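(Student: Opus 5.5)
The plan is to construct the canonical comparison map directly from the universal property of colimits and then use cofinality to build its inverse. Write $(C,c_J)$ for the colimit of $F$ over $\mathcal{J}$ and $(C',c'_K)$ for the colimit of $F'$ over $\mathcal{K}$. We assume, as the statement intends, that $F'$ is the restriction of $F$ to $\mathcal{K}$. We also read ``cofinal'' in the usual sense: for every $J\in\mathcal{J}$ there is $K\in\mathcal{K}$ with a morphism $J\rightarrow K$. For the full argument we would also like $\mathcal{K}$ to be full, or at least any two such morphisms to be equalised further up inside $\mathcal{K}$; this holds in the application of Lemma \ref{inverse_system_Galois}.

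First, the restricted family $(C,c_K)_{K\in\mathcal{K}}$ is a cocone on $F'$. By the universal property of $C'$ there is then a unique morphism $u:C'\rightarrow C$ with $u\circ c'_K=c_K$ for all $K$.

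Second, we build a cocone on $F$ with vertex $C'$. For $J\in\mathcal{J}$, choose $K\in\mathcal{K}$ and $f:J\rightarrow K$, and set $d_J:=c'_K\circ F(f)$. The main obstacle is showing that $d_J$ is independent of the choices. Given two choices $f_1:J\rightarrow K_1$ and $f_2:J\rightarrow K_2$, we use the filtered property of $\mathcal{J}$ to find $L$ with maps $g_i:K_i\rightarrow L$. We then use cofinality to push $L$ into some $K_3\in\mathcal{K}$. Finally, we use the equaliser axiom of filteredness to replace $K_3$ by a further object where $g_1f_1$ and $g_2f_2$ become equal, and push into $\mathcal{K}$ once more. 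The cocone condition for $c'$ on the maps $K_i\rightarrow K_3$, which lie in $\mathcal{K}$ by fullness, then gives $c'_{K_1}F(f_1)=c'_{K_2}F(f_2)$. The same kind of argument shows that $d_{J'}\circ F(h)=d_J$ for every $h:J\rightarrow J'$, since $J\rightarrow J'\rightarrow K$ is itself an admissible choice for $J$. Hence $(C',d)$ is a cocone, and we obtain a unique $v:C\rightarrow C'$ with $v\circ c_J=d_J$.

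Third, we check that $u$ and $v$ are mutually inverse by uniqueness in the universal properties. For $K\in\mathcal{K}$, taking $f=\id_K$ gives $v\circ u\circ c'_K=v\circ c_K=c'_K$, so $v\circ u=\id_{C'}$. For $J\in\mathcal{J}$, we have $u\circ v\circ c_J=u\circ c'_K\circ F(f)=c_K\circ F(f)=c_J$, so $u\circ v=\id_C$. Canonicity follows because $u$ was defined without any choices.
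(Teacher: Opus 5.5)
Your proof is correct, but it takes a different route from the paper, which gives no argument and simply cites Mac Lane (Theorem IX.3.1). That citation goes through \emph{final functors}. One checks that for every $J\in\mathcal{J}$ the comma category $(J\downarrow\mathcal{K})$ is non-empty, which is cofinality, and connected, which uses filteredness of $\mathcal{J}$ together with fullness of $\mathcal{K}$. The general theorem then says that the canonical map $\colim F'\rightarrow\colim F$ is an isomorphism. It even shows that $\colim F$ exists once $\colim F'$ does, whereas you assume both exist, as the statement does.

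Your independence-of-choice argument for $d_J$ is a hands-on proof that $(J\downarrow\mathcal{K})$ is connected: any two objects map to a common one. Your construction of $v$ and the two uniqueness checks are the cited theorem's proof specialised to this case. So your version gives a self-contained proof using only the three axioms of a filtered category. The citation gives generality instead: it covers any final functor, not just the inclusion of a full cofinal subcategory.

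You were also right to insist on fullness, because without it the statement is false. For example, $\mathcal{J}=\{0\rightarrow 1\}$ is filtered, and its discrete subcategory $\mathcal{K}$ on both objects is trivially cofinal. Yet $\colim_{\mathcal{K}}F=F(0)\sqcup F(1)$ while $\colim_{\mathcal{J}}F=F(1)$. The paper carries this hypothesis only implicitly: its wording ``cofinal set'' means a sub-poset with the induced order, and in its application in Lemma \ref{inverse_system_Galois} the subcategory $\mathcal{K}$ is full.
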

\begin{proof}
    See \cite{McLane_Sau}, Theorem $IX.3.1$ 
\end{proof}
\noindent A subset $I \subset J$ of a poset $J$ is said to be \textit{cofinal} if for every $j\in J$, there exists some $i\in I$ such that $j\leq i$.

\subsubsection*{Products and Coproducts}

Let $\mathcal{C}$ be a category, $X_1$ and $X_2$ objects of $\mathcal{C}$.
A \textbf{product} of $X_1$ and $X_2$ is an object $X$ often denoted as $X_1\times X_2$, equipped with a pair of morphisms $\pr_1:X \rightarrow X_1$ and $\pr_2:X\rightarrow X_2$ satisfying the following universal property:
For every object $Y\in \mathcal{C}$ and every pair of morphisms $f_1:Y\rightarrow X_1$ and $f_2:Y\rightarrow X_2$, there exists a unique morphism $f:Y\rightarrow X_1\times X_2$ such that the following diagram commutes
\[
\begin{tikzcd}[column sep=large, row sep = large]
        & Y \arrow[d, dashed, "f"] \arrow[dr, "f_2"] \arrow[dl, "f_1"']&\\
        X_1 & X_1\times X_2 \arrow[l, "\pr_1"]\arrow[r, "\pr_2"'] & X_2
\end{tikzcd}
\]
Similarly, one defines a \textbf{coproduct} of $X_1$ and $X_2$ as an object written as $X_1 \sqcup X_2$ with a pair of morphisms $i_1: X_1\rightarrow X_1\sqcup X_2$ and $i_2: X_2\rightarrow X_1\sqcup X_2$ satisfying the same universal property as above but with the arrows reversed 
\[
\begin{tikzcd}[column sep=large, row sep = large]
        & Y   &\\
        X_1\arrow[ur, "f_1"] \arrow[r, "i_1"'] & 
        X_1\sqcup X_2 \arrow[u, dashed, "f"'] 
         & 
        X_2 \arrow[ul, "f_2"'] \arrow[l, "i_2"]
\end{tikzcd}
\]
 
\noindent Whether a product or a coproduct exists may depend on $\mathcal{C}$ or on $X_1$ and $X_2$, however if it does exists it is unique up to canonical isomorphism.\\
\noindent In a category $\mathcal{C}$ with finite product an object $X$ is said to be a \textbf{group object} if it has as morphisms, a group multiplication and inverse operation, as well as an identity such that the multiplication is associative and the inverse and the identity are two-sided.

\subsubsection*{Kernels and Cokernels}

Let $\mathcal{C}$ be a category with $f:X\rightarrow Y$ a morphism in $\mathcal{C}$.\\
An \textbf{equaliser} consists of an object $E\in \mathcal{C}$ and a morphism $\eq:E\rightarrow X$ in $\mathcal{C}$ such that for two parallel morphisms $f,g:X\rightarrow Y$ and for any morphism $m:Z\rightarrow X$ in $\mathcal{C}$, there exists a unique morphism $u:Z\rightarrow E$ such that the following diagram commutes 
\[
\begin{tikzcd}[column sep=large, row sep = large]
        E \arrow[r,"\eq"] &
        X  \arrow[r, shift left, "f"] \arrow[r, shift right, "g"']  & 
        Y\\
        Z \arrow[u, dashed, "u"] \arrow[ur, "m"']
\end{tikzcd}
\]
In that casen $(E,\eq)$ is said to be an equaliser for $f$ and $g$.
Similarly we define a \textbf{coequaliser} $(Q,q)$ in $\mathcal{C}$ as the dual of the equaliser.\\
A \textbf{kernel} of a morphism $f:X\rightarrow Y$ in $\mathcal{C}$ is an equaliser of $f$ and the zero morphism from $X$ to $Y$.\\
A \textbf{cokernel} of $f$ in $\mathcal{C}$ is defined as the coequaliser of $f$ and the zero morphism from $X$ to $Y$.

\subsubsection*{Initial and terminal objects:}

\begin{proposition}
Let 
$\mathcal{Z} =  \emptyset$ be the empty category, and let 
$F : \mathcal{Z}\rightarrow \mathcal{C}$
be the unique functor. Then
\begin{itemize}
    \item[$i)$] the limit of $F$ is an object $T$ in $\mathcal{C}$ with the property that for any (other) object $C\in\mathcal{C}$ there exists a unique morphism $C\rightarrow T$. In particular, $\Hom(T,T)$ is a singleton. If $T$ exists, it is called a (the) \textbf{terminal object} of $\mathcal{C}$.
    
    \item[$ii)$]the colimit of $F$ is an object $I\in \mathcal{C}$ with the property that for any object $C\in\mathcal{C}$, there exists a unique morphism $I\rightarrow C$. In particular, $\Hom(I,I)$ is a singleton. If $I$ exists, it is called a (the) \textbf{initial object} of $\mathcal{C}$.
\end{itemize}
\end{proposition}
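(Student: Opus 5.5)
We prove the two parts separately, treating limits and colimits over the empty category directly from the definitions of cone, cocone, limit and colimit given above.

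For $i)$, we unwind what a cone on $F:\emptyset\rightarrow\mathcal{C}$ is. A cone $(M,\mu)$ consists of an object $M\in\mathcal{C}$ and a family $\mu_Z$ indexed by objects $Z\in\mathcal{Z}$. Since $\mathcal{Z}$ has no objects, this family is empty. The compatibility condition $F(f)\circ\mu_Z=\mu_{Z'}$ is indexed by morphisms of $\mathcal{Z}$, so it holds vacuously. Hence cones on $F$ are exactly the objects of $\mathcal{C}$, each carrying the empty family.

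Next, a morphism of cones $(M,\mu)\rightarrow(N,\nu)$ is a morphism $f:M\rightarrow N$ in $\mathcal{C}$ with $\nu_Z\circ f=\mu_Z$ for all $Z$. Again this condition is vacuous, so morphisms of cones are arbitrary morphisms of $\mathcal{C}$. The limit $(T,\lambda)$ is therefore an object $T$ such that every object $C$ (that is, every cone) admits a unique morphism $C\rightarrow T$, which is precisely the definition of a terminal object.

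Applying this with $C=T$ shows that $\Hom(T,T)$ contains exactly one element, which must be $1_T$. So $\Hom(T,T)$ is a singleton. Uniqueness of $T$ up to isomorphism then follows from the theorem above on uniqueness of limits.

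Part $ii)$ is the dual statement, obtained by reversing the arrows. Cocones on $F$ are exactly the objects of $\mathcal{C}$, and morphisms of cocones are arbitrary morphisms, because all compatibility conditions are again vacuous. The colimit is therefore an object $I$ with a unique morphism $I\rightarrow C$ for every $C$, i.e. an initial object. Taking $C=I$ gives $\Hom(I,I)=\{1_I\}$.

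The only point needing care is to state clearly that the vacuous conditions impose no constraints at all, so that cones and their morphisms coincide with objects and morphisms of $\mathcal{C}$. Once that is said, both parts follow immediately from the definitions.
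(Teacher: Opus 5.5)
Your argument is correct and complete. Because $\mathcal{Z}$ has no objects or morphisms, every compatibility condition is vacuous, so cones (resp.\ cocones) on $F$ and their morphisms are literally the objects and morphisms of $\mathcal{C}$. The universal property of the limit (resp.\ colimit) therefore coincides word for word with the definition of a terminal (resp.\ initial) object, in both directions, and taking $C=T$ (resp.\ $C=I$) forces $\Hom(T,T)=\{1_T\}$ (resp.\ $\Hom(I,I)=\{1_I\}$).

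The paper states this proposition in its category-theory appendix without giving any proof, so there is no competing route to compare against. Your direct unwinding of the definitions is the standard argument.
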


\noindent Remark that if they exist, the initial and terminal object are unique up to isomorphism.\\

\subsection{Adjoint Functors}

\begin{definition}
    Let $\mathcal{C}$ and $\mathcal{D}$ be categories; let $L : \mathcal{C}\rightarrow\mathcal{D}$ and $R : \mathcal{D}\rightarrow\mathcal{C}$ be functors. 
    We say that $(L,R)$ is a pair of \textbf{adjoint functors}, or $L$ is a \textbf{left adjoint} to $R$, or $R$ is a \textbf{right adjoint} to $L$ if and only if for any objects $C \in\mathcal{C}$ and $D\in\mathcal{D}$, there is an isomorphism
$$ \theta_{C,D} : \Hom_D(LC,D)  \rightarrow \Hom_C (C,RD) $$
that is natural in both arguments $C$ and $D$. 
If there exists an adjoint pair of functors between the categories $\mathcal{C}$ and $\mathcal{D}$, we refer to this situation as an \textbf{adjunction} and denote it by $(L,R):\mathcal{C} \rightleftarrows \mathcal{D}$
\end{definition} 

\begin{examples}
\textbf{ }
    \begin{itemize}
        \item[$a)$] Let $\phi : R \rightarrow S$ be a morphism of rings. Then this induces a functor $R_{\phi} : M_S \rightarrow M_R$, from (right) $S$-modules to (right) $R$-modules, called the \textbf{restriction of scalars}, that defines on a right $S$-module $M$ a right $R$-module structure by the formula $m\cdot r = m\cdot \phi(r)$, $m\in M, r\in R$.
        This functor has a left adjoint, called the \textbf{extension of scalars}, given by 
        $-\otimes_R S :M_R \rightarrow M_S$.
        
        \item[$b)$] The \textbf{pull-back} and \textbf{pushout} functors defined above are respectively left and right adjoint one to another.
    \end{itemize}
\end{examples}

\begin{theorem}
    Consider the functor $L:\mathcal{C}\rightarrow \mathcal{D}$ and $R:\mathcal{D}\rightarrow \mathcal{C}$. 
    Then there is a bijective correspondence between the natural isomorphism $\theta_{C,D}$ (as above) that turn $(L,R)$ into a pair of adjoint functors, and pair of natural transformations $(\eta,\varepsilon)$ defined by 
    $\eta_C: C\rightarrow RLC$ and $\varepsilon_D:LRD\rightarrow D$ that satisfy $R\epsilon\circ\eta R = R$ and $\varepsilon L\circ L\eta=L$.
\end{theorem}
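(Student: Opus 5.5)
Both directions go through the triangle identities $R\varepsilon\circ\eta R=\id_R$ and $\varepsilon L\circ L\eta=\id_L$ and the Yoneda lemma. Starting from a natural bijection $\theta_{C,D}:\Hom_{\mathcal{D}}(LC,D)\to\Hom_{\mathcal{C}}(C,RD)$, I set $\eta_C:=\theta_{C,LC}(\id_{LC})$ and $\varepsilon_D:=\theta^{-1}_{RD,D}(\id_{RD})$. Naturality of $\eta$ and $\varepsilon$ will come from naturality of $\theta$ in each variable. For $\eta$, naturality in $D$ gives
$$\theta_{C,D}(g)=Rg\circ\eta_C \quad\text{for all } g:LC\to D,$$
and naturality in $C$ then gives $RLf\circ\eta_C=\eta_{C'}\circ f$ for $f:C\to C'$. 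Dually,
$$\theta^{-1}_{C,D}(h)=\varepsilon_D\circ Lh \quad\text{for all } h:C\to RD,$$
and this yields naturality of $\varepsilon$.

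With these two formulas, the triangle identities are immediate. First,
$$\id_{RD}=\theta(\varepsilon_D)=R\varepsilon_D\circ\eta_{RD}.$$
Second,
$$\id_{LC}=\theta^{-1}(\eta_C)=\varepsilon_{LC}\circ L\eta_C.$$

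Conversely, given $(\eta,\varepsilon)$ satisfying the triangle identities, I define $\theta(g):=Rg\circ\eta_C$ and $\psi(h):=\varepsilon_D\circ Lh$. Both are natural in $C$ and $D$ because $\eta$ and $\varepsilon$ are natural and $L,R$ are functors. The maps are mutually inverse:
$$\psi\theta(g)=\varepsilon_D\circ LRg\circ L\eta_C=g\circ\varepsilon_{LC}\circ L\eta_C=g,$$
using naturality of $\varepsilon$ and then the second triangle identity. The composite $\theta\psi=\id$ follows symmetrically from naturality of $\eta$ and the first identity.

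It remains to check that the two assignments are inverse to each other. Going from $(\eta,\varepsilon)$ to $\theta$ and back recovers $\eta_C=R(\id)\circ\eta_C$, and similarly for $\varepsilon$. Going from $\theta$ to $(\eta,\varepsilon)$ and back recovers $\theta$ by the formula $\theta(g)=Rg\circ\eta_C$ derived above.

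The only delicate step is extracting $\theta(g)=Rg\circ\eta_C$ from naturality. The point is that naturality in $D$, applied to the morphism $g:LC\to D$ and the element $\id_{LC}$, gives exactly $\theta(g\circ\id)=Rg\circ\theta(\id)$. In other words it is the Yoneda lemma applied to $\Hom(LC,-)$. The remaining steps are routine diagram chases.
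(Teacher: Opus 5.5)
Your proof is correct. The paper gives no argument of its own and only cites Mac Lane; what you wrote is the standard proof found there: take $\eta$ and $\varepsilon$ as the images of identities, use naturality (Yoneda) to get $\theta(g)=Rg\circ\eta_C$ and $\theta^{-1}(h)=\varepsilon_D\circ Lh$, and use the triangle identities together with naturality of $\eta$ and $\varepsilon$ for the converse.
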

\begin{proof}
    See \cite{McLane_Sau}.
\end{proof}

\noindent In that case, we call the above natural transformations of the adjunction $(\eta,\varepsilon)$, respectively \textit{unit} and \textit{counit} .

\begin{theorem}
    Let $(L,R) : \mathcal{C}\rightleftarrows \mathcal{D}$ be an adjoint pair of functors. Then $R$ preserves all limits that exist in $\mathcal{D}$; $L$ preserves all colimits that exist in $\mathcal{D}$.
\end{theorem}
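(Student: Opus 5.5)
The natural approach is the standard one via the universal property of limits: right adjoints preserve limits, and the colimit statement for $L$ follows by duality. Fix a diagram $F:\mathcal{Z}\rightarrow\mathcal{D}$ with limit $(L_F,\lambda)$ in $\mathcal{D}$. I will show that $(RL_F, R\lambda)$, with components $R\lambda_Z: RL_F\rightarrow RF(Z)$, is a limit of $RF:\mathcal{Z}\rightarrow\mathcal{C}$. It is a cone on $RF$ because $R$ is a functor: applying $R$ to $F(f)\circ\lambda_Z=\lambda_{Z'}$ gives $RF(f)\circ R\lambda_Z=R\lambda_{Z'}$.

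For the universal property, take any cone $(T,\tau)$ on $RF$ in $\mathcal{C}$. Using the adjunction bijection $\theta_{T,F(Z)}^{-1}:\Hom_{\mathcal{C}}(T,RF(Z))\rightarrow\Hom_{\mathcal{D}}(LT,F(Z))$, I set $\tau'_Z:=\theta^{-1}(\tau_Z)$. Naturality of $\theta$ in the second variable says $\theta^{-1}(RF(f)\circ\tau_Z)=F(f)\circ\theta^{-1}(\tau_Z)$, so $(LT,\tau')$ is a cone on $F$. The universal property of $(L_F,\lambda)$ then provides a unique $u':LT\rightarrow L_F$ with $\lambda_Z\circ u'=\tau'_Z$. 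Put $u:=\theta(u'):T\rightarrow RL_F$. Naturality in the second variable once more gives $R\lambda_Z\circ u=\theta(\lambda_Z\circ u')=\theta(\tau'_Z)=\tau_Z$, so $u$ is a morphism of cones. For uniqueness, any $v$ with $R\lambda_Z\circ v=\tau_Z$ satisfies $\lambda_Z\circ\theta^{-1}(v)=\tau'_Z$, hence $\theta^{-1}(v)=u'$ and therefore $v=u$. Since limits are unique up to isomorphism, this shows that $R$ preserves every limit that exists.

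For $L$ and colimits I would not repeat the argument but dualize it. Passing to opposite categories, $(R^{op},L^{op})$ is an adjoint pair $\mathcal{D}^{op}\rightleftarrows\mathcal{C}^{op}$ with $L^{op}$ now the right adjoint, and colimits in $\mathcal{C}$ are exactly limits in $\mathcal{C}^{op}$. So the first part applies verbatim. (The statement writes ``colimits that exist in $\mathcal{D}$'', but the colimits in question are those of diagrams in $\mathcal{C}$.)

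The only delicate point is bookkeeping rather than an obstacle: one has to use naturality of $\theta$ in the correct variable. Only naturality in the $\mathcal{D}$-argument is needed, and this is exactly what turns cones on $RF$ into cones on $F$ and back.
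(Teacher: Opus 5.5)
Your argument is correct and is the standard cone-transposition proof that the paper defers to via \cite{McLane_Sau}, Theorem V.5.1: naturality of $\theta$ in the $\mathcal{D}$-variable converts cones on $RF$ into cones on $F$ and back, and the colimit case follows by applying the limit case to the adjunction $(R^{op},L^{op})\colon\mathcal{D}^{op}\rightleftarrows\mathcal{C}^{op}$. You are also right that the statement has a slip: $L$ preserves colimits that exist in $\mathcal{C}$, not in $\mathcal{D}$.
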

\begin{proof}
    See \cite{McLane_Sau}, Theorem $V.\:5.1$.\\
\end{proof}

\subsection{Abelian Categories}

\begin{definition}
    A category is \textbf{preadditive} if every $\Hom_{\mathcal{C}}$-sets are abelian groups and the composition of morphisms is bilinear.\\
    A preadditive category is \textbf{additive} if every finite set of objects has a biproduct, so we can form firect sums and direct products (which are isomorphic) and zero object (which are empty biproducts).\\
    An additive category is \textbf{preabelian} if every morphism has both a kernel and a cokerel.\\
    Finally, a preablian category is \textbf{abelian} if every monomorphism is a kernel of some morphism and every epimorphism is a cokernel of some morphism.\\
    An additive category \textbf{$k$-linear} if every of its $\Hom$-sets are $k$-vector space.
\end{definition}

\begin{definition}
    A functor $F:\mathcal{C}\rightarrow\mathcal{D}$ between two preadditive categories is called \textbf{additive} if $F(f+g) = F(f)+F(g)$ for any two object $A,B\in\mathcal{C}$ and any two morphisms $f,g\in\Hom_{\mathcal{C}}(A,B)$.\\
    A functor $F:\mathcal{C}\rightarrow\mathcal{D}$ between two preabelian categories is called  \textbf{exact} if for any exact sequence of the form
    $0\rightarrow A \xrightarrow{f} B\xrightarrow{g}C\rightarrow 0$
    in $\mathcal{C}$, there is an exact sequence $0\rightarrow F(A) \xrightarrow{F(f)} F(B)\xrightarrow{F(g)}F(C)\rightarrow 0$ in $\mathcal{D}$.
\end{definition}

\begin{examples}\textbf{ }
\begin{itemize}
    \item[$a)$] Trivially, the category of abelian groups is an abelian category.
    \item[$b)$] Let $R$ be a ring, then the category of $R$-modules is an abelian category.
    \item[$c)$] Let $G$ be a group. The category of finite-dimensional representation of $G$ over $k$ being equivalent to the category of $k[G]$-modules makes it an abelian category.
    It is even a $k$-linear abelian category.
\end{itemize}
\end{examples}

\begin{proposition}
\label{comodules_abelian}
    Let $C$ be a coalgebra over a field $k$. Then the category of finite-dimensional $C$-comodules is $k$-linear and abelian.
\end{proposition}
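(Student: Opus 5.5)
We must show that the category of finite-dimensional right $C$-comodules is $k$-linear and abelian. The plan is to realise $\Comod^{\text{f.d.}}_C$ as a full subcategory of $\Vect_k$ with extra structure, closed under the relevant constructions. Every limit and colimit we need is then computed on underlying vector spaces, exactly as for modules over a ring.

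First, $k$-linearity. A morphism of comodules is a $k$-linear map $f:M\rightarrow N$ with $\rho_N\circ f=(f\otimes \id_C)\circ\rho_M$. This condition is linear in $f$, so $\Hom_C(M,N)$ is a $k$-subspace of $\Hom_k(M,N)$. Composition is bilinear because it is bilinear in $\Vect_k$. The zero space, with its zero coaction, is a zero object. For biproducts we take $M\oplus N$ with coaction $\rho_M\oplus\rho_N$, using $(M\oplus N)\otimes C\cong (M\otimes C)\oplus(N\otimes C)$. The coassociativity and counit diagrams of Definition \ref{def_hopf_alg} hold componentwise, and the canonical inclusions and projections are comodule maps. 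So the category is additive and $k$-linear.

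Next, kernels and cokernels. Let $f:M\rightarrow N$ be a comodule map and $K=\ker f$. Because $-\otimes_k C$ is exact over a field, $K\otimes C$ is the kernel of $f\otimes\id_C$. For $x\in K$ we have $(f\otimes\id)\rho_M(x)=\rho_N(f(x))=0$, hence $\rho_M(x)\in K\otimes C$, so $K$ is a subcomodule. The same exactness shows that $\rho_N$ descends to $N/\im f\rightarrow (N/\im f)\otimes C$: indeed $\im f$ is a subcomodule, since $\rho_N(f(m))=(f\otimes\id)\rho_M(m)\in \im f\otimes C$. The axioms are inherited from $M$ and $N$, because the structure maps are a restriction and a quotient respectively. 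The universal properties then follow from those in $\Vect_k$ once we check that the induced linear maps are comodule maps. This check is immediate from injectivity of $K\otimes C\rightarrow M\otimes C$ and surjectivity of $N\otimes C\rightarrow (N/\im f)\otimes C$. Finite-dimensionality is clearly preserved throughout.

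Finally, the abelian axiom, which I expect to be the only point needing care. We must show that the canonical map $\operatorname{coim} f\rightarrow \im f$ is an isomorphism; equivalently, every monomorphism is a kernel and every epimorphism is a cokernel. The subtlety is that a mono or epi is defined categorically, not as injective or surjective. To handle this, observe that the kernel computed above is the set-theoretic kernel, so $f$ is mono if and only if $\ker f=0$, i.e. if and only if $f$ is injective; dually, $f$ is epi if and only if it is surjective. The canonical map $M/\ker f\rightarrow \im f$ is then a linear bijection and a comodule map. Its inverse is also a comodule map, because applying the bijection $(M/\ker f)\otimes C\rightarrow \im f\otimes C$ intertwines the two coactions. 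Hence it is an isomorphism in $\Comod^{\text{f.d.}}_C$, and the category is abelian.
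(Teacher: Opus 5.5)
Your proof is correct. The paper gives no argument of its own here, only a citation to \cite{DNR}, Corollary 2.1.19, so your proposal is a self-contained replacement, and it is the standard direct verification. Everything rests on exactness of $-\otimes_k C$ over the field $k$. That exactness makes kernels and cokernels computed in $\Vect_k$ inherit coactions. It follows that monomorphisms and epimorphisms in $\Comod^{\text{f.d.}}_C$ are exactly the injective and surjective comodule maps, as you show. What your route buys is transparency: it shows precisely where the field hypothesis is used (over a base ring the same argument needs $C$ to be flat). It also avoids any passage through $C^*$-modules. That passage is painless only for finite-dimensional $C$; for infinite-dimensional $C$ one must restrict to rational $C^*$-modules (cf.\ the dualisation issue in Remark \ref{dual_algebara_no_coalgebra}). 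Textbook treatments usually work in the category of all $C$-comodules, where your argument applies verbatim, and deduce the finite-dimensional case from closure under kernels, cokernels and finite biproducts, exactly as you observe. One small simplification: the paper defines \emph{abelian} by requiring every monomorphism to be a kernel and every epimorphism a cokernel, so you can check that directly rather than via $\operatorname{coim} f\cong \im f$. An injective $f\colon M\to N$ is, through the comodule isomorphism $M\cong \im f$ you constructed, the kernel of $N\to N/\im f$. Dually, a surjective $f$ is the cokernel of $\ker f\hookrightarrow M$.
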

\begin{proof}
    See \cite{DNR} Corollary $2.1.19$.\\
\end{proof}

\subsection{Equivalence of Categories}
We have introduced the notion of an isomorphism of categories (a pair of functors $F :\mathcal{C}\rightarrow\mathcal{D}, G: \mathcal{D}\rightarrow\mathcal{C}$ such that $F\circ G= 1_D$ and $G\circ F = 1_C$). This definition is however very rigid and many natural examples of “equivalent theories” do not fall under this definition.\\
Hence, it would be very reasonable to introduce a new definition where we replace the above identities by natural isomorphisms
\begin{definition}
    Two categories $\mathcal{C}$ and $\mathcal{D}$ are \textbf{equivalent} if there exist two functors $F : \mathcal{C}\rightarrow\mathcal{D}$ and $G: \mathcal{D}\rightarrow\mathcal{C}$, and two natural isomorphisms $\alpha:F\circ G \xrightarrow{\sim} 1_D$ and $\beta : G\circ F \xrightarrow{\sim} 1_C$. In this situation we say that the functor $G$ is a quasi-inverse for $F$ (and $F$ is a quasi-inverse for $G$).
    Finally, we say that $\mathcal{C}$ and $\mathcal{D}$ are anti-equivalent if $\mathcal{C}$ is equivalent to $\mathcal{D}^{op}$.
\end{definition}

\noindent We say that an adjunction $(L,R):\mathcal{C}\rightleftarrows\mathcal{D}$ is an \textbf{adjoint equivalence} if both the unit $\eta$ and counit $\varepsilon$ are natural isomorphisms.

\begin{theorem}
    The following properties of a functor $F:\mathcal{C}\rightarrow \mathcal{D}$ are equivalent :
    \begin{itemize}
        \item[$a)$] $F$ is an equivalence of categories,
        \item[$b)$] $F$ is part of an adjoint equivalence $\mathcal{C}\rightleftarrows\mathcal{D}$.
        \item[$c)$] $F$ is fully faithful and essentially surjective
    \end{itemize}
\end{theorem}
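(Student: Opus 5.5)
We prove the cycle $a)\Rightarrow c)\Rightarrow b)\Rightarrow a)$. The only step with real content is constructing a quasi-inverse from $c)$; the other implications are formal.

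For $b)\Rightarrow a)$ nothing needs to be done. An adjoint equivalence consists of functors $F,G$ whose unit $\eta:1_{\mathcal{C}}\Rightarrow GF$ and counit $\varepsilon:FG\Rightarrow 1_{\mathcal{D}}$ are natural isomorphisms. Taking $\beta=\eta^{-1}$ and $\alpha=\varepsilon$ gives exactly the data in the definition of an equivalence.

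For $a)\Rightarrow c)$, let $G$ be a quasi-inverse with isomorphisms $\alpha:FG\xrightarrow{\sim}1_{\mathcal{D}}$ and $\beta:GF\xrightarrow{\sim}1_{\mathcal{C}}$.
\begin{itemize}
\item[] \emph{Essential surjectivity.} Each $D\in\mathcal{D}$ satisfies $D\cong F(G(D))$ via $\alpha_D^{-1}$.
\item[] \emph{Faithfulness.} If $F(f)=F(g)$ for $f,g:X\to Y$, then $GF(f)=GF(g)$. Naturality of $\beta$ gives $f=\beta_Y\circ GF(f)\circ\beta_X^{-1}$, and likewise for $g$, so $f=g$. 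By symmetry $G$ is faithful too.
\item[] \emph{Fullness.} Given $h:F(X)\to F(Y)$, set $f=\beta_Y\circ G(h)\circ\beta_X^{-1}$. Naturality of $\beta$ gives $GF(f)=G(h)$, and faithfulness of $G$ then yields $F(f)=h$.
\end{itemize}

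For $c)\Rightarrow b)$, build $G$ by a choice. For each $D\in\mathcal{D}$, use essential surjectivity (and the axiom of choice, since objects form a class) to pick $G(D)\in\mathcal{C}$ and an isomorphism $\varepsilon_D:F(G(D))\xrightarrow{\sim}D$. For a morphism $g:D\to D'$, define $G(g)$ as the unique preimage under the bijection $F_{G(D),G(D')}$ of $\varepsilon_{D'}^{-1}\circ g\circ\varepsilon_D$.
\begin{itemize}
\item[] \emph{$G$ is a functor.} By faithfulness, preserving identities and composition reduces to checking it after applying $F$, where it is immediate.
\item[] \emph{$\varepsilon$ is a natural isomorphism $FG\Rightarrow 1$.} This holds by construction.
\item[] \emph{The unit.} Define $\eta_C:C\to GF(C)$ as the unique morphism with $F(\eta_C)=\varepsilon_{F(C)}^{-1}$. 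It is an isomorphism because a fully faithful functor reflects isomorphisms: the preimage of $\varepsilon_{F(C)}$ is an inverse, again by faithfulness. Naturality of $\eta$ is likewise checked after applying $F$.
\item[] \emph{Triangle identities.} We must verify $\varepsilon F\circ F\eta=1_F$ and $G\varepsilon\circ\eta G=1_G$. The first holds by the definition of $\eta$. For the second, apply $F$ and use $F(\eta_{G(D)})=\varepsilon_{FG(D)}^{-1}$ together with naturality of $\varepsilon$ applied to $\varepsilon_D$; faithfulness then finishes the check.
\end{itemize}
By the theorem characterising adjunctions through $(\eta,\varepsilon)$, the pair $(F,G)$ is an adjunction, and since $\eta,\varepsilon$ are isomorphisms it is an adjoint equivalence.

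The main obstacle is the second triangle identity in $c)\Rightarrow b)$. It requires a careful diagram chase: one must recognise $\varepsilon_{FG(D)}$ and $FG(\varepsilon_D)$ as related through naturality and then invoke faithfulness. Everything else reduces to transporting identities through the bijections $F_{X,Y}$.
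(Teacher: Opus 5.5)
Your proof is correct and is essentially the argument the paper relies on. The paper gives no proof of its own and only cites Mac Lane, Theorem IV.4.1. Mac Lane runs through the same cycle (adjoint equivalence $\Rightarrow$ equivalence $\Rightarrow$ fully faithful and essentially surjective $\Rightarrow$ adjoint equivalence) and uses the same choice of objects and isomorphisms in the last step.

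The only difference is in that last step. Mac Lane takes the given functor as the right adjoint, treats the chosen isomorphisms as universal arrows, and gets both the adjunction and the invertibility of the other (co)unit from general results on adjoints. You instead check naturality and both triangle identities directly, by applying $F$ and using faithfulness. That makes your version self-contained, needing only the $(\eta,\varepsilon)$-characterisation of adjunctions stated in the paper's appendix.
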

\begin{proof}
    See \cite{McLane_Sau}, Theorem $IV.\:4.1.$
\end{proof}

\subsection{Grothendieck Topologies}
\label{Groth_topo}
We finish our review on categories by an interesting way of endowing a category with a sort of topology called a Grothendieck topology.
These kinds of topologies allow to axiomatise the notion of an open cover for a category.

\begin{definition}
    A \textbf{Grothendieck topology} $\mathcal{G}$ on a category $\mathcal{C}$ is a collection of maps $\mathcal{G}(X) = \{(U_i\rightarrow X)_{i\in I}\}$ for all $X\in \mathcal{C}$ called coverings, such that,
    \begin{itemize}
        \item If $X'\rightarrow X$ is an isomorphism, then $\{X'\rightarrow X\}\in \mathcal{G}(X)$.
        \item If $\{U_i\rightarrow X\}_{i\in I} \in \mathcal{G}(X)$ and $W\rightarrow X$, then $\{U_i\times_XW \rightarrow W\} \in \mathcal{G}(X)$.
        \item If $\{U_i \rightarrow X\}_{i\in I}\in\mathcal{G}(X)$ and $\{V_{ij} \rightarrow U_i\}_{j\in J}\in \mathcal{G}(U_i)$ for all $i\in I$, then $\{V_{ij} \rightarrow X\}_{(i,j)\in I\times J}\in \mathcal{G}(X)$.
    \end{itemize}
\end{definition}

\noindent We can now define the analogue of topological spaces but for categories,

\begin{definition}
    A \textbf{site} is a pair $(\mathcal{C},\mathcal{G})$, such that $\mathcal{C}$ is a category and $\mathcal{G}$ is a Grothendieck topology on $\mathcal{C}$.
\end{definition}

\noindent Now it becomes possible to define sheaves on a category and so it is even possible to define their sheaf cohomology.

\begin{remark}
    As it is the case for topological spaces, categories can be endowed with many different Grothendieck topologies.
    The category of schemes is of particular interest to us here.\\
    The category $\Sch$ can be endowed with the elementary \textit{Zariski topology}, in that case a Zariski covering is a family of morphisms $\{(f_i:U_i \rightarrow X)_{i\in I}\}$ of schemes such that each $f_i$ is an open immersion and such that $X =\bigcup_{i\in I} f_i(U_i)$.\\
    We gave an introduction to \textit{\'etale topology} on the category of schemes in Chapter \ref{nex_topology}.\\
    And as other classes of Grothendieck topologies we may find flat topologies. 
    Among them we have the \textit{fppf topology} (as in fid\`element plate de pr\'esentation finie) and the  \textit{fpqc topology} (as in fid\`element plate et quasi-compacte).
    In these topologies, the morphisms of affine schemes that are covering morphisms are respectively  \textit{faithfully flat, of finite presentation, and is quasi-finite}, and \textit{faithfully flat and quasi-compact}.\\
\end{remark}

\section{Review of Algebraic Geometry}

Often, before introducing schemes, one defines the notion of an algebraic variety.
It can be seen as the set of common solutions of a system of polynomial equations over a field (usually algebraically closed). 
Here, we take the liberty of referring to the first chapter of \cite{Hartshorne}, and immediately start by introducing the theory of sheaves, then schemes.\\

\subsection{Sheaves}
\label{review_sheaves}
\begin{definition}
    A \textbf{presheaf} $\mathcal{F}$ of sets $($resp. rings, abelian groups, $R$-modules, ...$)$ on a topological space $X$ consists of the following data:
    \begin{itemize}
        \item[$a)$] for every open set $U \subset X$, a set $($resp. ring, abelian group, $R$-module, ...$)$ $\mathcal{F}(U)$, and
        \item[$b)$] for every inclusion $V\subset U$ of open sets in $X$ a morphism of sets $($resp. rings, abelian groups, $R$-modules, ...$)$ $\rho_{UV}:\mathcal{F}(U)\rightarrow\mathcal{F}(V)$ called the restriction map, such that
        \begin{itemize}
            \item[$1)$] $\mathcal{F}(\emptyset) = 0$,
            \item[$2)$] $\rho_{UU}$ is the identity map on $\mathcal{F}(U)$ for all open sets $U\subset X$, and 
            \item[$3)$] for any inclusion $W\subset V\subset U$ of opens of $X$, we have $\rho_{UV}\circ\rho_{VW} = \rho_{UW}$.
        \end{itemize}
    \end{itemize}
    The elements of $\mathcal{F}(U)$ are called sections of $\mathcal{F}$ over $U$.
\end{definition}

\noindent We can reformulate the definition of a presheaf in the language of category, for any topological space $X$, we define a category $\Top(X)$ whose objects are open subsets of $X$ and where the morphisms are inclusion maps. So if $V\subset U$ in $X$, $\Hom(U,V)=\emptyset$ and $\Hom(V,U)$ has just one element.
Now a presheaf is just a contravariant functor from the category $\Top(X)$ to the category of sets (resp. rings, abelian groups, $R$-modules, ...).

\begin{definition}
    Now a presheaf $\mathcal{F}$ is a called a \textbf{sheaf} if it satisfies the following \textit{glueing property} : \\
    For $U\subset X$ a non-empty open subset, $\{U_i : i\in I\}$ a non-empty open cover of $U$ and $s_i\in\mathcal{F}(U_i)$ such that $s_i\vert_{U_i\cap U_j} = s_j\vert_{U_i\cap U_j}$ for all $i,j\in I$, then there is a unique $s\in\mathcal{F}(U)$ such that $s\vert_{U_i} = s_i$ for all $i\in I$.
\end{definition}

\begin{definition}
    Let $\mathcal{F}$ be a presheaf on $X$, and let $U \subset X$ be an open subset. Then the restriction of $\mathcal{F}$ to $U$ is defined to be the presheaf $\mathcal{F}\vert_U$ on $U$ with $\mathcal{F}\vert_U(V) = \mathcal{F}(V)$ for every open subset $V\subset U$, and with the restriction maps taken from $\mathcal{F}$ . Note that if $\mathcal{F}$ is a sheaf then so is $\mathcal{F}\vert_U$
\end{definition}

\begin{examples}\textbf{ }
    \begin{itemize}
        \item[$a)$] Let $X$ be a complex manifold. 
        Due to the maximum modulus principle from complex analysis, we know that the only holomorphic functions $f:X\rightarrow \mathbb{C}$ are the constant functions. 
        To resolve this challenge, we work ‘‘locally" and define the following sheaf $\mathcal{O}_X$ to keep track of the holomorphic structure on the underlying topological space of $X$ on arbitrary open subsets. 
        For every open subset $U\subset X$, we define $\mathcal{O}_X(U)$ to be the ring (commutative) of holomorphic functions from $U$ to $\mathbb{C}$ with the restriction being the restriction of domain of functions.
        
        \item[$b)$] Let $S$ be a topological space (or abelian group, ...), then we can define a sheaf $\mathcal{F}_S$ on any topological space $X$ by setting $\mathcal{F}_S(U)$ to be the set (or abelian group, ...) of continuous maps $U\rightarrow S$ and taking the restriction maps to be the usual restriction of functions. Accordingly, this construction defines a sheaf. Moreover, if $S$ has the discrete topology, every continuous map $f:U\rightarrow S$ is constant on each connected component of $U$, so $\mathcal{F}_S(U) = S$. And in that case, we call $\mathcal{F}_S$ the constant sheaf on $X$ with value $S$.
    \end{itemize} 
\end{examples}

\begin{definition}
    If $\mathcal{F}$ and $\mathcal{G}$ are presheaves on $X$, a \textbf{morphism of presheaves} $\varphi : \mathcal{F}\rightarrow\mathcal{G}$ consists of a morphism of sets (resp. rings, abelian groups, $R$-modules, ...) $\varphi_U : \mathcal{F}(U)\rightarrow\mathcal{G}(U)$ for each open set $U$, such that for every inclusion $V\subset U$, the following diagram commutes,\\
    \[
    \begin{tikzcd}
        \mathcal{F}(U) \arrow[r,"\varphi_U"] \arrow[d, "\rho^{\mathcal{F}}_{UV}"']
        & \mathcal{G}(U) \arrow[d,"\rho^{\mathcal{G}}_{UV}"] \\
        \mathcal{F}(V) \arrow[r,"\varphi_V"']
        &  \mathcal{G}(V)
    \end{tikzcd}
    \]
\end{definition}

\noindent This forms a category of presheaves of sets (resp. rings, abelian groups, $R$-modules, ...) on a fixed topological space $X$, denoted $\PSh(X)$. And the category of sheaves of sets (resp. rings, abelian groups, $R$-modules, ...) on $X$, denoted $\Sh(X)$ is defined as the full subcategory of the corresponding presheaf category. This means that a morphism of sheaves is just a morphism of the underlying presheaves.

\begin{definition}
    Let $\mathcal{F}$ be a presheaf of sets (resp. rings, abelian groups, $R$-modules, ...) on $X$ and $x\in X$. 
    The \textbf{stalk} $\mathcal{F}_x$ of $\mathcal{F}$ at $x$ is defined as the direct limit of the sets (resp. rings, abelian groups, $R$-modules, ...) $\mathcal{F}(U)$ for all open subsets $U\subset X$ containing $x$, via the restriction maps $\rho$
\end{definition}

\begin{remark}
    An element of $\mathcal{F}_x$ is represented by a pair $(U,s)$, where $U$ is an open neighbourhood of $x$ and $s\in\mathcal{F}(U)$. And two such elements $(U,s)$ and $(U',s')$ are equivalent in $\mathcal{F}_x$ if and only if there is an open neighbourhood $W$ of $x$, with $W\subset U\cap U'$ such that $s\vert_W = s'\vert_W$. The elements of $\mathcal{F}_x$ are called germs of $\mathcal{F}$ at $x$.
    Note that a morphism $\varphi: \mathcal{F}\rightarrow\mathcal{G}$ of presheaves on $X$ induces a morphism $\varphi_x:\mathcal{F}_x\rightarrow\mathcal{G}_x$ on the stalks, for any point $x\in X$.
\end{remark}

\begin{definition}
    Let $f:X\rightarrow Y$ be a continuous map of topological spaces. 
    For any sheaf $\mathcal{F}$ on $X$, we define the \textbf{direct image sheaf} or \textbf{push forward sheaf} $f_{*}\mathcal{F}$ on $Y$ by $(f_*\mathcal{F})(V) = \mathcal{F}(f^{-1}(V))$ for any open set $V\subset Y$. 
\end{definition}

\begin{construction}
\label{almost_pullback_sheaf}
    For $f: X\rightarrow Y$ a continuous map of topological spaces, the direct image functor $f_* : \Sh(X) \rightarrow \Sh(Y)$ has an adjoint functor called the \textbf{inverse image functor} $f^{-1}$. 
    Let $\mathcal{G}$ be a sheaf on $Y$,
    unlike the direct image sheaf, the inverse image sheaf cannot be defined as simply as $f^{-1}\mathcal{G}(U) = \mathcal{G}(f(U))$ for each open set $U\subset X$ as unfortunately $f(U)$ is not necessarily open.
    What we do then is to define $f^{-1}\mathcal{G}$ to be the sheaf associated to the presheaf $U\mapsto \varinjlim_{V\supset f(U)} \mathcal{G}(V)$.\\
    The process of considering a sheaf associated to a presheaf is called sheafification and is described in \cite{Hartshorne} Proposition-Definition $II.\:1.2$.\\
\end{construction}

\subsection{Schemes}
\label{Annexe_scheme}

In classical algebraic geometry, a scheme is a generalisation of a variety (it allows to consider non-reduced rings and thus it allows for nilpotents).
We begin with the affine case.

\begin{definition}
    Let $R$ be a ring. The set of all prime ideals of $R$ is called the \textbf{spectrum} of $R$ or the \textbf{affine scheme} associated to $R$, and is denoted by $\Spec\:R$.
\end{definition}

\begin{definition}
    Let $R$ be a ring and $P\in\Spec\:R$ a prime ideal of $R$ (a point in the corresponding affine scheme).
    \begin{itemize}
        \item[$a)$] We denote $\kappa(P)$ the quotient field of the integral domain $R/P$. It is called the \textbf{residue field} of $\Spec\:R$ at $P$.
        \item[$b)$] For any $f\in R$, we define the value of $f$ at $P$, to be the image of $f$ under the composite ring homomorphism $R\rightarrow R/P\rightarrow \kappa(P)$.
    \end{itemize}
\end{definition}

\begin{definition}
    An affine scheme $\Spec\: R$ is called \textbf{integral} if it is non-empty and the ring $R$ is an integral domain.
\end{definition}
\begin{definition}
    An affine scheme $\Spec\: R$ is called \textbf{reduced} if the ring $R$ is also reduced, i.e. it contains no nilpotent elements.
\end{definition}

\noindent Let $R$ be a ring, similarly as in the case of affine varieties, we define the \textit{zero locus} $V(S)$ of $S\subset R$ and the \textit{ideal} $I(X)$ of $X\subset \Spec\: R$ to be :
\begin{equation*}
\begin{split}
    V(S) &:= \{P\in\Spec\:R \mid f(P)=0\:,\forall \:f\in S\} \subset \Spec\:R, \\
    I(X) &:=\{f\in R\mid f(P)=0,\:\forall\: P\in X\} \trianglelefteq R
\end{split}
\end{equation*}

\noindent We can now endow an affine scheme with a topology.
\begin{definition}
    We define the \textbf{Zariski topology} on an affine scheme $\Spec\:R$ to be the topology whose closed sets are exactly the sets of the form $V(S)$ for some $S\subset R$.
\end{definition}
\noindent We have a scheme-theoretic version of the Nullstellensatz, 
\begin{proposition}
    Let $R$ be a ring. 
    
    $a)$ For any closed subset $X\subset \Spec\:R$, we have $V(I(X))=X$.
    
    $b)$ For any ideal $J\trianglelefteq R$, we have $I(V(J))=\sqrt{J}$.\\
    In particular there is an inclusion-reversing bijection
    \begin{equation*}
        \{\text{closed subsets of }\Spec\:R\} \xleftrightarrow{1:1} \{\text{radical ideals in } R\}.
    \end{equation*}
\end{proposition}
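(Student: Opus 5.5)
Both parts rest on two elementary facts about radicals: a prime $P$ contains an ideal $J$ if and only if it contains $\sqrt{J}$; and $\sqrt{J}$ is the intersection of all primes containing $J$ (\cite{Eisenbud}, Corollary $2.12$, already used for the nilradical). Throughout I read ``$f(P)=0$'' as $f\in P$, since the value of $f$ at $P$ is its image in $\kappa(P)$ and $R/P\hookrightarrow\kappa(P)$ is injective. Under this reading, $V(S)=\{P\mid S\subseteq P\}$ and $I(X)=\bigcap_{P\in X}P$.

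\textbf{Part $b)$.} For $J\trianglelefteq R$, unwinding gives $I(V(J))=\bigcap_{P\supseteq J}P$, and by the cited fact this intersection is $\sqrt{J}$. The inclusion $\sqrt{J}\subseteq\bigcap P$ is immediate, since any $f$ with $f^n\in J\subseteq P$ lies in the prime $P$. The reverse inclusion is the only nontrivial input. If I had to argue it directly, I would take $f\notin\sqrt{J}$ and localise at $f$: then $J_f$ is a proper ideal of $R_f$, so it lies in a maximal ideal of $R_f$ by Zorn, and pulling that ideal back to $R$ gives a prime containing $J$ but not $f$. This is the step I expect to be the main obstacle, and I would simply cite it rather than reprove it.

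\textbf{Part $a)$.} Let $X$ be closed, so $X=V(S)$ for some $S\subseteq R$. Every $f\in S$ lies in every $P\in X$, so $S\subseteq I(X)$. Since $V$ reverses inclusions, this gives $V(I(X))\subseteq V(S)=X$. Conversely, every $P\in X$ contains $\bigcap_{Q\in X}Q=I(X)$, so $X\subseteq V(I(X))$. Hence $V(I(X))=X$.

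\textbf{The bijection.} Note first that $I(X)$ is always radical: if $f^n\in I(X)$, then $f^n$ lies in every $P\in X$, and so does $f$ because each $P$ is prime. So $I$ maps closed subsets to radical ideals, and $V$ maps ideals to closed subsets by definition of the Zariski topology. Part $a)$ gives $V\circ I=\mathrm{id}$ on closed sets, and part $b)$ gives $I\circ V(J)=\sqrt{J}=J$ for radical $J$. Both maps are visibly inclusion-reversing, which gives the stated inclusion-reversing bijection.
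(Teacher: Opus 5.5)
Your proof is correct. The paper does not really supply a proof of its own: it only refers to \cite{Eisenbud}, Theorem I.1.6. That is Hilbert's classical Nullstellensatz for $k[x_1,\dots,x_r]$ with $k$ algebraically closed. It concerns maximal ideals (points of affine space), its proof needs Zariski's lemma or Noether normalisation, and it does not literally cover the primes of an arbitrary ring.

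Your route shows that none of that depth is needed for the scheme-theoretic statement.

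\begin{itemize}
\item You correctly read $f(P)=0$ as $f\in P$, via $R/P\hookrightarrow\kappa(P)$. With that reading, part $a)$ and the bijection follow formally from the equivalence $X\subseteq V(S)\iff S\subseteq I(X)$, together with the observation that $I(X)$ is always radical.
\item Part $b)$ reduces exactly to the identity $\sqrt{J}=\bigcap_{P\supseteq J}P$, and your localisation argument proves it. $J_f$ is proper in $R_f$ precisely when $f\notin\sqrt{J}$. A maximal ideal of $R_f$ containing $J_f$ then contracts to a prime of $R$ that contains $J$ but not $f$.
\end{itemize}

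This is the same elementary fact the paper invokes elsewhere (Eisenbud, Corollary 2.12). So your argument is self-contained, works for every ring, and matches the generality of the statement; the citation buys only brevity.

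One point is worth making explicit: the convention $I(\emptyset)=R$ (an empty intersection), with $R$ counted as a radical ideal. This is what makes $\emptyset\leftrightarrow R$ part of the bijection.
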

\begin{proof}
    See \cite{Eisenbud}, Theorem $I.\:1.6$.
\end{proof}

\begin{remark}
    A particular property of schemes is that it allows points that are not necessarily closed, meaning that their closure is a bigger subscheme containing that point.
    We call non-closed point $P$ in a scheme $X$ a \textbf{generic point} if its closure is the entire scheme.
\end{remark}

\noindent We now cite a few topological properties of affine schemes :

\begin{definition}
    A topological space $X$ is called \textbf{reducible} if it can be written as $X = X_1 \cup X_2$, for closed subsets $X_1,X_2 \subsetneq X$. Otherwise it is called \textbf{irreducible}.
\end{definition}

\begin{proposition}
    An affine variety $\Spec\: R$ is irreducible if and only if $R$ is an integral domain.
\end{proposition}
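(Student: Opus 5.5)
The statement is that $\Spec\:R$ is irreducible if and only if $R$ is an integral domain. As written this needs a small correction before it can be proved. With the paper's definition of integral domain, a nonzero ring with no zero-divisors, the forward implication fails for non-reduced rings: $R=k[x]/(x^2)$ has $\Spec\:R$ a single point, hence irreducible, but $R$ is not a domain. So I will prove the corrected statement: $\Spec\:R$ is irreducible if and only if the nilradical $\sqrt{(0)}$ is prime. This is the criterion already used in the proof of Lemma \ref{irreducible_generic_point}. I then recover the stated equivalence for reduced $R$, where $\sqrt{(0)}=(0)$ is prime exactly when $R$ is a domain. I would note this in the text as the hypothesis under which the proposition is meant.

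The proof works through the dictionary between closed sets and radical ideals given by the scheme-theoretic Nullstellensatz just stated. The key facts I will use are that $V(I)\cup V(J)=V(IJ)$ and that $V(I)=\Spec\:R$ iff $I\subseteq\sqrt{(0)}$. The second holds because $I(\Spec\:R)=I(V(0))=\sqrt{(0)}$.

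For ($\Leftarrow$), suppose $\sqrt{(0)}$ is prime, and write $\Spec\:R=V(I)\cup V(J)=V(IJ)$ with $V(I),V(J)$ closed. Then $IJ\subseteq\sqrt{(0)}$, and since $\sqrt{(0)}$ is prime, $I\subseteq\sqrt{(0)}$ or $J\subseteq\sqrt{(0)}$. Hence one of $V(I),V(J)$ is the whole space, so the decomposition is trivial. Nonemptiness holds because a prime ideal is proper, so $R\neq0$ and $\Spec\:R$ is nonempty.

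For ($\Rightarrow$), suppose $\Spec\:R$ is irreducible, and let $fg\in\sqrt{(0)}$. Then $\Spec\:R=V(fg)=V(f)\cup V(g)$, so by irreducibility, say, $V(f)=\Spec\:R$. This gives $f\in\sqrt{(0)}$, so $\sqrt{(0)}$ is prime; it is proper because $\Spec\:R\neq\emptyset$. The only delicate point is the non-reduced subtlety above. The remaining steps are routine applications of the Nullstellensatz, so I expect no real obstacle.
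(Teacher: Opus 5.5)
Your proof is correct, and so is your caveat: for arbitrary $R$ the statement fails (e.g.\ $k[x]/(x^2)$), so `affine variety' has to be read as forcing $R$ to be reduced. The criterion you actually prove ($\Spec\:R$ irreducible iff $\sqrt{(0)}$ is prime, with irreducible spaces taken nonempty, a condition the paper's appendix definition omits) is exactly the one the paper uses in the proof of Lemma~\ref{irreducible_generic_point}. The paper states this proposition without proof; its argument in that lemma is the open-set dual of yours, using $D(f)\cap D(g)=D(fg)$ and $D(f)\neq\emptyset\iff f\notin\sqrt{(0)}$ where you use $V(f)\cup V(g)=V(fg)$ and $V(I)=\Spec\:R\iff I\subseteq\sqrt{(0)}$.
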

\begin{proposition}
\label{integral_iff_irr_red}
    An affine scheme $X$ is said to be integral if and only if it is irreducible and reduced.
\end{proposition}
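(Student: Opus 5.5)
We must prove that an affine scheme $X=\Spec\:R$ is integral if and only if it is irreducible and reduced. By the definition above, $\Spec\:R$ is integral if it is non-empty and $R$ is an integral domain. So the claim reduces to a purely ring-theoretic statement: $R\neq 0$ is a domain if and only if $\Spec\:R$ is irreducible and $R$ has no nonzero nilpotents. The plan is to combine the proposition just stated (irreducibility of $\Spec\:R$ versus $R$ being a domain) with the Nullstellensatz-type correspondence between closed subsets and radical ideals. The first proposition cannot be used verbatim, since without reducedness irreducibility only tells us that the nilradical is prime.

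For the forward direction, assume $R$ is an integral domain. Then $R$ is reduced: if $f^n=0$ with $n$ minimal and $n\geq 2$, then $f\cdot f^{n-1}=0$ forces $f=0$ or $f^{n-1}=0$, and either contradicts minimality unless $f=0$. Irreducibility follows from the preceding proposition. It can also be seen directly: $(0)$ is prime, so it is a point of $\Spec\:R$, and it lies in every $V(P)$. Hence $\overline{\{(0)\}}=V(0)=\Spec\:R$, and a space with a dense point is irreducible, since any closed set containing that point is the whole space.

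For the converse, assume $\Spec\:R$ is non-empty, irreducible and reduced. First I would show that $\sqrt{(0)}$ is prime, exactly as in the proof of Lemma~\ref{irreducible_generic_point}. Suppose $fg\in\sqrt{(0)}$. Then $V(f)\cup V(g)=V(fg)=\Spec\:R$, so by irreducibility one of them, say $V(f)$, is all of $\Spec\:R$. By the Nullstellensatz, $f\in I(V(f))=I(\Spec\:R)=\sqrt{(0)}$. Non-emptiness gives $R\neq 0$, so $\sqrt{(0)}\neq R$. Reducedness says $\sqrt{(0)}=(0)$, so $(0)$ is prime, i.e. $R$ is an integral domain.

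The only point needing care is the step $V(fg)=V(f)\cup V(g)$ together with the identification $I(\Spec\:R)=\sqrt{(0)}$. Both are routine: the first holds because points are prime ideals, and the second is part $b)$ of the scheme-theoretic Nullstellensatz applied to $J=(0)$. I expect no genuine obstacle. The main thing to keep straight is that reducedness is exactly what upgrades ``nilradical is prime'' to ``$(0)$ is prime''.
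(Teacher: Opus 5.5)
Your argument is correct. The paper states this proposition in the appendix without proof. Your key step, that irreducibility of $\Spec\:R$ forces $\sqrt{(0)}$ to be prime, is the same argument the paper gives inside the proof of Lemma~\ref{irreducible_generic_point}. There it is phrased with distinguished opens via $D(f)\cap D(g)=D(fg)$ rather than with $V(fg)=V(f)\cup V(g)$. You are also right not to rely on the direction ``irreducible $\Rightarrow$ domain'' of the preceding proposition, which fails for non-reduced rings such as $k[x]/(x^2)$.

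One point is worth making explicit: your converse needs ``irreducible'' to include non-emptiness. That is the standard convention, but the paper's definition of irreducible omits it. Under the paper's literal definitions, $\Spec\:0=\emptyset$ is irreducible and reduced but not integral. So the non-emptiness hypothesis you add in the converse is exactly what makes the statement true, and you should say where it comes from rather than assume it silently.
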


\begin{definition}
    An affine scheme $X$ is called \textbf{disconnected} if it can be written as $X=X_1\cup X_2$ for closed subsets $X_1,X_2\subsetneq X$ with $X_1\cap X_2 = \emptyset$. Otherwise it is called \textbf{connected}
\end{definition}

\begin{definition}
    A topological space is called \textbf{Noetherian} if there is no infinite strictly decreasing chain of closed subsets.
\end{definition}

\noindent Thus an affine scheme $\Spec\: R$ is Noetherian if and only if $R$ is a Noetherian ring.

\begin{definition}
    The \textbf{dimension} $\dim X\in \mathbb{N}\cup \{\infty\}$ of a non-empty topological space $X$ is the supremum of the length of chains of strictly increasing irreducible closed subsets of $X$.
\end{definition}

\begin{proposition}
    For a ring $R$ and an element $f\in R$, we call $D(f) := \Spec\:R \backslash V(f)$ the \textbf{distinguished open subset} of $f$ in $\Spec\: R$.
\end{proposition}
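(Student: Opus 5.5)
The statement to prove is that $D(f)$ is open in $\Spec\:R$, together with the basic properties of distinguished opens. I will prove it straight from the definition of the Zariski topology. The core claim is that $D(f)$ is the complement of the closed set $V(f)=V(\{f\})$. This holds by the definition of the Zariski topology, since closed sets are exactly the sets $V(S)$ for $S\subset R$. So the first step is simply to take $S=\{f\}$ and note that $D(f)=\Spec\:R\backslash V(\{f\})$ is open.

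Next I will record the standard properties that make $D(f)$ useful.
\begin{itemize}
\item[$a)$] Unwinding the value of $f$ at $P$: $f(P)=0$ in $\kappa(P)$ if and only if $f\in P$, because $R/P\rightarrow\kappa(P)$ is injective. Hence $D(f)=\{P\mid f\notin P\}$.
\item[$b)$] $D(f)\cap D(g)=D(fg)$, since $P$ is prime, so $fg\notin P$ if and only if $f\notin P$ and $g\notin P$.
\item[$c)$] The $D(f)$ form a basis of the topology. Any open set has the form $\Spec\:R\backslash V(S)$, and $V(S)=\bigcap_{f\in S}V(f)$, so the open set equals $\bigcup_{f\in S}D(f)$.
\end{itemize}

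The only mildly delicate point is the identification $f(P)=0\Leftrightarrow f\in P$. It depends on $R/P$ being an integral domain that embeds in its fraction field $\kappa(P)$, and I expect no real obstacle there. If desired, I will also add the identification $D(f)\cong\Spec\:R_f$, which follows from the correspondence between primes of $R_f$ and primes of $R$ not containing $f$. I would only cite this, for instance from \cite{Eisenbud}, rather than prove it.
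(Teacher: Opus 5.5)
Your proposal is correct. The statement is really a definition labelled as a proposition, so the paper gives no proof; its only mathematical content, that $D(f)$ is open, is exactly your first step, since $V(f)=V(\{f\})$ is closed by the paper's definition of the Zariski topology. Your items $a)$--$c)$ are correct additions, and $c)$ in particular supplies the argument, absent from the paper, for the remark right after the statement that the distinguished opens form a basis of the topology.
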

\begin{remark}
    The distinguish open sets form a basis of the Zariski topology of an affine scheme $\Spec\:R$ in the sense that every open subset $U\subset \Spec\:R$ is an arbitrary union of distinguished opens.
\end{remark}

\noindent We now define the notion of morphism of schemes. 
Since schemes usually carry more structure than typical topological spaces, it is natural that morphism of schemes need to satisfy more conditions than just being continuous. 
It does not just have to move points around, it needs to map sections of structure sheaves in a meaningful way. 
This is defined below.

\begin{definition}
    A \textbf{locally ringed space} is a ringed space $(X,\mathcal{O}_X)$ such that each stalk $\mathcal{O}_{X,P}$ for $P\in X$ is a local ring.
    A \textbf{morphism} of locally ringed spaces from $(X,\mathcal{O}_X)$ to $(Y,\mathcal{O}_Y)$ is given by a continuous map $f:X\rightarrow Y$ and for every open subset $U\subset Y$ a ring homomorphism $f^*_U:\mathcal{O}_Y(U)\rightarrow O_X(f^{-1}(U))$ called the \textit{pull-back} on $U$, such that the two following conditions hold :
    \begin{itemize}
        \item[$a)$] The pull-back maps must be compatible with restriction (this implies there are ring homomorphism $f^*_P:\mathcal{O}_{Y,f(P)}\rightarrow \mathcal{O}_{X,P}$ on the stalks for every $P\in X$).
        \item[$b)$] The ring homomorphism induced by $f^*$ between the stalks of $Y$ and $X$ must be a local homomorphism, i.e.
        for every $P\in X$, we have $(f^*_P)^{-1}(I_P) = I_{f(P)}$ where $I_P$ and $I_{f(P)}$ denote maximal ideals of $\mathcal{O}_{X,P}$ and $\mathcal{O}_{Y,f(P)}$ respectively.
    \end{itemize}
\end{definition}

\noindent Any affine scheme is a locally ringed space with its structure sheaf (see Proposition $II. \:2.3.$ in \cite{Hartshorne}).
Let $X$ be an affine scheme and $\mathcal{O}_X$ its structure sheaf.
For any open subset $U\subset X$, then $\mathcal{O}_X(U)$ is the ring of regular functions on $U$. This is not to be confused with the ring of \textit{rational functions} on $U$.

\begin{definition}
    A \textbf{scheme} is a locally ringed space that has an open cover by affine schemes. Morphisms of schemes are just morphisms as locally ringed spaces. This makes a category of schemes denoted $\Sch$.
\end{definition}

\noindent In the literature, there is often a distinction between separated and non-separated scheme (the notion corresponds to that of Hausdroff spaces in topology). For sake of simplicity, all the schemes we consider are assumed to be separated.

\begin{remark}
    All the above topological concepts defined for affine schemes immediately apply to schemes.
\end{remark}

\noindent We can now define \textit{open} and \textit{closed subschemes} :
\begin{construction}
    Let $X$ be a scheme.
    We say that $Y$ is a \textbf{open subscheme} of $Y$ whose underlying space is the subspace $Y$ of $X$ together with an isomorphism of structure sheaf $\mathcal{O}_Y$ with the restriction $\mathcal{O}_{X\vert_Y}$. Similarly, an \textbf{open immersion} is a morphism $X\rightarrow Y$ which induces an isomorphism of $X$ with an open subscheme of $Y$.\\
    On the other hand, $Z$ is a \textbf{closed subscheme} of $X$ if there is a morphism of locally ringed spaces $i:Z\rightarrow X$ that is a closed immersion.
    This means it has to satisfy the following criteria :
    \begin{itemize}
        \item[$\cdot$] the map $i$ is a homeomorphism of $Z$ onto its image,
        \item[$\cdot$] the associated sheaf map $\mathcal{O}_X\rightarrow i_*\mathcal{O}_Z$ is surjective with kernel $\mathcal{I}$,
        \item[$\cdot$] the kernel $\mathcal{I}$ is locally generated by sections as an $\mathcal{O}_X$-module.
    \end{itemize}
    This is further explained in the next subsection.
\end{construction}

\noindent Even though they are an important class of schemes, we give a quick definition of \textit{projective schemes}.
\begin{construction}
    Let $R$ be a commutative graded ring, where $R = \oplus_{d\ge 0} R_d$ is its gradation. 
    The elements of $R_d\backslash \{0\}$ are said to be homogeneous of degree $d$.
    An ideal in a graded ring is called \textbf{homogeneous} if it is generated by homogeneous elements.
    The ringed space $(\Spec\: R, \mathcal{O}_{\Spec\:R})$ is an \textbf{affine projective scheme} if $R$ is a graded ring as above and $\Spec\:R := \{P \trianglelefteq R \mid \text{ with } P \text{ a homogeneous prime ideals} \}$, with the usual structure sheaf.
    Similarly as above, one define a \textbf{projective scheme} $(X,\mathcal{O}_X)$.
\end{construction}

\begin{definition}
    Let $X$ be a scheme. The \textbf{ring of rational functions} on $X$ is the ring $R(X)$ whose elements are rational functions of $X$.
\end{definition}

\begin{remark}
    We can also define the residue field $\kappa(P)$ for $P$ a point of a locally ringed space $(X,\mathcal{O}_X)$. It is defined as follows $\kappa(P) := \mathcal{O}_{X,P}/\mathfrak{m}_P$ where $\mathfrak{m}_P$ is the unique maximal ideal of $\mathcal{O}_{X,P}$.
\end{remark}

\begin{lemma}
\label{function_field_residue_field}
    Let $X$ be an integral scheme, then the function field $k(X) = \mathcal{O}_{X,\eta} = \kappa(\eta)$ for $\eta$ the unique generic point of $X$ (see Lemma $\ref{irreducible_generic_point}$).
\end{lemma}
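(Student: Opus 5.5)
Since $X$ is integral, it is irreducible, so Lemma \ref{irreducible_generic_point} gives a unique generic point $\eta$, and I will compute $\mathcal{O}_{X,\eta}$ and $\kappa(\eta)$ on an affine chart.

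Pick any non-empty affine open $U=\Spec\:R\subset X$. Every non-empty open subset of an irreducible space is dense, and $\eta$ lies in every non-empty open set, so $\eta\in U$. The ring $R=\mathcal{O}_X(U)$ is an integral domain: this follows from integrality of $X$, or from Proposition \ref{integral_iff_irr_red} applied to the open subscheme $U$, which is again irreducible and reduced. By the proof of Lemma \ref{irreducible_generic_point}, the generic point of $\Spec\:R$ is its nilradical, and since $R$ is a domain this is the prime ideal $(0)$. Hence $\eta$ corresponds to $(0)\in\Spec\:R$.

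Next, stalks are computed on any open neighbourhood, so $\mathcal{O}_{X,\eta}=\mathcal{O}_{U,\eta}=R_{(0)}$. Localising a domain at the zero ideal inverts every non-zero element, which gives the fraction field $\mathrm{Frac}(R)$. This is a field, so its maximal ideal is $0$, and therefore $\kappa(\eta)=\mathcal{O}_{X,\eta}/\mathfrak{m}_\eta=\mathrm{Frac}(R)$. This agrees with the definition of the residue field for affine schemes, the quotient field of $R/(0)$.

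Finally, I identify this with $k(X)$. The ring of rational functions consists of germs of regular functions on non-empty opens, modulo agreement on smaller non-empty opens. Because every non-empty open contains $\eta$, and every open containing $\eta$ is non-empty, this colimit is taken over exactly the same directed system as the stalk at $\eta$. Thus $k(X)=\varinjlim_{V\ni\eta}\mathcal{O}_X(V)=\mathcal{O}_{X,\eta}$.

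The main obstacle is this last identification, since the paper's definition of $R(X)$ is only informal. I would fix the definition as germs over non-empty opens and then use the fact that the directed systems coincide. As a consistency check, the result does not depend on the choice of $U$: for two affine charts, their intersection is non-empty and contains a common distinguished open, so both charts give the same fraction field.
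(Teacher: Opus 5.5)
Your proof is correct. It is essentially the standard argument behind the paper's citation; the paper gives no proof of its own and only refers to Stacks Tag 01RV. Every non-empty open subset of the irreducible scheme contains $\eta$, so the germs defining $k(X)$ are exactly the stalk $\mathcal{O}_{X,\eta}$. On an affine chart $\Spec\:R$ with $R$ a domain, $\eta$ corresponds to $(0)$, which gives $\mathcal{O}_{X,\eta}=R_{(0)}=\mathrm{Frac}(R)=\kappa(\eta)$. Your decision to define $k(X)$ as germs over non-empty opens agrees with the Stacks definition via dense opens, because in an irreducible space the two notions coincide.
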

\begin{proof}
    See \cite{Stacks}\href{https://stacks.math.columbia.edu/tag/01RV}{(Tag 01RV)}.
\end{proof}
\begin{definition}
    Let $X$ be an integral scheme. The \textbf{function field} or \textbf{field of rational functions} of $X$ is the field $R(X)$, that we now denote $k(X)$.
\end{definition}

\noindent One powerful aspects of scheme theory is that it allows us to consider varieties/schemes over particular fields.
For example, we talk about a scheme $X$ over a field $k$, when we consider the map $X\rightarrow \Spec\:k$.
Over certain fields, schemes may have different properties which are called relative properties, since they are now relative to a base scheme.
This was the case for most concepts presented in Section \ref{morphisms_of_schemes}. 
In that sense relative properties of schemes can be thought of as properties of morphisms of scheme.

\begin{definition}
    A morphism of schemes $f:X\rightarrow Y$ is said to be \textbf{quasi-compact} if $Y$ can be covered by open affine subschemes $(V_i)_{i\in I}$ such that the pre-images $f^{-1}(V_i)$ are compact (in the usual topological sense) for all $i\in I$
\end{definition}

\begin{definition}
    A morphisms $f:X\rightarrow Y$ of schemes is called \textbf{proper} if it is (separated) of finite type and universally closed. 
\end{definition}

\noindent One of the main important tool in that flavour is the ability to change base schemes.

\begin{definition}
    Let $X$ and $Y'$ be schemes over a scheme $Y$, we define the \textbf{fibre product} of $X$ and $Y'$ over $Y$ to be the scheme  $X\times_YY'$ such that the following diagram commutes:
    \[
    \begin{tikzcd}
        X\times_Y Y' \arrow[r,"p_{Y'}"] \arrow[d, "p_X"']
        & Y' \arrow[d,"g"] \\
        X \arrow[r,"f"']
        &  Y
    \end{tikzcd}
    \]
    If we consider the scheme $X$ over $Y$ and $g:Y'\rightarrow Y$ the change of basis, the \textbf{base change} of $X$ is the scheme $X' :=X\times_YY'$ over $Y'$ and we call $p_{X'}: X' \rightarrow Y'$ the \textbf{base change map}.
\end{definition}

\noindent We say that a morphism of schemes is \textbf{universally closed} if for any base change, its base change map is a closed map.

\begin{remark}
    The disjoint union distributes over the fibre product of a scheme, meaning that for any schemes $X_i$ and $Y$ over a base scheme $Z$, there is an isomorphism of schemes :
    $$(\bigsqcup_i X_i) \times_Z Y \cong \bigsqcup_i(X_i\times_Z Y) .$$
\end{remark}

\noindent An interesting use of the base change is the ability to consider \textit{geometric notions}.
Usually, we want to work over algebraically closed fields, but not all properties of schemes are invariant under base change. 
So we can “force" a scheme to have a certain property over an algebraically closed field by considering it having that property over its closure.
As an illustration we give the following definition
\begin{definition}
\label{geometrically_integral}
    A scheme $X$ over a field $k$ is said to be \textbf{geometrically integral} if for all field extension $L|k$, $X\times_k \Spec\: L$ is integral, then of course $X\times_k \Spec\: \overline{k}$ is also integral.
\end{definition}

\noindent There are many ways to define the fibre of a morphism in algebraic geometry

\begin{definition}
    Let $f:X\rightarrow Y$ be a morphism of schemes, let $y\in Y$ and $\kappa(y)$ the residue field of $y$ with the inclusion morphism being $\iota: \Spec\:\kappa(y)\rightarrow Y$.
    We define the \textbf{fibre of $f$ at $y$} to be the scheme $X_y := X\times_Y \Spec\:\kappa(y)$
\end{definition}

\subsection{Quasi-coherent Sheaves}
\label{annexe_quasi_coherent}
So far we have introduced schemes and morphisms between them without mentioning any sheaves other than the structure sheaves, our review of scheme theory would not be complete without mentioning sheaves of modules.

\begin{definition}
    Let $X$ be a scheme and $\mathcal{O}_X$ its structure sheaf.
    A \textbf{sheaf of modules} on $X$, also called a sheaf of $\mathcal{O}_X$-modules is a sheaf $\mathcal{F}$ on $X$ such that 
    $\mathcal{F}(U)$ is an $\mathcal{O}_X(U)$-module for all open subsets $U\subset X$ and all restriction maps are $\mathcal{O}_X$-modules homomorphisms in the sense that 
    $$(\varphi +\psi)\vert_U = \varphi\vert_U + \psi\vert_U \: \text{ and }\: (\lambda\varphi)\vert_U = \lambda\vert_U\cdot \varphi\vert_U$$
    for all open subsets $U\subset V$ of $X$, $\lambda\in \mathcal{O}_X(V)$ and $\varphi,\psi\in\mathcal{F}(V)$.
\end{definition}

\begin{examples} \textbf{ }
\begin{itemize}
    \item[$a)$] Let $X$ be a scheme, then $\mathcal{O}_X$ is a sheaf of modules on $X$.
    
    \item[$b)$]  Let $f:\mathcal{F}\rightarrow \mathcal{G}$ be a morphisms of sheaves on a scheme $X$.
    For any open subset $U\subset X$, we set 
    $(\Ker f)(U):= \ker( f_U : \mathcal{F}(U) \rightarrow \mathcal{G}(U) )$. $\Ker f$ is a sheaf of modules on $X$, we call it the \textbf{kernel sheaf} of $f$.
    
    \item[$c)$] For a morphism $f:\mathcal{F}\rightarrow \mathcal{G}$ of sheaves on a scheme $X$ and any open subset $U\subset X$, we set $(\Imm' f)(U):= \im( f_U : \mathcal{F}(U) \rightarrow\mathcal{G}(U)).$
    $\Imm' f$ defines a presheaf of modules but not yet a sheaf. We define its sheafification $\Imm f$ the \textbf{image sheaf} of $f$.

    \item[$d)$] Let $f:\mathcal{F}\rightarrow \mathcal{G}$ be a morphism of sheaves on a scheme $X$.
    We define the \textbf{tensor presheaf} $\mathcal{F}\otimes'\mathcal{G}$ on $X$ by $(\mathcal{F}\otimes'\mathcal{G})(U):= \mathcal{F}(U) \otimes_{\mathcal{O}_X(U)}\mathcal{G}(U)$ for any open subset $U\subset X$.
    We define its sheafification to be the \textbf{tensor sheaf} $\mathcal{F}\otimes \mathcal{G}$.

    \item[$e)$] Let $f:X\rightarrow Y$ be a morphism of schemes and let $\mathcal{F}$ be a sheaf on $X$. Consider the direct image sheaf $f_*\mathcal{F}$ on $Y$, it is a sheaf of $\mathcal{O}_Y$-modules.
\end{itemize} 
\end{examples}

\begin{lemma}
\label{exact_at_level_of_stalk}
    Let $(X, \mathcal{O}_X)$ be a ringed space. The category $\Mod(\mathcal{O}_X)$ is an abelian category. Moreover a complex
    $\dots \rightarrow\mathcal{F}_1\rightarrow \mathcal{F}_2 \rightarrow \mathcal{F}_3 \rightarrow\dots$ is exact at $\mathcal{F}_2$ if and only if for all $x\in X$, the complex
    $\dots \rightarrow(\mathcal{F}_1)_x\rightarrow (\mathcal{F}_2)_x \rightarrow (\mathcal{F}_3)_x \rightarrow\dots$ is exact at $(\mathcal{F}_2)_x$.
\end{lemma}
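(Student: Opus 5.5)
The plan is to prove both claims of Lemma \ref{exact_at_level_of_stalk} directly from the definitions of kernel and image sheaves given in the examples above. The key tool is that taking stalks is a filtered colimit (Theorem \ref{direct_limit_R_module}). Filtered colimits of modules are exact, so the stalk functor $\mathcal{F}\mapsto\mathcal{F}_x$ is exact on presheaves of modules, and sheafification does not change stalks. I will carry out three steps in order.

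\textbf{Step 1: additive and preabelian structure.} First I check that $\Mod(\mathcal{O}_X)$ is additive. The Hom-sets $\Hom(\mathcal{F},\mathcal{G})$ are abelian groups under sectionwise addition, composition is bilinear, the zero sheaf is a zero object, and $\mathcal{F}\oplus\mathcal{G}$ defined by $U\mapsto\mathcal{F}(U)\oplus\mathcal{G}(U)$ is already a sheaf and serves as a biproduct. Next, for a morphism $f$ the presheaf kernel $\Ker f$ is already a sheaf, by the gluing property plus injectivity of restriction on sections. For the cokernel, I sheafify the presheaf $U\mapsto\mathcal{G}(U)/\im f_U$. The universal properties then follow from the sectionwise ones together with the universal property of sheafification. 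Finally I record the stalk formulas
$$(\Ker f)_x=\ker f_x,\qquad (\Imm f)_x=\im f_x,\qquad (\operatorname{coker} f)_x=\operatorname{coker} f_x.$$
These hold because stalks commute with finite limits and colimits of modules by exactness of filtered colimits, and sheafification preserves stalks.

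\textbf{Step 2: the abelian axiom.} I need to show that every monomorphism is a kernel and every epimorphism is a cokernel. The canonical approach is to show that the natural map $\operatorname{coim} f\to\Imm f$ is an isomorphism. By Step 1 this map induces on each stalk the corresponding map for modules over $\mathcal{O}_{X,x}$, and that map is an isomorphism because modules form an abelian category. I then reduce this step to the claim that a morphism of sheaves is an isomorphism if it is an isomorphism on all stalks, and I expect this claim to be the main obstacle. Injectivity on sections follows because a section whose germs all vanish is zero. Surjectivity needs gluing: local preimages exist near each point, they agree on overlaps by the injectivity just proved, and so they glue to a global preimage.

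\textbf{Step 3: exactness is stalk-local.} Exactness at $\mathcal{F}_2$ means $\Imm(\mathcal{F}_1\to\mathcal{F}_2)=\Ker(\mathcal{F}_2\to\mathcal{F}_3)$ as subsheaves of $\mathcal{F}_2$. By the stalk formulas, taking stalks turns this equality into $\im=\ker$ at $(\mathcal{F}_2)_x$, which gives one direction. For the converse, if the stalk equality holds for all $x$, I compare the two subsheaves via their intersection and the quotient maps. Two subsheaves of a sheaf with equal stalks everywhere are equal, since a section of one lies in the other locally and hence globally by gluing. This is the same local-to-global argument used in Step 2.
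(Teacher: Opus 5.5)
Your proposal is correct and is essentially the argument behind the reference the paper gives; the paper offers no proof of its own, only a citation of Stacks, Tag 01AG, whose proof follows the same steps as yours. You build kernels and sheafified cokernels with the expected stalks, check stalkwise that the canonical map from coimage to image is an isomorphism (using that a morphism of sheaves which is an isomorphism on all stalks is an isomorphism), and then deduce that exactness can be tested on stalks. Your Step 3 lemma, that subsheaves with equal stalks coincide, also justifies identifying the sheafified image with the categorical image $\ker(\mathcal{G}\to\operatorname{coker} f)$, so nothing is missing.
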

\begin{proof}
    See \cite{Stacks} \href{https://stacks.math.columbia.edu/tag/01AG}{(Tag 01AG)}
\end{proof}

\noindent We now describe exactly how an $R$-module $M$ determines a sheaf of modules $\widetilde{M}$ on the affine scheme $X=\Spec\:R$
\begin{definition}
    Let $X=\Spec\: R$ be an affine scheme, and let $M$ be an $R$-module.
    For any open subset $U\subset X$, we set
    \begin{equation*}
    \begin{split}
        \widetilde{M}(U) := \{\varphi = (\varphi_P)_{P\in U} \mid& \varphi_P \in M_P,\:\forall P\in U,\\
        &\forall P, \:\exists U_P \text{ an open neighbourhood of } P \text{ in } U\\
        &\text{and } g\in M, f\in R \text{ with } \varphi_Q = \frac{g}{f}, \:\forall Q\in U_P
    \end{split}
    \end{equation*}
    From the definition, $\widetilde{M}$ is a sheaf, moreover $\widetilde{M}(U)$ is a module over $\widetilde{R}(U) = \mathcal{O}_X(U)$, hence $\widetilde{M}$ is a sheaf of modules on $X$. We call it the \textbf{sheaf associated to $M$}. 
\end{definition}

\begin{proposition}
    Let $R$ be a ring, let $M$ be an $R$-module, and let $\widetilde{M}$ be the sheaf on $X : \Spec \: R$ associated to $M$. Then :
    \begin{itemize}
        \item[$a)$] $\widetilde{M}$ is an $\mathcal{O}_X$-module,
        \item[$b)$] for each $P\in X$, the stalk $(\widetilde{M})_P$ of the sheaf $\widetilde{M}$ at $P$ is isomorphic to the localized module $M_P$ and
        \item[$c)$] in particular the global sections $\widetilde{M}(X) = M$.
    \end{itemize}
\end{proposition}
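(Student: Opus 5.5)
The plan is to reduce everything to the standard correspondence between modules over a ring and quasi-coherent data on affine opens, working with the explicit description of $\widetilde{M}(U)$ as compatible families of locally fractional elements $\varphi=(\varphi_P)_{P\in U}$, $\varphi_P\in M_P$. I will treat the three claims in order, since $c)$ is a consequence of the method used for $b)$ together with a patching argument. Throughout I will use the basis of distinguished opens $D(f)$ and the fact that localisation is exact and commutes with the relevant direct limits.

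For $a)$, I will check that $\widetilde{M}(U)$ is a module over $\mathcal{O}_X(U)=\widetilde{R}(U)$ by pointwise operations: given $s=(s_P)\in\mathcal{O}_X(U)$ and $\varphi\in\widetilde{M}(U)$, I set $(s\varphi)_P=s_P\varphi_P\in M_P$. Local fractionality is preserved: if near $P$ we have $s=a/f$ and $\varphi=g/h$ on a common neighbourhood, then $s\varphi=ag/fh$ there. Compatibility with restrictions is immediate because restriction just forgets coordinates, so the module axioms hold on sections and commute with restriction, which is the definition of a sheaf of modules.

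For $b)$, I will define $\widetilde{M}_P\to M_P$ by sending a germ $(U,\varphi)$ to $\varphi_P$; this is well defined and $R_P$-linear. Surjectivity: any $g/f\in M_P$ with $f\notin P$ defines a section $g/f$ on $D(f)\ni P$ whose value at $P$ is $g/f$. Injectivity: if $\varphi_P=0$, shrink to a neighbourhood $D(h)$ on which $\varphi=g/f$ with $f$ nonvanishing; then $g/f=0$ in $M_P$ means some $t\notin P$ kills $g$, and on $D(ht)$ we get $g/f=0$ in every $M_Q$, so the germ vanishes. This is routine.

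For $c)$ the real content is the more general statement $\widetilde{M}(D(f))\cong M_f$, which I will prove and then specialise to $f=1$. The map $M_f\to\widetilde{M}(D(f))$, $g/f^n\mapsto(g/f^n)_P$, is injective by an annihilator argument: if $g/f^n$ vanishes in every $M_P$ with $P\in D(f)$, then $\operatorname{Ann}(g)$ is contained in no prime avoiding $f$, so $f\in\sqrt{\operatorname{Ann}(g)}$ and $f^mg=0$. Surjectivity is the main obstacle: a section is only locally of the form $g_i/h_i$ on a cover of $D(f)$ by opens $D(h_i)$. I will use quasi-compactness of $D(f)$ to reduce to finitely many $h_1,\dots,h_r$, rewrite each local expression as $g_i/h_i$ with $D(h_i)$ the domain, then use injectivity on $D(h_ih_j)$ to get $(h_ih_j)^N(h_jg_i-h_ig_j)=0$ for a uniform $N$. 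Replacing $h_i$ by $h_i^{N+1}$ and $g_i$ by $h_i^Ng_i$ gives exact relations $h_jg_i=h_ig_j$. Since the $D(h_i)$ cover $D(f)$, $f^m=\sum a_ih_i$ for some $m$, and then $g=\sum a_ig_i$ satisfies $h_jg=f^mg_j$, so $g/f^m$ restricts to $g_j/h_j$ on each $D(h_j)$, giving the preimage.
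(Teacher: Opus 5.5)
Your proposal is correct and follows the same route as the paper. The paper simply cites Hartshorne, Proposition II.5.1, whose proof consists of the steps you give: pointwise operations for the module structure, evaluation of germs at $P$ to get $(\widetilde{M})_P\cong M_P$, and the stronger statement $\widetilde{M}(D(f))\cong M_f$, proved by the annihilator argument for injectivity and by quasi-compactness together with $f^m=\sum a_ih_i$ for surjectivity, then specialised to $f=1$.
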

\begin{proof}
    See \cite{Hartshorne} Proposition $5.1$.
\end{proof}

\begin{definition}
    A sheaf of modules $\mathcal{F}$ on a scheme $X$ is said to be \textbf{locally free} if there is an affine open cover $\{U_i |i\in I\}$ of $X$ such that on every open $U_i = \Spec\:R_i$ the restricted sheaf $\mathcal{F}\vert_{U_i}$ is isomorphic to the sheaf $\widetilde{M}_i$ associated to a free $R_i$-module $M_i$ of finite rank.
    If this rank is the same for all $i\in I$, it is also called the rank of $\mathcal{F}$.
\end{definition}

\noindent But not every sheaf of modules on an affine scheme $\Spec\: R$ is associate to an $R$-module. For that reason, we have to define additional properties.

\begin{definition}
    A sheaf of modules $\mathcal{F}$ on a scheme $X$ is called \textbf{quasi-coherent} if there is an affine cover $\{ U_i |i\in I\}$ of $X$ such that on every open $U_i=\Spec\:R_i$, the restricted sheaf $\mathcal{F}\vert_{U_i}$ is isomorphic to the sheaf associated to $M_i$ and $R_i$-module.
    Moreover, if $M_i$ is a finitely generated $R_i$-module for all $i$, we say that $\mathcal{F}$ is a $\textbf{coherent}$ sheaf
\end{definition}

\begin{example}
    The structure sheaf $\mathcal{O}_X$ is quasi-coherent on any scheme $X$, since for any open subset $U=\Spec\:R\subset X$ is in fact of the form $\mathcal{F}\vert_U \cong \widetilde{M}$ for any $R$-module $M$
\end{example}

\noindent We define the \textbf{tensor product} $\mathcal{F}\otimes_{\mathcal{O}_X}\mathcal{G}$ of two $\mathcal{O}_X$-modules to be the sheaf associated to the presheaf $U\mapsto \mathcal{F}(U) \otimes_{\mathcal{O}_X(U)} \mathcal{G}(U)$.

Taking the tensor product of two quasi-coherent sheaves gives a quasi-coherent sheaf again

\begin{proposition}
    Let $f:X\rightarrow Y$ be a morphism of schemes with $X$ a Noetherian scheme.
    Then is $\mathcal{F}$ is a quasi-coherent sheaf of $\mathcal{O}_X$-modules, then its direct image $f_*\mathcal{F}$ is a quasi-coherent sheaf of $\mathcal{O}_Y$-modules.
\end{proposition}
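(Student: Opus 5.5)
The plan is to reduce to the affine case and run the standard push-forward argument for quasi-coherent sheaves. Quasi-coherence is local on $Y$, so I cover $Y$ by open affines $V=\Spec\:B$. Since $(f_*\mathcal{F})\vert_V=(f\vert_{f^{-1}(V)})_*(\mathcal{F}\vert_{f^{-1}(V)})$, it suffices to show that for affine $Y=\Spec\:B$ the sheaf $f_*\mathcal{F}$ is isomorphic to $\widetilde{M}$ with $M=\mathcal{F}(X)$ viewed as a $B$-module via $f^{\#}$. For this it is enough to check that for every $g\in B$ the natural map
\[
\mathcal{F}(X)_g\longrightarrow (f_*\mathcal{F})(D(g))=\mathcal{F}(X_{f^{\#}g})
\]
is an isomorphism. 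Here $X_{f^{\#}g}=f^{-1}(D(g))$ is the open set where the pulled-back section $f^{\#}g$ does not vanish. Once this holds, $f_*\mathcal{F}$ and $\widetilde{M}$ agree on the basis of distinguished opens compatibly with restrictions. By the description of $\widetilde{M}$ and the proposition computing its sections, they are then isomorphic as sheaves.

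The key input is that $X$ is Noetherian, hence quasi-compact, and each intersection of two opens of $X$ is again Noetherian, hence quasi-compact. So I cover $X$ by finitely many affines $U_i=\Spec\:A_i$, with $\mathcal{F}\vert_{U_i}\cong\widetilde{M_i}$. I also cover each $U_i\cap U_j$ by finitely many affines $U_{ijk}$, on which $\mathcal{F}$ is again of the form $\widetilde{M_{ijk}}$; this uses that the restriction of a quasi-coherent sheaf to an affine open is associated to its module of sections. The glueing property of $\mathcal{F}$ then gives an exact sequence
\[
0\to\mathcal{F}(X)\to\prod_i M_i\to\prod_{i,j,k}M_{ijk}.
\]
The same argument applied to the open cover $\{U_i\cap X_{f^{\#}g}\}$ of $X_{f^{\#}g}$ gives an analogous exact sequence. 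In it, $U_i\cap X_{f^{\#}g}=\Spec\:(A_i)_{g}$, so its terms are $(M_i)_g$ and $(M_{ijk})_g$.

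Localisation at $g$ is exact (Proposition \ref{localization_as_tensor} together with flatness of localisation), and it commutes with \emph{finite} products. Localising the first sequence therefore produces the second, up to the canonical maps. Comparing the two left-exact sequences via the five-lemma style argument identifies $\mathcal{F}(X)_g$ with $\mathcal{F}(X_{f^{\#}g})$.

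The main obstacle is justifying the finiteness: both the index sets $i$ and $(i,j,k)$ must be finite, since localisation does not commute with infinite products. This is exactly where the Noetherian hypothesis on $X$ is used. I would also verify carefully that the restriction maps are compatible with the identification $\mathcal{F}(U_i\cap X_{f^{\#}g})=(M_i)_g$, which requires tracking the $B$-module structure through $f^{\#}$.
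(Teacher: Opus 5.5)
Your argument is correct, but it is organised differently from the proof the paper relies on; the paper only cites \cite{Hartshorne}, Proposition II.5.8.

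Both arguments use the same finite affine covers: $\{U_i\}$ of $X$, and $\{U_{ijk}\}$ of each $U_i\cap U_j$. Hartshorne, after reducing to $Y$ affine, writes the glueing condition as an exact sequence of sheaves on $Y$,
\[
0\to f_*\mathcal{F}\to\bigoplus_i (f\vert_{U_i})_*(\mathcal{F}\vert_{U_i})\to\bigoplus_{i,j,k}(f\vert_{U_{ijk}})_*(\mathcal{F}\vert_{U_{ijk}}).
\]
The two right-hand terms are quasi-coherent by the affine-to-affine case, where $\widetilde{N}$ pushes forward to $\widetilde{N}$ viewed as a $B$-module. He then concludes because the kernel of a morphism of quasi-coherent sheaves is quasi-coherent.

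You instead take sections of the corresponding sequences over $X$ and over $f^{-1}(D(g))$, localise the first at $g$, and compare. In effect this unpacks, in this special situation, the proof that kernels of quasi-coherent sheaves on an affine scheme are quasi-coherent: that proof is exactly exactness of localisation plus its commutation with finite products.

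What each route buys:
\begin{itemize}
\item Hartshorne's route is shorter once the abelian-category properties of quasi-coherent sheaves are available. It also isolates where finiteness enters: for finitely many indices the products in the sheaf condition are finite direct sums of quasi-coherent sheaves.
\item Your route is self-contained at the level of modules and yields more. It gives the explicit identification $f_*\mathcal{F}\cong\widetilde{\mathcal{F}(X)}$ over affine $Y$, and the formula $\mathcal{F}(X_s)\cong\mathcal{F}(X)_s$ for $s\in\Gamma(X,\mathcal{O}_X)$ (the ``qcqs lemma''), which is useful in its own right.
\end{itemize}

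One small point to state in your reduction step: you apply the affine case to $f^{-1}(V)\to V$, so you need $f^{-1}(V)$ to be Noetherian. This holds because open subschemes of Noetherian schemes are Noetherian.
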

\begin{proof}
    See \cite{Hartshorne}, Proposition $II.\:5.8$.
\end{proof}

\noindent Unfortunately, the adjoint functor $f^{-1}$ from Remark \ref{almost_pullback_sheaf} does not take quasi-coherent sheaves on $X$ to quasi-coherent sheaves on $Y$.
For that reason we define the \textit{pull-back sheaf} $f^*$ as follow
\begin{definition}
    Let $f:(X,\mathcal{O}_X)\rightarrow (Y,\mathcal{O}_Y)$ be a morphism of ringed spaces and $\mathcal{G}$ a sheaf on $Y$. 
    We define the \textbf{pull-back sheaf} $f^*\mathcal{G}$ to be the tenor product $f^{-1}\mathcal{G}\otimes_{f^{-1}\mathcal{O}_Y} \mathcal{O}_X$
\end{definition}

\noindent And now the pull-back sheaf of a quasi-coherent sheaf is again quasi-coherent by Proposition $II.\:5.8$ from \cite{Hartshorne}.

\begin{construction}
    Let $\mathcal{F}$ be a sheaf on a scheme $X$. 
    For a closed point $P\in X$ and the inclusion map $i:P\rightarrow X$, the sheaf pull-back sheaf $i^*\mathcal{F}$ on $P$ has only one non-trivial space of sections $i^*\mathcal{F}(P)$.
    The geometric meaning of this space is better seen in the affine case.
    Let $X = \Spec\: R$ so that $P\trianglelefteq R$ is a maximal ideal and $\mathcal{F} = \widetilde{M}$ for an $R$-module $M$.
    We have $i^*\mathcal{F}(P) = M\otimes_R R/P = M/PM$ as a vector space over the residue field $\kappa(P)$.
    This vector space $i^*\mathcal{F}(P)$ over $\kappa(P)$ is called the \textbf{fibre} of $\mathcal{F}$ at $P$.
\end{construction}

\begin{theorem}
    The category of quasi-coherent sheaves on schemes is an abelian category.
\end{theorem}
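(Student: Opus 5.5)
The statement to prove is that the category $\mathbf{QCoh}(X)$ of quasi-coherent sheaves on a scheme $X$ is abelian. The plan is to realise $\mathbf{QCoh}(X)$ as a full subcategory of $\Mod(\mathcal{O}_X)$. By Lemma \ref{exact_at_level_of_stalk} the latter is already abelian, and exactness there can be tested on stalks. It then suffices to check three things: that $\mathbf{QCoh}(X)$ contains the zero object, that it is closed under finite direct sums, and that the kernel and cokernel (computed in $\Mod(\mathcal{O}_X)$) of a morphism of quasi-coherent sheaves are again quasi-coherent. A full additive subcategory of an abelian category with these closure properties is itself abelian. The reason is that every monomorphism is still the kernel of its cokernel, and every epimorphism the cokernel of its kernel, because these kernels and cokernels are computed in the ambient abelian category and land back in the subcategory.

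Zero and finite direct sums are immediate. On an affine open $U=\Spec R$ we have $\widetilde{0}=0$ and $\widetilde{M}\oplus\widetilde{N}\cong\widetilde{M\oplus N}$, using the stalk description $(\widetilde{M})_P\cong M_P$. The substantive step is kernels and cokernels. Quasi-coherence is local, so we may restrict to an affine open $U=\Spec R$ on which both $\mathcal{F}\cong\widetilde{M}$ and $\mathcal{G}\cong\widetilde{N}$. I would first show that on such an affine open every morphism $\varphi:\widetilde{M}\to\widetilde{N}$ equals $\widetilde{f}$ for the $R$-linear map $f=\varphi(U):M\to N$ on global sections, which uses $\widetilde{M}(U)=M$. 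Two $\mathcal{O}_U$-linear maps that agree on global sections agree on every $D(g)$, since $\widetilde{M}(D(g))=M_g$ and the map there is forced to be the localisation of $f$. They therefore agree everywhere, because the $D(g)$ form a basis.

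Next I would show that $M\mapsto\widetilde{M}$ is exact. Localisation $M\mapsto M_P$ is exact, and exactness of sheaves can be checked stalkwise (Lemma \ref{exact_at_level_of_stalk}). Applying $\widetilde{\ }$ to the exact sequence $0\to\ker f\to M\to N\to\operatorname{coker} f\to 0$ then gives $\Ker\varphi\cong\widetilde{\ker f}$ and $\operatorname{Coker}\varphi\cong\widetilde{\operatorname{coker} f}$ on $U$. Hence both are quasi-coherent.

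I expect the main obstacle to be the identification $\operatorname{Hom}_{\mathcal{O}_U}(\widetilde{M},\widetilde{N})=\operatorname{Hom}_R(M,N)$. The problem is that quasi-coherence only provides some affine cover adapted to $\mathcal{F}$ and a possibly different one adapted to $\mathcal{G}$. I would pass to a common refinement by distinguished opens. To do this I need that the restriction of $\widetilde{M}$ to $D(g)\subset\Spec R$ is $\widetilde{M_g}$, which follows from the stalk computation. With that fact available, the common refinement works, and the remaining verifications are routine.
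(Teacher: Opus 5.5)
Your proof is correct. The paper gives no argument of its own, only a citation to Hartshorne. The proof behind that citation (Proposition II.5.7 together with the fact that $\Mod(\mathcal{O}_X)$ is abelian) has the same outer structure as yours: quasi-coherent sheaves form a full subcategory of $\Mod(\mathcal{O}_X)$ closed under finite direct sums, kernels and cokernels.

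The real difference is the local step. Hartshorne declares the question local, takes $X=\Spec A$, and invokes Proposition II.5.4 and Corollary II.5.5. These say that on an affine scheme \emph{every} quasi-coherent sheaf is $\widetilde{\Gamma(X,\mathcal{F})}$, so $M\mapsto\widetilde{M}$ is an equivalence between $A$-modules and quasi-coherent sheaves on $\Spec A$. Proving that needs Lemma II.5.3: a section over $D(g)$ extends to $X$ after multiplication by a power of $g$. You bypass this by working only on opens where both $\mathcal{F}$ and $\mathcal{G}$ are already presented as $\widetilde{M}$ and $\widetilde{N}$. There you need only that $M\mapsto\widetilde{M}$ is fully faithful and exact, plus the isomorphism $\widetilde{M}|_{D(g)}\cong\widetilde{M_g}$.

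Your route is more elementary and fits the thesis's definition of quasi-coherence (existence of \emph{some} adapted affine cover) directly. Hartshorne's route yields the stronger fact that a quasi-coherent sheaf is of the form $\widetilde{M}$ on \emph{every} affine open. That makes the reduction to the affine case immediate and is reused constantly afterwards.

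Two small points should be written out.

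First, the common refinement. Take $x\in U_i\cap V_j$, with $U_i=\Spec R_i$ adapted to $\mathcal{F}$ and $V_j=\Spec S_j$ adapted to $\mathcal{G}$. Choose $x\in D_{U_i}(g)\subset U_i\cap V_j$, and then $x\in D_{V_j}(h)\subset D_{U_i}(g)$. Let $a/g^n$ be the image of $h$ in $\mathcal{O}_X(D_{U_i}(g))=(R_i)_g$. Then $D_{V_j}(h)=D_{U_i}(ga)$, so this open is distinguished in both charts and both sheaves have the form $\widetilde{\,\cdot\,}$ on it. Shrinking only to $D_{U_i}(g)$ is not enough: that open need not be distinguished in $V_j$, and you would not know $\mathcal{G}$ is of the form $\widetilde{N'}$ there without further work.

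Second, in the abelian-subcategory argument you need that a monomorphism (resp.\ epimorphism) of $\mathbf{QCoh}(X)$ is also one in $\Mod(\mathcal{O}_X)$. This holds because its ambient kernel (resp.\ cokernel) is quasi-coherent, and the universal property inside $\mathbf{QCoh}(X)$ forces it to be zero. Only then can you apply the ambient fact that a monomorphism is the kernel of its cokernel.
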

\begin{proof}
    See \cite{Hartshorne}, $III.\: 1.0.5$.
\end{proof}

\noindent In particular it is stable under kernels, images, cokernels and quotients.\\
Regarding Lemma \ref{exact_at_level_of_stalk}, we have the following proposition.
\begin{proposition}
    If $0\rightarrow\mathcal{F}_1\rightarrow \mathcal{F}_2 \rightarrow \mathcal{F}_3\rightarrow 0$ is a short exact sequence of quasi-coherent sheaf on an affine scheme scheme $X$, then 
    $0\rightarrow\mathcal{F}_1(X)\rightarrow \mathcal{F}_2(X) \rightarrow \mathcal{F}_3(X)\rightarrow 0$ is a short exact sequence. 
    In other words, the global section functor is flat
\end{proposition}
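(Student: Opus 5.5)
The plan is to reduce the statement to the affine case and then use the standard fact that the global section functor on an affine scheme is, via the correspondence $M\mapsto \widetilde{M}$, essentially the identity on modules. Since $X=\Spec\:R$ is affine and the $\mathcal{F}_i$ are quasi-coherent, we may write $\mathcal{F}_i\cong\widetilde{M_i}$ for $R$-modules $M_i$. Here we use that on an affine scheme a quasi-coherent sheaf is associated to its module of global sections, with $M_i=\mathcal{F}_i(X)$ by part $c)$ of the proposition on $\widetilde{M}$. We also use that morphisms of quasi-coherent sheaves come from $R$-linear maps $M_1\to M_2\to M_3$, namely the maps on global sections. This identification, that $\widetilde{\cdot}$ is an equivalence between $R$-modules and quasi-coherent sheaves on $\Spec\:R$ with quasi-inverse $\Gamma(X,\_)$, is the step I expect to be the main obstacle. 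The paper only records that quasi-coherent sheaves are \emph{locally} of the form $\widetilde{M}$. I would therefore first prove (or cite from \cite{Hartshorne}, Corollary $II.\:5.5$) that a quasi-coherent sheaf on an affine scheme satisfies $\mathcal{F}\cong\widetilde{\mathcal{F}(X)}$. The argument covers $X$ by distinguished opens $D(f)$ on which $\mathcal{F}$ is associated to a module, then shows via the sheaf axiom that $\mathcal{F}(D(f))=\mathcal{F}(X)_f$.

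Granting this, the sequence of global sections is $0\to M_1\to M_2\to M_3\to 0$, and I must show it is exact. Left exactness holds for any sheaves, directly from the sheaf axiom: a global section killed by $\mathcal{F}_2\to\mathcal{F}_3$ locally comes from a unique local section of $\mathcal{F}_1$, and these local sections glue. So the only issue is surjectivity of $M_2\to M_3$. For this I use Lemma \ref{exact_at_level_of_stalk}, which says the sheaf sequence is exact on stalks. By part $b)$ of the proposition on $\widetilde{M}$, the stalk at $P$ is the localisation $(M_i)_P$, so $0\to (M_1)_P\to (M_2)_P\to (M_3)_P\to 0$ is exact for every prime $P$.

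Finally, let $C=\operatorname{coker}(M_2\to M_3)$. Localisation is exact, so $C_P=\operatorname{coker}((M_2)_P\to(M_3)_P)=0$ for all $P$. A module all of whose localisations at primes vanish is zero: if $c\neq 0$, its annihilator lies in some maximal ideal $\mathfrak{m}$, and then $c/1\neq 0$ in $C_{\mathfrak{m}}$. Hence $C=0$, and the global section sequence is short exact. This is exactly the claim; the word ``flat'' in the statement should be read as ``exact'' on quasi-coherent sheaves.
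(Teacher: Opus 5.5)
Your argument is correct, but it follows a different route from the paper, which simply cites \cite{Hartshorne}, Proposition II.5.6.

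\textbf{Hartshorne's route.} After left exactness, he proves surjectivity of $\mathcal{F}_2(X)\to\mathcal{F}_3(X)$ directly at the level of sections:
\begin{enumerate}
\item A global section $s$ of $\mathcal{F}_3$ lifts locally on distinguished opens $D(f)$.
\item Two local lifts differ by sections of the kernel $\mathcal{F}_1$ over $D(fg_i)$.
\item Lemma II.5.3, applied only to the quasi-coherent $\mathcal{F}_1$, shows that after multiplying by a power of $f$ the local lifts can be corrected to agree on overlaps. Hence $f^n s$ lifts globally.
\item A relation $1=\sum_j a_j f_j^{n}$ then assembles a global lift of $s$.
\end{enumerate}

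\textbf{Your route.} You pass through the equivalence $M\mapsto\widetilde{M}$ of Corollary II.5.5. Its proof rests on the same Lemma II.5.3 (your sketch $\mathcal{F}(D(f))=\mathcal{F}(X)_f$ is exactly that lemma), so there is no circularity. You then reduce to the commutative-algebra fact that exactness of $R$-modules can be checked after localising at primes.

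\textbf{What each buys.} Your version is shorter and more conceptual once II.5.5 is available. However, it uses quasi-coherence of all three sheaves essentially, since you need $\mathcal{F}_3\cong\widetilde{M_3}$ with the sheaf map equal to $\widetilde{M_2\to M_3}$. Hartshorne's argument needs only the kernel $\mathcal{F}_1$ to be quasi-coherent, with $\mathcal{F}_2,\mathcal{F}_3$ arbitrary $\mathcal{O}_X$-modules. That stronger form is what is used right afterwards to show that an extension of quasi-coherent sheaves is quasi-coherent (Proposition II.5.7).

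Your remark that ``flat'' in the statement should read ``exact'' is right.
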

\begin{proof}
    See \cite{Hartshorne}, Proposition $5.6$.
\end{proof}

\noindent However it is not the case that the global section functor is flat for non-affine schemes, it is only left exact.
Indeed the category of quasi-coherent sheaves has \textit{enough injectives}, which means that all of its objects admit an \textit{injective resolution} which basically mean that the above global section functor is left exact. This follows from the following result
\begin{proposition}
     Let $(X,\mathcal{O}_S$ be a ringed space. Then the category of sheaves of $\mathcal{O}_X$-modules has enough injectives.
\end{proposition}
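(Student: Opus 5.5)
We want to show that for a ringed space $(X,\mathcal{O}_X)$ every sheaf of $\mathcal{O}_X$-modules $\mathcal{F}$ embeds into an injective one. The approach is to build the injective object stalkwise and then assemble it with a product of skyscraper sheaves.

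The argument rests on the classical fact that every module over a ring embeds into an injective module. We treat this as a known result from commutative algebra: for $\mathbb{Z}$ it follows from Baer's criterion, since divisible groups are injective; for a general ring $R$ one uses $\Hom_{\mathbb{Z}}(R,I)$ with $I$ a divisible group, which is injective by the adjunction between restriction and coinduction.

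First, for each $x\in X$ the stalk $\mathcal{F}_x$ is an $\mathcal{O}_{X,x}$-module. We choose an embedding $\mathcal{F}_x\hookrightarrow I_x$ with $I_x$ an injective $\mathcal{O}_{X,x}$-module.

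Second, let $j_x:\{x\}\to X$ be the inclusion and define the skyscraper sheaf $(j_x)_*I_x$ by
\[
U\mapsto I_x \text{ if } x\in U, \qquad U\mapsto 0 \text{ otherwise}.
\]
This is an $\mathcal{O}_X$-module via $\mathcal{O}_X(U)\to\mathcal{O}_{X,x}$. The key adjunction is
\[
\Hom_{\mathcal{O}_X}(\mathcal{G},(j_x)_*I_x)\cong\Hom_{\mathcal{O}_{X,x}}(\mathcal{G}_x,I_x).
\]
It is verified directly: a sheaf map is determined by compatible maps $\mathcal{G}(U)\to I_x$ for $U\ni x$, and these factor through the colimit $\mathcal{G}_x$. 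Now set $\mathcal{I}=\prod_{x\in X}(j_x)_*I_x$. Combining the adjunction with the universal property of products gives
\[
\Hom(\mathcal{G},\mathcal{I})\cong\prod_x\Hom(\mathcal{G}_x,I_x).
\]
The right-hand side is exact in $\mathcal{G}$, because taking stalks is exact (Lemma \ref{exact_at_level_of_stalk}), each $I_x$ is injective, and products of exact sequences of abelian groups are exact. Hence $\mathcal{I}$ is injective.

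Third, the maps $\mathcal{F}_x\hookrightarrow I_x$ correspond under this isomorphism to a morphism $\mathcal{F}\to\mathcal{I}$. On stalks at $x$ it composes to the chosen injection, so by Lemma \ref{exact_at_level_of_stalk} the kernel has zero stalks and therefore vanishes. Thus $\mathcal{F}\hookrightarrow\mathcal{I}$, as required.

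The main step requiring care is the skyscraper adjunction together with the exactness of the product. In particular, the injectivity of $\mathcal{I}$ must be deduced from the $\Hom$ isomorphism, not from any exactness of infinite products of sheaves, which in general fails on stalks.
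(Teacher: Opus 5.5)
Your proof is correct and is essentially the argument the paper relies on. The paper simply cites Hartshorne, Proposition III.2.2, and that proof is exactly your construction: the product $\prod_{x\in X}(j_x)_*I_x$ built from injective $\mathcal{O}_{X,x}$-modules containing the stalks, injectivity of this product deduced from the skyscraper adjunction, and injectivity of $\mathcal{F}\to\mathcal{I}$ checked stalkwise through the projection to the $x$-th factor.
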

\begin{proof}
    \cite{Hartshorne} Proposition $III.\: 2.2$.
\end{proof}

\noindent Nakayama's Lemma \ref{Nakayama_first_lemma} is a useful tool in algebraic geometry. 
It has the following interpretation in terms of coherent sheaves 
\begin{corollary}[Coherent Nakayama]
    If the stalk of a coherent sheaf is zero at a point $P$, then the sheaf restricts to zero in an open neighbourhood of $P$.
\end{corollary}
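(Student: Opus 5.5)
The statement is the coherent version of Nakayama: if $\mathcal{F}$ is coherent on $X$ and $\mathcal{F}_P = 0$, then $\mathcal{F}|_U = 0$ for some open neighbourhood $U$ of $P$. The plan is to reduce to the affine case and then use finite generation, so that Nakayama's Lemma \ref{Nakayama_first_lemma} is not even needed.

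Since $\mathcal{F}$ is coherent, there is an affine open $V=\Spec\:R$ containing $P$ with $\mathcal{F}|_V\cong\widetilde{M}$ for a finitely generated $R$-module $M$. The point $P$ corresponds to a prime $\mathfrak{p}\trianglelefteq R$. By the proposition on sheaves associated to modules, the stalk is $(\widetilde{M})_{\mathfrak{p}}\cong M_{\mathfrak{p}}$, so the hypothesis reads $M_{\mathfrak{p}}=0$.

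Now choose generators $m_1,\dots,m_n$ of $M$. For each $j$, $m_j/1=0$ in $M_{\mathfrak{p}}$, so by the definition of localisation there is $s_j\in R\setminus\mathfrak{p}$ with $s_jm_j=0$. Put $f=s_1\cdots s_n\notin\mathfrak{p}$. Then $f$ kills every generator, hence $fM=0$, and therefore $M_f=0$.

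Take $U=D(f)\subset V$, an open neighbourhood of $P$. We have $\widetilde{M}|_{D(f)}\cong\widetilde{M_f}$ (the standard identification of the restriction to a distinguished open with the localised module, which I take as known from \cite{Hartshorne}). Alternatively, avoid that identification: every $Q\in D(f)$ satisfies $f\notin Q$, so $M_Q$ is a localisation of $M_f=0$ and vanishes. A sheaf whose stalks all vanish on $U$ is zero on $U$, by the gluing axiom, or by Lemma \ref{exact_at_level_of_stalk} applied to $0\to\mathcal{F}|_U\to 0$. Hence $\mathcal{F}|_U=0$.

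The only substantive step, and the one I would check most carefully, is passing from "each generator is killed by some element outside $\mathfrak{p}$" to a single common element $f$. This is exactly where finite generation, and so coherence rather than mere quasi-coherence, is used. For an infinitely generated module the product of annihilators need not exist, and the conclusion fails: for example $\bigoplus_n R/\mathfrak{m}_n$ over a ring with infinitely many closed points accumulating at $P$.

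If one prefers to invoke Lemma \ref{Nakayama_first_lemma} as the statement's name suggests, the route is slightly different. Apply it over the local ring $R_{\mathfrak{p}}$ to $M_{\mathfrak{p}}=\mathfrak{p}M_{\mathfrak{p}}$ to get an element $r\equiv1$ modulo $\mathfrak{p}R_{\mathfrak{p}}$ with $rM_{\mathfrak{p}}=0$. Such an $r$ is a unit, which again gives $M_{\mathfrak{p}}=0$, and from there the argument continues exactly as above.
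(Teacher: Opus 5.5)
Your argument is correct. The paper gives no proof of this corollary; it only presents it as an ``interpretation'' of Nakayama's Lemma \ref{Nakayama_first_lemma}, so your route is genuinely different from the one the paper points to.

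You never use Nakayama. After reducing to $\mathcal{F}|_{\Spec R}\cong\widetilde{M}$ with $M$ finitely generated, you find a single $f\notin\mathfrak{p}$ that kills every generator. Then $M_f=0$, and hence $\mathcal{F}|_{D(f)}=0$. For the statement as written, where the hypothesis is vanishing of the \emph{stalk}, this common-annihilator argument is the right proof. It also shows exactly where finite generation is used, as your counterexample illustrates.

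Nakayama is needed for a stronger statement, where you assume only that the fibre $\mathcal{F}_P\otimes\kappa(P)=M_{\mathfrak{p}}/\mathfrak{p}M_{\mathfrak{p}}$ vanishes. In that case, Lemma \ref{Nakayama_first_lemma} over $R_{\mathfrak{p}}$ with $I=\mathfrak{p}R_{\mathfrak{p}}$ gives an element $r\equiv 1$ modulo $\mathfrak{p}R_{\mathfrak{p}}$, hence a unit, with $rM_{\mathfrak{p}}=0$. So $M_{\mathfrak{p}}=0$, and your argument finishes the proof.

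Your last paragraph does not achieve this. You apply Nakayama to the equality $M_{\mathfrak{p}}=\mathfrak{p}M_{\mathfrak{p}}$, but you only know that equality because $M_{\mathfrak{p}}=0$ is already the hypothesis. That step is therefore circular and adds nothing. To make Nakayama do real work, start from the fibre hypothesis instead.
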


\begin{lemma}
\label{ideal_sheaf}
    Let $i: X \rightarrow Y$ be a closed embedding. Then
    \begin{itemize}
        \item[a)] If $\mathcal{F}$ is a quasi-coherent sheaf on $X$, then the direct image $i_*\mathcal{F}$ is a quasi-coherent sheaf on $Y$.
        \item[b)] $0 \rightarrow \mathcal{I}_{X/Y} \rightarrow\mathcal{O}_{Y}\rightarrow i_*\mathcal{O}_X\rightarrow0$ is an exact sequence of quasi-coherent sheaves with $\mathcal{I}_{X/Y}$ the kernel of $i^{\#}:\mathcal{O}_Y\rightarrow i_*\mathcal{O}_X$ called the ideal sheaf of $X$ in $Y$.
    \end{itemize}
\end{lemma}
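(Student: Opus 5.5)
Part a) is local on $Y$, so I will reduce to the affine case and then glue. I choose an affine open cover $\{V_j=\Spec\:B_j\}$ of $Y$. Because $i$ is a closed embedding, each $i^{-1}(V_j)$ is a closed subscheme of an affine scheme, hence affine, say $\Spec\:B_j/I_j$. Quasi-coherence of $\mathcal{F}$ gives that $\mathcal{F}\vert_{i^{-1}(V_j)}$ is isomorphic to some $\widetilde{M_j}$ with $M_j$ a $B_j/I_j$-module. I will then check that $(i_*\mathcal{F})\vert_{V_j}\cong\widetilde{M_j}$, where $M_j$ is now viewed as a $B_j$-module by restriction of scalars.

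To carry out that check, it is enough to compare the two sheaves on distinguished opens $D(f)\subset V_j$. On one side $(i_*\mathcal{F})(D(f))=\mathcal{F}(D(\bar f))=(M_j)_{\bar f}$. On the other side $\widetilde{M_j}(D(f))=(M_j)_f$. These agree because localizing a $B_j/I_j$-module at $\bar f$ is the same as localizing it as a $B_j$-module at $f$, by Proposition \ref{localization_as_tensor}. The identifications are compatible with restrictions, and since distinguished opens form a basis, the two sheaves are isomorphic. This proves a). If one prefers to rely on something already stated, the Noetherian setting also allows a direct appeal to the proposition that $f_*$ preserves quasi-coherence (\cite{Hartshorne}, II.5.8).

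For b), the sequence is exact by definition at $\mathcal{I}_{X/Y}$ and at $\mathcal{O}_Y$, because $\mathcal{I}_{X/Y}$ is defined as the kernel of $i^{\#}$. Exactness on the right is exactly the surjectivity of $\mathcal{O}_Y\to i_*\mathcal{O}_X$, which is part of the definition of a closed immersion (Construction of closed subschemes). The sheaf $\mathcal{O}_Y$ is quasi-coherent, and $i_*\mathcal{O}_X$ is quasi-coherent by a). Since quasi-coherent sheaves form an abelian subcategory closed under kernels, $\mathcal{I}_{X/Y}$ is quasi-coherent as well. Concretely, on $\Spec\:B_j$ it is $\widetilde{I_j}$, the kernel of $\widetilde{B_j}\to\widetilde{B_j/I_j}$, and this follows from exactness of localization.

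The main obstacle is the affine-locality step in a): showing that the preimage of an affine open under a closed immersion is affine, of the form $\Spec\:B/I$. I would take this from the local description of closed immersions (\cite{Hartshorne}, II.5.10 / Ex. II.3.11) rather than reprove it here.
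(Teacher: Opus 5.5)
Your proof is correct, but for (a) your main route differs from the one the paper relies on. The paper simply cites \cite{Hartshorne}, Proposition II.5.9. That proof gets quasi-coherence of $i_*\mathcal{O}_X$ (and likewise of $i_*\mathcal{F}$) from Proposition II.5.8(c). A closed immersion is quasi-compact and separated, so over an affine $V\subset Y$ one covers $i^{-1}(V)$ by finitely many affines $U_k$ with affine overlaps. One then exhibits $(i_*\mathcal{F})\vert_V$ as the kernel of $\bigoplus_k i_*(\mathcal{F}\vert_{U_k})\to\bigoplus_{k,l} i_*(\mathcal{F}\vert_{U_k\cap U_l})$, whose terms are quasi-coherent by the affine case. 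This is the alternative you mention in passing.

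You instead use the fact that $i^{-1}(V_j)$ is itself affine, $\Spec B_j/I_j$, and then do a single affine computation (a special case of \cite{Hartshorne}, II.5.2(d)). This buys the explicit local descriptions $(i_*\mathcal{F})\vert_{V_j}\cong\widetilde{M_j}$ and $\mathcal{I}_{X/Y}\vert_{V_j}\cong\widetilde{I_j}$. The cited route, by contrast, needs nothing about the shape of $i^{-1}(V)$ and works for any quasi-compact separated morphism.

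Two cautions about your inputs.
\begin{itemize}
\item In Hartshorne, Corollary II.5.10 (closed subschemes of affine schemes are affine) is deduced from Proposition II.5.9, i.e.\ from exactly the result the paper cites for this lemma. Quoting II.5.10 here is therefore circular. Quote Exercise II.3.11(b) instead, whose argument via Exercises II.2.17(b) and II.2.18(d) is independent of Section II.5.
\item Writing $\mathcal{F}\vert_{i^{-1}(V_j)}\cong\widetilde{M_j}$ uses that a quasi-coherent sheaf is of the form $\widetilde{M}$ on every affine open (\cite{Hartshorne}, II.5.4). This is stronger than the paper's definition, which only asks for some affine cover. It is standard but should be cited.
\end{itemize}

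Your treatment of (b) is the same as Hartshorne's.
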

\begin{proof}
    See \cite{Hartshorne}, Proposition $5.9$.
\end{proof}

\begin{proposition}
\label{affine_morph_quasi_coherent}
    There is an anti-equivalence between the category of affine morphisms $X\rightarrow Y$ and the category of quasi-coherent sheaves of $\mathcal{O}_X$-algebras.
\end{proposition}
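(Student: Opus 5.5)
The statement to prove is that there is an anti-equivalence between affine morphisms $f:X\rightarrow Y$ and quasi-coherent sheaves of $\mathcal{O}_Y$-algebras. The sheaves must live on the target $Y$, so that is how I read it. I would build functors in both directions and check they are quasi-inverse by reducing to the affine case. There the statement is the anti-equivalence between affine schemes $\Spec\:B\rightarrow\Spec\:A$ and $A$-algebras $B$.

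\textbf{Forward functor.} For an affine morphism $f:X\rightarrow Y$, I send $f$ to $f_*\mathcal{O}_X$. This is a sheaf of $\mathcal{O}_Y$-algebras through $f^{\#}:\mathcal{O}_Y\rightarrow f_*\mathcal{O}_X$. It is quasi-coherent by the Proposition on direct images, since we assume throughout that schemes are Noetherian. One can also see this directly: over an affine open $V=\Spec\:A$ with $f^{-1}(V)=\Spec\:B$, the restriction $f_*\mathcal{O}_X\vert_V$ is the sheaf $\widetilde{B}$ associated to $B$ viewed as an $A$-module. A morphism $X\rightarrow X'$ over $Y$ induces an algebra map $f'_*\mathcal{O}_{X'}\rightarrow f_*\mathcal{O}_X$, and this reversal of arrows is why the correspondence is contravariant.

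\textbf{Reverse functor (relative $\Spec$).} Given a quasi-coherent $\mathcal{O}_Y$-algebra $\mathcal{A}$, I cover $Y$ by affine opens $V_i=\Spec\:A_i$. On each one, $\mathcal{A}\vert_{V_i}\cong\widetilde{B_i}$ for an $A_i$-algebra $B_i$, and I set $X_i=\Spec\:B_i\rightarrow V_i$. For any principal open $D(g)\subset V_i\cap V_j$, quasi-coherence gives
\[ \mathcal{A}(D(g))\cong (B_i)_g\cong (B_j)_g. \]
This identifies the preimages of $D(g)$ in $X_i$ and in $X_j$ canonically. Because the identifications come from the single sheaf $\mathcal{A}$, the cocycle condition on triple overlaps holds automatically, so the $X_i$ glue to a scheme $X$ with an affine morphism $X\rightarrow Y$. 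An algebra map $\mathcal{A}\rightarrow\mathcal{A}'$ induces ring maps $B_i\rightarrow B'_i$, hence morphisms $\Spec\:B'_i\rightarrow\Spec\:B_i$ over $V_i$, and these glue.

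\textbf{Checking the functors are quasi-inverse.} On each $V_i$, the composites reduce to $B\mapsto\Spec\:B\mapsto B$ and $X\mapsto f_*\mathcal{O}_X\mapsto\Spec\:\Gamma(X,\mathcal{O}_X)$. Both are natural isomorphisms by the affine anti-equivalence, and naturality lets these local isomorphisms glue.

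\textbf{Main obstacle.} The delicate point is independence of choices: the relative $\Spec$ must not depend on the affine cover chosen, and the morphism $f$ must be affine over \emph{every} affine open of $Y$, not only over those in the cover. I would handle both at once with a universal property. I would show that morphisms $T\rightarrow X$ over $Y$ correspond bijectively to $\mathcal{O}_Y$-algebra maps $\mathcal{A}\rightarrow g_*\mathcal{O}_T$, where $g:T\rightarrow Y$ is the structure map. This can be checked locally on the $V_i$, where it is the affine adjunction $\Hom(T,\Spec\:B)=\Hom(B,\mathcal{O}_T(T))$. The universal property then determines $X$ up to unique isomorphism. In particular, taking $V=\Spec\:A\subset Y$ arbitrary shows $f^{-1}(V)\cong\Spec\:\mathcal{A}(V)$.
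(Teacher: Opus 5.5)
Your proof is correct, and you are right to read the algebras as living on the base $Y$: the $\mathcal{O}_X$ in the statement is a typo, which the paper's later use of this result, with $\phi_*\mathcal{O}_X$ regarded as an $\mathcal{O}_S$-algebra, confirms. The paper gives no argument of its own and only cites the Stacks Project, whose proof follows your route: $f\mapsto f_*\mathcal{O}_X$ in one direction, and in the other the relative $\Spec$, built by gluing $\Spec\:\mathcal{A}(V)$ over affine opens and made independent of choices by the universal property $\Hom_Y(T,X)\cong\Hom(\mathcal{A},g_*\mathcal{O}_T)$.
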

\begin{proof}
    See \cite{Stacks} \href{https://stacks.math.columbia.edu/tag/01SA}{(Tag 01SA)}.\\
\end{proof}

\subsection{Quasi-Coherent Sheaf Cohomology}
\label{Sheaf_cohomology}

We have seen above that the functor of global sections (which can be thought of as a derived functor here) sends a short exact sequence 
$$0\rightarrow \mathcal{F}_1 \rightarrow \mathcal{F}_2 \rightarrow \mathcal{F}_3 \rightarrow 0$$
of quasi-coherent sheaves on a scheme $X$ to the following left exact sequence 
$$0\rightarrow \mathcal{F}_1(X) \rightarrow \mathcal{F}_2(X) \rightarrow \mathcal{F}_3(X) \rightarrow \dots$$
Sheaf cohomology is basically the study of when the above sequence fails to be exact.

\begin{definition}
    Let $\mathcal{F}$ be a quasi-coherent sheaf on $X$ and $\{U_1,...,U_n\}$ an affine open cover of $X$.
    \begin{itemize}
        \item[$a)$] For all $p\in \mathbb{N}$, we define the sets 
        $$C^p(\mathcal{F}) = \bigoplus_{i_0<\dots< i_p} \mathcal{F}(U_{i_0} \cap \dots \cap U_{i_p})$$
        An element $\varphi \in C^p(\mathcal{F})$ is just a collection of sections $\varphi_{i_0,\dots ,ip}\in \mathcal{F}(U_{i_0} \cap \dots \cap U_{i_p})$ for all intersections of $p+1$ sets taken from the chosen affine open cover. 

        \item[$b)$] For every $p\in \mathbb{N}$ we define a linear \textbf{boundary map} $d^p: C^p(\mathcal{F}) \rightarrow C^{p+1}(\mathcal{F})$ by 
        $$(d^p\varphi)_{i_0,\dots,i_{p+1}} = \sum^{p+1}_{k=0} (-1)^k \varphi_{i_0,\dots,\hat{i_k},\dots, i_{p+1} \vert_{U_{i_0}\cap\dots \cap U_{i_{p+1}}}}$$
    \end{itemize}
\end{definition}

\begin{lemma}
    The composition of two consecutive boundary maps is $0$.
\end{lemma}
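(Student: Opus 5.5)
The plan is a direct computation of $d^{p+1}\circ d^p$ on an arbitrary cochain $\varphi\in C^p(\mathcal{F})$, showing that every term in the resulting double sum cancels with exactly one other term. Fix indices $i_0<\dots<i_{p+2}$ and write $U_{i_0\dots i_{p+2}}$ for the corresponding intersection. Applying the definition of the boundary map twice, and using that restriction maps compose (property $3)$ of presheaves), I will write
$$(d^{p+1}d^p\varphi)_{i_0,\dots,i_{p+2}}=\sum_{l=0}^{p+2}(-1)^l\sum_{k}(-1)^{k'}\,\varphi_{i_0,\dots,\hat{i_k},\dots,\hat{i_l},\dots,i_{p+2}}\big\vert_{U_{i_0\dots i_{p+2}}},$$
where the inner sum runs over $k\neq l$, and $k'$ is the position of $i_k$ inside the shorter tuple with $i_l$ removed. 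Concretely, $k'=k$ if $k<l$ and $k'=k-1$ if $k>l$.

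The key step is to index the sum by unordered pairs $\{k,l\}$ with $k<l$. Each such pair arises twice, since the same section $\varphi_{\dots\hat{i_k}\dots\hat{i_l}\dots}$ is obtained either by removing $i_l$ first and then $i_k$, or by removing $i_k$ first and then $i_l$.

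In the first ordering, $i_l$ is removed with sign $(-1)^l$, and $i_k$ then still sits in position $k$, giving total sign $(-1)^{l+k}$. In the second ordering, $i_k$ is removed with sign $(-1)^k$, and $i_l$ now sits in position $l-1$, giving total sign $(-1)^{k+l-1}$.

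The two contributions therefore cancel. Since all terms are restrictions of the same section to the same open set, they live in the same group $\mathcal{F}(U_{i_0\dots i_{p+2}})$, so the cancellation is legitimate. Summing over all pairs gives $0$ for every index tuple, hence $d^{p+1}\circ d^p=0$.

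The main obstacle is purely bookkeeping: correctly tracking the shift of position from $l$ to $l-1$ once an earlier index has been deleted. I will handle this by splitting the double sum explicitly into the two regions $k<l$ and $k>l$, and then relabelling the second region by swapping the names of $k$ and $l$. After the relabelling, the two regions are visibly negatives of each other term by term. No properties of quasi-coherence are needed; the argument works for any presheaf of abelian groups and any finite cover.
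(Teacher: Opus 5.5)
Your proposal is correct, and it is exactly the direct computation the paper has in mind (the paper's proof just says ``This is a straight up computation''). You also correctly track the position shift, so that each pair $k<l$ contributes $(-1)^{k+l}$ and $(-1)^{k+l-1}$, and you note the two facts about restriction maps the computation relies on: they compose, and they are additive.
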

\begin{proof}
    This is a straight up computation.
\end{proof}

\begin{definition}
    Let $\mathcal{F}$ be a coherent sheaf on a scheme $X$ and let $r\in \mathbb{N}$.
    \begin{itemize}
    \item[$a)$]
    The following chain complex is called  the \textbf{$\check{\textbf{C}}$ech complex} (the fact that it is a complex follows from the previous lemma),
    $$C^0(\mathcal{F}) \xrightarrow[]{d^0} C^1(\mathcal{F})\xrightarrow[]{d^1} C^2(\mathcal{F})\xrightarrow[]{d^2} \dots.$$
    \item[$b)$] We define the $r$-th \textbf{$\check{\text{C}}$ech cohomology group} of $\mathcal{F}$ to be $H^r(X,\mathcal{F}) := \ker d^r / \im d^{r-1}$.
    With the convention that $C^{-1}(\mathcal{F})$ and $d^{-1}$ are zero so that $H^0(X,\mathcal{F}) = \ker d^0$.
    \item[$c)$] Suppose we work over a field $k$, we ca define $h^r(X,\mathcal{F})$ the dimension of $H^r(X,\mathcal{F})$ as a $k$-vector space.
    \end{itemize}
\end{definition}

\noindent The $\check{\text{C}}$ech cohomology is usually defined for sheaves of abelian groups, but since quasi-coherent sheaves are a special kind of sheaf of abelian groups, we make no distinction between quasi-coherent sheaf cohomology and the $\check{\text{C}}$ech cohomology.
It is also important to note that as we have defined it, the quasi-coherent sheaf cohomology is independent of the choice of affine open covering.
We have the following first few properties. 

\begin{proposition}
    Let $\mathcal{F}$ be a quasi-coherent sheaf on a scheme $X$. Then
    \begin{itemize}
        \item[$a)$] $H^0(X,\mathcal{F}) = \mathcal{F}(X)$,
        \item[$b)$] if $X$ is affine, then $H^r(X,\mathcal{F}) = 0$ for all $r> 0$.
    \end{itemize}
\end{proposition}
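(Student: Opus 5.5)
For part $a)$ I would compute directly from the definition of the \v{C}ech complex. The plan is to show that $\ker d^0$ is exactly the set of families of sections that agree on overlaps, and then identify this set with $\mathcal{F}(X)$ by the sheaf axiom.

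Fix the affine cover $\{U_1,\dots,U_n\}$. An element $\varphi\in C^0(\mathcal{F})$ is a tuple $(\varphi_i)$ with $\varphi_i\in\mathcal{F}(U_i)$. By the formula for the boundary map,
$$(d^0\varphi)_{i_0 i_1}=\varphi_{i_1}\vert_{U_{i_0}\cap U_{i_1}}-\varphi_{i_0}\vert_{U_{i_0}\cap U_{i_1}}.$$
So $\varphi\in\ker d^0$ exactly when the $\varphi_i$ agree on all pairwise overlaps. The glueing property of sheaves then gives a unique $s\in\mathcal{F}(X)$ with $s\vert_{U_i}=\varphi_i$.

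Conversely, the map $\mathcal{F}(X)\to C^0(\mathcal{F})$, $s\mapsto(s\vert_{U_i})$, lands in $\ker d^0$. It is injective by the uniqueness half of the sheaf axiom. Hence $H^0(X,\mathcal{F})=\ker d^0\cong\mathcal{F}(X)$, and this is routine.

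For part $b)$ I would use the independence of the choice of affine cover noted just above. When $X$ is affine we may take the cover consisting of the single open $U_1=X$. Then $C^p(\mathcal{F})$ is an empty direct sum for every $p\ge1$, since there are no strictly increasing indices $i_0<\dots<i_p$ with $p\ge1$ in $\{1\}$. So $C^p(\mathcal{F})=0$ for $p\geq 1$, and $H^r(X,\mathcal{F})=0$ for all $r>0$ at once.

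The main obstacle is that this argument rests entirely on the independence of the cover, which the text asserts but does not prove. If it were unavailable, I would instead take an arbitrary finite cover by distinguished opens $D(f_i)$ of $X=\Spec R$ and write $\mathcal{F}=\widetilde{M}$. The \v{C}ech complex then becomes $M\to\prod M_{f_i}\to\prod M_{f_if_j}\to\cdots$.

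To show this complex is exact in positive degrees, I would first localise at each $f_j$. After localising, one of the opens becomes all of $X$, and the standard contracting homotopy $(h\varphi)_{i_0\dots i_{p-1}}=\varphi_{j\,i_0\dots i_{p-1}}$ shows exactness. Exactness of the original complex then follows because the $D(f_j)$ cover $X$: a module that vanishes after localising at each $f_j$ is zero. This is where I expect the only real work to lie.
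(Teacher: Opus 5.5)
Your part $a)$ is exactly the glueing argument behind Hartshorne's Lemma III.4.1, which is what the paper cites, so it matches. For $b)$ the paper gives only a pointer to Hartshorne, and the specific reference is even left blank. There, vanishing of \v{C}ech cohomology on an affine scheme is obtained indirectly. For a Noetherian separated scheme, \v{C}ech cohomology of a quasi-coherent sheaf with respect to an affine cover is identified with derived-functor cohomology (Theorem III.4.5, proved by embedding $\mathcal{F}$ in a flasque quasi-coherent sheaf). The latter then vanishes on affines by Serre's Theorem III.3.5.

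Your route is genuinely different. The computation with the one-element cover $\{X\}$ is correct, but, as you suspected, it carries no weight on its own. Independence of the affine cover is normally deduced in one of two ways. One is from exactness of \v{C}ech complexes of affine covers of affine schemes (the usual double-complex argument). The other is from the comparison theorem, which already contains Serre's vanishing. So using independence to prove $b)$ is circular as a foundation. Your fallback is the real proof. Compared with the paper's route it is elementary: it needs no injective or flasque resolutions and no Noetherian hypothesis.

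One refinement is needed. Without independence, $H^r(X,\mathcal{F})$ is computed from whichever finite affine cover was fixed. So you need exactness for an arbitrary finite affine cover $\{U_i\}$ of $X=\Spec R$, not only a distinguished one; note that $\{X\}=\{D(1)\}$ is itself distinguished, so the distinguished case alone adds little. The same trick handles the general case:
\begin{itemize}
\item[$\cdot$] Localise at each $g\in R$ with $D(g)\subseteq U_j$ for some $j$.
\item[$\cdot$] The intersections $U_{i_0\cdots i_p}$ are affine, so $\mathcal{F}(U_{i_0\cdots i_p})_g\cong\mathcal{F}(U_{i_0\cdots i_p}\cap D(g))$.
\item[$\cdot$] Hence the localised complex is the \v{C}ech complex of $\mathcal{F}\vert_{D(g)}$ for a cover containing $U_j\cap D(g)=D(g)$, and is therefore contractible.
\item[$\cdot$] Finitely many such $g$ generate the unit ideal, so the cohomology of the original complex vanishes.
\end{itemize}
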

\begin{proof}
    See \cite{Hartshorne}, for $a)$, Lemma $III.\: 4.1$ and for $b)$, 
\end{proof}

\begin{proposition}
    Let $0\rightarrow \mathcal{F}_1\rightarrow\mathcal{F}_2\rightarrow\mathcal{F}_3\rightarrow 0$ be a short exact sequence of quasi-coherent sheaves on a scheme $X$.
    Then there exists a long exact sequence of cohomology groups 
    \begin{equation*}
    \begin{split}
        0&\rightarrow H^0(X,\mathcal{F_\text{1}}) \rightarrow H^0(X,\mathcal{F_\text{2}}) \rightarrow H^0(X,\mathcal{F_\text{3}})\\
        &\rightarrow H^1(X,\mathcal{F_\text{1}}) \rightarrow H^1(X,\mathcal{F_\text{2}}) \rightarrow H^1(X,\mathcal{F_\text{3}})\\
        &\rightarrow H^2(X,\mathcal{F_\text{1}}) \rightarrow \dots
    \end{split}
    \end{equation*}
\end{proposition}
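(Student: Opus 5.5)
The plan is to compute both sides on a single finite affine open cover and pass to the long exact sequence of the resulting complex. First I fix an affine open cover $\{U_1,\dots,U_n\}$ of $X$, which is finite because our schemes are Noetherian, hence quasi-compact. Since all schemes are assumed separated, every finite intersection $U_{i_0}\cap\dots\cap U_{i_p}$ is again affine. For each $p$ I then write down the sequence
$$0\rightarrow C^p(\mathcal{F}_1)\rightarrow C^p(\mathcal{F}_2)\rightarrow C^p(\mathcal{F}_3)\rightarrow 0.$$
Each $C^p(\mathcal{F}_j)$ is a finite direct sum of modules of sections $\mathcal{F}_j(U_{i_0}\cap\dots\cap U_{i_p})$ over affine opens.

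The key step is to show that this sequence is exact for every $p$. Restricting the given short exact sequence of quasi-coherent sheaves to an affine open $V$ keeps it exact as a sequence of sheaves on $V$, since exactness is checked on stalks by Lemma \ref{exact_at_level_of_stalk}. The restrictions are still quasi-coherent. The proposition stated just above, that global sections on an affine scheme preserve short exact sequences of quasi-coherent sheaves, then gives exactness of
$$0\rightarrow \mathcal{F}_1(V)\rightarrow\mathcal{F}_2(V)\rightarrow\mathcal{F}_3(V)\rightarrow 0.$$
Finite direct sums of exact sequences are exact, so the displayed sequence of $C^p$'s is exact. Moreover, the maps commute with the boundary maps $d^p$, because $d^p$ is built from restriction maps and alternating sums, and morphisms of sheaves commute with restrictions. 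The result is a short exact sequence of \v{C}ech complexes $0\rightarrow C^\bullet(\mathcal{F}_1)\rightarrow C^\bullet(\mathcal{F}_2)\rightarrow C^\bullet(\mathcal{F}_3)\rightarrow 0$.

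Finally I apply the standard snake lemma argument to this short exact sequence of complexes. It produces connecting homomorphisms $\delta^r:H^r(X,\mathcal{F}_3)\rightarrow H^{r+1}(X,\mathcal{F}_1)$, defined by lifting a cocycle to $C^r(\mathcal{F}_2)$, applying $d^r$, and pulling back to $C^{r+1}(\mathcal{F}_1)$. This yields the long exact sequence in the statement, starting with $0$ because $H^0=\ker d^0$ and $C^{-1}=0$. Independence of the chosen cover, noted in the text, ensures these are the cohomology groups of $X$.

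The main obstacle is the exactness on the right at each level $p$, that is, the surjectivity of $C^p(\mathcal{F}_2)\rightarrow C^p(\mathcal{F}_3)$. This surjectivity fails for global sections on non-affine $X$, and it is exactly where separatedness (intersections of affines are affine) and the affine vanishing proposition are needed. Checking well-definedness of $\delta^r$ and exactness at each spot is routine diagram chasing.
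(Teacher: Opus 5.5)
Your proposal is correct and takes essentially the same route as the proof the paper cites (Vakil, Ch.~18). That argument fixes one finite affine cover for all three sheaves, uses separatedness so that every intersection is affine, uses exactness of sections of quasi-coherent sheaves over affines to get a short exact sequence of \v{C}ech complexes, and then applies the snake lemma to obtain the long exact sequence.
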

\begin{proof}
    See \cite{Vakil} Chapter $18.1$, $(iii)$.\\
\end{proof}

\noindent We only mention it, but the cohomology of sheaves gives us powerful invariants, useful for the classification of schemes.
Such as, the cohomology groups themselves, the \textit{Euler characteristics}, the genus of a scheme and much more.

\subsection{Analytification}
\label{analytification}

A smooth scheme over $\mathbb{C}$ can be seen as a complex manifold. 
Since we can apply techniques of complex analysis and differential geometry to complex manifolds, 
provided we have a nice dictionary between the language of abstract algebraic geometry and complex
manifolds, we could translate those analytical results into results about schemes.

\noindent We first need to define what we mean by an analytic space (complex in our case) 

\begin{definition}

    Let $U\subset \mathbb{C}^n$ be the Cartesian product of open discs : $\{|z_i| <1 \mid i=1,...,n\}$,
    let $f_1,...,f_m$ be holomorphic functions on $U$ and
    let $Y\subset U$ be the closed subset (for the usual Euclidean topology) consisting of the zero locus $V(f_1,...,f_m)$, 
    with its structure sheaf the restriction $\mathcal{O}_Y = \mathcal{O}_U/(f_1,...,f_m)$ (for $\mathcal{O}_U$ the sheaf of holomorphic functions on $U$).\\
    A \textbf{complex analytic space} is a locally ringed space $(X,\mathcal{O}_X)$ which can be covered by affine open sets, each of which biholomorphic (as locally ringed spaces) to a space $Y$ as above. (Notice that $Y$ is open in $X$ but closed in $U\subset \mathbb{C}^n$)
\end{definition}

\noindent Notice that the sheaf structure is actually a structure of $\mathbb{C}$-modules.

\begin{remark}
    Theses spaces are called \textit{analytic} because the topology on them is the topology induced from the usual one on $\mathbb{C}^n$ open sets are small, and compatible with analysis (limits, continuity, holomorphic functions, ...)
\end{remark}

\noindent Similarly as in the algebraic case, complex analytic spaces are a generalisation of complex manifolds, indeed, they also allow singularities.
For example, let $X = \{(u,v)\in\mathbb{C}^2 \mid uv= 0\}$ be a complex analytic space, then it is not a complex manifold since there is no neighbourhood at the origin that is biholomorphic to an open subset of $\mathbb{C}$.

\begin{definition}
    If $X$ and $Y$ are complex analytical spaces, a morphism $\varphi:X\rightarrow Y$ of locally $\mathbb{C}$-ringed spaces is called a \textbf{holomorphic map}. 
\end{definition}

\noindent As the name of this section suggests, the goal is to associate to an algebraic scheme $X$ over $\mathbb{C}$ such a complex analytic space that we will denote by $X^{an}$,

\begin{construction}
    Let $X$ be a scheme of finite type over $\mathbb{C}$. 
    Consider a covering of $X$ by affine open subsets $Y_i=\Spec\:A_i$, recall that for all $i$, $A_i$ is of finite type, so $A_i \cong \mathbb{C}[x_1, ..., x_n]/(f_1, ..., f_m)$ with $f_1,...,f_m$ polynomials in $x_1,...,x_n$.
    We can regard those polynomials as holomorphic functions on $\mathbb{C}^n$ and their zero locus form a complex analytic closed subspace $Y_i^{an}$ in $\mathbb{C}^n$.
    Since the scheme $X$ is obtained by glueing the open sets $Y_i$, we can use the same glueing data to glue the analytic spaces $Y_i^{an}$ into an analytic space $X^{an}$. This is the associated complex analytic space of $X$.
\end{construction}

\begin{remark}
    Notice that the above construction is functorial thus we obtain a functor $\_^{an}$ from the category of finite schemes over $\mathbb{C}$ to the category of complex analytical spaces.
    There is also a continuous map $\varphi : X^{an}\rightarrow X$ of the underlying topological spaces, which sends $X^{an}$ bijectively onto closed points of $X$.
\end{remark}

\noindent In his groundbreaking paper \cite{Serre}, Serre proved a plethora of results about the relationship between a scheme $X$ ans its associated analytic space $X^{an}$, we enumerate them below:

\begin{proposition}
\label{analytification_space}
    Let $X$ be a scheme over $\mathbb{C}$, locally of finite type and $X^{an}$ the associated analytical space. 
    Then the scheme $X$ is 
    \begin{itemize}
        \item[$a)$] non-empty,
        \item[$b)$] discrete,
        \item[$c)$] regular,
        \item[$d)$] normal, 
        \item[$e)$] reduced, or
        \item[$f)$] of dimension $n$
    \end{itemize}
    if and only if the associated $X^{an}$ is as well.
\end{proposition}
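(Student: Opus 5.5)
The plan is to reduce every property to a statement about local rings at closed points. Each property on the list is local on $X$, so by the functoriality of the construction of $X^{an}$ (gluing the $Y_i^{an}$ along the same data as the $Y_i$) we may assume $X=\Spec\:A$ with $A=\mathbb{C}[x_1,\dots,x_n]/(f_1,\dots,f_m)$. Then $X^{an}$ is the zero locus of the $f_j$ in $\mathbb{C}^n$. The continuous map $\varphi:X^{an}\rightarrow X$ is a bijection onto the closed points of $X$. For a closed point $P$ with maximal ideal $\mathfrak{m}$ we compare $\mathcal{O}_{X,P}=A_{\mathfrak{m}}$ with the analytic local ring $\mathcal{O}^{an}_{X,P}=\mathbb{C}\{x_1,\dots,x_n\}/(f_1,\dots,f_m)$. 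The key input is that $A_\mathfrak{m}\rightarrow\mathcal{O}^{an}_{X,P}$ is a flat local homomorphism of Noetherian local rings. Moreover both rings have the same $\mathfrak{m}$-adic completion $\mathbb{C}[[x_1,\dots,x_n]]/(f_1,\dots,f_m)$. So the completion map is faithfully flat from each side.

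The steps, in order, are as follows.

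First, (a) is immediate. $X$ is of finite type over $\mathbb{C}$, so if it is non-empty it has a closed point by the Nullstellensatz, and hence $X^{an}\neq\emptyset$. The converse is trivial.

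Second, for (c), (d), (e) and (f), we use that a Noetherian local ring $R$ is regular, normal, reduced, or of a given dimension if and only if its completion $\hat R$ is. In the excellent setting all of these ascend and descend along completion. For regularity and dimension this is standard, since $\dim R=\dim\hat R$ and $\mathfrak{m}/\mathfrak{m}^2$ is unchanged. For normality and reducedness we need excellence: polynomial rings over $\mathbb{C}$ and convergent power series rings are both excellent. Since $A_\mathfrak{m}$ and $\mathcal{O}^{an}_{X,P}$ share the completion, each property holds at $P$ algebraically if and only if it holds analytically. Finally, the set of points where each property holds (respectively fails) in $X$ is constructible, or open by excellence. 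Hence it holds everywhere as soon as it holds at all closed points, since closed points are dense in every closed subset of a finite-type $\mathbb{C}$-scheme. For dimension, $\dim X$ is the supremum of $\dim\mathcal{O}_{X,P}$ over closed points, and the same holds on the analytic side.

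Third, for (b): $X$ is discrete if and only if it is zero-dimensional, that is, finite. This happens if and only if every local ring at a closed point is Artinian, which by the dimension comparison holds if and only if $X^{an}$ has dimension $0$, i.e. is discrete.

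The main obstacle is the flatness and shared-completion comparison between $A_\mathfrak{m}$ and $\mathbb{C}\{x\}/(f)$, together with the excellence of $\mathbb{C}\{x\}$, which is needed for normality and reducedness to descend from the completion. I would establish flatness first. Both rings embed compatibly into $\mathbb{C}[[x]]/(f)$, and the map $R\rightarrow\hat R$ is faithfully flat for any Noetherian local $R$. These two facts give flatness of the comparison map. For the excellence of $\mathbb{C}\{x\}$ I would cite Serre \cite{Serre}, in line with how the surrounding text defers to GAGA.
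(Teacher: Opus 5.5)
The paper gives no argument of its own here; it only cites \cite{SGA}, Proposition XII.2.1. Your proposal is a correct reconstruction of the standard proof behind that reference: reduce to an affine piece, compare $\mathcal{O}_{X,P}$ with $\mathcal{O}_{X^{an},P}$ at closed points through their common completion, and then pass from closed points to all points. I have two remarks.

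First, Serre's GAGA \cite{Serre} is the right source for the flatness of $\mathcal{O}_{X,P}\to\mathcal{O}_{X^{an},P}$ and for the equality of completions. It cannot be your source for the excellence of $\mathbb{C}\{x_1,\dots,x_n\}$, because that notion postdates it. You do not actually need that fact, since the flatness you establish (and then never use) already does the work.
\begin{itemize}
\item If $\mathcal{O}_{X^{an},P}$ is reduced or normal, then so is $\mathcal{O}_{X,P}$, by descent along the faithfully flat local map $\mathcal{O}_{X,P}\to\mathcal{O}_{X^{an},P}$.
\item Conversely, suppose $\mathcal{O}_{X,P}$ is reduced or normal. Excellence of this algebraic local ring alone makes $\widehat{\mathcal{O}_{X,P}}=\widehat{\mathcal{O}_{X^{an},P}}$ reduced or normal. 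This then descends along the faithfully flat map $\mathcal{O}_{X^{an},P}\to\widehat{\mathcal{O}_{X^{an},P}}$.
\end{itemize}

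Second, openness or constructibility of the regular, normal and reduced loci is more than you need for the globalisation step. These properties are preserved under localisation. Every point of a scheme locally of finite type over $\mathbb{C}$ specialises to a closed point. So holding at all closed points already gives the property everywhere.
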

\begin{proof}
    See \cite{SGA}, Proposition $XII.\:2.1$.
\end{proof}

\begin{proposition}
\label{analytification_morphisms}
    Let $X$ and $Y$ be two schemes over $\mathbb{C}$ locally of finite type, $f:X\rightarrow Y$ a morphism of finite type and 
    $f^{an}: X^{an}\rightarrow Y^{an}$ the associated analytical morphism induced by $f$. 
    Then $f$ is 
    \begin{itemize}
        \item[$a)$] surjective,
        \item[$b)$] a closed immersion,
        \item[$c)$] an immersion,
        \item[$d)$] proper, or
        \item[$e)$] finite
    \end{itemize}
    if and only if $f^{an}$ is as well.
\end{proposition}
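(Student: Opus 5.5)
This is a GAGA-type comparison for morphisms, and I would prove it by reducing each property to a local statement that can be compared stalkwise, using that analytification is a base-change-compatible functor which is bijective on closed points. The basic tools are two facts. First, the continuous map $\varphi:X^{an}\rightarrow X$ is a bijection onto the closed points, and closed points are dense in every closed subset of a scheme locally of finite type over $\mathbb{C}$. Second, for a closed point $x$, the map $\mathcal{O}_{X,x}\rightarrow\mathcal{O}_{X^{an},x}$ is faithfully flat and induces an isomorphism of completions. I would also use that analytification commutes with fibre products, $(X\times_Y Z)^{an}\cong X^{an}\times_{Y^{an}}Z^{an}$, which I would check on affine charts $\Spec\:\mathbb{C}[x]/(f)$ by gluing. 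Together with Proposition \ref{analytification_space}, this lets me transport properties of schemes and of fibres.

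The properties would then be handled in the order a), e), b), c), d).

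For a), surjectivity, the image $f(X)$ is constructible by Chevalley. Hence $f$ is surjective if and only if it is surjective on closed points. Since $f^{an}$ is $f$ on closed points, I would identify fibres via $(f^{an})^{-1}(y)=(X_y)^{an}$ and apply the non-emptiness statement of Proposition \ref{analytification_space}.

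For e), finiteness, the forward direction is direct: a finite $\mathcal{O}_Y$-algebra $\mathcal{A}$ gives $\mathcal{A}\otimes\mathcal{O}_{Y^{an}}$, which is finite. For the converse, $f^{an}$ finite means it is proper with finite fibres. I would deduce quasi-finiteness of $f$ from discreteness of fibres, using Proposition \ref{analytification_space} b) on $X_y$. Properness of $f$ would follow from d), and then proper plus quasi-finite gives finite by Zariski's main theorem.

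For b), a closed immersion is exactly a finite morphism for which $\mathcal{O}_Y\rightarrow f_*\mathcal{O}_X$ is surjective. With e) available, I would compare this surjectivity on stalks at closed points, using the faithful flatness of $\mathcal{O}_{Y,y}\rightarrow\mathcal{O}_{Y^{an},y}$, since $(f_*\mathcal{O}_X)^{an}=f^{an}_*\mathcal{O}_{X^{an}}$ for finite $f$. For c), an immersion is a closed immersion into an open subscheme. I would reduce to b) after checking that $f^{an}$ being a local embedding forces $f$ to be unramified and radicial at closed points, again by comparing completed local rings.

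The main obstacle is d), properness, in the converse direction. The forward direction uses Chow's lemma: if $X'\rightarrow X$ is projective and surjective with $X'\rightarrow Y$ projective, then $X'^{an}\rightarrow Y^{an}$ is a closed subspace of $\mathbb{P}^n\times Y^{an}$, which is topologically proper because $\mathbb{P}^n(\mathbb{C})$ is compact. Properness of $f^{an}$ then descends through the surjection. For the converse, I would try Nagata compactification: embed $X$ as a dense open in some $\overline{X}$ proper over $Y$. If $f^{an}$ is proper, then $X^{an}$ is closed in $\overline{X}^{an}$, being a proper image. It is also open and dense in the analytic topology, which forces $X^{an}=\overline{X}^{an}$. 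By a) this gives $X=\overline{X}$ on closed points, and hence everywhere. Justifying that density in the Zariski topology implies density in the analytic topology is the delicate point; I expect to cite the result of SGA $1$, XII that Zariski-dense open subsets of $X^{an}$ are analytically dense.
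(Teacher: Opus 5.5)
The paper itself only cites SGA~1, XII.3.2, so your sketch is a more detailed route. Your arguments for a), b), d) and e) are essentially sound. Two small points: the converse of e) uses the converse of d), so d) has to come first; and Nagata's theorem needs $f$ separated and $Y$ quasi-compact, which you get from the paper's separatedness convention after working locally on $Y$.

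The genuine gap is in c). The only information you extract from $f^{an}$ is that it is a local embedding, hence that $f$ is unramified and radicial at closed points, and then you ``reduce to b)''. But an unramified radicial morphism of finite type need not be an immersion, and ``$f^{an}$ is a local embedding'' cannot tell the difference. Here is an example:
\begin{itemize}
\item[-] Let $C=\{y^2=x^2(x+1)\}\subset\mathbb{A}^2_{\mathbb{C}}$, and let $\nu:\mathbb{A}^1\to C$, $t\mapsto(t^2-1,\,t(t^2-1))$, be its normalisation; it sends both $t=1$ and $t=-1$ to the node.
\item[-] Put $X=\mathbb{A}^1\setminus\{1\}$ and $f=\nu|_X$. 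Then $f$ is bijective, radicial and unramified.
\item[-] Locally on $X^{an}$, the map $f^{an}$ is a closed embedding into an open subset of $C^{an}$; near $t=-1$ it parametrises one branch of the node.
\item[-] Yet $f$ is not an immersion. Its image is the closed set $C$, so it would be a surjective closed immersion onto the reduced curve $C$, i.e.\ an isomorphism, while $X$ is smooth and $C$ is not.
\item[-] Consistently, $f^{an}$ is not an immersion: $f^{an}(t)\to f^{an}(-1)$ as $t\to 1$, although $t$ has no limit in $X^{an}$.
\end{itemize}
Note also that even radiciality needs the global injectivity of $f^{an}$, which a local embedding does not provide.

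What is missing is the global, topological half of being an immersion. $f^{an}$ is a homeomorphism onto a \emph{locally closed} subset of $Y^{an}$, and this has to be transported to the Zariski topology. The tool is the comparison you already plan to cite in d), in the following form: for a constructible $E\subseteq Y$, the analytic closure of the set $E(\mathbb{C})$ of closed points of $E$ is $\overline{E}(\mathbb{C})$, where $\overline{E}$ is the Zariski closure. The argument then runs as follows.
\begin{itemize}
\item[-] The set $f^{an}(X^{an})=f(X)(\mathbb{C})$ is open in its analytic closure, which by the comparison is $\overline{f(X)}(\mathbb{C})$.
\item[-] So the constructible set $F=\overline{f(X)}\setminus f(X)$ has an analytically closed set of closed points.
\item[-] Applying the comparison to $F$, and using that $Y$ is Jacobson, shows that $F$ is Zariski closed.
\item[-] Hence $f(X)$ is closed in the open subscheme $V=Y\setminus F$, and $f^{an}:X^{an}\to V^{an}$ is an immersion with closed image, i.e.\ a closed embedding.
\item[-] Now b) shows that $f:X\to V$ is a closed immersion, so $f$ is an immersion.
\end{itemize}
Without this step the reduction of c) to b) does not go through.
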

\begin{proof}
    See \cite{SGA}, Proposition $XII.\:3.2$.
\end{proof}

\noindent 
We say that a complex analytical space $(X,\mathcal{O}_X)$ is actually a \textit{Riemann surface} if and only if it is $1$-dimensional and smooth (or non-singular). 
Thus, notice that the above notions also make sense for Riemann surfaces when considering their structure sheaf of holomorphic functions. 
\begin{remark}
    It is interesting to note that in \cite{Serre}, for $X$ a projective scheme over $\mathbb{C}$,
    Serre also showed that the analytification functor yields an equivalence of categories between coherent sheaves on $X$ and coherent analytic sheaves on $X^{an}$.
\end{remark}

\bibliographystyle{alpha}
\bibliography{bibliothec}
\nocite{*}

\end{document}